\providecommand{\U}[1]{\protect\rule{.1in}{.1in}}
\newtheorem{theorem}{Theorem}
\theoremstyle{plain}
\newtheorem{axiom}{Axiom}
\newtheorem{corollary}[theorem]{Corollary}
\newtheorem{example}[theorem]{Example}
\newtheorem{lemma}[theorem]{Lemma}
\newtheorem{problem}[theorem]{Problem}
\newtheorem{proposition}[theorem]{Proposition}
\newtheorem{remark}[theorem]{Remark}
\numberwithin{equation}{section}
\numberwithin{theorem}{section}
\begin{document}
\title[Topological radicals, V.]{Topological radicals, V. From Algebra to Spectral Theory}
\author{Victor S. Shulman}
\address{Vologda State Technical University\\
Lenin St. 15, Vologda 160000, Russia}
\email{shulman.victor80@gmail.com}
\author{Yurii V. Turovskii}
\address{Institute of Mathematics and Mechanics, National Academy of Sciences of Azerbaijan\\
F. Agaev St. 9, Baku AZ1141, Azerbaijan}
\email{yuri.turovskii@gmail.com}
\thanks{2010 \textit{Mathematics Subject Classification }Primary 46H20, 46H15, 47A10;
Secondary 47L10, 22D25.}
\thanks{\textit{Key words and phrases}. algebra, C*-algebra, $Q$-algebra, Engel
algebra, morphism, radical, preradical, heredity, scattered radical,
procedure, centralization, socle, primitivity procedure, operation,
convolution, superposition, transfinite chain, primitive map, primitive ideal,
Banach ideal, spectrum, spectral radius, joint spectral radius, tensor radius,
Berger-Wang formula}
\thanks{This paper is in final form and no version of it will be submitted for
publication elsewhere.}
\dedicatory{To the memory of Bill Arveson, a great mathematician and a great person}\date{}

\begin{abstract}
We introduce and study procedures and constructions of the theory of general
topological radicals that are related to the spectral theory --- the
centralization, primitivity and socle procedures, the scattered radical, the
radicals related to the continuity of the usual, joint and tensor radii. Among
other applications we find some sufficient conditions of continuity of the
spectrum and spectral radii of different types, and in particular prove that
in a GCR C*-algebra the joint spectral radius is continuous on precompact sets
and coincides with the Berger-Wang radius.

\end{abstract}
\maketitle
\tableofcontents

\section{Introduction and preliminaries}

\subsection{Introduction}

The general theory of radicals can be considered as a global structure theory
of algebras which is aimed at the study of ideals and quotients simultaneously
defined for a large class of algebras and related to some general properties
of algebras.

Briefly speaking, a map $P$ defined on some class $\mathfrak{U}$ of algebras
and sending each algebra $A\in\mathfrak{U}$ to its ideal $P(A)$ is called a
radical if it satisfies some natural conditions of covariance and stability
(see Section 2.1). The most popular example of a radical is the
\textit{Jacobson radical }$\operatorname{rad}$ defined on the class of all
algebras; its restriction to the class of all Banach algebras is denoted by
$\operatorname{Rad}$. The importance of this map for the theory of Banach
algebras cannot be overestimated.

The first significant achievements of the theory of radicals were related to
the nilpotency and some close properties, and obtained by Baer, Levitzki,
K\"{o}the, Amitsur, Kurosh and others prominent algebraists. The functional
analytic counterpart of this theory was initiated by Peter Dixon in his paper
\cite{D97}, which contained basic definitions and presented first
applications. Since then the theory was developed and applied to different
problems of operator theory and Banach algebras in \cite{TR0, TR1, TR2, TR3,
TR4}. The problems that turned out to be "solvable in radicals" had
their\textbf{ }origin in the theory of invariant subspaces, irreducible
representations, semigroups of operators, tensor products, linear operator
equations, joint spectral radius, Banach Lie algebras and other topics. They
lead to introducing and study of corresponding radicals -- the
(\textit{closed}-) \textit{hypofinite radical} $\mathcal{R}_{\mathrm{hf}}$,
the \textit{hypocompact radical} $\mathcal{R}_{\mathrm{hc}}$, the
\textit{tensor radical} $\mathcal{R}_{t}$, the \textit{compactly
quasinilpotent radical} $\mathcal{R}_{\mathrm{cq}}$ and others (see Section 2
for definitions and discussion of these and other examples). It can be said
that the essence of the radical approach to a problem (a property, a
phenomenon) is to single out an ideal that accumulates elements related to
this problem, and study the dependence of this ideal on the algebra. The
construction of the corresponding radical is a typical result in the theory,
and this is reasonable because all \textquotedblleft well
behaved\textquotedblright\ radicals in their time find applications.

For functional analysts, the radicals on Banach algebras are most interesting.
However, to have a more flexible technique, one has to consider topological
radicals defined on non-complete normed algebras and also algebraic radicals
defined on algebras without topology. For instance, the study of non-closed
ideals of a Banach algebra ideals which are complete in a larger norm (we call
them Banach ideals) turns out to be very useful. This is a reason to consider
radicals in the wider context. So we outline three levels of consideration of
radicals: algebraic, normed and Banach. The relations between these three
theories are quite complicated. For example, the restriction of
$\operatorname{rad}$ to the class of \textit{all} normed algebras cannot be
considered as a topological radical, because $\operatorname{rad}\left(
A\right)  $ can be non-closed in $A$ if $A$ is not complete.

In this paper our main objects are spectral characteristics --- spectrum,
primitive ideals, spectral radii of different nature, socle, nilpotency,
spectral continuity etc --- and we construct special radicals and general
procedures of the theory of radicals aiming at applications to the spectral
theory in its algebraic and functional analytic aspects. The interplay between
algebraic and functional analytic sides of the spectral theory forces us to
devote a large part of the paper to understanding the links between algebraic
and topological theories of radicals.

All main objects and constructions are related, to a greater or lesser extent,
to the \textit{scattered radical} $\mathcal{R}_{s}$.This radical associates
with any Banach algebra the largest ideal whose elements have at most
countable spectrum. From the viewpoint of the hierarchy of radicals the
formulas
\begin{align}
\mathcal{P}_{\beta}  &  =\mathcal{R}_{\mathrm{hf}}\wedge\operatorname{Rad}%
,\label{i1}\\
\mathcal{R}_{s}  &  =\mathcal{R}_{\mathrm{hf}}\vee\operatorname{Rad}
\label{i0}%
\end{align}
exactly determine the place of the signed radicals in Banach algebras:
(\ref{i1}) and (\ref{i0}) mean that the \textit{closed-Baer radical}
$\mathcal{P}_{\beta}$ is the infimum of $\mathcal{R}_{\mathrm{hf}}$ and
$\operatorname{Rad}$, and $\mathcal{R}_{s}$ is the supremum of the same pair
of radicals. All these radicals are topological, but $\mathcal{R}_{s}$ and
$\operatorname{Rad}$ are the restrictions of some algebraic radicals to Banach
algebras while $\mathcal{P}_{\beta}$ and $\mathcal{R}_{\mathrm{hf}}$ have
algebraic analogs.

The paper is organized as follows. In the first section we gather the
necessary preliminary information. The second section contains the basic
definitions of the radical theory and the discussion of additional properties
of radical-like maps. We present here also the basic examples of topological
radicals. We introduce also radicals defined on the class of C*-algebras.

We call by a \textit{procedure} any rule that transforms ideal maps. The
simplest one which we use by default is the \textit{restriction }of radicals
to a subclass of algebras. Section 3 is devoted to the study of several
important procedures. The first two of them (the \textit{convolution procedure
}$P\longmapsto P^{\ast}$ and \textit{superposition} \textit{procedure
}$P\longmapsto P^{\circ}$) were introduced by Baer in the purely algebraic
context, and by Dixon in the topological one. They produce a radical from an
ideal map that lacks some stability property --- an under radical and an over
radical; we complement Dixon's results for the case of normed algebras. The
third one (the \textit{closure}) is very simple -- from a map $A\longmapsto
P(A)$ with non-necessarily closed ideals $P(A)$ it makes the map
$A\longmapsto\overline{P(A)}$ which can be a topological radical or be
transformed into a topological radical by means of the convolution procedure.
Thus by means of the closure one can obtain a topological radical from an
algebraic one. The fourth procedure is the \textit{regularization
}$P\longmapsto P^{r}$; it allows one to extend a radical from the class of all
Banach algebras to a class of normed algebras by taking the completion.

In Section 4 we study some operations (multiplace procedures), that produce
new radicals or radical-like maps from given families of radicals: supremum,
infimum and two-place procedures --- the convolution operation $\ast$ and
superposition operation $\circ$. One of our aims here is to show that they all
are closely related. For instance, the convolution and superposition
procedures are the results of transfinite applications of the convolution and
superposition operations. Supremum is reached by the convolution procedure;
respectively, infimum is reached by the superposition procedure. We also study
conditions which imply that a class of algebras is radical or semisimple.

We consider the problem of heredity of a radical that is obtained by closure
and convolution procedures. A radical $P$ is called \textit{hereditary} if
$P(J)=J\cap P(A)$ for any appropriate ideal $J$ of an algebra. This property
is very convenient for the study and use of a radical. Since many (almost all)
most important radicals are constructed by means of the closure and
convolution, the heredity problem is one of the main inner problems of the
theory. We give a criterion of heredity for the resulting radical which works
for almost all known examples. In particular this approach allows us to answer
in affirmative the question of Dixon \cite{D97} about the heredity of the
closed-Baer radical $\mathcal{P}_{\beta}$. This radical is the smallest one
among topological radicals on Banach algebras for which all algebras with
trivial multiplication are radical.

In Section 5 we deal with the radical approach \ to the property of
commutativity modulo a radical. Namely, starting with a radical $P$ we define
an under radical $P^{a}$, such that $P^{a}$-radical algebras are precisely
those algebras that are commutative modulo $P$:
\[
\lbrack A,A]\subset P(A).
\]
We find the conditions on $P$ under which $P^{a}$ is a radical, and check that
they are fulfilled for our main examples. For instance, this is true for all
algebraic hereditary radicals and their topological analogs obtained by the
closure and convolution procedures. In particular, $\operatorname{rad}^{a}$,
$\mathcal{R}_{s}^{a}$, $\mathcal{P}_{\beta}^{a}$, $\mathcal{R}_{\mathrm{hf}%
}^{a}$, $\mathcal{R}_{\mathrm{cq}}^{a}$ and $\mathcal{R}_{t}^{a}$ are
radicals. While the first four radicals satisfy our criterion, the last two
need a separate consideration\textbf{ } (the result for $\mathcal{R}%
_{\mathrm{cq}}^{a}$ was proved in \cite{TR3}). This underlines the advantage
of the joint consideration of algebraic and topological radicals. As the
commutativity modulo $\operatorname{rad}$, $\mathcal{R}_{\mathrm{cq}}$ and
$\mathcal{R}_{t}$ is often used in the theory of spectral radii of different
types, some important applications to spectral theory are indicated at the end
of the section and also in Section 9. In particular, we consider the
sufficient conditions for a Banach algebra to be Engel.

The subject of Section 6 is the very popular in the theory of algebras and
Banach algebras notion of the socle of an algebra. We consider the ideal map
$\operatorname{soc}:A\longmapsto\mathrm{socle}(A)$ and define a new procedure
that transforms each radical $P$ into the convolution $\operatorname{soc}\ast
P$ (\textquotedblleft the socle modulo $P$\textquotedblright). By means of
this procedure we establish some relations between, for example, hypofinite
radical and Baer radical. Main applications of this construction will be given
in Section 8 where we introduce the scattered radical.

Our object in Section 7 is more spectral: we study procedures related to the
space $\operatorname{Prim}(A)$ of all primitive ideals of an algebra $A$. All
of them begin with a choice of a subset $\Omega(A)\subset\operatorname{Prim}%
(A)$, but such a choice (a \textit{primitive map}) must be done simultaneously
for all algebras $A$ in the given class, and subjected to some natural
restrictions. For example one can choose for $\Omega(A)$ all the space
$\operatorname{Prim}(A)$, or $\varnothing$, or the set of primitive ideals of
finite codimension. If a primitive map is fixed then the procedures we deal
with are

\begin{itemize}
\item The $\Omega$-\textit{hull-kernel closure} $P^{\mathrm{kh}_{\Omega}}$ of
a preradical $P$ which sends $A$ to the intersection of those ideals
$I\in\Omega(A)$ that contain $P(A)$;

\item The $\Omega$-\textit{primitivity extension }$P^{p_{\Omega}}$ of $P$ that
sends $A$ to the ideal of all elements $a\in A$ such that $a/I\in P(A/I)$, for
all $I\in\Omega(A)$.
\end{itemize}

\noindent\ Our main aims here are to study the interrelations between these
procedures and to find the conditions on $P$ under which the produced maps
have sufficiently convenient properties. Note that the primitivity extension
of the scattered radical take a central place in applications to the spectral continuity.

Section 8 which can be considered central in this paper, is devoted to the
theory of the scattered radical $\mathcal{R}_{s}$. Almost all constructions of
the previous sections find a reflection in or application to the theory of
$\mathcal{R}_{s}$. We introduce and study $\mathcal{R}_{s}$ in its algebraic
and Banach algebraic versions. The algebraic version of $\mathcal{R}_{s}$ is
defined via the socle and convolution procedure: it equals
$(\operatorname{soc}{\ast}\operatorname{rad})^{\ast}$. We show, using the
connection between both versions, that $\mathcal{R}_{s}$ is a hereditary
topological radical satisfying some additional flexibility\ conditions which
allow one to use its centralization and primitivity extensions.

Another important fact is that the structure space of an $\mathcal{R}_{s}%
$-radical algebra is dispersed and that (the classes of the equivalence of)
strictly irreducible representations of such algebras are uniquely determined
by their kernels (= primitive ideals). We show that in hereditarily semisimple
Banach algebras (in particular, in C*-algebras) $\mathcal{R}_{s}$ coincides
with the hypocompact radical and that scattered C*-algebras can be
characterized by many other equivalent conditions.

The subject of Section 9 is the continuity of spectral characteristics of an
element or a family of elements of a Banach algebra. We show that the map
$a\longmapsto\sigma(a)$ is continuous at points $a\in\mathcal{R}_{s}^{p}(A)$
where $\mathcal{R}_{s}^{p}$ is the primitivity extension of the scattered
radical, and that this property is not fulfilled for elements of
$\mathcal{R}_{s}^{p\ast}(A)$ and of $\mathcal{R}_{s}^{a}$. Among other results
we obtain that if $A$ is a C*-algebra then $\mathcal{R}_{s}^{p\ast}(A)$ is the
largest GCR-ideal of $A$ (so the radical $\mathcal{R}_{s}^{p\ast}$\textit{
}extends the GCR-radical from C*-algebras to Banach algebras).

We construct topological radicals $\mathcal{R}_{\overrightarrow{\rho}}$,
$\mathcal{R}_{\overrightarrow{\rho_{j}}}$ and $\mathcal{R}%
_{\overrightarrow{\rho_{t}}}$ that have the properties that elements, compact
subsets and summable families in $\mathcal{R}_{\overrightarrow{\rho}}(A)$,
$\mathcal{R}_{\overrightarrow{\rho_{j}}}(A)$ and $\mathcal{R}%
_{\overrightarrow{\rho_{t}}}(A)$, respectively, are the continuity points for
spectral radius, joint spectral radius and tensor spectral radius,
respectively. It is shown that $\mathcal{R}_{\overrightarrow{\rho}}%
\geq\mathcal{R}_{s}^{p\ast}$. Thus the spectral radius, unlike the spectrum,
is continuous at all elements of $\mathcal{R}_{s}^{p\ast}(A)$. In particular,
$\rho$ is continuous on any GCR C*-algebra.

In Section 10 we apply some of the listed results to the problem of recovering
the spectral characteristics of an element or a family of elements of an
algebra $A$ from the corresponding information about its images in some
quotients $A/I$, where $I$ belong to a fixed family $\mathcal{F}$ of ideals in
$A$ with trivial intersection. Our main interest is in the joint spectral
radius $\rho(M)$ of a precompact set $M\subset A$ --- a characteristic that
attracts much interest not only in operator theory but in such branches of
mathematics as the topological dynamics and fractal theory. We show that
\begin{equation}
\rho(M)=\sup_{I\in\mathcal{F}}\rho(M/I) \label{supId}%
\end{equation}
if $\mathcal{F}$ is finite, and extend this equality to the case of infinite
family $\mathcal{F}$ for algebras that have non-zero compact elements. An
especially interesting case is $\mathcal{F}=\operatorname{Prim}(A)$. We relate
(\ref{supId}) to the equality $\rho(M)=r(M)$, where $r(M)$ is a spectral
characteristic introduced by Berger and Wang \cite{BW92}; in the algebras
where this equality holds for all precompact subsets (the \textit{Berger-Wang
algebras}) the analysis of many spectral problem becomes much easier.

We show that in the $\mathcal{R}_{s}^{p\ast}$-radical Berger-Wang algebras the
joint spectral radius is continuous and find some criteria for (\ref{supId})
to hold. Then we study the case of C*-algebras and show that each GCR-algebra
is a Berger-Wang algebra and therefore (if one takes into account that
GCR-algebras are $\mathcal{R}_{s}^{p\ast}$-radical) the joint spectral radius
is continuous on GCR-algebras.

It is natural that simultaneous consideration of various kinds of radicals --
the algebraic ones, the radicals on Banach algebras, normed algebras,
$Q$-algebras, C*-algebras and so on -- can be difficult for the first
acquaintance with the topic. However, apart from the reasons above, it gives
the possibility not to repeat similar arguments many times. Anyway for the
first reading it seems to be reasonable to restrict oneself to a concrete type
of radicals and consider only Banach or C*-algebras. We can say that we did
our best to simplify the text and, in particular, following the advice of
Laurence Sterne, generously lighted up the dark places by stars.

\bigskip

\textbf{Acknowledgment.}\textit{ The authors are grateful to Matej Bre\v{s}ar
and Peng Cao for useful discussions and information, and to Edward Kissin for
his encouraging interest in this work. They express special thanks to the
referee for numerous useful remarks and suggestions.}

\subsection{Preliminaries}

\subsubsection{Spaces}

Let $X,U$ be linear spaces, and let $Y$ and $Z$ be subspaces of $X$. If
$Z\subset Y$ then $\left(  X/Z\right)  /\left(  Y/Z\right)  \cong X/Y$. If
$(Y_{\alpha})_{\alpha\in\Lambda}$ is a family of subspaces of $X$ and
$Z\subset\cap_{\alpha\in\Lambda}Y_{\alpha}$ then
\begin{equation}
(\cap_{\alpha\in\Lambda}Y_{\alpha})/Z=\cap_{\alpha\in\Lambda}(Y_{\alpha}/Z).
\label{pr}%
\end{equation}
If $X$ is normed, let $\overline{Y}$, or more exactly $\overline{Y}^{\left(
X\right)  }$, denote the closure of $Y$ in $X$, and $\widehat{X}$ denote the
completion of $X$. If $Z\subset Y$ is a closed subspace of $Y$ then $Y/Z\cong
q\left(  Y\right)  $, where $q:X\longrightarrow X/\overline{Z}^{\left(
X\right)  }$ is the standard quotient map $x\longmapsto$ the coset of $x$.

Let $f:X\longrightarrow U$ be a linear map. If $M\subset U$ then
\[
f^{-1}\left(  M\right)  :=\left\{  x\in X:f\left(  x\right)  \in M\right\}  .
\]
Clearly $N\subset f^{-1}\left(  f\left(  N\right)  \right)  $ for every
$N\subset X$; $N=f^{-1}\left(  f\left(  N\right)  \right)  $ $\Leftrightarrow$
$N=f^{-1}\left(  M\right)  $ for some $M\subset U$; in particular this holds
if $f$ is invertible. If $X,U$ are normed, and $f$ is open and continuous,
then%
\begin{equation}
f^{-1}(\overline{M})=\overline{f^{-1}(M)}. \label{gen0}%
\end{equation}

If $X$ is a linear space then $L\left(  X\right)  $ is the algebra of all
linear operators on $X$, $\mathcal{F}\left(  X\right)  $ the ideal of finite
rank operators. If $X$ is normed then $\mathcal{B}\left(  X\right)  $ is the
algebra of all bounded operators, $\mathcal{K}\left(  X\right)  $ is the ideal
of compact operators.

\subsubsection{Algebras}

Let $A$ be an associative complex algebra; then $A^{1}$ equals $A$ if $A$ is
unital, and is the algebra obtained from $A$ by adjoining the identity element
otherwise. In what follows, an \textit{ideal} of an algebra means a two-sided
ideal. If $I$ is an ideal of $A$ then $q_{I}$ denotes the standard quotient
map $A\longrightarrow A/I$ by default; we also write $a/I$ instead of
$q_{I}\left(  a\right)  $ for any $a\in A$. Respectively, if $M=\left(
a_{\alpha}\right)  _{\Lambda}$ is a family in $A$ then $M/I$ denotes the
family $\left(  a_{\alpha}/I\right)  _{\Lambda}$ in $A/I$. Define operators
\textrm{L}$_{a}$, $\mathrm{R}_{a}$ and \textrm{W}$_{a}$ on $A$\textrm{ by }%
\[
\mathrm{L}_{a}x=ax\text{, }\mathrm{R}_{a}x=xa\text{ and }\mathrm{W}%
_{a}=\mathrm{L}_{a}\mathrm{R}_{a};
\]
again $\mathrm{L}_{M}$ is a family $\left(  \mathrm{L}_{a_{\alpha}}\right)
_{\Lambda}$ of operators. Such rules act for sets by default.

\subsubsection{Representations\label{algebras}}

A representation $\pi$ of an algebra $A$ on a linear space $X$ is called
\textit{strictly irreducible} if $\pi(A)\xi:=\{\pi(a)\xi:a\in A\}=X$, for
every $\xi\neq0$. Representations $\pi$ and $\tau$, acting on the spaces $X$
and $Y$, respectively, are \textit{equivalent }(write $\pi$ $\sim$ $\tau$) if
there is a linear bijective operator $T:X\rightarrow Y$ satisfying the
condition $T\pi(a)=\tau(a)T$ for all $a\in A$. The direct sum $\oplus
_{i=1}^{n}\pi_{i}$ of representations $\pi_{1},...,\pi_{n}$, acting on the
spaces $X_{1},...,X_{n}$, is the representation $\pi$ on $X=\oplus_{i=1}%
^{n}X_{i}$ defined by the formula
\[
\pi(a)(\oplus_{i=1}^{n}\xi_{i})=\oplus_{i=1}^{n}\pi_{i}(a)\xi_{i}.
\]
It is well known (see for example \cite[Chapter 17]{Lang}) that if strictly
irreducible representations $\pi_{1},...,\pi_{n}$ are pairwise nonequivalent
and $0\neq\eta_{i}\in X_{i}$, $1\leq i\leq n$, then the vector $\eta
=\oplus_{i=1}^{n}\eta_{i}$ is cyclic for the representation $\pi$: $\pi
(A)\eta=X$.

\begin{lemma}
\label{reprfin} Let $\{\pi_{i}:1\leq i\leq n\}$ be mutually inequivalent
strictly irreducible representations. Then $\sum_{i=1}^{n}\mathrm{rank}({\pi
}_{i}(a))\leq\dim aAa$ for any $a\in A$.
\end{lemma}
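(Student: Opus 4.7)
The plan is to exhibit a linear map from $aAa$ onto the direct sum of the ranges $\pi_i(a)X_i$, whose total dimension is $\sum_i \mathrm{rank}(\pi_i(a))$; the desired inequality then follows from the rank bound. The workhorse will be the cyclicity statement recalled in the paragraph preceding the lemma, applied not to an arbitrary choice of nonzero vectors, but to vectors sitting inside the ranges of the operators $\pi_i(a)$.

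First I would reduce to the case where each $r_i := \mathrm{rank}(\pi_i(a))$ is strictly positive: indices $i$ with $\pi_i(a)=0$ contribute $0$ to the left-hand side and may be dropped. Next, for each $i$ pick $\eta_i \in X_i$ with $\eta_i' := \pi_i(a)\eta_i \neq 0$; set $\eta = \oplus_i \eta_i$ and $\eta' = \pi(a)\eta = \oplus_i \eta_i'$, where $\pi = \oplus_i \pi_i$ on $X = \oplus_i X_i$. By the fact quoted just before the lemma, applied to the nonzero vectors $\eta_i'\in X_i$, the vector $\eta'$ is cyclic for $\pi$, so $\pi(A)\eta' = X$.

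Now define $T\colon aAa \to X$ by $T(c) = \pi(c)\eta$. This is well defined and linear, and for $c = aba$,
\[
T(aba) \;=\; \pi(a)\pi(b)\pi(a)\eta \;=\; \pi(a)\pi(b)\eta'.
\]
As $b$ ranges over $A$, $\pi(b)\eta'$ ranges over all of $X$ by the cyclicity established above; hence $T(aAa) = \pi(a)X = \bigoplus_{i=1}^n \pi_i(a)X_i$, a subspace of dimension $\sum_{i=1}^n r_i$. Therefore $\dim aAa \ge \dim T(aAa) = \sum_{i=1}^n \mathrm{rank}(\pi_i(a))$, which is the desired estimate.

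The only real subtlety — and what I would flag as the main step — is choosing $\eta$ inside $X$ so that the auxiliary cyclic vector $\pi(a)\eta$ lies in $\bigoplus_i \pi_i(a)X_i$ while each of its components is still nonzero; this is exactly what makes the image of $T$ fit snugly inside $\pi(a)X$ and attain the full dimension $\sum_i r_i$. Everything else is a routine application of the cited cyclic-vector fact together with the observation that $\pi(aba) = \pi(a)\pi(b)\pi(a)$ factors the map $T$ through $\pi(a)$.
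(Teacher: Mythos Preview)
Your proof is correct and follows essentially the same approach as the paper's: both pick $\xi_i$ (your $\eta_i$) with $\pi_i(a)\xi_i\neq 0$, use the cited cyclicity fact to conclude that $\pi(a)\xi$ is cyclic for $\pi=\oplus_i\pi_i$, and then observe that the linear map $c\mapsto \pi(c)\xi$ sends $aAa$ onto $\pi(a)X=\oplus_i\pi_i(a)X_i$. The only cosmetic differences are notation and that the paper also remarks one may assume $\dim aAa<\infty$, which is harmless since your argument gives the inequality directly in either case.
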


\begin{proof}
Fix $a\in A$. We may suppose that $\dim aAa <\infty$ and that $\pi_{i}%
(a)\neq0$ for all $i$.

Let $\pi=\oplus_{i=1}^{n}\pi_{i}$ and $X=\oplus_{i=1}^{n}X_{i}$. Clearly
\[
\dim\pi(aAa)\xi\leq\dim aAa
\]
for each $\xi\in X$. Let us choose vectors $\xi_{i}\in X_{i}$ such that
$\eta_{i}:=\pi_{i}(a)\xi_{i}\neq0$, $1\leq i\leq n$. Let $\xi=\oplus_{i=1}%
^{n}\xi_{i}$; then the vector $\eta=\pi(a)\xi=\oplus_{i=1}^{n}\pi_{i}%
(a)\xi_{i}$ is cyclic for $\pi(A)$. Therefore $\pi(aAa)\xi=\pi(a)\pi
(A)\eta=\pi(a)X=\oplus_{i=1}^{n}\pi_{i}(a)X_{i}$ whence
\[
\dim\pi(aAa)\xi=\sum\dim\pi_{i}(a_{i})X_{i}=\sum_{i=1}^{n}\mathrm{rank}%
(\pi_{i}(a)).
\]

\end{proof}

Let $\operatorname{Irr}\left(  A\right)  $ be the set of all strictly
irreducible representations of $A$, $\operatorname{Prim}\left(  A\right)  $
the set of all primitive ideals, i.e., the kernels of all $\pi\in
\operatorname{Irr}\left(  A\right)  $. For any subset $E$ of $A$, let
$\mathrm{h}(E;A)$ be the set of all $I\in\operatorname{Prim}\left(  A\right)
$ with $E\subset I$ and $\mathrm{kh}\left(  E;A\right)  =\cap_{I\in
\mathrm{h}\left(  E;A\right)  }I$.

If $\pi\in\operatorname{Irr}\left(  A\right)  $ then let $X_{\pi}$ be the
space on which $\pi$ acts (the representation space). It is well known that if
$A$ is not unital then
\begin{align}
\operatorname{Irr}\left(  A^{1}\right)   &  =\left\{  \pi_{\mathrm{triv}%
}:A^{1}\longrightarrow A^{1}/A\cong\mathbb{C}\right\}  \cup\left\{  \pi
^{1}:\pi\in\operatorname{Irr}\left(  A\right)  \right\}  ,\label{p2}\\
\operatorname{Prim}\left(  A^{1}\right)   &  =\{A\}\cup\operatorname{Prim}%
\left(  A\right)  \text{ and }\operatorname{rad}\left(  A^{1}\right)
=\operatorname{rad}\left(  A\right)  \label{p1}%
\end{align}
where every $\pi\in\operatorname{Irr}\left(  A\right)  $ extends to
$\pi^{1}\in\operatorname{Irr}\left(  A^{1}\right)  $ by setting $\pi
^{1}\left(  1_{A^{1}}\right)  =1_{X_{\pi}}$.

If $J$ is a proper ideal of $A$ then
\begin{align}
\operatorname{Irr}\left(  J\right)   &  =\left\{  \pi|_{J}:\pi\in
\operatorname{Irr}\left(  A\right)  ,\pi\left(  J\right)  \neq0\right\}
,\label{p3}\\
\operatorname{Prim}\left(  J\right)   &  =\left\{  I\cap J:I\in
\operatorname{Prim}\left(  A\right)  ,I\cap J\neq J\right\}  . \label{p4}%
\end{align}
Every $\pi\in\operatorname{Irr}\left(  J\right)  $ can be uniquely extended to
a representation $\widetilde{\pi}\in\operatorname{Irr}\left(  A\right)  $. To
construct the extension it suffices to choose a non-zero vector $\zeta\in
X_{\pi}$ and, for each $a\in A$, to set $\widetilde{\pi}\left(  a\right)
\xi=\pi\left(  ab\right)  \zeta$ for every $b\in J$ such that $\xi=\pi\left(
b\right)  \zeta$.

Every $\pi\in\operatorname{Irr}\left(  A/J\right)  $ induces $\pi_{A}%
\in\operatorname{Irr}\left(  A\right)  $ by setting $\pi_{A}\left(  a\right)
=\pi\left(  a/J\right)  $ for every $a\in A$. Every $\pi\in\operatorname{Irr}%
\left(  A\right)  $ with $\pi\left(  J\right)  =0$ induces $\pi_{q_{J}}%
\in\operatorname{Irr}\left(  A/J\right)  $ by setting $\pi_{q_{J}}\left(
q_{J}\left(  a\right)  \right)  =\pi\left(  a\right)  $ for every $a\in A$.
Hence
\begin{equation}
\operatorname{Prim}\left(  A/J\right)  =\left\{  I/J:I\in\operatorname{Prim}%
\left(  A\right)  ,J\subset I\right\}  . \label{p5}%
\end{equation}
Note that $A/J$ can be unital for an non-unital algebra $A$.

If $f:A\longrightarrow B$ is a homomorphism onto then $\pi\in
\operatorname{Irr}\left(  B\right)  $ induces $\pi^{f}\in\operatorname{Irr}%
\left(  A\right)  $ by setting $\pi^{f}\left(  a\right)  =\pi\left(  f\left(
a\right)  \right)  $ for every $a\in A$. So
\begin{equation}
\left\{  f^{-1}\left(  I\right)  :I\in\operatorname{Prim}\left(  B\right)
\right\}  \subset\operatorname{Prim}\left(  A\right)  . \label{p6}%
\end{equation}
Let $A$ be a normed algebra, let $\operatorname{Irr}_{\mathrm{n}}\left(
A\right)  $ (respectively, $\operatorname{Irr}_{\mathrm{b}}\left(  A\right)
$) be the set of all continuous strictly irreducible representations of $A$ by
bounded operators on a normed (respectively, Banach) space, and let
$\operatorname{Prim}_{\mathrm{n}}\left(  A\right)  =\left\{  \ker\pi:\pi
\in\operatorname{Irr}_{\mathrm{n}}\left(  A\right)  \right\}  $ and
$\operatorname{Prim}_{\mathrm{b}}\left(  A\right)  =\left\{  \ker\pi:\pi
\in\operatorname{Irr}_{\mathrm{b}}\left(  A\right)  \right\}  $.

It should be noted that for a C*-algebra, a primitive ideal is defined as the
kernel of an irreducible *-representation on a Hilbert space. The fact that
this definition is equivalent to the general one, is proved in \cite[Corollary
2.9.6]{Dixmier}.

\subsubsection{ $Q$-algebras}

A normed algebra $A$ is called a $Q$\textit{-algebra} if the set of all
invertible elements of $A^{1}$ is open. A normed algebra $A$ is a $Q$-algebra
$\Leftrightarrow$ $\sum_{1}^{\infty}a^{n}$ converges for any $a\in A$ with
$\left\Vert a\right\Vert <1$ $\Leftrightarrow$ every strictly irreducible
representation of $A$ is equivalent to a continuous representation by bounded
operators on a normed space \cite[Theorem 2.1]{TR1}. A normed algebra $A$ is
called a $Q_{\mathrm{b}}$\textit{-algebra }if every strictly irreducible
representation of $A$ is equivalent to a continuous representation by bounded
operators on a Banach space.

Let $A$ be an algebra and $a\in A$. The \textit{spectrum} $\sigma\left(
a\right)  $, or more exactly $\sigma_{A}\left(  a\right)  $, is the set of all
$\lambda\in\mathbb{C}$ such that $a-\lambda$ is not invertible in $A^{1}$;
this is related to the definition in \cite{BD73}. So $\sigma_{A}\left(
a\right)  $ and $\sigma_{A^{1}}\left(  a\right)  $ determine the same set, but
sometimes we prefer to write $\sigma_{A^{1}}\left(  a\right)  $ for
exactness.\textit{ }

Let $B$ is a subalgebra of $A$; it is called a \textit{spectral subalgebra} of
$A$ if $\sigma_{B}\left(  a\right)  \backslash\left\{  0\right\}  =$
$\sigma_{A}\left(  a\right)  \backslash\left\{  0\right\}  $ for every $a\in
B$. Every ideal of $A$ is a spectral subalgebra of $A$; a normed algebra $A$
is a $Q$-algebra $\Leftrightarrow$ $A$ is a spectral subalgebra of
$\widehat{A}$, see \cite[Theorem 4.2.10]{P94} and \cite[Lemma 20.9]{KS97}.

\subsubsection{The joint spectrum\label{spectrum}}

For the joint spectral theory in Banach algebras we refer to \cite{H88, M07}.
If $M=\left(  a_{\alpha}\right)  _{\Lambda}\subset A$ is a family in $A$ then
the \textit{left spectrum} $\sigma^{l}\left(  M\right)  $ is the set of all
families $\lambda=\left(  \lambda_{\alpha}\right)  _{\Lambda}\subset
\mathbb{C}$ such that the family $M-\lambda:=\left(  a_{\alpha}-\lambda
_{\alpha}\right)  _{\Lambda}$ generates the proper left ideal of $A^{1}$. The
\textit{right spectrum} $\sigma^{r}\left(  M\right)  $ is defined similarly by
replacing `left' by `right'; $\sigma\left(  M\right)  =\sigma^{l}\left(
M\right)  \cup\sigma^{r}\left(  M\right)  $ is called the \textit{Harte
spectrum, }or simply the \textit{spectrum}. We write $\sigma^{l}\left(
a\right)  $ and $\sigma^{r}\left(  a\right)  $ if $M=\left\{  a\right\}  $.
Let $A$ be unital. A subalgebra $B$ of $A$ is called \textit{unital} if it
contains the identity element of $A$, and \textit{inverse-closed }if $B$
contains $x^{-1}$ for every $x\in B$ which is invertible in $A$. If $B$ is
inverse-closed then $\sigma_{B}\left(  a\right)  =\sigma_{A}\left(  a\right)
$ for every $a\in B$.

The following folklore lemma determines the operational possibilities of the
joint spectra.

\begin{lemma}
\label{sp0}Let $A$ be an algebra and $M=\left(  a_{\alpha}\right)  _{\Lambda}$
be a family in $A$. Then

\begin{enumerate}
\item $\lambda\notin\sigma_{A}^{l}\left(  M\right)  $ if and only if
$\lambda|_{N}\notin\sigma_{A}^{l}\left(  N\right)  $ for some finite subfamily
$N\subset M$;

\item If $B$ is a unital subalgebra of $A^{1}$ and $M\subset B$ then
$\sigma_{A}^{l}\left(  M\right)  \subset\sigma_{B}^{l}\left(  M\right)  $;

\item If $f:A\longrightarrow C$ is a surjective homomorphism then $\sigma
_{C}^{l}\left(  f\left(  M\right)  \right)  \subset\sigma_{A}^{l}\left(
M\right)  $;

\item $\sigma^{l}\left(  M\right)  =\sigma^{l}\left(  M/\operatorname{rad}%
\left(  A\right)  \right)  $.
\end{enumerate}
\end{lemma}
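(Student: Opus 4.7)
The plan is to unwind the definition: for a family $N = (b_\beta)_\Gamma$ in $A$, ``$\mu \notin \sigma^l_A(N)$'' is equivalent to $1_{A^1} \in L(N-\mu)$, where $L(N-\mu) := \sum_\beta A^1(b_\beta - \mu_\beta)$ is the left ideal of $A^1$ generated by $N - \mu$. Each of the four statements is then a routine translation of this description.

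For (1), if $1_{A^1} = \sum_i c_i(a_{\alpha_i} - \lambda_{\alpha_i})$ with $c_i \in A^1$ and $\alpha_i \in \Lambda$, then the indices $\alpha_1, \ldots, \alpha_n$ form a finite subfamily $N \subset M$ with $\lambda|_N \notin \sigma^l_A(N)$; conversely, $L(N - \lambda|_N) \subset L(M - \lambda)$. For (2), since $B$ is a unital subalgebra of $A^1$, we have $B^1 = B$, so if $\lambda \notin \sigma^l_B(M)$ then $1 \in \sum_\alpha B(a_\alpha - \lambda_\alpha) \subset L(M - \lambda)$, whence $\lambda \notin \sigma^l_A(M)$.

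For (3), I would first extend $f: A \to C$ to a unital homomorphism $f^1: A^1 \to C^1$ in the usual way (if $A$ is not unital, send the adjoined identity to $1_{C^1}$; if $A$ is unital, surjectivity forces $C$ to be unital with $f(1_A) = 1_C$). Then if $1_{A^1} = \sum_i c_i(a_{\alpha_i} - \lambda_{\alpha_i})$, applying $f^1$ gives $1_{C^1} = \sum_i f^1(c_i)(f(a_{\alpha_i}) - \lambda_{\alpha_i})$, so $\lambda \notin \sigma^l_C(f(M))$.

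For (4), the inclusion $\sigma^l(M/\operatorname{rad}(A)) \subset \sigma^l(M)$ is immediate from (3) applied to $q_{\operatorname{rad}(A)}$. For the reverse inclusion, suppose $\lambda \notin \sigma^l(M/\operatorname{rad}(A))$; lifting a representation of $1$ from $(A/\operatorname{rad}(A))^1$ back to $A^1$ yields $r \in \operatorname{rad}(A)$ such that $1_{A^1} - r \in L(M - \lambda)$. The key step is to invoke (\ref{p1}), which says $\operatorname{rad}(A^1) = \operatorname{rad}(A)$ (and the unital case is trivial), so $r$ lies in the Jacobson radical of $A^1$ and $1 - r$ is invertible in $A^1$. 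Then $1 = (1-r)^{-1}(1-r) \in L(M-\lambda)$, giving $\lambda \notin \sigma^l_A(M)$. The only mildly subtle point in the whole lemma is this last lifting argument, where one must be careful that the radical of the unitization agrees with the radical of $A$; everything else is a direct unpacking of definitions.
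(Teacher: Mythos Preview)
Your proposal is correct and takes essentially the same approach as the paper: both proofs simply unwind the definition of $\sigma^l$ in terms of the left ideal generated by $M-\lambda$ in $A^1$. The paper is terser --- it declares (1), (2), and (4) ``standard and trivial'' and for (3) only sketches the one case where $C$ is unital and $A$ is not --- so your write-up is more explicit but not methodologically different; in particular your extension $f^1:A^1\to C^1$ in (3) is exactly the paper's $f':A^1\to C$ in the nontrivial case, and your lifting argument for (4) is the standard one the paper has in mind.
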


\begin{proof}
$\left(  1\right)  $, $\left(  2\right)  $ and $\left(  4\right)  $ are
standard and trivial.

$\left(  3\right)  $ Consider only the case when $C$ is unital and $A$ is not.
One can extend $f$ up to $f^{\prime}:A^{1}\longrightarrow C$ by setting
$f^{\prime}\left(  1_{A}\right)  =1_{C}$, whence $\sigma_{C}^{l}\left(
f^{\prime}\left(  M\right)  \right)  \subset\sigma_{A}^{l}\left(  M\right)  $,
but $f^{\prime}\left(  M\right)  =f\left(  M\right)  $.
\end{proof}

If $N=\left(  T_{\beta}\right)  $ is a family in $L\left(  X\right)  $ then
$\sigma_{X}^{p}\left(  N\right)  $, or simply $\sigma^{p}\left(  N\right)  $,
is a \textit{point spectrum} of $N$, i.e., the set of all $\lambda=\left(
\lambda_{\beta}\right)  \subset\mathbb{C}$ such that there is a non-zero
vector $\zeta\in X$ and $T_{\beta}\zeta=\lambda_{\beta}\zeta$; $\lambda$ is
called an \textit{eigenvalue} of $N$ corresponding to \textit{eigenvector}
$\zeta$. If $X$ is normed then it is useful to use an \textit{approximate
point spectrum} $\sigma_{X}^{a}\left(  N\right)  $, or simply $\sigma
^{a}\left(  N\right)  $; it is the set of all $\lambda=\left(  \lambda_{\beta
}\right)  \subset\mathbb{C}$ such that there is a net $\zeta=\left(
\zeta_{\gamma}\right)  \subset X$ with $\lim_{\gamma}\left\Vert \zeta_{\gamma
}\right\Vert >0$ and $\lim_{\gamma}\left\Vert \left(  T_{\beta}-\lambda
_{\beta}\right)  \zeta_{\gamma}\right\Vert =0$ for every $\beta$; $\lambda$ is
called an \textit{approximate} \textit{eigenvalue} of $N$ corresponding to
\textit{approximate} \textit{eigenvector} $\zeta$. If $A$ is a normed algebra
and $M$ is a family in $A$ then set $\sigma_{A}^{a}\left(  M\right)
=\sigma_{A}^{a}\left(  \mathrm{L}_{M}\right)  $.

If $T$ $\in L\left(  X\right)  $ then put $\sigma_{X}\left(  T\right)
:=\sigma_{L\left(  X\right)  }\left(  T\right)  $. If $T$ is one-to-one and
onto then $T$ is invertible in $L\left(  X\right)  $ whence $\sigma_{X}\left(
T\right)  =\sigma_{X}^{p}\left(  T\right)  \cup\sigma_{L\left(  X\right)
}^{r}\left(  T\right)  $. If $X$ is a Banach space, $T$ is bounded and the
kernel and the image of $T$ satisfy the mentioned conditions then $T$ is
invertible in $\mathcal{B}\left(  X\right)  $ by Banach's theorem, whence
\begin{equation}
\sigma_{\mathcal{B}\left(  X\right)  }\left(  T\right)  =\sigma_{X}^{p}\left(
T\right)  \cup\sigma_{\mathcal{B}\left(  X\right)  }^{r}\left(  T\right)
=\sigma_{X}^{p}\left(  T\right)  \cup\sigma_{L\left(  X\right)  }^{r}\left(
T\right)  =\sigma_{X}\left(  T\right)  \label{sf3}%
\end{equation}

\begin{theorem}
\label{sp1}Let $A$ be an algebra and $M$ be a family in $A$. Then

\begin{enumerate}
\item $\sigma_{A}^{l}\left(  M\right)  =\cup_{\pi\in\operatorname{Irr}\left(
A^{1}\right)  }\sigma_{X_{\pi}}^{p}\left(  \pi\left(  M\right)  \right)  $;

\item If $\pi$ is a representation of $A^{1}$ by bounded operators on a normed
space $X$ then $\sigma_{X}^{a}\left(  \pi\left(  M\right)  \right)
\subset\sigma_{A}^{l}\left(  M\right)  ;$

\item If $A$ is a $Q$-algebra then $\sigma_{A}^{l}\left(  M\right)  =\cup
_{\pi\in\operatorname{Irr}_{\mathrm{n}}\left(  A^{1}\right)  }\sigma_{X_{\pi}%
}^{a}\left(  \pi\left(  M\right)  \right)  $.
\end{enumerate}
\end{theorem}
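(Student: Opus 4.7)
The plan is to handle the three parts in sequence, since (2) and (3) build on (1) by adding a topological layer. The core of (1) is just the standard correspondence between maximal left ideals of the unital algebra $A^{1}$ and strictly irreducible $A^{1}$-modules, which I will invoke explicitly.

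For (1), I would begin by observing that $\lambda\in\sigma_{A}^{l}(M)$ means exactly that the left ideal $J=\sum_{\alpha}A^{1}(a_{\alpha}-\lambda_{\alpha})$ is proper in $A^{1}$. The inclusion $\supset$ is then immediate: if $\pi\in\operatorname{Irr}(A^{1})$ and $0\neq\zeta\in X_{\pi}$ satisfies $\pi(a_{\alpha})\zeta=\lambda_{\alpha}\zeta$ for every $\alpha$, then $\pi(J)\zeta=0$, so $1_{A^{1}}\notin J$. For the reverse inclusion, I would use Zorn's lemma to enlarge $J$ to a maximal left ideal $L\subset A^{1}$; since $A^{1}$ is unital, $L$ is automatically modular, and the left regular action of $A^{1}$ on $X:=A^{1}/L$ defines a strictly irreducible representation $\pi$. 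The vector $\zeta=1_{A^{1}}+L\neq0$ satisfies $\pi(a_{\alpha}-\lambda_{\alpha})\zeta=(a_{\alpha}-\lambda_{\alpha})+L=0$, so $\lambda\in\sigma_{X_{\pi}}^{p}(\pi(M))$.

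For (2), I would argue by contradiction: if $\lambda\notin\sigma_{A}^{l}(M)$, then $1_{A^{1}}=\sum_{i=1}^{n}b_{i}(a_{\alpha_{i}}-\lambda_{\alpha_{i}})$ for some finite collection $b_{i}\in A^{1}$ and indices $\alpha_{i}$. Applying $\pi$ and evaluating on an approximate eigenvector net $(\zeta_{\gamma})$ gives
\[
\|\zeta_{\gamma}\|\leq\sum_{i=1}^{n}\|\pi(b_{i})\|\cdot\|(\pi(a_{\alpha_{i}})-\lambda_{\alpha_{i}})\zeta_{\gamma}\|\longrightarrow 0,
\]
since each $\pi(b_{i})$ is bounded and each factor on the right tends to $0$ by assumption. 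This contradicts $\lim_{\gamma}\|\zeta_{\gamma}\|>0$.

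For (3), the inclusion $\supset$ follows at once from (2) applied to $\pi\in\operatorname{Irr}_{\mathrm{n}}(A^{1})$. For $\subset$, I would take $\lambda\in\sigma_{A}^{l}(M)$ and apply (1) to obtain some $\pi\in\operatorname{Irr}(A^{1})$ with a nonzero eigenvector $\zeta$, then replace $\pi$ by an equivalent continuous representation on a normed space. Here I would invoke the decomposition (\ref{p2}): either $\pi=\pi_{\mathrm{triv}}$, which is already a continuous one-dimensional representation, or $\pi=\tau^{1}$ for some $\tau\in\operatorname{Irr}(A)$. In the latter case the $Q$-algebra hypothesis on $A$ yields a continuous representation $\tau'$ of $A$ on a normed space $X'$ equivalent to $\tau$, and then $(\tau')^{1}$, obtained by sending $1_{A^{1}}$ to $1_{X'}$, is a continuous strictly irreducible representation of $A^{1}$ equivalent to $\pi$. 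Transporting $\zeta$ through the intertwining bijection produces a nonzero eigenvector $\zeta'\in X'$, so $\lambda\in\sigma_{X'}^{p}((\tau')^{1}(M))$, and taking a constant net $\zeta_{\gamma}=\zeta'$ shows that this is automatically contained in $\sigma_{X'}^{a}((\tau')^{1}(M))$. The only nontrivial bookkeeping—and the place where I expect the main obstacle—is the passage from $A$ to $A^{1}$ in (3): one must verify that the extension $(\tau')^{1}$ is continuous with respect to the standard norm on $A^{1}$ so that $(\tau')^{1}\in\operatorname{Irr}_{\mathrm{n}}(A^{1})$, which is a straightforward operator norm estimate but is the only point where the $Q$-algebra hypothesis is genuinely used.
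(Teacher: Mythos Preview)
Your argument is correct and matches the paper's proof in (1) and (2) almost verbatim. For (3) the paper simply writes ``follows from (1) and (2)'', implicitly using that if $A$ is a $Q$-algebra then so is $A^{1}$ (immediate from the definition, since $(A^{1})^{1}=A^{1}$), so every $\pi\in\operatorname{Irr}(A^{1})$ is directly equivalent to some member of $\operatorname{Irr}_{\mathrm{n}}(A^{1})$; your detour through the decomposition (\ref{p2}) and the extension $(\tau')^{1}$ is correct but unnecessary, and the ``main obstacle'' you anticipate dissolves once you note this.
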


\begin{proof}
$\left(  1\right)  $ For $\lambda\in\sigma_{A}^{l}\left(  M\right)  $, there
is a maximal left ideal $J$ of $A^{1}$ such that $M-\lambda=\left(  a_{\alpha
}-\lambda_{\alpha}\right)  \subset J\neq A^{1}$. The representation $\pi$ of
$A^{1}$ defined by $\pi\left(  b\right)  =\mathrm{L}_{b}|_{X}$ is strictly
irreducible where $X=A^{1}/J$. For $\xi=1/J\in X$ we obtain that $\pi\left(
a_{\alpha}\right)  \xi=\lambda_{\alpha}\xi$ for every $\alpha$. Therefore
$\lambda\in\sigma_{X}^{p}\left(  \pi\left(  M\right)  \right)  $. We proved
the inclusion $\subset$.

Conversely, let $\lambda\in\sigma_{X_{\pi}}^{p}\left(  \pi\left(  M\right)
\right)  $ for some $\pi\in\operatorname{Irr}\left(  A^{1}\right)  $. Then
$\pi\left(  a_{\alpha}-\lambda_{\alpha}\right)  \xi=0$ for some non-zero
vector $\xi\in X_{\pi}$ and each $\alpha$. But $\left\{  x\in A^{1}:\pi\left(
x\right)  \xi=0\right\}  $ is a proper left ideal of $A^{1}$. Therefore
$\lambda\in\sigma_{A}^{l}\left(  M\right)  $.

$\left(  2\right)  $ Let $0\in\sigma_{X_{\pi}}^{a}\left(  \pi\left(  M\right)
\right)  \backslash\sigma_{A}^{l}\left(  M\right)  $. By Lemma \ref{sp0},
there is a finite subfamily $N=\left(  a_{\alpha}\right)  _{\Lambda^{\prime}%
}\subset M$ with $0\notin\sigma_{A}^{l}\left(  N\right)  $. So there are
$b_{\alpha}\in A^{1}$ for $\alpha\in\Lambda^{\prime}$ such that $\sum
_{\Lambda^{\prime}}b_{\alpha}a_{\alpha}=1$. If $\zeta=\left(  \zeta_{\gamma
}\right)  \subset X$ is an approximate eigenvector related to $0$ then
\[
0<\lim_{\gamma}\left\Vert \zeta_{\gamma}\right\Vert \leq\lim_{\gamma}%
\sum_{\alpha\in\Lambda^{\prime}}\left\Vert \pi\left(  b_{\alpha}a_{\alpha
}\right)  \zeta_{\gamma}\right\Vert \leq\sum_{\alpha\in\Lambda^{\prime}%
}\left\Vert \pi\left(  b_{\alpha}\right)  \right\Vert \lim_{\gamma}\left\Vert
\pi\left(  a_{\alpha}\right)  \zeta_{\gamma}\right\Vert =0,
\]
a contradiction. Therefore $\sigma_{X_{\pi}}^{a}\left(  \pi\left(  M\right)
\right)  \subset\sigma_{A}^{l}\left(  M\right)  $.

$\left(  3\right)  $ follows from $\left(  1\right)  $ and $\left(  2\right)
$.
\end{proof}

One can obtain the related statements for $\sigma_{A}^{r}\left(  M\right)  $
if pass to the opposite algebra.

\subsubsection{Spectrum and primitive ideals}

If an algebra $A$ is normed then $\sigma\left(  a\right)  $ is not empty by
the Gelfand-Mazur theorem; if $A$ is a $Q$-algebra then $\sigma\left(
a\right)  $ is a compact subset of $\mathbb{C}$.

\begin{theorem}
\label{sp11}Let $A$ be an algebra and $a\in A$. Then

\begin{enumerate}
\item $\sigma_{A}\left(  a\right)  \backslash\sigma_{A}^{l}\left(  a\right)
\subset\cup_{\pi\in\operatorname{Irr}\left(  A^{1}\right)  }\sigma_{L\left(
X_{\pi}\right)  }^{r}\left(  \pi\left(  a\right)  \right)  $;

\item The following chain of equalities holds:%
\begin{align*}
\sigma_{A}\left(  a\right)   &  =\cup_{I\in\operatorname{Prim}\left(
A^{1}\right)  }\sigma_{A^{1}/I}\left(  a/I\right)  =\cup_{\pi\in
\operatorname{Irr}\left(  A^{1}\right)  }\sigma_{\pi\left(  A^{1}\right)
}\left(  \pi\left(  a\right)  \right)  =\cup_{\pi\in\operatorname{Irr}\left(
A^{1}\right)  }\sigma_{X_{\pi}}\left(  \pi\left(  a\right)  \right) \\
&  =\cup_{\pi\in\operatorname{Irr}\left(  A^{1}\right)  }\left(
\sigma_{X_{\pi}}^{p}\left(  \pi\left(  a\right)  \right)  \cup\sigma_{X_{\pi}%
}^{r}\left(  \pi\left(  a\right)  \right)  \right)  ;
\end{align*}

\item If $A$ is a $Q$-algebra then
\[
\sigma_{A}\left(  a\right)  =\cup_{\pi\in\operatorname{Irr}_{\mathrm{n}%
}\left(  A^{1}\right)  }\sigma_{\overline{\pi\left(  A\right)  }}\left(
\pi\left(  a\right)  \right)  =\cup_{\pi\in\operatorname{Irr}_{\mathrm{n}%
}\left(  A^{1}\right)  }\sigma_{\mathcal{B}\left(  X_{\pi}\right)  }\left(
\pi\left(  a\right)  \right)  ;
\]

\item If $A$ is a $Q_{\mathrm{b}}$-algebra then%
\[
\sigma_{A}\left(  a\right)  =\cup_{\pi\in\operatorname{Irr}_{\mathrm{b}%
}\left(  A^{1}\right)  }\left(  \sigma_{X_{\pi}}^{p}\left(  \pi\left(
a\right)  \right)  \cup\sigma_{\mathcal{B}\left(  X_{\pi}\right)  }^{r}\left(
\pi\left(  a\right)  \right)  \right)  .
\]

\end{enumerate}
\end{theorem}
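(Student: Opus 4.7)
My plan is to prove (1) first by a short idempotent construction in $A^1$, then use (1) together with Theorem \ref{sp1}(1) to pin down the chain of equalities in (2), and finally deduce (3) and (4) from (2) by invoking the $Q$-algebra material of the preliminaries.

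For (1), fix $\lambda\in\sigma_A(a)\setminus\sigma_A^l(a)$. Then $a-\lambda$ has a left inverse $b\in A^1$ satisfying $b(a-\lambda)=1$ but no right inverse. Set $e:=1-(a-\lambda)b$. Using $b(a-\lambda)=1$, direct calculation gives $e^2=e$ and $e(a-\lambda)=0$, and $e\ne 0$ because $e=0$ would make $b$ a right inverse. A nonzero idempotent is never quasi-regular, hence $e\notin\operatorname{rad}(A^1)=\bigcap_{\pi\in\operatorname{Irr}(A^1)}\ker\pi$, and some $\pi\in\operatorname{Irr}(A^1)$ satisfies $\pi(e)\ne 0$. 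The identity $\pi(e)(\pi(a)-\lambda)=\pi(e(a-\lambda))=0$ then forces the range of $\pi(a)-\lambda$ to lie in $\ker\pi(e)\subsetneq X_\pi$, so $\pi(a)-\lambda$ is not surjective and $\lambda\in\sigma_{L(X_\pi)}^r(\pi(a))$.

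For (2), the chain is sandwiched by inclusions running both ways. The easy direction: by the remark leading up to equation (\ref{sf3}), $\sigma_{X_\pi}(\pi(a))=\sigma_{X_\pi}^p(\pi(a))\cup\sigma_{L(X_\pi)}^r(\pi(a))$; inclusion of the unital subalgebra $\pi(A^1)\subseteq L(X_\pi)$ gives $\sigma_{\pi(A^1)}(\pi(a))\supseteq\sigma_{X_\pi}(\pi(a))$; the isomorphism $A^1/\ker\pi\cong\pi(A^1)$ matches the second and third unions; and Lemma \ref{sp0}(3) gives $\sigma_{A^1/I}(a/I)\subseteq\sigma_A(a)$. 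The reverse direction combines Theorem \ref{sp1}(1), $\sigma_A^l(a)=\bigcup_\pi\sigma_{X_\pi}^p(\pi(a))$, with part (1) to yield $\sigma_A(a)=\sigma_A^l(a)\cup(\sigma_A(a)\setminus\sigma_A^l(a))\subseteq\bigcup_\pi[\sigma_{X_\pi}^p(\pi(a))\cup\sigma_{L(X_\pi)}^r(\pi(a))]$. The two together collapse all five unions into equalities.

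For (3) and (4), the preliminary theorem on $Q$-algebras lets me restrict the unions from (2) to $\operatorname{Irr}_n(A^1)$ (resp.\ $\operatorname{Irr}_b(A^1)$), since every strictly irreducible representation of a $Q$-algebra is equivalent to a continuous representation on a normed (resp.\ Banach) space and equivalent representations share all the spectra considered. Statement (4) then follows immediately from equation (\ref{sf3}), which yields $\sigma_{L(X_\pi)}^r(\pi(a))=\sigma_{\mathcal{B}(X_\pi)}^r(\pi(a))$ for bounded $\pi(a)$ on a Banach space. For (3) I additionally need $\sigma_{\pi(A^1)}(\pi(a))=\sigma_{\overline{\pi(A)}}(\pi(a))=\sigma_{\mathcal{B}(X_\pi)}(\pi(a))$: the first equality holds because $\pi(A^1)$, as a continuous quotient of the $Q$-algebra $A^1$, is itself a $Q$-algebra and hence a spectral subalgebra of its closure in $\mathcal{B}(X_\pi)$; the second uses strict irreducibility of $\pi$ to show that any element of $\overline{\pi(A)}$ invertible in $\mathcal{B}(X_\pi)$ is already invertible in $\overline{\pi(A)}$ (after possibly adjoining the identity). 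This last pair of identifications is the main obstacle, since it requires careful tracking of the three algebras $\pi(A^1)\subseteq\overline{\pi(A)}\subseteq\mathcal{B}(X_\pi)$ and the proper adjoining of identities; the rest assembles cleanly from the idempotent trick of (1), Lemma \ref{sp0}(3), Theorem \ref{sp1}(1), and equation (\ref{sf3}).
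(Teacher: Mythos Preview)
Your arguments for (1), (2), and (4) are correct and essentially follow the paper's route: the idempotent $e=1-(a-\lambda)b$ is precisely the element the paper shows lies outside $\operatorname{rad}(A^1)$, and the sandwich for (2) via Theorem~\ref{sp1}(1) and part (1) is exactly what the paper does. (A small overstatement: equation~(\ref{sf3}) gives $\sigma_X^p\cup\sigma_{L(X)}^r=\sigma_X^p\cup\sigma_{\mathcal{B}(X)}^r$, not $\sigma_{L(X)}^r=\sigma_{\mathcal{B}(X)}^r$ on its own, but since (4) only needs the union with $\sigma^p$, this is harmless.)

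The real divergence is in (3), where your plan carries a genuine gap. You try to establish the equalities $\sigma_{\pi(A^1)}(\pi(a))=\sigma_{\overline{\pi(A)}}(\pi(a))=\sigma_{\mathcal{B}(X_\pi)}(\pi(a))$ for each individual $\pi$. Both steps are problematic. For the first, being a spectral subalgebra only yields equality of spectra \emph{away from} $0$; moreover, a $Q$-algebra is a spectral subalgebra of its \emph{completion}, but the closure of $\pi(A^1)$ in the operator norm of $\mathcal{B}(X_\pi)$ need not coincide with that completion, since $\pi$ is merely continuous and the two norms on $\pi(A^1)$ may be inequivalent. For the second, strict irreducibility (density in the Jacobson sense) does not by itself force $\overline{\pi(A)}$ to be inverse-closed in $\mathcal{B}(X_\pi)$, and no such statement is available in the preliminaries.

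The paper avoids all of this by a sandwich argument you already have the ingredients for. From the inclusions $\pi(A)\subset\overline{\pi(A)}\subset\mathcal{B}(X_\pi)\subset L(X_\pi)$ one gets, for each $\pi$,
\[
\sigma_{L(X_\pi)}(\pi(a))\subset\sigma_{\mathcal{B}(X_\pi)}(\pi(a))\subset\sigma_{\overline{\pi(A)}}(\pi(a))\subset\sigma_{\pi(A)}(\pi(a)).
\]
By (2), the union over $\pi$ of the leftmost term and the union of the rightmost term are \emph{both} equal to $\sigma_A(a)$; hence so are the two intermediate unions, which is exactly (3). No $\pi$-by-$\pi$ equality is needed, and the ``main obstacle'' you flag simply evaporates.
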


\begin{proof}
$\left(  1\right)  $ Let $\lambda\in$ $\sigma_{A}\left(  a\right)
\backslash\sigma_{A}^{l}\left(  a\right)  $; then there is $b\in A^{1}$ such
that $b\left(  a-\lambda\right)  =1$. Assume, to the contrary, that
$\lambda\notin\sigma_{L\left(  X_{\pi}\right)  }^{r}\left(  \pi\left(
a\right)  \right)  $ for every $\pi\in\operatorname{Irr}\left(  A\right)  $:
there is an operator $T_{\pi}$ on $X_{\pi}$ such that $\left(  \pi\left(
a\right)  -\lambda\right)  T_{\pi}=1$. As $\pi\left(  b\right)  \left(
\pi\left(  a\right)  -\lambda\right)  =1$ then $T_{\pi}=\pi\left(  b\right)  $
for every $\pi\in\operatorname{Irr}\left(  A^{1}\right)  $. Hence $\left(
a-\lambda\right)  b-1\in\cap_{\pi\in\operatorname{Irr}\left(  A^{1}\right)
}\ker\pi=\operatorname{rad}\left(  A^{1}\right)  $ and $\left(  a-\lambda
\right)  b=1+c$ for some $c\in\operatorname{rad}\left(  A^{1}\right)  $. So
$\left(  a-\lambda\right)  b\left(  1+c\right)  ^{-1}=1$ and $\lambda
\notin\sigma_{A}\left(  a\right)  $, a contradiction.

$\left(  2\right)  $ By Lemma \ref{sp0}$\left(  2\text{-}3\right)  $,
\begin{align*}
\sigma_{A}\left(  a\right)   &  \supset\sigma_{A^{1}/I}\left(  a/I\right)
=\sigma_{\pi\left(  A^{1}\right)  }\left(  \pi\left(  a\right)  \right)
\supset\sigma_{L\left(  X_{\pi}\right)  }\left(  \pi\left(  a\right)  \right)
\\
&  =\sigma_{X_{\pi}}^{p}\left(  \pi\left(  a\right)  \right)  \cup
\sigma_{L\left(  X_{\pi}\right)  }^{r}\left(  \pi\left(  a\right)  \right)
\end{align*}
where $\pi\in\operatorname{Irr}\left(  A^{1}\right)  $ and $I=\ker\pi$. The
result follows from $\left(  1\right)  $ and Theorem \ref{sp1}$\left(
1\right)  $.

$\left(  3\right)  $ If $A$ is a $Q$-algebra then every $\pi\in
\operatorname{Irr}\left(  A^{1}\right)  $ is equivalent to a (bounded)
strictly irreducible representation by bounded operators on a normed space
$X_{\pi}$. As $\pi\left(  A\right)  \subset\overline{\pi\left(  A\right)
}\subset\mathcal{B}\left(  X_{\pi}\right)  \subset L\left(  X_{\pi}\right)  $
then
\[
\sigma_{L\left(  X_{\pi}\right)  }\left(  \pi\left(  a\right)  \right)
=\sigma_{\mathcal{B}\left(  X_{\pi}\right)  }\left(  \pi\left(  a\right)
\right)  \subset\sigma_{\overline{\pi\left(  A\right)  }}\left(  \pi\left(
a\right)  \right)  \subset\sigma_{\pi\left(  A\right)  }\left(  \pi\left(
a\right)  \right)
\]
for every $\pi\in\operatorname{Irr}\left(  A^{1}\right)  $, and the result
follows from $\left(  2\right)  $.

$\left(  4\right)  $ If $A$ is a $Q_{\mathrm{b}}$-algebra then every $\pi
\in\operatorname{Irr}\left(  A^{1}\right)  $ is equivalent to a (bounded)
strictly irreducible representation by bounded operators on a Banach space
$X_{\pi}$. Then the result follows from $\left(  3\right)  $ and (\ref{sf3}).
\end{proof}

The first equality in Theorem \ref{sp11}$\left(  2\right)  $ was proved in
\cite[Proposition 1]{Z80} (for Banach algebras).

\begin{corollary}
\label{sinq}Let $A$ be an algebra, and let $I$ be an ideal of $A$ and $a\in
A$. Then

\begin{enumerate}
\item If $\left(  A/I\right)  ^{1}\cong A^{1}/I$ then
\[
\sigma_{A/I}\left(  a/I\right)  =\sigma_{A^{1}/\mathrm{kh}\left(
I;A^{1}\right)  }\left(  a/\mathrm{kh}\left(  I;A^{1}\right)  \right)
=\cup_{J\in\mathrm{h}\left(  I;A^{1}\right)  }\sigma_{A^{1}/J}\left(
a/J\right)  ;
\]

\item If $\left(  A/I\right)  ^{1}\ncong A^{1}/I$ then
\[
\sigma_{A/I}\left(  a/I\right)  =\sigma_{A/\mathrm{kh}\left(  I;A\right)
}\left(  a/\mathrm{kh}\left(  I;A\right)  \right)  =\cup_{J\in\mathrm{h}%
\left(  I;A\right)  }\sigma_{A/J}\left(  a/J\right)  ;
\]

\item If $A$ is a $Q$-algebra then $\sigma_{A/I}\left(  a/I\right)
=\sigma_{A/\overline{I}}\left(  a/\overline{I}\right)  $.
\end{enumerate}
\end{corollary}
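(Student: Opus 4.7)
I plan to deduce all three parts from Theorem~\ref{sp11}(2) by applying it to the quotients $A/I$, $A^{1}/\mathrm{kh}(I;A^{1})$, or $A/\mathrm{kh}(I;A)$, combined with the description of primitive ideals in quotients given by (\ref{p5}) and (\ref{p1}). The central algebraic observation I will use throughout is that for any algebra $B$ and any ideal $E$ of $B$, a primitive ideal $J\in\operatorname{Prim}(B)$ contains $\mathrm{kh}(E;B)$ if and only if $J\in\mathrm{h}(E;B)$: one direction is immediate because $\mathrm{kh}(E;B)=\cap_{K\in\mathrm{h}(E;B)}K\supseteq E$, so $J\supseteq\mathrm{kh}(E;B)\supseteq E$; the converse is tautological. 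Consequently, by (\ref{p5}), there is a canonical bijection $K\leftrightarrow K/\mathrm{kh}(E;B)$ between $\mathrm{h}(E;B)$ and $\operatorname{Prim}(B/\mathrm{kh}(E;B))$, with the natural isomorphism $(B/\mathrm{kh}(E;B))/(K/\mathrm{kh}(E;B))\cong B/K$.

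For part~(1), the hypothesis $(A/I)^{1}\cong A^{1}/I$ combined with (\ref{p5}) identifies $\operatorname{Prim}((A/I)^{1})$ with $\{K/I:K\in\mathrm{h}(I;A^{1})\}$ and the corresponding quotients with $A^{1}/K$, so Theorem~\ref{sp11}(2) applied to $A/I$ produces the rightmost expression, while applied to the unital algebra $A^{1}/\mathrm{kh}(I;A^{1})$, via the bijection above, it produces the middle one. For part~(2), the hypothesis $(A/I)^{1}\not\cong A^{1}/I$ forces $A$ to be non-unital and $A/I$ to be unital, since the canonical surjection $A^{1}\to(A/I)^{1}$ sending $1_{A^{1}}$ to $1_{(A/I)^{1}}$ has kernel exactly $I$ precisely when $A/I$ is non-unital. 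Then $(A/I)^{1}=A/I$ and, as a quotient of the unital $A/I$, also $A/\mathrm{kh}(I;A)$ is unital; applying Theorem~\ref{sp11}(2) to each and using (\ref{p5}) yields the two claimed equalities, now with $A$ in place of $A^{1}$ throughout.

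For part~(3), the $Q$-algebra hypothesis on $A$ transfers to $A^{1}$ via (\ref{p2}) (strictly irreducible representations of $A^{1}$ are either the one-dimensional trivial one or extensions $\pi^{1}$ of $\pi\in\operatorname{Irr}(A)$, each equivalent to a continuous representation), so every primitive ideal of $A^{1}$ (and of $A$) is the kernel of a continuous strictly irreducible representation and hence closed. This gives $\mathrm{h}(I;A^{1})=\mathrm{h}(\overline{I};A^{1})$ and $\mathrm{h}(I;A)=\mathrm{h}(\overline{I};A)$, so whenever $I$ and $\overline{I}$ land in the same sub-case of the corollary, parts~(1)--(2) immediately yield the conclusion. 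The main obstacle is that they need not land in the same sub-case: since $A/I$ unital implies $A/\overline{I}$ unital, but not conversely, one may have $A/I$ non-unital while $A/\overline{I}$ is unital, so that parts~(1) and~(2) describe the two sides via unions over $\mathrm{h}(\,\cdot\,;A^{1})$ and $\mathrm{h}(\,\cdot\,;A)$ respectively. In that case, the formulas differ a priori only by the contribution of the exceptional primitive $A\in\mathrm{h}(I;A^{1})$, which gives $\sigma_{A^{1}/A}(a/A)=\sigma_{\mathbb{C}}(0)=\{0\}$; I will complete the proof by a direct check, using Lemma~\ref{sp0}(3) for the inclusion $\sigma_{A/\overline{I}}(a/\overline{I})\subseteq\sigma_{A/I}(a/I)$ and a bookkeeping argument for the reverse, that this isolated value is absorbed into the $\overline{I}$-side of the equality.
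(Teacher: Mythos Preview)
Your treatment of parts (1) and (2) is correct and essentially the same as the paper's: both apply Theorem~\ref{sp11}(2) to the relevant quotient and read off $\operatorname{Prim}$ via (\ref{p5}); the paper additionally invokes Lemma~\ref{sp0}(4) for the first equality in (1), while you obtain it by applying Theorem~\ref{sp11}(2) a second time to $A^{1}/\mathrm{kh}(I;A^{1})$ --- an equivalent route.

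For part (3) you are actually \emph{more} careful than the paper: the paper's sandwich
\[
\sigma_{A^{1}/\mathrm{kh}(I;A^{1})}(\,\cdot\,)\subset\sigma_{A/\overline{I}}(\,\cdot\,)\subset\sigma_{A/I}(\,\cdot\,)
\]
tacitly uses $(A/\overline{I})^{1}\cong A^{1}/\overline{I}$ for the left inclusion, i.e.\ assumes $\overline{I}$ lands in the same sub-case as $I$. You correctly isolate the cross-case ($A$ non-unital, $A/I$ non-unital, $A/\overline{I}$ unital) as the obstacle. However, your proposed ``bookkeeping'' fix cannot succeed, because the stated equality in (3) is in fact \emph{false} in that cross-case. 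Since $A/I$ is non-unital, one always has $0\in\sigma_{A/I}(a/I)$; but if $a/\overline{I}$ happens to be invertible in the unital algebra $A/\overline{I}$, then $0\notin\sigma_{A/\overline{I}}(a/\overline{I})$. Concretely, take $A=C_{0}(\mathbb{R})$, let $I$ be the ideal of compactly supported functions vanishing at $0$, and $a(x)=e^{-x^{2}}$: then $\overline{I}=\{f:f(0)=0\}$, $A/\overline{I}\cong\mathbb{C}$, and one computes $\sigma_{A/I}(a/I)=\{0,1\}$ while $\sigma_{A/\overline{I}}(a/\overline{I})=\{1\}$. (One minor addendum to your analysis: besides the exceptional primitive $A\in\mathrm{h}(I;A^{1})$, the two union formulas can also differ at those $J\in\mathrm{h}(I;A)$ with $A/J$ unital, since then $\sigma_{A^{1}/J}(a/J)=\sigma_{A/J}(a/J)\cup\{0\}$; but this again only contributes $\{0\}$.)

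So the gap you spotted is real and is not repairable: the honest conclusion in the cross-case is only $\sigma_{A/I}(a/I)\setminus\{0\}=\sigma_{A/\overline{I}}(a/\overline{I})\setminus\{0\}$, which is still enough for the Remark following the corollary (compactness of $\sigma_{A/I}(a/I)$) and for the later uses in the paper.
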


\begin{proof}
$\left(  1\right)  $ As $\mathrm{kh}\left(  I;A^{1}\right)  =q_{I}^{-1}\left(
\operatorname{rad}\left(  A^{1}/I\right)  \right)  $ then
\begin{equation}
A^{1}/\mathrm{kh}\left(  I;A^{1}\right)  \cong\left(  A^{1}/I\right)
/\operatorname{rad}\left(  A^{1}/I\right)  \cong\left(  \left(  A/I\right)
/\operatorname{rad}\left(  A/I\right)  \right)  ^{1} \label{a1}%
\end{equation}
The first equality follows from Lemma \ref{sp0}$\left(  4\right)  $. As
\[
\operatorname{Prim}\left(  A^{1}/I\right)  =\left\{  J/I:J\in\mathrm{h}\left(
I;A^{1}\right)  \right\}
\]
by $\left(  \ref{p5}\right)  $ and $\left(  A^{1}/I\right)  /\left(
J/I\right)  \cong A^{1}/J$ then, by Theorem \ref{sp11}$\left(  2\right)  ,$
\[
\sigma_{A/I}\left(  a/I\right)  =\cup_{J\in\mathrm{h}\left(  I;A^{1}\right)
}\sigma_{\left(  A^{1}/I\right)  /\left(  J/I\right)  }\left(  a/\left(
J/I\right)  \right)  =\cup_{J\in\mathrm{h}\left(  I;A^{1}\right)  }%
\sigma_{A^{1}/J}\left(  a/J\right)  .
\]

$\left(  2\right)  $ This is a case when $A/I$ is unital and $A$ is not
unital. We see as in $\left(  \ref{a1}\right)  $ that $A/\mathrm{kh}\left(
I;A\right)  =\left(  \left(  A/I\right)  /\operatorname{rad}\left(
A/I\right)  \right)  ^{1}$ is unital and the first equality follows.
Furthermore, $\operatorname{Prim}\left(  A/I\right)  =\left\{  J/I:J\in
\mathrm{h}\left(  I;A\right)  \right\}  $ by $\left(  \ref{p5}\right)  $. If
$J\in\mathrm{h}\left(  I;A\right)  $ then $A/J$ is also unital, whence as in
$\left(  1\right)  $ we have $\sigma_{A/I}\left(  a/I\right)  =\cup
_{J\in\mathrm{h}\left(  I;A\right)  }\sigma_{A/J}\left(  a/J\right)  .$

$\left(  3\right)  $ Consider only the case $\left(  A/I\right)  ^{1}\cong
A^{1}/I$ because the other case is similar. As primitive ideals are closed,
$\overline{I}\subset J$ for every $J\in\operatorname{Prim}\left(
A^{1}\right)  $ with $I\subset J$. Then $I\subset\overline{I}\subset
\mathrm{kh}\left(  I;A^{1}\right)  $ implies $\sigma_{A^{1}/\mathrm{kh}\left(
I;A^{1}\right)  }\left(  a/\mathrm{kh}\left(  I;A^{1}\right)  \right)
\subset\sigma_{A/\overline{I}}\left(  a/\overline{I}\right)  \subset
\sigma_{A/I}\left(  a/I\right)  $ and the result follows from $\left(
1\right)  $.
\end{proof}

\begin{remark}
As a consequence of Corollary $\ref{sinq}$, the spectrum of $a/I$ in the
quotient $A/I$ of a $Q$-algebra $A$ by a possibly unclosed ideal $I$ is a
compact set in $\mathbb{C}$.
\end{remark}

\subsubsection{Banach ideals\label{banach}}

Let $A$ be a normed algebra with norm $\left\Vert \cdot\right\Vert $; we write
also $\left(  A;\left\Vert \cdot\right\Vert \right)  $. An ideal $I$ of $A$ is
called \textit{normed }if there are a norm $\left\Vert \cdot\right\Vert _{I}$
on $I$ and $t>0$ such that
\[
\left\Vert x\right\Vert \leq t\left\Vert x\right\Vert _{I}%
\]
for every $x\in I$; the norm $\left\Vert \cdot\right\Vert _{I}$ is called
\textit{flexible }if $t=1$. A normed ideal $I$ with a flexible norm is also
called a flexible ideal; $I$ is called a Banach ideal if it is complete with
respect to $\left\Vert \cdot\right\Vert _{I}$.

An ideal of $A$ with the norm inherited from $A$ is of course flexible. We
will describe now a more interesting class of examples.

It is well known that the sum of two closed ideals $I$ and $J$ of a Banach
algebra $A$ may be non-closed, see an excellent discussion in \cite{D00}. One
can consider this sum as a Banach ideal with the norm
\[
\left\Vert a\right\Vert _{I+J}=\inf\left\{  \left\Vert x\right\Vert
+\left\Vert y\right\Vert :a=x+y\text{, }x\in J\text{, }y\in I\right\}
\]
for every $a\in J+I$. If $J\cap I=0$ then there is only one pair $x\in J$,
$y\in I$ with $a=x+y$ and $\left\Vert a\right\Vert _{I+J}=\left\Vert
x\right\Vert +\left\Vert y\right\Vert $. So we have in general for $x\in J$,
$y\in I$ that
\[
\left\Vert x+y\right\Vert _{I+J}=\inf\left\{  \left\Vert x-z\right\Vert
+\left\Vert y+z\right\Vert :z\in J\cap I\right\}  .
\]
It is clear that if $a$ lies in $I$ or $J$ then $\left\Vert a\right\Vert
=\left\Vert a\right\Vert _{I+J}$. Therefore $J\cap I$ is a closed ideal of
$K:=\left(  J+I;\left\Vert \cdot\right\Vert _{I+J}\right)  $. Then $K/\left(
J\cap I\right)  $ is a Banach algebra with the norm
\[
\left\Vert a/\left(  J\cap I\right)  \right\Vert _{K/\left(  J\cap I\right)
}=\left\Vert x/\left(  J\cap I\right)  \right\Vert +\left\Vert y/\left(  J\cap
I\right)  \right\Vert
\]
for $a/\left(  J\cap I\right)  =x/\left(  J\cap I\right)  +y/\left(  J\cap
I\right)  $ with $x\in J$, $y\in I$. Also, $I/\left(  J\cap I\right)  $ is a
closed ideal of $K/\left(  J\cap I\right)  $, whence clearly
\begin{align}
\left\Vert a/I\right\Vert _{K/I}  &  =\left\Vert \left(  a/\left(  J\cap
I\right)  \right)  /\left(  I/\left(  J\cap I\right)  \right)  \right\Vert
_{\left(  K/\left(  J\cap I\right)  \right)  /\left(  I/\left(  J\cap
I\right)  \right)  }\nonumber\\
&  =\inf\left\{  \left\Vert x/\left(  J\cap I\right)  \right\Vert +\left\Vert
\left(  y-z\right)  /\left(  J\cap I\right)  \right\Vert :z\in I\right\}
\nonumber\\
&  =\left\Vert x/\left(  J\cap I\right)  \right\Vert . \label{ai}%
\end{align}
On the other hand, as $I$ is a closed ideal of $K$ then%
\begin{align*}
\left\Vert a/I\right\Vert _{K/I}  &  =\inf\left\{  \left\Vert a-z\right\Vert
_{I+J}:z\in I\right\} \\
&  =\inf\left\{  \left\Vert x\right\Vert +\left\Vert y\right\Vert
:a=x+y+z\text{, }x\in J\text{, }y,z\in I\right\}  .
\end{align*}
So we obtain the following

\begin{lemma}
\label{bi}Let $(A,\|\cdot\|)$ be a normed algebra, and let $I,J$ be closed
ideals of $A$. Then

\begin{enumerate}
\item $J/\left(  J\cap I\right)  $ is isometrically isomorphic to $\left(
J+I\right)  /I$ with the norm
\[
\left\Vert a/I\right\Vert _{\left(  J+I\right)  /I}=\inf\left\{  \left\Vert
x\right\Vert +\left\Vert y\right\Vert :a=x+y+z\text{, }x\in J\text{, }y,z\in
I\right\}  .
\]

\item $\left(  J+I\right)  /I$ with norm $\left\Vert \cdot\right\Vert
_{\left(  J+I\right)  /I}$ is a flexible ideal of $A/I$.
\end{enumerate}
\end{lemma}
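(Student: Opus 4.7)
The plan is to package the computations performed in the paragraph immediately preceding the lemma into the two assertions, and to add one short flexibility argument for part (2).

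For part (1), I would exhibit the natural map $\varphi: J/(J\cap I) \to (J+I)/I$ given by $\varphi(x/(J\cap I)) = x/I$. Well-definedness, injectivity, surjectivity, and the fact that $\varphi$ is an algebra homomorphism are the usual second isomorphism theorem: every $a\in J+I$ is of the form $x+y$ with $x\in J$, $y\in I$, so $a/I = x/I$. The non-trivial point is that $\varphi$ is isometric for the stated norm on $(J+I)/I$. This is precisely equation (\ref{ai}): for $a=x+y\in J+I$ with $x\in J$, $y\in I$ one has $\|a/I\|_{K/I}=\|x/(J\cap I)\|$. Separately one must note that the two formulas for the norm on $(J+I)/I$ — the intrinsic infimum $\inf\{\|x\|+\|y\|: a=x+y+z,\ x\in J,\ y,z\in I\}$ appearing in the statement, and the quotient norm $\|\cdot\|_{K/I}$ used in (\ref{ai}) — coincide; this identification is the second displayed formula before the lemma, which uses only that $I$ is a closed ideal of $K$.

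For part (2), the fact that $(J+I)/I$ is an ideal of $A/I$ is purely algebraic, since $J+I$ is an ideal of $A$. The stated formula is a genuine norm on $(J+I)/I$ because it is the quotient norm on the Banach algebra $K/I=(J+I,\|\cdot\|_{I+J})/I$. What remains is the flexibility inequality $\|a/I\|_{A/I}\le\|a/I\|_{(J+I)/I}$ for every $a\in J+I$: given any decomposition $a=x+y+z$ with $x\in J$ and $y,z\in I$, the element $y+z$ lies in $I$, so $\|a/I\|_{A/I}\le\|a-(y+z)\|=\|x\|\le\|x\|+\|y\|$, and taking the infimum over all such decompositions yields the inequality with constant $t=1$.

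The main potential pitfall is not mathematical depth but bookkeeping: one has to keep straight three norms living on different but algebraically isomorphic objects — $\|\cdot\|$ on $J/(J\cap I)$ (inherited from $A$), $\|\cdot\|_{K/I}$, and the infimum norm in the statement — and verify that the identifications carried out before the lemma are in fact isometric. Once the identification $\|\cdot\|_{(J+I)/I}=\|\cdot\|_{K/I}$ is in place, both (1) and (2) drop out immediately from the preceding discussion, with the flexibility inequality being the only new computation.
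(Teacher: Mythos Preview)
Your proposal is correct and follows exactly the paper's approach: the paper's proof of (1) simply points back to equation (\ref{ai}) and the displayed formula for $\|a/I\|_{K/I}$ established in the preceding discussion (noting only that the argument works for normed, not just Banach, algebras), and it declares (2) ``obvious''. Your only addition is spelling out the flexibility inequality in (2), which is precisely what the paper leaves to the reader; one cosmetic slip is calling $K/I$ a ``Banach algebra'' in your (2), whereas the lemma assumes only a normed $A$, but the quotient-norm argument you give is unaffected.
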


\begin{proof}
$\left(  1\right)  $ There is a bounded isomorphism $\phi$ from $J/\left(
J\cap I\right)  $ onto $\left(  J+I\right)  /I$, so for every $x/\left(  J\cap
I\right)  $ with $x\in J$ there is only one $a/I$ with $a\in J+I$. The above
argument clearly works if $A$ is a normed algebra, and $\left(  \ref{ai}%
\right)  $ shows that $\phi$ is an isometry.

$\left(  2\right)  $ is obvious.
\end{proof}

\begin{remark}
It follows from the lemma that if $A$ is a Banach algebra then the ideal
$\left(  J+I\right)  /I$ with norm $\left\Vert \cdot\right\Vert _{\left(
J+I\right)  /I}$ is a Banach ideal of $A/I$. Indeed, $J/\left(  J\cap
I\right)  $ is complete whence $\left(  J+I\right)  /I$ with norm $\left\Vert
\cdot\right\Vert _{\left(  J+I\right)  /I}$ is complete.
\end{remark}

\section{Radicals and other ideal maps}

\subsection{Definitions\label{definitions}}

Radicals are defined on classes of algebras satisfying some natural conditions
(see \cite{TR1}). For our aims it is sufficient to assume as a rule that each
of these classes, $\mathfrak{U}$, is either the class $\mathfrak{U}%
_{\mathrm{a}}$ of all associative complex algebras for the algebraic case or
one of the classes $\mathfrak{U}_{\mathrm{b}}$ and $\mathfrak{U}_{\mathrm{n}}$
of all Banach and all normed associative complex algebras, respectively, for
the topological case. We consider also radicals defined on subclasses of these
classes, for example on the subclass of $\mathfrak{U}_{\mathrm{n}}$ consisting
of all $Q$-algebras and the subclass of $\mathfrak{U}_{\mathrm{b}}$ consisting
of all C*-algebras.

We use the term \textit{morphism} for a surjective homomorphism in the
algebraic case and a open continuous surjective homomorphism in the
topological case. In what follows, unless necessity for clarity we omit words
`algebraic' or `topological' for notions of the radical theory in statements
valid in both the algebraic and topological contexts. Sometimes we add
necessary topological or algebraic specifications in square brackets. For
instance, the expression \textquotedblleft a [closed] ideal\textquotedblright%
\ means that the ideal in the proposition must be closed when we consider the
proposition in the topological context.

A map $P$ on $\mathfrak{U}$ is called a [\textit{closed}] \textit{ideal map}
if $P\left(  A\right)  $ is a [closed] ideal of $A$ for each [normed]
$A\in\mathfrak{U}$. An \textit{algebraic }(respectively, \textit{topological})
\textit{radical }is a (respectively, closed) ideal map $P$ that satisfies the
following axioms:

\begin{axiom}
\label{1}$f\left(  P\left(  A\right)  \right)  \subset P\left(  B\right)  $
for every morphism $f:A\longrightarrow B$ of algebras from $\mathfrak{U}$;
\end{axiom}

\begin{axiom}
\label{2}$P\left(  A/P\left(  A\right)  \right)  =0$;
\end{axiom}

\begin{axiom}
\label{3}$P\left(  P\left(  A\right)  \right)  =P\left(  A\right)  $;
\end{axiom}

\begin{axiom}
\label{4}$P\left(  I\right)  $ is an ideal of $A$ contained in $P\left(
A\right)  $, for each ideal $I\in\mathfrak{U}$ of $A$.
\end{axiom}

It is assumed in Axiom $4$ that if $A$ is normed then the norm on $I$ is
inherited from $A$. If a (closed) ideal map $P$ satisfies Axiom $1$ on
$\mathfrak{U}$ then $P$ is called a \textit{preradical} on $\mathfrak{U}$;
$P$ is called \textit{pliant }if Axiom $1$ on $\mathfrak{U}$ holds with
algebraic morphisms. A (non-necessarily closed) ideal map $P$ is a \textit{preradical with topological morphisms} if
it defined on a class of normed algebras and satisfies Axiom $1$ with topological
morphisms (this class of maps contains the
restrictions of arbitrary algebraic preradicals to classes of normed algebras). Algebraic preradicals are always pliant.

\begin{lemma}
\label{ap0}Let $P$ be a pliant preradical, and let $A$ be an algebra with two
norms $\left\Vert \cdot\right\Vert _{1}$ and $\left\Vert \cdot\right\Vert
_{2}$. Then $P\left(  A;\left\Vert \cdot\right\Vert _{1}\right)  =P\left(
A;\left\Vert \cdot\right\Vert _{2}\right)  $.
\end{lemma}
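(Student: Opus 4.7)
The plan is very short: exploit the fact that ``pliant'' means Axiom~\ref{1} holds for \emph{algebraic} morphisms, not merely for topological ones, and then apply it to the identity map $A\to A$ viewed between the two different normed structures.

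More precisely, I would first observe that the identity map
\[
\mathrm{id}:(A,\|\cdot\|_{1})\longrightarrow(A,\|\cdot\|_{2})
\]
is a bijective algebra homomorphism, hence in particular a surjective homomorphism of complex associative algebras; so it is an algebraic morphism in the sense of the paper (it need not be continuous or open, which is precisely why the pliancy hypothesis is essential). Since $P$ is pliant, Axiom~\ref{1} applies to this morphism and yields
\[
P(A,\|\cdot\|_{1}) = \mathrm{id}\bigl(P(A,\|\cdot\|_{1})\bigr)\subset P(A,\|\cdot\|_{2}).
\]
Applying the same argument to $\mathrm{id}:(A,\|\cdot\|_{2})\to(A,\|\cdot\|_{1})$ gives the reverse inclusion, and the two together give the claimed equality.

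There is essentially no obstacle: the entire content of the lemma is unwrapping the definition of ``pliant.'' The only thing worth verifying for a careful reader is that both $(A,\|\cdot\|_{1})$ and $(A,\|\cdot\|_{2})$ belong to the class $\mathfrak{U}$ on which $P$ is defined (so that $P(A,\|\cdot\|_{i})$ makes sense), which is automatic from the statement of the lemma. Thus the proof reduces to two lines invoking Axiom~\ref{1} twice in opposite directions.
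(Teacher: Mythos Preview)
Your proof is correct and follows exactly the same approach as the paper's: the paper's proof consists of the single observation that the identity map $(A,\|\cdot\|_{1})\to(A,\|\cdot\|_{2})$ is an algebraic isomorphism, which is precisely what you unpack in detail.
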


\begin{proof}
Indeed, the identity map $\left(  A;\left\Vert \cdot\right\Vert _{1}\right)
\longrightarrow\left(  A;\left\Vert \cdot\right\Vert _{2}\right)  $ is an
algebraic isomorphism.
\end{proof}

Sometimes the intermediate definitions are useful: a preradical $P$ on
$\mathfrak{U}\subset\mathfrak{U}_{n}$ is called \textit{strict} if
\begin{equation}
\overline{f\left(  P\left(  A\right)  \right)  }=P\left(  B\right)
\label{PRER}%
\end{equation}
for any continuous isomorphism $f:A\longrightarrow B$ of algebras from
$\mathfrak{U}$, and called \textit{strong }if $P$ satisfies Axiom $1$ with
respect to the continuous surjective homomorphisms of algebras from
$\mathfrak{U}$. It is clear that every strict preradical is strong.

\begin{remark}
For radicals on $\mathfrak{U}_{\mathrm{b}}$ or $\mathfrak{U}_{\mathrm{a}}$ our
definition coincides with Dixon's one, but for normed algebras the definitions differ:

\begin{enumerate}
\item We do not assume that ideals in Axiom $4$ are closed;

\item In \cite{D97} the morphisms are not necessarily open.
\end{enumerate}
\end{remark}

A preradical $P$ is called \textit{hereditary} if
\[
P\left(  I\right)  =I\cap P\left(  A\right)
\]
for every ideal $I$ of an algebra $A\in\mathfrak{U}$. The class of hereditary
radicals is especially important; the heredity of a preradical $P$ implies the
fulfillment of Axioms $3$ and $4$ for $P$. So one can define\textit{
hereditary radicals} as hereditary preradicals satisfying Axiom $2$.

In several cases we will impose on a preradical $P$, defined on $\mathfrak{U}%
_{\mathrm{b}}$, a more strong condition -- the \textit{condition of Banach
heredity}:
\begin{equation}
P\left(  L,\left\Vert \cdot\right\Vert _{L}\right)  =L\cap P\left(  A\right)
\text{ for any Banach ideal }\left(  L,\left\Vert \cdot\right\Vert
_{L}\right)  \subset A, \label{BanRad}%
\end{equation}
where $A$ is a Banach algebra. One can introduce similarly the
\textit{condition of flexible heredity}, for normed algebras.

An algebra $A\in\mathfrak{U}$ is called $P$\textit{-radical} if $A=P\left(
A\right)  $ and $P$\textit{-semisimple} if $P\left(  A\right)  =0$. Let
$\mathbf{Rad}\left(  P\right)  $ denote the class of all $P$-radical algebras
and $\mathbf{Sem}\left(  P\right)  $ the class of all $P$-semisimple algebras.
A preradical $P$ is called \textit{uniform} if every subalgebra $B\in
\mathfrak{U}$ of a $P$-radical algebra is $P$-radical. It is clear that
uniform radicals are hereditary.

By definition \cite{D97}, an \textit{under} \textit{radical} is a preradical
which satisfies all axioms besides, possibly, Axiom $2$. For instance, any
hereditary preradical is an under radical; moreover, restrictions of algebraic
radicals to appropriate classes of normed algebras are algebraic under
radicals in general. The dual notion for under radicals is a notion of an over
radical. By definition \cite{D97}, an \textit{over} \textit{radical} is a
preradical which satisfies all axioms besides, possibly, Axiom $3$.

In the class of preradicals on $\mathfrak{U}$ there is a natural order:
$P_{1}\leq P_{2}$ if $P_{1}\left(  A\right)  \subset P_{2}\left(  A\right)  $
for each $A\in\mathfrak{U}$. We write $P_{1}<P_{2}$ if $P_{1}\leq P_{2}$ and
$P_{1}\neq P_{2}$. It is easy to see that
\begin{equation}
P_{1}\leq P_{2}\text{ }\Longrightarrow\text{ }\mathbf{\mathbf{Rad}}\left(
P_{1}\right)  \subset\mathbf{\mathbf{Rad}}\left(  P_{2}\right)  \text{ and
}\mathbf{Sem}\left(  P_{2}\right)  \subset\mathbf{Sem}\left(  P_{1}\right)  .
\label{pprs}%
\end{equation}
The following theorem is useful for comparing radicals.

\begin{theorem}
\label{equality}Let $P_{1}$ and $P_{2}$ be over radicals on $\mathfrak{U}$,
and let $R_{1}$ and $R_{2}$ be under radicals on $\mathfrak{U}$. Then

\begin{enumerate}
\item $\mathbf{Sem}\left(  P_{2}\right)  \subset\mathbf{Sem}\left(
P_{1}\right)  $ if and only if $P_{1}\leq P_{2}$;

\item $\mathbf{\mathbf{Rad}}\left(  R_{1}\right)  \subset\mathbf{\mathbf{Rad}%
}\left(  R_{2}\right)  $ if and only if $R_{1}\leq R_{2}$.
\end{enumerate}
\end{theorem}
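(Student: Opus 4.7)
The plan is to prove the two parts in parallel: each biconditional has an easy direction that follows immediately from the monotonicity implication (\ref{pprs}) already recorded, so the content lies in the forward implications. For (1) I will use Axiom~2 for $P_2$ (which $P_2$ enjoys as an over radical), and for (2) I will use Axiom~3 for $R_1$ (which $R_1$ enjoys as an under radical); Axioms~1 and~4 are available in both cases since every preradical on $\mathfrak{U}$ satisfies them by hypothesis.

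For (1), assume $\mathbf{Sem}(P_2)\subset\mathbf{Sem}(P_1)$ and fix $A\in\mathfrak{U}$. The quotient $B=A/P_2(A)$ is a legitimate object (in the topological case $P_2(A)$ is a closed ideal because $P_2$ is a closed ideal map), and the standard map $q:A\to B$ is a morphism. Axiom~2 for the over radical $P_2$ gives $P_2(B)=0$, so $B\in\mathbf{Sem}(P_2)\subset\mathbf{Sem}(P_1)$, hence $P_1(B)=0$. Now Axiom~1 for the preradical $P_1$ yields $q(P_1(A))\subset P_1(B)=0$, i.e.\ $P_1(A)\subset P_2(A)$. The converse is immediate from (\ref{pprs}).

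For (2), assume $\mathbf{Rad}(R_1)\subset\mathbf{Rad}(R_2)$ and fix $A\in\mathfrak{U}$. Set $I=R_1(A)$, an object of $\mathfrak{U}$ by our standing convention on the class. Axiom~3 for the under radical $R_1$ gives $R_1(I)=R_1(R_1(A))=R_1(A)=I$, so $I\in\mathbf{Rad}(R_1)\subset\mathbf{Rad}(R_2)$, whence $R_2(I)=I$. Axiom~4 for $R_2$ now asserts that $R_2(I)$ is an ideal of $A$ contained in $R_2(A)$, so $R_1(A)=I=R_2(I)\subset R_2(A)$. Again the converse is immediate from (\ref{pprs}).

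The two forward implications exhibit a pleasant duality: the over-radical statement is proved by passing to the quotient by $P_2(A)$ (so that Axiom~2 produces a $P_2$-semisimple algebra), while the under-radical statement is proved by restricting to the ideal $R_1(A)$ (so that Axiom~3 produces an $R_1$-radical algebra). There is no real obstacle here; the only point requiring attention is to confirm that the constructed ideal and quotient lie in the class $\mathfrak{U}$ and that the relevant map is a morphism, which in both the algebraic setting and the topological setting $\mathfrak{U}_{\mathrm{a}},\mathfrak{U}_{\mathrm{b}},\mathfrak{U}_{\mathrm{n}}$ is automatic from the closed-ideal-map character of radicals.
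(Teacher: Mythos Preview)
Your proof is correct and follows essentially the same approach as the paper's: for (1) you pass to the quotient $A/P_2(A)$ and use Axiom~2 for $P_2$ together with Axiom~1 for $P_1$, and for (2) you restrict to the ideal $R_1(A)$ and use Axiom~3 for $R_1$ together with Axiom~4 for $R_2$. One small inaccuracy in your preamble: Axiom~4 is not part of the definition of a preradical, but it \emph{is} satisfied by over and under radicals, which is all you actually use.
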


\begin{proof}
Let $A$ be an algebra. Then $R_{1}\left(  A\right)  \in\mathbf{\mathbf{Rad}%
}\left(  R_{1}\right)  $ by Axiom $3$ and $A/P_{2}\left(  A\right)
\in\mathbf{Sem}\left(  P_{2}\right)  $ by Axiom $2$.

$\left(  1\right)  $ If $\mathbf{Sem}\left(  P_{2}\right)  \subset
\mathbf{Sem}\left(  P_{1}\right)  $ then $A/P_{2}\left(  A\right)
\in\mathbf{Sem}\left(  P_{1}\right)  $. As $P_{1}$ is a preradical,
\[
q\left(  P_{1}\left(  A\right)  \right)  \subset P_{1}\left(  A/P_{2}\left(
A\right)  \right)  =0
\]
for the standard quotient map $q:A\longrightarrow A/P_{2}\left(  A\right)  $.
Therefore $P_{1}\left(  A\right)  \subset P_{2}\left(  A\right)  $, i.e.,
$P_{1}\leq P_{2}$, and $\left(  \ref{pprs}\right)  $ completes the proof.

$\left(  2\right)  $ If $\mathbf{\mathbf{Rad}}\left(  R_{1}\right)
\subset\mathbf{\mathbf{Rad}}\left(  R_{2}\right)  $ then $R_{1}\left(
A\right)  \in\mathbf{\mathbf{Rad}}\left(  R_{2}\right)  $. As $R_{1}\left(
A\right)  $ is an ideal of $A$,
\[
R_{1}\left(  A\right)  =R_{2}\left(  R_{1}\left(  A\right)  \right)  \subset
R_{2}\left(  A\right)
\]
by Axiom $4$, i.e., $R_{1}\leq R_{2}$, and the converse follows by $\left(
\ref{pprs}\right)  $.
\end{proof}

\begin{corollary}
\label{ineq}Let $P_{1}$ and $P_{2}$ be radicals on $\mathfrak{U}$. If
$\mathbf{Sem}\left(  P_{1}\right)  =\mathbf{Sem}\left(  P_{2}\right)  $ or
$\mathbf{\mathbf{Rad}}\left(  P_{1}\right)  =\mathbf{\mathbf{Rad}}\left(
P_{2}\right)  $ then $P_{1}=P_{2}$.
\end{corollary}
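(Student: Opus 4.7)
The plan is to derive Corollary \ref{ineq} as an immediate double application of Theorem \ref{equality}, exploiting the observation that any radical is simultaneously an over radical and an under radical (since it satisfies all four axioms). So both parts of Theorem \ref{equality} apply to $P_1$ and $P_2$ without any further work.

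First, suppose $\mathbf{Sem}(P_1) = \mathbf{Sem}(P_2)$. Then in particular $\mathbf{Sem}(P_2) \subset \mathbf{Sem}(P_1)$, and viewing $P_1, P_2$ as over radicals, Theorem \ref{equality}(1) yields $P_1 \leq P_2$. Reversing the roles and applying the same argument to the reverse inclusion $\mathbf{Sem}(P_1) \subset \mathbf{Sem}(P_2)$ gives $P_2 \leq P_1$. Antisymmetry of the order yields $P_1 = P_2$.

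Second, suppose $\mathbf{Rad}(P_1) = \mathbf{Rad}(P_2)$. Viewing $P_1, P_2$ as under radicals, Theorem \ref{equality}(2) applied to $\mathbf{Rad}(P_1) \subset \mathbf{Rad}(P_2)$ gives $P_1 \leq P_2$, and the reverse inclusion gives $P_2 \leq P_1$. Again we conclude $P_1 = P_2$.

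There is no real obstacle here; the content of the corollary is entirely absorbed in Theorem \ref{equality}. The only thing worth flagging in the write-up is the remark that a radical plays both roles simultaneously, so each equality of classes is converted to a pair of inequalities $P_1 \leq P_2$ and $P_2 \leq P_1$ by the appropriate part of that theorem.
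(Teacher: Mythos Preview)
Your proposal is correct and is exactly the argument the paper intends: Corollary \ref{ineq} is stated without proof immediately after Theorem \ref{equality}, and your double application of parts (1) and (2), using that a radical is both an over radical and an under radical, is precisely the intended one-line derivation.
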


\subsection{\label{uni}Classes of algebras}

\subsubsection{Base classes of algebras\label{remarks}}

Let us say more about classes of algebras on which the radicals are defined.
It is convenient to assume by default that a class of algebras contains all
appropriate [topologically] isomorphic images of its elements.

A class $\mathfrak{U}$ of algebras is called \textit{algebraically universal}
if it contains quotients and ideals of algebras from $\mathfrak{U}$. A class
$\mathfrak{U}$ of normed algebras is called \textit{universal} if it contains
quotients by closed ideals and ideals of algebras from $\mathfrak{U}$, and
\textit{ground} if it contains quotients by closed ideals and closed ideals of
algebras from $\mathfrak{U}$.

For instance, $\mathfrak{U}_{\mathrm{a}}$ is algebraically universal,
$\mathfrak{U}_{\mathrm{n}}$ is universal, and $\mathfrak{U}_{\mathrm{b}}$ is
ground. These classes are main in this paper. We also use the following
classes of algebras:

\begin{enumerate}
\item $\mathfrak{U}_{\mathrm{q}}$ and $\mathfrak{U}_{\mathrm{q}_{b}}$ are the
class of all $Q$-algebras and $Q_{\mathrm{b}}$-algebras, respectively. These
classes are universal \cite{TR1}.

\item $\mathfrak{U}_{\mathrm{b}}^{u}$ is the smallest universal class
containing all Banach algebras. An algebra $A$ is called a \textit{subideal}
of an algebra $B$ if there is a finite series of algebras $A=I_{0}%
\subset\cdots\subset I_{n}=B$ such that $I_{i-1}$ is an ideal of $I_{i}$ for
$i=1,\ldots,n$; in such a case $A$ is called an $n$\textit{-subideal} of $B$.
By \cite[Theorem 2.24]{TR1}, $\mathfrak{U}_{\mathrm{b}}^{u}$ is the class of
all subideals of Banach algebras.

\item $\mathfrak{U}_{\mathrm{c}^{\ast}}$ is the class of all C*-algebras.
Recall that every closed ideal of a C*-algebra is automatically self-adjoint
\cite[Proposition 1.8.2]{Dixmier}. So this class is ground.
\end{enumerate}

\subsubsection{About the definition of radicals in C*-algebras}

We will also consider the restrictions of radicals of Banach algebras to
$\mathfrak{U}_{\mathrm{c}^{\ast}}$. The work with C*-algebra has several advantages:

\begin{enumerate}
\item The sum of two closed ideals of a C*-algebra is closed \cite[Corollary
1.5.6]{Dav};

\item Each irreducible *-representation of a C*-algebra is strictly
irreducible \cite[Theorem 2.8.3]{Dixmier};

\item A closed ideal of a closed ideal of a C*-algebra is an ideal of the
algebra \cite[Proposition 1.8.5]{Dixmier}.
\end{enumerate}

So Axiom $4$ for radicals on C*-algebras is equivalent to the following:

\bigskip

\noindent\textbf{Axiom 4 for }$\mathfrak{U}_{\mathrm{c}^{\ast}}$\textbf{.
}$P(I)\subset P(A)$ for any closed ideal $I$ of $A$.

\bigskip

As C*-algebras are semisimple, every morphism in $\mathfrak{U}_{\mathrm{c}%
^{\ast}}$ is automatically continuous by Johnson's theorem \cite{J67}. It
seems to be natural to consider all *-epimorphisms (i.e. *-morphisms) as
morphisms in $\mathfrak{U}_{\mathrm{c}^{\ast}}$. The following result shows
that this does not change the class of preradicals.

\begin{theorem}
\label{star-mor} Let $P$ be a closed ideal map on $\mathfrak{U}_{\mathrm{c}%
^{\ast}}$. If $f(P(A))\subset P(B)$ for each *-morphism $f:A\longrightarrow B$
then the same is true for all morphisms.
\end{theorem}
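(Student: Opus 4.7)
The plan is as follows. I first observe that in $\mathfrak{U}_{\mathrm{c}^{\ast}}$ the notion of morphism simplifies considerably: by Johnson's automatic-continuity theorem (applicable since C*-algebras are semisimple), every algebra homomorphism between C*-algebras is continuous, and by the open mapping theorem any continuous surjection between Banach spaces is open. Hence morphisms in $\mathfrak{U}_{\mathrm{c}^{\ast}}$ coincide with surjective algebra homomorphisms. Let $f\colon A\to B$ be such a morphism and set $I:=\ker f$. Continuity of $f$ makes $I$ a closed two-sided ideal of $A$, and closed ideals of C*-algebras are automatically self-adjoint (Dixmier 1.8.2), so the quotient map $q_I\colon A\to A/I$ is a *-morphism; by hypothesis $q_I(P(A))\subset P(A/I)$. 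Factoring $f=\bar f\circ q_I$, where $\bar f\colon A/I\to B$ is a bijective continuous algebra homomorphism (hence a topological algebra isomorphism by the open mapping theorem applied to $\bar f^{-1}$), I reduce the theorem to the following statement: every continuous algebra isomorphism $\phi\colon C\to D$ between C*-algebras satisfies $\phi(P(C))\subset P(D)$.

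To handle the reduced statement, I would transport the C*-structure of $C$ onto the underlying complex algebra of $D$ via $\phi$: define on $D$ a new involution $d^{\star}:=\phi\bigl(\phi^{-1}(d)^{\ast}\bigr)$ and a new norm $|d|:=\|\phi^{-1}(d)\|_C$. A routine verification shows that $\star$ is a conjugate-linear antimultiplicative involution, $|\cdot|$ is submultiplicative, and $|d^{\star}d|=|d|^{2}$, so that $D':=(D,\cdot,\star,|\cdot|)$ is a C*-algebra. By construction $\phi\colon C\to D'$ is an isometric *-isomorphism, and applying the hypothesis to this *-morphism gives $\phi(P(C))\subset P(D')$.

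The crux is then to identify $P(D')$ with $P(D)$, or at least to prove the inclusion $P(D')\subset P(D)$, as these are two potentially distinct closed ideals of the same underlying complex algebra $D$. I expect this identification to be the main obstacle, since $P$ is specified only axiomatically and its values on the two C*-structures $(D,\star,|\cdot|)$ and $(D,\ast,\|\cdot\|)$ are not obviously comparable. My plan is to bridge them via an auxiliary algebra morphism: the twisted map $\phi^{\#}\colon c\mapsto\phi(c^{\ast})^{\ast}$ is a second algebra isomorphism $C\to D$ sharing the kernel and range of $\phi$, and the self-adjointness $P(D)^{\ast}=P(D)$ (immediate from $P(D)$ being a closed ideal of the C*-algebra $D$) gives $\phi^{\#}(P(C))=\phi(P(C))^{\ast}$. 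Combining $\phi$ and $\phi^{\#}$ into a single *-morphism valued in an appropriate C*-direct sum or matrix amplification---chosen so that its *-structure reproduces the original involution on $D$ rather than the transported one $\star$---should allow a second appeal to the hypothesis that concludes $\phi(P(C))\subset P(D)$ directly. This is precisely the point at which the specifically C*-algebraic structure (self-adjointness of closed ideals, and the existence of the canonical involution on $D$) must be used rather than mere Banach algebra structure, and I anticipate the genuine technical work of the proof lies in making this bridge explicit.
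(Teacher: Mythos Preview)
Your reduction to the isomorphism case is correct and matches the paper's first step exactly. You also correctly identify the crux: showing that an algebra isomorphism $\phi\colon C\to D$ between C*-algebras carries $P(C)$ into $P(D)$, given only the *-morphism hypothesis.

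The gap is in the final step. Transporting the C*-structure to obtain $D'$ merely restates the problem: the identity $D'\to D$ is itself an algebra isomorphism of C*-algebras, so asking whether $P(D')\subset P(D)$ is another instance of the very question you are trying to answer. Your proposed bridge via $\phi^{\#}(c)=\phi(c^*)^*$ does not produce a *-morphism into any C*-algebra built from $D$: for the diagonal map $(\phi,\phi^{\#})\colon C\to D\oplus D$ one computes $(\phi(c^*),\phi(c)^*)$ versus $(\phi(c)^*,\phi(c^*))$, which are swapped rather than equal, and the exchange involution $(a,b)\mapsto(b^*,a^*)$ that would repair this does not make $D\oplus D$ a C*-algebra. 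Off-diagonal or matrix embeddings run into the same obstruction. So the ``second appeal to the hypothesis'' you anticipate cannot be set up with these ingredients alone; something genuinely structural about isomorphisms of C*-algebras is needed.

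The paper supplies exactly that via a theorem of Gardner: every algebra isomorphism $g\colon C\to B$ of C*-algebras factors as $g=g_1\circ g_2$ with $g_2$ a *-isomorphism and $g_1(x)=vxv^{-1}$ for some invertible positive $v\in\mathcal{B}(H)$ (realizing $B\subset\mathcal{B}(H)$). The *-part is handled by hypothesis, so one must show that conjugation by $v$ preserves every closed ideal of $B$. The paper does this by functional calculus: $S=\mathrm{L}_v\mathrm{R}_{v^{-1}}$ has positive spectrum and preserves $B$, hence $T=\log S$ is defined and preserves $B$; a computation gives $T=\mathrm{L}_u-\mathrm{R}_u$ with $u=\log v$, so $T$ restricts to a derivation of $B$. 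Derivations preserve closed ideals (factor $a=bc$ through a bounded approximate identity and use Leibniz), and therefore $S=\exp T$ does as well. Gardner's decomposition is the missing idea you need.
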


\begin{proof}
Let $f:A\longrightarrow B$ be an epimorphism, $J=\ker f$ and $C=A/J$. Then
$f=g\circ q_{J}$, where $g$ is an isomorphism of $C$ onto $B$. So it suffices
to show that $q_{J}(P(A))\subset P(C)$ and $g(P(C))\subset P(B)$. The first
inclusion is evident because $q_{J}$ is a *-epimorphism.

One can consider $B$ as an operator C*-algebra, that is, a closed *-subalgebra
of $\mathcal{B}(H)$ where $H$ is a Hilbert space. It was proved by T. Gardner
\cite{Gard} that $g=g_{1}\circ g_{2}$ where $g_{2}$ is a *-isomorphism of $C$
onto $B$, and $g_{1}:B\longrightarrow B$ acts by the rule
\[
g_{1}(x)=vxv^{-1}%
\]
where $v$ is an invertible positive operator on $H$. So, to prove that
$g(P(C))\subset P(B)$, it suffices to show that $g_{1}$ preserves closed
ideals of $B$.

Let $S$ be the operator on $\mathcal{B}(H)$ defined by $Sx=vxv^{-1}$ for all
$x\in\mathcal{B}(H)$. By the assumption, $S$ preserves $B$ and $g_{1}=S|_{B}$.
Clearly $S$ is a product of two commuting operators: $S=\mathrm{L}%
_{v}\mathrm{R}_{v^{-1}}$. Since $\mathrm{L}_{v}$ and $\mathrm{R}_{v^{-1}}$
have positive spectra on $\mathcal{B}(H)$, the same is true for $S$ on
$\mathcal{B}(H)$.

Let us denote by $\log$ the holomorphic extension of the function
\[
\phi(z)=\sum_{k=1}^{\infty}\frac{(-1)^{k-1}}{k}(z-1)^{k}%
\]
from the disk $D=\{z:|z-1|<1$ to $\mathbb{C}\setminus(-\infty,0]$. By the
functional calculus, this function can be applied to every element of a Banach
algebra whose spectrum doesn't intersect $(-\infty,0]\subset\mathbb{R}$. As
$E:=\left\{  K\in\mathcal{B}(\mathcal{B}(H))):KB\subset B\right\}  $ is a
Banach algebra and $S\in E$ has spectrum outside of $(-\infty,0]\subset
\mathbb{R}$ then the operator $T=\log S$ preserves $B$.

Moreover, $T=\log\mathrm{L}_{v}+\log\mathrm{R}_{v^{-1}}$. Indeed, to prove
that
\begin{align*}
\log\left(  1+\lambda\left(  \mathrm{L}_{v}-1\right)  \right)  \left(
1+\lambda\left(  \mathrm{R}_{v^{-1}}-1\right)  \right)   &  =\log\left(
1+\lambda\left(  \mathrm{L}_{v}-1\right)  \right) \\
&  +\log\left(  1+\lambda\left(  \mathrm{R}_{v^{-1}}-1\right)  \right)
\end{align*}
for $\lambda=1$, it suffices to show that the equality holds for sufficiently
small $|\lambda|$. Clearly the equality $\phi(KL)=\phi(K)+\phi(L)$ is checked
for commuting operators $K,L$ having spectra in $D$ by the calculation as in
the case of numerical series.

Let $u=\log v$. As $0=\log1=\log v+\log v^{-1}$ and $\log\mathrm{L}%
_{v}=\mathrm{L}_{\log v}$ then $\log\mathrm{L}_{v}=\mathrm{L}_{u}$ and
$\log\mathrm{R}_{v^{-1}}=-\mathrm{R}_{u}$. We proved that the operator $T$
acts by the formula
\[
Tx=ux-xu
\]
for $x\in\mathcal{B}(H)$ and preserves $B$.

The restriction of $T$ to $B$ is a derivation of $B$ and therefore preserves
closed ideals of $B$. Indeed, if $I$ is a closed ideal of $B$ then it is a
C*-algebra, so it has a bounded approximate identity whence, by \cite[Theorem
11.10]{BD73}, each $a\in J$ can be written in the form $a=bc$ for $a,b\in J$,
and
\[
T(a)=bT(c)+T(b)c\in I.
\]
Therefore $S=\exp(T)$ also preserves all closed ideals of $B$.
\end{proof}

As a result, Axiom 1 for radicals on C*-algebras is equivalent to the following:

\bigskip

\noindent\textbf{Axiom 1 for }$\mathfrak{U}_{\mathrm{c}^{\ast}}$\textbf{.
}$f\left(  P\left(  A\right)  \right)  \subset P\left(  B\right)  $ for any
*-morphism $f:A\longrightarrow B$.

\subsection{Some important examples}

\subsubsection{\label{sjac}The Jacobson radical}

The most famous radical on $\mathfrak{U}_{\mathrm{a}}$ is the \textit{Jacobson
radical} $\operatorname{rad}$ which is defined by
\[
\operatorname{rad}(A)=\ker\operatorname{Prim}(A^{1}):=\cap\{I:I\in
\operatorname{Prim}(A^{1})\}
\]
for every algebra $A$; $\operatorname{rad}$ is an algebraic hereditary
radical. As usual, $A$ is called \textit{radical} if $A=\operatorname{rad}%
\left(  A\right)  $, and \textit{semisimple} if $\operatorname{rad}\left(
A\right)  =0$. By Johnson's theorem \cite{J67}, the topology of a complete
norm in a semisimple Banach algebra is unique.

\begin{theorem}
\label{am}Let $A,B$ be Banach algebras, and let $P$ be a topological over
radical such that $\operatorname{rad}\leq P$ on Banach algebras. Then

\begin{enumerate}
\item If $f:A\longrightarrow B$ is an algebraic morphism then $f\left(
P\left(  A\right)  \right)  \subset P\left(  B\right)  $;

\item If $f:A\longrightarrow B$ is an algebraic isomorphism then $f\left(
P\left(  A\right)  \right)  =P\left(  B\right)  $.
\end{enumerate}
\end{theorem}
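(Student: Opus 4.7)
The plan is to reduce the non-continuous algebraic morphism $f$ to a topological one by passing to the quotient $B/P(B)$ and invoking Johnson's uniqueness-of-norm theorem.

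First I would observe that $B/P(B)$ is a Banach algebra, since $P$ is a topological ideal map and hence $P(B)$ is a closed ideal of $B$. Next, I would check that $B/P(B)$ is semisimple. Here I use that $P$ is an over radical, so Axiom~2 gives $P(B/P(B))=0$, and then the hypothesis $\operatorname{rad}\leq P$ forces
\[
\operatorname{rad}(B/P(B))\subset P(B/P(B))=0.
\]

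For part (1), consider the composition $g:=q_{P(B)}\circ f:A\longrightarrow B/P(B)$. It is an algebraic epimorphism of the Banach algebra $A$ onto the semisimple Banach algebra $B/P(B)$. By Johnson's theorem (uniqueness of the norm topology on semisimple Banach algebras, cited as \cite{J67} right before the theorem), $g$ is automatically continuous, and then by the open mapping theorem $g$ is open and surjective, i.e., a topological morphism in the sense of Section~\ref{definitions}. Applying Axiom~1 to the topological preradical $P$ yields
\[
g(P(A))\subset P(B/P(B))=0,
\]
so $f(P(A))\subset\ker q_{P(B)}=P(B)$, proving (1).

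For part (2), the inverse map $f^{-1}:B\longrightarrow A$ is again an algebraic morphism between Banach algebras, so applying (1) to $f^{-1}$ gives $f^{-1}(P(B))\subset P(A)$, whence $P(B)\subset f(P(A))$; combined with (1) this yields equality. I do not foresee a genuine obstacle: the only subtle point is the appeal to Johnson's theorem, which is precisely what permits us to replace an a priori discontinuous algebraic morphism with a bona fide topological morphism into the semisimple quotient $B/P(B)$ — the rest is a direct application of Axioms~1 and~2.
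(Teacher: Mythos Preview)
Your proof is correct and uses the same key idea as the paper: Johnson's automatic continuity theorem applied to a map into a semisimple Banach quotient. Your choice to quotient by $P(B)$ directly (observing that $B/P(B)$ is semisimple because $\operatorname{rad}\le P$ and $P(B/P(B))=0$) is in fact a bit cleaner than the paper's route, which quotients by $\operatorname{rad}(B)$ and then needs the extra step of showing $P(B/\operatorname{rad}(B))\subset P(B)/\operatorname{rad}(B)$ via the natural map $B/\operatorname{rad}(B)\to B/P(B)$.
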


\begin{proof}
$\left(  1\right)  $ Let $B^{\prime}=B/\operatorname{rad}\left(  B\right)  $
and $q:B\longrightarrow B^{\prime}$ be the standard quotient map. Then $q\circ
f:A\longrightarrow B^{\prime}$ is a topological morphism by Johnson's result
\cite[Corollary 5.5.3]{A91}. Therefore $\left(  q\circ f\right)  \left(
P\left(  A\right)  \right)  \subset P\left(  B^{\prime}\right)  $, whence
$f\left(  P\left(  A\right)  \right)  \subset q^{-1}\left(  P\left(
B^{\prime}\right)  \right)  $.

Let $g:B/\operatorname{rad}(B)\rightarrow B/P(B)$ be the natural morphism,
then
\[
g(P(B/\operatorname{rad}(B)))\subset P(B/P(B))=0
\]
whence $P\left(  B^{\prime}\right)  =P(B/\operatorname{rad}(B))\subset\ker
g=P(B)/\operatorname{rad}(B)=q(P(B))$. Thus
\[
f(P(A))\subset q^{-1}\left(  q\left(  P(B)\right)  \right)  =P(B).
\]

$\left(  2\right)  $ follows from $\left(  1\right)  $ since $f^{-1}%
:B\longrightarrow A$ is also an algebraic morphism.
\end{proof}

As a consequence, topological radicals larger than (or equal to)
$\operatorname{rad}$ on Banach algebras don't depend on the complete norm topology.

Let $\operatorname{Rad}$ be the restriction of $\operatorname{rad}$ to Banach
algebras: $\operatorname{Rad}=\operatorname{rad}|_{\mathfrak{U}_{\mathrm{b}}}%
$. Then $\operatorname{Rad}$ is a pliant hereditary topological radical, while
the restriction $\operatorname{rad}|_{\mathfrak{U}_{\mathrm{n}}}$ is not even
a topological radical \cite{D97}. The reason was already discussed: a normed
algebra can have nonclosed primitive ideals. It is clear that
$\operatorname{Rad}$ has a unique hereditary extension to $\mathfrak{U}%
_{\mathrm{b}}^{u}$, namely $\operatorname{rad}|_{\mathfrak{U}_{\mathrm{b}}%
^{u}}$. The hereditary extension of $\operatorname{Rad}$ to normed algebras is
given by the \textit{regular Jacobson radical} $\operatorname{Rad}^{r}$ (see
Section \ref{regular}):
\begin{equation}
\operatorname{Rad}^{r}\left(  A\right)  =\left\{  a\in A:\rho\left(
ab\right)  =0\text{ }\forall b\in\widehat{A}\right\}  \label{rjr}%
\end{equation}
where $\rho\left(  a\right)  $ is a (geometric) spectral radius $\inf
_{n}\left\Vert a^{n}\right\Vert ^{1/n}$. Below we consider the other
topological hereditary extensions of $\operatorname{Rad}$ to $\mathfrak{U}%
_{\mathrm{n}}$.

\subsubsection{Primitive maps and related radicals\label{primitive}}

Let $\mathfrak{U}$ be a base class of algebras. A rule that indicates, for
each algebra $A\in\mathfrak{U}$, a subset $\Omega\left(  A\right)  $ of
$\operatorname{Prim}\left(  A\right)  $ is called a \textit{ primitive map }
on $\mathfrak{U}$ if

\begin{enumerate}
\item[(1$_{pm}$)] $\Omega\left(  B\right)  =\left\{  f\left(  I\right)
:I\in\Omega\left(  A\right)  \right\}  $ for an injective morphism
$f:A\longrightarrow B$ of algebras from $\mathfrak{U}$;

\item[(2$_{pm}$)] $\Omega\left(  J\right)  =\left\{  J\cap I:I\in\Omega\left(
A\right)  ,J\cap I\neq J\right\}  $ for every ideal $J\in\mathfrak{U}$ of $A$;

\item[(3$_{pm}$)] $\Omega\left(  A/J\right)  =\left\{  J/I:I\in\Omega\left(
A\right)  ,J\subset I\right\}  $ for every [closed] ideal $J\in\mathfrak{U}$
of $A$.
\end{enumerate}

If $\left(  1_{pm}\right)  $ holds for arbitrary isomorphisms of algebras
$A,B\in\mathfrak{U}$, then we say that $\Omega$ is \textit{pliant}; in other
words, a primitive map $\Omega$ on $\mathfrak{U}$ is pliant if the following
condition holds:

\begin{enumerate}
\item[(4$_{pm}$)] $\Omega\left(  A\right)  $ doesn't depend on the choice of a
norm in $A$, for every $A$ from $\mathfrak{U}$ (in short: on the choice of
$\mathfrak{U}$-norm).
\end{enumerate}

Clearly all primitive maps on $\mathfrak{U}_{\mathrm{a}}$ and on
$\mathfrak{U}_{\mathrm{c}^{\ast}}$ are pliant.

\begin{proposition}
All primitive maps defined on Banach algebras are pliant.
\end{proposition}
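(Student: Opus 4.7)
The plan is to reduce pliantness (condition $(4_{pm})$) for an arbitrary Banach algebra $A$ to the analogous question on its primitive quotients $A/I$, and then to invoke Johnson's uniqueness-of-norm theorem, which applies because primitive algebras are semisimple.

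First I would fix a complex algebra $A$ equipped with two complete submultiplicative norms $\|\cdot\|_{1}$ and $\|\cdot\|_{2}$, and verify that the underlying set $\operatorname{Prim}(A)$ is the same in both cases (primitivity being a purely algebraic notion) and that each $I\in\operatorname{Prim}(A)$ is closed with respect to both norms (the standard fact that, in a Banach algebra, maximal modular left ideals are closed and the corresponding primitive ideal arises as an intersection of closed sets). Consequently, for each such $I$ the quotient $A/I$ is a primitive, hence semisimple, Banach algebra under either induced quotient norm $q_{j}$, and condition $(3_{pm})$ can be applied.

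Next I would exploit $(3_{pm})$ to translate membership in $\Omega$ into a statement about primitive quotients. Since $\operatorname{Prim}(A/I)=\{0\}$, we have $\Omega(A/I)\subset\{0\}$, and $(3_{pm})$ gives
\[
I\in\Omega(A,\|\cdot\|_{j})\ \Longleftrightarrow\ 0\in\Omega(A/I,q_{j}),\qquad j=1,2.
\]
So the problem reduces to showing that $\Omega(A/I,q_{1})=\Omega(A/I,q_{2})$ for every primitive ideal $I$.

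This is exactly where Johnson's theorem enters. Because $A/I$ is semisimple, its two complete Banach algebra norms $q_{1}$ and $q_{2}$ are automatically equivalent, so the identity $\mathrm{id}\colon(A/I,q_{1})\longrightarrow(A/I,q_{2})$ is a topological isomorphism and therefore an injective morphism in $\mathfrak{U}_{\mathrm{b}}$. Applying $(1_{pm})$ to this morphism yields $\Omega(A/I,q_{1})=\Omega(A/I,q_{2})$, and by the equivalence above this propagates to $\Omega(A,\|\cdot\|_{1})=\Omega(A,\|\cdot\|_{2})$, which is pliantness.

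The main obstacle is that $A$ itself need not be semisimple, so Johnson's theorem cannot be applied directly to $A$; indeed, a non-semisimple Banach algebra can in principle carry genuinely inequivalent complete norms. The essential insight is that the reduction to primitive quotients transports the question into the semisimple realm where Johnson applies automatically. Everything else is verifying standard Banach-algebra prerequisites (closedness of primitive ideals, completeness and submultiplicativity of quotient norms) and correctly reading off $(1_{pm})$ and $(3_{pm})$.
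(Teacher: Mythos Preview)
Your approach is correct in spirit and uses the same two ingredients as the paper --- condition $(3_{pm})$ and Johnson's uniqueness-of-norm theorem --- but applies them through a different reduction. The paper passes once to the single quotient $A/\operatorname{Rad}(A)$: since every primitive ideal contains $\operatorname{Rad}(A)$, condition $(3_{pm})$ with $J=\operatorname{Rad}(A)$ gives a bijection between $\Omega(A)$ and $\Omega(A/\operatorname{Rad}(A))$, and this quotient is semisimple, so Johnson (plus $(1_{pm})$) applies once. You instead pass to each primitive quotient $A/I$ separately and invoke Johnson on each. Both routes work; the paper's is a bit more economical.

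One factual slip to correct: your claim that $\operatorname{Prim}(A/I)=\{0\}$ for a primitive ideal $I$ is false in general --- a primitive Banach algebra can have many primitive ideals (e.g.\ $\mathcal{B}(H)$ for infinite-dimensional $H$, where irreducible representations of the Calkin algebra pull back to primitive ideals containing $\mathcal{K}(H)$). Fortunately your argument does not need this. The equivalence $I\in\Omega(A)\Longleftrightarrow 0\in\Omega(A/I)$ follows directly from $(3_{pm})$ with $J=I$, since $0=I/I$ appears in $\Omega(A/I)$ exactly when $I\in\Omega(A)$; and your application of $(1_{pm})$ to the topological isomorphism $(A/I,q_1)\to(A/I,q_2)$ gives the full equality $\Omega(A/I,q_1)=\Omega(A/I,q_2)$ anyway, which is more than enough. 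Simply delete the clause about $\operatorname{Prim}(A/I)=\{0\}$ and the proof stands.
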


\begin{proof}
Let $A$ be a Banach algebra. It follows from (3$_{pm}$) that $\Omega\left(
A\right)  $ is uniquely determined by $\Omega\left(  A/\operatorname{Rad}%
\left(  A\right)  \right)  $, but $A/\operatorname{Rad}\left(  A\right)  $ has
only one complete norm topology by Johnson's theorem.
\end{proof}

The following proposition is straightforward.

\begin{proposition}
A primitive map defined on some class $\mathfrak{U}$ of normed algebras is
pliant if and only if it does not depend on the choice of a $\mathfrak{U}%
$-norm on semisimple algebras from $\mathfrak{U}$.
\end{proposition}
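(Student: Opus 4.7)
The plan is to dispose of the forward implication trivially and to establish the reverse one by reducing to the semisimple case through condition $(3_{pm})$, mirroring the strategy of the preceding proposition for Banach algebras, with the standing hypothesis replacing Johnson's theorem.

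\smallskip

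\emph{Necessity} is immediate: if $\Omega$ is pliant then $(4_{pm})$ asserts that $\Omega(A)$ does not depend on the choice of $\mathfrak{U}$-norm for \emph{every} $A\in\mathfrak{U}$, in particular for semisimple $A\in\mathfrak{U}$.

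\smallskip

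\emph{Sufficiency.} Given $A\in\mathfrak{U}$ carrying two $\mathfrak{U}$-norms $\|\cdot\|_1$ and $\|\cdot\|_2$, I would consider $J:=\operatorname{rad}(A)$ and exploit two facts: (i) since $\operatorname{Prim}(A)$ is a purely algebraic invariant, $J$ is the same ideal in both normed structures; (ii) $J\subset I$ for every $I\in\operatorname{Prim}(A)\supset\Omega(A)$. Facts (i)--(ii) together with $(3_{pm})$ produce a bijection $I\leftrightarrow I/J$ between $\Omega(A)$ and $\Omega(A/J)$ with inverse $q_J^{-1}$, so that $\Omega(A)$ is recoverable from $\Omega(A/J)$. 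Since $A/J$ is Jacobson semisimple and the two norms on $A$ induce two $\mathfrak{U}$-norms on $A/J$, the standing hypothesis gives $\Omega(A/J,\|\cdot\|_1^{\mathrm{quot}})=\Omega(A/J,\|\cdot\|_2^{\mathrm{quot}})$; lifting via $q_J^{-1}$ yields $\Omega(A,\|\cdot\|_1)=\Omega(A,\|\cdot\|_2)$, which is precisely $(4_{pm})$.

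\smallskip

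The hard step will be a technicality rather than a conceptual difficulty: in a general normed class $\operatorname{rad}(A)$ need not be closed, whereas the topological form of $(3_{pm})$ is stated for closed ideals and $A/\operatorname{rad}(A)$ may fail to be a normed algebra in the usual sense. For the base classes $\mathfrak{U}_{\mathrm{b}}$, $\mathfrak{U}_{\mathrm{q}_{\mathrm{b}}}$, and $\mathfrak{U}_{\mathrm{c}^{\ast}}$ this poses no issue (primitive ideals are automatically closed and the quotient remains in the class). For a general normed $\mathfrak{U}$ the word ``straightforward'' in the statement must presuppose that $\mathfrak{U}$ is stable under forming $A/\operatorname{rad}(A)$; once this mild stability is in place the reduction above goes through verbatim.
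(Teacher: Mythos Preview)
Your proof is correct and follows exactly the approach the paper has in mind: the authors label this proposition ``straightforward'' without proof, but the intended argument is visibly the one from the preceding proposition (reduce to $A/\operatorname{rad}(A)$ via $(3_{pm})$ and invoke the hypothesis on semisimple algebras), with the standing hypothesis replacing Johnson's theorem. Your caveat about the closedness of $\operatorname{rad}(A)$ in a general normed class is well taken and is a genuine technical point the paper glosses over; in the base classes where the authors actually apply primitive maps (notably $\mathfrak{U}_{\mathrm{q}}$ and its subclasses, cf.\ Theorem~\ref{pm1}) primitive ideals are closed and the issue does not arise.
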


It is easy to see from the results of Section \ref{algebras} that setting
$\Omega(A)=\operatorname{Prim}(A)$ for each $A$, we obtain a primitive map
on\textbf{ $\mathfrak{U}_{\mathrm{a}}$; }it will be denoted by
$\operatorname{Prim}$.

\begin{theorem}
\label{pm}Let $\mathcal{F}=\left(  \Omega_{\alpha}\right)  $ be a family of
(pliant)\textit{ primitive maps. Then }

\begin{enumerate}
\item $\Omega_{\cap\mathcal{F}}:A\longrightarrow\cap_{\alpha}\Omega_{\alpha
}\left(  A\right)  $ and $\Omega_{\cup\mathcal{F}}:A\longrightarrow
\cup_{\alpha}\Omega_{\alpha}\left(  A\right)  $ are (pliant) \textit{
primitive maps;}

\item If $\Omega_{i}$ is a (pliant)\textit{ primitive map for }$i=1,2$\textit{
then so is }$\Omega_{1}\backslash\Omega_{2}:A\longmapsto\Omega_{1}\left(
A\right)  \backslash\Omega_{2}\left(  A\right)  $.
\end{enumerate}
\end{theorem}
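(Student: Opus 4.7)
The plan is to verify each of the three primitive-map axioms $(1_{pm})$, $(2_{pm})$, $(3_{pm})$ in turn for $\Omega_{\cap\mathcal{F}}$, $\Omega_{\cup\mathcal{F}}$, and $\Omega_{1}\setminus\Omega_{2}$, drawing each time on the fact that every $\Omega_{\alpha}$ (respectively $\Omega_{1},\Omega_{2}$) already satisfies them. The uniform observation is that each axiom has the shape
\[
\Omega(B) \;=\; T\bigl(\Omega(A)\cap D\bigr),
\]
where $B$ is the derived algebra, $D\subset\operatorname{Prim}(A)$ is a suitable domain, and $T\colon D\to\operatorname{Prim}(B)$ is a concrete transformation of primitive ideals: namely $T(I)=f(I)$ (with $D=\operatorname{Prim}(A)$) in $(1_{pm})$, $T(I)=J\cap I$ (with $D=\{I:J\not\subset I\}$) in $(2_{pm})$, and $T(I)=I/J$ (with $D=\{I:J\subset I\}$) in $(3_{pm})$. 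So the verification reduces to asking whether $T$ commutes with the set-theoretic operation in question.

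For the union, no extra input is required: $T\bigl(\bigcup_{\alpha}S_{\alpha}\bigr)=\bigcup_{\alpha}T(S_{\alpha})$ holds for any function $T$, so each axiom for $\Omega_{\cup\mathcal{F}}$ drops out by unioning the corresponding axioms for the $\Omega_{\alpha}$. For the intersection and the set difference, however, one needs $T$ to be injective on $D$, because injectivity is exactly what upgrades the trivial inclusions to the required equalities $T\bigl(\bigcap_{\alpha}S_{\alpha}\bigr)=\bigcap_{\alpha}T(S_{\alpha})$ and $T(S_{1}\setminus S_{2})=T(S_{1})\setminus T(S_{2})$. Thus the real content is to show that each of the three $T$'s is bijective: in $(1_{pm})$ the injective morphism $f$ is an algebra isomorphism and so induces a bijection between primitive ideal spaces; in $(3_{pm})$ the assignment $I\leftrightarrow I/J$ of $(\ref{p5})$ is bijective by the classical correspondence theorem; and in $(2_{pm})$ the assignment $I\mapsto J\cap I$ between $\{I\in\operatorname{Prim}(A):J\not\subset I\}$ and $\operatorname{Prim}(J)$ is bijective in view of $(\ref{p3})$, $(\ref{p4})$, and the uniqueness of extension of any $\pi\in\operatorname{Irr}(J)$ to $\widetilde{\pi}\in\operatorname{Irr}(A)$ recorded in the preliminaries.

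Pliancy passes to the constructed maps for free: if each $\Omega_{\alpha}(A)$ is independent of the particular $\mathfrak{U}$-norm carried by $A$, then so are $\bigcap_{\alpha}\Omega_{\alpha}(A)$, $\bigcup_{\alpha}\Omega_{\alpha}(A)$, and $\Omega_{1}(A)\setminus\Omega_{2}(A)$, so condition $(4_{pm})$ is immediate. The one genuinely non-formal point is the bijectivity in case $(2_{pm})$, which is where the representation-theoretic input from the preliminaries is actually used; once it is in hand the rest of the argument is purely set-theoretic bookkeeping, and the three axioms fall out mechanically for all three constructions simultaneously.
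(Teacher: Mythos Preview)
Your proposal is correct and follows essentially the same strategy as the paper: direct verification of the three axioms $(1_{pm})$--$(3_{pm})$ for each construction. The paper simply writes out one sample computation for each part and dismisses the rest with ``similarly''; you instead extract the common shape $\Omega(B)=T(\Omega(A)\cap D)$ and reduce everything to the injectivity of $T$ on $D$, which is a cleaner and more explicit organization of the same idea. In particular, you correctly flag that the only non-formal point is the bijectivity of $I\mapsto J\cap I$ in $(2_{pm})$; the paper's ``similarly'' hides exactly this. One small remark: the uniqueness of the extension $\tau\mapsto\widetilde{\tau}$ is stated in the preliminaries at the level of representations, not kernels, so to get injectivity of $I\mapsto J\cap I$ on primitive ideals you implicitly use that $\ker\widetilde{\tau}=\{a\in A:aJ\subset\ker\tau\}$ depends only on $\ker\tau$; this is an easy consequence of the explicit formula for $\widetilde{\tau}$ given there, but it is worth having in mind.
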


\begin{proof}
$\left(  1\right)  $ Since
\begin{align*}
\cup_{\alpha}\Omega_{\alpha}\left(  J\right)   &  =\cup_{\alpha}\left\{  J\cap
I:I\in\Omega_{\alpha}\left(  A\right)  ,J\cap I\neq J\right\} \\
&  =\left\{  J\cap I:I\in\cup_{\alpha}\Omega_{\alpha}\left(  A\right)  ,J\cap
I\neq J\right\}
\end{align*}
we obtain (2$_{pm}$) for $\Omega_{\cup\mathcal{F}}$. The remaining assertions
can be proved similarly.

$\left(  2\right)  $. Since
\begin{align*}
\left(  \Omega_{1}\backslash\Omega_{2}\right)  \left(  A/J\right)   &
=\left\{  J/I:I\in\Omega_{1}\left(  A\right)  ,J\subset I\right\}
\backslash\left\{  J/I:I\in\Omega_{2}\left(  A\right)  ,J\subset I\right\} \\
&  =\left\{  J/I:I\in\Omega_{1}\left(  A\right)  \backslash\Omega_{2}\left(
A\right)  ,J\subset I\right\}
\end{align*}
we get (3$_{pm}$) for $\Omega_{1}\backslash\Omega_{2}$. The other conditions
can be checked in a similar way.
\end{proof}

As usual for subsets of $\operatorname{Prim}\left(  A\right)  $, $\ker
\Omega\left(  A\right)  $ is simply $\cap\left\{  I:I\in\Omega\left(
A\right)  \right\}  $. We define an ideal map $\Pi_{\Omega}$ by
\[
\Pi_{\Omega}\left(  A\right)  =\ker\Omega\left(  A\right)
\]
for every $A\in\mathfrak{U}$, meaning that $\Pi_{\Omega}\left(  A\right)  =A$
if $\Omega\left(  A\right)  =\varnothing$, or, more formally, by $\Pi_{\Omega
}\left(  A\right)  =A\cap\ker\Omega\left(  A\right)  $. Not all primitive maps
determine radicals (see Example \ref{pm5}). The obstacle is that the ideals
$\Pi_{\Omega}\left(  A\right)  $ can be non-closed.

\begin{theorem}
\label{pm1}Let $\Omega$ be a\textit{ }primitive map on $\mathfrak{U}$. Then

\begin{enumerate}
\item If ($\Omega$ is pliant and) $\mathfrak{U}\subset\mathfrak{U}%
_{\mathrm{q}}$ then $\Pi_{\Omega}$ is a (pliant) topological hereditary radical;

\item If $\Omega$ is pliant and $\mathfrak{U}_{\mathrm{q}}\subset
\mathfrak{U}\subset\mathfrak{U}_{\mathrm{n}}$ then $\Pi_{\Omega}$ is a pliant
hereditary preradical;

\item If $\mathfrak{U}=\mathfrak{U}_{\mathrm{a}}$ then $\Pi_{\Omega}$ is an
algebraic hereditary radical.
\end{enumerate}
\end{theorem}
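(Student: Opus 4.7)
The plan is to derive heredity of $\Pi_{\Omega}$ directly from axiom $(2_{pm})$, observe that heredity automatically yields Axioms~3 and~4, then obtain Axiom~1 (covariance) and Axiom~2 (idempotency of the quotient) from $(3_{pm})$, and finally discuss what changes among the three cases --- the only delicate point being the \emph{closedness} of $\Pi_{\Omega}(A)$ in the topological settings.

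\textbf{Step 1: heredity.} Let $J$ be an (appropriate, closed in the topological case) ideal of $A\in\mathfrak{U}$. By $(2_{pm})$, $\Omega(J)=\{J\cap I:I\in\Omega(A),\ J\cap I\neq J\}$, and the relation $J\cap I=J$ is equivalent to $J\subset I$. Splitting the intersection according to whether $J\subset I$ or not, one finds
\[
J\cap\Pi_{\Omega}(A)=\bigcap_{I\in\Omega(A)}(J\cap I)=\bigcap_{\substack{I\in\Omega(A)\\ J\cap I\neq J}}(J\cap I)=\Pi_{\Omega}(J),
\]
with the usual convention $\Pi_{\Omega}(J)=J$ when $\Omega(J)=\varnothing$ (which is precisely when $J\subset I$ for every $I\in\Omega(A)$). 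As remarked right after the definition of hereditary radicals, this yields Axioms~3 and~4 for free: $\Pi_{\Omega}(\Pi_{\Omega}(A))=\Pi_{\Omega}(A)\cap\Pi_{\Omega}(A)=\Pi_{\Omega}(A)$, and $\Pi_{\Omega}(I)=I\cap\Pi_{\Omega}(A)\subset\Pi_{\Omega}(A)$ is an ideal of $A$.

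\textbf{Step 2: Axioms 1 and 2.} For Axiom~1, let $f:A\longrightarrow B$ be a morphism in $\mathfrak{U}$ and set $J=\ker f$ (closed in the topological setting because $f$ is continuous). Identifying $B$ with $A/J$, axiom $(3_{pm})$ gives $\Omega(A/J)=\{I/J:I\in\Omega(A),\ J\subset I\}$, hence
\[
\Pi_{\Omega}(B)=\Bigl(\bigcap\{I:I\in\Omega(A),\ J\subset I\}\Bigr)\big/J\supset(\Pi_{\Omega}(A)+J)/J=f(\Pi_{\Omega}(A)),
\]
which is Axiom~1. For Axiom~2, put $J=\Pi_{\Omega}(A)$; now $J\subset I$ for \emph{every} $I\in\Omega(A)$, so $(3_{pm})$ gives $\Omega(A/J)=\{I/J:I\in\Omega(A)\}$ and $\Pi_{\Omega}(A/J)=(\bigcap_{I\in\Omega(A)}I)/J=J/J=0$. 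Crucially, $(3_{pm})$ demands that $J$ be a closed ideal in $\mathfrak{U}$ whenever we are in a topological class, so Axiom~2 is only granted once we know that $\Pi_{\Omega}(A)\in\mathfrak{U}$ and, in particular, is closed.

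\textbf{Step 3: the three cases.} In case~(3), $\mathfrak{U}=\mathfrak{U}_{\mathrm{a}}$, every ideal is allowed, so Steps~1--2 give an algebraic hereditary radical. In case~(1), $\mathfrak{U}\subset\mathfrak{U}_{\mathrm{q}}$, and by the $Q$-algebra characterization recalled in Section~1.2.4, every strictly irreducible representation of $A$ is equivalent to a continuous one on a normed space; consequently each $I\in\operatorname{Prim}(A)$ is closed, so $\Pi_{\Omega}(A)$ is an intersection of closed ideals and is itself a closed ideal. Since the class $\mathfrak{U}_{\mathrm{q}}$ is universal, $\Pi_{\Omega}(A)\in\mathfrak{U}$ and Step~2 applies to yield a topological hereditary radical; pliancy, when $\Omega$ is pliant, is immediate because $\Pi_{\Omega}(A)=\ker\Omega(A)$ depends only on the set $\Omega(A)$, which does not depend on the choice of $\mathfrak{U}$-norm.

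\textbf{Step 4: the preradical case.} In case~(2), $\mathfrak{U}$ can contain non-$Q$ normed algebras, whose primitive ideals need not be closed, so $\Pi_{\Omega}(A)$ may fail to be closed; this blocks the use of $(3_{pm})$ for $J=\Pi_{\Omega}(A)$ and hence Axiom~2. Steps~1--2 still produce Axiom~1, heredity, and consequently Axioms~3 and~4, giving a pliant hereditary preradical (pliancy being inherited from $\Omega$ as in Step~3). The only real obstacle in the whole argument is precisely this point --- verifying that $\Pi_{\Omega}(A)$ is closed in the topological context --- which is exactly why the hypothesis $\mathfrak{U}\subset\mathfrak{U}_{\mathrm{q}}$ is needed to upgrade the preradical of~(2) to the topological radical of~(1).
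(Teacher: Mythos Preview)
Your proof is correct and follows essentially the same route as the paper's: heredity from $(2_{pm})$, Axiom~2 from $(3_{pm})$, Axiom~1 via factoring a morphism through its kernel, and the observation that closedness of primitive ideals in $Q$-algebras is what separates case~(1) from case~(2). One small point worth tightening: in Step~2 you write ``identifying $B$ with $A/J$'' and then invoke only $(3_{pm})$, but that identification is through the induced injective morphism $A/J\to B$, so you are tacitly using $(1_{pm})$ as well; the paper makes this explicit by saying that every morphism factors as a standard quotient map followed by an injective morphism, and that $(1_{pm})$ and $(3_{pm})$ together give Axiom~1.
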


\begin{proof}
$\left(  3\right)  $ Let $A\in\mathfrak{U}$ be an algebra, and let
$J\in\mathfrak{U}$ be an ideal of $A$. It follows from (2$_{pm}$) that
$\Pi_{\Omega}\left(  J\right)  =J\cap\Pi_{\Omega}\left(  A\right)  $. So
$\Pi_{\Omega}$ is hereditary.

Let $J=\Pi_{\Omega}\left(  A\right)  $. Then $J\subset I$ for every
$I\in\Omega\left(  A\right)  $. By $\left(  \ref{pr}\right)  $,
\[
\Pi_{\Omega}\left(  A/J\right)  =\ker\Omega\left(  A\right)  /J=\Pi_{\Omega
}\left(  A\right)  /J=J/J=0.
\]

Since every morphism is represented as the superposition of a standard quotient
map and an injective morphism, (1$_{pm}$) and (3$_{pm}$) imply that
$\Pi_{\Omega}$ is a pliant radical on $\mathfrak{U}$.

$\left(  1\right)  $ Indeed, every strictly irreducible representation of a
normed $Q$-algebra $A$ is equivalent to a bounded representation by bounded
operators on a normed space \cite[Theorem 2.1]{TR1}. So $\operatorname{Prim}%
\left(  A\right)  $ consists of closed ideals and $\Pi_{\Omega}$ is a closed
ideal map on normed $Q$-algebras. As in $\left(  3\right)  $, $\Pi_{\Omega}$
is a hereditary radical.

$\left(  2\right)  $ is evident.
\end{proof}

\begin{remark}
\label{pm3}An algebra $A\in\mathfrak{U}$ is $\Pi_{\Omega}$-radical if and only
if $\Omega\left(  A\right)  =\varnothing$, and $A$ is $\Pi_{\Omega}%
$-semisimple if and only if\quad$\cap\left\{  I:I\in\Omega\left(  A\right)
\right\}  =0$; it is clear that every $\Pi_{\Omega}$-semi%
\-%
sim%
\-%
ple algebra is $\operatorname{rad}$-semisimple, i.e. $\operatorname{rad}%
\leq\Pi_{\Omega}$ on $\mathfrak{U}$. In particular, Theorem $\ref{am}$ holds
for $P=\Pi_{\Omega}$.
\end{remark}

The following statement is obvious.

\begin{corollary}
\label{pm2} Let $\Omega$ be a pliant primitive map on a universal class
$\mathfrak{U}$ of normed algebras. Then $\Pi_{\Omega}$ satisfies the condition
of flexible heredity (in particular, the condition of Banach heredity) and
$f\left(  P\left(  A\right)  \right)  =P\left(  B\right)  $ for an algebraic
injective morphism $f:A\longrightarrow B$ of any algebras $A,B\in\mathfrak{U}$.
\end{corollary}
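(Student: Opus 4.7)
The plan is to derive both assertions as quick consequences of pliancy of $\Omega$ combined with the hereditariness already established in Theorem \ref{pm1}. Writing $P = \Pi_\Omega$ for brevity, I need to check (a) the flexible heredity identity $P(L,\|\cdot\|_L) = L \cap P(A)$ for any flexible ideal $(L,\|\cdot\|_L)$ of $A\in\mathfrak{U}$ (with $(L,\|\cdot\|_L)\in\mathfrak{U}$), and (b) the equality $f(P(A)) = P(B)$ whenever $f: A \to B$ is an algebraic injective morphism between algebras in $\mathfrak{U}$, i.e.\ an algebraic isomorphism (since morphisms are by definition surjective).

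For (a), I would first apply Theorem \ref{pm1} in its hereditary form to the ideal $L$ equipped with the inherited norm $\|\cdot\|_A|_L$: this ideal lies in $\mathfrak{U}$ by universality, so the theorem yields
\[
P(L,\|\cdot\|_A|_L) \;=\; L \cap P(A).
\]
Next I would use pliancy via condition $(4_{pm})$: the identity mapping $(L,\|\cdot\|_L) \to (L,\|\cdot\|_A|_L)$ is an algebraic isomorphism between two normed algebras that share the same underlying algebra, so $\Omega$ is insensitive to which of these two norms is used, and therefore $P(L,\|\cdot\|_L) = P(L,\|\cdot\|_A|_L)$. Combining the two equalities delivers (a); the Banach heredity statement is then just the special case in which $(L,\|\cdot\|_L)$ is Banach, so that it lies in $\mathfrak{U}_{\mathrm{b}}$.

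For (b), pliancy is precisely the statement that condition $(1_{pm})$ extends to every algebraic isomorphism of algebras from $\mathfrak{U}$, so
\[
\Omega(B) \;=\; \{\,f(I) : I \in \Omega(A)\,\}.
\]
Since $f$ is a set-theoretic bijection it commutes with intersections, hence
\[
P(B) \;=\; \bigcap_{I\in\Omega(A)} f(I) \;=\; f\Bigl(\bigcap_{I\in\Omega(A)} I\Bigr) \;=\; f(P(A)),
\]
which gives (b).

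The only obstacle I foresee is purely bookkeeping: I must keep track of which norm on $L$ is in force at each step so that the invocations of Theorem \ref{pm1} (which requires the inherited norm) and of pliancy (which lets me switch to $\|\cdot\|_L$) are combined cleanly, and I must ensure $(L,\|\cdot\|_L)\in\mathfrak{U}$ so that the symbol $P(L,\|\cdot\|_L)$ is literally meaningful. There is no deeper difficulty, because the notion of pliancy has been tailored precisely to make such norm-change arguments automatic.
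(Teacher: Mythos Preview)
Your proposal is correct and matches the paper's intent: the paper simply labels the corollary ``obvious'' and gives no proof, so you have written out precisely the straightforward unpacking it omits---heredity from Theorem~\ref{pm1} applied to the inherited-norm ideal, followed by pliancy $(4_{pm})$ to switch norms for part~(a), and pliancy of $(1_{pm})$ for algebraic isomorphisms plus bijectivity commuting with intersections for part~(b). Your bookkeeping caveat about needing $(L,\|\cdot\|_L)\in\mathfrak{U}$ is well-placed and is the only thing to watch.
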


See the definitions of $\operatorname{Prim}_{\mathrm{n}}\left(  A\right)  $
and $\operatorname{Prim}_{\mathrm{b}}\left(  A\right)  $ for a normed algebra
$A$ in Section \ref{algebras}.

\begin{example}
\label{pm4}$\operatorname{Prim}_{\mathrm{n}}$ and $\operatorname{Prim}%
_{\mathrm{b}}$ are primitive maps on $\mathfrak{U}_{\mathrm{n}}$.
\end{example}

\begin{proof}
In virtue of Section \ref{algebras}, it is sufficient to check that the
declared properties of representations from $\operatorname{Irr}_{\mathrm{n}%
}\left(  A\right)  $ or $\operatorname{Irr}_{\mathrm{b}}\left(  A\right)  $
are preserved under passing to an ideals, quotients, actions of isomorphisms
and taking the converse manipulations. But this is straightforward in virtue
of \cite[Lemmas 2.3 and 2.4]{TR1}.
\end{proof}

The topological radicals $\Pi_{\operatorname{Prim}_{\mathrm{n}}}$ and
$\Pi_{\operatorname{Prim}_{\mathrm{b}}}$ coincide with the introduced in
\cite{D97, TR1} extensions $\operatorname{rad}_{\mathrm{b}}$ and
$\operatorname{rad}_{\mathrm{b}}$ of $\operatorname{Rad}$ to normed algebras.
On the base of $\operatorname{Prim}_{\mathrm{n}}$ and $\operatorname{Prim}%
_{\mathrm{b}}$ one can construct other useful primitive maps by imposing
additional requirements on representations $\pi$ from $\operatorname{Irr}%
_{\mathrm{n}}\left(  A\right)  $ or $\operatorname{Irr}_{\mathrm{b}}\left(
A\right)  $.

\begin{example}
\label{pm5}By Theorem $\ref{pm}$, $\Omega_{\mathrm{n}}:=$ $\operatorname{Prim}%
\backslash\operatorname{Prim}_{\mathrm{n}}$ and $\Omega_{\mathrm{b}}:=$
$\operatorname{Prim}\backslash\operatorname{Prim}_{\mathrm{b}}$ are primitive
maps on $\mathfrak{U}_{\mathrm{n}}$, but $\Pi_{\Omega_{\mathrm{n}}}$ and
$\Pi_{\Omega_{\mathrm{b}}}$ are not closed ideal maps on $\mathfrak{U}%
_{\mathrm{n}}$; $Q$-algebras are $\Pi_{\Omega_{\mathrm{n}}}$-radical, and
$\Pi_{\Omega_{\mathrm{n}}}\left(  A\right)  $ is a $Q$-algebra for every
normed algebra $A$.
\end{example}

The simplest characteristic of a representation is its dimension. Recall that
the \textit{dimension} of $\pi\in\operatorname{Irr}\left(  A\right)  $ is the
dimension of $X_{\pi}$. Let $N$ be a subset of $\mathbb{N\;\cup}\left\{
\infty\right\}  $, and let $\operatorname{Irr}^{\dim\in N}\left(  A\right)  $
be the set of all representations $\pi\in\operatorname{Irr}\left(  A\right)  $
such that $\dim X_{\pi}\in N$. Respectively, let $\operatorname{Prim}^{\dim\in
N}\left(  A\right)  =\left\{  \ker\pi:\pi\in\operatorname{Irr}^{\dim\in
N}\left(  A\right)  \right\}  $. The simplest special cases of the relation
$\dim\in N$ are $\dim=1$, $\dim>1$, $\dim<\infty$.

\begin{theorem}
\label{pm6}Let $N\subset\mathbb{N\;\cup}\left\{  \infty\right\}  $. Then
$\operatorname{Prim}^{\dim\in N}$ is a pliant hereditary primitive map$.$
\end{theorem}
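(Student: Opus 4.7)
The plan is to verify the four axioms (1$_{pm}$)--(4$_{pm}$) directly, leveraging the fact that the bijective correspondences between $\operatorname{Irr}(A)$ and $\operatorname{Irr}(J)$, $\operatorname{Irr}(A/J)$, $\operatorname{Irr}(B)$ recalled in Section \ref{algebras} all preserve the representation space, hence its dimension. Thus, the key observation driving the whole proof is the \emph{invariance of} $\dim X_\pi$ under restriction to ideals, passage to quotients by contained ideals, and pullback along isomorphisms.

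First I would check (2$_{pm}$). Given an ideal $J$ of $A$, formulas (\ref{p3})--(\ref{p4}) say that each $\tau\in\operatorname{Irr}(J)$ is of the form $\pi|_J$ for a unique $\pi\in\operatorname{Irr}(A)$ with $\pi(J)\neq0$, and that $\pi|_J$ acts on the same space $X_\pi$. Hence $\dim X_{\pi|_J}\in N$ iff $\dim X_\pi\in N$, which yields
\[
\operatorname{Prim}^{\dim\in N}(J)=\{J\cap I:I\in\operatorname{Prim}^{\dim\in N}(A),\ J\cap I\neq J\}.
\]
Next, for (3$_{pm}$), the correspondence $\pi\mapsto\pi_{q_J}$ from (\ref{p5}) between $\operatorname{Irr}(A/J)$ and the set of $\pi\in\operatorname{Irr}(A)$ with $J\subset\ker\pi$ again preserves $X_\pi$, so
\[
\operatorname{Prim}^{\dim\in N}(A/J)=\{I/J:I\in\operatorname{Prim}^{\dim\in N}(A),\ J\subset I\}.
\]

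For (1$_{pm}$), an injective morphism $f:A\to B$ is, by the definition in Section \ref{definitions}, a bijective (open continuous) homomorphism, so $\pi\mapsto\pi\circ f$ is a bijection $\operatorname{Irr}(B)\to\operatorname{Irr}(A)$ acting identically on representation spaces; mapping kernels gives $\operatorname{Prim}^{\dim\in N}(B)=\{f(I):I\in\operatorname{Prim}^{\dim\in N}(A)\}$. The same argument goes through when $f$ is only an \emph{algebraic} isomorphism, which is what is needed for pliancy: condition (4$_{pm}$) reduces to noting that the set $\operatorname{Prim}^{\dim\in N}(A)$ is defined using only purely algebraic data (the algebraic structure of $A$ and the linear dimension of representation spaces), with no reference to any norm on $A$.

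There is no real obstacle here; the one point that could trip one up is making sure that in conditions (2$_{pm}$) and (3$_{pm}$) one uses the correct direction of the representation-space identification (the restriction $\pi|_J$ and the quotient $\pi_{q_J}$ act on the same underlying space as $\pi$, not on subquotients of it). Once that is in place, all four conditions follow immediately, and $\operatorname{Prim}^{\dim\in N}$ is indeed a pliant primitive map; the word ``hereditary'' in the statement simply emphasises the clean form of (2$_{pm}$), which is automatic from our argument.
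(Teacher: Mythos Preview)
Your proposal is correct and follows exactly the paper's approach: the paper's proof is the one-line observation that ``the representation spaces are not changed when we extend a representation from an ideal to the algebra, restrict to an ideal or induce the strictly irreducible representations from a quotient,'' which is precisely the invariance of $\dim X_\pi$ you make explicit in verifying (1$_{pm}$)--(4$_{pm}$).
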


\begin{proof}
It suffices to note that the representation spaces are not changed when we
extend a representation from an ideal to the algebra, restrict to an ideal or
induce the strictly irreducible representations from a quotient.
\end{proof}

\begin{corollary}
\label{dim}Let $N\subset\mathbb{N\;\cup}\left\{  \infty\right\}  $. Then the
map
\[
\operatorname{rad}^{\dim\in N}:A\longmapsto\Pi_{\operatorname{Prim}^{\dim\in
N}}\left(  A\right)
\]
is a pliant hereditary radical$.$
\end{corollary}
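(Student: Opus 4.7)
The plan is to observe that this corollary is essentially an immediate consequence of the two theorems that precede it, and to be explicit about which axioms are inherited from where.

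First I would invoke Theorem \ref{pm6}, which tells us that $\operatorname{Prim}^{\dim\in N}$ is a pliant hereditary primitive map on $\mathfrak{U}_{\mathrm{a}}$. This supplies, essentially for free, the three axioms $(1_{pm})$--$(3_{pm})$ required of a primitive map, together with the pliancy condition $(4_{pm})$; the justification is exactly the one already used in Theorem \ref{pm6}, namely that passing to an ideal, extending from an ideal to the algebra, or lifting from a quotient does not alter the dimension of the representation space, so the condition $\dim X_{\pi}\in N$ is preserved by all the operations that enter the definition of a primitive map.

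Next I would feed $\Omega=\operatorname{Prim}^{\dim\in N}$ into Theorem \ref{pm1}(3), the purely algebraic case. That result asserts directly that $\Pi_{\Omega}$ is an algebraic hereditary radical on $\mathfrak{U}_{\mathrm{a}}$, which by definition is exactly what $\operatorname{rad}^{\dim\in N}$ is supposed to be. Pliancy is then automatic because, as noted in Section \ref{definitions}, \emph{every} algebraic preradical is pliant (the identity isomorphism between two normed structures on an algebra is an algebraic morphism, and there is no topological structure to violate Axiom \ref{1}).

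The only thing to be careful about is making sure one does not mistakenly try to reprove Axioms \ref{1}--\ref{4} from scratch: Theorem \ref{pm1}(3) already does this in full generality from the primitive-map axioms, so the only obstacle would be if $\operatorname{Prim}^{\dim\in N}$ failed to be a primitive map, which is precisely what Theorem \ref{pm6} rules out. In short, the proof reduces to a single sentence combining the two previous results, and the \textquotedblleft main obstacle\textquotedblright{} is really just checking that no hypothesis is being silently required beyond those two ingredients.
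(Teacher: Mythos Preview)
Your proposal is correct and matches the paper's approach: the paper gives no explicit proof for this corollary, treating it as an immediate consequence of Theorem~\ref{pm6} (which supplies the pliant primitive map) and Theorem~\ref{pm1}(3) (which turns any algebraic primitive map into a hereditary radical via $\Pi_{\Omega}$). Your explicit identification of which ingredient delivers which axiom is exactly the intended reasoning.
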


Note that $\operatorname{rad}^{\dim\in N}$ is a pliant hereditary topological
radical on $Q$-algebras.

\begin{example}
An algebra $A$ is $\operatorname{rad}^{\dim>1}$-radical $\Leftrightarrow$ $A$
admits only one-dimensional strictly irreducible representations, and $A$ is
$\operatorname{rad}^{\dim=1}$-semisimple $\Leftrightarrow$ the intersection of
kernels of one-dimensional strictly irreducible representations is trivial.

Respectively, $A$ is $\operatorname{rad}^{\dim=\infty}$-radical
$\Leftrightarrow$ $A$ admits only finite-dimensional strictly irreducible
representations, and $A$ is $\operatorname{rad}^{\dim<\infty}$-semisimple
$\Leftrightarrow$ the intersection of kernels of finite-dimensional strictly
irreducible representations is trivial.
\end{example}

Let us finish by showing that in terms of primitive maps one can define one of
the most important objects of the theory of C*-algebras --- the largest GCR-ideal.

Recall that a C*-algebra $A$ is called \textit{elementary} if it is isomorphic
to the algebra $\mathcal{K}(H)$ of all compact operators on some Hilbert space
$H$. Furthermore, $A$ is a \textit{CCR-algebra} (a \textit{GCR-algebra}) if
for every irreducible representation $\pi$ of $A$, the image, $\pi(A)$,
consists of compact operators (respectively, contains a non-zero compact
operator, or, equivalently, all compact operators). A simplest example of a
CCR-algebra is $\mathcal{K}(H)$.

For $A\in\mathfrak{U}_{\mathrm{c}^{\ast}}$, let $\Omega_{\mathfrak{gcr}}(A)$
denote the set of all $I\in\operatorname{Prim}(A)$ such that $A/I$ has no
elementary ideals.

\begin{theorem}
$\Pi_{\Omega_{\mathfrak{gcr}}}$ is a topological radical on $\mathfrak{U}%
_{\mathrm{c}^{\ast}}$; $\Pi_{\Omega_{\mathfrak{gcr}}}\left(  A\right)  $ is
the largest GCR-ideal for every $A\in\mathfrak{U}_{\mathrm{c}^{\ast}}$.
\end{theorem}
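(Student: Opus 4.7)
The plan is to verify the axioms $(1_{pm})$--$(3_{pm})$ for $\Omega_{\mathfrak{gcr}}$, invoke Theorem~\ref{pm1}(1) (together with $\mathfrak{U}_{\mathrm{c}^{\ast}}\subset\mathfrak{U}_{\mathrm{q}}$) to conclude that $\Pi_{\Omega_{\mathfrak{gcr}}}$ is a topological hereditary radical, and then identify its radical class with the GCR C*-algebras so that $\Pi_{\Omega_{\mathfrak{gcr}}}(A)$ comes out as the largest GCR-ideal via heredity.

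Axiom $(1_{pm})$ is immediate, since possession of a non-zero elementary ideal is invariant under $\ast$-iso\-morphism. Axiom $(3_{pm})$ is just~(\ref{p5}) combined with $(A/J)/(I/J)\cong A/I$. The main content is $(2_{pm})$: for a closed ideal $J$ of $A$ and a primitive $I$ with $I\cap J\neq J$, the standard isomorphism identifies $J/(I\cap J)$ with the non-zero closed ideal $(J+I)/I$ of the primitive C*-algebra $A/I$, and one must show that $(J+I)/I$ has a non-zero elementary ideal if and only if $A/I$ does. The forward implication uses the fact recalled in Section~\ref{uni} that a closed ideal of a closed ideal of a C*-algebra is itself an ideal, so any elementary ideal of $(J+I)/I$ is also an elementary ideal of $A/I$. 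The reverse implication uses the following structural fact about primitive C*-algebras: if $A/I$ has a non-zero elementary ideal $E$, and $\pi$ is the faithful irreducible representation of $A/I$ on $H$, then $\pi(A/I)\supset\mathcal{K}(H)$ and $E=\pi^{-1}(\mathcal{K}(H))$; since $\mathcal{K}(H)$ is simple, any non-zero closed ideal of $A/I$ contains $E$. In particular $(J+I)/I\supset E$, and $E$ is an elementary ideal of $(J+I)/I$.

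With $\Omega_{\mathfrak{gcr}}$ a primitive map, Theorem~\ref{pm1}(1) gives that $\Pi_{\Omega_{\mathfrak{gcr}}}$ is a topological hereditary radical on $\mathfrak{U}_{\mathrm{c}^{\ast}}$. To finish, observe from Remark~\ref{pm3} that a C*-algebra $B$ is $\Pi_{\Omega_{\mathfrak{gcr}}}$-radical iff $\Omega_{\mathfrak{gcr}}(B)=\varnothing$, i.e., iff every primitive quotient of $B$ has a non-zero elementary ideal---which for a primitive C*-algebra means precisely that it is GCR. By Glimm's characterization of type~I C*-algebras, this is equivalent to $B$ itself being GCR. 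Hence $\Pi_{\Omega_{\mathfrak{gcr}}}(A)$ is $\Pi_{\Omega_{\mathfrak{gcr}}}$-radical (Axiom~\ref{3}) and therefore a GCR-ideal, while any GCR-ideal $J$ of $A$ satisfies $\Pi_{\Omega_{\mathfrak{gcr}}}(J)=J$ and thus $J\subset\Pi_{\Omega_{\mathfrak{gcr}}}(A)$ by Axiom~\ref{4}. Thus $\Pi_{\Omega_{\mathfrak{gcr}}}(A)$ is the largest GCR-ideal of $A$.

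The delicate points are axiom $(2_{pm})$, which rests on the uniqueness of the minimal non-zero closed ideal in a primitive C*-algebra containing compact operators, and the final identification, which invokes Glimm's non-trivial dichotomy for type~I C*-algebras.
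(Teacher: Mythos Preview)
Your verification of the primitive-map axioms is more detailed than the paper's (which merely says ``easy to check'') and is correct; in particular your treatment of $(2_{pm})$ via the minimality of the elementary ideal in a primitive C*-algebra is clean. Your overall strategy for the second half---identify $\mathbf{Rad}(\Pi_{\Omega_{\mathfrak{gcr}}})$ and then read off the ``largest GCR-ideal'' statement from Axioms~\ref{3} and~\ref{4}---is a legitimate alternative to the paper's direct argument, which instead shows by hand that $J_{0}=\Pi_{\Omega_{\mathfrak{gcr}}}(A)$ is GCR (extend an irreducible representation of $J_{0}$ to $A$ and use that its kernel is not in $\Omega_{\mathfrak{gcr}}(A)$) and that every GCR-ideal $J$ lies inside each $I\in\Omega_{\mathfrak{gcr}}(A)$.

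There is, however, a genuine error in your identification step. The assertion that a primitive C*-algebra with a non-zero elementary ideal is itself GCR is false. If $D\subset\mathcal{B}(H)$ is an irreducible C*-algebra with $D\cap\mathcal{K}(H)=0$ and $D$ not GCR (for instance, a faithful irreducible image of the Calkin algebra), then $C=D+\mathcal{K}(H)$ is a primitive C*-algebra with elementary ideal $\mathcal{K}(H)$, yet $C/\mathcal{K}(H)\cong D$ admits an irreducible representation whose image contains no compacts, so $C$ is not GCR. Thus your chain ``$B/I$ has an elementary ideal $\Leftrightarrow$ $B/I$ is GCR $\Leftrightarrow$ (Glimm) $B$ is GCR'' breaks at the first step.

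The fix is that both the false step and the appeal to Glimm are unnecessary. Under the paper's definition, $B$ is GCR iff $\pi(B)\supset\mathcal{K}(H_{\pi})$ for every irreducible $\pi$; since $B/\ker\pi$ is $\ast$-isomorphic to $\pi(B)$, this is exactly the condition that every primitive quotient of $B$ has a non-zero elementary ideal---you already proved the relevant equivalence of ``contains $\mathcal{K}(H)$'' and ``has an elementary ideal'' in your argument for $(2_{pm})$. So $\Omega_{\mathfrak{gcr}}(B)=\varnothing$ is equivalent to $B$ being GCR by definition, and with this correction your proof goes through.
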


\begin{proof}
It is easy to check that $\Omega_{\mathfrak{gcr}}$ is a primitive map whence
$\Pi_{\Omega_{\mathfrak{gcr}}}$ is a topological radical on $\mathfrak{U}%
_{\mathrm{c}^{\ast}}$.

Let $A$ be a C*-algebra and $J_{0}=\Pi_{\Omega_{\mathfrak{gcr}}}(A)=\cap
\{I\in\Omega_{\mathfrak{gcr}}(A)\}$. To show that $J_{0}$ is a GCR-algebra,
take an irreducible representation $\pi$ of $J_{0}$ on a Hilbert space $H$ and
extend it to an irreducible representation (again $\pi$) of $A$ on $H$. Then
$\ker\pi\notin\Omega_{\mathfrak{gcr}}(A)$, so $\pi(A)$ contains an elementary
ideal $K$. Since $K$ is irreducible and $\mathcal{K}(H)$ is a CCR-algebra,
$K=\mathcal{K}(H)$. Then
\[
0\neq\pi(J_{0})K(H)\subset\pi(J_{0})\pi(A)\subset\pi(J_{0}),
\]
so $\pi(J_{0})$ contains a non-zero compact operator. This means that $J_{0}$
is a GCR-algebra.

Let now $J$ be a GCR-ideal of $A$. If $I\in\Omega_{\mathfrak{gcr}}(A)$ then
$I=\ker\pi$ for an irreducible representation $\pi$ of $A$ such that $\pi(A)$
has no elementary ideals. But the restriction of $\pi$ to $J$ is also
irreducible (if non-zero), so $\pi(J)$ must contain $\mathcal{K}(H)$.
Therefore $\mathcal{K}(H)\subset\pi(A)$, a contradiction. Thus $\pi(J)=0$ and
then $J\subset I$. So
\[
J\subset\cap\{I:I\in\Omega_{\mathfrak{gcr}}(A)\}=J_{0}.
\]

\end{proof}

Another way to establish that the map sending each C*-algebra $A$ to its
largest GCR-ideal is a hereditary topological radical, will be considered in
Section \ref{c-algebras}.

\subsubsection{\label{stensor}The tensor Jacobson radical $\mathcal{R}_{t}$}

It is defined via the tensor spectral radius. Let $A$ be a normed algebra,
$M=\left(  a_{i}\right)  _{1}^{\infty}$ and $N=\left(  b_{j}\right)
_{1}^{\infty}$ be \textit{summable} families in $A$ , i.e. $\left\Vert
M\right\Vert _{+}=\sum_{i}\left\Vert a_{i}\right\Vert <\infty$ and $\left\Vert
N\right\Vert _{+}=\sum_{i}\left\Vert b_{i}\right\Vert <\infty$. The norm of a
family does not depend on the order, so one can consider the corresponding
equivalence relation $\simeq$ between families (see details in \cite{TR2}).
Define the product $MN$ within up to $\simeq$ as a family $H=\left(
c_{k}\right)  _{1}^{\infty}$ where $c_{k}=a_{i}b_{j}$ for $k=\phi(i,j)$ and
$\phi$ is an arbitrary bijection of $\mathbb{N\times}\mathbb{N}$ onto
$\mathbb{N}$. Then the power $M^{n+1}$ is defined by $M^{n+1}\simeq MM^{n}$
for every $n>0$ and the \textit{tensor (spectral) radius} $\rho_{t}\left(
M\right)  $ is defined by
\[
\rho_{t}\left(  M\right)  =\inf_{n}\left\Vert M^{n}\right\Vert _{+}^{1/n}%
=\lim_{n\rightarrow\infty}\left\Vert M^{n}\right\Vert _{+}^{1/n}.
\]
Let us define by $N\sqcup M$ the disjunct union of families $M=\left(
a_{i}\right)  $ and $N=\left(  b_{j}\right)  $ (that is a family equivalent to
$\left(  b_{1},a_{1},b_{2},a_{2},\ldots\right)  $); then $\mathcal{R}%
_{t}\left(  A\right)  $ is defined as the set of all $a\in A$ such that
$\rho_{t}\left(  \left\{  a\right\}  \sqcup M\right)  =\rho_{t}\left(
M\right)  $ for every summable family $M$ in $A$; $\mathcal{R}_{t}$ is a
uniform topological radical \cite{TR1, TR2}.

The term\textit{ tensor} is justified by the fact that $a\in\mathcal{R}%
_{t}\left(  A\right)  $ if and only if $a\otimes b\in\operatorname{Rad}\left(
A\widehat{\otimes}B\right)  $ for every normed algebra $B$ and $b\in B$
\cite[Theorem 3.36]{TR2}. By \cite[Theorem 3.29]{TR2},
\begin{equation}
\rho_{t}\left(  M\right)  =\rho_{t}\left(  M/\mathcal{R}_{t}\left(  A\right)
\right)  . \label{tr}%
\end{equation}

There is a problem whether $\mathcal{R}_{t}=\operatorname{Rad}$ on Banach
algebras. In the algebraic case it is known that the tensor product of a
radical algebra and the other algebra can be not radical.

\begin{problem}
Is there an algebraic radical $P$ on $\mathfrak{U}_{a}$ such that
$P=\mathcal{R}_{t}$ on $\mathfrak{U}_{\mathrm{n}}$?
\end{problem}

\subsubsection{\label{2.3.4}The compactly quasinilpotent radical
$\mathcal{R}_{\mathrm{cq}}$}

Let $A$ be a normed algebra, and let $M\subset A$ be bounded. Define the
\textit{norm }$\left\Vert M\right\Vert =\sup_{a\in M}\left\Vert a\right\Vert $
and the \textit{joint spectral radius}
\[
\rho\left(  M\right)  =\inf_{n}\left\Vert M^{n}\right\Vert ^{1/n}%
=\lim_{n\rightarrow\infty}\left\Vert M^{n}\right\Vert ^{1/n}%
\]
where $M^{n}=\left\{  a_{1}\cdots a_{n}:a_{i}\in M\right\}  $ \cite{RS60}. Let
$\mathfrak{k}\left(  A\right)  $ be the set of all precompact subsets of $A$.
Then $\mathcal{R}_{\mathrm{cq}}\left(  A\right)  $ is the set of all $a\in A$
such that $\rho\left(  \left\{  a\right\}  \cup M\right)  =\rho\left(
M\right)  $ for every $M\in\mathfrak{k}\left(  A\right)  $; $\mathcal{R}%
_{\mathrm{cq}}$ is a uniform topological radical \cite{TR1}.

A normed algebra $A$ is called \textit{compactly quasinilpotent} if
$\rho\left(  M\right)  =0$ for every $M\in\mathfrak{k}\left(  A\right)  $; the
completion of compactly quasinilpotent algebra is again compactly
quasinilpotent \cite[Lemma 4.11]{TR1}. Note that $\mathcal{R}_{\mathrm{cq}%
}\left(  A\right)  $ is the largest compactly quasinilpotent ideal
\cite[Corollary 4.21]{TR1}.

An important fact related to this radical is that every compact operator $a$
in the Jacobson radical of a closed operator algebra $A$ generates a compactly
quasinilpotent ideal of $A$; a close argument was a key in the proof that $A$
has a hyperinvariant subspace if $a\neq0$ \cite{S84}. The other invariant
subspace results \cite{T99, ST00} are connected with the calculation of the
joint spectral radius. It was established in \cite{ST00} that for a precompact
set $M$ of compact operators on a Banach space $X$
\begin{equation}
\rho\left(  M\right)  =r\left(  M\right)  \label{bwf}%
\end{equation}
where the \textit{Berger-Wang radius} $r\left(  M\right)  $ is defined by
\cite{BW92}:
\[
r\left(  M\right)  =\underset{n\rightarrow\infty}{\lim\sup}\sup\left\{
\rho\left(  a\right)  :a\in M^{n}\right\}  ^{1/n}.
\]
We call $\left(  \ref{bwf}\right)  $ the \textit{Berger-Wang formula} because
the matrix version of $\left(  \ref{bwf}\right)  $, for $\dim X<\infty$, was
established in \cite{BW92}. A useful contribution in the calculation of $\rho$
is the equality%
\begin{equation}
\rho\left(  M\right)  =\rho\left(  M/\mathcal{R}_{\mathrm{cq}}\left(
A\right)  \right)  \label{cq}%
\end{equation}
for every $M\in\mathfrak{k}\left(  A\right)  $ \cite[Theorem 4.18]{TR1}.

There is a version of the joint spectral radius for bounded countable families
$M=\left(  a_{n}\right)  _{1}^{\infty}$ in $A$, where $M^{n}$ is calculated by
rules of families \cite{TR2}.

\subsubsection{The hypocompact radical $\mathcal{R}_{\mathrm{hc}}$}

Let $A$ be a normed algebra. An element $a\in A$ is called \textit{compact }if
$\mathrm{W}_{a}:=\mathrm{L}_{a}\mathrm{R}_{a}$ (see Section \ref{algebras}) is
a compact operator on $A$; $A$ is \textit{compact} if it consists of compact
elements, \textit{bicompact} if $\mathrm{L}_{a}\mathrm{R}_{b}$ is a compact
operator on $A$ for every $a,b\in A$, and \textit{hypocompact} if every
non-zero quotient of $A$ by a closed ideal has a non-zero compact element.
Then $\mathcal{R}_{\mathrm{hc}}\left(  A\right)  $ is defined as the largest
hypocompact ideal of $A$ (equivalently, as the smallest closed ideal $J$ of
$A$ such that $A/J$ has no non-zero compact elements); $\mathcal{R}%
_{\mathrm{hc}}$ is a hereditary topological radical \cite{TR3}.

This radical plays an important role in the theory of the joint spectral
radius. It was proved in \cite{TR3} that the following equality (an
\textit{algebra version of the joint spectral radius formula}):\textit{ }%
\begin{equation}
\rho\left(  M\right)  =\max\left\{  \rho\left(  M/\mathcal{R}_{\mathrm{hc}%
}\left(  A\right)  \right)  ,r\left(  M\right)  \right\}  \label{af}%
\end{equation}
holds for every normed algebra $A$ and $M\in\mathfrak{k}\left(  A\right)  $.
In particular, it follows from $\left(  \ref{af}\right)  $ that
\begin{equation}
\mathcal{R}_{\mathrm{hc}}\left(  A\right)  \cap\operatorname{Rad}^{r}\left(
A\right)  \subset\mathcal{R}_{\mathrm{cq}}\left(  A\right)  \label{aff}%
\end{equation}
for every normed algebra $A$. It should be noted that $A\longmapsto
\mathcal{R}_{\mathrm{hc}}\left(  A\right)  \cap\operatorname{Rad}^{r}\left(
A\right)  $ is also a hereditary topological radical on normed algebras. This
radical is called the \textit{Jacobson hypocompact radical} and is denoted by
$\mathcal{R}_{\mathrm{jhc}}$.

If $A=\mathcal{B}\left(  X\right)  $ for a Banach space $X$, then
$\mathcal{R}_{\mathrm{hc}}\left(  A\right)  \supset\mathcal{K}\left(
X\right)  $, so that $\left(  \ref{af}\right)  $ implies $\left(
\ref{bwf}\right)  $ and a stronger result, the \textit{operator version of the
joint spectral radius formula}:
\begin{equation}
\rho\left(  M\right)  =\max\left\{  \rho_{e}\left(  M\right)  ,r\left(
M\right)  \right\}  \label{afo}%
\end{equation}
that holds for every $M\in\mathfrak{k}\left(  \mathcal{B}\left(  X\right)
\right)  $, where the \textit{essential spectral radius} $\rho_{e}\left(
M\right)  $ is defined as the joint spectral radius $\rho\left(
M/\mathcal{K}\left(  X\right)  \right)  $ in the Calkin algebra $\mathcal{B}%
\left(  X\right)  /\mathcal{K}\left(  X\right)  $ (see details in \cite{TR3}).

There is the largest of topological radicals $P$ which can change
$\mathcal{R}_{\mathrm{hc}}$ in $\left(  \ref{af}\right)  $ \cite{TR3}; it is
denoted by $\mathcal{R}_{\mathrm{bw}}$ and called the \textit{Berger-Wang
radical}.

A normed algebra $A$ is called a \textit{Berger-Wang algebra} if $\left(
\ref{bwf}\right)  $ holds for every $M\in\mathfrak{k}\left(  A\right)  $.

\subsubsection{The hypofinite radicals $\mathcal{R}_{\mathrm{hf}}$ and
$\mathfrak{R}_{\mathrm{hf}}$}

We begin with the algebraic hypofinite radical $\mathfrak{R}_{\mathrm{hf}}$
and describe its construction more transparently because of the lack of
references. An element $a$ of an algebra $A$ is called a \textit{finite rank
element} if $\dim aAa<\infty$. An algebra $A$ is called \textit{finite} if it
consists of finite rank elements, \textit{bifinite} if $\dim aAb<\infty$ for
every $a,b\in A$, and \textit{hypofinite} if $A/I$ contains a non-zero finite
rank element for every ideal $I\neq A$. We transfer these notions to ideals.
Let $\digamma\!\left(  A\right)  $ be the set of all finite rank elements of
$A$.

\begin{theorem}
\label{hf}Let $A$ be an algebra, and let $J$ be an ideal of $A$. Then

\begin{enumerate}
\item $f\left(  \digamma\!\left(  A\right)  \right)  \subset\digamma\!\left(
B\right)  $ for every morphism $f:A\longrightarrow B$;

\item If $\digamma\!\left(  J\right)  \neq0$ then $J\cap\digamma\!\left(
A\right)  \neq0$;

\item If $x\in\digamma\!\left(  A\right)  $ and $I$ is an ideal generated by
$x$ then $\mathrm{L}_{a}\mathrm{R}_{b}$ is a finite rank operator on $A$ for
every $a,b\in I$;

\item If $A$ is hypofinite and $I$ a non-zero (one-sided) ideal then
$I\cap\digamma\!\left(  A\right)  \neq0$;

\item The following conditions are equivalent:

\begin{enumerate}
\item $J$ is a hypofinite ideal of $A$;

\item for every morphism $f:A\longrightarrow B$, either $f\left(  J\right)
=0$ or $f\left(  J\right)  \cap\digamma\!\left(  B\right)  \neq0$;

\item there is an increasing transfinite sequence $\left(  J_{a}\right)
_{\alpha\leq\gamma}$ of ideals of $A$ such that $J_{0}=0$, $J_{\gamma}=J$,
\ $J_{\alpha}=\cup_{\alpha^{\prime}\,<\alpha}J_{\alpha^{\prime}}$ for every
limit ordinal $\alpha$, and all $J_{\alpha+1}/J_{\alpha}$ are bifinite.
\end{enumerate}

\item The following conditions are equivalent:

\begin{enumerate}
\item $A$ is hypofinite;

\item all ideals and quotients of $A$ are hypofinite;

\item $A/I$ and $I$ are hypofinite for some ideal $I$ of $A$.
\end{enumerate}

\item There is a largest hypofinite ideal of $A$ (which is denoted by
$\mathfrak{R}_{\mathrm{hf}}\left(  A\right)  $);

\item $\mathfrak{R}_{\mathrm{hf}}\left(  J\right)  =J\cap\mathfrak{R}%
_{\mathrm{hf}}\left(  A\right)  $;

\item $A/\mathfrak{R}_{\mathrm{hf}}\left(  A\right)  $ has no non-zero
hypofinite ideals and finite rank elements.
\end{enumerate}
\end{theorem}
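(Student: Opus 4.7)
The proof proceeds in the order stated, with each part relying on the earlier ones.

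Parts (1)--(3) are direct computations. For (1), surjectivity of $f$ gives $f(a)Bf(a) = f(aAa)$, so $\dim f(a)Bf(a) \le \dim aAa < \infty$. For (2), given $0 \neq x \in \digamma(J)$, the key observation is $xAx \subset J$ (since $J \lhd A$), and a case split yields the conclusion: if $x^{2}\neq 0$ then $y := x^{2}\in J$ satisfies $yAy = x(xAx)x \subset xJx$, which is finite-dimensional; if $x^{2}=0$ but $xax \neq 0$ for some $a$, then $y := xax \in J$ satisfies $yAy \subset x(aJa)x \subset xJx$; if $xAx = 0$ but $xA \neq 0$, any $y := xa \neq 0$ satisfies $yAy = xaAxa \subset x(Ax)Aa \subset (xAx)Aa = 0$; the remaining case $xA = Ax = 0$ makes $x$ itself a finite rank element of $A$ lying in $J$. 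Part (3) expands $a,b$ in the ideal generated by $x$ (i.e.\ in $AxA + Ax + xA + \mathbb{C}x$) as finite sums $\sum u_{i}xv_{i}$ with $u_{i},v_{i}\in A^{1}$, so that $acb \in \sum u_{i}(xAx)v_{j}'$ lies in a finite-dimensional subspace.

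Part (4) is the main technical step. Given a nonzero one-sided ideal $I$ of hypofinite $A$, Zorn's lemma produces a two-sided ideal $M$ of $A$ maximal with $M \cap I = 0$; then $M \neq A$, and by maximality every nonzero two-sided ideal of $A/M$ intersects $\bar I := (I+M)/M$ nontrivially. Hypofiniteness of $A$ gives a nonzero $y+M \in \digamma(A/M)$; by (3) the two-sided ideal of $A/M$ generated by $y+M$ is bifinite, and meets $\bar I$ in some nonzero $z+M$ with $\dim(zAz+M)/M < \infty$. Writing $z = z'+m$ with $z'\in I$, $m\in M$, the decomposition $zAz = z'Az' + z'Am + mAz' + mAm$ has its three mixed summands in $M$ (since $AM, MA \subset M$), while $z'Az' \subset I$ by the one-sidedness of $I$ (e.g.\ for left $I$: $Az' \subset AI \subset I$, so $z'Az' \subset z'I \subset AI \subset I$). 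Hence $z'Az' \cap M \subset I\cap M = 0$, giving $\dim z'Az' = \dim(z'Az'+M)/M \le \dim(zAz+M)/M < \infty$, and $z' \in I \cap \digamma(A)$ is nonzero because $z \notin M$.

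Part (5) is a transfinite characterization. For (a) $\Rightarrow$ (b), if $f(J)\neq 0$ then $f(J)\cong J/\ker(f|_J)$ contains a nonzero finite rank element by hypofiniteness of $J$, and (2) applied to $f(J) \lhd B$ lifts this to an element of $\digamma(B)$. For (b) $\Rightarrow$ (c), iteratively apply (b) with $f = q_{J_\alpha}:A\to A/J_\alpha$: if $J/J_\alpha \neq 0$, obtain $y+J_\alpha \in \digamma(A/J_\alpha)\cap J/J_\alpha$, and take $J_{\alpha+1}$ to be the preimage of the ideal generated by $y+J_\alpha$, which is bifinite by (3); the chain stabilizes at $J$ by ordinal considerations. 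For (c) $\Rightarrow$ (a), given $K\lhd J$ with $K \neq J$, take the smallest $\alpha = \beta+1$ with $J_\alpha \not\subset K$; then $(J_{\beta+1}+K)/K \cong J_{\beta+1}/(J_{\beta+1}\cap K)$ is a quotient of the bifinite algebra $J_{\beta+1}/J_\beta$ and hence bifinite, so every nonzero element is finite rank in it, and (2) applied to $(J_{\beta+1}+K)/K \lhd J/K$ yields a nonzero finite rank element of $J/K$.

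Parts (6)--(9) assemble via (5). For (6), splicing chains from (5)(c) for $I$ and for $A/I$ gives a chain for $A$, and vice versa. Part (7) takes $\mathfrak{R}_{\mathrm{hf}}(A)$ as the sum of all hypofinite ideals of $A$, whose chain is obtained by concatenating chains of the summands. Part (8) combines (6) and (5): $J\cap \mathfrak{R}_{\mathrm{hf}}(A)$ is an ideal of the hypofinite algebra $\mathfrak{R}_{\mathrm{hf}}(A)$, hence hypofinite by (6), so $\subset \mathfrak{R}_{\mathrm{hf}}(J)$; conversely, $\mathfrak{R}_{\mathrm{hf}}(J)$ lies inside the ideal of $A$ it generates (contained in $J$), which is itself hypofinite by a chain argument as in (5)(c), and hence lies in $\mathfrak{R}_{\mathrm{hf}}(A)$. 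For (9), a nonzero finite rank element of $A/\mathfrak{R}_{\mathrm{hf}}(A)$ would, by (3), generate a bifinite (hence hypofinite) ideal whose preimage would enlarge $\mathfrak{R}_{\mathrm{hf}}(A)$ via (5)(c) and (6), contradicting maximality; the same chain argument rules out nonzero hypofinite ideals in $A/\mathfrak{R}_{\mathrm{hf}}(A)$. The hard part will be the case analysis in (2) and the verification that $z'Az'\subset I$ in (4) when $I$ is merely one-sided; the transfinite arguments in (5) then follow a standard radical-theoretic pattern.
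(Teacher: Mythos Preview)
Your proposal is correct throughout, and the overall architecture (parts building on one another, (5) as the transfinite pivot, (6)--(9) assembled from (5)) matches the paper. Two of your arguments, however, take a genuinely different route from the paper's:

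\textbf{Part (2).} The paper uses the identity $\mathrm{W}_{ba}=\mathrm{L}_b\mathrm{W}_a\mathrm{R}_b$ to get a two-line proof: for any $b\in J$ and $a\in\digamma(J)$ one has $ba\in J\cap\digamma(A)$ (since $(ba)A(ba)=b\cdot a(Ab)a\subset b\cdot aJa$), so either $J\digamma(J)\neq 0$ and we are done, or $J\digamma(J)=0$, in which case $aAa\subset Ja=0$ for $a\in\digamma(J)$. Your four-case analysis reaches the same conclusion by hand; it is correct (modulo the symmetric case $xA=0$, $Ax\neq 0$, which you should mention), but the identity-based argument is shorter and reusable.

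\textbf{Part (4).} Here your approach diverges substantially. The paper passes to $A/K$ where $K=\{a\in A:aI=0\}$ is the left annihilator of $I$: if $K=A$ then $I\subset\digamma(A)$ trivially, and otherwise $A/K$ has a finite rank element $a+K$, one picks $x\in I$ with $ax\neq 0$, and the map $c+K\mapsto cx$ shows $\dim(ax)A(ax)\le\dim (aAa+K)/K<\infty$. Your Zorn-based argument (maximal two-sided $M$ with $M\cap I=0$, then locating $z'\in I$ with $z'Az'\hookrightarrow (zAz+M)/M$) is also correct and self-contained; it trades the slightly delicate annihilator computation for an explicit separation argument. The paper's route is shorter and avoids the axiom of choice, while yours makes the role of the one-sidedness of $I$ more transparent in the verification $z'Az'\subset I$.

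One minor point: in (8) your phrase ``hypofinite by a chain argument as in (5)(c)'' hides the key step. The paper makes it explicit: if $I$ is the ideal of $A$ generated by $\mathfrak{R}_{\mathrm{hf}}(J)$, then $I^3\subset\mathfrak{R}_{\mathrm{hf}}(J)$ (the standard ideal-of-an-ideal fact), so $I^3$ is hypofinite by (6) and $I/I^3$ is nilpotent hence bifinite, giving a two-step chain for $I$. Your sketch is fine once this is said, but without the $I^3$ trick it is not clear how to produce a chain of ideals of $A$ (rather than of $J$).
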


\begin{proof}
$\left(  1\right)  $ $\mathrm{W}_{f\left(  a\right)  }B=f\left(
\mathrm{W}_{a}A\right)  $ is finite-dimensional if $a\in\digamma\!\left(
A\right)  $.

$\left(  2\right)  $ It is clear that
\begin{equation}
\mathrm{W}_{ba}=\mathrm{L}_{b}\mathrm{W}_{a}\mathrm{R}_{b}=\mathrm{R}%
_{a}\mathrm{W}_{b}\mathrm{L}_{a} \label{wba}%
\end{equation}
for every $a,b\in A$. If $a\in\digamma\!\left(  J\right)  $ then
$\mathrm{W}_{ba}\in\digamma\!\left(  A\right)  $, whence $J\digamma\!\left(
J\right)  \subset J\cap\digamma\!\left(  A\right)  $. If $J\digamma\!\left(
J\right)  =0$ then clearly $\digamma\!\left(  J\right)  \subset J\cap
\digamma\!\left(  A\right)  $.

$\left(  3\right)  $ follows from $\left(  \ref{wba}\right)  $.

$\left(  4\right)  $ Assume that $I$ is a left ideal. Let $K=\left\{  a\in
A:aI=0\right\}  $; $K$ is an ideal of $A$. If $K=A$ then $I\subset
\digamma\!\left(  A\right)  $, otherwise $A/K$ has a non-zero finite rank
element $b$. Let $q_{K}:A\longrightarrow A/K$ be the standard quotient map and
$q_{K}\left(  a\right)  =b$. Clearly there is $x\in J$ such that $ax\neq0$, it
is easy to check that $ax\in\digamma\!\left(  A\right)  $.

$\left(  5\text{a}\right)  \Longrightarrow\left(  5\text{b}\right)  $ Let
$I=\ker f$ and $K=I\cap J$. Assume that $f\left(  J\right)  \neq0$; then there
is a non-zero $a\in J/K\cap\digamma\!\left(  A/K\right)  $ by $\left(
2\right)  $. Let $b\in A$ be such that $a=q_{K}\left(  b\right)  $; then
$0\neq f\left(  b\right)  \in\digamma\!\left(  B\right)  $.

$\left(  5\text{b}\right)  \Longrightarrow\left(  5\text{c}\right)  $ Assume
that we already have built $J_{\beta}$ by transfinite induction. If $J\neq
J_{\beta}$ then $J/J_{\beta}$ has a non-zero finite rank element by
definition, and contains a non-zero finite rank element $a$ of $A$ by $\left(
2\right)  $. Then $J/J_{\beta}$ contains a non-zero bifinite ideal $I$ of $A$
by $\left(  3\right)  $. Take the preimage of $I$ in $A$ as $J_{\beta+1}$.
This proves the implication.

$\left(  5\text{c}\right)  \Longrightarrow\left(  5\text{a}\right)  $ Let $I$
be an ideal of $J$ and $I\neq J$. Then there is the first ordinal $\alpha$
such that $J_{\alpha}$ doesn't lie in $I$. Then $q_{I}\left(  J_{\alpha
}\right)  \subset\digamma\!\left(  J/I\right)  $.

$\left(  6\text{a}\right)  \Longrightarrow\left(  6\text{b}\right)  $ It is
clear that the quotients of a hypofinite algebra are hypofinite. Let $K$ be an
ideal of $A$, $f:A\longrightarrow B$ a morphism and $I=\ker f$. Assume that
$f\left(  K\right)  \neq0$; then $q_{I}\left(  K\right)  \neq0$. As $A$ is
hypofinite then there is $a\in K$ such that $0\neq q_{I}\left(  a\right)
\in\digamma\!\left(  A/I\right)  $ by $\left(  4\right)  $. Then $0\neq
f\left(  a\right)  \in\digamma\!\left(  B\right)  $ by $\left(  1\right)  $.
Therefore $K$ is hypofinite by $\left(  5\right)  $.

$\left(  6\text{b}\right)  \Longrightarrow\left(  6\text{c}\right)  $ is
obvious, and $\left(  6\text{c}\right)  \Longrightarrow\left(  6\text{a}%
\right)  $ follows by $\left(  5\right)  $: one can build a sequence of ideals
for $A$ as in $\left(  5\text{c}\right)  $ having such sequences for $I$ and
$A/I$.

$\left(  7\right)  $ Let $K$ be the sum of all hypofinite ideals of $A$. Then
an arbitrary morphism $f:A\longrightarrow B$ vanishes on $K$ if and only if it
vanishes on each hypofinite ideal of $A$. Therefore $K$ is hypofinite by
$\left(  5\right)  $.

$\left(  8\right)  $ As $J\cap\mathfrak{R}_{\mathrm{hf}}\left(  A\right)  $ is
a hypofinite ideal of $J$, we obtain that $J\cap\mathfrak{R}_{\mathrm{hf}%
}\left(  A\right)  \subset\mathfrak{R}_{\mathrm{hf}}\left(  J\right)  $. Let
$I$ be the ideal of $A$ generated by $\mathfrak{R}_{\mathrm{hf}}\left(
J\right)  $. Then clearly $I^{3}\subset\mathfrak{R}_{\mathrm{hf}}\left(
J\right)  $ is hypofinite and $I/I^{3}$ is bifinite, whence $I$ is hypofinite
by $\left(  6\right)  $. Therefore $\mathfrak{R}_{\mathrm{hf}}\left(
J\right)  \subset I\subset\mathfrak{R}_{\mathrm{hf}}\left(  A\right)  $.

$\left(  9\right)  $ If $I$ is a non-zero hypofinite ideal of $A/\mathfrak{R}%
_{\mathrm{hf}}\left(  A\right)  $ then its preimage $K$ in $A$ is hypofinite
by $\left(  6\right)  $, whence $\mathfrak{R}_{\mathrm{hf}}\left(  A\right)
\subsetneqq I$, a contradiction. Therefore $A/\mathfrak{R}_{\mathrm{hf}%
}\left(  A\right)  $ has no non-zero finite rank elements.
\end{proof}

\begin{corollary}
\label{hfr} $\mathfrak{R}_{\mathrm{hf}}$ is a hereditary radical.
\end{corollary}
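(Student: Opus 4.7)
The plan is to assemble the four radical axioms from Theorem \ref{hf}, together with the definition of heredity, using mostly items (1), (6), (7), (8), (9) of that theorem; no new computation is needed.

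First I would record heredity directly from item (8): for every ideal $J$ of $A$,
\[
\mathfrak{R}_{\mathrm{hf}}(J)=J\cap\mathfrak{R}_{\mathrm{hf}}(A),
\]
which in particular shows that $\mathfrak{R}_{\mathrm{hf}}(J)$ is an ideal of $A$ (intersection of two ideals) contained in $\mathfrak{R}_{\mathrm{hf}}(A)$, giving Axiom 4.

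For Axiom 1, let $f:A\longrightarrow B$ be a morphism. Since $f$ is a surjective homomorphism and $\mathfrak{R}_{\mathrm{hf}}(A)$ is an ideal of $A$, the image $f(\mathfrak{R}_{\mathrm{hf}}(A))$ is an ideal of $B$. It is hypofinite because it is a homomorphic image of the hypofinite algebra $\mathfrak{R}_{\mathrm{hf}}(A)$ (item (6), equivalence (a)$\Leftrightarrow$(b)). By maximality (item (7)), $f(\mathfrak{R}_{\mathrm{hf}}(A))\subset\mathfrak{R}_{\mathrm{hf}}(B)$. Axiom 3 is obtained by observing that $\mathfrak{R}_{\mathrm{hf}}(A)$ is itself hypofinite, so it coincides with its own largest hypofinite ideal, i.e.\ $\mathfrak{R}_{\mathrm{hf}}(\mathfrak{R}_{\mathrm{hf}}(A))=\mathfrak{R}_{\mathrm{hf}}(A)$. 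Axiom 2 is exactly item (9): $A/\mathfrak{R}_{\mathrm{hf}}(A)$ has no non-zero hypofinite ideals, so $\mathfrak{R}_{\mathrm{hf}}(A/\mathfrak{R}_{\mathrm{hf}}(A))=0$.

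There is no real obstacle here; the work was done in Theorem \ref{hf}. The only point that deserves a word of care is Axiom 1, where one must remember that the image of a hypofinite ideal under a morphism is again hypofinite, which is precisely the content of the implication (6a)$\Rightarrow$(6b) applied to a quotient of $\mathfrak{R}_{\mathrm{hf}}(A)$.
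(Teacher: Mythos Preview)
Your proof is correct and follows essentially the same strategy as the paper: all axioms and heredity are read off from Theorem~\ref{hf}. The only minor difference is in the verification of Axiom~1: you show that $f(\mathfrak{R}_{\mathrm{hf}}(A))$ is a hypofinite ideal of $B$ via (6) and then invoke maximality (7), whereas the paper composes with the quotient map $q:B\to B/\mathfrak{R}_{\mathrm{hf}}(B)$ and uses (5b) together with (9) to conclude that $(q\circ f)(\mathfrak{R}_{\mathrm{hf}}(A))=0$; both routes are equally short.
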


\begin{proof}
By Theorem \ref{hf}(8-9), it suffices to show that $\mathfrak{R}_{\mathrm{hf}%
}$ is an algebraic preradical. Let $f:A\longrightarrow B$ be a morphism, and
let $q:B\longrightarrow B/\mathfrak{R}_{\mathrm{hf}}\left(  B\right)  $ be the
standard quotient map. Then $q\circ f$ is a morphism $A\longrightarrow
B/\mathfrak{R}_{\mathrm{hf}}\left(  B\right)  $ and it follows from Theorem
\ref{hf}$\left(  5\text{b}\right)  $ that either $\left(  q\circ f\right)
\left(  \mathfrak{R}_{\mathrm{hf}}\left(  A\right)  \right)  =0$ or
$B/\mathfrak{R}_{\mathrm{hf}}\left(  B\right)  $ has a non-zero finite rank
element which is impossible by Theorem \ref{hf}$\left(  9\right)  $. Therefore
$\left(  q\circ f\right)  \left(  \mathfrak{R}_{\mathrm{hf}}\left(  A\right)
\right)  =0$ and $f\left(  \mathfrak{R}_{\mathrm{hf}}\left(  A\right)
\right)  \subset\mathfrak{R}_{\mathrm{hf}}\left(  B\right)  $.
\end{proof}

The radical $\mathfrak{R}_{\mathrm{hf}}$ is called an (\textit{algebraic})
\textit{hypofinite radical}.

The radical $\mathcal{R}_{\mathrm{hf}}$ is a topological hereditary radical
and can be defined as the largest closed-hypofinite ideal: a normed algebra
(or ideal) $A$ is called (\textit{closed}-) \textit{hypofinite} if every
non-zero quotient of $A$ by a \textit{closed} ideal has a non-zero finite rank
element of $A$, and \textit{approximable} \cite{TR2} if $A=\overline
{\digamma\!\left(  A\right)  }$. Elements of $\overline{\digamma\!\left(
A\right)  }$ are called \textit{approximable elements} of $A$. Theorem
\ref{hf} and Corollary \ref{hfr} hold for $\mathcal{R}_{\mathrm{hf}}$ if one
replaces the terms for $\mathfrak{R}_{\mathrm{hf}}$ by appropriate terms for
$\mathcal{R}_{\mathrm{hf}}$.

\subsubsection{ Radicals on C*-algebras\label{c-algebras}}

Let $P$ be a radical. It is known (\cite[Theorems 2.9 and 2.10]{TR1}) that the
class $\mathbf{{Rad}}$$(P)$ is stable under extensions and quotients, and that
the [closure of] the union of an up-directed net of ideals $I_{\lambda}%
\in\mathbf{{Rad}}$$(P)$ of some algebra also belongs to $\mathbf{{Rad}}$$(P)$.
Moreover, if $P$ is hereditary then each ideal of $A\in\mathbf{{Rad}}$$(P)$
clearly belongs to $\mathbf{{Rad}}$$(P)$. The following result shows that the
converse holds for C*-algebras: a class of C*-algebras with the listed above
properties is $\mathbf{{Rad}}$$(P)$ for some (hereditary) radical $P$.

\begin{theorem}
\label{clasc} Let $\mathfrak{C}$ be a class of C*-algebras containing
*-isomorphic images of its elements and having the following properties:

\begin{enumerate}
\item[ ]

\begin{enumerate}
\item If $A\in\mathfrak{C}$ then $A/J\in\mathfrak{C}$ for every closed ideal
$J$ of $A$;

\item If $J\in\mathfrak{C}$ is an ideal of a C*-algebra $A$ and $A/J\in
\mathfrak{C}$ then $A\in\mathfrak{C}$;

\item If $\left(  J_{\alpha}\right)  _{{\alpha}}\subset\mathfrak{C}$ is an
up-directed net of ideals of a C*-algebra $A$ then $\overline{\cup_{\alpha
}J_{\alpha}}\in\mathfrak{C}$.
\end{enumerate}
\end{enumerate}

\noindent Then

\begin{enumerate}
\item Each C*-algebra $A$ has a largest ideal $R_{\mathfrak{C}}(A)\in$
$\mathfrak{C}$;

\item $R_{\mathfrak{C}}$ is a radical on $\mathfrak{U}_{c^{\ast}}$ such that
$\mathbf{{Rad}}$$(R_{\mathfrak{C}})=\mathfrak{C}$;

\item The radical $R_{\mathfrak{C}}$ is hereditary if and only if
$\mathfrak{C}$ satisfies the condition

\begin{enumerate}
\item[(d)] Any closed ideal of an algebra $A\in\mathfrak{C}$ belongs to
$\mathfrak{C}$;
\end{enumerate}

\item $R_{\mathfrak{C}}$ is uniform if and only if $B\in\mathfrak{C}$ for any
C*-subalgebra $B$ of every $A\in\mathfrak{C}$.
\end{enumerate}
\end{theorem}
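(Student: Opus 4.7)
The plan is to construct $R_{\mathfrak{C}}(A)$ in (1) as the closed union of the family
$\mathcal{J}_{A}:=\{J:J\text{ is a closed ideal of }A\text{ with }J\in\mathfrak{C}\}$,
and then deduce (2)--(4) by routine applications of conditions (a), (b), (c) combined with the three C*-specific tools recalled in Section 2.2.2: sums of closed ideals are closed, closed ideals of closed ideals are closed ideals of the ambient algebra, and Theorem \ref{star-mor} lets us check Axiom 1 against only *-morphisms.

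The key preliminary lemma is that $\mathcal{J}_{A}$ is closed under finite sums. If $I,J\in\mathcal{J}_{A}$, then $I+J$ is closed (sum of closed ideals in a C*-algebra), and $(I+J)/I\cong J/(J\cap I)$ is a quotient of $J\in\mathfrak{C}$, hence lies in $\mathfrak{C}$ by (a); together with $I\in\mathfrak{C}$, condition (b) gives $I+J\in\mathfrak{C}$. Thus $\mathcal{J}_{A}$ is up-directed under inclusion, so (c) yields $R_{\mathfrak{C}}(A):=\overline{\bigcup_{J\in\mathcal{J}_{A}}J}\in\mathfrak{C}$, manifestly the largest element of $\mathcal{J}_{A}$. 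The equality $\mathbf{Rad}(R_{\mathfrak{C}})=\mathfrak{C}$ in (2) is then immediate since $A\in\mathfrak{C}$ iff $A=R_{\mathfrak{C}}(A)$.

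For the radical axioms in (2): by Theorem \ref{star-mor} I only need to check Axiom 1 for a *-morphism $f\colon A\to B$; then $f(R_{\mathfrak{C}}(A))$ is the *-homomorphic image of the C*-algebra $R_{\mathfrak{C}}(A)$, hence a closed ideal of $B$ *-isomorphic to $R_{\mathfrak{C}}(A)/(R_{\mathfrak{C}}(A)\cap\ker f)$, which is in $\mathfrak{C}$ by (a), so $f(R_{\mathfrak{C}}(A))\subset R_{\mathfrak{C}}(B)$. Axiom 2 follows by pulling back: any non-zero closed ideal $\bar J\in\mathfrak{C}$ of $A/R_{\mathfrak{C}}(A)$ has preimage $J$ with $J/R_{\mathfrak{C}}(A)\cong\bar J\in\mathfrak{C}$ and $R_{\mathfrak{C}}(A)\in\mathfrak{C}$, whence $J\in\mathfrak{C}$ by (b), contradicting maximality. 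Axiom 3 is trivial since $R_{\mathfrak{C}}(A)\in\mathfrak{C}$ itself. For Axiom 4 with $I$ a closed ideal of $A$, $R_{\mathfrak{C}}(I)$ is a closed ideal of $I$ belonging to $\mathfrak{C}$, hence a closed ideal of $A$ by the C*-fact above, so $R_{\mathfrak{C}}(I)\subset R_{\mathfrak{C}}(A)$.

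Parts (3) and (4) are short. If $R_{\mathfrak{C}}$ is hereditary and $I$ is a closed ideal of $A\in\mathfrak{C}$, then $R_{\mathfrak{C}}(I)=I\cap R_{\mathfrak{C}}(A)=I$, so $I\in\mathfrak{C}$, giving (d). Conversely, assuming (d), Axiom 4 already gives $R_{\mathfrak{C}}(I)\subset I\cap R_{\mathfrak{C}}(A)$, while $I\cap R_{\mathfrak{C}}(A)$ is a closed ideal of $R_{\mathfrak{C}}(A)\in\mathfrak{C}$, hence lies in $\mathfrak{C}$ by (d); being a closed ideal of $I$, it is contained in $R_{\mathfrak{C}}(I)$. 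For (4), uniformity is literally the statement that every C*-subalgebra of an algebra in $\mathbf{Rad}(R_{\mathfrak{C}})=\mathfrak{C}$ belongs to $\mathfrak{C}$. The main obstacle I anticipate is organising the Axiom 1 verification cleanly: one must combine the Theorem \ref{star-mor} reduction with the closedness of *-homomorphic images of C*-algebras to keep $f(R_{\mathfrak{C}}(A))$ inside the category of closed ideals where (a)--(c) operate. Once the directedness lemma for $\mathcal{J}_{A}$ is in place, everything else is bookkeeping with (a)--(c) and the three C*-specific facts.
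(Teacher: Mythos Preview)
Your proof is correct and follows essentially the same approach as the paper's: both establish that the family of closed ideals of $A$ lying in $\mathfrak{C}$ is closed under finite sums (via $(I+J)/I\cong J/(I\cap J)$ and properties (a), (b)), then apply (c) to the resulting up-directed family to produce $R_{\mathfrak{C}}(A)$, and verify the radical axioms and parts (3)--(4) exactly as you do. The only cosmetic difference is that the paper indexes the up-directed net by finite subfamilies of $\mathcal{J}_A$ rather than using $\mathcal{J}_A$ itself, but this is immaterial.
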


\begin{proof}
(1) If ideals $I,J$ of $A$ belong to $\mathfrak{C}$ then
\[
(I+J)/I=J/(I\cap J)\in\mathfrak{C}%
\]
whence $I+J\in\mathfrak{C}$ by (a).

Let ${\Lambda}$ be the set of all finite families of ideals of $A$ that belong
to $\mathfrak{C}$; it is ordered by the inclusion. For $\omega\in{\Lambda}$,
let $J_{\omega}$ be the sum of all ideals in $\omega$. Then $\left(
I_{\omega}\right)  _{\omega\in{\Lambda}}\subset\mathfrak{C}$ is an up-directed
net of ideals of $A$, whence the closure of their sum is also in
$\mathfrak{C}$ by (c). Clearly it is a largest ideal in $A$ that belongs to
$\mathfrak{C}$.

(2) If $f:A\rightarrow B$ is a *-morphism and $J\in\mathfrak{C}$ is a ideal of
$A$ then $f(J)$ is an ideal of $B$ *-isomorphic to a quotient of $J$. Hence
$f(J)\in\mathfrak{C}$ is an ideal of $B$ and therefore $f(J)\subset
R_{\mathfrak{C}}(B)$.

In particular, $f(R_{\mathfrak{C}}(A))\subset R_{\mathfrak{C}}(B)$. We proved
that the map $A\rightarrow R_{\mathfrak{C}}(A)$ is a preradical.

As $R_{\mathfrak{C}}(A)\in\mathfrak{C}$ then $R_{\mathfrak{C}}(R_{\mathfrak{C}%
}(A))=R_{\mathfrak{C}}(A)$.

If $I$ is a closed ideal of $A$ then $R_{\mathfrak{C}}(I)$ belongs to
$\mathfrak{C}$, whence $R_{\mathfrak{C}}(A)\subset I\cap R_{\mathfrak{C}}(A)$.

Let $N=R_{\mathfrak{C}}(A/R_{\mathfrak{C}}(A))$ and $q=q_{R_{\mathfrak{C}}%
(A)}$. Setting $J=q^{-1}(N)$, we have that $J$ is a closed ideal of $A$ and
$J/R_{\mathfrak{C}}(A)=q(J)=N$ whence $J\in\mathfrak{C}$ by (b). Therefore
$J\subset R_{\mathfrak{C}}(A)$ and $N=0$.

We proved that the map $R_{\mathfrak{C}}$ is a radical on $\mathfrak{U}%
_{c^{\ast}}$; the equality $\mathbf{{Rad}}$$(R_{\mathfrak{C}})=\mathfrak{C}$
is obvious.

(3) If $R_{\mathfrak{C}}$ is hereditary then for each closed ideal $I$ of an
algebra $A\in\mathfrak{C}$, we have that $R_{\mathfrak{C}}(I)=I\cap
R_{\mathfrak{C}}(A)=I\cap A=I$, whence $I\in\mathfrak{C}$.

Conversely, let $\mathfrak{C}$ contain all closed ideals of all algebras in
$\mathfrak{C}$. Since $R_{\mathfrak{C}}(A)\in\mathfrak{C}$ then the ideal
$I\cap R_{\mathfrak{C}}(A)$ of $R_{\mathfrak{C}}(A)$ belongs to $\mathfrak{C}$
whence $I\cap R_{\mathfrak{C}}(A)\subset R_{\mathfrak{C}}(I)$. We proved that
$R_{\mathfrak{C}}$ is hereditary.

(4) obviously follows from (2).
\end{proof}

The above theorem permits to construct important examples of C*-radicals.
Recall that a C*-algebra $A$ is called \textit{a GCR-algebra} (or C*-algebra
of type I) if $\pi(A)$ contains a non-zero compact operator for any
irreducible representation $\pi$ of $A$. Furthermore $A$ is \textit{nuclear}
if, for each C*-algebra $B$, there is only one C*-norm on the algebraic tensor
product $A\otimes B$. If $A$ contains an up-directed net of finite-dimensional
*-subalgebras with dense union then $A$ is called an\textit{ AF-algebra}.

\begin{corollary}
\label{special} Each C*-algebra $A$ has the largest GCR-ideal $\mathcal{R}%
_{\mathfrak{gcr}}(A)$, the largest nuclear ideal $\mathcal{R}_{\mathfrak{nc}%
}(A)$ and the largest AF-ideal $\mathcal{R}_{\mathfrak{af}}(A)$. The maps
$\mathcal{R}_{\mathfrak{gcr}}$, $\mathcal{R}_{\mathfrak{nc}}$ and
$\mathcal{R}_{\mathfrak{af}}$ are hereditary topological radicals on
$\mathfrak{U}_{c^{\ast}}$.
\end{corollary}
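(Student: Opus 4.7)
The plan is to deduce the corollary directly from Theorem~\ref{clasc} by verifying, for each of the three classes
\[
\mathfrak{C}_{\mathfrak{gcr}}, \quad \mathfrak{C}_{\mathfrak{nc}}, \quad \mathfrak{C}_{\mathfrak{af}}
\]
of GCR-algebras, nuclear C*-algebras and AF-algebras, that the conditions (a)--(d) of Theorem~\ref{clasc} hold. Once this is done, Theorem~\ref{clasc}(1)--(3) immediately yields the existence of the largest ideal $\mathcal{R}_{\mathfrak{C}}(A)$ in each class and tells us that the resulting map is a hereditary topological radical on $\mathfrak{U}_{c^{\ast}}$.

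For the GCR case, I would recall the standard facts (see Dixmier~\cite{Dixmier}): a quotient and a closed ideal of a GCR-algebra are again GCR, so (a) and (d) hold; a C*-algebra $A$ with a closed ideal $J$ such that both $J$ and $A/J$ are GCR is itself GCR (every irreducible representation either factors through $A/J$ or is nontrivial on $J$, and compactness of $\pi(J)$ forces an invariant subspace on which $\pi(A)$ acts compactly), giving (b); and (c) follows because an irreducible representation of $\overline{\cup_\alpha J_\alpha}$ is nonzero on some $J_\alpha$ and restricts to an irreducible representation there. For the nuclear case, the same list is classical: nuclearity passes to quotients and closed ideals (Choi--Effros), is preserved under extensions, and inductive limits of nuclear C*-algebras are nuclear. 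For the AF case, quotients and closed ideals of AF-algebras are AF (Bratteli--Elliott), the closure of an upward directed union of AF-ideals is AF essentially by the definition, and extension of an AF-algebra by an AF-algebra is AF by L.~Brown's theorem.

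Thus the verification for each class reduces to a short list of well-known results; once (a)--(d) are in hand, Theorem~\ref{clasc}(1)--(3) simultaneously produces the largest ideal and asserts that $\mathcal{R}_{\mathfrak{gcr}}$, $\mathcal{R}_{\mathfrak{nc}}$, $\mathcal{R}_{\mathfrak{af}}$ are hereditary topological radicals on $\mathfrak{U}_{c^{\ast}}$.

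The main obstacle is the extension closure property, condition (b), for AF-algebras: this is L.~Brown's nontrivial theorem that AF is stable under extensions, and it must be cited rather than reproved. For the GCR and nuclear cases all four closure properties are already part of the standard canon, so once (b) for AF is granted by reference, the rest of the argument is a straightforward bookkeeping application of Theorem~\ref{clasc}.
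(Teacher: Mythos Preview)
Your proposal is correct and follows essentially the same approach as the paper: verify conditions (a)--(d) of Theorem~\ref{clasc} for each of the three classes by citing the standard structural results (Dixmier for GCR, L.~Brown's extension theorem for AF, and the classical permanence properties for nuclearity), then invoke Theorem~\ref{clasc}(1)--(3). The paper's proof is in fact terser than yours---it simply points to the relevant references without sketching the arguments---but the logical structure is identical, and you correctly single out Brown's theorem as the one nontrivial ingredient that must be cited rather than reproved.
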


\begin{proof}
We should check the properties (a)-(d) for the corresponding classes. For
GCR-algebras they are well known, as well as the existence of the largest
GCR-ideal (see \cite[Section 4.3]{Dixmier}).

Clearly a quotient of an AF-algebra is an AF-algebra. The proof of the fact
that a closed ideal of an AF-algebra is an AF-algebra see in \cite[Lemma
III.4.1]{Dav}. The property (b) for AF-algebras was established by Brown
\cite{Brown} . The property (c) for AF-algebras is obvious.

For nuclear algebras, the proof of (a), (b) and (d) can be found in
\cite[Corollary 15.3.4]{Tak}. The property (c) follows immediately from the
fact that an inductive limit of nuclear C*-algebras is nuclear; it was
established in the first paper on the subject \cite{Tak1}.
\end{proof}

The radical $\mathcal{R}_{\mathfrak{gcr}}$ is uniform, this follows from
\cite[Proposition 4.3.5]{Dixmier}. The radicals $\mathcal{R}_{\mathfrak{nc}}$
and $\mathcal{R}_{\mathfrak{af}}$ are not uniform; the construction of a
non-nuclear C*-subalgebra of a nuclear C*-algebra can be found in
\cite[Example 2.4]{Wass}; for an example of a non-AF C*-subalgebra of
AF-algebra one can take an irrational rotation algebra (see \cite[Theorem
6.5.2]{Dav}).

\section{Procedures}

We understand procedures in the theory of radicals as transforms of ideal
maps. In other words, a \textit{procedure} is a mapping $P\longmapsto
P^{\prime}$ from a class of ideal maps that act on a class $\mathfrak{U}$ of
algebras, into another class of ideal maps acting on a possibly different
class $\mathfrak{U}^{\prime}$ of algebras.

\subsection{The convolution and superposition procedures}

Let $A$ be a [normed] algebra. Increasing and decreasing transfinite sequences
$\left(  I_{\alpha}\right)  $ and $\left(  J_{\alpha}\right)  $, respectively,
of [closed] ideals of $A$ are called\textit{ transfinite chains }if
\begin{align}
I_{\alpha}  &  =\cup_{\alpha^{\prime}<\alpha}I_{\alpha^{\prime}}\text{
\quad\lbrack}I_{\alpha}=\overline{\cup_{\alpha^{\prime}<\alpha}I_{\alpha}%
}\text{],}\label{bb}\\
J_{\alpha}  &  =\cap_{\alpha^{\prime}<\alpha}J_{\alpha^{\prime}}\nonumber
\end{align}
for every limit ordinal $\alpha$. The quotients $I_{\alpha+1}/I_{\alpha}$ and
$J_{\alpha}/J_{\alpha+1}$ are called the \textit{gap-quotients} of $\left(
I_{\alpha}\right)  $ and $\left(  J_{\alpha}\right)  $, respectively.

Let $P$ and $R$ be preradicals satisfying Axiom $4$. For each $A\in
$\textbf{$\mathfrak{U}$, }we define transfinite chains of ideals $\left(
I_{\alpha}\right)  $ and $\left(  J_{\alpha}\right)  $ by the conditions
\begin{align*}
I_{0}  &  =0,\text{ \quad}I_{\alpha+1}=q_{I_{\alpha}}^{-1}\left(  R\left(
A/I_{\alpha}\right)  \right)  ,\\
J_{0}  &  =A,\text{ \quad}J_{\alpha+1}=P\left(  J_{\alpha}\right)  ,
\end{align*}
respectively, for every ordinal $\alpha$, where $q_{I_{\alpha}}%
:A\longrightarrow A/I_{\alpha}$ is the standard quotient map.

Let $R_{\alpha}\left(  A\right)  =I_{\alpha}$ and $P_{\alpha}\left(  A\right)
=J_{\alpha}$ for every $\alpha$. Transfinite sequences $\left(  R_{\alpha
}\right)  $ and $\left(  P_{\alpha}\right)  $ of ideal maps are called the
\textit{convolution} and \textit{superposition chains } for $R$ and $P$,
respectively. The transfinite chains of ideals stabilize at some steps
$\delta=\delta\left(  A\right)  $ and $\gamma=\gamma(A)$:%
\[
I_{\delta}=I_{\delta+1}\text{ \ and\ }J_{\gamma}=J_{\gamma+1}.
\]
It was proved in \cite[Theorem 6.6 and Theorem 6.10]{D97} that the map
$R^{\ast}$ defined by
\[
R^{\ast}\left(  A\right)  :=I_{\delta}%
\]
satisfies Axiom $2$, and the map $P^{\circ}$ defined by
\[
P^{\circ}\left(  A\right)  :=J_{\gamma}%
\]
satisfies Axiom $3$.

The maps $R\longmapsto R^{\ast}$ and $P\longmapsto P^{\circ}$ are called the
\textit{convolution} and \textit{superposition procedures}, respectively. We
underline that the convolution procedure for algebraic preradicals on
$\mathfrak{U}_{\mathrm{a}}$ differs from the one for topological preradicals
on $\mathfrak{U}_{\mathrm{n}}$ because the latter includes the operation of
the closure. So we have two different --- \textit{ algebraic} and
\textit{topological --- }convolution procedures.

The following theorem was proved by Dixon \cite[Corollary 6.12]{D97} for
algebraic (over/under) radicals and for topological (over/under) radicals on
normed algebras in assumption that ideals in Axiom $4$ are closed, in
particular for (over/under) radicals on Banach algebras. We extend the Dixon
results to preradicals of normed algebras satisfying Axiom $4$ without any restrictions.

\begin{theorem}
\label{ovunt}Let $R$, $P$ be preradicals satisfying Axiom $4$. Then

\begin{enumerate}
\item $R^{\ast}$ is \textit{a smallest over} \textit{radical} larger than or
equal to $R$.

If $R$ is an under radical then $R^{\ast}$ is a radical;

\item $P^{\circ}$ is a largest under radical smaller than or equal to $P$.

If $P$ is an over radical then $P^{\circ}$ is a radical.
\end{enumerate}
\end{theorem}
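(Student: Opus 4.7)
The plan is to prove both parts by transfinite induction on the chains $\left(I_\alpha\right)$ and $\left(J_\alpha\right)$ defining $R^{\ast}$ and $P^{\circ}$, checking Axioms~$1$ and~$4$ stage by stage, while Axiom~$2$ for $R^{\ast}$ and Axiom~$3$ for $P^{\circ}$ emerge directly from the stabilization of the chains. The extra assertions (that $R$ being an under radical promotes $R^{\ast}$ to a radical, and dually for $P^{\circ}$) then reduce to verifying the one remaining axiom via a single additional idempotency argument.

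For part~$(1)$, I would establish Axiom~$1$ of $R^{\ast}$ by inducting on $f\left(I_\alpha(A)\right)\subset I_\alpha(B)$ for any morphism $f:A\longrightarrow B$: at successors, the hypothesis yields a well-defined induced quotient morphism $\bar f:A/I_\alpha(A)\longrightarrow B/I_\alpha(B)$ (open, continuous, surjective in the topological case), to which Axiom~$1$ of $R$ applies, and taking preimages under the two quotient maps gives the inclusion at stage $\alpha+1$; limits are handled by monotonicity of unions and closures. Axiom~$4$ is proved simultaneously in the form ``$I_\alpha(J)\subset I_\alpha(A)$ and $I_\alpha(J)$ is an ideal of $A$'' for every ideal $J$ of $A$: in the successor step, $J/I_\alpha(J)$ is an ideal of $A/I_\alpha(J)$, so Axiom~$4$ of $R$ places $R\left(J/I_\alpha(J)\right)\subset R\left(A/I_\alpha(J)\right)$, and Axiom~$1$ of $R$ applied to the natural morphism $A/I_\alpha(J)\longrightarrow A/I_\alpha(A)$ then transfers this into the stage-$(\alpha{+}1)$ containment. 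Axiom~$2$ is immediate from the stabilization identity $I_\delta=q^{-1}\left(R(A/I_\delta)\right)$, which forces $R(A/R^{\ast}(A))=0$, whereupon the chain for $A/R^{\ast}(A)$ collapses to zero at the first step. Minimality is the dual induction: given any over radical $Q\geq R$, Axiom~$2$ of $Q$ combined with Axiom~$1$ of $Q$ on $A/I_\alpha\longrightarrow A/Q(A)$ yields $Q\left(A/I_\alpha\right)\subset Q(A)/I_\alpha$, so $I_{\alpha+1}\subset Q(A)$.

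When $R$ is additionally an under radical, I would upgrade $R^{\ast}$ to a radical by isolating the key lemma: any algebra $B$ admitting a transfinite chain of ideals with $R$-radical gap quotients lies in $\mathbf{Rad}(R^{\ast})$, proved by comparing such a chain to the $R^{\ast}$-chain of $B$ through Axiom~$1$ of $R$. Applied to $B=R^{\ast}(A)$ with its own chain $\left(I_\alpha\right)$, whose gap quotients $I_{\alpha+1}/I_\alpha=R\left(A/I_\alpha\right)$ are $R$-radical by Axiom~$3$ of $R$, this yields $R^{\ast}(R^{\ast}(A))=R^{\ast}(A)$, i.e., Axiom~$3$ of $R^{\ast}$. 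Part~$(2)$ runs along the mirror pattern: the decreasing chain uses intersections at limits (no closures required), Axiom~$3$ of $P^{\circ}$ is immediate from stabilization since at $\gamma$ one has $P(J_\gamma)=J_\gamma$, maximality is the dual inductive argument, and when $P$ is an over radical the extra Axiom~$2$ of $P^{\circ}$ follows by checking that the chain for $A/P^{\circ}(A)$ stalls at zero.

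The hardest point, and precisely where the extension beyond Dixon's original formulation enters, is Axiom~$1$ of $P^{\circ}$ in the normed setting. The successor step needs $f\left(P(J_\alpha(A))\right)\subset P(J_\alpha(B))$, and the natural appeal would be to Axiom~$1$ of $P$ applied to the restriction $f|_{J_\alpha(A)}:J_\alpha(A)\longrightarrow f(J_\alpha(A))$; however, because $J_\alpha(A)$ need not be closed, this restriction may fail to be open onto its image, so is not automatically a morphism in $\mathfrak{U}_{\mathrm{n}}$. My plan to circumvent this is to factor $f|_{J_\alpha(A)}$ through the genuine topological quotient $J_\alpha(A)\longrightarrow J_\alpha(A)/(J_\alpha(A)\cap\ker f)$ (a morphism to which Axiom~$1$ of $P$ applies), and then invoke Axiom~$4$ of $P$ with $f(J_\alpha(A))$ viewed as an ideal of $J_\alpha(B)$ to transfer the containment from the quotient-norm side to the subspace-norm side. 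This is the chief technical burden of the proof and the main novelty over Dixon's treatment, which avoided the issue by restricting Axiom~$4$ to closed ideals from the outset.
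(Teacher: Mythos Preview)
Your overall architecture is sound and matches the paper's transfinite-induction strategy, but you have misidentified where the genuine technical difficulty lies, and your Axiom~$4$ argument for the convolution chain has a gap in precisely the place the paper works hardest.

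In your successor step for Axiom~$4$ of $R^{\ast}$ you form $A/I_\alpha(J)$ and apply Axiom~$4$ of $R$ to the ideal $J/I_\alpha(J)$ of $A/I_\alpha(J)$. In the algebraic case this is fine, but in the normed case it breaks down when $J$ is a non-closed ideal of $A$: the ideal $I_\alpha(J)$ is closed in $J$ (since preradicals take values in closed ideals) but need not be closed in $A$, so $A/I_\alpha(J)$ is not a Hausdorff normed algebra and is not in the class $\mathfrak{U}$. This is exactly the obstacle that distinguishes the paper's result from Dixon's (who assumed Axiom~$4$ only for closed ideals). The paper's proof addresses it by passing to the closure $\overline{J}:=\overline{I_\alpha(K)}$ in $A$, observing that $K/I_\alpha(K)$ is isometrically identified with the ideal $q_{\overline{J}}(K)$ of $A/\overline{J}$ (the isometry holds because $I_\alpha(K)$ is dense in $\overline{J}$), then using Axiom~$4$ to pass into $R(\overline{K}/\overline{J})$, and finally using the morphism $\overline{K}/\overline{J}\to\overline{K}/R_\alpha(\overline{K})$ together with Dixon's result for the closed ideal $\overline{K}$ to reach $R_\alpha(A)$. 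None of this closure-juggling appears in your sketch.

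Conversely, the step you flag as the ``chief technical burden'' --- Axiom~$1$ for $P^{\circ}$ when $f|_{J_\alpha(A)}$ is not open --- is not what the paper treats as the novelty; the paper refers all of Axiom~$1$ to Dixon and concentrates exclusively on Axiom~$4$. Your proposed workaround for that point is also not convincing as written: factoring through $J_\alpha(A)/(J_\alpha(A)\cap\ker f)$ gives a morphism, but the remaining continuous bijection onto $f(J_\alpha(A))$ carries a different (quotient) norm than the subspace norm inherited from $B$, and Axiom~$4$ says nothing about how $P$ behaves under such a renorming --- it only compares $P$ of an ideal to $P$ of the ambient algebra. So invoking Axiom~$4$ does not ``transfer the containment from the quotient-norm side to the subspace-norm side''.
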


\begin{proof}
Using the transfinite induction, Dixon proved \cite{D97} that if $R$ is an
under radical and $P$ is an over radical then all $R_{\alpha}$ and $P_{\alpha
}$ are under and over radicals, respectively, under his assumption on ideals.
It is easy to see from Dixon's proof that if $R$ and $P$ are preradicals
satisfying Axiom $4$ restricted to closed ideals then so are $R_{\alpha}$ and
$P_{\alpha}$ for every $\alpha$.

Now let $R$ and $P$ satisfy Axiom $4$. We have to check Axiom $4$ for all maps
from $\left(  R_{\alpha}\right)  $ and $\left(  P_{\alpha}\right)  $; then
$R^{\ast}$ and $P^{\circ}$ will also satisfy Axiom $4$. For other assertions
of this theorem we refer to \cite{D97}.

Let $A$ be a normed algebra, and let $K$ be an ideal of $A$. Using the
transfinite induction, we fix an ordinal $\alpha$ and assume that $\left(
R_{\alpha^{\prime}}\right)  _{\alpha^{\prime}<\alpha}$ and $\left(
P_{\alpha^{\prime}}\right)  _{\alpha^{\prime}<\alpha}$ consist of preradicals
satisfying Axiom $4$.

If $\alpha$ is a limit ordinal then all $R_{\alpha^{\prime}}\left(  K\right)
\subset R_{\alpha^{\prime}}\left(  A\right)  $ and $P_{\alpha^{\prime}}\left(
K\right)  \subset P_{\alpha^{\prime}}\left(  A\right)  $ are ideals of $A$ for
$\alpha^{\prime}<\alpha$, whence
\begin{align}
R_{\alpha}\left(  K\right)   &  =\overline{\cup_{\alpha^{\prime}<\alpha
}R_{\alpha^{\prime}}\left(  K\right)  }^{\left(  K\right)  }\subset
\overline{\cup_{\alpha^{\prime}<\alpha}R_{\alpha^{\prime}}\left(  A\right)
}^{\left(  A\right)  }=R_{\alpha}\left(  A\right)  ,\label{rk}\\
P_{\alpha}\left(  K\right)   &  =\cap_{\alpha^{\prime}<\alpha}P_{\alpha
^{\prime}}\left(  K\right)  \subset\cap_{\alpha^{\prime}<\alpha}%
P_{\alpha^{\prime}}\left(  A\right)  =P_{\alpha}\left(  A\right) \nonumber
\end{align}
are ideals of $A$. Indeed, to check this for $R_{\alpha}\left(  K\right)  $
note that $I:=\cup_{\alpha^{\prime}<\alpha}R_{\alpha^{\prime}}\left(
K\right)  $ is an ideal of $A$, and for every $x\in R_{\alpha}\left(
K\right)  $ there is a sequence $\left(  y_{n}\right)  \subset I$ such that
$y_{n}\rightarrow x$ as $n\rightarrow\infty$. For every $a\in A$, one has
$ay_{n},y_{n}a\in I$ for each $n$, whence
\[
ax,xa\in K\cap\overline{I}^{\left(  A\right)  }=\overline{I}^{\left(
K\right)  }=R_{\alpha}\left(  K\right)  .
\]
Therefore $R_{\alpha}\left(  K\right)  $ is an ideal of $A$; the same
assertion for $P_{\alpha}$ is trivial.

Let now $\alpha=\alpha^{\prime}+1$, for some ordinal $\alpha^{\prime}$. Since
$P_{\alpha^{\prime}}\left(  K\right)  \subset P_{\alpha^{\prime}}\left(
A\right)  $ is an ideal of $A$ then $P_{\alpha}\left(  K\right)  =P\left(
P_{\alpha^{\prime}}\left(  K\right)  \right)  $ is an ideal of $A$. As
$P_{\alpha^{\prime}}\left(  K\right)  $ is an ideal of $P_{\alpha^{\prime}%
}\left(  A\right)  $, we obtain that
\[
P_{\alpha}\left(  K\right)  =P\left(  P_{\alpha^{\prime}}\left(  K\right)
\right)  \subset P\left(  P_{\alpha^{\prime}}\left(  A\right)  \right)
=P_{\alpha}\left(  A\right)  .
\]

By the hypothesis of induction (valid for any $A$), $J:=R_{\alpha^{\prime}%
}\left(  K\right)  \subset R_{\alpha^{\prime}}\left(  A\right)  $ is an ideal
of $A$. Then, as $K$ is an ideal of $\overline{K}$,
\begin{equation}
J\subset R_{\alpha^{\prime}}\left(  \overline{K}\right)  \label{jin}%
\end{equation}
is an ideal of $\overline{K}$. Since $J=K\cap\overline{J}$, the map
$x/J\longmapsto q_{\overline{J}}\left(  x\right)  $ is an injective map from
$K/J$ onto the ideal $q_{\overline{J}}\left(  K\right)  $ of $A/\overline{J}$.
So
\begin{equation}
R_{\alpha}\left(  K\right)  =\left\{  x\in K:q_{\overline{J}}\left(  x\right)
\in R\left(  q_{\overline{J}}\left(  K\right)  \right)  \right\}  \label{j1}%
\end{equation}
whence $R_{\alpha}\left(  K\right)  $ is an ideal of $A$. As $q_{\overline{J}%
}\left(  K\right)  $ is an ideal of $\overline{K}/\overline{J}$, we obtain
that
\begin{equation}
R\left(  q_{\overline{J}}\left(  K\right)  \right)  \subset R\left(
\overline{K}/\overline{J}\right)  . \label{j2}%
\end{equation}
Since $\overline{J}\subset R_{\alpha^{\prime}}\left(  \overline{K}\right)  $
is an ideal of $\overline{K}$ by $\left(  \ref{jin}\right)  $, it follows
that
\begin{equation}
q\left(  R\left(  \overline{K}/\overline{J}\right)  \right)  \subset R\left(
\overline{K}/R_{\alpha^{\prime}}\left(  \overline{K}\right)  \right)
=R_{\alpha}\left(  \overline{K}\right)  /R_{\alpha^{\prime}}\left(
\overline{K}\right)  . \label{j3}%
\end{equation}
for the natural map $q:\overline{K}/\overline{J}\longrightarrow\overline
{K}/R_{\alpha^{\prime}}\left(  \overline{K}\right)  $. It follows from
$\left(  \ref{j1}\right)  $, $\left(  \ref{j2}\right)  $ and $\left(
\ref{j3}\right)  $ that $R_{\alpha}\left(  K\right)  \subset R_{\alpha}\left(
\overline{K}\right)  $. As $\overline{K}$ is a closed ideal of $A$ then
$R_{\alpha}\left(  \overline{K}\right)  \subset R_{\alpha}\left(  A\right)  $
by \cite[Theorem 6.6]{D97}. Therefore $R_{\alpha}\left(  K\right)  \subset
R_{\alpha}\left(  A\right)  $. This completes the proof.
\end{proof}

We outline a part of the proof of Theorem \ref{ovunt} in the form of a
separate statement.

\begin{proposition}
\label{ax4}Let $R,R_{1}$, $P,P_{1}$ be preradicals satisfying Axiom $4$. Then

\begin{enumerate}
\item If $R_{2}$ is defined on $\mathfrak{U}$ by $R_{2}\left(  A\right)
=q_{R_{1}\left(  A\right)  }^{-1}\left(  R\left(  A/R_{1}\left(  A\right)
\right)  \right)  $ then $R_{2}$ is a preradical satisfying Axiom $4$;

\begin{enumerate}
\item If $R,R_{1}$ are under radicals then $R_{2}$ is an under radical;
\end{enumerate}

\item If $P_{2}$ is defined on $\mathfrak{U}$ by $P_{2}\left(  A\right)
=P\left(  P_{1}\left(  A\right)  \right)  $ then $P_{2}$ is a preradical
satisfying Axiom $4$;

\begin{enumerate}
\item If $P,P_{1}$ are over radicals then $P_{2}$ is an over radical.
\end{enumerate}
\end{enumerate}
\end{proposition}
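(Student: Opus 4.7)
The proposition is exactly the successor-ordinal step of the transfinite induction behind Theorem \ref{ovunt}: taking $R_{1} = R_{\alpha'}$ and $P_{1} = P_{\alpha'}$, the inductive claim at $\alpha = \alpha'+1$ is what (1) and (2) assert. The plan is therefore to verify Axioms 1 and 4 for $R_{2}$ and $P_{2}$ at a single step, and then reduce the under/over radical refinements to Axioms 3 and 2 of the building blocks.

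For (1), Axiom 1 is formal: any morphism $f : A \to B$ descends to $\bar f : A/R_{1}(A) \to B/R_{1}(B)$ because $f(R_{1}(A)) \subset R_{1}(B)$, and Axiom 1 for $R$ followed by pulling back gives $f(R_{2}(A)) \subset R_{2}(B)$. The real content is Axiom 4, and I would reproduce the argument already present in the proof of Theorem \ref{ovunt}: for an ideal $K$ of $A$, set $J = R_{1}(K)$; this is an ideal of $A$ by Axiom 4 for $R_{1}$, and the identity $J = K \cap \overline{J}$ lets $q_{\overline{J}}$ realise $K/J$ as the ideal $q_{\overline{J}}(K)$ of $A/\overline{J}$, so
\[
R_{2}(K) = \{\, x \in K : q_{\overline{J}}(x) \in R(q_{\overline{J}}(K)) \,\}
\]
is an ideal of $A$ by Axiom 4 for $R$ inside $A/\overline{J}$. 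Tracking the inclusion $R(q_{\overline{J}}(K)) \subset R(\overline{K}/\overline{J})$ through the quotient and invoking Dixon's closed-ideal result \cite[Theorem 6.6]{D97} for $R_{2}(\overline{K}) \subset R_{2}(A)$ yields $R_{2}(K) \subset R_{2}(A)$.

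Part (2) is substantially easier because no closures intervene. Axiom 1 follows from the observation that $f(P_{1}(A))$ is an ideal of $P_{1}(B)$, so Axiom 4 for $P$ applied inside $P_{1}(B)$ together with the restriction $f|_{P_{1}(A)}$ gives $f(P_{2}(A)) \subset P(P_{1}(B)) = P_{2}(B)$. For Axiom 4, any ideal $K$ of $A$ makes $P_{1}(K)$ simultaneously an ideal of $A$ (by Axiom 4 for $P_{1}$) and of $P_{1}(A)$ (since $P_{1}(K) \subset P_{1}(A)$); applying Axiom 4 for $P$ in both frames makes $P_{2}(K) = P(P_{1}(K))$ an ideal of $A$ contained in $P(P_{1}(A)) = P_{2}(A)$.

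For (1a), Axiom 3 for $R_{2}$ reduces via the sandwich $R_{1}(R_{2}(A)) \subset R_{1}(A) \subset R_{1}(R_{2}(A))$ (from Axiom 4 for $R_{1}$ applied in both directions, with Axiom 3 for $R_{1}$ providing $R_{1}(R_{1}(A)) = R_{1}(A)$) to $R(R(A/R_{1}(A))) = R(A/R_{1}(A))$, which is Axiom 3 for $R$. For (2a), Axiom 2 for $P_{2}$ reduces, via the identification $P_{1}(A/P_{2}(A)) = P_{1}(A)/P_{2}(A)$ obtained from Axioms 1 and 2 for $P_{1}$ by factoring $A \to A/P_{1}(A)$ through $A/P_{2}(A)$, to the equality $P(P_{1}(A)/P(P_{1}(A))) = 0$, which is Axiom 2 for $P$ applied to $P_{1}(A)$. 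The only real obstacle in the whole proposition is Axiom 4 for $R_{2}$, where the potentially nonclosed $J = R_{1}(K)$ forces the passage through $\overline{J}$ and $\overline{K}$; everything else is bookkeeping within the axioms.
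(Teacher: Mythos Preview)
Your proposal matches the paper's approach exactly: the paper's proof of Proposition~\ref{ax4} is literally the single sentence ``This is the step $\alpha\mapsto\alpha+1$ of the transfinite induction applied in Theorem~\ref{ovunt}'', and you have unpacked that step in detail, including the under/over refinements (1a) and (2a) which the paper defers to Dixon. Your treatment of Axiom~4 for $R_{2}$ reproduces the argument in Theorem~\ref{ovunt} faithfully, and your arguments for (1a) and (2a) are correct.

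There is, however, one genuine gap in your treatment of Axiom~1 for $P_{2}$ in the topological case. You write that ``Axiom~4 for $P$ applied inside $P_{1}(B)$ together with the restriction $f|_{P_{1}(A)}$'' gives $f(P_{2}(A))\subset P_{2}(B)$. But $f|_{P_{1}(A)}:P_{1}(A)\to f(P_{1}(A))$ need not be a morphism in the paper's sense: openness can fail (the quotient norm on $P_{1}(A)/(P_{1}(A)\cap\ker f)$ may be strictly larger than the subspace norm on $f(P_{1}(A))\subset B$), and on $\mathfrak{U}_{\mathrm{b}}$ the image $f(P_{1}(A))$ may not even be a Banach algebra, so $P$ is not defined on it. The fix is to work with the preimage instead: set $L=f^{-1}(P_{1}(B))$, a closed ideal of $A$ containing $P_{1}(A)$. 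Since $L$ is saturated for $f$, the restriction $f|_{L}:L\to P_{1}(B)$ \emph{is} a morphism; Axiom~4 for $P$ (with $P_{1}(A)$ as an ideal of $L$) gives $P(P_{1}(A))\subset P(L)$, and Axiom~1 for $P$ applied to $f|_{L}$ then yields $f(P_{2}(A))\subset f(P(L))\subset P(P_{1}(B))=P_{2}(B)$.
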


\begin{proof}
This is the step $\alpha\mapsto\alpha+1$ of the transfinite induction applied
in Theorem \ref{ovunt}.
\end{proof}

\begin{theorem}
\label{sera}Let $R$ be an under radical, and let $P$ be an over radical. Then

\begin{enumerate}
\item $\mathbf{Sem}\left(  R^{\ast}\right)  =\mathbf{Sem}\left(  R\right)  $;

\item $\mathbf{\mathbf{Rad}}\left(  P^{\circ}\right)  =\mathbf{\mathbf{Rad}%
}\left(  P\right)  $.
\end{enumerate}
\end{theorem}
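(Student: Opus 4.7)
My plan is to prove each equality by two inclusions. In each case one direction will come directly from the relation $R\leq R^{\ast}$ (respectively $P^{\circ}\leq P$) combined with the monotonicity principle $(\ref{pprs})$, and the other from the observation that the defining transfinite chain degenerates immediately when its input algebra is already $R$-semisimple (respectively $P$-radical). This exploits the fact that the convolution chain starts at $I_0=0$ and its first step is $I_1=R(A)$, while the superposition chain starts at $J_0=A$ and its first step is $J_1=P(A)$.

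For part (1), the inequality $R\leq R^{\ast}$ follows by reading off the chain: $R(A)=I_1\subset I_{\delta}=R^{\ast}(A)$. Hence $(\ref{pprs})$ gives $\mathbf{Sem}(R^{\ast})\subset\mathbf{Sem}(R)$. For the reverse inclusion I take $A\in\mathbf{Sem}(R)$, so $R(A)=0$, and verify by transfinite induction that every term of the convolution chain vanishes. The successor step uses $I_{\alpha+1}=q_{I_{\alpha}}^{-1}(R(A/I_{\alpha}))$, which reduces to $R(A)=0$ once we know $I_{\alpha}=0$; the limit step is read off from $(\ref{bb})$ and uses only the triviality $\overline{0}=0$. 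Consequently $R^{\ast}(A)=0$, so $A\in\mathbf{Sem}(R^{\ast})$.

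Part (2) is dual. From $P^{\circ}(A)=J_{\gamma}\subset J_1=P(A)$ I obtain $P^{\circ}\leq P$ and hence $\mathbf{Rad}(P^{\circ})\subset\mathbf{Rad}(P)$ by $(\ref{pprs})$. For the reverse, assume $A\in\mathbf{Rad}(P)$, i.e.\ $P(A)=A$. Then $J_1=P(J_0)=P(A)=A=J_0$, and a transfinite induction shows the superposition chain is constant with value $A$: successor steps give $J_{\alpha+1}=P(A)=A$, and limit steps use the second line of $(\ref{bb})$, which yields $J_{\alpha}=\cap A=A$. Therefore $P^{\circ}(A)=A$ and $A\in\mathbf{Rad}(P^{\circ})$.

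There is no genuine obstacle: the statement is essentially an unwinding of the definitions of $R^{\ast}$ and $P^{\circ}$. The one point that deserves a brief mention is the limit-ordinal step of the convolution chain in the topological setting, where $I_{\alpha}$ is defined as the closure of the union of the earlier terms; one needs to observe that this closure is compatible with the trivial induction hypothesis (since $\overline{0}=0$) so that the chain remains identically zero. With this remark the transfinite inductions close uniformly in the algebraic and normed cases, and no further appeal to Theorem \ref{ovunt} is needed beyond the fact that $R^{\ast}$ and $P^{\circ}$ are defined at all.
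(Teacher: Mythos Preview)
Your proof is correct and follows essentially the same approach as the paper's. The paper's version is terser: it simply notes that if $R(A)=0$ then $R_1(A)=0$, so the convolution chain stabilizes at step $0$ and $R^{\ast}(A)=0$ (and dually for $P^{\circ}$), leaving the reverse inclusions implicit via $R\leq R^{\ast}$ and $P^{\circ}\leq P$ from Theorem~\ref{ovunt}; your transfinite induction accomplishes the same thing but is more work than needed, since stabilization at $\delta=0$ already suffices.
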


\begin{proof}
Let $A$ be an algebra, and let $\left(  R_{\alpha}\right)  $ and $\left(
P_{\alpha}\right)  $ be the convolution and superposition chains of $R$ and
$P$, respectively.

$\left(  1\right)  $ If $R\left(  A\right)  =0$ then $R_{1}\left(  A\right)
=R\left(  A\right)  =0$, whence $R^{\ast}\left(  A\right)  =0$.

$\left(  2\right)  $ If $A=P\left(  A\right)  $ then $P_{1}\left(  A\right)
=P\left(  A\right)  =A$, whence $P^{\circ}\left(  A\right)  =A$.
\end{proof}

\begin{theorem}
\label{isotone}Let $R,R_{1}$ be under radicals, and let $P,P_{1}$ be over
radicals. Then

\begin{enumerate}
\item If $R_{1}\leq R$ then $R_{1}^{\ast}\leq R^{\ast}$;

\item If $P_{1}\leq P$ then $P_{1}^{\circ}\leq P^{\circ}$;

\item If $R\leq P$ then $R^{\ast}\leq P^{\circ}$;

\item If $P\leq R$ then $P^{\circ}\leq R^{\ast}$.
\end{enumerate}
\end{theorem}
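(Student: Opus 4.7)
The plan is to deduce all four inequalities as formal consequences of the universal properties of $R^{\ast}$ and $P^{\circ}$ recorded in Theorem \ref{ovunt}, rather than by a fresh transfinite induction on the convolution and superposition chains. That theorem tells us that $R^{\ast}$ is the \emph{smallest} over radical majorizing $R$, and that $P^{\circ}$ is the \emph{largest} under radical minorizing $P$; moreover, since in this theorem $R$ is assumed to be an under radical and $P$ an over radical, both $R^{\ast}$ and $P^{\circ}$ are in fact radicals, hence simultaneously under radicals and over radicals. I shall also freely use the trivial comparisons $R\leq R^{\ast}$ and $P^{\circ}\leq P$, which are immediate from the definitions $R_{1}(A)=R(A)$ and $P_{1}(A)=P(A)$ at the first step of the chains.

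For assertion (1), the chain of inequalities $R_{1}\leq R\leq R^{\ast}$ exhibits $R^{\ast}$ as an over radical that dominates $R_{1}$; by minimality of $R_{1}^{\ast}$ among such over radicals this gives $R_{1}^{\ast}\leq R^{\ast}$. Assertion (2) is symmetric: from $P_{1}^{\circ}\leq P_{1}\leq P$ one sees that $P_{1}^{\circ}$ is an under radical dominated by $P$, and maximality of $P^{\circ}$ forces $P_{1}^{\circ}\leq P^{\circ}$.

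For assertion (3), the hypothesis $R\leq P$ together with the fact that $R$ is an under radical and $P^{\circ}$ is the largest under radical dominated by $P$ gives $R\leq P^{\circ}$. Since $P^{\circ}$ is itself an over radical, the minimality of $R^{\ast}$ among over radicals majorizing $R$ yields $R^{\ast}\leq P^{\circ}$, as required. Assertion (4) is then immediate from the four-term chain $P^{\circ}\leq P\leq R\leq R^{\ast}$.

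There is no substantive obstacle here: once Theorem \ref{ovunt} has been invoked, every step is a formal manipulation of the order on (pre)radicals. The only point worth a moment's care is that one must apply Theorem \ref{ovunt} in the correct direction at each stage — specifically, that $R^{\ast}$ is an over radical (used in (1) and (3)) while $P^{\circ}$ is an under radical (used in (2) and (3)) — but both facts are contained in the hypotheses of the present theorem, so no supplementary argument is needed.
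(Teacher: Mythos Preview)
Your proof is correct, and for parts (1), (2), and (4) it is essentially identical to the paper's argument: both deduce the inequalities directly from the extremal characterizations of $R^{\ast}$ and $P^{\circ}$ in Theorem~\ref{ovunt}.

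For part (3) there is a small but genuine difference in route. The paper first shows $R^{\ast}\leq P$ by passing through semisimple classes: from $R\leq P$ it gets $\mathbf{Sem}(P)\subset\mathbf{Sem}(R)=\mathbf{Sem}(R^{\ast})$ via Theorem~\ref{sera}, then invokes Theorem~\ref{equality} to conclude $R^{\ast}\leq P$, and finally uses that $P^{\circ}$ is the largest radical below $P$. You instead go the other way around: first $R\leq P^{\circ}$ (since $R$ is an under radical dominated by $P$ and $P^{\circ}$ is the largest such), and then $R^{\ast}\leq P^{\circ}$ (since $P^{\circ}$ is an over radical dominating $R$). Your argument is self-contained from Theorem~\ref{ovunt} alone and avoids the detour through Theorems~\ref{sera} and~\ref{equality}; the paper's version, on the other hand, illustrates how the semisimple-class comparison can substitute for direct order arguments, which is a theme used elsewhere in the paper.
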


\begin{proof}
$\left(  1\right)  $ $R^{\ast}$ is a radical and $R_{1}\leq R\leq R^{\ast}$.
As $R_{1}^{\ast}$ is the smallest radical among radicals $R^{\prime}$ such
that $R_{1}\leq R^{\prime}$ then $R_{1}^{\ast}\leq R^{\ast}$.

$\left(  2\right)  $ is similar to $\left(  1\right)  $: $P_{1}^{\circ}$ is a
radical and $P_{1}^{\circ}\leq P_{1}\leq P$. As $P^{\circ}$ is the largest
radical among radicals $P^{\prime}$ such that $P^{\prime}\leq P$ then
$P_{1}^{\circ}\leq P^{\circ}$.

$\left(  3\right)  $ This is \cite[Theorem 6.11]{D97}, but the following
simple observation somewhat shortens the proof: as $R\leq P$, we have that%
\[
\mathbf{Sem}\left(  P\right)  \subset\mathbf{Sem}\left(  R\right)
=\mathbf{Sem}\left(  R^{\ast}\right)  .
\]
Since $R^{\ast}$ is an over radical then $R^{\ast}\leq P$ by Theorem
\ref{equality}. As $R^{\ast}$ is a radical and $P^{\circ}$ is a largest
radical smaller than or equal to $P$ then $R^{\ast}\leq P^{\circ}$.

$\left(  4\right)  $ is trivial.
\end{proof}

In other words, $\left(  1\right)  $ and $\left(  2\right)  $ of Proposition
\ref{isotone} state that the convolution and superposition procedures are isotone.

\subsection{The closure procedure}

The \textit{closure procedure} $P\longmapsto\overline{P}$ makes a preradical
$P$ on a class $\mathfrak{U}$ (containing $\mathfrak{U}_{\mathrm{n}}$) into a
topological preradical $\overline{P}$ on $\mathfrak{U}_{\mathrm{n}}$ by the
rule
\[
\overline{P}\left(  A\right)  =\overline{P\left(  A\right)  }%
\]
for every $A\in\mathfrak{U}_{\mathrm{n}}$; $\overline{P}$ is called the
\textit{closure} of $P$. Dixon pointed out in \cite[Example 6.4]{D97} that if
$P$ is a radical then the closure $\overline{P}$ may be not a topological
radical, but always is a topological under radical. Recall that the
restrictions of algebraic radicals from $\mathfrak{U}_{\mathrm{a}}$ to a class
of normed algebras are algebraic under radicals. One can state Dixon's result
\cite[Theorem 6.3]{D97} in the following stronger form.

\begin{theorem}
\label{closunder}The closure of the restriction of an algebraic under radical
to a class of normed algebras is a strict topological under radical.
\end{theorem}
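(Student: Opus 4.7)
Let $P$ be an algebraic under radical on $\mathfrak{U}_{\mathrm{a}}$ and set $\overline{P}(A)=\overline{P(A)}^{(A)}$ for every normed algebra $A$ in the given class $\mathfrak{U}\subset\mathfrak{U}_{\mathrm{n}}$. We must verify that $\overline{P}$ is a closed ideal map satisfying Axioms $1$, $3$, $4$ (for ideals not assumed closed) and, in addition, the strict identity $\overline{f(\overline{P}(A))}=\overline{P}(B)$ for every continuous isomorphism $f\colon A\longrightarrow B$ in $\mathfrak{U}$. The closed-ideal-map property is immediate, since the closure of an algebraic ideal is a closed ideal of $A$.

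For Axiom $1$, let $f\colon A\longrightarrow B$ be a topological morphism. As $P$ is an algebraic preradical, it is pliant and $f(P(A))\subset P(B)$. Continuity of $f$ then yields
\[
f(\overline{P}(A))=f(\overline{P(A)})\subset\overline{f(P(A))}\subset\overline{P(B)}=\overline{P}(B).
\]
For strictness, if $f$ is a continuous isomorphism, then $f^{-1}$ is also an algebraic isomorphism, so $f^{-1}(P(B))\subset P(A)$ and hence $f(P(A))=P(B)$; combining with the continuity inclusion above and taking closures gives
\[
\overline{P}(B)=\overline{P(B)}=\overline{f(P(A))}\subset\overline{f(\overline{P(A)})}=\overline{f(\overline{P}(A))}\subset\overline{P(B)}=\overline{P}(B),
\]
so equality holds throughout.

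For Axiom $3$, write $B=\overline{P}(A)=\overline{P(A)}^{(A)}$; since $P(A)$ is an algebraic ideal of $A$ contained in $B$, it is an ideal of $B$, and Axiom $4$ for the algebraic under radical $P$ forces $P(P(A))\subset P(B)$. Axiom $3$ for $P$ gives $P(P(A))=P(A)$, so $P(A)\subset P(B)$. Because $B$ is closed in $A$, closure in $B$ and in $A$ of a subset of $B$ agree, whence
\[
B=\overline{P(A)}^{(B)}\subset\overline{P(B)}^{(B)}=\overline{P}(B)\subset B,
\]
establishing $\overline{P}(\overline{P}(A))=\overline{P}(A)$.

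For Axiom $4$, let $I\in\mathfrak{U}$ be an ideal of $A$ with the inherited norm. By the algebraic Axiom $4$ for $P$, the set $P(I)$ is an ideal of $A$ and is contained in $P(A)$. Since the norm on $I$ is inherited from $A$, any sequence in $P(I)$ converging in $I$ also converges in $A$, so $\overline{P(I)}^{(I)}\subset\overline{P(A)}^{(A)}=\overline{P}(A)$. To see that $\overline{P}(I)=\overline{P(I)}^{(I)}$ is an ideal of $A$, take $x\in\overline{P}(I)$ and $a\in A$, and choose $(x_{n})\subset P(I)$ with $x_{n}\to x$. Then $ax_{n}\in P(I)$ because $P(I)$ is an ideal of $A$, and $ax\in I$ because $x\in I$ and $I$ is an ideal of $A$; hence $ax_{n}\to ax$ inside $I$, giving $ax\in\overline{P(I)}^{(I)}$, and similarly for $xa$. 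The main subtlety (and the only place the argument is not purely formal) is precisely this step: one has to use simultaneously that $P(I)$ is an ideal of the \emph{big} algebra $A$, while $I$ itself need not be closed, so the closure must be kept inside $I$ and one has to verify that multiplication by arbitrary elements of $A$ does not throw us out of $I$. Once this point is cleared, all four conditions are in place and $\overline{P}$ is a strict topological under radical.
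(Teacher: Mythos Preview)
Your proof is correct and follows the same approach as the paper; the strictness argument (using that an algebraic preradical is pliant, so $f(P(A))=P(B)$ for any algebraic isomorphism $f$, then closing) is identical to the paper's. The only difference is that the paper defers the under-radical verification (Axioms~1, 3, 4) to Dixon \cite[Theorem~6.3]{D97} and writes out only the strictness, whereas you supply the full argument, including the check of Axiom~4 for non-closed ideals---which is worth making explicit since the paper's conventions do not assume ideals are closed.
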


\begin{proof}
We only prove the strictness for $\overline{P}$. Let $f:A\longrightarrow B$ be
a continuous isomorphism of normed algebras. Then $f\left(  P\left(  A\right)
\right)  =P\left(  B\right)  $ by definition whence $f\left(  \overline
{P\left(  A\right)  }\right)  \ $is dense in $\overline{P\left(  B\right)  }$.
Therefore $\overline{f\left(  \overline{P}\left(  A\right)  \right)
}=\overline{P}\left(  B\right)  $.
\end{proof}

Starting with a (restricted) algebraic radical $P$, one can take its closure
and then apply the topological convolution procedure. The following theorem
shows that the result will not change if we do the same with the algebraic
convolution of $P$.

\begin{theorem}
\label{csscs}Let $P$ be an algebraic under radical. Then $\overline{P}^{\ast
}=\overline{P^{\ast}}^{\ast}$ and $\overline{P}^{\ast}$ is a strong
topological radical.
\end{theorem}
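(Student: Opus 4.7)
The plan is to establish the two desired inclusions separately, and then handle strongness by a parallel transfinite induction. I begin with a preliminary that will be used twice: $\overline{P}$ is a \emph{strong} topological under radical. Indeed, if $f:A\to B$ is any continuous surjective homomorphism of normed algebras, then $f$ is in particular an algebraic morphism, so Axiom $1$ for the algebraic preradical $P$ yields $f(P(A))\subset P(B)$, and continuity of $f$ gives $f(\overline{P(A)})\subset\overline{f(P(A))}\subset\overline{P(B)}$.

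For the inequality $\overline{P}^{\ast}\leq\overline{P^{\ast}}^{\ast}$, the relation $P\leq P^{\ast}$ gives $\overline{P}\leq\overline{P^{\ast}}$ and Theorem \ref{isotone}(1) (isotonicity of the convolution on under radicals) does the rest. For the reverse inequality $\overline{P^{\ast}}^{\ast}\leq\overline{P}^{\ast}$, the same isotonicity, combined with the fact that $\overline{P}^{\ast}$ is already a topological radical (Theorem \ref{ovunt}(1)) and hence fixed by $\ast$, reduces the task to showing $\overline{P^{\ast}}\leq\overline{P}^{\ast}$. Since $\overline{P}^{\ast}(A)$ is closed, this further reduces to the set-theoretic inclusion $P^{\ast}(A)\subset\overline{P}^{\ast}(A)$ for every normed algebra $A$. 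Setting $J=\overline{P}^{\ast}(A)$, Axiom $2$ for $\overline{P}^{\ast}$ gives $\overline{P(A/J)}=\overline{P}(A/J)=0$, whence $P(A/J)=0$. I then run a transfinite induction on the algebraic convolution chain $(I_{\alpha})$ of $P$ on $A$: the successor step uses Axiom $1$ for the algebraic preradical $P$ applied to the quotient morphism $A/I_{\alpha}\to A/J$ (available by the induction hypothesis $I_{\alpha}\subset J$) to conclude $P(A/I_{\alpha})\subset J/I_{\alpha}$, hence $I_{\alpha+1}=q_{I_{\alpha}}^{-1}(P(A/I_{\alpha}))\subset J$; the limit step is immediate. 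Stabilization delivers $P^{\ast}(A)\subset J$.

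Finally, strongness of $\overline{P}^{\ast}$ is proved by a parallel transfinite induction: given a continuous surjective homomorphism $f:A\to B$ of normed algebras and topological convolution chains $(I_{\alpha}^{A})$, $(I_{\alpha}^{B})$ for $\overline{P}$ on $A$ and $B$, I claim $f(I_{\alpha}^{A})\subset I_{\alpha}^{B}$. At a successor $\alpha+1$, the inclusion $f(I_{\alpha}^{A})\subset I_{\alpha}^{B}$ together with the closedness of both ideals produces a well-defined continuous surjective homomorphism $\tilde{f}:A/I_{\alpha}^{A}\to B/I_{\alpha}^{B}$ of normed algebras with their quotient norms; the strongness of $\overline{P}$ established above then gives $\tilde{f}(\overline{P}(A/I_{\alpha}^{A}))\subset\overline{P}(B/I_{\alpha}^{B})$, which pulls back to $f(I_{\alpha+1}^{A})\subset I_{\alpha+1}^{B}$. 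The limit step uses continuity of $f$ and the general inclusion $f(\overline{Y})\subset\overline{f(Y)}$. The one point that merits care is the bookkeeping of closures through the chain, so that the induced quotient maps remain continuous normed-algebra homomorphisms — but this is exactly what the topological convolution is designed to guarantee, so the main substantive ingredient is just the elementary strongness of $\overline{P}$ itself.
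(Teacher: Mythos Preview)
Your proof is correct. The strongness argument is essentially identical to the paper's: establish that $\overline{P}$ is strong (the paper cites strictness from Theorem~\ref{closunder}, you argue directly from the algebraic Axiom~1 plus continuity --- both work), then propagate strongness along the topological convolution chain by exactly the successor/limit induction you describe.

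The equality $\overline{P}^{\ast}=\overline{P^{\ast}}^{\ast}$ is where your route diverges. The paper argues via semisimple classes: by Theorem~\ref{sera}, $\mathbf{Sem}(P)=\mathbf{Sem}(P^{\ast})$, hence $\mathbf{Sem}(\overline{P})=\mathbf{Sem}(\overline{P^{\ast}})$, hence $\mathbf{Sem}(\overline{P}^{\ast})=\mathbf{Sem}(\overline{P^{\ast}}^{\ast})$, and Corollary~\ref{ineq} finishes. You instead prove the nontrivial inclusion $P^{\ast}(A)\subset\overline{P}^{\ast}(A)$ by a direct transfinite induction on the \emph{algebraic} convolution chain of $P$, exploiting that the quotient $A/I_{\alpha}\to A/J$ is an algebraic morphism. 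This is a valid and self-contained argument; note that your induction could be compressed to a single line by observing that $P(A/J)=0$ means $A/J\in\mathbf{Sem}(P)=\mathbf{Sem}(P^{\ast})$ (Theorem~\ref{sera}) and applying Axiom~1 for the algebraic radical $P^{\ast}$ to $q_{J}$. One small wording issue: you write ``Axiom~2 for $\overline{P}^{\ast}$ gives $\overline{P}(A/J)=0$'', but Axiom~2 literally gives $\overline{P}^{\ast}(A/J)=0$; the passage to $\overline{P}(A/J)=0$ needs the trivial observation $\overline{P}\leq\overline{P}^{\ast}$.
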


\begin{proof}
By Theorem \ref{sera}, $\mathbf{Sem}\left(  P\right)  =\mathbf{Sem}\left(
P^{\ast}\right)  $. Then
\begin{align*}
\mathbf{Sem}\left(  \overline{P}^{\ast}\right)   &  =\mathbf{Sem}\left(
\overline{P}\right)  =\mathfrak{U}_{\mathrm{n}}\cap\mathbf{Sem}\left(
P\right)  =\mathfrak{U}_{\mathrm{n}}\cap\mathbf{Sem}\left(  P^{\ast}\right) \\
&  =\mathbf{Sem}\left(  \overline{P^{\ast}}\right)  =\mathbf{Sem}\left(
\overline{P^{\ast}}^{\ast}\right)  .
\end{align*}
As $\overline{P}^{\ast}$and $\overline{P^{\ast}}^{\ast}$ are radicals,
$\overline{P}^{\ast}=\overline{P^{\ast}}^{\ast}$ by Corollary \ref{ineq}.

Let $f:A\longrightarrow B$ be a continuous surjective homomorphism of normed
algebras, and let $\left(  P_{\alpha}\right)  $ be the convolution series of
$\overline{P}$. As $P_{1}=\overline{P}$ is strict by Theorem \ref{closunder},
it is strong. Let us prove by transfinite induction that every $P_{\alpha}$ is strong.

Let $\alpha$ be a limit ordinal, and let all $P_{\alpha^{\prime}}$ be strong
for $\alpha^{\prime}<\alpha$. Then
\begin{align*}
f\left(  P_{\alpha}\left(  A\right)  \right)   &  =f\left(  \overline
{\cup_{\alpha^{\prime}<\alpha}P_{\alpha^{\prime}}\left(  A\right)  }\right)
\subset\overline{f\left(  \cup_{\alpha^{\prime}<\alpha}P_{\alpha^{\prime}%
}\left(  A\right)  \right)  }\\
&  =\overline{\cup_{\alpha^{\prime}<\alpha}f\left(  P_{\alpha^{\prime}}\left(
A\right)  \right)  }\subset\overline{\cup_{\alpha^{\prime}<\alpha}%
P_{\alpha^{\prime}}\left(  B\right)  }=P_{\alpha}\left(  B\right)  .
\end{align*}

The step $\alpha\mapsto\alpha+1$ of the induction is reduced to the
consideration of $\overline{P}$: it suffices to check that
\begin{equation}
f\left(  q_{I}^{-1}\left(  \overline{P}\left(  A/I\right)  \right)  \right)
\subset q_{J}^{-1}\left(  \overline{P}\left(  B/J\right)  \right)  \label{aa}%
\end{equation}
whenever $f\left(  I\right)  \subset J$ where $I=P_{\alpha}\left(  A\right)  $
and $J=P_{\alpha}\left(  B\right)  $. As $\overline{P}$ is strong, $\left(
\ref{aa}\right)  $ is easily checked. Indeed, $f$ induces a continuous
homomorphism $f^{\prime}$ from $A/I$ onto $B/J$ such that $f^{\prime}\circ
q_{I}=q_{J}\circ f$; then
\begin{align*}
f\left(  q_{I}^{-1}\left(  \overline{P}\left(  A/I\right)  \right)  \right)
&  \subset q_{J}^{-1}\left(  q_{J}\left(  f\left(  q_{I}^{-1}\left(
\overline{P}\left(  A/I\right)  \right)  \right)  \right)  \right) \\
&  =q_{J}^{-1}\left(  f^{\prime}\left(  \overline{P}\left(  A/I\right)
\right)  \right)  \subset q_{J}^{-1}\left(  \overline{P}\left(  B/J\right)
\right)  .
\end{align*}
As a consequence, $\overline{P}^{\ast}$ is strong.
\end{proof}

We apply the above theorem to classical algebraic radicals related to the
property of nilpotency. An algebra $A$ is called \textit{nilpotent} if there
is an integer $n>0$ such that $a_{1}\cdots a_{n}=0$ for every $a_{1}%
,\ldots,a_{n}\in A$, \textit{locally nilpotent} if every finite subset of $A$
generates a nilpotent subalgebra, \textit{nil }if every element of $A$ is
nilpotent. All these notions transfer to ideals.

An algebra $A$ is called \textit{semiprime} if it has no non-zero ideals with
zero square. For semiprimeness of $A$ it is sufficient to have no non-zero
left or right ideals with zero square \cite[Lemma 30.4]{BD73}.

One can define the \textit{Baer }(\textit{prime} or \textit{lower
nil-})\textit{ radical} $\mathfrak{P}_{\beta}\left(  A\right)  $ of $A$ as the
smallest ideal $I$ of $A$ with semiprime quotient $A/I$; the \textit{Levitzki
}(or \textit{locally nilpotent}) and\textit{ K\"{o}te }(or \textit{upper
nil-}) \textit{radicals} $\mathfrak{P}_{\lambda}\left(  A\right)  $ and
$\mathfrak{P}_{\kappa}\left(  A\right)  $ are defined as the largest locally
nilpotent and nil ideals of $A$, respectively. They all are hereditary
radicals and
\begin{equation}
\mathfrak{P}_{\beta}<\mathfrak{P}_{\lambda}<\mathfrak{P}_{\kappa}.
\label{compnil}%
\end{equation}

Let $\mathcal{P}_{\beta}=\overline{\mathfrak{P}_{\beta}}^{\ast}$,
$\mathcal{P}_{\lambda}=\overline{\mathfrak{P}_{\lambda}}^{\ast}$ and
$\mathcal{P}_{\kappa}=\overline{\mathfrak{P}_{\kappa}}^{\ast}$. They are
strong topological radicals by Theorems \ref{closunder} and \ref{ovunt}, and
are called the \textit{closed-Baer}, \textit{closed-Levitzki} and
\textit{closed-K\"{o}te} \textit{radical}, respectively.

Let $A$ be an algebra in $\mathfrak{U}_{a}$ and let $\Sigma_{\beta}\left(
A\right)  $ be the sum of all nilpotent ideals of $A$. It is clear that
$\Sigma_{\beta}$ is an algebraic under radical on $\mathfrak{U}_{\mathrm{a}}$;
by construction, the (algebraic) Baer radical $\mathfrak{P}_{\beta}$ is equal
to $\Sigma_{\beta}^{\ast}$.

Theorem \ref{csscs} yields

\begin{corollary}
\label{baer}$\mathcal{P}_{\beta}=\overline{\Sigma_{\beta}}^{\ast}$.
\end{corollary}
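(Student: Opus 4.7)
The plan is to obtain Corollary \ref{baer} as an immediate application of Theorem \ref{csscs} to the under radical $P=\Sigma_{\beta}$. The text has already told us two things: first, that $\Sigma_{\beta}$ is an algebraic under radical on $\mathfrak{U}_{\mathrm{a}}$; second, that by construction $\mathfrak{P}_{\beta}=\Sigma_{\beta}^{\ast}$. So the only real content is to chain the equalities
\[
\mathcal{P}_{\beta}=\overline{\mathfrak{P}_{\beta}}^{\ast}=\overline{\Sigma_{\beta}^{\ast}}^{\ast}=\overline{\Sigma_{\beta}}^{\ast},
\]
where the first equality is the definition of $\mathcal{P}_{\beta}$, the second is the identity $\mathfrak{P}_{\beta}=\Sigma_{\beta}^{\ast}$, and the third is precisely Theorem \ref{csscs} with $P=\Sigma_{\beta}$.

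Before doing that I would double-check, for the reader, that $\Sigma_{\beta}$ really satisfies the axioms of an algebraic under radical (Axioms 1 and 4). Axiom 1 is clear: a surjective homomorphism $f\colon A\longrightarrow B$ sends a nilpotent ideal to a nilpotent ideal (the nilpotency index is preserved or reduced), hence sends sums of nilpotent ideals into sums of nilpotent ideals, so $f(\Sigma_{\beta}(A))\subset\Sigma_{\beta}(B)$. Axiom 4 amounts to observing that if $I$ is an ideal of $A$ and $J$ is a nilpotent ideal of $I$, then the ideal $J'$ generated by $J$ in $A$ satisfies $(J')^{3}\subset J$ (since $J'\subset IJI+IJ+JI+J\subset I$, and the product of enough copies lands inside $J$), so $J'$ is nilpotent in $A$; this shows $\Sigma_{\beta}(I)\subset\Sigma_{\beta}(A)\cap I$, and in particular $\Sigma_{\beta}(I)$ is an ideal of $A$ contained in $\Sigma_{\beta}(A)$.

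With these verifications in hand, Theorem \ref{csscs} applies verbatim and yields $\overline{\Sigma_{\beta}}^{\ast}=\overline{\Sigma_{\beta}^{\ast}}^{\ast}$. Substituting $\Sigma_{\beta}^{\ast}=\mathfrak{P}_{\beta}$ on the right-hand side and recalling the definition of $\mathcal{P}_{\beta}$ completes the proof.

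I do not expect any real obstacle: the entire difficulty is absorbed into Theorem \ref{csscs}, whose proof in turn used transfinite induction along the convolution chain together with the equality $\mathbf{Sem}(P)=\mathbf{Sem}(P^{\ast})$ and Corollary \ref{ineq}. The only minor point to watch is that one is genuinely applying Theorem \ref{csscs}, so $\Sigma_{\beta}$ must be an \emph{algebraic} under radical (not merely an ideal map); that is why I would spell out Axioms 1 and 4 for $\Sigma_{\beta}$ as above.
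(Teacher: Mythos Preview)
Your proof is correct and follows exactly the paper's approach: the corollary is stated as an immediate consequence of Theorem \ref{csscs} applied to $P=\Sigma_{\beta}$, using the already-asserted facts that $\Sigma_{\beta}$ is an algebraic under radical and $\mathfrak{P}_{\beta}=\Sigma_{\beta}^{\ast}$, together with the definition $\mathcal{P}_{\beta}=\overline{\mathfrak{P}_{\beta}}^{\ast}$. Your supplementary verification of Axioms~1 and~4 for $\Sigma_{\beta}$ is fine (the paper simply declares $\Sigma_{\beta}$ to be an under radical without proof); if you want completeness you might also note Axiom~3, which holds since every nilpotent ideal of $A$ is a nilpotent ideal of $\Sigma_{\beta}(A)$.
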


\begin{theorem}
\label{classical}$\mathcal{P}_{\beta}<\mathcal{P}_{\lambda}<$ $\mathcal{P}%
_{\kappa}$ on $\mathfrak{U}_{\mathrm{n}}$ and $\mathcal{P}_{\beta}%
=\mathcal{P}_{\lambda}=$ $\mathcal{P}_{\kappa}$ on $\mathfrak{U}_{\mathrm{b}}$.
\end{theorem}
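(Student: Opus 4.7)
The plan is to treat the chain of inequalities first, then strictness on $\mathfrak{U}_{\mathrm{n}}$, then the collapse on $\mathfrak{U}_{\mathrm{b}}$. Starting from the classical algebraic inequalities $\mathfrak{P}_{\beta}\leq\mathfrak{P}_{\lambda}\leq\mathfrak{P}_{\kappa}$ recorded in $(\ref{compnil})$, the closure procedure $P\mapsto\overline{P}$ is trivially monotone, and by Theorem \ref{closunder} each of the three maps $\overline{\mathfrak{P}_{\beta}}$, $\overline{\mathfrak{P}_{\lambda}}$, $\overline{\mathfrak{P}_{\kappa}}$ is a topological under radical on $\mathfrak{U}_{\mathrm{n}}$. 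Applying Theorem \ref{isotone}(1), which asserts isotony of the convolution procedure on under radicals, then yields $\mathcal{P}_{\beta}\leq\mathcal{P}_{\lambda}\leq\mathcal{P}_{\kappa}$ on $\mathfrak{U}_{\mathrm{n}}$ (and in particular on $\mathfrak{U}_{\mathrm{b}}$).

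For the strict inequalities on $\mathfrak{U}_{\mathrm{n}}$, I would exhibit normed algebras that separate the radicals. A Golod--Shafarevich style construction provides a finitely generated complex algebra $B$ that is nil but not nilpotent; being finitely generated, $B$ is therefore not locally nilpotent either. Such a countable-dimensional algebra can be equipped with a submultiplicative norm, for instance via a faithful left-regular representation on an appropriate completion. By refining the construction, e.g.\ by passing to a Smoktunowicz-type simple nil algebra or by quotienting $B$ by its Levitzki radical, one arranges that $\mathfrak{P}_{\lambda}(B)=0$ while $\mathfrak{P}_{\kappa}(B)=B$; the convolution chain for $\overline{\mathfrak{P}_{\lambda}}$ then begins and remains at $0$, so that $\mathcal{P}_{\lambda}(B)=0\neq B=\mathcal{P}_{\kappa}(B)$. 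The strict inequality $\mathcal{P}_{\beta}<\mathcal{P}_{\lambda}$ is witnessed analogously by a normed version of a classical locally nilpotent, non-Baer-radical example.

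For the equality on $\mathfrak{U}_{\mathrm{b}}$, the essential input is Grabiner's theorem: every closed nil subalgebra of a Banach algebra is nilpotent. I would use it to prove the density inclusion $\mathfrak{P}_{\kappa}(A)\subseteq\overline{\mathfrak{P}_{\beta}(A)}$ for every Banach algebra $A$, which forces $\overline{\mathfrak{P}_{\kappa}}=\overline{\mathfrak{P}_{\beta}}$ on $\mathfrak{U}_{\mathrm{b}}$ and then, by isotony of convolution (Theorem \ref{isotone}(1)) applied in both directions, $\mathcal{P}_{\kappa}=\mathcal{P}_{\beta}$ on $\mathfrak{U}_{\mathrm{b}}$ (with $\mathcal{P}_{\lambda}$ squeezed in between). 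For the density inclusion itself, given $x$ in the largest nil ideal $\mathfrak{P}_{\kappa}(A)$, one considers the closure $\overline{J}$ in $A$ of the principal ideal generated by $x$: this is a Banach subalgebra of $A$ containing the dense nil ideal $J$, so stratifying $J$ by nilpotency index and invoking Baire category inside the Banach algebra $\overline{J}$ yields a bounded nilpotency index on a nonempty open subset, which by standard arguments upgrades to a closed nil, hence (by Grabiner) nilpotent, closed ideal of $A$ containing $x$.

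The hard part is the last paragraph: making the Baire-category step rigorous is delicate because a dense nil ideal need not be complete in its subspace norm, so one must apply Baire inside $\overline{J}$ and then propagate nilpotency from a closed subalgebra back to a closed ideal of the ambient $A$. A secondary technical point in the strictness paragraph is that the algebraic counterexamples of Golod, Smoktunowicz and Bergman must actually admit submultiplicative norms; once normed, one must verify that the transfinite closure-convolution process does not collapse the intended separation, which is precisely why it is convenient to pass to algebras with trivial Levitzki (respectively Baer) radical.
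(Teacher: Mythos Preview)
Your treatment of the non-strict chain and of the strict separations on $\mathfrak{U}_{\mathrm{n}}$ matches the paper's: isotony of closure and of the convolution procedure gives $\mathcal{P}_\beta\le\mathcal{P}_\lambda\le\mathcal{P}_\kappa$ from $(\ref{compnil})$, and the strict inequalities are witnessed by normed realizations of Golod-type nil algebras with trivial Levitzki radical (the paper cites \cite{T85} for $\mathcal{P}_\lambda<\mathcal{P}_\kappa$) and of locally nilpotent semiprime algebras (the paper cites \cite[Proposition~3.3 and Corollary~9.4]{D97} for $\mathcal{P}_\beta<\mathcal{P}_\lambda$). Your sketches are in the same spirit, though vaguer on the normability of the examples.

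For the collapse on $\mathfrak{U}_{\mathrm{b}}$ the paper takes a much shorter route than you do: it simply invokes Dixon's theorem \cite{D73}, which asserts that on Banach algebras the \emph{algebraic} radicals already coincide, $\mathfrak{P}_\beta=\mathfrak{P}_\lambda=\mathfrak{P}_\kappa=\Sigma_\beta$; the equality of the topological radicals is then immediate. Your proposal instead tries to reprove (a weakening of) this from Grabiner's theorem via Baire category, and the sketched Baire step has a genuine gap. You stratify the dense nil ideal $J$ by nilpotency index and want to run Baire inside the Banach algebra $\overline{J}$, but the closed sets $F_n=\{y\in\overline{J}:y^n=0\}$ are only known to cover the dense subset $J$, not $\overline{J}$ itself, so Baire gives nothing; and you cannot first pass to $\overline{J}$, because the closure of a nil ideal is not a priori nil---establishing that is essentially the content of the result you are after. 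Dixon's actual argument in \cite{D73} is more delicate than a direct Baire stratification, and since the paper is content to quote it, you should do the same rather than attempt to reconstruct it.
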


\begin{proof}
As all procedures are isotone, the non-strict inequalities follows from
$\left(  \ref{compnil}\right)  $. Also, $\mathcal{P}_{\beta}<\mathcal{P}%
_{\lambda}$ follows from \cite[Proposition 3.3 and Corollary 9.4]{D97}, and
for the proof of $\mathcal{P}_{\lambda}<$ $\mathcal{P}_{\kappa}$ it is
sufficient to point out a normed nil algebra with zero Levitzki radical; for
instance, it was done in \cite{T85} as some realization of Golod's solution of
the Bernside problem for algebras.

By \cite{D73},
\[
\mathfrak{P}_{\beta}=\mathfrak{P}_{\lambda}=\mathfrak{P}_{\kappa}%
=\Sigma_{\beta}%
\]
on $\mathfrak{U}_{\mathrm{b}}$. Therefore $\mathcal{P}_{\beta}=\mathcal{P}%
_{\lambda}=$ $\mathcal{P}_{\kappa}$ on $\mathfrak{U}_{\mathrm{b}}$.
\end{proof}

The common radical on $\mathfrak{U}_{\mathrm{b}}$ is denoted by $\mathcal{P}%
_{\mathrm{nil}}$ and called the \textit{closed-nil radical}. One can consider
$\mathcal{P}_{\beta}$, $\mathcal{P}_{\lambda}$, $\mathcal{P}_{\kappa}$ as
topological extensions of $\mathcal{P}_{\mathrm{nil}}$ to normed algebras.

Let $A$ be an algebra, and let $\Sigma_{\mathrm{hf}}\left(  A\right)  $ be the
sum of all bifinite ideals of $A$. If $A$ is normed, let $\Sigma_{\mathrm{hc}%
}\left(  A\right)  $ be the closure of sum of all bicompact ideals of $A$. It
is easy to see that $\Sigma_{\mathrm{hf}}$ and $\Sigma_{\mathrm{hc}}$ are
under radicals.

In the following theorem we apply the typical method of comparing radicals.

\begin{theorem}
\label{hc}$\mathfrak{R}_{\mathrm{hf}}=\Sigma_{\mathrm{hf}}^{\ast}$ and
$\mathcal{R}_{\mathrm{hf}}=\overline{\mathfrak{R}_{\mathrm{hf}}}^{\ast
}=\overline{\Sigma_{\mathrm{hf}}}^{\ast}\leq\mathcal{R}_{\mathrm{hc}}=$
$\Sigma_{\mathrm{hc}}^{\ast}$.
\end{theorem}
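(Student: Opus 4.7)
The plan is to prove each equality by identifying the semisimple classes and applying Corollary~\ref{ineq}, and to prove the inequality by comparing radical classes via Theorem~\ref{equality}(2). In each case the key reduction is Theorem~\ref{sera}, which lets me replace the semisimple class of a convolution $R^{\ast}$ by that of the under radical $R$.

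\emph{For $\mathfrak{R}_{\mathrm{hf}} = \Sigma_{\mathrm{hf}}^{\ast}$.} Both sides are algebraic radicals (Theorem~\ref{ovunt}(1) and Corollary~\ref{hfr}). By Theorem~\ref{sera}(1), $\mathbf{Sem}(\Sigma_{\mathrm{hf}}^{\ast}) = \mathbf{Sem}(\Sigma_{\mathrm{hf}})$ is the class of algebras with no nonzero bifinite ideal, while by Theorem~\ref{hf}(9), $\mathbf{Sem}(\mathfrak{R}_{\mathrm{hf}})$ is the class of algebras with $\digamma\!(A) = 0$. These classes coincide: a nonzero finite rank element generates a bifinite ideal (Theorem~\ref{hf}(3)), and a nonzero bifinite ideal obviously contains nonzero finite rank elements. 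Corollary~\ref{ineq} yields the equality.

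\emph{For the topological chain $\mathcal{R}_{\mathrm{hf}} = \overline{\mathfrak{R}_{\mathrm{hf}}}^{\ast} = \overline{\Sigma_{\mathrm{hf}}}^{\ast}$.} The second equality is immediate from the first part combined with Theorem~\ref{csscs} applied to the algebraic under radical $\Sigma_{\mathrm{hf}}$. For the first equality I compare the $\overline{\mathfrak{R}_{\mathrm{hf}}}$-semisimple normed algebras (those with $\digamma\!(A) = 0$, using that $\overline{\mathfrak{R}_{\mathrm{hf}}(A)} = 0 \Leftrightarrow \mathfrak{R}_{\mathrm{hf}}(A) = 0$) with the $\mathcal{R}_{\mathrm{hf}}$-semisimple ones. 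If $\mathcal{R}_{\mathrm{hf}}(A) \neq 0$, then the closed-hypofinite definition applied to the trivial closed ideal produces a nonzero finite rank element in $\mathcal{R}_{\mathrm{hf}}(A)$, which transfers to $A$ by Theorem~\ref{hf}(2). Conversely, given $0 \neq a \in \digamma\!(A)$, let $I = A^{1}aA^{1}$; then $I$ is bifinite by Theorem~\ref{hf}(3), and I claim that $\overline{I}$ is closed-hypofinite: if $J \subsetneq \overline{I}$ is a closed ideal, density yields $b \in I \setminus J$, and the identity $bAb = bIb$ shows $b$ has finite rank in $\overline{I}$, so its image in $\overline{I}/J$ is a nonzero finite rank element. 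Hence $\overline{I} \subset \mathcal{R}_{\mathrm{hf}}(A)$, and Corollary~\ref{ineq} completes this step.

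\emph{For $\mathcal{R}_{\mathrm{hf}} \leq \mathcal{R}_{\mathrm{hc}}$ and $\mathcal{R}_{\mathrm{hc}} = \Sigma_{\mathrm{hc}}^{\ast}$.} Every finite rank element is compact, since the range of $\mathrm{W}_a$ is the finite-dimensional space $aAa$; so closed-hypofinite algebras are hypocompact, giving $\mathbf{Rad}(\mathcal{R}_{\mathrm{hf}}) \subset \mathbf{Rad}(\mathcal{R}_{\mathrm{hc}})$ and the inequality via Theorem~\ref{equality}(2). The equality $\mathcal{R}_{\mathrm{hc}} = \Sigma_{\mathrm{hc}}^{\ast}$ is obtained by repeating the previous paragraph with ``finite rank'' replaced by ``compact'' and ``bifinite'' by ``bicompact'', using the compact-element analogues of Theorem~\ref{hf}(2,3) available in \cite{TR3}. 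The main obstacle I anticipate is the density-plus-rank argument that $\overline{I}$ is closed-hypofinite: everything else is routine manipulation of semisimple classes, but this step is where algebraic information about the bifinite ideal $I$ must be transferred through the norm closure to a topological conclusion about $\overline{I}$, and it is exactly this passage that relies on the identity $bAb = bIb$ together with the density of $I$ in $\overline{I}$.
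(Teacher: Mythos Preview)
Your overall strategy---reducing each equality to a comparison of semisimple classes via Theorem~\ref{sera} and Corollary~\ref{ineq}---is sound and parallels the paper's method.  There is, however, one step that does not hold as written.  In showing that $\overline{I}$ is closed-hypofinite you invoke ``the identity $bAb=bIb$''; this equality is neither proved nor evident.  For $b\in I$ and $x\in A$ one only gets $bxb\in bI\cap Ib$, and there is no general reason why $bI\cap Ib\subset bIb$.  Fortunately the identity is unnecessary: Theorem~\ref{hf}(3) already says that $\mathrm{L}_{b}\mathrm{R}_{b}$ has finite rank \emph{on $A$}, i.e.\ $\dim bAb<\infty$, so $b\overline{I}b\subset bAb$ is finite-dimensional and $b/J$ is a nonzero finite rank element of $\overline{I}/J$.  (Alternatively, density of $I$ in $\overline{I}$ and finite-dimensionality of $bIb$ give $b\overline{I}b\subset\overline{bIb}=bIb$; perhaps this is what you intended and $A$ is a slip for $\overline{I}$.)  The same remark applies to the hypocompact analogue: there the corresponding identity would be even more problematic, but again what you really need is that $\mathrm{W}_{b}$ is compact on $A$, hence on $\overline{I}$, hence induces a compact operator on $\overline{I}/J$.

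With that correction in place your argument is complete.  It differs mildly from the paper's in the proof of $\mathcal{R}_{\mathrm{hf}}=\overline{\mathfrak{R}_{\mathrm{hf}}}^{\ast}$: the paper shows directly that $\overline{\mathfrak{R}_{\mathrm{hf}}(A)}$ is closed-hypofinite by taking closures along the transfinite chain of Theorem~\ref{hf}(5c) (the gap-quotients become approximable), whereas you avoid transfinite chains by producing a closed-hypofinite ideal from a \emph{single} finite rank element and then invoking Corollary~\ref{ineq}.  Your route is slightly more elementary; the paper's route has the advantage of exhibiting the full chain structure, which is reused elsewhere.
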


\begin{proof}
Let $B$ be an algebra. As every bifinite ideal is hypofinite, $\Sigma
_{\mathrm{hf}}^{\ast}\left(  B\right)  \subset\mathfrak{R}_{\mathrm{hf}}$.
Since $\mathfrak{R}_{\mathrm{hf}}$ is a radical, $\Sigma_{\mathrm{hf}}^{\ast
}\leq\mathfrak{R}_{\mathrm{hf}}$. As every $\Sigma_{\mathrm{hf}}^{\ast}%
$-semisimple algebra is $\mathfrak{R}_{\mathrm{hf}}$-semisimple,
$\mathfrak{R}_{\mathrm{hf}}\leq\Sigma_{\mathrm{hf}}^{\ast}$. One can show
similarly that $\mathcal{R}_{\mathrm{hc}}=$ $\Sigma_{\mathrm{hc}}^{\ast}$.

Let $A$ be a normed algebra and $J=\mathfrak{R}_{\mathrm{hf}}\left(  A\right)
$. Let $\left(  J_{\alpha}\right)  _{\alpha\leq\gamma}$be the increasing
transfinite chain of ideals of $A$ constructed in Theorem \ref{hf}$\left(
5\right)  $ with $J_{0}=0$ and $J_{\gamma}=J$. As each $J_{\alpha+1}%
/J_{\alpha}$ is bifinite, $\overline{J_{\alpha+1}}/\overline{J_{\alpha}}$ is
approximable, so $\overline{J}$ is closed-hypofinite (see \cite[Proposition
3.48]{TR2}). As $\mathcal{R}_{\mathrm{hf}}\left(  A\right)  $ is the largest
closed-hypofinite ideal of $A$ then $\overline{J}\subset\mathcal{R}%
_{\mathrm{hf}}\left(  A\right)  $. Therefore $\overline{\mathfrak{R}%
_{\mathrm{hf}}}\leq\mathcal{R}_{\mathrm{hf}}$, whence $\overline
{\mathfrak{R}_{\mathrm{hf}}}^{\ast}\leq\mathcal{R}_{\mathrm{hf}}$. But every
$\overline{\mathfrak{R}_{\mathrm{hf}}}^{\ast}$-semisimple algebra has no
non-zero finite rank elements and is therefore $\mathcal{R}_{\mathrm{hf}}%
$-semisimple, whence $\mathcal{R}_{\mathrm{hf}}\leq\overline{\mathfrak{R}%
_{\mathrm{hf}}}^{\ast}$ by Theorem \ref{equality}.

By Theorem \ref{csscs}, $\mathcal{R}_{\mathrm{hf}}=\overline{\Sigma
_{\mathrm{hf}}}^{\ast}$. As $\overline{\Sigma_{\mathrm{hf}}}\leq
\Sigma_{\mathrm{hc}}$ then $\mathcal{R}_{\mathrm{hf}}=\overline{\Sigma
_{\mathrm{hf}}}^{\ast}\leq\Sigma_{\mathrm{hc}}^{\ast}=\mathcal{R}%
_{\mathrm{hc}}$ by Theorem \ref{isotone}.
\end{proof}

\subsection{The regularization procedure\label{regular}}

Let $P$ be a hereditary topological radical defined on some class
$\mathfrak{U}$ of normed algebras such that $\mathfrak{U}_{\mathrm{b}}%
\subset\mathfrak{U}$. By \cite[Theorem 2.21]{TR1}, the map $P^{r}$ defined by
\begin{equation}
P^{r}\left(  A\right)  =A\cap P\left(  \widehat{A}\right)  \label{rproc}%
\end{equation}
for each $A\in\mathfrak{U}_{\mathrm{n}}$, where $\widehat{A}$ is a completion
of $A$, is a hereditary topological radical on $\mathfrak{U}_{\mathrm{n}}$.
The map $P\longmapsto P^{r}$ is called the \textit{regularization procedure};
it is clear that%
\[
P^{rr}=P^{r}.
\]
A hereditary topological radical $P$ on $\mathfrak{U}_{\mathrm{n}}$ is called
\textit{regular} if $P=P^{r}$. One of the important reasons for the
consideration of over radicals is the fact \cite{TR1} that preradicals
obtained from topological radicals by the regular procedure are over radicals.

\begin{theorem}
The radicals $\mathcal{R}_{\mathrm{cq}}$ and $\mathcal{R}_{t}$ are regular.
\end{theorem}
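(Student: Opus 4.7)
My plan is to verify the two identities $\mathcal{R}_{\mathrm{cq}}(A)=A\cap\mathcal{R}_{\mathrm{cq}}(\widehat{A})$ and $\mathcal{R}_{t}(A)=A\cap\mathcal{R}_{t}(\widehat{A})$ for every normed algebra $A$, using in each case the characterization from Section~2.3 best suited to the direction at hand. Both radicals are uniform, hence hereditary, topological radicals, so the regularization $P^{r}(A)=A\cap P(\widehat{A})$ is defined for them.

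For $\mathcal{R}_{\mathrm{cq}}$ the forward inclusion $\mathcal{R}_{\mathrm{cq}}(A)\subset\mathcal{R}_{\mathrm{cq}}(\widehat{A})$ will be obtained from the maximality description recalled in Section~\ref{2.3.4}. Setting $J:=\mathcal{R}_{\mathrm{cq}}(A)$, I note that $J$ is an ideal of $A$, so by density of $A$ in $\widehat{A}$ the closure $\overline{J}^{\,\widehat{A}}$ is an ideal of $\widehat{A}$; as a Banach algebra it is the completion of the normed algebra $(J,\|\cdot\|)$, so \cite[Lemma~4.11]{TR1} gives that $\overline{J}^{\,\widehat{A}}$ is compactly quasinilpotent. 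Maximality of $\mathcal{R}_{\mathrm{cq}}(\widehat{A})$ then forces $\overline{J}^{\,\widehat{A}}\subset\mathcal{R}_{\mathrm{cq}}(\widehat{A})$, and hence $J\subset A\cap\mathcal{R}_{\mathrm{cq}}(\widehat{A})$. For the reverse inclusion I will use the pointwise description: for $a\in A\cap\mathcal{R}_{\mathrm{cq}}(\widehat{A})$ and $M\in\mathfrak{k}(A)$, the set $M$ is also precompact in $\widehat{A}$ and the joint spectral radius $\rho(N)=\inf_{n}\|N^{n}\|^{1/n}$ depends only on the ambient norm, so $\rho_{A}(N)=\rho_{\widehat{A}}(N)$ for $N\subset A$. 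The defining equality $\rho(\{a\}\cup M)=\rho(M)$ in $\widehat{A}$ therefore holds also in $A$, placing $a$ in $\mathcal{R}_{\mathrm{cq}}(A)$.

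For $\mathcal{R}_{t}$ I will exploit the tensor characterization \cite[Theorem~3.36]{TR2}: $a\in\mathcal{R}_{t}(A)$ if and only if $a\otimes b\in\operatorname{Rad}(A\,\widehat{\otimes}\,B)$ for every normed algebra $B$ and every $b\in B$. The decisive observation is that, since $A$ is norm-dense in $\widehat{A}$, the algebraic tensor product $A\otimes B$ is dense in $\widehat{A}\otimes B$ in the projective cross-norm, so passing to completions yields the canonical isometric identification $A\,\widehat{\otimes}\,B=\widehat{A}\,\widehat{\otimes}\,B$ of Banach algebras. Consequently the element $a\otimes b$ with $a\in A$, $b\in B$ lives in the same Banach algebra with the same Jacobson radical in either interpretation, and the tensor characterization applied to both $A$ and $\widehat{A}$ yields at once
\[
\mathcal{R}_{t}(A)=\{a\in A:a\otimes b\in\operatorname{Rad}(\widehat{A}\,\widehat{\otimes}\,B)\text{ for all such }B,b\}=A\cap\mathcal{R}_{t}(\widehat{A}).
\]

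The only points that are not entirely formal are the identification $A\,\widehat{\otimes}\,B=\widehat{A}\,\widehat{\otimes}\,B$ and the parallel check that $\overline{\mathcal{R}_{\mathrm{cq}}(A)}^{\,\widehat{A}}$ is both an ideal of $\widehat{A}$ and canonically isomorphic to the abstract Banach-algebra completion of $\mathcal{R}_{\mathrm{cq}}(A)$. Both come down to standard density-and-continuity arguments, so I do not anticipate any substantive obstacle once the radicals are replaced by their characterizations from Sections~\ref{stensor} and~\ref{2.3.4}.
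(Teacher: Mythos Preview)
Your proof is correct and matches the paper's approach: for $\mathcal{R}_{\mathrm{cq}}$ the paper uses exactly the same two ingredients (the pointwise definition for one inclusion, and \cite[Lemma~4.11]{TR1} applied to the closure of $\mathcal{R}_{\mathrm{cq}}(A)$ in $\widehat{A}$ for the other), while for $\mathcal{R}_{t}$ the paper says the proof is ``similar'' but also explicitly points to Proposition~\ref{regt} as an alternative, and that proposition is precisely your tensor-characterization argument via $A\,\widehat{\otimes}\,B=\widehat{A}\,\widehat{\otimes}\,B$.
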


\begin{proof}
Let $A$ be a normed algebra. By definition, $A\cap\mathcal{R}_{\mathrm{cq}%
}\left(  \widehat{A}\right)  \subset\mathcal{R}_{\mathrm{cq}}\left(  A\right)
$. On the other hand, the completion of a compactly quasinilpotent algebra is
again compactly quasinilpotent \cite[Lemma 4.11]{TR1}. So $\overline
{\mathcal{R}_{\mathrm{cq}}\left(  A\right)  }^{\left(  \widehat{A}\right)  }$
is compactly quasinilpotent, and as it is an ideal of $\widehat{A}$ then
$\overline{\mathcal{R}_{\mathrm{cq}}\left(  A\right)  }^{\left(
\widehat{A}\right)  }\subset\mathcal{R}_{\mathrm{cq}}\left(  \widehat{A}%
\right)  $. But $A\cap\overline{\mathcal{R}_{\mathrm{cq}}\left(  A\right)
}^{\left(  \widehat{A}\right)  }=\mathcal{R}_{\mathrm{cq}}\left(  A\right)  $
because $\mathcal{R}_{\mathrm{cq}}\left(  A\right)  $ is a closed ideal of
$A$. Thus $\mathcal{R}_{\mathrm{cq}}$ is regular.

The proof for $\mathcal{R}_{t}$ is similar. For the other proof we refer to
Proposition \ref{regt}.
\end{proof}

\begin{theorem}
The radicals $\mathcal{R}_{\beta}$ and $\mathcal{R}_{\mathrm{hf}}$ are not regular.
\end{theorem}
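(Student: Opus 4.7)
The plan is, for each of the two radicals $P$, to exhibit a non-complete normed algebra $A$ for which $P(A)\subsetneq A\cap P(\widehat{A})$; such an example directly contradicts $P=P^{r}$.

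For $\mathcal{R}_{\mathrm{hf}}$, I would take $T$ to be a positive injective compact operator on a separable Hilbert space whose eigenvalues $\lambda_{n}$ are distinct and converge to $0$, and let $A$ be the non-unital algebra $\{p(T):p\in\mathbb{C}[x],\ p(0)=0\}$ with the operator norm. The continuous functional calculus identifies $\widehat{A}$ with $c_{0}(\mathbb{N})$ (continuous functions on the discrete set $\sigma(T)\setminus\{0\}$). Since $c_{00}\subset c_{0}$ is a dense ideal whose members are algebra-sense finite-rank elements and every proper closed quotient of $c_{0}$ is of the same shape, $c_{0}$ is closed-hypofinite, so $\mathcal{R}_{\mathrm{hf}}(\widehat{A})=\widehat{A}$ and hence $A\cap\mathcal{R}_{\mathrm{hf}}(\widehat{A})=A$. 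I would then show $\mathcal{R}_{\mathrm{hf}}(A)=0$ by proving that no nonzero element is of finite rank in any nonzero closed ideal of $A$: the closed ideals of $A$ have the form $J_{F}=\{p(T)\in A:p(1/i)=0\ \forall i\in F\}$ for finite $F\subset\mathbb{N}$, and for nonzero $x=p_{F}(T)r(T)\in J_{F}$ the space $xJ_{F}x=p_{F}(T)^{3}r(T)^{2}\,\mathbb{C}[T]$ has infinite dimension because the evaluation map $q\longmapsto\bigl(p_{F}(\lambda_{n})^{3}r(\lambda_{n})^{2}q(\lambda_{n})\bigr)_{n}$ is injective on $\mathbb{C}[x]$ (a polynomial in the kernel would vanish at cofinitely many $\lambda_{n}$). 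Consequently $\mathcal{R}_{\mathrm{hf}}(A)=0\ne A$, giving $\mathcal{R}_{\mathrm{hf}}\ne\mathcal{R}_{\mathrm{hf}}^{r}$.

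For $\mathcal{P}_{\beta}$ the scheme is parallel, with closed nilpotent ideals playing the role of algebra-finite-rank elements. A natural model for $\widehat{A}$ is the Banach algebra $B$ of compact strictly upper-triangular operators on $\ell^{2}$: the two-sided square-zero ideals $I_{N}=\{T\in B:T_{ij}=0\ \text{unless}\ i\le N<j\}$ sum (algebraically) to the compact strictly upper-triangular operators of finite row-support, and this set is dense in $B$ by row truncation of compact operators. Hence $\overline{\Sigma_{\beta}(B)}=B$ and Corollary~\ref{baer} gives $\mathcal{P}_{\beta}(B)=B$. The remaining task is to produce a dense subalgebra $A\subset B$ which is semiprime, whereupon $\mathfrak{P}_{\beta}(A)=0$ forces $\mathcal{P}_{\beta}(A)=0$, while $A\cap\mathcal{P}_{\beta}(\widehat{A})=A$. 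The principal obstacle is that every closed square-zero ideal $J$ of $B$ cuts out a nilpotent ideal $A\cap J$ of $A$, so semiprimeness of $A$ forces $A\cap I_{N}=0$ for all $N$ (and in fact for every closed square-zero ideal of $B$); this rules out $A$ being obtained as an algebraic sum of the blocks $I_{N}$ and instead forces a general-position construction, e.g.\ choosing generators of $A$ whose row supports straddle infinitely many of the $I_{N}$ with weights linked by irrational coefficients so that no nontrivial linear combination lies in any single $I_{N}$. Verifying semiprimeness of the resulting $A$ is the technical heart of the argument; once it is in place, $\mathcal{P}_{\beta}\ne\mathcal{P}_{\beta}^{r}$ follows exactly as in the hypofinite case.
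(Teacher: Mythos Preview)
Your argument for $\mathcal{R}_{\mathrm{hf}}$ works, though it is more elaborate than necessary: once you know that $r\mapsto r(T)$ is injective on polynomials, it follows that $xAx$ is infinite-dimensional for every nonzero $x\in A$, so $A$ has no nonzero finite-rank elements and hence $\mathcal{R}_{\mathrm{hf}}(A)=0$. The classification of closed ideals of the incomplete algebra $A$ is neither needed nor obviously correct.

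Your treatment of $\mathcal{P}_\beta$, however, is genuinely incomplete. You set up the ambient algebra $B$ and verify $\mathcal{P}_\beta(B)=B$, but you explicitly leave unfinished the construction of a dense semiprime subalgebra $A$, and the obstacle you identify --- that $A$ must miss every closed square-zero ideal of $B$ --- is real. The ``general-position'' idea you sketch is not carried out, and it is not at all clear it can be made to work in this non-commutative setting.

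The paper sidesteps the difficulty with a single commutative example serving both radicals at once. Let $B=L^{1}[0,1]$ with the Volterra convolution $(f\ast g)(t)=\int_{0}^{t}f(s)g(t-s)\,ds$; this is a radical Banach algebra. By Titchmarsh's convolution theorem, $f\in B$ is nilpotent iff it vanishes a.e.\ on some interval $(0,a)$. For each $a>0$ the functions vanishing on $(0,a)$ form a nilpotent ideal, and the union of these ideals is dense, so $\mathcal{P}_\beta(B)=B$; the same ideals give $\mathcal{R}_{\mathrm{hf}}(B)=B$. Now take $A$ to be the dense subalgebra of polynomials. A nonzero polynomial does not vanish on any interval, so by Titchmarsh $A$ is an integral domain: it has no nilpotents, whence $\mathcal{P}_\beta(A)=0$, and since every element of $A\subset B$ is quasinilpotent, any finite-rank element of $A$ would be algebraic and hence nilpotent, so $\mathcal{R}_{\mathrm{hf}}(A)=0$ as well. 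The key point you missed is that placing the construction inside a \emph{radical commutative} Banach algebra makes semiprimeness of the dense subalgebra automatic via the integral-domain property, eliminating the delicate general-position argument entirely.
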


\begin{proof}
Let $B$ be the Banach space $L^{1}[0,1]$ considered as a Banach algebra with
multiplication $(f\ast g)(t)=\int_{0}^{t}f(s)g(t-s)ds.$ It can be shown that
nilpotent elements are dense in $B$ (and therefore all elements of $B$ are
quasinilpotent since $B$ is commutative): indeed, it follows from the
Titchmarsh convolution theorem that a function $f\in B$ is nilpotent if and
only if it vanishes a.e. on some interval $(0,a)$. Therefore $\mathcal{R}%
_{\beta}(B)=B$. Moreover, $f$ is an element of finite rank if it is nilpotent
of index 2 (equivalently if $f(t)=0$ for $t\in(0,1/2)$), so $\mathcal{R}%
_{\mathrm{hf}}(B)\neq0$. In fact, $\mathcal{R}_{\mathrm{hf}}(B)=B$.

Let $A$ be the subalgebra of $B$ that consists of polynomials. It is dense in
$B$ so $B$ can be considered as the completion of $A$. Clearly $A$ has no
nilpotent elements (a non-zero polynomial cannot vanish on an interval), so
$\mathcal{R}_{\beta}(A)=0$. As a quasinilpotent finite rank element is
nilpotent, $A$ has no finite rank elements and $R_{\mathrm{hf}}(A)=0$.
\end{proof}

\begin{problem}
Is the hypocompact radical $\mathcal{R}_{\mathrm{hc}}$ regular?
\end{problem}

It is clear that the regular Jacobson radical $\operatorname{Rad}^{r}$ defined
by $\left(  \ref{rjr}\right)  $ is a regular radical. Note that%
\begin{equation}
\mathcal{R}_{\mathrm{cq}}\leq\mathcal{R}_{t}\leq\operatorname{Rad}%
^{r}<\operatorname{rad}_{\mathrm{n}}<\operatorname{rad}_{\mathrm{b}};
\label{comprad}%
\end{equation}
also $\operatorname{rad}=\operatorname{rad}_{\mathrm{n}}$ on $\mathfrak{U}%
_{\mathrm{q}}$ and $\operatorname{rad}=\operatorname{rad}_{\mathrm{b}}$ on
$\mathfrak{U}_{\mathrm{q}_{\mathrm{b}}}$ \cite[Theorem 2.18]{TR1}.

Consider the \textit{closed-Jacobson radical} $\overline{\operatorname{rad}%
}^{\ast}$; then $\overline{\operatorname{rad}}^{\ast}=\operatorname{Rad}$ on
$\mathfrak{U}_{\mathrm{b}}$. Define also the map $\Pi_{\mathrm{pc}}$ on normed
algebras by
\begin{equation}
\Pi_{\mathrm{pc}}\left(  A\right)  =\cap\left\{  I\in\operatorname{Prim}%
\left(  A^{1}\right)  :I\text{ is closed in }A\right\}  . \label{pc}%
\end{equation}

\begin{remark}
The map $A\longmapsto\left\{  I\in\operatorname{Prim}\left(  A^{1}\right)
:I\text{ is closed in }A\right\}  $ is not a primitive map because, for an
ideal $J$ of a normed algebra $A$, there can exist a non-closed primitive
ideal $I$ of $A$ such that $I\cap J$ is a closed primitive ideal of $J$.
\end{remark}

\begin{lemma}
$\Pi_{\mathrm{pc}}$ is a topological over radical on normed algebras, and
$\overline{\operatorname{rad}}\leq\Pi_{\mathrm{pc}}$.
\end{lemma}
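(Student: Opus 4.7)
The plan is to verify that $\Pi_{\mathrm{pc}}$ is a closed ideal map satisfying Axioms~1, 2, and~4 (Axiom~3 is not claimed of an over radical), and then derive the inequality $\overline{\operatorname{rad}}\leq\Pi_{\mathrm{pc}}$. Closedness is immediate, since $\Pi_{\mathrm{pc}}(A)$ is an intersection of closed ideals, with $A$ itself among them in the non-unital case so the intersection lies inside $A$. The inequality is equally direct: $\operatorname{rad}(A)=\cap\operatorname{Prim}(A^{1})$ is contained in the subintersection $\Pi_{\mathrm{pc}}(A)$, which is closed, so $\overline{\operatorname{rad}(A)}\subset\Pi_{\mathrm{pc}}(A)$.

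For Axiom~4, let $J$ be an ideal of $A$. By~(\ref{p4}) every primitive ideal of $J$ has the form $I\cap J$ with $I\in\operatorname{Prim}(A)$ and $I\cap J\neq J$, and each such $I\cap J$ is an ideal of $A$ because $I$ and $J$ both are; hence every member of the collection defining $\Pi_{\mathrm{pc}}(J)$ is an ideal of $A$, and so $\Pi_{\mathrm{pc}}(J)$ itself is. To see $\Pi_{\mathrm{pc}}(J)\subset\Pi_{\mathrm{pc}}(A)$, I fix a closed primitive $I$ of $A^{1}$: either $J\subset I$, in which case $\Pi_{\mathrm{pc}}(J)\subset J\subset I$ trivially, or else $I\cap J\in\operatorname{Prim}(J)$ is closed in $J$ (because $I$ is closed in $A$), so $I\cap J$ enters the collection defining $\Pi_{\mathrm{pc}}(J)$, yielding $\Pi_{\mathrm{pc}}(J)\subset I\cap J\subset I$.

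For Axiom~1, I extend a morphism $f:A\longrightarrow B$ to the surjective continuous homomorphism $f^{1}:A^{1}\longrightarrow B^{1}$. For any closed primitive $K$ of $B^{1}$, (\ref{p6}) gives $(f^{1})^{-1}(K)\in\operatorname{Prim}(A^{1})$, and $(f^{1})^{-1}(K)\cap A=f^{-1}(K\cap B)$ is closed in $A$ by continuity of $f$; thus $\Pi_{\mathrm{pc}}(A)\subset(f^{1})^{-1}(K)$, so $f(\Pi_{\mathrm{pc}}(A))\subset K$, and intersecting over $K$ finishes Axiom~1. For Axiom~2, set $I=\Pi_{\mathrm{pc}}(A)$ and take $a\in A\setminus I$; by definition of $I$ there is a closed primitive $J$ of $A^{1}$ with $a\notin J$, and necessarily $I\subset J$ and $J\neq A$ (since $a\in A\setminus J$). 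Then $J\in\operatorname{Prim}(A)$ with $I\subset J$, so by~(\ref{p5}) $J/I\in\operatorname{Prim}(A/I)$; since $q_{I}$ is open and continuous, $J/I$ is closed in $A/I$, and $q_{I}(a)\notin J/I$. Hence $q_{I}(a)\notin\Pi_{\mathrm{pc}}(A/I)$, proving $\Pi_{\mathrm{pc}}(A/I)=0$.

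The main obstacle is Axiom~4 in view of the Remark immediately preceding the lemma: the assignment $A\longmapsto\{I\in\operatorname{Prim}(A^{1}):I\text{ closed in }A\}$ is not a primitive map, so the machinery of Theorem~\ref{pm1} is unavailable and one cannot expect the hereditary equality $\Pi_{\mathrm{pc}}(J)=J\cap\Pi_{\mathrm{pc}}(A)$. What saves the one-sided inclusion is that enlarging the indexing collection for $\Pi_{\mathrm{pc}}(J)$ (to include closed primitives of $J$ arising from non-closed primitives of $A$) can only shrink $\Pi_{\mathrm{pc}}(J)$; this same asymmetry is presumably also what blocks Axiom~3 in general, leaving only the over-radical conclusion.
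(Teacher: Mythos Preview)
Your proof is correct and follows essentially the same approach as the paper's: Axioms~1 and~4 are handled identically (pulling back closed primitives along $f^{1}$, and restricting closed primitives of $A^{1}$ to the ideal $J$), and the inequality $\overline{\operatorname{rad}}\leq\Pi_{\mathrm{pc}}$ is the same one-line observation. The only minor divergence is in Axiom~2: the paper proves the slightly stronger identity $\Pi_{\mathrm{pc}}(A/I)=\Pi_{\mathrm{pc}}(A)/I$ for any closed $I\subset\Pi_{\mathrm{pc}}(A)$ by invoking the intersection--quotient formula~(\ref{pr}) and the bijection $J\mapsto J/I$ between closed primitives containing $I$ and closed primitives of $A/I$, whereas you argue the special case $I=\Pi_{\mathrm{pc}}(A)$ directly by contrapositive---this is a cosmetic difference, and your final paragraph correctly identifies why only the over-radical conclusion (not heredity) is available.
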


\begin{proof}
Let $A$ be a normed algebra, and let $I$ be an ideal of $A$. If $J\in
\operatorname{Prim}\left(  A^{1}\right)  $ is closed then $I\cap
J\in\operatorname{Prim}\left(  I^{1}\right)  $ is closed in $I$, whence
$\Pi_{\mathrm{pc}}\left(  I\right)  \subset\Pi_{\mathrm{pc}}\left(  A\right)
$ is an ideal of $A$ by $\left(  \ref{p4}\right)  $.

Let $B$ be a normed algebra, and let $f:A\longrightarrow B$ be a morphism. Let
$\pi\in\operatorname{Irr}\left(  B^{1}\right)  $ and $K:=\ker\pi$ is closed.
Then $\pi\circ f\in\operatorname{Irr}\left(  A^{1}\right)  $ and $\ker\left(
\pi\circ f\right)  =f^{-1}\left(  K\right)  $ is closed.

Let $\mathcal{F}_{A}$ be the set of all closed primitive ideals of $A^{1}$.
Assume that $I\subset\Pi_{\mathrm{pc}}\left(  A\right)  $ is closed. Then
$I\subset J$ for every $J\in\mathcal{F}_{A}$ and%
\begin{align*}
\Pi_{\mathrm{pc}}\left(  A\right)  /I  &  =\cap\left\{  J:J\in\mathcal{F}%
_{A}\right\}  /I=\cap\left\{  J/I:J\in\mathcal{F}_{A}\right\} \\
&  =\cap\left\{  J/I:J/I\in\mathcal{F}_{A/I}\right\}  =\Pi_{\mathrm{pc}%
}\left(  A/I\right)
\end{align*}
by $\left(  \ref{p4}\right)  $ and $\left(  \ref{p5}\right)  $. Hence
$\Pi_{\mathrm{pc}}$ is an over radical.

Furthermore, $\overline{\operatorname{rad}}\left(  A\right)  =\overline
{\cap_{I\in\operatorname{Prim}\left(  A^{1}\right)  }I}\subset\Pi
_{\mathrm{pc}}\left(  A\right)  $ in virtue of $\left(  \ref{pc}\right)  $.
\end{proof}

As a consequence, $\Pi_{\mathrm{pc}}^{\circ}$ is a topological radical which
also extends $\operatorname{Rad}$ to normed algebras; it is called the
\textit{primitively closed Jacobson radical.}

\begin{theorem}
$\overline{\operatorname{rad}}^{\ast}<\Pi_{\mathrm{pc}}^{\circ}\leq
\operatorname{rad}_{\mathrm{n}}$; $\overline{\operatorname{rad}}^{\ast}$and
$\operatorname{Rad}^{r}$ are not comparable.
\end{theorem}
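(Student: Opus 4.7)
The plan is to verify the two inequalities, the strictness of the first, and the two non-comparability statements in turn, using $A=\mathbb{C}[V]$ (polynomials in the Volterra integration operator on $L^{2}[0,1]$ equipped with the operator norm) as the main test algebra.

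The inequality $\overline{\operatorname{rad}}^{\ast}\leq\Pi_{\mathrm{pc}}^{\circ}$ follows from the preceding lemma $\overline{\operatorname{rad}}\leq\Pi_{\mathrm{pc}}$ combined with Theorem~\ref{isotone}(3), since $\overline{\operatorname{rad}}$ is a topological under radical by Theorem~\ref{closunder} and $\Pi_{\mathrm{pc}}$ is a topological over radical. For $\Pi_{\mathrm{pc}}^{\circ}\leq\operatorname{rad}_{\mathrm{n}}$ it suffices to prove $\Pi_{\mathrm{pc}}\leq\operatorname{rad}_{\mathrm{n}}$ and use $\Pi_{\mathrm{pc}}^{\circ}\leq\Pi_{\mathrm{pc}}$: given $\pi\in\operatorname{Irr}_{\mathrm{n}}(A)$, extend $\pi$ to $\pi^{1}\in\operatorname{Irr}(A^{1})$ by setting $\pi^{1}(1_{A^{1}})=1_{X_{\pi}}$; this extension is bounded in the norm $\|a+\lambda\|=\|a\|+|\lambda|$ on $A^{1}$, so $\ker\pi^{1}$ is a closed primitive ideal of $A^{1}$ containing $\Pi_{\mathrm{pc}}(A)$. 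Since $\ker\pi=A\cap\ker\pi^{1}$, intersecting over $\pi$ yields $\Pi_{\mathrm{pc}}(A)\subset\operatorname{rad}_{\mathrm{n}}(A)$.

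On $A=\mathbb{C}[V]$, the primitive ideals are precisely $\ker\chi_{c}$ with $\chi_{c}(p)=p(c)$ for $c\in\mathbb{C}$. Using the elementary bound $\|V^{n}\|\leq 1/n!$, the identity $|\chi_{c}(V^{n})|=|c|^{n}$ shows that $\chi_{c}$ is continuous in operator norm if and only if $c=0$, so the unique closed primitive ideal of $A$ is $J=V\mathbb{C}[V]$, whereas $\operatorname{rad}(A)=\cap_{c}\ker\chi_{c}=0$. Thus $\overline{\operatorname{rad}}^{\ast}(A)=0$, while a parallel analysis on $J^{1}$ equipped with the unitization norm shows that $\chi_{0}$ remains the only continuous character, so $\Pi_{\mathrm{pc}}(J)=J$ and the superposition chain stabilizes at $\Pi_{\mathrm{pc}}^{\circ}(A)=V\mathbb{C}[V]$. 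This establishes the strict inequality $\overline{\operatorname{rad}}^{\ast}<\Pi_{\mathrm{pc}}^{\circ}$. The completion $\widehat{A}$ is a commutative Banach algebra whose characters are the continuous characters of $A$, i.e.\ only $\chi_{0}$; hence $\operatorname{Rad}(\widehat{A})=V\widehat{A}$ and $\operatorname{Rad}^{r}(A)=A\cap V\widehat{A}=V\mathbb{C}[V]$. The comparison $V\mathbb{C}[V]\neq 0=\overline{\operatorname{rad}}^{\ast}(A)$ gives $\operatorname{Rad}^{r}\not\leq\overline{\operatorname{rad}}^{\ast}$.

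The reverse non-comparability $\overline{\operatorname{rad}}^{\ast}\not\leq\operatorname{Rad}^{r}$ requires a normed algebra $B$ with some $b\in\operatorname{rad}(B)$ not lying in $\operatorname{Rad}(\widehat{B})$. For commutative $B$ this is impossible, since every character of $\widehat{B}$ restricts to a continuous character of $B$ and hence $\operatorname{rad}(B)\subset\operatorname{Rad}(\widehat{B})$; the construction must therefore be noncommutative and must exploit an algebraic strictly irreducible representation of $B$ on an infinite-dimensional space whose kernel contains $b$ but which is not continuous on $B$ and hence does not survive in the completion. The main obstacle is precisely this construction: the natural candidates (nil algebras of finite-support strictly upper-triangular operators, polynomial algebras in a single quasinilpotent operator, or dense subalgebras of semisimple Banach algebras built from one operator) all satisfy $\operatorname{rad}(B)\subset\operatorname{Rad}^{r}(B)$, either because the completion becomes quasinilpotent or because the relevant representations extend continuously. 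A valid $B$ must produce a genuinely discontinuous infinite-dimensional strictly irreducible algebraic representation, after which $\overline{\operatorname{rad}}^{\ast}(B)\ni b\notin\operatorname{Rad}^{r}(B)$ by construction, completing the non-comparability.
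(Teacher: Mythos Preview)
Your treatment of the inequalities and of the strictness $\overline{\operatorname{rad}}^{\ast}<\Pi_{\mathrm{pc}}^{\circ}$ is correct and follows the same idea as the paper: the test algebra of polynomials in a non-nilpotent quasinilpotent element is exactly what the paper uses (you work with the unitized version, which changes $\Pi_{\mathrm{pc}}^{\circ}(A)$ from $A$ to $V\mathbb{C}[V]$, but this is harmless). The direction $\operatorname{Rad}^{r}\not\leq\overline{\operatorname{rad}}^{\ast}$ is also handled the same way.

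The genuine gap is the reverse non-comparability $\overline{\operatorname{rad}}^{\ast}\not\leq\operatorname{Rad}^{r}$. You correctly diagnose that commutative examples cannot work and describe what a valid example must accomplish, but you do not produce one; the final paragraph is an outline of requirements, not a construction. The paper does not build such an example from scratch either---it invokes \cite[Example~9.3]{D97}, which provides Banach algebras $C,D$ and an injective (necessarily continuous but non-open) homomorphism $\phi:C\to D$ with dense image, where $\operatorname{rad}(C)=C$ and $\operatorname{rad}(D)=0$. Setting $E=\phi(C)$ with the norm inherited from $D$, one gets $\operatorname{rad}(E)=E$ (since $E\cong C$ algebraically), hence $\overline{\operatorname{rad}}^{\ast}(E)=E$, while $\widehat{E}=D$ gives $\operatorname{Rad}^{r}(E)=E\cap\operatorname{rad}(D)=0$. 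So the missing ingredient is precisely a citation to (or reconstruction of) Dixon's example of a radical Banach algebra densely embedded in a semisimple one; without it the proof is incomplete.
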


\begin{proof}
Indeed, it is clear that $\overline{\operatorname{rad}}\left(  A\right)
\subset\Pi_{\mathrm{pc}}\left(  A\right)  \subset\operatorname{rad}%
_{\mathrm{n}}\left(  A\right)  $ for every normed algebra $A$. Then
$\overline{\operatorname{rad}}^{\ast}\leq\Pi_{\mathrm{pc}}^{\circ}%
\leq\operatorname{rad}_{\mathrm{n}}$ by Theorems \ref{equality} and
\ref{isotone}.

Let $a$ be a quasinilpotent element of a Banach algebra and $a^{n}\neq0$ for
every $n>0$. Let $A$ be the algebra generated by $a$, and $B=\overline{A}$.
Then $\operatorname{rad}\left(  A\right)  =0$, but $\operatorname{rad}\left(
B\right)  =B$. Hence $\overline{\operatorname{rad}}^{\ast}\left(  A\right)
=\overline{\operatorname{rad}}\left(  A\right)  =0$, but $\operatorname{Rad}%
^{r}\left(  A\right)  =A\cap\operatorname{rad}\left(  B\right)  =A$.

Also, $A$ has no closed primitive ideals in $\operatorname{Prim}\left(
A^{1}\right)  $ besides of $A$. Indeed, a primitive ideal $I$ is the kernel of
multiplicative functional $f$ on $A^{1}$. If $I\neq A$ is closed then
$f\left(  a\right)  \neq0$, $f$ is continuous and $f\left(  a\right)  $ in the
spectrum $\sigma_{B}\left(  a\right)  $, a contradiction. Therefore
$\Pi_{\mathrm{pc}}^{\circ}\left(  A\right)  =$ $\Pi_{\mathrm{pc}}\left(
A\right)  =A$ and $\overline{\operatorname{rad}}^{\ast}<\Pi_{\mathrm{pc}%
}^{\circ}$.

By \cite[Example 9.3]{D97}, there are Banach algebras $C$, $D$ and an
injective embedding $\phi:C\longrightarrow D$ with the dense image such that
$\operatorname{rad}\left(  C\right)  =C$ and $\operatorname{rad}\left(
D\right)  =0$. Let $E=\phi\left(  C\right)  $. As $\phi$ and $\phi^{-1}$ are
algebraic morphisms for appropriate algebras then $\operatorname{rad}\left(
E\right)  =E$ and $\overline{\operatorname{rad}}^{\ast}\left(  E\right)  =E$,
but $\operatorname{Rad}^{r}\left(  E\right)  =E\cap\operatorname{rad}\left(
D\right)  =0$. So the observations with algebras $A$ and $E$ show that
$\overline{\operatorname{rad}}^{\ast}$and $\operatorname{Rad}^{r}$ are not comparable.
\end{proof}

As we will see in Theorem \ref{class}, the closed-Jacobson radical
$\overline{\operatorname{rad}}^{\ast}$ is hereditary.

\section{Operations with radicals}

Operations are multiplace procedures that act on a class of ideal maps.

\subsection{Supremum and infimum}

Let $\mathcal{F}$ be a non-empty family (set, class) of radicals. Then the
maps $\mathrm{H}_{\mathcal{F}}$ and $\mathrm{B}_{\mathcal{F}}$\textrm{
}defined by
\begin{align*}
\mathrm{H}_{\mathcal{F}}\left(  A\right)   &  =\sum_{P\in\mathcal{F}}P\left(
A\right)  \text{\quad\quad\lbrack}\mathrm{H}_{\mathcal{F}}\left(  A\right)
=\overline{\sum_{P\in\mathcal{F}}P\left(  A\right)  }\text{]}\\
\mathrm{B}_{\mathcal{F}}\left(  A\right)   &  =\cap_{P\in\mathcal{F}}P\left(
A\right)
\end{align*}
are the under and over radicals, respectively (see \cite[Lemma 3.2]{TR3}). We
extend the statement to under/over radicals as follows.

\begin{theorem}
\label{dix0}

\begin{enumerate}
\item If $\mathcal{F}_{u}$ is a family of under radicals then $\mathrm{H}%
_{\mathcal{F}_{u}}$ is an under radical, and
\[
\mathbf{Sem}\left(  \mathrm{H}_{\mathcal{F}_{u}}\right)  =\cap_{P\in
\mathcal{F}_{u}}\mathbf{Sem}\left(  P\right)  .
\]

\item If $\mathcal{F}_{o}$ is a family of over radicals then $\mathrm{B}%
_{\mathcal{F}_{o}}$ is an over radical, and
\[
\mathbf{\mathbf{Rad}}\left(  \mathrm{B}_{\mathcal{F}_{o}}\right)  =\cap
_{P\in\mathcal{F}_{o}}\mathbf{\mathbf{Rad}}\left(  P\right)  .
\]

\end{enumerate}
\end{theorem}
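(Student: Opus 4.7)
The plan is to verify the axioms defining under (resp.\ over) radicals directly for the ideal maps $\mathrm{H}_{\mathcal{F}_u}$ and $\mathrm{B}_{\mathcal{F}_o}$, following the scheme used for full radicals in \cite[Lemma 3.2]{TR3} but dropping whichever of Axioms 2 or 3 is not assumed for the members of the family.

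For part (1), Axiom 1 for $\mathrm{H}_{\mathcal{F}_u}$ is immediate: for any morphism $f:A\longrightarrow B$ one has $f(P(A))\subset P(B)$ for each $P\in\mathcal{F}_u$, so after summing over $P$ and, in the topological case, taking closures (using continuity of $f$) we obtain $f(\mathrm{H}_{\mathcal{F}_u}(A))\subset \mathrm{H}_{\mathcal{F}_u}(B)$. For Axiom 4, each $P(I)$ is an ideal of $A$ contained in $P(A)$, hence $\sum_P P(I)$ is an ideal of $A$ contained in $\sum_P P(A)$; in the topological case one argues, as in the proof of Theorem \ref{ovunt}, that if $y_n\in\sum_P P(I)$ converges in $I$ to $x$, then for every $a\in A$ the products $ay_n$, $y_na$ again lie in $\sum_P P(I)$ and converge to $ax$, $xa$ inside $\overline{\sum_P P(I)}^{(I)}$, so the closure is an ideal of $A$. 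The decisive step is Axiom 3. The inclusion $\mathrm{H}_{\mathcal{F}_u}(\mathrm{H}_{\mathcal{F}_u}(A))\subset \mathrm{H}_{\mathcal{F}_u}(A)$ follows from Axiom 4 for $\mathrm{H}_{\mathcal{F}_u}$ itself applied to the ideal $H:=\mathrm{H}_{\mathcal{F}_u}(A)$ of $A$. For the reverse inclusion, for each $P\in\mathcal{F}_u$ the ideal $P(A)\subset H$ is an ideal of $H$, so Axiom 4 for $P$ gives $P(P(A))\subset P(H)$; combining with Axiom 3 for $P$, i.e.\ $P(P(A))=P(A)$, yields $P(A)\subset P(H)\subset \mathrm{H}_{\mathcal{F}_u}(H)$; summing over $P$ and passing to the closure gives $H\subset \mathrm{H}_{\mathcal{F}_u}(H)$.

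For part (2), Axioms 1 and 4 are formal consequences of those for the individual $P\in\mathcal{F}_o$ together with the fact that an intersection of ideals is an ideal. The central point is Axiom 2. Set $B:=\mathrm{B}_{\mathcal{F}_o}(A)$, so $B\subset P(A)$ for every $P$, and consider, for each fixed $P$, the natural epimorphism $q:A/B\longrightarrow A/P(A)$. Axiom 1 for $P$ gives $q(P(A/B))\subset P(A/P(A))$, which vanishes by Axiom 2 for $P$; hence $P(A/B)\subset \ker q = P(A)/B$. Intersecting over $P$ and using the identity (\ref{pr}),
\[
\mathrm{B}_{\mathcal{F}_o}(A/B)\subset \bigcap_{P\in\mathcal{F}_o}\bigl(P(A)/B\bigr)=\Bigl(\bigcap_{P\in\mathcal{F}_o}P(A)\Bigr)\Big/B=B/B=0.
\]

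The class identifications are then immediate: $\mathrm{H}_{\mathcal{F}_u}(A)=0$ iff $P(A)=0$ for every $P\in\mathcal{F}_u$, giving $\mathbf{Sem}(\mathrm{H}_{\mathcal{F}_u})=\bigcap_{P\in\mathcal{F}_u}\mathbf{Sem}(P)$, and dually $\mathrm{B}_{\mathcal{F}_o}(A)=A$ iff $P(A)=A$ for every $P\in\mathcal{F}_o$, giving $\mathbf{Rad}(\mathrm{B}_{\mathcal{F}_o})=\bigcap_{P\in\mathcal{F}_o}\mathbf{Rad}(P)$. The only point I expect to require care is the interaction of Axioms 3 and 4 for $\mathrm{H}_{\mathcal{F}_u}$ with the closure operation in the topological case when the ideal $I$ is not closed, but this reduces to the same short absorption argument already made in the proof of Theorem \ref{ovunt}.
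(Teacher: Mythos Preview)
Your proposal is correct and follows essentially the same approach as the paper's proof: the paper also dismisses Axioms~1 and~4 as clear, proves Axiom~3 for $\mathrm{H}_{\mathcal{F}_u}$ via the chain $P(A)=P(P(A))\subset P(\mathrm{H}_{\mathcal{F}_u}(A))\subset \mathrm{H}_{\mathcal{F}_u}(\mathrm{H}_{\mathcal{F}_u}(A))$, and proves Axiom~2 for $\mathrm{B}_{\mathcal{F}_o}$ by pushing $\mathrm{B}_{\mathcal{F}_o}(A/I)$ forward along the natural map $A/I\to A/R(A)$ to land in $R(A/R(A))=0$. You are simply more explicit about Axioms~1 and~4 in the topological case and about the easy inclusion $\mathrm{H}_{\mathcal{F}_u}(H)\subset H$ from Axiom~4, which the paper leaves implicit.
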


\begin{proof}
The equalities for radical and semisimple classes follow from the definitions
of $\mathrm{H}_{\mathcal{F}_{u}}$ and $\mathrm{B}_{\mathcal{F}_{o}}$. It is
clear that $\mathrm{H}_{\mathcal{F}_{u}}$ and $\mathrm{B}_{\mathcal{F}_{o}}$
satisfy Axioms $1$ and $4$.

Let $A$ be an algebra. If $P\in\mathcal{F}_{u}$ then $P\left(  A\right)  $ is
an ideal in $\mathrm{H}_{\mathcal{F}_{u}}\left(  A\right)  $ and therefore
\[
P\left(  A\right)  =P\left(  P\left(  A\right)  \right)  \subset P\left(
\mathrm{H}_{\mathcal{F}_{u}}\left(  A\right)  \right)  \subset\mathrm{H}%
_{\mathcal{F}_{u}}\left(  \mathrm{H}_{\mathcal{F}_{u}}\left(  A\right)
\right)
\]
whence $\mathrm{H}_{\mathcal{F}_{u}}\left(  A\right)  \subset\mathrm{H}%
_{\mathcal{F}_{u}}\left(  \mathrm{H}_{\mathcal{F}_{u}}\left(  A\right)
\right)  $. This proves that $\mathrm{H}_{\mathcal{F}_{u}}$ is an under radical.

Let $I=\mathrm{B}_{\mathcal{F}_{o}}\left(  A\right)  $ and $J=q_{I}%
^{-1}\left(  \mathrm{B}_{\mathcal{F}_{o}}\left(  A/I\right)  \right)  $ where
$q_{I}:A\longrightarrow A/I$ is the standard quotient map. Let $R\in
\mathcal{F}_{o}$; then $R\left(  A\right)  \subset I$ and there is an [open
continuous] epimorphism $p:A/I\longrightarrow A/R\left(  A\right)  $ such that
$q=p\circ q_{I}$ where $q:A\longrightarrow A/R\left(  A\right)  $ is the
standard quotient map. Then
\[
q\left(  J\right)  =\left(  p\circ q_{I}\right)  \left(  J\right)  =p\left(
\mathrm{B}_{\mathcal{F}_{o}}\left(  A/I\right)  \right)  \subset p\left(
R\left(  A/I\right)  \right)  \subset R\left(  A/R\left(  A\right)  \right)
=0,
\]
whence $J\subset R\left(  A\right)  $ for every $R\in\mathcal{F}_{o}$. Then
$J\subset I$ and $\mathrm{B}_{\mathcal{F}_{o}}\left(  A/I\right)
=q_{I}\left(  J\right)  =0$. This means that $\mathrm{B}_{\mathcal{F}_{o}}$ is
an over radical.
\end{proof}

From the viewpoint of the order, $\mathrm{H}_{\mathcal{F}_{u}}$ is the
supremum of $\mathcal{F}_{u}$ in the class of under radicals and
$\mathrm{B}_{\mathcal{F}_{o}}$ is the infimum of $\mathcal{F}_{o}$ in the
class of over radicals.

In general, for a family $\mathcal{F}$ of preradicals satisfying Axiom $4$,
let us define the ideal maps $\vee\mathcal{F}$ and $\wedge\mathcal{F}$ by
\[
\vee\mathcal{F}=\mathrm{H}_{\mathcal{F}}^{\ast}\;\text{ and }\;\wedge
\mathcal{F}=\mathrm{B}_{\mathcal{F}}^{\circ},
\]
respectively; if $\mathcal{F}$ is $\left\{  P_{1},P_{2}\right\}  $ then
instead we write $P_{1}\vee P_{2}$ or $P_{1}\wedge P_{2}$, respectively.

\begin{remark}
It is easy to check (arguing as in Theorems \ref{dix0} and \ref{ovunt}) that,
for a family $\mathcal{F}$ of preradicals satisfying Axiom $4$, $\vee
\mathcal{F}$ determines the \textit{smallest over} \textit{radical} larger
than or equal to each $R\in\mathcal{F}$ and $\wedge\mathcal{F}$ determines the
largest \textit{under} \textit{radical} smaller than or equal to each
$P\in\mathcal{F}$.
\end{remark}

\begin{corollary}
The operations $\vee$ and $\wedge$ produce supremum and infimum of a family of
radicals in the class of radicals.
\end{corollary}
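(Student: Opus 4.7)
The plan is to reduce the corollary entirely to the statements already established in Theorems \ref{dix0} and \ref{ovunt} together with the remark immediately preceding the corollary. Let $\mathcal{F}$ be a family of radicals. Since every radical is both an under radical and an over radical, Theorem \ref{dix0} tells us that $\mathrm{H}_{\mathcal{F}}$ is an under radical and $\mathrm{B}_{\mathcal{F}}$ is an over radical. By Theorem \ref{ovunt}(1) the convolution $\mathrm{H}_{\mathcal{F}}^{\ast} = \vee\mathcal{F}$ is then a genuine radical, and by Theorem \ref{ovunt}(2) the superposition $\mathrm{B}_{\mathcal{F}}^{\circ} = \wedge\mathcal{F}$ is also a radical. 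This handles the assertion that both operations land in the class of radicals.

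Next I would verify that $\vee\mathcal{F}$ is an upper bound and $\wedge\mathcal{F}$ is a lower bound for $\mathcal{F}$. For any $R\in\mathcal{F}$ and any algebra $A$, the inclusion $R(A)\subset \sum_{P\in\mathcal{F}}P(A) = \mathrm{H}_{\mathcal{F}}(A)$ gives $R\leq \mathrm{H}_{\mathcal{F}}\leq \mathrm{H}_{\mathcal{F}}^{\ast}=\vee\mathcal{F}$; dually, $\wedge\mathcal{F}=\mathrm{B}_{\mathcal{F}}^{\circ}\leq \mathrm{B}_{\mathcal{F}}(A)\subset P(A)$ for every $P\in\mathcal{F}$.

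Finally, for the extremality among radicals I would use the characterizations recorded in the remark preceding the corollary. Suppose $Q$ is a radical with $R\leq Q$ for every $R\in\mathcal{F}$. Then in particular $Q$ is an over radical that majorizes every element of $\mathcal{F}$, so by the remark $\vee\mathcal{F}\leq Q$; hence $\vee\mathcal{F}$ is the supremum in the class of radicals. Dually, if $Q$ is a radical with $Q\leq P$ for every $P\in\mathcal{F}$, then $Q$ is an under radical minorizing every member of $\mathcal{F}$, and the remark yields $Q\leq \wedge\mathcal{F}$, so $\wedge\mathcal{F}$ is the infimum.

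I do not anticipate genuine obstacles here: all of the substantive work (checking Axioms 1--4, the transfinite construction, preservation of the over/under status under $\ast$ and $\circ$) has already been discharged in Theorems \ref{dix0} and \ref{ovunt} and the surrounding remark. The only mild care needed is to observe that a radical is simultaneously an over and an under radical, so that the extremal properties of $\ast$ (smallest over radical above) and $\circ$ (largest under radical below) translate into the corresponding extremal properties within the narrower class of radicals; this is the one conceptual step that could easily be glossed over, and I would make it explicit.
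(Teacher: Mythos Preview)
Your proposal is correct and follows essentially the same route as the paper: both reduce the corollary to Theorem \ref{dix0} and Theorem \ref{ovunt}, noting that a radical is simultaneously an over and an under radical. The only cosmetic difference is that for the extremality step you invoke the Remark preceding the corollary (that $\vee\mathcal{F}$ is the smallest over radical above each $R\in\mathcal{F}$), whereas the paper argues directly that $\mathrm{H}_{\mathcal{F}}\leq P$ and then applies the isotonicity of the convolution procedure (Proposition \ref{isotone}) together with $P^{\ast}=P$ for a radical $P$; these are equivalent formulations of the same fact.
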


\begin{proof}
Let $\mathcal{F}$ be a family of radicals. Let $P$ be a radical such that
$R\leq P$ for every $R\in\mathcal{F}$. Then $\mathrm{H}_{\mathcal{F}}\leq P$,
whence $\vee\mathcal{F}\leq P$ by Theorem \ref{dix0} and Proposition
\ref{isotone}. As $R\leq\vee\mathcal{F}$ for every $R\in\mathcal{F}$, we
conclude that $\vee\mathcal{F}$ is the supremum of $\mathcal{F}$.

Similarly, $\wedge$ $\mathcal{F}$ is the infimum of $\mathcal{F}$.
\end{proof}

Taking into account Theorems \ref{equality} and \ref{sera}, one can
reformulate Theorem \ref{ovunt} as follows.

\begin{theorem}
\label{dix}Let $R$ be an under radical, and let $P$ be an over radical. Then

\begin{enumerate}
\item $R^{\ast}$ is the only radical $S$ such that $\mathbf{Sem}\left(
S\right)  =\mathbf{Sem}\left(  R\right)  $;

\item $P^{\circ}$ is the only radical $T$ such that $\mathbf{\mathbf{Rad}%
}\left(  T\right)  =\mathbf{\mathbf{Rad}}\left(  P\right)  $.
\end{enumerate}
\end{theorem}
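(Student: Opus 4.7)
The plan is to read this off almost immediately from the ingredients already assembled earlier in the section. For part (1), existence of such an $S$ is provided by $S = R^{\ast}$: Theorem \ref{ovunt} gives that $R^{\ast}$ is a radical whenever $R$ is an under radical, and Theorem \ref{sera}(1) gives $\mathbf{Sem}(R^{\ast}) = \mathbf{Sem}(R)$. For uniqueness, suppose $S$ is any radical with $\mathbf{Sem}(S) = \mathbf{Sem}(R) = \mathbf{Sem}(R^{\ast})$; then Corollary \ref{ineq} applied to the pair $S, R^{\ast}$ of radicals forces $S = R^{\ast}$.

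For part (2), the argument is symmetric. Existence is witnessed by $T = P^{\circ}$: Theorem \ref{ovunt} gives that $P^{\circ}$ is a radical whenever $P$ is an over radical, and Theorem \ref{sera}(2) gives $\mathbf{Rad}(P^{\circ}) = \mathbf{Rad}(P)$. For uniqueness, if $T$ is any radical with $\mathbf{Rad}(T) = \mathbf{Rad}(P) = \mathbf{Rad}(P^{\circ})$, then Corollary \ref{ineq} (the $\mathbf{Rad}$-clause) yields $T = P^{\circ}$.

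There is no real obstacle: the theorem is a genuine reformulation. Alternatively, one can replace the appeal to Corollary \ref{ineq} by the following direct bracketing argument using Theorem \ref{equality}: any radical $S$ is in particular an over radical, and the equality $\mathbf{Sem}(S) = \mathbf{Sem}(R^{\ast})$ gives both inclusions $\mathbf{Sem}(S) \subset \mathbf{Sem}(R^{\ast})$ and $\mathbf{Sem}(R^{\ast}) \subset \mathbf{Sem}(S)$, hence by Theorem \ref{equality}(1) both $R^{\ast} \leq S$ and $S \leq R^{\ast}$; the analogous bracketing via Theorem \ref{equality}(2) handles (2) by viewing radicals as under radicals. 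Either route makes the proof a one-line deduction, and no further calculations are needed.
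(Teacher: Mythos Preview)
Your proof is correct and matches the paper's own approach: the paper does not give a separate proof but simply says, immediately before the statement, that taking into account Theorems \ref{equality} and \ref{sera} one can reformulate Theorem \ref{ovunt} in this form. Your citations of Theorem \ref{ovunt}, Theorem \ref{sera}, and Corollary \ref{ineq} (which is itself the direct consequence of Theorem \ref{equality}) are exactly the ingredients the paper points to.
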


In assumptions of Theorem \ref{dix}, we also recall that
\[
\mathbf{\mathbf{Rad}}\left(  R\right)  \subset\mathbf{\mathbf{Rad}}\left(
R^{\ast}\right)  \text{ and }\mathbf{Sem}\left(  P\right)  \subset
\mathbf{Sem}\left(  P^{\circ}\right)  .
\]

\begin{corollary}
Let $V$ be a class of algebras. Then there exist the largest radical $S$ such
that $\mathbf{Sem}\left(  S\right)  \supset V$, and the smallest radical $T$
such that $\mathbf{\mathbf{Rad}}\left(  T\right)  \supset V$.

\begin{enumerate}
\item $V=\mathbf{Sem}\left(  S\right)  $ if and only if there is a family
$\mathcal{F}_{1}$ of under radicals such that $\mathbf{Sem}\left(
\mathrm{H}_{\mathcal{F}_{1}}\right)  =V$;

\item $V=\mathbf{\mathbf{Rad}}\left(  T\right)  $ if and only if there is a
family $\mathcal{F}_{2}$ of over radicals such that $\mathbf{\mathbf{Rad}%
}\left(  \mathrm{B}_{\mathcal{F}_{2}}\right)  =V$.
\end{enumerate}
\end{corollary}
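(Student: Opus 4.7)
The plan is to exhibit both $S$ and $T$ as the supremum and infimum of the obvious classes of radicals, and then deduce the two equivalences as short consequences of Theorem \ref{sera}.

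First I would construct $S$. Let $\mathcal{F}_{S}$ denote the class of all radicals $P$ (on the ambient base class $\mathfrak{U}$) with $V\subset\mathbf{Sem}(P)$; it is non-empty since the zero radical has semisimple class $\mathfrak{U}$. Applying Theorem \ref{dix0}(1) shows that $\mathrm{H}_{\mathcal{F}_{S}}$ is an under radical with $\mathbf{Sem}(\mathrm{H}_{\mathcal{F}_{S}})=\cap_{P\in\mathcal{F}_{S}}\mathbf{Sem}(P)\supset V$, and then Theorems \ref{ovunt}(1) and \ref{sera}(1) turn $S:=\mathrm{H}_{\mathcal{F}_{S}}^{\ast}$ into a radical with the same semisimple class, so $S\in\mathcal{F}_{S}$. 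Since $P\leq\mathrm{H}_{\mathcal{F}_{S}}\leq S$ for every $P\in\mathcal{F}_{S}$, $S$ is the largest radical with $\mathbf{Sem}(S)\supset V$. The construction of $T$ is dual: take $\mathcal{G}_{T}:=\{P:V\subset\mathbf{Rad}(P)\}$, which contains the identity radical $A\longmapsto A$; form the over radical $\mathrm{B}_{\mathcal{G}_{T}}$ from Theorem \ref{dix0}(2), and set $T:=\mathrm{B}_{\mathcal{G}_{T}}^{\circ}$. Theorems \ref{ovunt}(2) and \ref{sera}(2) then produce a radical with $\mathbf{Rad}(T)\supset V$, minimal by construction.

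For the equivalences, the forward direction of (1) is trivial with $\mathcal{F}_{1}:=\{S\}$ (a radical is in particular an under radical, and $\mathrm{H}_{\{S\}}=S$). For the converse, given under radicals $\mathcal{F}_{1}$ with $\mathbf{Sem}(\mathrm{H}_{\mathcal{F}_{1}})=V$, the convolution $R:=\mathrm{H}_{\mathcal{F}_{1}}^{\ast}$ is a radical satisfying $\mathbf{Sem}(R)=V$ by Theorem \ref{sera}(1); hence $R\in\mathcal{F}_{S}$ and $R\leq S$ by maximality of $S$, which combined with the antitonicity in (\ref{pprs}) forces $V\subset\mathbf{Sem}(S)\subset\mathbf{Sem}(R)=V$. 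Part (2) is the symmetric argument, replacing $\mathrm{H}_{\mathcal{F}_{1}}^{\ast}$ by $\mathrm{B}_{\mathcal{F}_{2}}^{\circ}$ and Theorem \ref{sera}(1) by Theorem \ref{sera}(2). The only point deserving care is set-theoretic, namely that $\mathcal{F}_{S}$ and $\mathcal{G}_{T}$ may be proper classes rather than sets; but $\mathrm{H}$ and $\mathrm{B}$ act pointwise on each algebra as a sum or intersection of ideals, so the construction remains legitimate and no real obstacle arises.
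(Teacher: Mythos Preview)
Your proof is correct and follows essentially the same approach as the paper: both construct $S$ and $T$ via $\mathrm{H}$ and $\mathrm{B}$ over the obvious maximal families, then invoke Theorems \ref{dix0} and \ref{sera} (the paper cites Theorem \ref{dix} in place of \ref{sera}). The only cosmetic difference is that the paper takes $\mathcal{F}_{1}$ to be all \emph{under} radicals with $\mathbf{Sem}(P)\supset V$ rather than all radicals, and its proof is terser---it stops at the existence of $S$ and $T$, leaving the maximality/minimality and the equivalences (1), (2) implicit, whereas you spell these out explicitly.
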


\begin{proof}
Let $\mathcal{F}_{1}$ be the family of all under radicals $P$ such that
$\mathbf{Sem}\left(  P\right)  \supset V$, and let $\mathcal{F}_{2}$ be the
family of all over radicals $P$ such that $\mathbf{\mathbf{Rad}}\left(
P\right)  \supset V$. By Theorems \ref{dix} and \ref{dix0}, there exist
radicals $S$ and $T$ such that $\mathbf{Sem}\left(  S\right)  =\cap
_{P\in\mathcal{F}_{1}}\mathbf{Sem}\left(  P\right)  \supset V$ and
$\mathbf{\mathbf{Rad}}\left(  T\right)  =\cap_{P\in\mathcal{F}_{2}%
}\mathbf{\mathbf{Rad}}\left(  P\right)  \supset V$.
\end{proof}

For instance, let $W_{\rho}$ be the class of all normed algebras on which the
usual spectral radius $a\longmapsto\rho\left(  a\right)  $ is continuous; this
class contains all commutative normed algebras and Banach algebras whose
elements have at most countable spectrum. Then there exists a largest
topological radical $S_{\rho}$ such that $W_{\rho}\subset\mathbf{Sem}\left(
S_{\rho}\right)  $, but it is a problem whether there is a largest topological
radical $T$ for which every $T$-radical normed algebra has the property of
spectral radius continuity, i.e. lies in $W_{\rho}$.

Let $V$ be a class of algebras. For being a semisimple or radical class it is
necessary that $V$ is stable under extensions.

\begin{theorem}
\label{ext}Let\ $A$ be an algebra, and let $I$ be a [closed] ideal of $A$. Then

\begin{enumerate}
\item If $P$ is an under radical and $I$, $A/I$ are $P$-semi\-simp\-le then
$A$ is $P$-semi\-simp\-le;

\item If $P$ is an over radical and $I$, $A/I$ are $P$-radical then $A$ is $P$-radical.
\end{enumerate}
\end{theorem}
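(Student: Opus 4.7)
The plan is to handle the two parts symmetrically, playing the under-radical axioms against the over-radical axioms. In both cases the key observation is that Axiom $1$ together with Axiom $4$ constrains where $P(A)$ can sit, and then Axiom $3$ or Axiom $2$ finishes the job.

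For part $(1)$, assume $P(I)=0$ and $P(A/I)=0$ for an under radical $P$. Applying Axiom $1$ to the quotient morphism $q_I\colon A\longrightarrow A/I$ gives $q_I(P(A))\subset P(A/I)=0$, so $P(A)\subset I$. Thus $P(A)$ is a [closed] ideal of $A$ contained in $I$, and in particular it is a [closed] ideal of $I$ belonging to $\mathfrak{U}$. Applying Axiom $4$ with $I$ in the role of the ambient algebra and $P(A)$ in the role of the ideal, we obtain $P(P(A))\subset P(I)=0$. Axiom $3$ (which is available because $P$ is an under radical) then yields $P(A)=P(P(A))=0$.

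For part $(2)$, assume $P(I)=I$ and $P(A/I)=A/I$ for an over radical $P$. Axiom $4$ gives $I=P(I)\subset P(A)$, so $I\subset P(A)$ and the natural map
\[
\pi\colon A/I\longrightarrow A/P(A)
\]
is well-defined; since $P(A)$ is a [closed] ideal of $A$ containing $I$, the quotient $P(A)/I$ is a [closed] ideal of $A/I$, and $\pi$ is an (open continuous) surjective homomorphism. Axiom $2$ (available for over radicals) gives $P(A/P(A))=0$, and Axiom $1$ applied to $\pi$ gives $\pi(P(A/I))\subset P(A/P(A))=0$. Since $P(A/I)=A/I$ and $\pi$ is surjective, this forces $A/P(A)=0$, i.e.\ $P(A)=A$.

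The only delicate point is making sure that the intermediate objects $P(A)$ (in part $(1)$) and $P(A)/I$ (in part $(2)$) sit in $\mathfrak{U}$ and qualify as [closed] ideals so that Axioms $1$ and $4$ may be invoked, but this is immediate from $P$ being a [closed] ideal map together with the fact that $\mathfrak{U}_{\mathrm{a}}$, $\mathfrak{U}_{\mathrm{n}}$ and $\mathfrak{U}_{\mathrm{b}}$ are closed under passing to [closed] ideals and [closed] quotients. No genuine obstacle seems to arise; the statement is really a packaged consequence of the axioms.
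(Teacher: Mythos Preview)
Your proof is correct and follows essentially the same route as the paper's own proof: in part (1) you push $P(A)$ into $I$ via Axiom~1, then use Axiom~4 and Axiom~3 to conclude $P(A)=P(P(A))\subset P(I)=0$; in part (2) you use Axiom~4 to get $I\subset P(A)$, then apply Axiom~1 to the induced morphism $A/I\to A/P(A)$ together with Axiom~2 to force $A/P(A)=0$. The paper's argument is identical in structure, only slightly terser in naming the axioms.
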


\begin{proof}
$\left(  1\right)  $ As $q_{I}\left(  P\left(  A\right)  \right)  \subset
P\left(  A/I\right)  =0$ then $P\left(  A\right)  \subset I$, whence $P\left(
A\right)  $ is an ideal of $I$ and $P\left(  A\right)  =P\left(  P\left(
A\right)  \right)  \subset P\left(  I\right)  =0$.

$\left(  2\right)  $ As $I\subset P\left(  A\right)  $ then there is a
morphism $p:A/I\longrightarrow A/P\left(  A\right)  $ such that $q_{P\left(
A\right)  }=p\circ q_{I}$ where $q_{P\left(  A\right)  }$ and $q_{I}$ are
standard quotient maps. Then
\[
A/P\left(  A\right)  =p\left(  A/I\right)  =p\left(  P\left(  A/I\right)
\right)  \subset P\left(  A/P\left(  A\right)  \right)  =0,
\]
whence $A=P\left(  A\right)  $.
\end{proof}

In other words, the class $\mathbf{Rad}(P)$ (respectively, $\mathbf{Sem}(P)$)
is stable under extensions if $P$ is an over radical (respectively, under
radical). We will need also partially converse statements.

\begin{lemma}
\label{conv-ext}Let $R$ be an under radical, and let $P$ be an over radical. Then

\begin{enumerate}
\item If $\mathbf{Rad}(R)$ is stable under extensions then $R$ is a radical;

\item If $\mathbf{Sem}(P)$ is stable under extensions then $P$ is a radical.
\end{enumerate}
\end{lemma}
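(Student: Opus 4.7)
The plan for both parts is the same in spirit: produce a short exact sequence whose outer terms lie in the class stipulated to be stable under extensions, then use stability to transfer membership to the middle term and read off the missing axiom. The remaining three axioms will supply the containments needed to close the argument.

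For part (1), I will set $J=R(A)$ and aim to show $R(A/J)=0$. Axiom~3 gives $R(J)=J$, so $J\in\mathbf{Rad}(R)$. Define $K=q_J^{-1}(R(A/J))$; because $R$ is a [closed] ideal map on $A/J$ and $q_J$ is continuous in the topological setting, $K$ is a [closed] ideal of $A$ containing $J$. Applying Axiom~3 to the algebra $A/J$ gives $K/J=R(A/J)=R(K/J)$, i.e.\ $K/J\in\mathbf{Rad}(R)$. Stability of $\mathbf{Rad}(R)$ under the extension $0\to J\to K\to K/J\to 0$ then forces $R(K)=K$. But $K$ is an ideal of $A$, so Axiom~4 yields $K=R(K)\subset R(A)=J$. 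Hence $K=J$ and $R(A/J)=K/J=0$, which is Axiom~2 for $R$.

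For part (2), I will set $I=P(A)$ and $L=P(I)$ and aim to show $L=I$. Axiom~4 gives that $L$ is a [closed] ideal of $A$ with $L\subset I$. Axiom~2 applied inside $I$ gives $P(I/L)=0$, and Axiom~2 applied at $A$ gives $P(A/I)=0$; hence $I/L$ and $A/I\cong(A/L)/(I/L)$ both lie in $\mathbf{Sem}(P)$. Stability under the extension $0\to I/L\to A/L\to A/I\to 0$ then yields $P(A/L)=0$. Axiom~1 applied to $q_L\colon A\to A/L$ gives $q_L(P(A))\subset P(A/L)=0$, so $I=P(A)\subset L$; combined with $L\subset I$ this produces $P(P(A))=L=I=P(A)$, which is Axiom~3 for $P$.

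The argument is essentially a short diagram chase, and no deep obstacle is anticipated. The only points requiring care are the ideal-theoretic checks in the topological setting: verifying that $K$ in part~(1) and $L$ in part~(2) are \emph{closed} ideals of $A$, not merely of the subalgebras in which they are initially produced. Both facts are furnished by Axiom~4 (for $L$) and by the continuity of $q_J$ together with $R$ being a closed ideal map on $A/J$ (for $K$), so the plan should go through verbatim in both the algebraic and topological contexts.
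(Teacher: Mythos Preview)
Your proof is correct and follows essentially the same route as the paper: in part~(1) you form $K=q_{R(A)}^{-1}(R(A/R(A)))$, use Axiom~3 to see both $R(A)$ and $K/R(A)$ are $R$-radical, invoke stability to get $K$ $R$-radical, and then Axiom~4 to force $K\subset R(A)$; in part~(2) you set $L=P(P(A))$, use Axiom~2 to see $P(A)/L$ and $A/P(A)$ are $P$-semisimple, invoke stability to get $A/L$ $P$-semisimple, and then Axiom~1 to force $P(A)\subset L$. This matches the paper's argument line for line (with only notational differences).
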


\begin{proof}
$\left(  1\right)  $ Let $J=q_{R(A)}^{-1}(R(A/R(A)))$. Then the ideal $J$ is
an extension of $R(A)$ by $R(A/R(A))$: both algebras are $R$-radical because
$R$ is an under radical. By our assumption, $J$ is a $R$-radical ideal of $A$.
Therefore $J\subset R(A)$ which means that $R\left(  A/R(A)\right)  =0$, i.e.,
$R$ is a radical.

$\left(  2\right)  $ Let $J=P\left(  P(A)\right)  $. Then the algebra $A/J$ is
an extension of $P(A)/J$ by $(A/J)/(P(A)/J)\cong A/P(A)$. Both algebras are
$P$-semisimple because $P$ is an over radical. By our assumption, $A/J$ is
$P$-semisimple. Then%
\[
P(A)/J=q_{J}\left(  P(A)\right)  \subset P(A/J)=0,
\]
whence $J=P(A)$, i.e., $P$ is a radical.
\end{proof}

We apply this result to preradicals $\overline{\Pi_{\Omega_{\mathrm{n}}}}$ and
$\overline{\Pi_{\Omega_{\mathrm{b}}}}$, generated by the primitive maps
$\Omega_{\mathrm{n}}:=$ $\operatorname{Prim}\backslash\operatorname{Prim}%
_{\mathrm{n}}$ and $\Omega_{\mathrm{b}}:=$ $\operatorname{Prim}\backslash
\operatorname{Prim}_{\mathrm{b}}$ on $\mathfrak{U}_{\mathrm{n}}$ (see Example
\ref{pm5}).

\begin{theorem}
$\overline{\Pi_{\Omega_{\mathrm{n}}}}$ and $\overline{\Pi_{\Omega_{\mathrm{b}%
}}}$ are radicals on $\mathfrak{U}_{\mathrm{n}}$; for every normed algebra
$A$, $\overline{\Pi_{\Omega_{\mathrm{n}}}}\left(  A\right)  $ is the largest
$Q$-ideal of $A$ and $\overline{\Pi_{\Omega_{\mathrm{b}}}}\left(  A\right)  $
is the largest $Q_{\mathrm{b}}$-ideal of $A$.
\end{theorem}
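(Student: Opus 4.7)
The plan is to establish that $\overline{\Pi_{\Omega_{\mathrm{n}}}}$ satisfies Axioms 1, 3, 4 by transferring heredity across the closure, and then identify its value as the largest $Q$-ideal, from which Axiom 2 drops out by a direct application of the primitive-map properties. The argument for $\overline{\Pi_{\Omega_{\mathrm{b}}}}$ is identical with $Q$ replaced by $Q_{\mathrm{b}}$.

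By Theorem \ref{pm1}(2) the map $\Pi_{\Omega_{\mathrm{n}}}$ is a pliant hereditary preradical on $\mathfrak{U}_{\mathrm{n}}$, so its closure $\overline{\Pi_{\Omega_{\mathrm{n}}}}$ is at once a closed ideal map. Axiom 1 transfers because topological morphisms are continuous; Axiom 4 follows since, for a closed ideal $I\subset A$, $\overline{I\cap\Pi_{\Omega_{\mathrm{n}}}(A)}$ is a closed ideal of $A$ sitting inside $\overline{\Pi_{\Omega_{\mathrm{n}}}(A)}$; Axiom 3 follows from heredity applied to the closed ideal $J:=\overline{\Pi_{\Omega_{\mathrm{n}}}(A)}$, which gives $\Pi_{\Omega_{\mathrm{n}}}(J)=J\cap\Pi_{\Omega_{\mathrm{n}}}(A)=\Pi_{\Omega_{\mathrm{n}}}(A)$, hence $\overline{\Pi_{\Omega_{\mathrm{n}}}(J)}=J$. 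Thus $\overline{\Pi_{\Omega_{\mathrm{n}}}}$ is an under radical; the work is in Axiom 2.

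I would then identify the largest $Q$-ideal of $A$ with $\overline{\Pi_{\Omega_{\mathrm{n}}}(A)}$. By Example \ref{pm5}, $\Pi_{\Omega_{\mathrm{n}}}(A)$ is already a $Q$-algebra. Conversely, if $J$ is any $Q$-ideal of $A$, then every strictly irreducible representation of $J$ is equivalent to a continuous bounded representation on a normed space (Section on $Q$-algebras), so $\Omega_{\mathrm{n}}(J)=\varnothing$, giving $\Pi_{\Omega_{\mathrm{n}}}(J)=J$; heredity then forces $J=J\cap\Pi_{\Omega_{\mathrm{n}}}(A)\subset\Pi_{\Omega_{\mathrm{n}}}(A)$. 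Next, one shows that the closure of a $Q$-ideal inside a normed algebra is again a $Q$-ideal (power-series characterization: for $x\in\overline J$ with $\|x\|<1$, approximate by $y\in J$ with $\|y\|<1$, and combine the known convergence of $\sum y^n$ in $J$ with the absolute convergence of $\sum(x^n-y^n)$; completeness of the relevant partial sums is handled by viewing $\overline J$ through the completion $\widehat J$). Consequently $\overline{\Pi_{\Omega_{\mathrm{n}}}(A)}$ is itself a $Q$-ideal, and maximality forces $\Pi_{\Omega_{\mathrm{n}}}(A)=\overline{\Pi_{\Omega_{\mathrm{n}}}(A)}$—so $\Pi_{\Omega_{\mathrm{n}}}(A)$ is already closed and coincides with $\overline{\Pi_{\Omega_{\mathrm{n}}}}(A)$, which is the maximum $Q$-ideal.

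With $J=\Pi_{\Omega_{\mathrm{n}}}(A)$ closed, Axiom 2 is immediate: by (3$_{pm}$), $\Omega_{\mathrm{n}}(A/J)=\{K/J:K\in\Omega_{\mathrm{n}}(A),\,K\supset J\}$, and since every $K\in\Omega_{\mathrm{n}}(A)$ satisfies $K\supset\bigcap\Omega_{\mathrm{n}}(A)=J$, we get $\Pi_{\Omega_{\mathrm{n}}}(A/J)=\bigl(\bigcap_{K\in\Omega_{\mathrm{n}}(A)}K\bigr)/J=J/J=0$, so $\overline{\Pi_{\Omega_{\mathrm{n}}}}(A/J)=0$. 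The principal obstacle in this plan is the step asserting that the closure of a $Q$-ideal is a $Q$-ideal in the general normed (non-Banach) setting; handling that rigorously is the only nontrivial technical point, and it is what makes the maximum-$Q$-ideal description consistent with $\overline{\Pi_{\Omega_{\mathrm{n}}}}$ being a closed ideal map. (Alternatively one could bypass this identification by invoking Lemma \ref{conv-ext}: verify directly that $\mathbf{Rad}(\overline{\Pi_{\Omega_{\mathrm{n}}}})$ is stable under extensions, using extension-stability of $Q$-algebras from [TR1].)
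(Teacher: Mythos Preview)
Your proposal is essentially correct, and your parenthetical alternative is exactly the paper's route. The paper shows $\overline{\Pi_{\Omega_{\mathrm n}}}$ is an under radical via Theorem~\ref{closunder}, asserts that $\overline{\Pi_{\Omega_{\mathrm n}}(A)}$ is a $Q$-algebra (hence $\mathbf{Rad}(\overline{\Pi_{\Omega_{\mathrm n}}})$ coincides with the class of $Q$-algebras), cites the extension-stability of $Q$-algebras from [TR1], and applies Lemma~\ref{conv-ext}. Your primary route is a genuine variant: once you know $\Pi_{\Omega_{\mathrm n}}(A)$ is itself closed (by maximality plus ``closure of a $Q$-ideal is a $Q$-ideal''), Axiom~2 drops out directly from the primitive-map axiom~$(3_{pm})$ and formula~(\ref{pr}), with no appeal to extension-stability. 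This buys you a self-contained verification of Axiom~2 and the pleasant corollary that $\Pi_{\Omega_{\mathrm n}}$ is already a closed-ideal map; the paper's route, in exchange, fits the theorem into the general machinery of Section~4 (Lemma~\ref{conv-ext}) and makes the structural reason (radical classes stable under extensions) transparent.

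One caution on the key step both routes rely on. Your sketch for ``closure of a $Q$-ideal is a $Q$-ideal'' reduces to a Cauchy estimate $\sum_N^M\|x^n-y^n\|\le\|x-y\|\sum_N^M n r^{n-1}$, which does show the partial sums of $\sum x^n$ are Cauchy; but in a non-complete $A$ this alone does not give convergence in $\overline J$, and ``viewing $\overline J$ through $\widehat J$'' only places the limit in $\widehat J$, not in $\overline J=A\cap\widehat J$. The paper simply states this fact without proof, so you are not worse off, but your write-up should either cite it cleanly or supply a correct argument (for instance, one can use the uniform bound $\|b'\|\le\|b\|/(1-\|b\|)$ for the quasi-inverse in a $Q$-algebra together with an iterative quasi-product construction, but the convergence still needs care). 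Note also that your alternative via Lemma~\ref{conv-ext} does \emph{not} bypass this step: identifying $\mathbf{Rad}(\overline{\Pi_{\Omega_{\mathrm n}}})$ with the $Q$-algebras already requires it.
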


\begin{proof}
As $\Pi_{\Omega_{\mathrm{n}}}$ is a hereditary preradical with topological
morphisms then $\overline{\Pi_{\Omega_{\mathrm{n}}}}$ is an under radical by
Theorem \ref{closunder}. Since $\Pi_{\Omega_{\mathrm{n}}}\left(  A\right)  $
is a $Q$-algebra, $\overline{\Pi_{\Omega_{\mathrm{n}}}}\left(  A\right)
:=\overline{\Pi_{\Omega_{\mathrm{n}}}\left(  A\right)  }$ is a $Q$-algebra.
Taking into account that every $Q$-algebra is a $\Pi_{\Omega_{\mathrm{n}}}%
$-radical algebra by definition, and the class of all $Q$-algebras is stable
under extensions (see for instance \cite[Theorem 2.5]{TR1}), we conclude that
$\overline{\Pi_{\Omega_{\mathrm{n}}}}$ is a radical, by Lemma \ref{conv-ext}.
It is clear that $\overline{\Pi_{\Omega_{\mathrm{n}}}}\left(  A\right)  $ is
the largest $Q$-ideal of $A$.

A similar argument gives the result for $\overline{\Pi_{\Omega_{\mathrm{b}}}}$.
\end{proof}

\subsection{Superposition and convolution operations}

The \textit{superposition }of preradicals $P$ and $R$ satisfying Axiom $4$ is
defined by the usual rule:
\[
\left(  P\circ R\right)  \left(  A\right)  =P\left(  R\left(  A\right)
\right)
\]
for each algebra $A$. This map is a preradical satisfying Axiom $4$ by
Proposition \ref{ax4}. The superposition operation is clearly associative.

Let $A$ be an algebra, and let $P$ and $R$ be ideal maps. For a [closed] ideal
$I$ of $A$, define the ideal $P\ast I$ of $A$ by
\[
P\ast I:=q_{I}^{-1}\left(  P\left(  A/I\right)  \right)  ,
\]
where $q_{I}:A\longrightarrow A/I$ is the standard quotient map; in particular
$P\ast0=P\left(  A\right)  $, $P\ast P\left(  A\right)  =q_{P\left(  A\right)
}^{-1}\left(  P\left(  A/P\left(  A\right)  \right)  \right)  $ and $P\ast
A=A$. Sometimes we will write $\left(  P\ast I;A\right)  $ instead of $P\ast
I$ to avoid a misunderstanding.

Define the \textit{convolution } $P{\ast R}$ of ideal maps $P,R$ by
\begin{equation}
(P{\ast R})(A):=P\ast R\left(  A\right)  =q_{R\left(  A\right)  }^{-1}P\left(
A/R\left(  A\right)  \right)  . \label{convfor}%
\end{equation}
Proposition \ref{ax4} shows that if $P$ and $R$ are preradicals satisfying
Axiom $4$ then $P{\ast R}$ is a preradical satisfying Axiom $4$.

The operations of superposition and convolution are related by the dual
procedure $P\longmapsto P^{\dagger}$, where $P^{\dagger}$ is a
\textit{quotient-valued map }corresponding to $P$:
\[
P^{\dagger}\left(  A\right)  =A/P\left(  A\right)
\]
for every algebra $A$. This relation reflected in Proposition \ref{dua} below
reminds the Fourier transform which relates convolution and product of
functions. Rewrite $\left(  \ref{convfor}\right)  $ as
\[
(P{\ast}R)(A)=q_{R\left(  A\right)  }^{-1}\left(  P\left(  R^{\dagger
}(A)\right)  \right)
\]
for every $A$.

Let $S$ be a quotient-valued map, i.e. let $S$ send every algebra $A$ to $A/I$
for some ideal $I$. The dependence of $I$ on $A$ can be written as $I=S^{\dag
}(A)$, so we have an ideal-valued map $S^{\dag}$ such that
\[
S\left(  A\right)  =A/S^{\dag}\left(  A\right)  =q_{S^{\dag}\left(  A\right)
}\left(  A\right)
\]
for every algebra $A$. If $T,S$ are quotient-valued maps then their
\textit{superposition} $T\circ S$ is defined by
\[
\left(  T\circ S\right)  (A):=A/q_{S^{\dag}\left(  A\right)  }^{-1}(T^{\dag
}\left(  S\left(  A\right)  \right)  )\cong S\left(  A\right)  /T^{\dag
}\left(  S\left(  A\right)  \right)  =T\left(  S\left(  A\right)  \right)
\]
for every algebra $A$.

\begin{proposition}
\label{dua}Let $P,R$ be ideal-valued maps. Then
\[
(R{\ast}P)^{\dagger}=R^{\dagger}\circ P^{\dagger}.
\]

\end{proposition}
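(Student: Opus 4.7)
The plan is to unfold both sides of the claimed equality directly from the definitions and observe that they coincide, using only the standard third isomorphism theorem recalled at the beginning of the preliminaries.

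First I would evaluate the left-hand side. By the definition of the dagger operation applied to the ideal-valued map $R\ast P$, one has
\[
(R\ast P)^{\dagger}(A)=A/(R\ast P)(A)=A/q_{P(A)}^{-1}\bigl(R(A/P(A))\bigr),
\]
where the second equality is the defining formula (\ref{convfor}) for the convolution.

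Next I would evaluate the right-hand side. Setting $T=R^{\dagger}$ and $S=P^{\dagger}$ in the definition of the superposition of quotient-valued maps, one has $S^{\dagger}=P$ and $T^{\dagger}=R$ (the operation $\dagger$ is clearly involutive: applied to an ideal-valued map it returns the corresponding quotient-valued map, and vice versa). Therefore $S(A)=A/P(A)$, $T^{\dagger}(S(A))=R(A/P(A))$, and $q_{S^{\dagger}(A)}=q_{P(A)}$, so the definition of $T\circ S$ gives
\[
(R^{\dagger}\circ P^{\dagger})(A)=A/q_{P(A)}^{-1}\bigl(R(A/P(A))\bigr).
\]
Comparing with the expression for $(R\ast P)^{\dagger}(A)$ obtained above, the two ideals in the denominator are literally the same, and the equality of quotient-valued maps follows. (If one prefers the isomorphism rather than the equality built into the definition of $T\circ S$, one invokes the third isomorphism theorem $(A/Z)/(Y/Z)\cong A/Y$ with $Z=P(A)$ and $Y=q_{P(A)}^{-1}(R(A/P(A)))$, which is precisely the identification used to write $(T\circ S)(A)\cong S(A)/T^{\dagger}(S(A))$ in the first place.)

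There is no real obstacle here; the statement is a formal transposition between the convolution of ideal-valued maps and the superposition of quotient-valued maps, and the entire content is the compatibility between quotients and preimages that underlies the third isomorphism theorem. The only thing one must be careful about is bookkeeping: checking that the involution $P\mapsto P^{\dagger}$ swaps $P(A)$ with $A/P(A)$ consistently on both sides so that $S^{\dagger}(A)$ in the superposition formula is indeed $P(A)$, not some other ideal.
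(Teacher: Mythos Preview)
Your proposal is correct and follows essentially the same approach as the paper: both arguments unfold the definitions of $\dagger$, $\ast$, and $\circ$ and invoke the standard identification $(A/J)/I \cong A/q_J^{-1}(I)$ to see that both sides equal $A/q_{P(A)}^{-1}(R(A/P(A)))$. The paper's proof is slightly more terse, computing only the right-hand side and recognizing it as the left, but the content is identical.
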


\begin{proof}
If $J$ is a [closed] ideal of $A$ and $I$ is a [closed] ideal of $A/J$ then
one identifies in a standard way the algebras $(A/J)/I$ and $A/q_{J}^{-1}(I)$.
Therefore
\begin{align*}
R^{\dagger}\circ P^{\dagger}(A)  &  =(A/P(A))/R(A/P(A))=A/q_{P(A)}%
^{-1}(R(A/P(A)))\\
&  =A/(R\ast P)(A)=(R\ast P)^{\dagger}(A).
\end{align*}

\end{proof}

\begin{lemma}
\label{conv}Let $P$ and $R$ be preradicals. Then

\begin{enumerate}
\item $P{\ast R}$ is a preradical;

\item The convolution operation is associative.
\end{enumerate}
\end{lemma}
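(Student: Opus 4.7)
For part~(1), the plan is a direct verification of the two defining properties of a preradical, without invoking Axiom~4. The ideal property of $(P\ast R)(A) := q_{R(A)}^{-1}\bigl(P(A/R(A))\bigr)$ is automatic: $P(A/R(A))$ is an ideal of $A/R(A)$ and $q_{R(A)}$ is a surjective homomorphism (in the topological case it is also open and continuous, so the preimage is a closed ideal whenever $P$ takes values in closed ideals). For Axiom~1, I take a morphism $f : A \to B$ in $\mathfrak{U}$: Axiom~1 for $R$ gives $f(R(A)) \subset R(B)$, so $f$ descends to a morphism $\bar f : A/R(A) \to B/R(B)$ satisfying $\bar f\circ q_{R(A)} = q_{R(B)}\circ f$. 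Then Axiom~1 for $P$ gives $\bar f\bigl(P(A/R(A))\bigr) \subset P(B/R(B))$, and for any $x\in (P\ast R)(A)$,
$$q_{R(B)}(f(x)) = \bar f\bigl(q_{R(A)}(x)\bigr) \in P(B/R(B)),$$
whence $f(x)\in (P\ast R)(B)$.

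For part~(2), the plan is to identify $\bigl((P\ast R)\ast S\bigr)(A)$ and $\bigl(P\ast(R\ast S)\bigr)(A)$ as the preimage of a single ideal under a single composite quotient map, via the third isomorphism theorem. Writing $J := S(A)$ and $K := R(A/J)$, one unwinds
\begin{align*}
\bigl((P\ast R)\ast S\bigr)(A)
&= q_J^{-1}\bigl((P\ast R)(A/J)\bigr)
 = q_J^{-1}\bigl(q_K^{-1}\bigl(P((A/J)/K)\bigr)\bigr) \\
&= (q_K\circ q_J)^{-1}\bigl(P((A/J)/K)\bigr).
\end{align*}
The composition $\varphi := q_K\circ q_J : A\to (A/J)/K$ is a surjective morphism with kernel $q_J^{-1}(K) = (R\ast S)(A)$; the third isomorphism theorem then provides an isomorphism $\tilde\varphi : A/(R\ast S)(A) \to (A/J)/K$ satisfying $\varphi = \tilde\varphi\circ q_{(R\ast S)(A)}$, which in the topological case is open, continuous and bijective, hence a morphism in both directions. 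Applying Axiom~1 to $\tilde\varphi$ and $\tilde\varphi^{-1}$ gives $\tilde\varphi\bigl(P(A/(R\ast S)(A))\bigr) = P((A/J)/K)$, so
$$\bigl((P\ast R)\ast S\bigr)(A) = q_{(R\ast S)(A)}^{-1}\bigl(P(A/(R\ast S)(A))\bigr) = \bigl(P\ast(R\ast S)\bigr)(A).$$

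There is no deep obstacle; the routine subtlety is ensuring in the topological case that $\tilde\varphi$ is genuinely a morphism in both directions, which rests on the standard fact that a composition of two quotient homomorphisms by closed ideals is itself open, continuous and surjective. A more streamlined alternative is to deduce associativity from Proposition~\ref{dua}: the identity $(Q\ast T)^\dagger = Q^\dagger\circ T^\dagger$ transports convolution into superposition of quotient-valued maps, which is associative because composition of maps is associative, and the assignment $Q\mapsto Q^\dagger$ is injective (one recovers $Q$ as $\ker(A\to Q^\dagger(A))$), so associativity on the $^\dagger$ side descends to associativity of $\ast$.
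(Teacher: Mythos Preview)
Your proof is correct and follows essentially the same route as the paper's: for (1) you pass to the induced morphism on quotients and apply Axiom~1 twice, and for (2) you unwind both sides and identify them via the third isomorphism theorem. You are slightly more explicit than the paper about why the canonical isomorphism $(A/J)/K \cong A/(R\ast S)(A)$ is a morphism in both directions in the topological case, which is a fair point to spell out; your alternative argument via Proposition~\ref{dua} is a nice observation not in the paper, but the direct argument is already short enough that it does not really buy anything.
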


\begin{proof}
$\left(  1\right)  $ Let $f:A\longrightarrow B$ be a morphism. Then $f\left(
P\left(  A\right)  \right)  \subset P\left(  B\right)  $, so there is a
morphism $g:q_{P\left(  A\right)  }\left(  A\right)  \longrightarrow
q_{P\left(  B\right)  }\left(  B\right)  $ such that $g\circ q_{P\left(
A\right)  }=q_{P\left(  B\right)  }\circ f$; then $g\left(  R\left(
q_{P\left(  A\right)  }\left(  A\right)  \right)  \right)  \subset R\left(
q_{P\left(  B\right)  }\left(  B\right)  \right)  $. Let $a\in\left(  P{\ast
R}\right)  \left(  A\right)  $; then
\[
\left(  g\circ q_{P\left(  A\right)  }\right)  \left(  a\right)  \in R\left(
q_{P\left(  B\right)  }\left(  B\right)  \right)  ,
\]
whence $\left(  q_{P\left(  B\right)  }\circ f\right)  \left(  a\right)  \in
R\left(  q_{P\left(  B\right)  }\left(  B\right)  \right)  $ and $f\left(
a\right)  \in\left(  P{\ast R}\right)  \left(  B\right)  $.

$\left(  2\right)  $ Let $K$ be a preradical. We have that
\[
\left(  R\ast K\right)  \ast P\left(  A\right)  =q_{I_{1}}^{-1}\left(  R\ast
K\left(  A/I_{1}\right)  \right)  =q_{I_{1}}^{-1}\left(  q_{I_{2}}^{-1}\left(
R\left(  \left(  A/I_{1}\right)  /I_{2}\right)  \right)  \right)
\]
where $I_{1}=P\left(  A\right)  $, $I_{2}=K\left(  A/I_{1}\right)  $. Also,
$R\ast\left(  K\ast P\right)  \left(  A\right)  =q_{I_{3}}^{-1}\left(
R\left(  A/I_{3}\right)  \right)  $ where
\[
I_{3}=K\ast P\left(  A\right)  =q_{I_{1}}^{-1}\left(  K\left(  A/I_{1}\right)
\right)  =q_{I_{1}}^{-1}\left(  I_{2}\right)  .
\]
It remains to note that $q_{I_{3}}=q_{I_{2}}\circ q_{I_{1}}$ and $\left(
A/I_{1}\right)  /I_{2}\cong A/q_{I_{1}}^{-1}\left(  I_{2}\right)  =A/I_{3}$.
\end{proof}

The following result is a direct consequence of Proposition \ref{ax4}.

\begin{corollary}
\label{oper}

\begin{enumerate}
\item If $P$ and $R$ are under radicals then $P{\ast}R$ is an under radical.

\item If $P$ and $R$ are over radicals then $P{\circ R}$ is an over radical.
\end{enumerate}
\end{corollary}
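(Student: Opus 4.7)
The plan is to observe that both statements are direct instantiations of Proposition \ref{ax4}, so the whole proof reduces to matching the substitutions and invoking the right clauses of that proposition.

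For part (1), by the definition \eqref{convfor} we have
\[
(P{\ast}R)(A) = q_{R(A)}^{-1}\bigl(P(A/R(A))\bigr),
\]
which is exactly the formula in Proposition \ref{ax4}(1) with $R_{1} := R$ and the outer $R := P$. Since both $P$ and $R$ are under radicals, clause (1a) of that proposition applies and delivers the conclusion that $P{\ast}R$ is an under radical (in particular Axioms 1, 3 and 4 all hold, the strengthening to Axiom 3 being the content of (1a) beyond the default ``preradical satisfying Axiom 4'' assertion).

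For part (2), by definition $(P{\circ}R)(A) = P(R(A))$, matching the formula $P_{2}(A) = P(P_{1}(A))$ from Proposition \ref{ax4}(2) with $P_{1} := R$. Both ingredients being over radicals by hypothesis, clause (2a) gives that $P{\circ}R$ is an over radical, i.e. a preradical satisfying Axioms 1, 2 and 4.

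There is no genuine obstacle here: the heavy lifting (verifying Axiom 4 for the composite map and checking that Axioms 3 or 2 are preserved by the respective operation) was already carried out in Proposition \ref{ax4}, which in turn was itself extracted, in Theorem \ref{ovunt}, as the single-step inductive core of the convolution and superposition procedures. All that needs to be said in the proof is the substitution above.
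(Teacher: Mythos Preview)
Your proof is correct and matches the paper's approach exactly: the paper simply states that the corollary is a direct consequence of Proposition~\ref{ax4}, and you have spelled out precisely the substitutions (clauses (1a) and (2a)) that make this work.
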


\begin{theorem}
\label{oper3} If $P$ and $R$ are hereditary under radicals defined on a
universal class $\mathfrak{U}$, and $P$ is pliant then $P{\ast}R$ is a
hereditary under radical.
\end{theorem}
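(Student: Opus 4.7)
The strategy is to establish heredity $(P \ast R)(J) = J \cap (P \ast R)(A)$ for every ideal $J$ of any $A \in \mathfrak{U}$; that $P \ast R$ is already an under radical comes for free from Corollary \ref{oper}. The key is to reduce the computation of $P(J/R(J))$ to that of $P((J+I)/I)$ via a standard algebraic isomorphism, and then invoke heredity of $P$ inside $A/I$.

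First I would use heredity of $R$ to write $R(J) = J \cap I$, where $I := R(A)$. Since $R$ is a (closed) ideal map, $I$ is a (closed) ideal of $A$, so $J+I$ is an ideal of $A$, the quotient $(J+I)/I$ is an ideal of $A/I \in \mathfrak{U}$, and the natural map
\[
\phi : J/(J \cap I) \longrightarrow (J+I)/I, \qquad x + (J \cap I) \longmapsto x + I,
\]
is an algebraic isomorphism satisfying $\phi \circ q_{J \cap I} = q_I|_J$. Applying pliancy of $P$ to $\phi$ and $\phi^{-1}$ (both are algebraic isomorphisms of algebras in $\mathfrak{U}$) gives $\phi(P(J/(J \cap I))) = P((J+I)/I)$. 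Heredity of $P$ on $A/I$ then yields $P((J+I)/I) = ((J+I)/I) \cap P(A/I)$. Pulling back along $q_{J \cap I}$ and using $\phi \circ q_{J \cap I} = q_I|_J$,
\begin{align*}
(P \ast R)(J) &= q_{J \cap I}^{-1}\bigl(P(J/(J \cap I))\bigr) = (q_I|_J)^{-1}\bigl(P((J+I)/I)\bigr) \\
&= (q_I|_J)^{-1}\bigl(((J+I)/I) \cap P(A/I)\bigr) = J \cap q_I^{-1}(P(A/I)) = J \cap (P \ast R)(A),
\end{align*}
which is the desired identity.

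The main subtlety, and the reason pliancy is hypothesised, arises in the topological case: the algebraic isomorphism $\phi$ is continuous but generally not open with respect to the inherited norms; indeed Lemma \ref{bi} exhibits $J/(J \cap I)$ as an \emph{isometric} copy of $(J+I)/I$ only after replacing the norm inherited from $A/I$ by the flexible renorming $\|\cdot\|_{(J+I)/I}$. Consequently $\phi$ need not be a topological morphism, so heredity of $P$ by itself would not suffice to transport $P$ across $\phi$. The hypothesis that $P$ is pliant bypasses precisely this obstruction, since by Lemma \ref{ap0} a pliant preradical is invariant under algebraic isomorphisms and independent of the choice of admissible norm; once this step is granted, the rest is routine chasing of preimages through $q_I$ and $q_{J\cap I}$.
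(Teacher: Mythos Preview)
Your proof is correct and follows essentially the same approach as the paper: both use the heredity of $R$ to identify $J/R(J)$ algebraically with the ideal $q_{R(A)}(J)=(J+I)/I$ of $A/R(A)$, invoke pliancy of $P$ to transport $P$ across this (non-open) isomorphism, and then apply heredity of $P$ inside $A/R(A)$. Your explanatory paragraph on why pliancy is needed---namely that $\phi$ is continuous but generally not open, so ordinary topological heredity of $P$ would not suffice---is a welcome addition that the paper leaves implicit.
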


\begin{proof}
Let $A$ be an algebra, $J$ an ideal of $A$. As $P$ and $R$ are under radicals
then $P{\ast}R$ is also an under radical by Corollary \ref{oper}. Therefore
\[
\left(  P{\ast}R\right)  \left(  J\right)  \subset J\cap\left(  P{\ast
}R\right)  \left(  A\right)  .
\]
We have to prove the converse inclusion.

Since $R(J)=J\cap R(A)$, setting $f(x/R(J))=q_{R(A)}(x)$ we obtain an
isomorphism $f$ of the algebra $J/R(J)$ onto the ideal $q_{R(A)}(J)$ of
$A/R(A)$. Since $P$ is pliant, $f(P(J/R(J)))=P(q_{R(A)}(J))$.

If $x\in J\cap(P{\ast}R)(A)$ then
\[
q_{R(A)}(x)\in P(A/R(A))\cap q_{R(A)}(J)=P(q_{R(A)}(J))=f(P(J/R(J)).
\]
Thus $f(x/R(J))\in f(P(J/R(J))$, whence $x/R(J)\in P(J/R(J))$ and $x\in
(P{\ast}R)(J)$. Therefore $J\cap(P{\ast}R)(A)\subset(P{\ast}R)(J)$ and the
proof is complete.
\end{proof}

\begin{remark}
The proof of Theorem $\ref{oper3}$ shows that it remains true if $P,Q$ are
topological radicals on Banach algebras and $P$ satisfies the condition of
Banach heredity $(\ref{BanRad})$.
\end{remark}

For brevity, let
\[
P+R:=\mathrm{H}_{\left\{  P,R\right\}  }\text{ and }P\cdot R:=\mathrm{B}%
_{\{P,R\}}%
\]
for preradicals $P$ and $R$.

\begin{lemma}
\label{ovun2}

\begin{enumerate}
\item Let $P$ and $R$ be under radicals. Then
\[
P+R\leq P{\ast}R\leq\left(  P+R\right)  \ast\left(  P+R\right)  .
\]

\item Let $P$ and $R$ be over radicals. Then
\[
\left(  P\cdot R\right)  \circ\left(  P\cdot R\right)  \leq P{\circ}R\leq
P\cdot R.
\]

\end{enumerate}
\end{lemma}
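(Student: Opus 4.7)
The plan is to establish each of the four inclusions directly from the definitions of $\ast$, $\circ$, $+$, and $\cdot$, relying only on Axioms 1 and 4. For part (1), the first inequality $P+R \leq P{\ast}R$: recall $(P{\ast}R)(A)=q_{R(A)}^{-1}(P(A/R(A)))$, which in the topological setting is a closed ideal since $R(A)$ is closed and $P(A/R(A))$ is a closed ideal of the quotient. So it suffices to check $P(A)+R(A) \subset (P{\ast}R)(A)$; the inclusion $R(A) \subset (P{\ast}R)(A)$ is immediate from $q_{R(A)}(R(A))=0$, while $P(A) \subset (P{\ast}R)(A)$ follows from Axiom 1 for $P$ applied to the morphism $q_{R(A)}$: namely $q_{R(A)}(P(A)) \subset P(A/R(A))$, whence $P(A) \subset q_{R(A)}^{-1}(P(A/R(A)))$.

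For the second inequality of part (1), $P{\ast}R \leq (P+R){\ast}(P+R)$: the inclusion $R(A) \subset (P+R)(A)$ produces an induced morphism $\pi \colon A/R(A) \to A/(P+R)(A)$ with $\pi \circ q_{R(A)} = q_{(P+R)(A)}$. If $x \in (P{\ast}R)(A)$, then $q_{R(A)}(x) \in P(A/R(A)) \subset (P+R)(A/R(A))$, and applying Axiom 1 for the under radical $P+R$ (which is an under radical by Theorem \ref{dix0}) to $\pi$ yields $q_{(P+R)(A)}(x) = \pi(q_{R(A)}(x)) \in (P+R)(A/(P+R)(A))$; hence $x \in ((P+R){\ast}(P+R))(A)$.

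For part (2), the upper bound $P{\circ}R \leq P{\cdot}R$ is immediate: $(P{\circ}R)(A) = P(R(A))$ is contained in $R(A)$ trivially, and in $P(A)$ by Axiom 4 for $P$ applied to the ideal $R(A)$ of $A$. For the lower bound $(P{\cdot}R){\circ}(P{\cdot}R) \leq P{\circ}R$, write $Q = P{\cdot}R$; since $Q(A) = P(A) \cap R(A)$ is an ideal of $A$ contained in $R(A)$, it is in particular an ideal of $R(A)$, so Axiom 4 for $P$ yields $P(Q(A)) \subset P(R(A)) = (P{\circ}R)(A)$, and therefore $(Q{\circ}Q)(A) = Q(Q(A)) \subset P(Q(A)) \subset (P{\circ}R)(A)$.

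The most delicate point is the second inequality of part (1): one needs the induced quotient map $\pi$ to be a genuine morphism (continuous and open in the normed setting) so that Axiom 1 can be invoked. This is guaranteed because both $R(A)$ and $(P+R)(A)$ are closed ideals, as required by the closed-ideal-map convention for topological under radicals. All other steps are one-line applications of Axioms 1 and 4, entirely parallel to the inductive step carried out in Proposition \ref{ax4}.
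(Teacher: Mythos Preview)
Your proof is correct and follows essentially the same route as the paper's: direct verification of each inclusion from Axioms~1 and~4 using the quotient maps $q_{R(A)}$ and the induced morphism $A/R(A)\to A/(P+R)(A)$. The only cosmetic difference is in the second inequality of part~(1): the paper applies Axiom~1 for $P$ to the induced morphism (obtaining $q(P(A/R(A)))\subset P(A/(P+R)(A))$) and then passes to $P+R$, whereas you first enlarge $P(A/R(A))$ to $(P+R)(A/R(A))$ and then apply Axiom~1 for $P+R$, invoking Theorem~\ref{dix0}; both are one-line steps.
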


\begin{proof}
$\left(  1\right)  $ Let $A$ be an arbitrary algebra,\ $I=P\left(  A\right)
$, $J=\left(  P+R\right)  \left(  A\right)  $, and let $q_{I}:A\longrightarrow
A/I$ and $q_{J}:A\longrightarrow A/J$ be standard quotient maps. As
\[
q_{I}\left(  R\left(  A\right)  \right)  \subset R\left(  A/I\right)
=q_{I}\left(  \left(  P{\ast}R\right)  \left(  A\right)  \right)
\]
then $P+R\leq P{\ast}R$. As $I\subset J$, there is a morphism
$q:A/I\longrightarrow A/J$ such that $q\circ q_{I}=q_{J.}$. Therefore%
\[
q_{J}\left(  \left(  P{\ast}R\right)  \left(  A\right)  \right)  =q\left(
R\left(  A/I\right)  \right)  \subset R\left(  A/J\right)  \subset\left(
P+R\right)  \left(  A/J\right)
\]
whence $\left(  P{\ast}R\right)  \left(  A\right)  \subset\left(  \left(
P+R\right)  \ast\left(  P+R\right)  \right)  \left(  A\right)  $.

$\left(  2\right)  $ Clearly
\[
\left(  P{\circ}R\right)  \left(  A\right)  =P\left(  R\left(  A\right)
\right)  \subset P\left(  A\right)  \cap R\left(  A\right)  =\left(  P\cdot
R\right)  \left(  A\right)  .
\]
Let $I=P\left(  A\right)  \cap R\left(  A\right)  $. As $I$ is an ideal of
$R\left(  A\right)  $ then
\[
\left(  P\cdot R\right)  \left(  I\right)  \subset P\left(  I\right)  \subset
P\left(  R\left(  A\right)  \right)  .
\]

\end{proof}

\begin{corollary}
\label{ovun3}

\begin{enumerate}
\item Let $P$ and $R$ be under radicals. Then
\[
\left(  P{\ast}R\right)  ^{\ast}=\left(  R\ast P\right)  ^{\ast}=P\vee
R=P^{\ast}\vee R^{\ast}.
\]

\item Let $P$ and $R$ be over radicals. Then
\[
\left(  P\circ R\right)  ^{\circ}=\left(  R\circ P\right)  ^{\circ}=P\wedge
R=P^{\circ}\wedge R^{\circ}.
\]

\end{enumerate}
\end{corollary}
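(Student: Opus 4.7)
The plan is to deduce (1) directly from Lemma~\ref{ovun2}(1), the isotonicity of the convolution procedure (Theorem~\ref{isotone}(1)), and the trivial observation that a radical is fixed by the convolution procedure. Part (2) will then follow by a symmetric dual argument (or formally via the duality $P\mapsto P^{\dag}$ of Proposition~\ref{dua}, which exchanges $\ast$ and $\circ$).

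For part (1), I start from the sandwich provided by Lemma~\ref{ovun2}(1),
\[
P+R \;\leq\; P\ast R \;\leq\; (P+R)\ast(P+R).
\]
All three maps are under radicals (by Theorem~\ref{dix0}(1) applied to $P+R=\mathrm{H}_{\{P,R\}}$, and by Corollary~\ref{oper}(1) for the convolutions), so Theorem~\ref{isotone}(1) applies and gives
\[
(P+R)^{\ast} \;\leq\; (P\ast R)^{\ast} \;\leq\; \bigl((P+R)\ast(P+R)\bigr)^{\ast}.
\]
By definition $(P+R)^{\ast}=\mathrm{H}_{\{P,R\}}^{\ast}=P\vee R$, so everything reduces to the identity $\bigl((P+R)\ast(P+R)\bigr)^{\ast}=(P+R)^{\ast}$, which I view as the one real step of the proof. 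The parallel argument applied to $R\ast P$ will simultaneously yield $(R\ast P)^{\ast}=P\vee R$.

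To establish this identity I would record two simple facts. First, for any under radical $U$ one has $U\ast U\leq U^{\ast}$: indeed $U\ast U$ is precisely the second term $U_{2}$ of the convolution chain $(U_{\alpha})$ (with $U_{0}=0$, $U_{1}=U$), and this chain is increasing with limit $U^{\ast}$. Second, any radical $Q$ satisfies $Q^{\ast}=Q$: being itself an over radical $\geq Q$, it must, by the minimality property of $Q^{\ast}$ stated in Theorem~\ref{ovunt}(1), dominate $Q^{\ast}$, while $Q\leq Q^{\ast}$ is automatic. Setting $U=P+R$ and applying Theorem~\ref{isotone}(1) therefore yields $\bigl((P+R)\ast(P+R)\bigr)^{\ast}\leq\bigl((P+R)^{\ast}\bigr)^{\ast}=(P+R)^{\ast}$, while the reverse inequality comes from the lower bound in Lemma~\ref{ovun2}(1). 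The final identification $P\vee R=P^{\ast}\vee R^{\ast}$ is then immediate: $P+R\leq P^{\ast}+R^{\ast}$ together with isotonicity gives $\leq$, and for $\geq$ the radical $P\vee R$ dominates $P,R$, hence dominates $P^{\ast},R^{\ast}$ by the minimality in Theorem~\ref{ovunt}(1), hence dominates $P^{\ast}+R^{\ast}$, and then by $\ast$-invariance of radicals also $(P^{\ast}+R^{\ast})^{\ast}=P^{\ast}\vee R^{\ast}$.

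Part (2) is the dual mirror image. The superposition chain of an over radical $U$ is \emph{decreasing} with limit $U^{\circ}$ and $U_{2}=U\circ U$, whence $U\circ U\geq U^{\circ}$, while a radical is fixed by $\circ$ for the same reason as above. Combining Lemma~\ref{ovun2}(2) with Theorem~\ref{isotone}(2) sandwiches $(P\circ R)^{\circ}$ between two copies of $(P\cdot R)^{\circ}=P\wedge R$ (and likewise for $R\circ P$), and the identification $P\wedge R=P^{\circ}\wedge R^{\circ}$ is the mirror of the argument just given. I do not foresee any serious obstacle; the only point that requires thought rather than a direct invocation of earlier results is the identity $((P+R)\ast(P+R))^{\ast}=(P+R)^{\ast}$ (and its dual), which, as above, is nothing more than the isotonicity of $\ast$ together with the fact that radicals are $\ast$-fixed.
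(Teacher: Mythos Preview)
Your proposal is correct and follows the same route as the paper: sandwich $(P\ast R)$ between $P+R$ and $(P+R)\ast(P+R)$ via Lemma~\ref{ovun2}, apply the convolution procedure using isotonicity, and identify $\bigl((P+R)\ast(P+R)\bigr)^{\ast}$ with $(P+R)^{\ast}=P\vee R$. The only cosmetic difference is in the justification of this last identity: the paper invokes ``associativity of the convolution'' (so that the convolution chain of $(P+R)\ast(P+R)$ sits cofinally inside that of $P+R$), whereas you argue via $U\ast U = U_{2}\leq U^{\ast}$ together with $(U^{\ast})^{\ast}=U^{\ast}$ --- this is equivalent and arguably more transparent, since it avoids any concern about transfinite indices.
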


\begin{proof}
$\left(  1\right)  $ follows from Lemma \ref{ovun2} if one takes into account
that
\[
\left(  \left(  P+R\right)  \ast\left(  P+R\right)  \right)  ^{\ast}=P\vee R
\]
by associativity of the convolution.

$\left(  2\right)  $ follows from Lemma \ref{ovun2} if one takes into account
that
\[
\left(  \left(  P\cdot R\right)  \circ\left(  P\cdot R\right)  \right)
^{\circ}=P\wedge R
\]
by associativity of the superposition.
\end{proof}

This corollary can be easily extended to finite and even transfinite
\textquotedblleft products\textquotedblright\ of corresponding maps.

It is straightforward that if $P$ and $R$ are hereditary radicals then $P\cdot
R$ is a hereditary radical. As a consequence of this, Lemma \ref{ovun2}%
$\left(  2\right)  $ and Corollary \ref{ovun3}, we obtain the following

\begin{corollary}
\label{herr}Let $P$ and $R$ be the hereditary radicals. Then $P\circ R$ is a
hereditary radical and
\[
P\circ R=R\circ P=P\wedge R=R\cdot P.
\]

\end{corollary}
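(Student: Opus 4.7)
The plan is to use heredity of $P$ (and symmetrically of $R$) to collapse the superposition into the intersection, and then verify that this intersection is a hereditary radical.

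First I would show that $P\circ R=P\cdot R=R\circ P$. Since $R(A)$ is an ideal of $A$ and $P$ is hereditary, $(P\circ R)(A)=P(R(A))=R(A)\cap P(A)=(P\cdot R)(A)$. By the symmetric argument using heredity of $R$, $(R\circ P)(A)=R(P(A))=P(A)\cap R(A)=(P\cdot R)(A)$. So all three maps agree, and it remains to show that the common map $Q:=P\cdot R$ is a hereditary radical and coincides with $P\wedge R$.

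Heredity of $Q$ is immediate: for any ideal $I$ of $A$,
\[
Q(I)=P(I)\cap R(I)=(I\cap P(A))\cap(I\cap R(A))=I\cap Q(A).
\]
For Axiom 3, put $J=Q(A)=P(R(A))$. As $J\subset R(A)$, heredity of $R$ gives $R(J)=J\cap R(R(A))=J$, and Axiom 3 for $P$ gives $P(J)=P(P(R(A)))=P(R(A))=J$; hence $Q(J)=P(R(J))=P(J)=J$. For Axiom 2 I would argue that $R(A/J)=R(A)/J$: the preimage $K=q_J^{-1}(R(A/J))$ satisfies that $K/J$ is $R$-radical and $J$ is $R$-radical (since $R(J)=J$), so by stability of $\mathbf{Rad}(R)$ under extensions (Theorem \ref{ext}(2)) $K$ is $R$-radical, hence $K\subset R(A)$; this forces $R(A/J)=R(A)/J$. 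Axiom 2 for $P$ then yields
\[
Q(A/J)=P(R(A/J))=P(R(A)/J)=P(R(A)/P(R(A)))=0.
\]
Axiom 1 for $Q$ and the containment in $Q(A)$ required by Axiom 4 follow immediately from the corresponding properties of $P$ and $R$ and the formula $Q(A)=P(A)\cap R(A)$.

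Finally, $Q=P\wedge R$ follows from the already established characterization of $\wedge$ as infimum in the class of radicals: $Q$ is a radical satisfying $Q\le P$ and $Q\le R$, and any radical $S$ with $S\le P,S\le R$ satisfies $S(A)\subset P(A)\cap R(A)=Q(A)$. Alternatively, one can invoke Corollary \ref{ovun3}(2): $(P\circ R)^{\circ}=P\wedge R$, and since $P\circ R=Q$ is already a radical, the superposition procedure leaves it unchanged. The only step that requires a genuinely non-trivial idea (rather than a direct manipulation) is the identity $R(A/J)=R(A)/J$ needed for Axiom 2, and this is precisely where extension-stability of $\mathbf{Rad}(R)$ enters; every other step is a one-line consequence of heredity applied to the ideal $R(A)$ (or $P(A)$).
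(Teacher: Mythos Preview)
Your proof is correct. The paper's argument is shorter but relies on more machinery: it first observes (citing \cite[Lemma 3.2]{TR3}) that $P\cdot R$ is a hereditary radical, then uses the sandwich inequality of Lemma~\ref{ovun2}(2),
\[
(P\cdot R)\circ(P\cdot R)\le P\circ R\le P\cdot R,
\]
together with idempotence of $P\cdot R$ under $\circ$, to conclude $P\circ R=P\cdot R$; the identification with $P\wedge R$ is then Corollary~\ref{ovun3}(2). Your route is more direct and self-contained: the single heredity computation $P(R(A))=R(A)\cap P(A)$ replaces the sandwich argument entirely, and you then verify the radical axioms for $P\cdot R$ by hand rather than citing the external reference. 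One simplification: your extension-stability argument for $R(A/J)=R(A)/J$ is correct but could be replaced by a direct appeal to Proposition~\ref{ov}(1b), since $R$ is in particular an over radical and $J\subset R(A)$.
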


\subsection{Transfinite chains of ideals and radicals}

We start with a simple observation on the quotient algebras related to preradicals.

\begin{proposition}
\label{ov}Let\ $A$ be an algebra, and let $I$ be an ideal of $A$. Then

\begin{enumerate}
\item If $P$ is an over radical and

\begin{enumerate}
\item $P\left(  A\right)  \subset I$ then $I/P\left(  A\right)  $ is $P$-semisimple;

\item $I\subset P\left(  A\right)  $ [for closed $I$] then $P\left(
A/I\right)  =P\left(  A\right)  /I$.
\end{enumerate}

\item If $P$ is a hereditary preradical and $I\subset P\left(  A\right)  $
[for closed $I$] then $P\left(  A\right)  /I$ is $P$-radical.
\end{enumerate}
\end{proposition}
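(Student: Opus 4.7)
The plan is to verify each assertion by a direct application of the axioms, choosing in each case the right morphism and then reading off the inclusion from Axiom 1, 2, or 4. Throughout, in the topological setting the hypothesis that $I$ be closed is exactly what one needs to guarantee that the standard quotient maps $q_I$ (and the induced maps between quotients) are morphisms in the sense of Section \ref{definitions}.

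For (1a), since $P$ is an over radical, Axiom 2 gives $P\left(A/P(A)\right)=0$. The hypothesis $P(A)\subset I$ means that $I/P(A)$ is a [closed] ideal of $A/P(A)$, so Axiom 4 yields
\[
P\left(I/P(A)\right)\subset P\left(A/P(A)\right)=0,
\]
which is the claim.

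For (1b), apply Axiom 1 to the standard quotient morphism $q_{I}:A\longrightarrow A/I$: this gives $P(A)/I = q_{I}(P(A))\subset P(A/I)$. For the reverse inclusion, use the canonical identification $(A/I)/(P(A)/I)\cong A/P(A)$ (which is legitimate because $I\subset P(A)$, and in the topological case both $I$ and $P(A)$ are closed). The induced quotient $\pi:A/I\longrightarrow A/P(A)$ is a morphism, and Axiom 1 together with Axiom 2 for the over radical $P$ gives $\pi(P(A/I))\subset P(A/P(A))=0$, so $P(A/I)\subset\ker\pi = P(A)/I$.

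For (2), heredity of $P$ applied to the ideal $P(A)$ of $A$ gives $P(P(A))=P(A)\cap P(A)=P(A)$. Since $I\subset P(A)$ is a [closed] ideal of $A$, it is in particular a [closed] ideal of $P(A)$, so the restriction $q:=q_{I}|_{P(A)}:P(A)\longrightarrow P(A)/I$ is a morphism. Applying Axiom 1 to $q$ we obtain
\[
P(A)/I = q(P(A)) = q(P(P(A)))\subset P\left(P(A)/I\right),
\]
and the opposite inclusion is trivial, so $P(A)/I$ is $P$-radical. The only real obstacle anywhere is the topological bookkeeping in (1b) and (2) — i.e.\ verifying that the relevant ideals remain closed and that the induced maps on quotients are morphisms — but this is routine once the closedness hypothesis on $I$ is in place.
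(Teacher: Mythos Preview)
Your proof is correct and, for parts (1a) and (1b), follows the paper's argument essentially verbatim: Axiom~4 plus Axiom~2 for (1a), and the factorisation through the canonical morphism $A/I\to A/P(A)$ for (1b).

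For part (2) you take a slightly different route. The paper observes that $P(A)/I=q_I(P(A))\subset P(A/I)$ and then applies heredity \emph{inside the quotient} $A/I$ to the ideal $P(A)/I$, obtaining $P(P(A)/I)=(P(A)/I)\cap P(A/I)=P(A)/I$. You instead extract only the consequence $P(P(A))=P(A)$ of heredity and push it through the morphism $P(A)\to P(A)/I$ via Axiom~1. Both arguments are equally short; yours actually uses less of the heredity hypothesis (only Axiom~3), while the paper's version makes the role of the ambient quotient $A/I$ more visible, which ties in naturally with how (2) is invoked later in Theorem~\ref{ch2}.
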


\begin{proof}
$\left(  1\text{a}\right)  $ $P\left(  I/P\left(  A\right)  \right)  \subset
P\left(  A/P\left(  A\right)  \right)  =0$.

$\left(  1\text{b}\right)  $ There is a morphism $p:A/I\longrightarrow
A/P\left(  A\right)  $ such that $q_{P\left(  A\right)  }=p\circ q_{I}$ where
$q_{P\left(  A\right)  }$, $q_{I}$ are the corresponding standard quotient
maps. Then
\[
p\left(  P\left(  A/I\right)  \right)  \subset P\left(  A/P\left(  A\right)
\right)  =0
\]
whence $P\left(  A/I\right)  \subset\ker p=P\left(  A\right)  /I=q_{I}\left(
P\left(  A\right)  \right)  \subset P\left(  A/I\right)  $.

$\left(  2\right)  $ Indeed, $P\left(  A\right)  /I=q_{I}\left(  P\left(
A\right)  \right)  \subset P\left(  A/I\right)  $. Hence, by heredity of $P$,
\[
P\left(  P\left(  A\right)  /I\right)  =\left(  P\left(  A\right)  /I\right)
\cap P\left(  A/I\right)  =P\left(  A\right)  /I.
\]

\end{proof}

Let $P$ be a preradical, and let $A$ be an algebra. A [closed] ideal $I$ of
$A$ is called $P$\textit{-absorbing} if $A/I$ is $P$-semisimple.

\begin{theorem}
\label{ch1}Let $A$ be an algebra, $P$ an over radical, $R$ an under radical. Then

\begin{enumerate}
\item {}[The closure of] the sum of any family of $R$-radical ideals is $R$-radical;

\item If $\left(  J_{\alpha}\right)  _{\alpha\leq\gamma}$ is an increasing
transfinite chain of ideals of $A$ with $J_{0}=0$ and all gap-quotients are
$R$-radical then $J_{\gamma}$ is $R$-radical;

\item The intersection of any number of $P$-absorbing ideals is $P$-absorbing;

\item If $\left(  I_{\alpha}\right)  _{\alpha\leq\delta}$ is a decreasing
transfinite chain $\left(  I_{\alpha}\right)  _{\alpha\leq\gamma}$ of ideals
of $A$ with $I_{0}=A$ and all $I\ _{\alpha}/I_{\alpha+1}$ are $P$-semisimple
then $I_{\delta}$ is $P$-absorbing.
\end{enumerate}
\end{theorem}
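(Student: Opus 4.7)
The plan is to prove (1) and (3) directly from Axioms 1 and 4, and to deduce (2) and (4) by transfinite induction, with the limit steps reduced to (1) and (3). For (1), let $K=\overline{\sum_{\beta}I_{\beta}}$ (dropping the closure in the algebraic case); each $I_{\beta}$ is an ideal of $K$, so Axiom 4 yields $I_{\beta}=R(I_{\beta})\subset R(K)$, and since $R(K)$ is closed this gives $K\subset R(K)\subset K$, i.e., $K=R(K)$. For (3), let $I=\cap_{\beta}I_{\beta}$; each inclusion $I\subset I_{\beta}$ provides a morphism $p_{\beta}\colon A/I\to A/I_{\beta}$ with $q_{I_{\beta}}=p_{\beta}\circ q_{I}$, and Axiom 1 gives $p_{\beta}(P(A/I))\subset P(A/I_{\beta})=0$, so $P(A/I)\subset\bigcap_{\beta}\ker p_{\beta}=\bigcap_{\beta}(I_{\beta}/I)=0$ by the preliminary identity (\ref{pr}).

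For (2) and (4) I induct on $\alpha$ to show $J_{\alpha}$ is $R$-radical, respectively $A/I_{\alpha}$ is $P$-semisimple. The base cases are immediate. At a limit ordinal $\alpha$, the limit ideals $J_{\alpha}=\overline{\cup_{\alpha'<\alpha}J_{\alpha'}}$ and $I_{\alpha}=\cap_{\alpha'<\alpha}I_{\alpha'}$ are handled by applying (1) and (3) to the $R$-radical, respectively $P$-absorbing, ideals produced by the inductive hypothesis. The successor step of (4) is clean: $A/I_{\beta+1}$ is an extension of the $P$-semisimple algebra $A/I_{\beta}$ by the $P$-semisimple ideal $I_{\beta}/I_{\beta+1}$, so Theorem \ref{ext}(1) --- which provides extension-closure of $\mathbf{Sem}(P)$ for over radicals --- completes the step.

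The main obstacle is the successor step of (2), since Theorem \ref{ext}(2) supplies the analogous extension-closure for the radical class only in the over-radical setting. Axiom 4 still gives $J_{\beta}=R(J_{\beta})\subset R(J_{\beta+1})$, and then the quotient map $J_{\beta+1}\to J_{\beta+1}/R(J_{\beta+1})$ factors through the $R$-radical algebra $J_{\beta+1}/J_{\beta}$, so by Axiom 1 the quotient $J_{\beta+1}/R(J_{\beta+1})$ is itself $R$-radical. The remaining step --- to conclude $R(J_{\beta+1})=J_{\beta+1}$ --- is where Axiom 3 (applied to $R(J_{\beta+1})$, which is itself $R$-radical) together with the specific chain structure must be brought to bear to rule out a non-trivial $R$-radical quotient.
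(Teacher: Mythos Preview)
Your proofs of (1), (3) and (4) are correct and match the paper's argument essentially verbatim: (1) uses Axiom~4 to push each $I_\beta$ into $R(K)$, (3) uses the morphisms $p_\beta$ and Axiom~1, and (4) is a straightforward transfinite induction with the successor step handled by Theorem~\ref{ext}(1).

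For (2) you have put your finger on a genuine problem. The paper's own proof of the successor step simply reads ``follows from Theorem~\ref{ext}'', but Theorem~\ref{ext}(2) is stated for \emph{over} radicals, and its proof uses Axiom~2 in an essential way (to get $P(A/P(A))=0$). Your attempted workaround does not close the gap, and in fact no argument can: the successor step of (2) is precisely the assertion that $\mathbf{Rad}(R)$ is stable under extensions, and Lemma~\ref{conv-ext}(1) shows that for an under radical $R$ this forces $R$ to be a radical. Concretely, apply (2) to the chain $0\subset R(A)\subset q_{R(A)}^{-1}\bigl(R(A/R(A))\bigr)$; both gap-quotients are $R$-radical by Axiom~3, so (2) would make the top ideal $R$-radical, hence contained in $R(A)$ by Axiom~4, giving $R(A/R(A))=0$. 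Since under radicals that are not radicals exist (e.g.\ $\overline{\mathfrak{P}_\beta}$, cf.\ \cite[Example~6.4]{D97}), statement~(2) fails as written.

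The intended hypothesis is almost certainly that $R$ is a radical (or at least satisfies Axiom~2), in which case Theorem~\ref{ext}(2) applies directly to the successor step and your induction goes through without difficulty. All applications of this part of the theorem later in the paper are to genuine radicals, so nothing downstream is affected.
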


\begin{proof}
$\left(  1\right)  $ Let $I$ be the [closure of the] sum of a family
$(J_{\alpha})_{\alpha\in\Lambda}$ of $P$-radical ideals of $A$. Then
$J_{\alpha}=R\left(  J_{\alpha}\right)  \subset R\left(  I\right)  \subset I$
for every $\alpha$, whence $I=R\left(  I\right)  $.

$\left(  2\right)  $ follows by transfinite induction. The step $\alpha
\mapsto\alpha+1$ follows from Theorem \ref{ext}, and the case of limit
ordinals follows from $\left(  1\right)  $.

$\left(  3\right)  $ Let $I=\cap_{\alpha\in\Lambda}J_{\alpha}$ be the
intersection of $P$-absorbing ideals of $A$. As $I\subset J_{\alpha}$ for
every $\alpha$, then, for the standard morphism $q_{\alpha}:A/I\longrightarrow
A/J_{\alpha}$, we have $q_{\alpha}\left(  P\left(  A/I\right)  \right)
\subset P\left(  A/J_{\alpha}\right)  =0$. Therefore $P\left(  A/I\right)
\subset\cap_{\alpha}\ker q_{\alpha}=\cap_{\alpha\in\Lambda}J_{\alpha
}/I=\left(  \cap_{\alpha\in\Lambda}J_{\alpha}\right)  /I=0$.

$\left(  4\right)  $ follows by transfinite induction. The step $\alpha
\mapsto\alpha+1$ follows from Theorem \ref{ext}, and the case of limit
ordinals follows from $\left(  3\right)  $.
\end{proof}

\begin{theorem}
\label{ch2}Let $\mathcal{F}_{u}$ be a family of under radicals, let
$\mathcal{F}_{o}$ be a family of over radicals, and let $A$ be a [normed]
algebra. Then

\begin{enumerate}
\item There is an increasing transfinite chain $\left(  I_{\alpha}\right)
_{\alpha\leq\gamma}$ of [closed] ideals of $A$ such that $I_{0}=0$,
$I_{\gamma}=\left(  \vee\mathcal{F}_{u}\right)  \left(  A\right)  $ and every
gap-quotient $I_{\alpha+1}/I_{\alpha}$ of the chain is $P$-radical and is
equal to $P\left(  A/I_{\alpha}\right)  $ for some $P\in\mathcal{F}_{u}$;

\item There is an increasing transfinite chain $\left(  J_{\alpha}\right)
_{\alpha\leq\delta}$ of [closed] ideals of $A$ such that $J_{0}=A$,
$J_{\delta}=\left(  \wedge\mathcal{F}_{o}\right)  \left(  A\right)  $ and
every gap-quotient of the chain is $P$-semisimple for some $P\in
\mathcal{F}_{o}$.
\end{enumerate}
\end{theorem}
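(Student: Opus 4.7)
The plan is to build each chain by transfinite recursion refining the convolution (respectively superposition) chain of Section~3, with each successor step involving only a single member of the family.

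For (1), set $I_0 = 0$; at each successor ordinal, if $P(A/I_\alpha) = 0$ for every $P \in \mathcal{F}_u$, stop; otherwise pick some $P \in \mathcal{F}_u$ with $P(A/I_\alpha) \neq 0$ and put $I_{\alpha+1} := q_{I_\alpha}^{-1}(P(A/I_\alpha))$, which is a [closed] ideal of $A$ strictly containing $I_\alpha$. At limit ordinals take $I_\alpha := \cup_{\alpha' < \alpha} I_{\alpha'}$ (or its closure in the normed case). Strict monotonicity forces termination at some $\gamma$, and by construction each gap-quotient equals $P(A/I_\alpha)$ and is $P$-radical by Axiom~3. To identify $I_\gamma$ with $(\vee \mathcal{F}_u)(A)$, write $R := (\vee \mathcal{F}_u)(A)$: a transfinite induction shows $I_\alpha \subset R$, the successor step being that the natural morphism $A/I_\alpha \to A/R$ carries $P(A/I_\alpha)$ into $P(A/R) \subset (\vee \mathcal{F}_u)(A/R) = 0$ (using $P \leq \vee \mathcal{F}_u$ and the $(\vee \mathcal{F}_u)$-semisimplicity of $A/R$). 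Conversely, at termination $\mathrm{H}_{\mathcal{F}_u}(A/I_\gamma) = \sum_{P} P(A/I_\gamma) = 0$, so $A/I_\gamma$ is $\mathrm{H}_{\mathcal{F}_u}$-semisimple and hence $\vee \mathcal{F}_u = \mathrm{H}_{\mathcal{F}_u}^{\ast}$-semisimple by Theorem~\ref{sera}(1), giving $R \subset I_\gamma$.

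For (2), the chain is in fact decreasing (the word ``increasing'' in the statement appears to be a typographical slip, since $J_\delta \subset J_0 = A$). Set $J_0 = A$; at a successor step, if $P(J_\alpha) = J_\alpha$ for every $P \in \mathcal{F}_o$, stop; otherwise choose $P \in \mathcal{F}_o$ with $P(J_\alpha) \subsetneq J_\alpha$ and put $J_{\alpha+1} := P(J_\alpha)$. At limit ordinals take $J_\alpha := \cap_{\alpha' < \alpha} J_{\alpha'}$. The gap-quotient $J_\alpha/P(J_\alpha)$ is $P$-semisimple by Axiom~2. At the terminal ordinal $\delta$, the condition $P(J_\delta) = J_\delta$ for all $P \in \mathcal{F}_o$ gives $J_\delta \in \cap_{P \in \mathcal{F}_o} \mathbf{Rad}(P) = \mathbf{Rad}(\mathrm{B}_{\mathcal{F}_o})$ by Theorem~\ref{dix0}(2), and then Theorem~\ref{sera}(2) implies that $J_\delta$ is $\wedge \mathcal{F}_o$-radical, whence Axiom~4 yields $J_\delta \subset (\wedge \mathcal{F}_o)(A)$. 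For the reverse inclusion, set $R := (\wedge \mathcal{F}_o)(A)$: since $R$ is $\wedge \mathcal{F}_o$-radical and $\wedge \mathcal{F}_o \leq P$, one has $R = (\wedge \mathcal{F}_o)(R) \subset P(R) \subset R$, so $P(R) = R$ for every $P \in \mathcal{F}_o$; a straightforward transfinite induction then gives $R \subset J_\alpha$ at every stage, the successor step being $R = P(R) \subset P(J_\alpha) = J_{\alpha+1}$ (applying Axiom~4 to the ideal $R$ of $J_\alpha$) and the limit step following from $R \subset \cap_{\alpha' < \alpha} J_{\alpha'}$.

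The main difficulties are essentially bookkeeping: one must confirm that the recursive choices yield a strictly monotone chain, and that the inductive invariants $I_\alpha \subset R$ in part~(1) and $R \subset J_\alpha$ in part~(2) survive the limit steps, including the closure operation in the normed case. The only conceptual subtlety is the identity $P(R) = R$ in part~(2), which hinges on $\wedge \mathcal{F}_o \leq P$ for every $P \in \mathcal{F}_o$, together with the bridge provided by Theorems~\ref{dix0} and~\ref{sera} between the under/over radicals $\mathrm{H}_{\mathcal{F}_u}, \mathrm{B}_{\mathcal{F}_o}$ and their radical completions $\vee \mathcal{F}_u, \wedge \mathcal{F}_o$.
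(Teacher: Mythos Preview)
Your proof is correct. For part~(1) your construction and the paper's are essentially identical; the only difference is in how you verify $I_\gamma = (\vee\mathcal{F}_u)(A)$. The paper maintains the invariant $I_\alpha \subset R$ by invoking Proposition~\ref{ov}(1b) (namely $(\vee\mathcal{F}_u)(A/I_\alpha) = R/I_\alpha$), whereas you argue via the morphism $A/I_\alpha \to A/R$ and the semisimplicity of $A/R$. These are equivalent.

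For part~(2) there is a genuine, if small, difference in route. You set $J_{\alpha+1} := P(J_\alpha)$ directly, so the gap-quotient $J_\alpha/P(J_\alpha)$ is $P$-semisimple by Axiom~2, and you obtain the invariant $R \subset J_\alpha$ from $P(R) = R$ and Axiom~4. The paper instead works in the quotient $A/J$ (with $J = (\wedge\mathcal{F}_o)(A)$): it uses Proposition~\ref{ov}(1a) to see that $J_\alpha/J$ is $\wedge\mathcal{F}_o$-semisimple, hence not $P$-radical for some $P$, and sets $J_{\alpha+1} := q_J^{-1}(P(J_\alpha/J))$. Your approach is more direct and avoids the passage to $A/J$; the paper's approach makes the containment $J \subset J_\alpha$ automatic from the outset. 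Both arrive at the same chain (indeed $P(J_\alpha) = q_J^{-1}(P(J_\alpha/J))$ when $J \subset J_\alpha$ and $P(J_\alpha/J)$ is computed as the image of $P(J_\alpha)$), and your observation that the chain is decreasing rather than increasing is correct.
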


\begin{proof}
$\left(  1\right)  $ Let $I=\left(  \vee\mathcal{F}_{u}\right)  \left(
A\right)  $. Arguing by transfinite induction, one can assume that we already
have the required chain $\left(  I_{\alpha^{\prime}}\right)  _{\alpha^{\prime
}\leq\alpha}$ but with the only distinction $I_{\alpha}\subset I$. If
$I_{\alpha}\neq I$ then $I/I_{\alpha}$ is a non-zero $\vee\mathcal{F}_{u}%
$-radical algebra by Proposition \ref{ov}$\left(  1\text{b}\right)  $, because
$\vee\mathcal{F}_{u}$ is a radical. Therefore \textrm{H}$_{\mathcal{F}_{u}%
}\left(  I/I_{\alpha}\right)  \neq0$. Then \textrm{H}$_{\mathcal{F}_{u}%
}\left(  A/I_{\alpha}\right)  \neq0$, and there is some $P\in\mathcal{F}_{u}$
such that $P\left(  A/I_{\alpha}\right)  \neq0$. Let $I_{\alpha+1}%
=q_{I_{\alpha}}^{-1}\left(  P\left(  A/I_{\alpha}\right)  \right)  $. Then
$I_{\alpha+1}/I_{\alpha}$ is $P$-radical and
\[
I_{\alpha+1}/I_{\alpha}=P\left(  A/I_{\alpha}\right)  \subset\left(
\vee\mathcal{F}_{u}\right)  \left(  A/I_{\alpha}\right)  =\left(
\vee\mathcal{F}_{u}\right)  \left(  A\right)  /I_{\alpha}=I/I_{\alpha}%
\]
by Proposition \ref{ov}$\left(  1\text{b}\right)  $. So $I_{\alpha+1}\subset
I$.

$\left(  2\right)  $ Let $J=\left(  \wedge\mathcal{F}_{o}\right)  \left(
A\right)  $. Assume, by induction, that we already have the required chain
$\left(  J_{\alpha^{\prime}}\right)  _{\alpha^{\prime}\leq\alpha}$ but with
$J\subset J_{\alpha}$. If $J_{\alpha}\neq J$ then $J_{\alpha}/J$ is a non-zero
$\wedge\mathcal{F}_{o}$-semisimple algebra by Proposition \ref{ov}$\left(
1\text{a}\right)  $. Then \textrm{H}$_{\mathcal{F}_{o}}\left(  J_{\alpha
}/J\right)  =0$, and there is some $P\in\mathcal{F}$ such that $P\left(
\left(  J_{\alpha}/J\right)  \right)  \neq J_{\alpha}/J$. Let $J_{\alpha
+1}=q_{J}^{-1}\left(  P\left(  \left(  J_{\alpha}/J\right)  \right)  \right)
$. Then $J_{\alpha}/J_{\alpha+1}\cong\left(  J_{\alpha}/J\right)  /P\left(
J_{\alpha}/J\right)  $ is $P$-semisimple.
\end{proof}

Let $\mathfrak{U}$ be a base class, and let $V$ be a class of [normed]
algebras from $\mathfrak{U}$. We say that $V$ is \textit{closed under
increasing transfinite chains} if $V$ contains any [normed] algebra $A$ for
which there is an increasing transfinite chain $\left(  I_{\alpha}\right)
_{\alpha\leq\gamma}$ of [closed] ideals $A$ such that $I_{0}=0$, $I_{\gamma
}=A$ and all gap-quotients of the chain are algebras from $V$; we say that $V$
is \textit{closed under decreasing transfinite chains} if $V$ contains any
[normed] algebra $A$ for which there is a decreasing transfinite chain
$\left(  J_{\alpha}\right)  _{\alpha\leq\delta}$ of [closed] ideals of $A$
such that $J_{0}=A$, $J_{\delta}=0$ and all gap-quotients of the chain are
algebras from $V$.

\begin{corollary}
\label{ch3}Let $V$ be a class of [normed] algebras from $\mathfrak{U}$. Then

\begin{enumerate}
\item If $V$ is closed under increasing transfinite chains then there is the
largest radical on $\mathfrak{U}$ whose radical algebras lie in $V$;

\item If $V$ is closed under decreasing transfinite chains then there is the
smallest radical on $\mathfrak{U}$ whose semisimple algebras lie in $V$.
\end{enumerate}
\end{corollary}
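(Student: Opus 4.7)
The plan is to realize the two radicals as the supremum $\vee\mathcal{F}_u$ and the infimum $\wedge\mathcal{F}_o$, respectively, where $\mathcal{F}_u$ is the family of all under radicals $R$ on $\mathfrak{U}$ with $\mathbf{Rad}(R) \subset V$ and $\mathcal{F}_o$ is the family of all over radicals $R$ on $\mathfrak{U}$ with $\mathbf{Sem}(R) \subset V$. Both families produce radicals by the corollary following Theorem~\ref{dix0}, and I would then use Theorem~\ref{ch2} together with the closure hypothesis on $V$ to verify that the resulting radical still has the defining property.

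For part (1), I set $P := \vee\mathcal{F}_u$. Maximality is immediate: every radical $Q$ with $\mathbf{Rad}(Q) \subset V$ is in particular an under radical in $\mathcal{F}_u$, hence $Q \leq P$. The content lies in showing $\mathbf{Rad}(P) \subset V$. Given a $P$-radical algebra $A$, I would apply Theorem~\ref{ch2}(1) to $\mathcal{F}_u$ and $A$ to produce an increasing transfinite chain $(I_\alpha)_{\alpha \leq \gamma}$ of [closed] ideals of $A$ with $I_0 = 0$, $I_\gamma = P(A) = A$, and each gap-quotient $I_{\alpha+1}/I_\alpha$ being $R$-radical for some $R \in \mathcal{F}_u$ and therefore lying in $V$. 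The closure of $V$ under increasing transfinite chains then delivers $A \in V$.

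Part (2) is entirely symmetric: I would take $P := \wedge\mathcal{F}_o$ (a radical) and apply Theorem~\ref{ch2}(2) to a $P$-semisimple algebra $A$ to obtain a decreasing transfinite chain from $J_0 = A$ down to $J_\delta = P(A) = 0$ whose gap-quotients $J_\alpha/J_{\alpha+1}$ are $R$-semisimple for various $R \in \mathcal{F}_o$ and hence in $V$; the corresponding closure hypothesis concludes the argument, and minimality follows from $P \leq Q$ for every radical $Q \in \mathcal{F}_o$. I do not expect a serious obstacle, since the work has been done in Theorem~\ref{ch2}; the only delicate point is the matching of endpoints — Theorem~\ref{ch2} delivers chains terminating at $(\vee\mathcal{F}_u)(A)$ and $(\wedge\mathcal{F}_o)(A)$, which is precisely why the argument must be applied to $P$-radical (resp. $P$-semisimple) algebras, the case in which these endpoints coincide with $A$ and $0$ and the hypothesis on $V$ becomes applicable.
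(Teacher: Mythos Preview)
Your approach is correct and essentially the same as the paper's: the paper takes $\mathcal{F}$ to be the family of all \emph{radicals} (rather than under/over radicals) with the required containment, forms $\vee\mathcal{F}$ (resp.\ $\wedge\mathcal{F}$), and invokes Theorem~\ref{ch2} in the identical way. Your use of under/over radicals is a harmless enlargement of the family (radicals belong to both), and the resulting supremum/infimum is the same; the only minor imprecision is the citation---the fact that $\vee\mathcal{F}_u$ and $\wedge\mathcal{F}_o$ are radicals follows from Theorem~\ref{dix0} combined with Theorem~\ref{ovunt}, not from the corollary (which is stated for families of radicals).
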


\begin{proof}
$\left(  1\right)  $ Let $\mathcal{F}$ be the family of all radicals $P$ on
$\mathfrak{U}$ with $\mathbf{Rad}\left(  P\right)  \subset V$. Let
$R=\vee\mathcal{F}$, and let $A$ be an $R$-radical algebra. By Theorem
\ref{ch2}, there is an increasing transfinite chain $\left(  I_{\alpha
}\right)  _{\alpha\leq\gamma}$ of [closed] ideals of $A$ such that $I_{0}=0$,
$I_{\gamma}=A$ and every gap-quotient of the chain is $P$-radical for some
$P\in\mathcal{F}$. Then all gap-quotients of the chain lie in $V$, and $R\in$
$\mathcal{F}$.

$\left(  2\right)  $ The proof is similar to the proof of $\left(  1\right)  $.
\end{proof}

A transfinite sequence $\left(  S_{\alpha}\right)  _{\alpha\leq\delta}$ of
over radicals is called a \textit{decreasing transfinite chain} of over
radicals if $\left(  S_{\alpha}\left(  A\right)  \right)  _{\alpha\leq\delta}$
is a decreasing transfinite chain of ideals of $A$ for every algebra $A$;
similarly, a transfinite sequence $\left(  T_{\alpha}\right)  _{\alpha
\leq\gamma}$ of under radicals is called an \textit{increasing transfinite
chain} of under radicals if $\left(  T_{\alpha}\left(  A\right)  \right)
_{\alpha\leq\gamma}$ is an increasing transfinite chain of ideals of $A$ for
every algebra $A$.

Examples are:

\begin{enumerate}
\item A \textit{convolution chain} $\left(  T_{\alpha}\right)  _{\alpha
\leq\gamma}$ obtained from family $\mathcal{F}_{u}$ of under radicals by the
rule $T_{0}:=\mathcal{P}_{0}:A\longmapsto0$, \textit{the zero radical,} and
$T_{\alpha+1}=P\ast T_{\alpha}$ for some $P\in\mathcal{F}_{u}$;

\item A \textit{superposition chain} $\left(  S_{\alpha}\right)  _{\alpha
\leq\delta}$ obtained from a family $\mathcal{F}_{o}$ of over radicals by the
rule $S_{0}:=\mathcal{P}_{1}:A\longmapsto A$,\textit{ the identity radical,}
and $S_{\alpha+1}=R\circ S_{\alpha}$ for some $R\in\mathcal{F}_{o}$.
\end{enumerate}

Theorem \ref{dix0} and Corollary \ref{oper} guarantee that $\left(  T_{\alpha
}\right)  _{\alpha\leq\gamma}$ consists of over radicals and $\left(
S_{\alpha}\right)  _{\alpha\leq\delta}$ consists of under radicals.

\begin{theorem}
\label{chain}Let $\mathcal{F}_{u}^{\ast}$ be the family of all convolution
chains of a family $\mathcal{F}_{u}$ of under radicals, and let $\mathcal{F}%
_{o}^{\circ}$ be the family of all superposition chains of a family
$\mathcal{F}_{o}$ of over radicals. Then

\begin{enumerate}
\item $T_{\gamma}\leq\vee\mathcal{F}_{u}$ for every $\left(  T_{\alpha
}\right)  _{\alpha\leq\gamma}\in\mathcal{F}_{u}^{\ast}$ and $S_{\delta}%
\geq\wedge\mathcal{F}_{o}$ for every $\left(  S_{\alpha}\right)  _{\alpha
\leq\delta}\in\mathcal{F}_{o}^{\circ}$.

\item For any algebras $A_{1},\ldots,A_{n}$, there exist

\begin{enumerate}
\item $\left(  T_{\alpha}\right)  _{\alpha\leq\gamma}\in\mathcal{F}_{u}^{\ast
}$ such that $T_{\gamma}\left(  A_{i}\right)  =\left(  \vee\mathcal{F}%
_{u}\right)  \left(  A_{i}\right)  $, for $i\leq n;$

\item $\left(  S_{\alpha}\right)  _{\alpha\leq\delta}\in\mathcal{F}_{o}%
^{\circ}$ such that $S_{\delta}\left(  A_{i}\right)  =\left(  \wedge
\mathcal{F}_{o}\right)  \left(  A_{i}\right)  $, for $i\leq n$.
\end{enumerate}
\end{enumerate}
\end{theorem}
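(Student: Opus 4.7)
The plan is to prove both assertions by transfinite induction, with part (2) additionally using an iterated extension construction to handle the finitely many target algebras simultaneously.

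For part (1), write $V := \vee\mathcal{F}_u$ and verify $T_\alpha \leq V$ by induction on $\alpha$. At a successor ordinal, $T_{\alpha+1}(A) = q_{T_\alpha(A)}^{-1}(P(A/T_\alpha(A)))$ for some $P \in \mathcal{F}_u$; since $P \leq V$ and $T_\alpha(A) \subseteq V(A)$ by the inductive hypothesis, Proposition~\ref{ov}(1b) applied to the over radical $V$ gives $V(A/T_\alpha(A)) = V(A)/T_\alpha(A)$, whence $T_{\alpha+1}(A) \subseteq q_{T_\alpha(A)}^{-1}(V(A)/T_\alpha(A)) = V(A)$. The limit case is immediate since $V(A)$ is closed and contains every $T_{\alpha'}(A)$. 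The dual inequality $S_\alpha \geq W := \wedge\mathcal{F}_o$ proceeds analogously: at a successor, $W(A)$ sits as an ideal inside $S_\alpha(A)$ by induction, and since $W \leq R$ implies $\mathbf{Rad}(W) \subseteq \mathbf{Rad}(R)$, the ideal $W(A)$ is $R$-radical, so Axiom~4 for $R$ yields $W(A) = R(W(A)) \subseteq R(S_\alpha(A)) = S_{\alpha+1}(A)$; at limit ordinals the intersection trivially contains $W(A)$.

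For part (2a), the heart of the argument is an extension lemma: any convolution chain $(T_\alpha)_{\alpha \leq \gamma} \in \mathcal{F}_u^\ast$ can be prolonged to a convolution chain that terminates at $V(B)$ on any prescribed algebra $B$. Indeed, whenever $T_\alpha(B) \subsetneq V(B)$, Proposition~\ref{ov}(1b) produces $V(B/T_\alpha(B)) = V(B)/T_\alpha(B) \neq 0$, and because $V = \mathrm{H}_{\mathcal{F}_u}^\ast$, the vanishing of $\mathrm{H}_{\mathcal{F}_u}(B/T_\alpha(B))$ would collapse the whole convolution chain on $B/T_\alpha(B)$ to zero; therefore some $P \in \mathcal{F}_u$ has $P(B/T_\alpha(B)) \neq 0$, and setting $T_{\alpha+1} = P \ast T_\alpha$ strictly enlarges $T_\alpha(B)$. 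A standard cardinality argument (a strictly increasing transfinite chain of ideals of $B$ has length bounded by the size of the power set of $B$) forces stabilization, which by part (1) can only occur at $V(B)$. With the lemma in hand, start from the trivial chain and iteratively extend to absorb $A_1, A_2, \ldots, A_n$; by part (1) together with monotonicity, once $T_\alpha(A_i) = V(A_i)$ is reached it persists under every subsequent extension.

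Part (2b) is dual. The analogous extension step requires that whenever $S_\alpha(B) \supsetneq W(B)$, some $R \in \mathcal{F}_o$ satisfies $R(S_\alpha(B)) \subsetneq S_\alpha(B)$; otherwise $S_\alpha(B)$ would be $R$-radical for every $R \in \mathcal{F}_o$, hence lie in $\mathbf{Rad}(\mathrm{B}_{\mathcal{F}_o}) = \mathbf{Rad}(W)$ by Theorems~\ref{dix0}(2) and~\ref{dix}(2), and Axiom~4 for $W$ applied to the ideal $S_\alpha(B)$ of $B$ would force $S_\alpha(B) = W(S_\alpha(B)) \subseteq W(B)$, contradicting $S_\alpha(B) \supsetneq W(B)$. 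Well-definedness of the chains at limit ordinals is guaranteed by Theorem~\ref{dix0}, which ensures that the closure of the union (respectively, the intersection) of the previous terms is itself an under (over) radical. The main obstacle throughout is precisely the verification that a strictly progressing choice of $P$ or $R$ is always available; once this step is secured, the transfinite bookkeeping is routine.
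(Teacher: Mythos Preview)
Your proof is correct and follows essentially the same strategy as the paper: transfinite induction for part~(1), and for part~(2) an iterated extension of the chain through the algebras $A_1,\ldots,A_n$, using at each successor step the existence of some $P\in\mathcal{F}_u$ (resp.\ $R\in\mathcal{F}_o$) that moves the chain strictly forward.

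There is one organisational difference worth noting. In part~(2a) the paper, after reaching the ordinal $\gamma$ that handles $A_1,\ldots,A_n$, goes on to extend the chain separately for \emph{every} algebra $B$, thereby defining a global map $T$; it then argues that $T$ is a radical and concludes $T=\vee\mathcal{F}_u$, whence $T_\gamma(A_i)=T(A_i)=(\vee\mathcal{F}_u)(A_i)$. Your argument is more economical: you stop after the $n$ target algebras and deduce $T_\gamma(A_i)=(\vee\mathcal{F}_u)(A_i)$ directly from the combination of part~(1) (the upper bound) with the stabilisation criterion (no further strict increase is possible only at $(\vee\mathcal{F}_u)(A_i)$). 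This bypasses the need to verify that the globally defined $T$ is a preradical. The same remark applies dually to~(2b). For part~(1) the paper invokes Lemma~\ref{ovun2} at the successor step, whereas you appeal to Proposition~\ref{ov}(1b) and a direct Axiom~4 argument; these are equivalent routes to the same inequality.
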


\begin{proof}
$\left(  1\right)  $ follows by transfinite induction: the step $\alpha
\mapsto\alpha+1$ follows by Lemma \ref{ovun2} and the case of limit ordinals
is obvious.

$\left(  2\text{a}\right)  $ Construct $\left(  T_{\alpha}\right)
\in\mathcal{F}_{u}^{\ast}$ taking $T_{\alpha+1}=P\ast T_{\alpha}$ for some
$P\in\mathcal{F}_{u}$ with $P\left(  A_{1}/T_{\alpha}\left(  A_{1}\right)
\right)  \neq0$. The chain $\left(  T_{\alpha}\left(  A_{1}\right)  \right)  $
of ideals of $A_{1}$ must be stabilized: there is $T_{\gamma_{1}}$ such that
$P\left(  A_{1}/T_{\gamma_{1}}\left(  A_{1}\right)  \right)  =0$ for every
$P\in\mathcal{F}_{u}$. Continuing the construction of $\left(  T_{\alpha
}\right)  _{\alpha\leq\gamma_{1}}$ first for $A_{2},\ldots,A_{n}$ and then for
every other algebra $B$ in such a way if necessary, one can find $\left(
T_{\alpha}\right)  _{\alpha\leq\gamma_{i}}$ with $\gamma_{1}\leq\gamma_{2}%
\leq\cdots\leq\gamma_{n}=\gamma$ such that $P\left(  A_{i}/T_{\gamma_{i}%
}\left(  A_{i}\right)  \right)  =0$ for every $P\in\mathcal{F}_{u}$, and
define a preradical $T$ such that $T\left(  A_{i}\right)  =T_{\gamma}\left(
A_{i}\right)  $ for $i=1,\ldots,n$. By Theorem \ref{dix0} and Corollary
\ref{oper}, $T$ is an under radical and $T\leq\vee\mathcal{F}_{u}$ by $\left(
1\right)  $. It is clear that $T\left(  B/T\left(  B\right)  \right)  =0$,
i.e. $T$ is a radical. As%
\[
q_{T\left(  B\right)  }\left(  P\left(  B\right)  \right)  \subset P\left(
B/T\left(  B\right)  \right)  =0
\]
for every $P\in\mathcal{F}_{u}$, then $P\left(  B\right)  \subset T\left(
B\right)  $, for every algebra $B$. Hence $\vee\mathcal{F}_{u}\leq T$.

$\left(  2\text{b}\right)  $ Construct $\left(  S_{\alpha}\right)
\in\mathcal{F}_{o}^{\circ}$ taking $S_{\alpha+1}\left(  A_{1}\right)
=R\left(  S_{\alpha}\left(  A_{1}\right)  \right)  \neq S_{\alpha}\left(
A_{1}\right)  $ for some $R\in\mathcal{F}_{o}$. The chain $\left(  S_{\alpha
}\left(  A_{1}\right)  \right)  $ of ideals must be stabilized: there is
$S_{\delta_{1}}$ such that $R\left(  S_{\delta_{1}}\left(  A_{1}\right)
\right)  =S_{\delta_{1}}\left(  A_{1}\right)  $ for every $R\in\mathcal{F}%
_{o}$. Continuing the construction of $\left(  S_{\alpha}\right)  $ first for
$A_{2},\ldots,A_{n}$ and then for every other algebra $B$ in such a way if
necessary, one can find $\left(  S_{\alpha}\right)  _{\alpha\leq\delta_{i}}$
with $\delta_{1}\leq\delta_{2}\leq\cdots\leq\delta_{n}=\delta$ such that
$R\left(  A_{i}/S_{\delta_{i}}\left(  A_{i}\right)  \right)  =0$ for every
$R\in\mathcal{F}_{o}$, and define a preradical $S$ such that $S\left(
A_{i}\right)  =S_{\delta}\left(  A_{i}\right)  $ for $i=1,\ldots,n$. By
Theorem \ref{dix0} and Corollary \ref{oper}, $S$ is an over radical and
$S\geq\wedge\mathcal{F}_{o}$ by $\left(  1\right)  $. It is clear that
$S\left(  S\left(  B\right)  \right)  =S\left(  B\right)  $, i.e. $S$ is a
radical. As $S\left(  B\right)  =R\left(  S\left(  B\right)  \right)  \subset
R\left(  B\right)  $ for every $R\in\mathcal{F}_{o}$ and every algebra $B$,
then $S\leq\wedge\mathcal{F}_{o}$.
\end{proof}

\subsection{The heredity problem}

The closure procedure does not preserve the class of hereditary radicals:
$\overline{P}$ can be non-hereditary when $P$ is a hereditary preradical (see
\cite[Example 7.1]{D97}). The operation of the closure (of an ideal) is
involved also in the construction of $P^{\ast}$; this leads to a possible loss
of heredity. The following lemma shows that it is not the case for the
convolution chains in $\mathfrak{U}_{\mathrm{a}}$ (in particular, for the
algebraic convolution procedure) and for the superposition chains in general.

\begin{lemma}
\label{her}Let $\delta$ be a limit ordinal. Then

\begin{enumerate}
\item If $\left(  P_{\alpha}\right)  _{\alpha\leq\delta}$ is a decreasing
transfinite chain of preradicals and $P_{\alpha}$ is hereditary for
$\alpha<\delta$, then $P_{\delta}$ is hereditary.

\item If $\left(  P_{\alpha}\right)  _{\alpha\leq\delta}$ is an increasing
transfinite chain of algebraic preradicals and $P_{\alpha}$ is hereditary for
$\alpha<\delta$, then $P_{\delta}$ is hereditary.
\end{enumerate}
\end{lemma}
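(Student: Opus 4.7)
The plan is to unwind each definition and use a simple lattice identity. In both parts, the preradical $P_{\delta}$ at a limit ordinal is built pointwise from the earlier $P_{\alpha}$: decreasing chains give $P_{\delta}(A)=\cap_{\alpha<\delta}P_{\alpha}(A)$, and algebraic increasing chains give $P_{\delta}(A)=\cup_{\alpha<\delta}P_{\alpha}(A)$ (with no closure, since we are algebraic). Fix an algebra $A$ and an ideal $J\subset A$. Heredity of every $P_{\alpha}$ with $\alpha<\delta$ gives the key identity $P_{\alpha}(J)=J\cap P_{\alpha}(A)$, which I will then feed into the appropriate limit operation.

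For part (1), I compute
\[
P_{\delta}(J)\;=\;\bigcap_{\alpha<\delta}P_{\alpha}(J)\;=\;\bigcap_{\alpha<\delta}\bigl(J\cap P_{\alpha}(A)\bigr)\;=\;J\cap\bigcap_{\alpha<\delta}P_{\alpha}(A)\;=\;J\cap P_{\delta}(A),
\]
using only that intersection distributes over intersection with a fixed set. Note this argument is insensitive to the algebraic/topological distinction, because in the decreasing case the definition of a transfinite chain involves no closure operation.

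For part (2), I compute similarly
\[
P_{\delta}(J)\;=\;\bigcup_{\alpha<\delta}P_{\alpha}(J)\;=\;\bigcup_{\alpha<\delta}\bigl(J\cap P_{\alpha}(A)\bigr)\;=\;J\cap\bigcup_{\alpha<\delta}P_{\alpha}(A)\;=\;J\cap P_{\delta}(A),
\]
which uses that binary intersection distributes over arbitrary union. This is where the hypothesis ``algebraic'' is genuinely used: in the topological convention (\ref{bb}), the union is replaced by its closure, and then the step $\overline{\bigcup(J\cap P_{\alpha}(A))}=J\cap\overline{\bigcup P_{\alpha}(A)}$ fails in general. So the main (and essentially only) subtlety to flag is exactly this restriction; apart from that, both halves are one-line identities.

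Finally I should remark that the chains $(P_{\alpha}(J))_{\alpha\le\delta}$ in $J$ are themselves transfinite chains of the appropriate type: monotonicity transfers from the ambient chain via the identity $P_{\alpha}(J)=J\cap P_{\alpha}(A)$, so the displayed limit formulas $P_{\delta}(J)=\cap_{\alpha<\delta}P_{\alpha}(J)$ and $P_{\delta}(J)=\cup_{\alpha<\delta}P_{\alpha}(J)$ really are instances of (\ref{bb}) applied inside $J$, justifying the first equality in each calculation above.
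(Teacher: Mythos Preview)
Your proof is correct and follows essentially the same route as the paper: in each part you write $P_{\delta}$ at the limit ordinal as the intersection (respectively union) of the earlier $P_{\alpha}$, substitute the heredity identity $P_{\alpha}(J)=J\cap P_{\alpha}(A)$, and use the elementary distributivity of intersection over intersection (respectively over union). Your final paragraph is slightly superfluous, since by the very definition of a transfinite chain of preradicals the limit formula $P_{\delta}(J)=\cap_{\alpha<\delta}P_{\alpha}(J)$ (respectively $\cup$) holds for \emph{every} algebra, in particular for $J$, without needing to invoke heredity to transfer it from $A$.
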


\begin{proof}
$\left(  1\right)  $ Let $A$ be an algebra, and let $J$ be an ideal of $A$. By
definition, the chain $\left(  P_{\alpha}\left(  A\right)  \right)
_{\alpha\leq\delta}$ is a decreasing transfinite chain of [closed] ideals.
Then
\[
P_{\delta}\left(  A\right)  =\cap_{\alpha<\delta}P_{\alpha}\left(  A\right)
\]
by (\ref{bb}), and $P_{\delta}\left(  J\right)  =J\cap P_{\delta}\left(
A\right)  $.

$\left(  2\right)  $ By definition, $\left(  P_{\alpha}\left(  A\right)
\right)  _{\alpha\leq\delta}$ is an increasing transfinite chain of ideals.
So
\begin{align*}
P_{\delta}\left(  J\right)   &  =\cup_{\alpha<\delta}P_{\alpha}\left(
J\right)  =\cup_{\alpha<\delta}\left(  J\cap P_{\alpha}\left(  A\right)
\right)  =J\cap\left(  \cup_{\alpha<\delta}P_{\alpha}\left(  A\right)  \right)
\\
&  =J\cap P_{\delta}\left(  A\right)  .
\end{align*}

\end{proof}

\begin{corollary}
\label{hera}If $P$ is an algebraic hereditary preradical then $P^{\ast}$ is a
hereditary radical.
\end{corollary}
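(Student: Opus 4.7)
The plan is to verify heredity by transfinite induction along the convolution chain $(P_\alpha)_{\alpha}$ defining $P^\ast$, using the tools developed earlier in Section~4.

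First I would observe that an algebraic hereditary preradical $P$ is automatically an algebraic under radical (heredity implies Axioms~3 and~4, as noted after the definition of hereditary preradicals), and moreover $P$ is pliant since every algebraic preradical is pliant. Hence by Theorem~\ref{ovunt}(1), $P^\ast$ is an algebraic radical, so the only remaining task is to prove $P^\ast(J)=J\cap P^\ast(A)$ for every ideal $J$ of every algebra $A\in\mathfrak{U}_{\mathrm{a}}$.

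The main work is to show, by transfinite induction on $\alpha$, that every term $P_\alpha$ of the convolution chain of $P$ is an algebraic hereditary under radical. The base case $P_0=0$ is trivial. For the successor step, note that $P_{\alpha+1}(A)=q_{P_\alpha(A)}^{-1}(P(A/P_\alpha(A)))$ is precisely the convolution $(P\ast P_\alpha)(A)$ of Section~4.2, so $P_{\alpha+1}=P\ast P_\alpha$. Since $P_\alpha$ is an under radical by the argument in the proof of Theorem~\ref{ovunt}, and by induction hypothesis $P_\alpha$ is hereditary, Theorem~\ref{oper3} applied on the algebraically universal class $\mathfrak{U}_{\mathrm{a}}$ (with the pliant hereditary under radical $P$ in the left factor) shows that $P\ast P_\alpha=P_{\alpha+1}$ is a hereditary under radical. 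For a limit ordinal $\delta$, the map $P_\delta$ is the union $\cup_{\alpha<\delta}P_\alpha$ in the algebraic sense, so Lemma~\ref{her}(2) applies and gives heredity of $P_\delta$.

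Finally, for a fixed algebra $A$ and ideal $J$ of $A$, the chains $(P_\alpha(A))$ and $(P_\alpha(J))$ both stabilize; choosing an ordinal $\beta$ beyond both stabilization points yields $P^\ast(A)=P_\beta(A)$ and $P^\ast(J)=P_\beta(J)$. The heredity of $P_\beta$ established above then gives
\[
P^\ast(J)=P_\beta(J)=J\cap P_\beta(A)=J\cap P^\ast(A),
\]
completing the proof. The only non-routine step is the successor case, where one must recognise that the inductive construction of $P^\ast$ is driven precisely by the two-place convolution operation $P\ast(\,\cdot\,)$ and that Theorem~\ref{oper3} was designed exactly to handle heredity of this operation; the pliancy of algebraic preradicals is what makes that theorem available here.
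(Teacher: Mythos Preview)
Your proof is correct and follows essentially the same approach as the paper's own proof, which simply states that the convolution chain generated by $P$ consists of hereditary preradicals by Theorem~\ref{oper3} (for the successor step) and Lemma~\ref{her}(2) (for the limit step), hence $P^{\ast}$ is hereditary. You have spelled out the induction in more detail and added the observation that a common stabilization ordinal exists for $A$ and $J$, but the underlying argument is identical.
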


\begin{proof}
It follows from Theorem \ref{oper} and Lemma \ref{her} that the convolution
chain generated by $P$ consists of hereditary preradicals. Therefore $P^{\ast
}$ is hereditary.
\end{proof}

Let $\mathcal{F}$ be a family of hereditary radicals. Then $\wedge\mathcal{F}$
is a hereditary radical and therefore is the infimum of $\mathcal{F}$ in the
class of hereditary radicals. Besides of this $\wedge\mathcal{F}$ satisfies
the following equality \cite[Lemma 3.2]{TR3}%
\begin{equation}
\wedge\mathcal{F}=\mathrm{B}_{\mathcal{F}}. \label{hp}%
\end{equation}

\begin{theorem}
\label{sih}The operations $\vee$ and $\wedge$ restricted to the class of
algebraic hereditary radicals produce supremum and infimum in this class;
moreover $\wedge$ produces infimum in the class of topological hereditary radicals.
\end{theorem}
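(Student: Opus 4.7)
The plan is to handle the two operations separately, with the observation that $\wedge$ is the easier case because heredity can be read off directly from the pointwise intersection formula, while $\vee$ requires a transfinite-induction argument that runs along a convolution chain.

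For $\wedge$, let $\mathcal{F}$ be a family of hereditary radicals, whether on $\mathfrak{U}_{\mathrm{a}}$ or on $\mathfrak{U}_{\mathrm{n}}$. By identity $(\ref{hp})$, $\wedge\mathcal{F}=\mathrm{B}_{\mathcal{F}}$, i.e. $(\wedge\mathcal{F})(A)=\cap_{P\in\mathcal{F}}P(A)$ for every $A\in\mathfrak{U}$. For an ideal $J$ of $A$, heredity of each $P\in\mathcal{F}$ yields
\[
(\wedge\mathcal{F})(J)=\cap_{P\in\mathcal{F}}P(J)=\cap_{P\in\mathcal{F}}\bigl(J\cap P(A)\bigr)=J\cap(\wedge\mathcal{F})(A),
\]
so $\wedge\mathcal{F}$ is hereditary. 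Since $\wedge\mathcal{F}$ is already the largest radical lying below every $P\in\mathcal{F}$, it is \emph{a fortiori} the largest hereditary radical with this property, that is, the infimum in the class of hereditary radicals. This takes care of the ``moreover'' part and half of the algebraic statement.

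For $\vee$ in the algebraic setting, let $\mathcal{F}$ be a family of algebraic hereditary radicals, fix an algebra $A\in\mathfrak{U}_{\mathrm{a}}$ and an ideal $J$ of $A$ (itself an algebra in $\mathfrak{U}_{\mathrm{a}}$). The key step is to apply Theorem \ref{chain}(2a) to the pair $\{A,J\}$ to obtain a convolution chain $(T_\alpha)_{\alpha\leq\gamma}\in\mathcal{F}^{\ast}$ of under radicals with $T_\gamma(A)=(\vee\mathcal{F})(A)$ and $T_\gamma(J)=(\vee\mathcal{F})(J)$. I would then prove by transfinite induction that every $T_\alpha$ is hereditary. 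The base case $T_0=0$ is trivial. At a successor stage, $T_{\alpha+1}=P\ast T_\alpha$ for some $P\in\mathcal{F}$; since $P$ is algebraic it is automatically pliant, and both $P$ and $T_\alpha$ are hereditary under radicals, so Theorem \ref{oper3} delivers heredity of $T_{\alpha+1}$. At a limit ordinal, $T_\alpha(B)=\cup_{\alpha'<\alpha}T_{\alpha'}(B)$ for every algebra $B$, and Lemma \ref{her}(2) applies to give heredity of $T_\alpha$. Heredity of $T_\gamma$ then yields $(\vee\mathcal{F})(J)=T_\gamma(J)=J\cap T_\gamma(A)=J\cap(\vee\mathcal{F})(A)$. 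Hence $\vee\mathcal{F}$ is hereditary, and being already the smallest radical above every $P\in\mathcal{F}$ it is the smallest such among hereditary radicals.

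The main obstacle, and the reason the analogous statement for $\vee$ on $\mathfrak{U}_{\mathrm{n}}$ is deliberately \emph{not} asserted, lies at limit ordinals of the topological convolution chain, where one must take the \emph{closure} $\overline{\cup_{\alpha'<\alpha}T_{\alpha'}(B)}$ rather than the bare union. Lemma \ref{her}(2) is formulated precisely for algebraic (uncloseured) increasing chains and breaks in the topological setting; this is the same phenomenon that prevents $\overline{P}$ from being hereditary for a hereditary $P$ in general. Once the algebraic induction above is set up and the pair $\{A,J\}$ is handled via Theorem \ref{chain}(2a), no further work is needed, so the only real content of the proof is identifying those two inputs and verifying that both the successor and the limit steps preserve heredity.
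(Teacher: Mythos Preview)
Your proof is correct and follows essentially the same route as the paper: for $\wedge$ you invoke the identity $\wedge\mathcal{F}=\mathrm{B}_{\mathcal{F}}$ and read off heredity from the intersection formula (what the paper calls ``evident''); for $\vee$ you apply Theorem~\ref{chain}(2a) to the pair $\{A,J\}$ to obtain a single convolution chain computing $(\vee\mathcal{F})(A)$ and $(\vee\mathcal{F})(J)$ simultaneously, then run transfinite induction on heredity using Theorem~\ref{oper3} at successors and Lemma~\ref{her}(2) at limits. Your version is in fact slightly more explicit than the paper's, which cites ``Theorem~\ref{oper} and Lemma~\ref{her}'' where the successor step really needs Theorem~\ref{oper3}; your identification of Theorem~\ref{oper3} (with pliancy automatic in the algebraic setting) is the correct one.
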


\begin{proof}
Let $\mathcal{F}$ be a family of algebraic hereditary radicals, $A$ an
algebra, and let $J$ be an ideal of $A$. By Theorem \ref{chain}, there is a
convolution chain $\left(  T_{\alpha}\right)  _{\alpha\leq\gamma}$ obtained
from $\mathcal{F}$ such that $\left(  \vee\mathcal{F}\right)  \left(
A\right)  =T_{\gamma}\left(  A\right)  $ and $\left(  \vee\mathcal{F}\right)
\left(  J\right)  =T_{\gamma}\left(  J\right)  $. By Theorem \ref{oper} and
Lemma \ref{her}, $\left(  T_{\alpha}\right)  _{\alpha\leq\gamma}$ consists of
hereditary preradicals. Therefore $T_{\gamma}\left(  J\right)  =J\cap
T_{\gamma}\left(  A\right)  $. This shows that $\vee\mathcal{F}$ is a
hereditary radical. Therefore $\vee\mathcal{F}$ is the supremum of
$\mathcal{F}$ in the class of algebraic hereditary radicals.

The assertion about the infimum is evident.
\end{proof}

\begin{theorem}
Let $\mathcal{F}$ be a family of hereditary radicals and $V=\cup
_{P\in\mathcal{F}}\mathbf{\mathbf{Rad}}\left(  P\right)  $. Let $\mathcal{F}%
^{\prime}$ be the family of all hereditary radicals $R$ such that
$\mathbf{\mathbf{Rad}}\left(  R\right)  \supset V$. Then the supremum of
$\mathcal{F}$ in the class of hereditary radicals is equal to
\[
\wedge\mathcal{F}^{\prime}=\mathrm{B}_{\mathcal{F}^{\prime}}.
\]

\end{theorem}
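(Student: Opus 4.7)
The plan is to identify $\mathcal{F}'$ as precisely the set of hereditary upper bounds of $\mathcal{F}$, and then invoke the infimum description proved in Theorem \ref{sih} together with equation (\ref{hp}). Once that identification is made, the statement will follow because the infimum of the set of upper bounds is, whenever it belongs to that set, the supremum of $\mathcal{F}$.

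First I would observe that $\mathcal{F}'$ is nonempty (the identity radical $\mathcal{P}_{1}:A\longmapsto A$ is trivially hereditary and belongs to $\mathcal{F}'$), so $\wedge\mathcal{F}'$ is defined, and by Theorem \ref{sih} it is the infimum of $\mathcal{F}'$ in the class of hereditary radicals, and equals $\mathrm{B}_{\mathcal{F}'}$ by (\ref{hp}). Next I would characterize $\mathcal{F}'$ as follows: for a hereditary radical $R$, Theorem \ref{equality}(2) gives $P\leq R$ iff $\mathbf{Rad}(P)\subset\mathbf{Rad}(R)$; hence $R\in\mathcal{F}'$ iff $R\geq P$ for every $P\in\mathcal{F}$, i.e.\ $\mathcal{F}'$ is exactly the class of hereditary radicals that are upper bounds for $\mathcal{F}$.

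The key step is to verify that $\wedge\mathcal{F}'$ itself lies in $\mathcal{F}'$, which is the only nontrivial point. For this, take any $A\in V$; then $A\in\mathbf{Rad}(P)$ for some $P\in\mathcal{F}$, and since $\mathbf{Rad}(R)\supset V$ for every $R\in\mathcal{F}'$, we have $R(A)=A$ for every $R\in\mathcal{F}'$. Using the formula $(\wedge\mathcal{F}')(A)=\mathrm{B}_{\mathcal{F}'}(A)=\cap_{R\in\mathcal{F}'}R(A)$ from (\ref{hp}), this intersection equals $A$, so $A\in\mathbf{Rad}(\wedge\mathcal{F}')$. Therefore $V\subset\mathbf{Rad}(\wedge\mathcal{F}')$, i.e.\ $\wedge\mathcal{F}'\in\mathcal{F}'$.

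Finally I would conclude as follows: $\wedge\mathcal{F}'$ being in $\mathcal{F}'$ means it is an upper bound of $\mathcal{F}$ in the hereditary radicals; if $S$ is any hereditary radical with $P\leq S$ for all $P\in\mathcal{F}$, then $\mathbf{Rad}(S)\supset\cup_{P\in\mathcal{F}}\mathbf{Rad}(P)=V$, so $S\in\mathcal{F}'$, and hence $\wedge\mathcal{F}'\leq S$ by the infimum property. This shows $\wedge\mathcal{F}'$ is the least upper bound of $\mathcal{F}$ among hereditary radicals, which is exactly the claim. The main (and only) obstacle is the verification that $\wedge\mathcal{F}'\in\mathcal{F}'$, and the formula $\wedge\mathcal{F}'=\mathrm{B}_{\mathcal{F}'}$ from (\ref{hp}) is precisely what reduces this to the trivial ideal-wise computation above.
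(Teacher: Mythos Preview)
Your proof is correct and follows essentially the same approach as the paper: both use the equality $\wedge\mathcal{F}'=\mathrm{B}_{\mathcal{F}'}$ for hereditary radicals, verify that $\mathbf{Rad}(\wedge\mathcal{F}')\supset V$ (you compute this ideal-wise, the paper via the formula $\mathbf{Rad}(\mathrm{B}_{\mathcal{F}'})=\cap_{R\in\mathcal{F}'}\mathbf{Rad}(R)$ from Theorem~\ref{dix0}), and then apply Theorem~\ref{equality}(2) to conclude $\wedge\mathcal{F}'\geq P$ for all $P\in\mathcal{F}$. Your framing as ``infimum of the set of upper bounds'' and the explicit nonemptiness check are welcome clarifications but do not change the argument.
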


\begin{proof}
Let $T=\wedge\mathcal{F}^{\prime}$. Then $T$ is a hereditary radical by
\cite[Lemma 3.2]{TR3}, and $T\leq R$, for every $R\in\mathcal{F}^{\prime}$, by
definition. As $\wedge\mathcal{F}^{\prime}=\mathrm{B}_{\mathcal{F}^{\prime}}$
then, for every $P\in\mathcal{F}$, we obtain that
\[
\mathbf{\mathbf{Rad}}\left(  T\right)  =\cap_{R\in\mathcal{F}^{\prime}%
}\mathbf{\mathbf{Rad}}\left(  R\right)  \supset V\supset\mathbf{\mathbf{Rad}%
}\left(  P\right)
\]
and therefore $T\geq P$. So $T$ is the supremum of $\mathcal{F}$ in the class
of hereditary radicals.
\end{proof}

Now we consider a way to show heredity of some radicals that are constructed
via the closure procedure and the topological convolution procedure.

Let $A$ be an algebra, and let $I$ be an ideal of $A$. Let $I^{\bot}=\left\{
x\in A:xI=0\right\}  $, the left annihilator of $I$ in $A$. A preradical $P$
is called\textit{ nil-exact} if
\[
I^{\bot}\left(  P\ast I\right)  \subset P\left(  A\right)
\]
for every [closed] ideal $I$ of any algebra $A$.

A preradical $P$ is called \textit{weakly hereditary} if, for any $P$-radical
algebra $A$, $P\left(  I\right)  \neq0$ for every non-zero ideal $I$ of $A$.

Recall that $\Sigma_{\beta}\left(  A\right)  $ means the sum of all nilpotent
ideals of $A$; if $P$ is a topological preradical then $\Sigma_{\beta}\leq P$
means that the inequality holds for normed algebras on which $P$ is defined.

\begin{lemma}
\label{aur}Let $P$ be an algebraic under radical. Then

\begin{enumerate}
\item If $P$ is nil-exact then $\overline{P}$ is nil-exact;

\item If $P$ is weakly hereditary and $\Sigma_{\beta}\leq P$ then
$\overline{P}$ is weakly hereditary.
\end{enumerate}
\end{lemma}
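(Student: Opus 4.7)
For part (1), I would argue by a direct approximation. Fix a closed ideal $I$ of a normed algebra $A$, and take $x\in I^{\bot}$ and $y\in\overline{P}\ast I$, i.e., $q_{I}(y)\in\overline{P(A/I)}$. Choose a sequence $w_{n}\in P(A/I)$ with $w_{n}\to q_{I}(y)$ and lift each $w_{n}$ to some $y_{n}\in A$ with $q_{I}(y_{n})=w_{n}$; then $y_{n}\in q_{I}^{-1}(P(A/I))=P\ast I$. Because the quotient norm is $\|q_{I}(y_{n}-y)\|=\inf_{z\in I}\|y_{n}-y-z\|$, for each $n$ I can pick $z_{n}\in I$ with $\|y_{n}-z_{n}-y\|\leq\|w_{n}-q_{I}(y)\|+1/n$, so $y_{n}-z_{n}\to y$. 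Since $I\subset P\ast I$, one still has $y_{n}-z_{n}\in P\ast I$. Now apply nil-exactness of $P$ (to the ideal $I$): $x(y_{n}-z_{n})\in P(A)$ for every $n$. Passing to the limit, $xy=\lim x(y_{n}-z_{n})\in\overline{P(A)}=\overline{P}(A)$, which is exactly what is needed.

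For part (2), let $A$ be an $\overline{P}$-radical normed algebra, i.e.\ $P(A)$ is dense in $A$, and let $I$ be a nonzero closed ideal. I want $\overline{P(I)}\neq0$, equivalently $P(I)\neq0$. The plan is to set $J:=I\cap P(A)$ and split into two cases. If $J\neq 0$, I note that $P(A)$ is $P$-radical (Axiom 3 holds for under radicals, so $P(P(A))=P(A)$), and that $J$ is an ideal of $P(A)$. Weak heredity of $P$ applied to the nonzero ideal $J$ of the $P$-radical algebra $P(A)$ gives $P(J)\neq 0$. Since $J$ is also an ideal of $I$, Axiom 4 yields $P(J)\subset P(I)$, so $P(I)\neq 0$.

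The remaining task is to rule out $J=0$; this is where the hypothesis $\Sigma_{\beta}\leq P$ enters. If $I\cap P(A)=0$, then $I\cdot P(A)\subset I\cap P(A)=0$ and similarly $P(A)\cdot I=0$. Using joint continuity of multiplication and density of $P(A)$, this propagates to $I\cdot A=A\cdot I=0$, so $I^{2}=0$. Hence $I$ is a nilpotent ideal of $A$, so $I\subset\Sigma_{\beta}(A)\subset P(A)$, contradicting $I\cap P(A)=0$ together with $I\neq 0$. So $J\neq 0$ always, and the previous paragraph finishes the proof.

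The step requiring the most care is the approximation in (1): one must correct the lifts $y_{n}$ by elements of $I$ so that they converge in $A$ and not merely modulo $I$, and one must verify that this correction keeps them inside $P\ast I$ (which is automatic because $I\subset P\ast I$). Everything else is a straightforward combination of the axioms for under radicals, the definition of $\overline{P}$, and the density argument in (2).
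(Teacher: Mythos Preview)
Your proof is correct. Part~(1) is the paper's argument made explicit: where you build the sequence $y_n - z_n \to y$ by hand, the paper uses the identity $q_I^{-1}(\overline{M}) = \overline{q_I^{-1}(M)}$ for the open continuous map $q_I$, and then observes
\[
I^{\bot}\,\overline{q_I^{-1}(P(A/I))} \subset \overline{I^{\bot}q_I^{-1}(P(A/I))} \subset \overline{P(A)}.
\]

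In part~(2) you take a different split. The paper sets $J = IA$ rather than $J = I \cap P(A)$: if $IA = 0$ then $I^2 = 0$, so $I = \Sigma_\beta(I) \subset P(I)$ and $\overline{P}(I) = I \neq 0$ directly (no contradiction argument); if $IA \neq 0$ the paper invokes weak heredity of $P$ to get $P(J) \neq 0$, and then $P(J) \subset P(I)$ since $J$ is an ideal of $I$. Your route via $J = I \cap P(A)$ is arguably more transparent: the case $J \neq 0$ applies weak heredity inside the $P$-radical algebra $P(A)$ with no intermediate step, and the case $J = 0$ is dispatched by a short density contradiction. Both arguments use the hypothesis $\Sigma_\beta \le P$ only to handle the degenerate square-zero situation.

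One minor point: weak heredity of $\overline{P}$ must be verified for \emph{every} nonzero ideal $I$ of a $\overline{P}$-radical algebra, not only closed ones. Your argument nowhere actually uses closedness of $I$ (continuity of multiplication and density of $P(A)$ already give $IA = AI = 0$ from $I \cdot P(A) = P(A)\cdot I = 0$), so you can simply drop that hypothesis.
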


\begin{proof}
$\left(  1\right)  $ Let $A$ be a normed algebra, and let $I$ be a closed
ideal of $A$. We have that $I^{\bot}\left(  P\ast I\right)  \subset P\left(
A\right)  $, i.e. $I^{\bot}q_{I}^{-1}\left(  P\left(  A/I\right)  \right)
\subset P\left(  A\right)  $. By $\left(  \ref{gen0}\right)  $,
\[
q_{I}^{-1}\left(  \overline{P}\left(  A/I\right)  \right)  =q_{I}^{-1}\left(
\overline{P\left(  A/I\right)  }\right)  =\overline{q_{I}^{-1}\left(  P\left(
A/I\right)  \right)  },
\]
so
\[
I^{\bot}q_{I}^{-1}\left(  \overline{P}\left(  A/I\right)  \right)  =I^{\bot
}\overline{q_{I}^{-1}\left(  P\left(  A/I\right)  \right)  }\subset
\overline{I^{\bot}q_{I}^{-1}\left(  P\left(  A/I\right)  \right)  }%
\subset\overline{P(A)}=\overline{P}(A),
\]
i.e. $I^{\bot}\left(  \overline{P}\ast I\right)  \subset\overline{P}(A)$.

$\left(  2\right)  $ Let $A$ be a $\overline{P}$-radical algebra, and let $I$
be a non-zero ideal of $A$. Set $J=IA$.

If $J\neq0$ then $P(J)\neq0$ by the weak heredity of $P$. As $J$ is an ideal
of $I$, it follows that $P(I)\neq0$, whence $\overline{P}(I)\neq0$.

If $IA=0$ then $I^{2}=0$, whence $I=\Sigma_{\beta}(I)\subset P\left(
I\right)  $ and $\overline{P}(I)=I\neq0$.
\end{proof}

\begin{lemma}
\label{whur}Let $P$ be a weakly hereditary under radical, and let $A$ be an
algebra and $I$ an ideal of $A$. Then

\begin{enumerate}
\item If $I\cap P\left(  A\right)  \neq0$ then $P\left(  I\right)  \neq0$;

\item If $I\subset P\left(  A\right)  $ then $I$ is $P^{\ast}$-radical.
\end{enumerate}
\end{lemma}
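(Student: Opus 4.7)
The plan is to establish (1) first by direct application of weak heredity to the $P$-radical algebra $P(A)$, and then to deduce (2) by pushing (1) through the convolution chain that defines $P^{\ast}$.

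For (1), I would exploit the fact that $P(A)$ is itself $P$-radical: since $P$ is an under radical, $P(P(A)) = P(A)$. The set $J := I \cap P(A)$ is a non-zero [closed] ideal of $P(A)$; it is non-zero by hypothesis, and for $x \in J$, $y \in P(A)$ one has $xy, yx \in I$ (because $I$ is an ideal of $A$) and $xy, yx \in P(A)$ (because $P(A)$ is an ideal of $A$), so $xy, yx \in J$. Weak heredity then forces $P(J) \neq 0$. The same calculation shows that $J$ is a [closed] ideal of $I$, so Axiom $4$ applied inside $I$ gives $P(J) \subset P(I)$, and hence $P(I) \neq 0$.

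For (2), let $(I_{\alpha})_{\alpha \leq \gamma}$ denote the convolution chain defining $P^{\ast}(I)$: $I_{0} = 0$, $I_{\alpha+1} = q_{I_{\alpha}}^{-1}(P(I/I_{\alpha}))$ (the preimage being computed in $I$), with [closures of] unions at limit ordinals, and $I_{\gamma} = P^{\ast}(I)$. By construction $P(I/I_{\gamma}) = 0$. The crucial technical step -- the main obstacle -- is to verify by transfinite induction that each $I_{\alpha}$ is a [closed] ideal of the ambient algebra $A$, not merely of $I$. At a successor step, assuming this for $I_{\alpha}$, the quotient $I/I_{\alpha}$ is a [closed] ideal of $A/I_{\alpha}$, so $P(I/I_{\alpha})$ is a [closed] ideal of $A/I_{\alpha}$ by Axiom $4$; its preimage in $A$ lies automatically inside $I$ and coincides with $I_{\alpha+1}$, hence $I_{\alpha+1}$ is a [closed] ideal of $A$. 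The limit-ordinal step is routine since [closures of] unions of ideals of $A$ are ideals of $A$.

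Once the chain is known to consist of ideals of $A$, suppose for contradiction that $I_{\gamma} \neq I$. Set $B := A/I_{\gamma}$ and $q := q_{I_{\gamma}}$. Since $P$ is a preradical, $q(I) \subset q(P(A)) \subset P(B)$, so $q(I) = I/I_{\gamma}$ is a non-zero [closed] ideal of $B$ contained in $P(B)$; in particular $q(I) \cap P(B) = q(I) \neq 0$. Part (1) applied to $B$ and $q(I)$ then yields $P(I/I_{\gamma}) \neq 0$, contradicting the stabilization of the chain. Hence $I_{\gamma} = I$, i.e., $I$ is $P^{\ast}$-radical.
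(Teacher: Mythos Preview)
Your proof of (1) is correct and coincides with the paper's argument.

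For (2), your strategy---run the convolution chain for $P^{\ast}(I)$, show it consists of ideals of $A$, and apply (1) in the terminal quotient to force $I_{\gamma}=I$---is exactly the paper's idea, only organized as a single contradiction at the end rather than as strict increase at every step. The difference is purely cosmetic.

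There is, however, a technical slip in the topological case when $I$ is not assumed closed in $A$. Your induction hypothesis is that $I_{\alpha}$ is a \emph{closed} ideal of $A$, and you use this to form the normed algebra $A/I_{\alpha}$. But at the successor step you only obtain that $P(I/I_{\alpha})$ is closed in $I/I_{\alpha}$; since $I/I_{\alpha}$ need not be closed in $A/I_{\alpha}$, you cannot conclude that $I_{\alpha+1}$ is closed in $A$, and the induction collapses already at $\alpha=0$ (that is, $I_{1}=P(I)$ is closed in $I$ but not necessarily in $A$). Consequently $A/I_{\gamma}$ may fail to be a normed algebra and your final application of (1) is not justified as written.

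The paper repairs this by carrying an auxiliary chain $I_{\alpha}^{\prime}=\overline{I_{\alpha}}^{(A)}$ of closures in $A$ (equivalently, in $P(A)$), forming the legitimate quotient $A/I_{\alpha}^{\prime}$, and noting that the natural map $I/I_{\alpha}\to q_{I_{\alpha}^{\prime}}(I)$ is an isometric isomorphism (because $I\cap I_{\alpha}^{\prime}=I_{\alpha}$ and the distance from $x\in I$ to $I_{\alpha}$ equals the distance to its closure). With this identification, $q_{I_{\alpha}^{\prime}}(I)$ sits as a non-zero ideal inside the $P$-radical algebra $P(A/I_{\alpha}^{\prime})$, weak heredity gives $P(q_{I_{\alpha}^{\prime}}(I))\neq 0$, and hence $P(I/I_{\alpha})\neq 0$. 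Your argument becomes correct once you replace $A/I_{\gamma}$ by $A/\overline{I_{\gamma}}^{(A)}$ and invoke this isometry.
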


\begin{proof}
$\left(  1\right)  $ Let $J=I\cap P\left(  A\right)  \neq0$. Then $J$ is a
non-zero ideal of the $P$-radical algebra $P\left(  A\right)  $, whence
$P\left(  J\right)  \neq0$ by weak heredity. As $J$ is an ideal of $I$ then
$P\left(  J\right)  \subset P\left(  I\right)  \neq0$.

$\left(  2\right)  $ Let $A$ be an algebra, and let $I$ be a non-zero ideal of
$A$ and $I\subset P\left(  A\right)  $. Then $I$ is an ideal of the
$P$-radical algebra $B_{1}:=P\left(  A\right)  $. Let $I_{1}=P\ast0=P\left(
I\right)  $ with respect to the algebra $I$; then $I_{1}\neq0$ is an ideal of
$B_{1}$. Let $I_{1}^{\prime}$ be $I_{1}$ for the algebraic case and the
closure of $I_{1}$ in $B_{1}$ for the topological case. Then $B_{1}%
/I_{1}^{\prime}=q_{I_{1}^{\prime}}\left(  P\left(  A\right)  \right)  \subset
B_{2}:=P\left(  A/I_{1}^{\prime}\right)  $ and $I/I_{1}\cong q_{I_{1}^{\prime
}}\left(  I\right)  $.

If $I/I_{1}\neq0$ then $q_{I_{1}^{\prime}}\left(  I\right)  $ is a non-zero
ideal of the $P$-radical algebra $B_{2}$. Let $I_{2}=P\ast I_{1}$ with respect
to the algebra $I$: $I_{2}=q^{-1}\left(  P\left(  I/I_{1}\right)  \right)  $
where $q:I\longrightarrow I/I_{1}$ is the standard quotient map. If
$q_{I_{1}^{\prime}}\left(  I\right)  \neq0$ then $P\left(  q_{I_{1}^{\prime}%
}\left(  I\right)  \right)  \neq0$ by the assumptions and therefore $I_{2}\neq
I_{1}$.

Using transfinite induction, we can build the increasing transfinite chain of
ideals $\left(  I_{\alpha}\right)  $ of ideals of $A$ such that all
$I_{\alpha}\subset I$ and $I_{\alpha+1}=P\ast I_{\alpha}\neq I_{\alpha}$ if
$I\neq I_{\alpha}$, and also the corresponding transfinite chain $\left(
I_{\alpha}^{\prime}\right)  $ (consisting of closures of $I_{\alpha}$'s for
the topological case) such that $q_{I_{\alpha}^{\prime}}\left(  I\right)  $ is
an ideal of the $P$-radical algebra $P\left(  A/I_{\alpha}^{\prime}\right)  $.
The chain $\left(  I_{\alpha}\right)  $ must be stabilized: there is an
ordinal $\gamma$ such that $I=$ $I_{\gamma}$. By construction, $I$ is
$P^{\ast}$-radical.
\end{proof}

\begin{lemma}
\label{aeur}Let $P$ be a nil-exact and weakly hereditary under radical, and
let $I$ be an ideal of an algebra $A$. If $IP^{\ast}(A)\neq0$ then $P\left(
I\right)  \neq0$.
\end{lemma}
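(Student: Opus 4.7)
The plan is to locate the first place along the convolution chain for $P$ where the product $I\cdot I_{\alpha}$ becomes non-zero, and to use nil-exactness at that step to push a non-zero piece of $I$ into $P(A)$; weak heredity then finishes the job via Lemma \ref{whur}(1).

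First, recall the convolution chain $(I_{\alpha})_{\alpha\leq\delta}$ for $P$: $I_{0}=0$, $I_{\alpha+1}=q_{I_{\alpha}}^{-1}(P(A/I_{\alpha}))=P\ast I_{\alpha}$, and the limit steps are given by $(\ref{bb})$, with $P^{\ast}(A)=I_{\delta}$. Since $I\cdot P^{\ast}(A)\neq 0$, the set of ordinals $\alpha\leq\delta$ with $I\cdot I_{\alpha}\neq 0$ is non-empty; let $\alpha_{0}$ be its minimum. Clearly $\alpha_{0}>0$. I claim $\alpha_{0}$ cannot be a limit ordinal. In the algebraic case this is immediate from $I_{\alpha_{0}}=\cup_{\alpha'<\alpha_{0}}I_{\alpha'}$ and the distributivity of multiplication over unions. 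In the topological case, $I_{\alpha_{0}}=\overline{\cup_{\alpha'<\alpha_{0}}I_{\alpha'}}$; since $I\cdot\cup_{\alpha'<\alpha_{0}}I_{\alpha'}=0$ by minimality and multiplication by a fixed element is continuous, we still get $I\cdot I_{\alpha_{0}}\subset\overline{I\cdot\cup_{\alpha'<\alpha_{0}}I_{\alpha'}}=0$, a contradiction.

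Hence $\alpha_{0}=\beta+1$ for some $\beta$, and $I\cdot I_{\beta}=0$, which exactly means $I\subset I_{\beta}^{\bot}$. Since $I_{\beta+1}=P\ast I_{\beta}$ and $P$ is nil-exact, we obtain
\[
I\cdot I_{\beta+1}\subset I_{\beta}^{\bot}\cdot(P\ast I_{\beta})\subset P(A).
\]
On the other hand $I\cdot I_{\beta+1}\subset I$ because $I$ is an ideal, and $I\cdot I_{\beta+1}\neq 0$ by the choice of $\alpha_{0}=\beta+1$. Therefore $I\cap P(A)\neq 0$. Applying Lemma \ref{whur}(1) (which uses only weak heredity of $P$) to the ideal $I$ of $A$ yields $P(I)\neq 0$, as required.

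The only delicate point is the limit-ordinal step, where one must use continuity of multiplication (in the topological case) to propagate the equality $I\cdot I_{\alpha'}=0$ through the closure operation in $(\ref{bb})$; aside from this, the argument is a direct combination of the definitions of nil-exactness, weak heredity, and the convolution procedure.
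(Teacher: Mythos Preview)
Your proof is correct and follows essentially the same route as the paper's: locate the first step of the convolution chain where $I\cdot I_{\alpha}\neq 0$, show it is a successor $\beta+1$, use $I\subset I_{\beta}^{\bot}$ together with nil-exactness to get $0\neq I\cdot I_{\beta+1}\subset I\cap P(A)$, and finish with Lemma~\ref{whur}(1). If anything, you are slightly more explicit than the paper about the limit-ordinal step in the topological case.
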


\begin{proof}
If $IP\left(  A\right)  \neq0$ then $P\left(  I\right)  \neq0$ by Lemma
\ref{whur}$\left(  1\right)  $. So one may assume that $IP\left(  A\right)
=0$. For the convolution chain $\left(  P_{\alpha}\right)  $ (with
$P_{0}=\mathcal{P}_{0}$ and $P_{\alpha+1}=P\ast P_{\alpha}$ for all $\alpha$),
one can find the first ordinal $\gamma$ such that $I\left(  P\ast P_{\gamma
}\left(  A\right)  \right)  \neq0$ (this is possible because it is clear that
if $IP_{\alpha^{\prime}}\left(  A\right)  =0$ for $\alpha^{\prime}<\alpha$ and
$\alpha$ is a limit ordinal then $IP_{\alpha}\left(  A\right)  =0$). Let
$J=P_{\gamma}\left(  A\right)  $. Then $Iq_{J}^{-1}\left(  P\left(
A/J\right)  \right)  \neq0$ and $IJ=0$ by the choice of $\gamma$. By
nil-exactness of $P$, $Iq_{J}^{-1}\left(  P\left(  A/J\right)  \right)
\subset P\left(  A\right)  $. As $Iq_{J}^{-1}\left(  P\left(  A/J\right)
\right)  \subset I$ then $I\cap P(A)\neq0$, whence $P\left(  I\right)  \neq0$
by Lemma \ref{whur}$\left(  1\right)  $.
\end{proof}

\begin{lemma}
\label{whae}Let $P$ be a weakly hereditary and nil-exact under radical, and
$\Sigma_{\beta}\leq P$. Then $P^{\ast}$ is weakly hereditary.
\end{lemma}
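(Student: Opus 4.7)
Let $A$ be a $P^{\ast}$-radical algebra and $I$ a non-zero ideal of $A$. The aim is to show $P^{\ast}(I) \neq 0$; since $P \leq P^{\ast}$ gives $P(I) \subseteq P^{\ast}(I)$, it will be enough to establish $P(I) \neq 0$. The strategy is a short two-case analysis driven by the product $IA$, exploiting that $A = P^{\ast}(A)$ allows us to invoke Lemma~\ref{aeur}, and using $\Sigma_{\beta} \leq P$ to handle the degenerate case.

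First, in the hypothesis of Lemma~\ref{aeur}, the factor $P^{\ast}(A)$ collapses to $A$, so the dichotomy $IA \neq 0$ versus $IA = 0$ exhausts all possibilities. If $IA \neq 0$, then $IP^{\ast}(A) \neq 0$ and Lemma~\ref{aeur} gives $P(I) \neq 0$ at once. If instead $IA = 0$, then since $I \subseteq A$ we have $I^{2} \subseteq IA = 0$, so $I$ is a nilpotent ideal of itself; consequently $I \subseteq \Sigma_{\beta}(I)$, and the assumption $\Sigma_{\beta} \leq P$ yields $I \subseteq P(I)$, so $P(I) = I \neq 0$.

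There is no substantive obstacle here, since the two input lemmas are already in place: nil-exactness and weak heredity have been bundled into Lemma~\ref{aeur}, and the $\Sigma_{\beta} \leq P$ hypothesis handles exactly the square-zero case that Lemma~\ref{aeur} does not see. The only point that needs verification is that the dichotomy is truly exhaustive, which is immediate from $A = P^{\ast}(A)$. Thus the proof reduces to essentially one line in each case.
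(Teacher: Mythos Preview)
Your proof is correct and essentially identical to the paper's. The only cosmetic difference is that the paper splits on whether $I^{2}=0$ rather than whether $IA=0$, but since $I^{2}\subset IA$ and $IA=0$ forces $I^{2}=0$, the two dichotomies lead to the same applications of Lemma~\ref{aeur} and of $\Sigma_{\beta}\leq P$.
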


\begin{proof}
Let $A$ be a $P^{\ast}$-radical algebra, and let $I$ be a non-zero ideal of
$A$. If $I^{2}=0$ then $I=\Sigma_{\beta}\left(  I\right)  \subset P\left(
I\right)  \subset P^{\ast}\left(  I\right)  $ and $P^{\ast}\left(  I\right)
=I\neq0$. If $I^{2}\neq0$ then $IP^{\ast}\left(  A\right)  \neq0$ and
$P\left(  I\right)  \neq0$ by Lemma \ref{aeur}. Therefore $P\left(  I\right)
\subset P^{\ast}\left(  I\right)  \neq0$.
\end{proof}

\begin{theorem}
\label{heredit}Let $P$ be a weakly hereditary and nil-exact under radical, and
$\Sigma_{\beta}\leq P$. Then $P^{\ast}$ is hereditary.
\end{theorem}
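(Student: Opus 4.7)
The plan is to establish $P^{\ast}(J) = J \cap P^{\ast}(A)$ for every [closed] ideal $J$ of $A \in \mathfrak{U}$. The inclusion $P^{\ast}(J) \subset J \cap P^{\ast}(A)$ is automatic: by Theorem \ref{ovunt} the map $P^{\ast}$ is a radical, so Axiom 4 applied to the ideal $J$ of $A$ gives $P^{\ast}(J) \subset P^{\ast}(A)$, while $P^{\ast}(J) \subset J$ is trivial. The serious content is the reverse inclusion, which I propose to extract from the weak heredity of $P^{\ast}$ furnished by Lemma \ref{whae}.

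First I would set $L := J \cap P^{\ast}(A)$; this is an ideal of $A$ containing $P^{\ast}(J)$ (and in the topological case it is closed, being the intersection of two closed ideals). Then I pass to the quotient $B := A/P^{\ast}(J)$ and form $N := L/P^{\ast}(J)$, which is an ideal of $B$ because $L$ is an ideal of $A$. The crucial identification is that $N$ lies inside the $P^{\ast}$-radical algebra $P^{\ast}(B)$: since $P^{\ast}(J) \subset P^{\ast}(A)$, Proposition \ref{ov}(1b) applied to the radical $P^{\ast}$ yields
\[
P^{\ast}(B) \;=\; P^{\ast}(A)/P^{\ast}(J),
\]
which manifestly contains $N$. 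Hence $N$ is an ideal of the $P^{\ast}$-radical algebra $P^{\ast}(B)$.

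The contradiction then runs as follows. If $N \neq 0$, then weak heredity of $P^{\ast}$ (Lemma \ref{whae}) forces $P^{\ast}(N) \neq 0$. On the other hand, $N$ is simultaneously an ideal of $J/P^{\ast}(J)$, which is $P^{\ast}$-semisimple by Axiom 2 for $P^{\ast}$; so Axiom 4 for $P^{\ast}$ forces $P^{\ast}(N) \subset P^{\ast}(J/P^{\ast}(J)) = 0$. This contradiction yields $N = 0$, hence $L = P^{\ast}(J)$, which is the desired inclusion.

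The real work of the theorem has already been done in Lemma \ref{whae}, whose proof consumed all three hypotheses on $P$ --- weak heredity, nil-exactness, and $\Sigma_{\beta} \leq P$. Consequently, the main obstacle remaining for the present argument is not any one of the steps above --- each is essentially formal once Lemma \ref{whae} is available --- but the topological bookkeeping needed to apply Proposition \ref{ov}(1b): one must check that $P^{\ast}(J)$ is closed in $A$ so that $A/P^{\ast}(J)$ is a legitimate object in the ambient class, and that the closure operations built into the topological convolution procedure do not obstruct the identification $P^{\ast}(A/P^{\ast}(J)) = P^{\ast}(A)/P^{\ast}(J)$. Both checks are routine from the construction of $P^{\ast}$ via a chain of closed-ideal-valued preradicals (Theorem \ref{ovunt}), so a single argument covers the algebraic and topological cases uniformly.
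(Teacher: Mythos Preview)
Your argument is correct in the algebraic case and for closed ideals in the topological case, but there is a genuine gap when $J$ is a non-closed ideal of a normed algebra. You form $B = A/P^{\ast}(J)$ and invoke Proposition~\ref{ov}(1b), both of which require $P^{\ast}(J)$ to be a \emph{closed} ideal of $A$. The radical $P^{\ast}$ only guarantees that $P^{\ast}(J)$ is closed in $J$ (with the inherited norm); when $J$ is not closed in $A$, there is no reason for $P^{\ast}(J)$ to be closed in $A$, and your claim that this check is ``routine from the construction of $P^{\ast}$'' is not justified --- nothing in Theorem~\ref{ovunt} delivers closedness in the ambient algebra. Since the theorem is applied in Theorem~\ref{class} to radicals on $\mathfrak{U}_{\mathrm{n}}$, where heredity must be verified for \emph{arbitrary} (not only closed) ideals, this gap matters.

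The paper sidesteps the issue entirely by never forming $A/P^{\ast}(J)$. Instead it applies Lemma~\ref{whur}(2), with $P^{\ast}$ in place of $P$, directly to the ideal $L = J \cap P^{\ast}(A)\subset P^{\ast}(A)$, concluding that $L$ is $P^{\ast\ast}$-radical, i.e.\ $P^{\ast}$-radical. Since $L$ is also an ideal of $J$, Axiom~4 then gives $L = P^{\ast}(L) \subset P^{\ast}(J)$, which is exactly the missing inclusion. The transfinite induction inside Lemma~\ref{whur}(2) does form quotients, but the closures there are taken in $P(A)$ (here $P^{\ast}(A)$), which is closed in $A$, so the quotients are legitimate. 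Your contradiction argument via weak heredity of $P^{\ast}$ is morally a one-step shadow of that induction, but compressing it into a single quotient step forces a passage to $A/P^{\ast}(J)$ that may not lie in the ambient class.
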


\begin{proof}
Let $A$ be an algebra, and let $I$ be an ideal of $A$. By Lemma \ref{whae},
$P^{\ast}$ is weakly hereditary. We apply Lemma \ref{whur}$\left(  2\right)  $
to $P^{\ast}$ instead of $P$. As $P^{\ast}$ is a radical (in particular,
$P^{\ast\ast}=P^{\ast}$), every ideal $J$ of $A$ with $J\subset P^{\ast
}\left(  A\right)  $ is $P^{\ast}$-radical. Then $J:=I\cap P^{\ast}\left(
A\right)  $ is $P^{\ast}$-radical; as $J$ is an ideal of $I$, it follows that
\[
J=P^{\ast}\left(  J\right)  \subset P^{\ast}\left(  I\right)  \subset I\cap
P^{\ast}\left(  A\right)  =J,
\]
i.e. $P^{\ast}\left(  I\right)  =I\cap P^{\ast}\left(  A\right)  $. Therefore
$P^{\ast}$ is hereditary.
\end{proof}

As is well known (see for instance \cite{AR79}), the radicals $\mathfrak{P}%
_{\beta}$, $\mathfrak{P}_{\lambda}$, $\mathfrak{P}_{\kappa}$ and
$\operatorname{rad}$ are hereditary.

\begin{theorem}
\label{class}$\mathcal{P}_{\beta}$, $\mathcal{P}_{\lambda}$, $\mathcal{P}%
_{\kappa}$ and $\overline{\operatorname{rad}}^{\ast}$ are hereditary.
\end{theorem}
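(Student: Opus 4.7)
The plan is to apply Theorem~\ref{heredit} to each of the four radicals. By definition, $\mathcal{P}_{\beta}=\overline{\mathfrak{P}_{\beta}}^{\ast}$, $\mathcal{P}_{\lambda}=\overline{\mathfrak{P}_{\lambda}}^{\ast}$, $\mathcal{P}_{\kappa}=\overline{\mathfrak{P}_{\kappa}}^{\ast}$, while $\overline{\operatorname{rad}}^{\ast}$ is already presented in the required form. So it suffices to show that, for each algebraic radical $P\in\{\mathfrak{P}_{\beta},\mathfrak{P}_{\lambda},\mathfrak{P}_{\kappa},\operatorname{rad}\}$, the closure $\overline{P}$ is a weakly hereditary, nil-exact topological under radical with $\Sigma_{\beta}\leq\overline{P}$.

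By Lemma~\ref{aur} this reduces to verifying the three corresponding properties for $P$ itself. Each of the four $P$ is classically known to be a hereditary algebraic radical; heredity immediately implies weak heredity, since for any non-zero ideal $I$ of a $P$-radical algebra $A$ one has $P(I)=I\cap P(A)=I\ne 0$. The inequality $\Sigma_{\beta}\leq P$ also holds in all four cases, as every nilpotent ideal is contained in the Baer, Levitzki, K\"othe and Jacobson radicals.

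The substantive step, which will be the main obstacle, is nil-exactness. I would prove the auxiliary fact that \emph{any hereditary algebraic radical $P$ with $\Sigma_{\beta}\leq P$ is nil-exact}. Given an ideal $I$ of an algebra $A$, set $J:=P\ast I=q_{I}^{-1}(P(A/I))$, so $J$ is an ideal of $A$ with $J/I=P(A/I)$, and let $K:=I^{\perp}=\{x\in A:xI=0\}$, which is automatically a two-sided ideal of $A$. Form the product ideal $M:=KJ$. Since $MI\subset K(JI)\subset KI=0$, we have $M\subset K$, and consequently $N:=M\cap I\subset K\cap I$ satisfies $N^{2}\subset K\cdot I=0$. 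Thus $N$ is a nilpotent ideal of $A$, giving $N\subset\Sigma_{\beta}(A)\subset P(A)$. The isomorphism $M/N\cong(M+I)/I$ exhibits $M/N$ as an ideal of $A/I$ contained in $P(A/I)$, which makes it $P$-radical by heredity of $P$. Because $N\subset P(A)$, Proposition~\ref{ov}(1b) gives $P(A/N)=P(A)/N$, so $M/N\subset P(A)/N$, and therefore $KJ=M\subset P(A)$, which is the required nil-exactness.

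Combining this with Lemma~\ref{aur}, the closure $\overline{P}$ is a weakly hereditary, nil-exact topological under radical, and obviously $\Sigma_{\beta}\leq P\leq\overline{P}$. Theorem~\ref{heredit} then shows that $\overline{P}^{\ast}$ is hereditary for each of the four choices of $P$, establishing the statement. Apart from the nil-exactness computation just sketched, every step is a direct appeal to the machinery already developed in this section.
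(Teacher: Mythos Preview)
Your proof is correct and takes a genuinely different route from the paper. The paper verifies nil-exactness separately for $\Sigma_{\beta}$, $\mathfrak{P}_{\lambda}$, $\mathfrak{P}_{\kappa}$ and $\operatorname{rad}$, in each case by a direct computation tailored to the specific radical (e.g.\ for $\operatorname{rad}$ it argues via irreducible representations, for $\mathfrak{P}_{\kappa}$ via nilpotency of individual elements, etc.). You instead prove the uniform lemma that \emph{every hereditary algebraic radical $P$ with $\Sigma_{\beta}\le P$ is nil-exact}, and then apply it to $\mathfrak{P}_{\beta},\mathfrak{P}_{\lambda},\mathfrak{P}_{\kappa},\operatorname{rad}$ simultaneously.

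Your argument is sound: with $M=KJ$ one has $MI=0$, hence $N=M\cap I$ has $N^{2}\subset KI=0$ and so $N\subset\Sigma_{\beta}(A)\subset P(A)$; the algebraic isomorphism $M/N\cong(M+I)/I$ together with heredity of $P$ shows that $M/N$ is $P$-radical, and then Axiom~4 plus Proposition~\ref{ov}(1b) (valid because a radical is in particular an over radical) give $M/N\subset P(A/N)=P(A)/N$, whence $M\subset P(A)$. This is cleaner and more conceptual than the paper's four ad hoc checks, and it immediately yields nil-exactness for any further hereditary radical above $\Sigma_{\beta}$ one might encounter. The only minor imprecision is the phrase ``exhibits $M/N$ as an ideal of $A/I$''---strictly speaking $M/N$ is an ideal of $A/N$ algebraically isomorphic to the ideal $(M+I)/I$ of $A/I$; since $P$ is an algebraic preradical this is harmless, but it is worth stating explicitly.
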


\begin{proof}
To apply Lemma \ref{aur} and Theorem \ref{heredit}, one has to show that the
algebraic under radicals $\Sigma_{\beta}$ (see Corollary \ref{baer}),
$\mathfrak{P}_{\lambda}$, $\mathfrak{P}_{\kappa}$ and $\operatorname{rad}$ are
weakly hereditary and nil exact. Since any ideal of a $\Sigma_{\beta}%
$-radical, locally nilpotent, nil or Jacobson-radical algebra have the same
structure, all the under radicals are weakly hereditary. It remains to
establish nil exactness. Let $A$ be an algebra, and let $I$ be an ideal of
$A$, $B=A/I$ and $a\in I^{\bot}$.

Let $x\in q_{I}^{-1}\left(  \Sigma_{\beta}\left(  B\right)  \right)  $; then
$q_{I}\left(  x\right)  =b_{1}+\ldots+b_{n}$, where $b_{i}$ is in a nilpotent
ideal of $B$, for each $i$. We take $x_{i}\in A$ such that $b_{i}=q_{I}\left(
x_{i}\right)  $. Then $q\left(  ax_{i}\right)  $ lies in a nilpotent ideal of
$B$ for every $i$: there is $n_{i}>0$ such that $q_{I}\left(  ax_{i}A\right)
^{n_{i}}=0$, i.e. $\left(  ax_{i}A\right)  ^{n_{i}}\subset I$. As
$ax_{i}A\subset I^{\bot}=0$ then $\left(  ax_{i}A\right)  ^{n_{i}+1}=0$. Hence
$ax_{i}\in\Sigma_{\beta}\left(  A\right)  $ for every $i$ and therefore
$ax=ax_{1}+\ldots+ax_{n}\in\Sigma_{\beta}\left(  A\right)  $.

Let $N\subset q_{I}^{-1}\left(  \mathfrak{P}_{\lambda}\left(  B\right)
\right)  $ be finite, and let $A_{aN}$ be the subalgebra generated by $aN$.
Then $q_{I}\left(  aN\right)  \subset\mathfrak{P}_{\lambda}\left(  B\right)  $
generates the nilpotent subalgebra: there is $m>0$ such that $q_{I}\left(
A_{aN}\right)  ^{m}=0$, i.e. $\left(  A_{aN}\right)  ^{m}\subset I$. But
$A_{aN}\subset I^{\bot}$, whence $\left(  A_{aN}\right)  ^{m+1}=0$. As $N$ is
arbitrary, we conclude that $aq_{I}^{-1}\left(  \mathfrak{P}_{\lambda}\left(
B\right)  \right)  \subset\mathfrak{P}_{\lambda}\left(  A\right)  $.

Let $x\in q_{I}^{-1}\left(  \mathfrak{P}_{\kappa}\left(  B\right)  \right)  $,
and let $J$ be the ideal of $A$ generated by $ax$. Then $q_{I}\left(
J\right)  $ consists of nilpotents. In particular, if $b\in J$ is arbitrary
then there is $k>0$ such that $b^{k}\in I$. As $b\in I^{\bot}$ then
$b^{k+1}=0$. This means that $ax\in\mathfrak{P}_{\kappa}\left(  A\right)  $.

Let $x\in q_{I}^{-1}\left(  \operatorname{rad}\left(  B\right)  \right)  $,
and let $\pi\in\operatorname{Irr}\left(  A\right)  $ be arbitrary. If
$\pi\left(  I\right)  \neq0$ then $\pi\left(  I\right)  $ is a strictly
irreducible algebra of operators and $\pi(a)\pi(I)=\pi(aI)=0$ implies that
$\pi(a)=0$, whence $\pi(ax)=0$. If $\pi\left(  I\right)  =0$ then there exists
$\tau\in\operatorname{Irr}\left(  B\right)  $ such that $\pi=\tau\circ q_{I}$;
as $\tau\left(  q_{I}\left(  ax\right)  \right)  =0$ then $\pi\left(
ax\right)  =0$. Therefore $ax\in$ $\operatorname{rad}\left(  A\right)  $.

All under radicals in consideration are nil exact. By Lemma \ref{aur} and
Theorem \ref{heredit}, $\mathcal{P}_{\beta}$, $\mathcal{P}_{\lambda}$,
$\mathcal{P}_{\kappa}$ and $\overline{\operatorname{rad}}^{\ast}$ are hereditary.
\end{proof}

Establishing heredity of the radical $\mathcal{P}_{\beta}$, Theorem
\ref{class} answers a question posed by Dixon \cite[Page 188]{D97}.

Recall that, for any algebra $A$, $\digamma\!\left(  A\right)  $ is the set of
all finite rank elements of $A$.

\begin{lemma}
\label{spfr}Let $A$ be a semiprime algebra. Then

\begin{enumerate}
\item $\digamma\!\left(  A\right)  \cap\operatorname{rad}\left(  A\right)  =0$.

\item If $A$ is normed then $\overline{\digamma\!\left(  A\right)  }%
\cap\overline{\operatorname{rad}\left(  A\right)  }=0$.
\end{enumerate}
\end{lemma}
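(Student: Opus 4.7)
The strategy for (1) is to combine the finite-dimensional nilpotency forced by $a\in\digamma(A)\cap\operatorname{rad}(A)$ with semiprimeness of $A$, and then to obtain (2) by a closure and spectral-approximation version of the same template.

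For (1), set $B:=aA^{1}a$. The hypothesis $a\in\digamma(A)$ gives $\dim B<\infty$, while $B\subset\operatorname{rad}(A)$ because $\operatorname{rad}(A)$ is a two-sided ideal containing $a$. Hence every $b\in B$ satisfies $\sigma_{A}(b)=\{0\}$, and a finite-dimensional associative $\mathbb{C}$-algebra consisting of quasinilpotent elements is nilpotent (Wedderburn: the semisimple quotient $B/\operatorname{rad}(B)$ would otherwise contain elements of nonzero spectrum). Thus $B^{n}=0$ for some minimal $n\ge 1$.

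The crux is to upgrade $(aAa)^{n}=0$ to $a=0$ using semiprimeness. I argue by induction. In the base case $aAa=0$, one has $(aA)(aA)=(aAa)A=0$, so the right ideal $aA$ has square zero and thus vanishes by semiprimeness in the one-sided form (Lemma 30.4 of \cite{BD73}); symmetrically $Aa=0$, so $\mathbb{C}a$ is a two-sided ideal with $(\mathbb{C}a)^{2}\subset aA=0$, forcing $a=0$. For the inductive step I first pass from $a$ to $a^{2}$: the sandwich operator $b\mapsto aba$ on the finite-dimensional space $B$ is nilpotent (since $a\in\operatorname{rad}(A)$), hence has nontrivial kernel, so $\dim(a^{2}Aa^{2})=\dim(aBa)<\dim B$. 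Applying the inductive hypothesis to $a^{2}\in\digamma(A)\cap\operatorname{rad}(A)$ yields $a^{2}=0$, whence $(aAa)^{2}=aAa^{2}Aa=0$; choosing a nonzero $b\in aAa$ (otherwise the base case already applies), we obtain $b^{2}=0$, and then a semiprime-style calculation combining $b^{2}=0$ with the sandwich containment $bAb\subset aAa$ forces first $bAb=0$, and then $b=0$ by the base-case mechanism applied to $b$ in the role of $a$, contradicting $b\ne 0$.

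For (2), the intersection $I:=\overline{\digamma(A)}\cap\overline{\operatorname{rad}(A)}$ is a closed ideal of $A$ and, by heredity of the Baer radical (Theorem~\ref{class}), $\mathfrak{P}_{\beta}(I)=I\cap\mathfrak{P}_{\beta}(A)=0$, so $I$ is itself semiprime. Elements of $I$ are approximable, so their sandwich operators on $A$ are compact limits of finite-rank operators, and they lie in $\overline{\operatorname{rad}(A)}$, so after passing to the completion $\widehat{A}$ they are quasinilpotent by upper semicontinuity of the spectral radius. Running the argument of (1) inside the semiprime algebra $I$, the compact-quasinilpotent sandwich forces $aAa=0$ for every $a\in I$, and the base case of (1) applied within $I$ then gives $I=0$.

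\textbf{Main obstacle.} The delicate point is the inductive reduction in (1) from $a^{2}=0$ to $a=0$: because the two-sided ideal of $A$ generated by $bAb$ is only idempotent in general (one verifies $(bxb)v(byb)=b(xbvby)b\in bAb$, so its square lies in itself), closing the argument requires exploiting $b^{2}=0$ together with the sandwich containment $bAb\subset aAa$ to produce a two-sided ideal of genuine square zero, where semiprimeness can bite. The secondary obstacle in (2) is making the quasinilpotency of elements of $\overline{\operatorname{rad}(A)}$ precise when $A$ is only normed; this is handled by working inside $\widehat{A}$.
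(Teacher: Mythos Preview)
Your argument for (1) has a genuine gap at the inductive step. You claim that ``a semiprime-style calculation combining $b^{2}=0$ with the sandwich containment $bAb\subset aAa$ forces first $bAb=0$,'' but this implication is false as stated. Take $A=M_2(\mathbb{C})$ and $a=b=E_{12}$: then $A$ is semiprime, $b^2=0$, $bAb=aAa=\mathbb{C}E_{12}$, and $(aAa)^2=0$, yet $bAb\neq 0$. Of course in this example $a\notin\operatorname{rad}(A)$, which is exactly the point: the step must reuse the hypothesis $b\in\operatorname{rad}(A)$, and you never invoke it there. Your inductive scheme can in fact be repaired by applying the induction hypothesis to $b$ rather than seeking a direct calculation: one checks $bA^1b\subsetneq aA^1a$ (if $bA^1b=aA^1a$ then $b=bcb$ for some $c$, so $cb$ is an idempotent in $\operatorname{rad}(A)$, hence $cb=0$ and $b=0$), whence $b=0$ by induction, contradicting $b\neq 0$ and giving $aAa=0$. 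But as written, the ``semiprime-style calculation'' is not a calculation at all. The paper avoids this entirely: it bounds the nilpotency index of every $ax$, $a\in A^1$, uniformly (since $\operatorname{rank}\mathrm{W}_{ax}\le\operatorname{rank}\mathrm{W}_x$) and then applies the Nagata--Higman theorem to conclude that the left ideal $A^1x$ is nilpotent, contradicting semiprimeness directly.

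Your argument for (2) also has a gap. You assert that elements of $\overline{\operatorname{rad}(A)}$ are quasinilpotent in $\widehat{A}$ ``by upper semicontinuity,'' but this presupposes that elements of $\operatorname{rad}(A)$ are themselves quasinilpotent in $\widehat{A}$, and for a general normed algebra this fails (cf.\ \cite[Example 9.3]{D97}, where a dense radical subalgebra of a semisimple Banach algebra is exhibited). The paper's proof of (2) is far simpler and avoids any spectral continuity: for $x,y\in I$ take $a_n\in\digamma(A)$ with $a_n\to x$ and $b_n\in\operatorname{rad}(A)$ with $b_n\to y$; then $a_nb_n\in\digamma(A)\cap\operatorname{rad}(A)=0$ by (1) (note $(a_nb_n)A(a_nb_n)\subset (a_nAa_n)b_n$ is finite-dimensional, and $\operatorname{rad}(A)$ is an ideal), so $xy=0$, $I^2=0$, and semiprimeness gives $I=0$.
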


\begin{proof}
$\left(  1\right)  $ Assume that $x\in\digamma\!\left(  A\right)
\cap\operatorname{rad}\left(  A\right)  $ is nonzero, and let $n$ be the rank
of \textrm{W}$_{x}$. Then $xAx$ is $n$-dimensional and the subalgebra
generated by $x$ in $A^{1}$ is at most $\left(  n+2\right)  $-dimensional. So
$x$ is an algebraic element in $\operatorname{rad}\left(  A\right)  $, whence
there is a minimal polynomial $p\left(  x\right)  $ on $x$ with degree at most
$n+2$. The roots of $p$ lie in the spectrum of $x$ with respect to $A^{1}$ by
\cite[Section 1.1.2]{B67}. As $x$ is in $\operatorname{rad}\left(  A\right)
$, these roots are equal to $0$, whence $x^{n+2}=0$.

Let $a\in A^{1}$ be arbitrary. As \textrm{W}$_{ax}=\mathrm{L}_{a}%
\mathrm{W}_{x}\mathrm{R}_{a}$, the rank of \textrm{W}$_{ax}$ is at most $n$.
As $ax\in$ $\operatorname{rad}\left(  A\right)  $, it follows that $\left(
ax\right)  ^{n+2}=0$ by above. Since $A^{1}x$ consists of nilpotents of fixed
degree, $A^{1}x$ is a nilpotent algebra by the Nagata-Higman theorem (in fact,
Dubnov-Nagata-Higman theorem, see \cite{Form}). As $A^{1}x$ is a non-zero left
nilpotent ideal of $A$, we obtain that $A$ is not semiprime, a contradiction.
Therefore $\digamma\!\left(  A\right)  \cap\operatorname{rad}\left(  A\right)
=0$.

$\left(  2\right)  $ Let $x,y\in I:=\overline{\digamma\!\left(  A\right)
}\cap\overline{\operatorname{rad}\left(  A\right)  }$ be arbitrary. Let
$\left(  a_{n}\right)  \subset\digamma\!\left(  A\right)  $ and $\left(
b_{m}\right)  \subset\operatorname{rad}\left(  A\right)  $ be such that
$a_{n}\rightarrow x$ and $b_{n}\rightarrow y$ as $n\rightarrow\infty$. Then
$a_{n}b_{n}=0$ for all $n$ by $\left(  1\right)  $, but $a_{n}b_{n}\rightarrow
xy$ as $n\rightarrow\infty$, whence $xy=0$ and $I^{2}=0$. As $I$ is an ideal
of $A$ and $A$ is semiprime, $I=0$.
\end{proof}

Let us, for brevity, call a subset $M$ of an algebra $A$ \textit{square zero}
if $ab=0$ for all $a,b\in M$.

\begin{theorem}
\label{baerher}$\mathfrak{P}_{\beta}=\mathfrak{R}_{\mathrm{hf}}\wedge
\operatorname{rad}$ and $\mathcal{R}_{\beta}=\mathcal{R}_{\mathrm{hf}}%
\wedge\overline{\operatorname{rad}}^{\ast}\leq\mathcal{R}_{\mathrm{jhc}%
}=\mathcal{R}_{\mathrm{hc}}\wedge\mathcal{R}_{\mathrm{cq}}$.
\end{theorem}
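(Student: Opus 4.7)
The statement combines the algebraic equality $\mathfrak{P}_{\beta}=\mathfrak{R}_{\mathrm{hf}}\wedge\operatorname{rad}$, its topological counterpart $\mathcal{R}_{\beta}=\mathcal{R}_{\mathrm{hf}}\wedge\overline{\operatorname{rad}}^{\ast}$, the identification of $\mathcal{R}_{\mathrm{jhc}}$ as a meet, and the inequality connecting the two. The organising observation is that, for hereditary radicals, the operation $\wedge$ is computed pointwise as an ideal intersection by (\ref{hp}); since the three principals $\mathfrak{R}_{\mathrm{hf}}$ (respectively $\mathcal{R}_{\mathrm{hf}}$), $\operatorname{rad}$ (respectively $\overline{\operatorname{rad}}^{\ast}$), and the two compactness-type radicals $\mathcal{R}_{\mathrm{hc}},\mathcal{R}_{\mathrm{cq}}$ are all hereditary by Corollary \ref{hfr}, Theorem \ref{hc}, and Theorem \ref{class}, each meet in the statement becomes an honest intersection, and we only have to bound the intersection.

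\textbf{Easy pieces.} For $\mathcal{R}_{\mathrm{jhc}}=\mathcal{R}_{\mathrm{hc}}\wedge\mathcal{R}_{\mathrm{cq}}$, the definition from Section \ref{2.3.4} reads $\mathcal{R}_{\mathrm{jhc}}=\mathcal{R}_{\mathrm{hc}}\cap\operatorname{Rad}^{r}$; the inclusion (\ref{aff}) gives $\mathcal{R}_{\mathrm{hc}}\cap\operatorname{Rad}^{r}\subset\mathcal{R}_{\mathrm{cq}}$ while (\ref{comprad}) gives $\mathcal{R}_{\mathrm{cq}}\leq\operatorname{Rad}^{r}$, so the three intersections coincide. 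A nilpotent ideal $I$ with $I^{n}=0$ is hypocompact via the chain $I\supset I^{2}\supset\cdots\supset 0$ (whose gap-quotients are square-zero, hence bifinite) and compactly quasinilpotent because $M^{n}=0$ for every $M\in\mathfrak{k}(I)$; thus $\Sigma_{\beta}\leq\mathcal{R}_{\mathrm{hc}}\cap\mathcal{R}_{\mathrm{cq}}=\mathcal{R}_{\mathrm{jhc}}$, and isotonicity of the convolution procedure (Proposition \ref{isotone}) yields $\mathcal{R}_{\beta}=\overline{\Sigma_{\beta}}^{\ast}\leq\mathcal{R}_{\mathrm{jhc}}$. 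The same chain shows $\Sigma_{\beta}\leq\mathfrak{R}_{\mathrm{hf}}\cap\operatorname{rad}$; after convolution (and closure in the topological case) this gives the ``$\leq$'' halves of both equalities, namely $\mathfrak{P}_{\beta}\leq\mathfrak{R}_{\mathrm{hf}}\wedge\operatorname{rad}$ and $\mathcal{R}_{\beta}\leq\mathcal{R}_{\mathrm{hf}}\wedge\overline{\operatorname{rad}}^{\ast}$.

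\textbf{The reverse halves.} By Corollary \ref{ineq} it suffices to show that any $\mathfrak{P}_{\beta}$-semisimple (respectively $\mathcal{R}_{\beta}$-semisimple) algebra $B$ is semisimple for the right-hand meet. Since $\Sigma_{\beta}$ is dominated by both $\mathfrak{P}_{\beta}$ and $\mathcal{R}_{\beta}$, such a $B$ is automatically semiprime. The decisive claim is then a single observation: \emph{in any semiprime $B$, the hypofinite radical $K:=\mathfrak{R}_{\mathrm{hf}}(B)$, respectively $K:=\mathcal{R}_{\mathrm{hf}}(B)$, is Jacobson-semisimple.} Indeed, heredity of $\operatorname{rad}$ gives $L:=\operatorname{rad}(K)=K\cap\operatorname{rad}(B)$, so $L$ is an ideal of $B$ contained in $K$; heredity of the hypofinite radical (Corollary \ref{hfr}, and its topological extension from Theorem \ref{hc}) makes $L$ itself (closed-)hypofinite. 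If $L\neq 0$, Theorem \ref{hf}(4) yields $\digamma\!\left(L\right)\neq 0$, and Theorem \ref{hf}(2) then produces a non-zero $x\in L\cap\digamma\!\left(B\right)\subset\operatorname{rad}(B)\cap\digamma\!\left(B\right)$, contradicting Lemma \ref{spfr}(1). Hence $\operatorname{rad}(K)=0$. In the algebraic case this is already $K\cap\operatorname{rad}(B)=0$, the required semisimplicity. In the topological case $\overline{\operatorname{rad}(K)}=0$ makes the chain defining $\overline{\operatorname{rad}}^{\ast}(K)$ stationary at $0$, so $\overline{\operatorname{rad}}^{\ast}(K)=0$; by heredity of $\overline{\operatorname{rad}}^{\ast}$ (Theorem \ref{class}) this equals $K\cap\overline{\operatorname{rad}}^{\ast}(B)$, finishing the proof.

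\textbf{Main obstacle.} The transfinite iteration baked into $\overline{\operatorname{rad}}^{\ast}$ is the superficial difficulty: a naive induction along the defining chain $I_{\alpha}$ inside $B$ would require controlling semiprimeness of intermediate quotients $B/I_{\alpha}$, which generally fails. The resolution is to collapse the chain inside $K$ rather than $B$: once $\operatorname{rad}(K)=0$, the first step of the chain for $\overline{\operatorname{rad}}^{\ast}(K)$ is already trivial, and heredity of $\overline{\operatorname{rad}}^{\ast}$ (Theorem \ref{class}) transports this triviality back to the intersection in $B$, reducing the whole topological case to the same finite-rank versus Jacobson-radical clash that drives the algebraic one.
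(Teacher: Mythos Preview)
Your proof is correct and follows essentially the same approach as the paper: both reduce the reverse inequalities to the clash, in a semiprime algebra, between a non-zero finite rank element produced by hypofiniteness and membership in the Jacobson radical, invoking Lemma~\ref{spfr}. The one organisational difference worth noting is in the topological reverse half: the paper writes $(\mathcal{R}_{\mathrm{hf}}\wedge\overline{\operatorname{rad}}^{\ast})(B)=\overline{\operatorname{rad}}^{\ast}(\mathcal{R}_{\mathrm{hf}}(B))$ via Corollary~\ref{herr}, observes that this forces $J:=\overline{\operatorname{rad}}(\mathcal{R}_{\mathrm{hf}}(B))\neq 0$, and then uses Lemma~\ref{spfr}(2) (the closure version) to derive the contradiction; you instead show $\operatorname{rad}(K)=0$ directly with Lemma~\ref{spfr}(1), conclude $\overline{\operatorname{rad}}^{\ast}(K)=0$ from the trivial first step of the convolution chain, and then invoke heredity of $\overline{\operatorname{rad}}^{\ast}$ (Theorem~\ref{class}) to transport this to $K\cap\overline{\operatorname{rad}}^{\ast}(B)$. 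Your packaging is slightly cleaner in that it avoids the closure part of Lemma~\ref{spfr}; for the inequality $\mathcal{P}_{\beta}\le\mathcal{R}_{\mathrm{jhc}}$, the paper takes the shortcut $\mathcal{P}_{\beta}\le\mathcal{R}_{\mathrm{hf}}\le\mathcal{R}_{\mathrm{hc}}$ (from the equality just proved) rather than arguing that nilpotent ideals are hypocompact, which is a marginally shorter route than yours.
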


\begin{proof}
Let $A$ be an algebra, and let $\Sigma_{0}\left(  A\right)  $ denote the sum
of all square zero left ideals of $A$. If $L$ is a square zero left ideal of
$A$ then $La$ is also a square zero left ideal of $A$: $L(aL)a\subset(LL)a=0$.
So $\Sigma_{0}\left(  A\right)  $ is an ideal of $A$. It is easy to check that
$\Sigma_{0}$ is an under radical. If $I$ is a nilpotent ideal of $A$ then it
is obvious that $I\subset\Sigma_{0}^{\ast}\left(  A\right)  $. Therefore
$\Sigma_{\beta}\left(  A\right)  \subset\Sigma_{0}^{\ast}\left(  A\right)  $.
As $\Sigma_{0}^{\ast}$ is a radical then $\mathfrak{P}_{\beta}=\Sigma_{\beta
}^{\ast}\subset\Sigma_{0}^{\ast}$. On the other hand, every semiprime algebra
is $\Sigma_{0}^{\ast}$-semisimple, whence
\[
\mathfrak{P}_{\beta}=\Sigma_{0}^{\ast}.
\]
\ Note that every square zero left ideal consists of zero rank elements of
$A$. So $\Sigma_{0}\left(  A\right)  $ is an ideal of $A$ consisting of finite
rank elements. Therefore $\Sigma_{0}\leq\mathfrak{R}_{\mathrm{hf}}$, whence
\[
\mathfrak{P}_{\beta}=\Sigma_{0}^{\ast}\leq\mathfrak{R}_{\mathrm{hf}}.
\]
It is well-known that $\mathfrak{P}_{\beta}\leq\operatorname{rad}$. Thus
\[
\mathfrak{P}_{\beta}\leq\mathfrak{R}_{\mathrm{hf}}\wedge\operatorname{rad}.
\]

Let now $A$ be semiprime. We must to prove that $A$ is $\left(  \mathfrak{R}%
_{\mathrm{hf}}\wedge\operatorname{rad}\right)  $-semisimple. Assume, to the
contrary, that
\[
I:=\left(  \mathfrak{R}_{\mathrm{hf}}\wedge\operatorname{rad}\right)  \left(
A\right)  =\mathfrak{R}_{\mathrm{hf}}\left(  A\right)  \cap\operatorname{rad}%
\left(  A\right)  \neq0
\]
by $\left(  \ref{hp}\right)  $. By heredity of $\mathfrak{R}_{\mathrm{hf}}$,
$I$ is an $\mathfrak{R}_{\mathrm{hf}}$-radical ideal of $A$. By Theorem
\ref{hf}$\left(  5\right)  $, there is a non-zero finite rank element $x$ of
$A$ such that $x\in\operatorname{rad}\left(  A\right)  $, a contradiction to
Lemma \ref{spfr}. So $A$ is $\left(  \mathfrak{R}_{\mathrm{hf}}\wedge
\operatorname{rad}\right)  $-semisimple, whence
\[
\mathfrak{R}_{\mathrm{hf}}\wedge\operatorname{rad}\leq\mathfrak{P}_{\beta}.
\]

It is clear that $\overline{\mathfrak{P}_{\beta}}^{\ast}\leq\overline
{\mathfrak{R}_{\mathrm{hf}}}^{\ast}\wedge\overline{\operatorname{rad}}^{\ast}%
$, whence $\mathcal{P}_{\beta}\leq\mathcal{R}_{\mathrm{hf}}\wedge
\overline{\operatorname{rad}}^{\ast}$.

Let $A$ be a $\mathcal{P}_{\beta}$-semisimple normed algebra; then $A$ is
semiprime. Assume, to the contrary, that $I:=\left(  \mathcal{R}_{\mathrm{hf}%
}\wedge\overline{\operatorname{rad}}^{\ast}\right)  \left(  A\right)  \neq0$.
By Corollary \ref{herr}, $I=\overline{\operatorname{rad}}^{\ast}\left(
\mathcal{R}_{\mathrm{hf}}\left(  A\right)  \right)  $. Therefore
$J:=\overline{\operatorname{rad}}\left(  \mathcal{R}_{\mathrm{hf}}\left(
A\right)  \right)  \neq0$. Arguing as above, one has that there is a non-zero
element $x\in\overline{\digamma\!\left(  A\right)  }$ such that $x\in
J\subset\overline{\operatorname{rad}}\left(  A\right)  $, whence $x=0$ by
Lemma \ref{spfr}, a contradiction. Hence $A$ is $\left(  \mathcal{R}%
_{\mathrm{hf}}\wedge\overline{\operatorname{rad}}^{\ast}\right)  $-semisimple.
Therefore $\mathcal{P}_{\beta}\leq\mathcal{R}_{\mathrm{hf}}\wedge
\overline{\operatorname{rad}}^{\ast}\leq\mathcal{P}_{\beta}$.

By definition, $\mathcal{R}_{\mathrm{jhc}}=\mathcal{R}_{\mathrm{hc}}%
\wedge\operatorname{Rad}^{r}$. By $\left(  \ref{aff}\right)  $, $\mathcal{R}%
_{\mathrm{hc}}\wedge\operatorname{Rad}^{r}\leq\mathcal{R}_{\mathrm{cq}}$,
whence
\[
\mathcal{R}_{\mathrm{jhc}}=\left(  \mathcal{R}_{\mathrm{hc}}\wedge
\operatorname{Rad}^{r}\right)  \wedge\mathcal{R}_{\mathrm{cq}}=\mathcal{R}%
_{\mathrm{hc}}\wedge\mathcal{R}_{\mathrm{cq}}.
\]
As every nilpotent ideal of $A$ lies in $\mathcal{R}_{\mathrm{cq}}\left(
A\right)  $ then $\mathfrak{P}_{\beta}\leq\mathcal{R}_{\mathrm{cq}}$, whence
also $\mathcal{P}_{\beta}\leq\mathcal{R}_{\mathrm{cq}}$. Taking into
consideration that $\mathcal{P}_{\beta}=\mathcal{R}_{\mathrm{hf}}%
\wedge\overline{\operatorname{rad}}^{\ast}<\mathcal{R}_{\mathrm{hc}}$, we
obtain that $\mathcal{P}_{\beta}\leq\mathcal{R}_{\mathrm{jhc}}$.
\end{proof}

\section{Centralization}

\subsection{Commutative ideals and centralization of radicals}

Let $A\in\mathfrak{U}_{\mathrm{a}}$ be an algebra, and let $Z\left(  A\right)
$ be the center of $A$. An ideal $J$ of $A$ is called \textit{central} if
$J\subset Z\left(  A\right)  $. Let $\Sigma_{a}\left(  A\right)  $ be the sum
of all commutative ideals of $A$.

\begin{lemma}
\label{commut}

\begin{enumerate}
\item $\Sigma_{a}$ is a preradical.

\item If $A$ is a semiprime algebra then $\Sigma_{a}\left(  A\right)  $ is the
largest central ideal of $A$ and

\begin{enumerate}
\item $\Sigma_{a}\left(  A\right)  =\left\{  x\in A:x\left[  a,b\right]
=0\text{ }\forall a,b\in A\right\}  $;

\item $\Sigma_{a}\left(  J\right)  =J\cap\Sigma_{a}\left(  A\right)  $ for
every ideal $J$ of $A$.
\end{enumerate}
\end{enumerate}
\end{lemma}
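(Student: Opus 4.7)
For part~(1), $\Sigma_a(A)$ is the sum of all commutative ideals, hence itself an ideal of $A$, so $\Sigma_a$ is an ideal map; and any morphism $f\colon A\to B$ carries each commutative ideal $K$ of $A$ to the commutative ideal $f(K)$ of $B$, so summing over $K$ gives $f(\Sigma_a(A))\subset\Sigma_a(B)$, which is Axiom~1.

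The core of part~(2) is the claim that every commutative ideal $J$ of a semiprime $A$ lies in $Z(A)$. Fix $x\in J$ and $a\in A$, set $y=[x,a]\in J$, and note that $x,xa,ax\in J$ together with commutativity of $J$ give $xax=x^{2}a=ax^{2}$, whence $x^{2}\in Z(A)$. Substituting into the expansion of $y^{2}=(xa-ax)^{2}$ collapses all four terms to $a^{2}x^{2}$, so $y^{2}=0$. Because $y,ya'\in J$, commutativity of $J$ forces $(ya')y=y(ya')=y^{2}a'=0$, so $yAy=0$; hence $(Ay)^{2}\subset A(yAy)=0$ and semiprimeness of $A$ gives $Ay=0$, and symmetrically $yA=0$, so $\mathbb{C}y$ is a square-zero ideal, forcing $y=0$. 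Thus $J\subset Z(A)$, and since any central ideal is commutative, $\Sigma_a(A)$ is the largest central ideal.

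For~(2a), let $K=\{x\in A:x[a,b]=0\ \forall\,a,b\in A\}$, the left annihilator of $[A,A]$; it is clearly a left ideal, and a routine computation using $x[ca,b]=x[c,ab]=0$ and $x[a,b]c=0$ shows $(xc)[a,b]=0$, so $K$ is two-sided. For $\Sigma_a(A)\subset K$: if $x$ is in a commutative ideal, the identity $(xa)(xb)=(xb)(xa)$ with $x$ central gives $x^{2}[a,b]=0$, making $xA$ itself a commutative ideal of $A$; hence $xa\in\Sigma_a(A)\subset Z(A)$ is central, forcing $xab=bxa=xba$, i.e.\ $x[a,b]=0$. For $K\subset\Sigma_a(A)$: given $x\in K$ and $a\in A$, $y:=[x,a]\in K$ (two-sided), and the key trick is $y^{2}=y[x,a]=0$ since $y$ annihilates every commutator; similarly $yby=y^{2}b=0$ follows from $y[b,y]=0$ and $y^{2}=0$, so $yAy=0$ and the same semiprime cascade gives $y=0$. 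Hence $K\subset Z(A)$ is a central ideal, thus commutative, so $K\subset\Sigma_a(A)$.

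For~(2b), the inclusion $J\cap\Sigma_a(A)\subset\Sigma_a(J)$ is immediate since the intersection is an ideal of $J$ lying in the commutative $\Sigma_a(A)$. For the reverse, invoke the classical fact that any ideal of a semiprime algebra is semiprime, so (2a) applies to $J$. Given $x\in\Sigma_a(J)$, run the argument of part~(2) with $x\in Z(J)$ in place of ``$x$ in a commutative ideal of $A$'' (noting $x^{2}\in Z(A)$ still holds since $xb\in J$ for every $b\in A$) to obtain $y=[x,a]=0$ and hence $x\in Z(A)$. To upgrade the internal annihilation $x[J,J]=0$ to the external $x[A,A]=0$, apply $x\in K_J$ to the pair $(xa,xb)\in J\times J$ and use $[xa,xb]=x^{2}[a,b]$ to get $x^{3}[a,b]=0$; then $v:=x^{2}[a,b]$ satisfies $xv=0$ and, via centrality of $x^{2}$ together with $x^{4}[a,b]=x\cdot x^{3}[a,b]=0$, one computes $vAv=0$, so semiprimeness of $A$ yields $v=0$. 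With $x^{2}[a,b]=0$ for all $a,b\in A$, the principal ideal $(x)_A=\mathbb{C}x+xA$ is commutative and contains $x$, whence $x\in\Sigma_a(A)$. The main obstacle is precisely this last upgrading step: the starting hypothesis supplies only internal annihilation $x[J,J]=0$, and extracting the external $x[A,A]=0$ requires the two-stage semiprime cancellation on $v=x^{2}[a,b]$ to descend from $x^{3}[a,b]=0$.
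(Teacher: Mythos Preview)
Your argument is correct but considerably more laborious than the paper's, and there is one small point in (2b) that needs an extra line. For the core of (2), the paper simply notes that $[x,a]y=0$ for all $y\in J$ (from $x(ay)=(ay)x$ and $xy=yx$), whence $[x,a]b[x,a]=0$ since $b[x,a]\in J$, and semiprimeness finishes; your detour through $x^{2}\in Z(A)$ and $y^{2}=0$ is not needed. For $\Sigma_a(A)\subset K$ in (2a) the paper just uses that both $y$ and $ya$ lie in $Z(A)$, giving $y[a,b]=[ya,b]-[y,b]a=0$ in one step. The largest divergence is (2b): from the description (2a) applied to $J$, the paper reads off that $\Sigma_a(J)=\{x\in J:x[J,J]=0\}$ is a left ideal of $A$ (since $(ax)[c,d]=a\cdot x[c,d]=0$) and, by the opposite-algebra version, also a right ideal; being contained in $Z(J)$ it is a commutative two-sided ideal of $A$ and hence $\subset\Sigma_a(A)$ --- two lines in place of your element-wise upgrade through $x^{3}[a,b]=0\Rightarrow x^{2}[a,b]=0\Rightarrow(x)_A$ commutative.

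The small gap: when you ``run the argument of part~(2)'' with $x\in Z(J)$ in place of $x$ in a commutative ideal, the step $(ya')y=y(ya')$ originally relied on commutativity of $J$ applied to the pair $y,ya'$, not to $x$, so the substitution as stated does not cover it. You need to check separately that $y=[x,a]$ annihilates $J$: for $c\in J$ one has $yc=xac-axc$, and since $ac\in J$ with $x\in Z(J)$ gives $xac=acx$, while $xc=cx$ gives $axc=acx$, so $yc=0$. This is routine, but it is an extra verification beyond the parenthetical you give for $x^{2}\in Z(A)$.
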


\begin{proof}
$\left(  1\right)  $ is straightforward.

$\left(  2\right)  $ Let $J$ be a commutative ideal of $A$, and let $x,y\in
J$, $a,b\in A$ be arbitrary. Then
\[
\left[  x,a\right]  y=x\left(  ay\right)  -axy=\left(  ay\right)
x-axy=a\left[  y,x\right]  =0.
\]
As $b\left[  x,a\right]  \in J$, it follows that $\left[  x,a\right]  b\left[
x,a\right]  =0$, i.e. $\left[  x,a\right]  A\left[  x,a\right]  =0$. As $A$ is
semiprime, $\left[  x,a\right]  =0$, i.e. $J\subset Z\left(  A\right)  $,
whence $\Sigma_{a}\left(  A\right)  \subset Z\left(  A\right)  $. So
$\Sigma_{a}\left(  A\right)  $ is a commutative ideal of $A$. Then, by
definition, $\Sigma_{a}\left(  A\right)  $ is the largest ideal that lies in
$Z\left(  A\right)  $.

(a) Let $K=\left\{  x\in A:x\left[  a,b\right]  =0\text{ }\forall a,b\in
A\right\}  $. Let $x\in K$, $a,b,c\in A$ be arbitrary. It is clear that $K$ is
an ideal of $A$: indeed, one has
\[
xa\left[  b,c\right]  =xa\left[  b,c\right]  +x\left[  a,c\right]  b=x\left[
ab,c\right]  =0.
\]
Then $\left[  x,a\right]  b\in K$, whence $\left[  x,a\right]  b\left[
x,a\right]  =0$. Since $A$ is semiprime, $\left[  x,a\right]  =0$, i.e. $x\in
Z\left(  A\right)  $. Therefore $K\subset\Sigma_{a}\left(  A\right)  $.

Let $y\in\Sigma_{a}\left(  A\right)  $ be arbitrary. As $y,ya\in Z\left(
A\right)  $, we obtain that
\[
y\left[  a,b\right]  =\left[  ya,b\right]  -\left[  y,b\right]  a=0.
\]
Thus $\Sigma_{a}\left(  A\right)  \subset$ $K$.

$\left(  \text{b}\right)  $ Obviously $J\cap\Sigma_{a}\left(  A\right)
\subset\Sigma_{a}\left(  J\right)  $. As $J$ is a semiprime algebra,
$\Sigma_{a}\left(  J\right)  $ is described as in $\left(  \text{a}\right)  $
for $A=J$. Then $\Sigma_{a}\left(  J\right)  $ is a left ideal of $A$ that
lies in the center of $J$. By symmetry, $\Sigma_{a}\left(  J\right)  $ is a
commutative ideal of $A$ and so $\Sigma_{a}\left(  J\right)  \subset\Sigma
_{a}\left(  A\right)  $ by definition.
\end{proof}

\begin{remark}
\label{comsem}It follows from Lemma $\ref{commut}\left(  2\right)  $ that if
$A$ is a semiprime normed algebra then $\Sigma_{a}\left(  A\right)  $ is closed.
\end{remark}

Let $P$ be a preradical and let%
\[
P^{a}=\Sigma_{a}\ast P;
\]
$P^{a}$ is called the \textit{centralization} of $P$. The map $P\longmapsto
P^{a}$ is called the \textit{centralization procedure}.

Let $P$ be a radical such that $P\geq\mathfrak{P}_{\beta}$. If $P$ is
topological on $\mathfrak{U}_{\mathrm{n}}$ or $\mathfrak{U}_{\mathrm{b}}$, the
inequality $P\geq\mathfrak{P}_{\beta}$ is assumed to keep on $\mathfrak{U}%
_{\mathrm{n}}$ or $\mathfrak{U}_{\mathrm{b}}$, respectively; in such a case
the inequality $P\geq\mathcal{P}_{\beta}$ also holds.

\begin{theorem}
\label{pab}Let $P$ be a radical such that $P\geq\mathfrak{P}_{\beta}$. Then

\begin{enumerate}
\item $P^{a}$ is an under radical;

\item If $P$ is hereditary then $P^{a}$ is hereditary.
\end{enumerate}
\end{theorem}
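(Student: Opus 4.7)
The plan is to verify that $P^{a}=\Sigma_{a}\ast P$ satisfies Axioms $1$, $3$, and $4$ of an under radical for part (1), and then to upgrade to heredity in part (2). The key leverage throughout is that the hypothesis $P\geq\mathfrak{P}_{\beta}$ forces $A/P(A)$ to be semiprime for every $A$, so that both statements of Lemma~\ref{commut}(2) become available on the quotient.

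First I would record the preliminary fact: for any algebra $A$, $\mathfrak{P}_{\beta}(A/P(A))\leq P(A/P(A))=0$ by Axiom~$2$ for $P$, so $A/P(A)$ is semiprime; applying the same reasoning to an ideal $J$ of $A$ also makes $J/P(J)$ semiprime. Axiom~$1$ for $P^{a}$ then follows immediately from Lemma~\ref{conv}(1), since $\Sigma_{a}$ (by Lemma~\ref{commut}(1)) and $P$ are both preradicals. For Axiom~$3$, the observation is that $P(A)\subseteq P^{a}(A)$ together with Axioms~$3$ and~$4$ for $P$ gives $P(P^{a}(A))=P(A)$, so $P^{a}(A)/P(P^{a}(A))=\Sigma_{a}(A/P(A))$ is itself commutative and hence equal to its own $\Sigma_{a}$; this yields $P^{a}(P^{a}(A))=P^{a}(A)$.

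The hard part will be Axiom~$4$: for an ideal $J$ of $A$ I must show both that $P^{a}(J)\subseteq P^{a}(A)$ and that $P^{a}(J)$ is an ideal of $A$. For the inclusion I plan to use the canonical surjection $\psi:J/P(J)\longrightarrow q_{P(A)}(J)$ (well defined since $P(J)\subseteq P(A)$ by Axiom~$4$ for $P$): by Lemma~\ref{commut}(1), $\psi(\Sigma_{a}(J/P(J)))$ is a commutative ideal of the ideal $q_{P(A)}(J)$ of the semiprime algebra $A/P(A)$, and Lemma~\ref{commut}(2)(b) then embeds it inside $\Sigma_{a}(A/P(A))$, giving $P^{a}(J)\subseteq P^{a}(A)$. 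To see that $P^{a}(J)$ is an ideal of $A$, I would invoke the commutator characterization of Lemma~\ref{commut}(2)(a) inside the semiprime algebra $J/P(J)$: for $x\in P^{a}(J)$ one has $x[c,d]\in P(J)$ for all $c,d\in J$, and then for any $a\in A$, $ax[c,d]=a\cdot x[c,d]\in P(J)$ because $P(J)$ is an ideal of $A$ (Axiom~$4$ for $P$); this forces $q_{P(J)}(ax)\in\Sigma_{a}(J/P(J))$, i.e. $ax\in P^{a}(J)$, with the right-sided version symmetric.

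Part~(2) should then be a clean upgrade. Heredity of $P$ sharpens $P(J)=J\cap P(A)$, so $\psi$ becomes an isomorphism $J/P(J)\xrightarrow{\sim}q_{P(A)}(J)$; Axiom~$1$ for $\Sigma_{a}$ applied to both $\psi$ and $\psi^{-1}$ identifies $\Sigma_{a}(J/P(J))$ with $\Sigma_{a}(q_{P(A)}(J))=q_{P(A)}(J)\cap\Sigma_{a}(A/P(A))$ (the last equality again via Lemma~\ref{commut}(2)(b)), and pulling back through $q_{P(J)}$ yields $P^{a}(J)=J\cap P^{a}(A)$, the desired heredity.
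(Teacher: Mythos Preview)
Your proof is correct and follows essentially the same route as the paper's: both use Lemma~\ref{conv} for Axiom~1, exploit the semiprimeness of $A/P(A)$ and $J/P(J)$ to invoke the commutator description and the ideal-heredity of $\Sigma_a$ from Lemma~\ref{commut}(2), and obtain Axiom~3 from $P(P^a(A))=P(A)$. The only point you omit is the observation (Remark~\ref{comsem}) that in the topological case $\Sigma_a(A/P(A))$ is closed, ensuring $P^a$ is a closed ideal map; also, your ``right-sided version symmetric'' implicitly uses the opposite-algebra form $P^a(J)=\{x:[c,d]x\in P(J)\}$, which is exactly what the paper means by ``(\ref{pa}) applied to the opposite algebra.''
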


\begin{proof}
Note that $P^{a}$ is a preradical by Lemma \ref{conv}. Remark \ref{comsem}
guarantees that $P^{a}$ is a topological preradical in the topological case.

Let $A$ be an algebra, and let $J$ be an ideal of $A$. We claim that
$A/P\left(  A\right)  $ is semiprime: indeed, $\mathfrak{P}_{\beta}\left(
A/P\left(  A\right)  \right)  \subset P\left(  A/P\left(  A\right)  \right)
=0$. Similarly, $J/P\left(  J\right)  $ is a semiprime algebra. It follows
from Lemma \ref{commut}(2a) that
\begin{align}
P^{a}\left(  A\right)   &  =\left\{  x\in A:x\left[  a,b\right]  \in P\left(
A\right)  \text{ }\forall a,b\in A\right\}  ,\label{pa}\\
P^{a}\left(  J\right)   &  =\left\{  x\in J:x\left[  y,z\right]  \in P\left(
J\right)  \text{ }\forall y,z\in J\right\}  .\nonumber
\end{align}

Let $x\in P^{a}\left(  J\right)  $ be arbitrary, and let $q:A/P\left(
J\right)  \longrightarrow A/P\left(  A\right)  $ be the standard quotient map.
Then
\[
\left(  q\circ q_{P\left(  J\right)  }\right)  \left(  x\right)  \in\Sigma
_{a}\left(  q\left(  J/P\left(  J\right)  \right)  \right)  \subset\Sigma
_{a}\left(  A/P\left(  A\right)  \right)
\]
by Lemma \ref{commut}(2b), whence $x\in P^{a}\left(  A\right)  $. So
\[
P^{a}\left(  J\right)  \subset P^{a}\left(  A\right)  .
\]

$\left(  \text{2}\right)  $ Let $x\in J\cap P\left(  A\right)  $. Then
$x\left[  a,b\right]  \in J\cap P\left(  A\right)  $ for every $a,b\in A$, in
particular for every $a,b\in J$. As $J\cap P\left(  A\right)  =P\left(
J\right)  $, it follows that $x\in P^{a}\left(  J\right)  $. Therefore,
$P^{a}\left(  J\right)  =J\cap P^{a}\left(  A\right)  $. Thus $P^{a}$ is a
hereditary preradical, in particular, it is an under radical. This completes
the proof of $\left(  \text{2}\right)  $.

$\left(  \text{1}\right)  $ It follows from (\ref{pa}) applied to the opposite
algebra of $A$ that $P^{a}\left(  J\right)  $ is an ideal of $A$. It is clear
that $P\left(  P^{a}\left(  A\right)  \right)  \subset P\left(  A\right)
\subset P^{a}\left(  A\right)  $, and then we obtain that $P\left(  A\right)
=P\left(  P\left(  A\right)  \right)  \subset P\left(  P^{a}\left(  A\right)
\right)  $, whence
\begin{equation}
P\left(  A\right)  =P\left(  P^{a}\left(  A\right)  \right)  . \label{ppa}%
\end{equation}
It follows from Lemma \ref{commut}(2) that $\left[  a,b\right]  \in P\left(
A\right)  $ for every $a,b\in P^{a}\left(  A\right)  $, and from (\ref{pa})
for $J=P^{a}\left(  A\right)  $ and (\ref{ppa}) that
\[
P^{a}\left(  P^{a}\left(  A\right)  \right)  =\left\{  x\in P^{a}\left(
A\right)  :x\left[  a,b\right]  \in P\left(  A\right)  \text{ }\forall a,b\in
P^{a}\left(  A\right)  \right\}  =P^{a}\left(  A\right)  .
\]
This completes the proof of $\left(  \text{1}\right)  $.
\end{proof}

We see from Theorem \ref{pab} that in general $P^{a}$ is an under radical (if
$P\geq\mathfrak{P}_{\beta}$), and one can apply the convolution procedure to
obtain the corresponding radical $P^{a\ast}$ (\textit{the centralized }%
$P$\textit{ radical}). But $P^{a\ast}$ has a much more complicated description
than $P^{a}$ which can be transparently defined by the condition: $P^{a}(A)$
is the largest ideal of $A$ commutative modulo $P(A)$.

Below we study the question:\textit{ when }$P^{a}$\textit{ itself is a
radical?} Earlier the answer was obtained in the case of the compactly
quasinilpotent radical $R_{\mathrm{cq}}$ --- it was proved in \cite{TR3} that
$\mathcal{R}_{\mathrm{cq}}^{a}$ is a topological radical on $\mathfrak{U}%
_{\mathrm{n}}$.

Theorem \ref{pab} and Lemma \ref{conv-ext} immediately imply the following result.

\begin{lemma}
\label{exc1}Let $P$ be a radical such that $P\geq\mathfrak{P}_{\beta}$. Then
$P^{a}$ is a radical if and only if, for every [closed] ideal $J$ of an
algebra $A$ such that the algebras $A/J$ and $J$ are $P^{a}$-radical, the
algebra $A$ is $P^{a}$-radical.
\end{lemma}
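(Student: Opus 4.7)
The plan is to reduce the statement to a direct application of Lemma \ref{conv-ext}(1), using Theorem \ref{pab} to supply the under radical property of $P^a$. First, since $P \geq \mathfrak{P}_\beta$, Theorem \ref{pab}(1) tells us that $P^a$ is an under radical; this is the essential structural input, and it is what allows Lemma \ref{conv-ext} to be invoked (Lemma \ref{conv-ext}(1) requires the map to be an under radical in order to deduce radicality from extension stability of the radical class).

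For the ``only if'' direction, I would observe that whenever $P^a$ is in fact a radical, it is simultaneously an over radical, and then Theorem \ref{ext}(2) applies directly: given a [closed] ideal $J$ of $A$ with both $J$ and $A/J$ in $\mathbf{Rad}(P^a)$, the extension $A$ is in $\mathbf{Rad}(P^a)$. This is the stated extension property.

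For the ``if'' direction, the stated extension property is literally the hypothesis that $\mathbf{Rad}(P^a)$ is stable under extensions by [closed] ideals. Combined with the under radical property of $P^a$ from Theorem \ref{pab}(1), Lemma \ref{conv-ext}(1) yields that $P^a$ satisfies Axiom 2 and is therefore a radical.

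There is no real obstacle here; the only point requiring any care is to make sure the ``[closed]'' qualifier on the ideal $J$ matches the setting of Lemma \ref{conv-ext} in the topological case (where Axiom 2 is checked modulo the closed ideal $P^a(A)$), which it does since Theorem \ref{pab} already produces $P^a$ as a topological preradical in the topological context. Consequently, the two lemmas chain together to give the equivalence without further work.
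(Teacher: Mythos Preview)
Your proposal is correct and matches the paper's own approach: the paper simply states that the lemma follows immediately from Theorem \ref{pab} and Lemma \ref{conv-ext}, and you have spelled out precisely this reduction, invoking Theorem \ref{ext}(2) for the easy ``only if'' direction.
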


In the following proposition we obtain a tool for the proof that $P^{a}$ is a
radical, which turns out to be convenient for many important examples of radicals.

\begin{proposition}
\label{exc2}Let $P$ be a radical such that $P\geq\mathfrak{P}_{\beta}$. Then
$P^{a}$ is a radical if and only if

\begin{itemize}
\item[(*)] any $P$-semisimple algebra $B$ with $P^{a}$-radical $B/\Sigma
_{a}\left(  B\right)  $, is commutative.
\end{itemize}
\end{proposition}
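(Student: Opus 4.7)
The plan is to combine Lemma \ref{exc1} (which reduces ``$P^a$ is a radical'' to stability of $\mathbf{Rad}(P^a)$ under extensions) with the key identity $P^a(A) = q_{P(A)}^{-1}(\Sigma_a(B))$, where $B = A/P(A)$. This identity, together with the fact that $B$ is $P$-semisimple (hence semiprime, since $P \geq \mathfrak{P}_\beta$), yields two observations used throughout: first, $A$ is $P^a$-radical if and only if $B$ is commutative; second, for any $P$-semisimple algebra $B$ one has $P^a(B) = \Sigma_a(B)$.

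For the necessity direction, I suppose $P^a$ is a radical and let $B$ be a $P$-semisimple algebra such that $B/\Sigma_a(B)$ is $P^a$-radical. The second observation gives $\Sigma_a(B) = P^a(B)$, so $B/\Sigma_a(B) = B/P^a(B)$ is both $P^a$-semisimple (by Axiom 2 for the radical $P^a$) and $P^a$-radical (by hypothesis), hence zero. Thus $B = \Sigma_a(B) \subset Z(B)$, i.e.\ $B$ is commutative.

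For the sufficiency direction, I assume (*) and, by Lemma \ref{exc1}, must show that whenever $J$ is a [closed] ideal of $A$ with both $J$ and $A/J$ being $P^a$-radical, $A$ itself is $P^a$-radical. Let $B = A/P(A)$, and let $K$ be [the closure in $B$ of] the image $q_{P(A)}(J)$. Since $P(J) \subset J \cap P(A)$, the algebra $q_{P(A)}(J) \cong J/(J \cap P(A))$ is a quotient of $J/P(J)$; because $J$ is $P^a$-radical, the explicit characterization (\ref{pa}) established in Theorem \ref{pab} shows that $J/P(J)$ is commutative, hence so is $K$ (using continuity of multiplication in the topological case). By Lemma \ref{commut}(2) we then have $K \subset \Sigma_a(B)$, so $B/\Sigma_a(B)$ is a quotient of $B/K$. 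But $B/K$ is itself a quotient of $A/J$, and Axiom 1 for the under radical $P^a$ immediately implies that quotients of $P^a$-radical algebras are $P^a$-radical; thus $B/K$, and therefore $B/\Sigma_a(B)$, is $P^a$-radical. Condition (*) now yields that $B$ is commutative, which by the key identity above means $A$ is $P^a$-radical.

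The only subtlety lies in the topological case, where $J + P(A)$ need not be closed: one must work with $\overline{J + P(A)}$ from the outset, but this causes no difficulties, since $\Sigma_a(B)$ is closed by Remark \ref{comsem} and the closure of a commutative set in a normed algebra is commutative. Thus the real content of the proposition is the reduction via Lemma \ref{exc1}: the extension problem for $\mathbf{Rad}(P^a)$ is entirely concentrated in the configurations governed by (*).
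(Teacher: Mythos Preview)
Your proof is correct and follows essentially the same approach as the paper's: both directions use Lemma \ref{exc1} to reduce to the extension property, pass to $B=A/P(A)$, observe that the image of $J$ is a commutative ideal of the semiprime algebra $B$ (hence contained in $\Sigma_a(B)$), and then apply (*) to conclude $B$ is commutative; for necessity, both arguments exploit that $P^a(B)=\Sigma_a(B)$ when $B$ is $P$-semisimple together with Axiom~2. Your presentation is slightly more streamlined in the necessity direction, but there is no substantive difference.
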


\begin{proof}
Suppose that (*) is true. By Lemma \ref{exc1}, it suffices to show that if a
[closed] ideal $J$ of an algebra $A$ and the quotient $A/J$ are $P^{a}%
$-radical then $A$ is $P^{a}$-radical.

Let
$B=A/P\left(  A\right)  $ and $I$ be the [closure of the] image of $J$ under
the standard map $A\longrightarrow A/P\left(  A\right)  $. Then $B$ is
$P$-semisimple, in particular semiprime. Since $J$ is commutative modulo
$P(J)\subset P(A)$ then $I$ is a commutative ideal of $B$. Therefore
$I\subset\Sigma_{a}\left(  B\right)  $ and, using the standard quotient map
$q:$ $A/J\longrightarrow B/I\longrightarrow B/\Sigma_{a}\left(  B\right)  $,
we get
\[
B/\Sigma_{a}\left(  B\right)  =q\left(  A/J\right)  =q\left(  P^{a}\left(
A/J\right)  \right)  \subset P^{a}\left(  B/\Sigma_{a}\left(  B\right)
\right)
\]
i.e., $B/\Sigma_{a}\left(  B\right)  $ is $P^{a}$-radical.

By (*), $B$ is commutative whence $B = \Sigma_{a}(B)$ and
\[
A=q_{P\left(  A\right)  }^{-1}\left(  \Sigma_{a}\left(  A/P\left(  A\right)
\right)  \right)  =P^{a}\left(  A\right)  ,
\]
i.e. $A$ is $P^{a}$-radical. By Proposition \ref{exc1}, $P^{a}$ is a radical.

Conversely, let $P^{a}$ be a radical, and let $A$ be a $P$-semisimple algebra
such that $A/\Sigma_{a}\left(  A\right)  $ is $P^{a}$-radical. Let now
$q:A/\Sigma_{a}(A)\rightarrow A/P^{a}(A)$ be the natural morphism. Since the
first algebra is $P^{a}$-radical then its image is contained in $P^{a}%
(A/P^{a}(A))=0$. Therefore $A/P^{a}(A)=0$, whence $A=P^{a}(A)$. Since
$P(A)=0$, we get that $P^{a}(A)=\Sigma_{a}(A)$. Thus $A=\Sigma_{a}(A)$, i.e.,
$A$ is commutative. We proved (*).
\end{proof}

\begin{lemma}
\label{nilcom}Let $A$ be an algebra, and let $J$ be an ideal of $A$ such that
$\Sigma_{a}\left(  A\right)  \subset J$. If $A$ is semiprime and $J/\Sigma
_{a}\left(  A\right)  $ is commutative or nilpotent then $J=\Sigma_{a}\left(
A\right)  $.
\end{lemma}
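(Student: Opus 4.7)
Write $S := \Sigma_a(A)$. By Lemma~\ref{commut}(2), since $A$ is semiprime, $S$ is the largest central ideal of $A$ and admits the characterization $S = \{x \in A : x[a,b] = 0 \text{ for all } a, b \in A\}$. The nilpotent case reduces to the commutative one: if $J^n \subset S$ with $n \geq 2$, then $K := J^{n-1} + S$ satisfies $K^2 \subset J^n + S = S$ (because $S$ is an ideal of $A$ and $2(n-1) \geq n$), so $[K, K] \subset K^2 \subset S$. Once the commutative case is in hand, $K \subset S$ gives $J^{n-1} \subset S$, and iterating yields $J \subset S$. It therefore suffices to handle $[J, J] \subset S$.

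In that case I pass to $\bar A := A/S$ and $\bar J := J/S$, a commutative ideal of $\bar A$. The plan is to prove (i) $\bar A$ is semiprime and (ii) $\Sigma_a(\bar A) = 0$; Lemma~\ref{commut}(2) applied to $\bar A$ then places the commutative ideal $\bar J$ inside $\Sigma_a(\bar A) = 0$, forcing $J \subset S$. Fact (ii) is the easier step: if $\bar x \in \Sigma_a(\bar A)$, the set $T := x[A, A]$ lies in $S \subset Z(A)$, and $tt' = 0$ for $t, t' \in T$ (since $t'$ is central and annihilates $[A,A]$ as an element of $S$); hence $AT + T$ is a square-zero ideal of $A$, which vanishes by semiprimeness, so $T = 0$ and $x \in S$.

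For (i), any ideal $\bar I$ of $\bar A$ with $\bar I^2 = 0$ lifts to $I \supset S$ with $I^2 \subset S \subset Z(A)$. Fix $i \in I, a \in A$, and set $c := [i, a] \in I$. The centrality of $i^2 \in I^2$ yields $0 = [i^2, a] = ic + ci$, so $ci = -ic$; since $ic \in I^2 \subset Z(A)$ commutes with $i$, one gets $i^2 c = i(ic) = (ic)i = ici = i(ci) = -i^2 c$, whence $i^2 c = 0$ and $(ic)^2 = -i^2 c^2 = 0$. The central, square-zero element $ic$ generates a square-zero ideal of $A$ (using centrality, its square reduces to $(ic)^2 A = 0$), which vanishes by semiprimeness, giving $ic = 0$; that is, $i^2 a = iai$. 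Linearizing $i \mapsto i + j$ produces the identity $(ij + ji) a = i a j + j a i$ for $i, j \in I$, $a \in A$. Setting $a = [b, c]$ and using $ij + ji \in I^2 \subset S$ (so $(ij + ji)[b,c] = 0$) produces the symmetric relation
\[
i[b,c]j + j[b,c]i = 0 \qquad \text{for all } i, j \in I, \ b, c \in A.
\]
Taking $j = i$ gives $i[b,c]i = 0$; then $j = iy$ and $j = yi$ (for $y \in A$) yield $iA[b,c]i = 0 = i[b,c]Ai$. For $x := i[b,c]$ this gives $x^2 = (i[b,c]i)[b,c] = 0$ and $xAx = (i[b,c]Ai)[b,c] = 0$, so every product of elements of the two-sided ideal of $A$ generated by $x$ vanishes, making it square-zero; by semiprimeness it vanishes, whence $i[b,c] = 0$ and $I \subset S$.

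The main obstacle is the polarization cascade in (i): the element-level identity $i[i,a] = 0$ must be amplified, through the symmetric relation $j[b,c]i + i[b,c]j = 0$ obtained from $(ij+ji)a = iaj + jai$, all the way to the mutual vanishing $iA[b,c]i = 0 = i[b,c]Ai$ that is needed in order to run a square-zero ideal argument on the ideal of $A$ generated by $i[b,c]$. Once this bootstrap is carried out, the passage from $\bar A$ semiprime and $\Sigma_a(\bar A) = 0$ to $\bar J = 0$ is an immediate application of Lemma~\ref{commut}(2).
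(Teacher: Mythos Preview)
Your proof is correct, but it takes a considerably longer route than the paper's. The paper dispatches the lemma in a few lines with the following observation: once one has an ideal $K$ with $\Sigma_a(A)\subsetneq K$ and $[K,K]\subset \Sigma_a(A)$ (obtained exactly as you do, either as $K=J$ in the commutative case or as a single step in the nilpotent case), take any $x,y\in K$ and set $z:=[x,y]\in \Sigma_a(A)$. Since $A$ is semiprime, Lemma~\ref{commut}(2a) gives $z[a,b]=0$ for all $a,b\in A$; in particular $z^2=z[x,y]=0$. But $z$ is central, so the ideal it generates has square zero, and semiprimeness of $A$ forces $z=0$. Thus $K$ is a commutative ideal, hence $K\subset \Sigma_a(A)$, a contradiction.

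Your argument instead passes to $\bar A=A/\Sigma_a(A)$ and establishes two structural facts---that $\bar A$ is semiprime and that $\Sigma_a(\bar A)=0$---via a polarization cascade starting from $i[i,a]=0$. This is valid and even yields more (those two facts about $\bar A$ are of independent interest), but for the lemma at hand it is a substantial detour: the paper never needs to leave $A$, because the single element $z=[x,y]$ already witnesses the contradiction. Your reduction of the nilpotent case by descent on the nilpotency index also works, though the paper avoids the iteration by arguing directly by contradiction on a single $K$ with $K^2\subset \Sigma_a(A)$.
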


\begin{proof}
Let $\Sigma_{a}\left(  A\right)  \neq0$. Suppose, to the contrary, that
$J\neq\Sigma_{a}\left(  A\right)  $. If $J/\Sigma_{a}(A)$ is nilpotent then
there exists an ideal $K$ of $A$ such that $\Sigma_{a}\left(  A\right)
\subsetneqq K\subset J$ and $K^{2}\subset\Sigma_{a}\left(  A\right)  $.
Therefore $[K,K]\subset\Sigma_{a}\left(  A\right)  $. This is also true in the
case when $J/\Sigma_{a}\left(  A\right)  $ is commutative: in this case we can
take $K=J$.

Let $x,y\in K$ be arbitrary. Then $z:=\left[  x,y\right]  \in\Sigma_{a}\left(
A\right)  $. By Lemma \ref{commut},
\[
z^{2}=z\left[  x,y\right]  =0.
\]
If $z\neq0$ then $\mathfrak{P}_{\beta}\left(  \Sigma_{a}\left(  A\right)
\right)  \neq0$, whence $\mathfrak{P}_{\beta}\left(  A\right)  \neq0$, a
contradiction to the assumption that $A$ is semiprime. Therefore $z=0$, whence
$K$ is a commutative ideal of $A$. By definition, $K\subset\Sigma_{a}\left(
A\right)  $, a contradiction. Therefore $J=\Sigma_{a}\left(  A\right)  $.
\end{proof}

\begin{theorem}
\label{cent}Let $P\geq\mathfrak{P}_{\beta}$ be a weakly hereditary under
radical. Then if one of the following conditions holds:

\begin{enumerate}
\item $\mathfrak{U}$ is [algebraically] universal and $P$ is pliant or strict;

\item $P$ is defined on Banach algebras and satisfies the condition of Banach heredity.
\end{enumerate}

\noindent then $P^{\ast a}$ is a radical.
\end{theorem}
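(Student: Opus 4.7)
The strategy is to apply Proposition \ref{exc2} to the radical $P^{\ast}$. Since $P$ is an under radical, Theorem \ref{ovunt} gives that $P^{\ast}$ is a radical, and clearly $P^{\ast}\geq P\geq\mathfrak{P}_{\beta}$, so Theorem \ref{pab} ensures that $P^{\ast a}$ is an under radical. It therefore suffices to verify the condition of Proposition \ref{exc2} applied to $P^{\ast}$: every $P^{\ast}$-semisimple algebra $B$ whose quotient $B/\Sigma_{a}(B)$ is $P^{\ast a}$-radical must be commutative.

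Let such a $B$ be fixed. Since $P^{\ast}\geq\mathcal{P}_{\beta}$, $B$ is semiprime, so Lemma \ref{commut}(2) applies and $\Sigma_{a}(B)$ is the largest central ideal of $B$ (closed in the topological case by Remark \ref{comsem}). Setting $C:=B/\Sigma_{a}(B)$, one deduces from Lemma \ref{nilcom} (applied to ideals $J\supseteq\Sigma_{a}(B)$ with commutative quotient $J/\Sigma_{a}(B)$) that $C$ is itself semiprime and $\Sigma_{a}(C)=0$. The hypothesis that $C$ is $P^{\ast a}$-radical then unpacks, via the semiprimeness of $C/P^{\ast}(C)$, to the statement that $C/P^{\ast}(C)$ is commutative. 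The commutative branch of Lemma \ref{nilcom} applied with $A=B$ and $J=B$ now reduces the task to showing that $C$ itself is commutative, equivalently that $P^{\ast}(C)=0$.

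The key claim to establish is therefore that $P^{\ast}(B/\Sigma_{a}(B))=0$ whenever $B$ is $P^{\ast}$-semisimple. I would prove this by contradiction. Assuming $P^{\ast}(C)\neq 0$, the convolution chain defining $P^{\ast}(C)$ from $P$ must have a first nonzero step, which (being obtained by unions, respectively closures of unions) cannot be a limit ordinal; hence $P(C)\neq 0$, and pulling back to $B$ yields an ideal $M\supsetneq\Sigma_{a}(B)$ such that $M/\Sigma_{a}(B)$ is $P$-radical. The two alternative hypotheses of the theorem each deliver the requisite heredity of $P^{\ast}$: under (1), the pliantness or strictness of $P$ on the universal class $\mathfrak{U}$, combined with weak heredity, is used to verify nil-exactness of $P$, whence Theorem \ref{heredit} yields heredity of $P^{\ast}$; under (2), the Banach-heredity hypothesis combined with Lemma \ref{bi} and the remark after Theorem \ref{oper3} yields the corresponding Banach-algebraic statement. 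In either case, $P^{\ast}(M)=M\cap P^{\ast}(B)=0$, so in particular $P(M)=0$.

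The final contradiction is then obtained as follows. The ideal $M$ is $P^{\ast}$-semisimple, contains $\Sigma_{a}(B)$ as a central subideal (so $\Sigma_{a}(M)\supseteq\Sigma_{a}(B)$), and has nonzero $P$-radical quotient $M/\Sigma_{a}(B)$. Using the weak heredity of $P^{\ast}$, which by Lemma \ref{whae} is inherited from $P$ once nil-exactness is in hand, together with a transfinite iteration of the same argument applied inside $M$ (tracking the chain $\Sigma_{a}(B)\subseteq\Sigma_{a}(M)\subseteq\Sigma_{a}(\Sigma_{a}^{(2)})\subseteq\cdots$), one shows that $M$ must itself be commutative; but then $M$ is a commutative ideal of $B$, hence $M\subseteq\Sigma_{a}(B)$, contradicting $M\supsetneq\Sigma_{a}(B)$. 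The main obstacle I anticipate is the verification of heredity of $P^{\ast}$ from the weak-heredity-plus-pliant/strict hypothesis in case (1), i.e., the nil-exactness step needed to invoke Theorem \ref{heredit}; the rest of the argument is driven by the general machinery already developed in Sections 3 and 4.
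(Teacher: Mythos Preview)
Your overall strategy via Proposition \ref{exc2} is correct, and the initial reductions (to a $P^{\ast}$-semisimple algebra $B$ with $B/\Sigma_a(B)$ being $P^{\ast a}$-radical, hence $C:=B/\Sigma_a(B)$ commutative modulo $P^{\ast}(C)$) match the paper. The essential gap is precisely the one you flag at the end: you rely on heredity of $P^{\ast}$, but to obtain it from Theorem \ref{heredit} you would need nil-exactness of $P$, and nothing in the hypotheses (pliantness, strictness, weak heredity, Banach heredity) yields nil-exactness. This is not a technicality you can patch; the theorem is meant to apply, for instance, to radicals like $\mathcal{R}_t$ or $\mathcal{R}_{\mathrm{cq}}$ where nil-exactness is not available, and the paper's proof deliberately avoids any appeal to heredity of $P^{\ast}$.

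The paper's route is genuinely different. Instead of pulling back $P(C)$ to an ideal $M\supsetneq\Sigma_a(B)$ and hoping for heredity, it works with the \emph{left annihilator} $I:=\Sigma_a(B)^{\perp}$. By Lemma \ref{commut}(2a), $[B,B]\subset I$, so either $B$ is commutative or $I\neq 0$. Semiprimeness forces $I\cap\Sigma_a(B)=0$, so the quotient map restricts to an \emph{isomorphism} $f$ of $I$ onto an ideal $K$ of $C$. If $K$ were commutative, Lemma \ref{nilcom} would force $I\subset\Sigma_a(B)$, a contradiction; hence $[K,K]\neq 0$, and since $C$ is commutative modulo $P(C)$ one gets $K\cap P(C)\neq 0$. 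Now pliantness or strictness (case (1)) lets you transport $P$ across the isomorphism $f$ to conclude $P(I)\neq 0$, and Banach heredity (case (2)) does the same via Lemma \ref{bi} because $K$ is a Banach ideal of $C$. Either way $P(B)\neq 0$, contradicting $P^{\ast}$-semisimplicity. This handles the case ``$C$ commutative modulo $P(C)$''; the general case ``$C$ commutative modulo $P^{\ast}(C)$'' is then reduced to it by passing to the ideal $A_1=q_{\Sigma_a(B)}^{-1}(P(C))$, which is $P^{\ast}$-semisimple with $\Sigma_a(A_1)=\Sigma_a(B)$ and $A_1/\Sigma_a(A_1)=P(C)$ trivially commutative modulo $P$. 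The annihilator trick is the key idea you are missing; it replaces the need for heredity of $P^{\ast}$ by the much lighter requirement that $P$ behaves well under a single explicit algebra isomorphism.
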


\begin{proof}
First we prove the following statement:

(*) \textit{Let} $A$ \textit{be a }$P^{\ast}$\textit{-semisimple algebra and}
$B=A/\Sigma_{a}(A)$. \textit{If} $B$ \textit{is commutative modulo} $P(B)$
\textit{then} $A=\Sigma_{a}\left(  A\right)  $.

Indeed, $A$ is $P$-semisimple and semiprime. Let $J:=\Sigma_{a}(A)$,
$I=J^{\bot}$ (the left annihilator of $J$). Then $[A,A]\subset I$ by Lemma
\ref{commut}, hence either $A$ is commutative or $I\neq0$.

So we assume, to the contrary, that $I\neq0$. If $I\cap J\neq0$ then it is a
nilpotent ideal of $A$, a contradiction to semiprimeness of $A$.

Therefore $I\cap J=0$, and the restriction $f$ of $q_{J}:A\longrightarrow B$
to $I$ is one-to-one onto $K:=q_{J}(I)\cong\left(  I+J\right)  /J$. If $K$ is
commutative then $I+J$ is commutative modulo $J$, whence $I+J$ is commutative
by Lemma \ref{nilcom}, because $A$ is semiprime. This contradicts to the
maximality of $\Sigma_{a}(A)$. So $K$ is not commutative. Since $[K,K]\subset
P(B)$, we see that
\[
K\cap P(B)\neq0.
\]

In the case when $A$ is a Banach algebra, $K$ is a Banach ideal of $B$ (see
Section \ref{banach}). If $P$ satisfies the condition of Banach heredity then
\[
P\left(  K;\left\Vert \cdot\right\Vert _{K}\right)  =K\cap P\left(  B\right)
\neq0.
\]
Taking the norm $\left\Vert \cdot\right\Vert _{K}$ as in Section \ref{banach},
we have that $f$ is an isometric isomorphism between $I$ and $\left(
K;\left\Vert \cdot\right\Vert _{K}\right)  $. Then $P(I)\neq0$ whence
$P(A)\neq0$, a contradiction.

Let $P$ be pliant or strict, and let $L=K\cap P(B)$. As $L$ is a non-zero
ideal of the $P$-radical algebra $P\left(  B\right)  $ then
\[
P\left(  L\right)  \neq0
\]
by the weak heredity of $P$.

Since $f$ is a [bounded] isomorphism $I\longrightarrow K$, then $f\left(
P\left(  f^{-1}(L)\right)  \right)  =P(L)$ whenever $P$ is pliant, or
$\overline{f\left(  P(f^{-1}(L))\right)  }=P(L)$ whenever $P$ is strict. In
any case
\[
P\left(  f^{-1}(L)\right)  \neq0.
\]

As $f^{-1}(L)$ is an ideal of $I$, this implies that%
\[
0\neq P\left(  f^{-1}(L)\right)  \subset P(I)\subset P(A),
\]
a contradiction. The proof of the statement (*) is completed.

Let now $B$ be commutative modulo $P^{\ast}\left(  B\right)  $, and let
$\left(  P_{\alpha}\right)  $ be the convolution chain of $P$; then $P_{1}=P$.
Let $A_{1}=\left\{  x\in A:x/J\in P\left(  B\right)  \right\}  $. Then $A_{1}$
is an ideal of $A$; therefore $A_{1}$ is $P^{\ast}$-semisimple, and
$J\subset\Sigma_{a}\left(  A_{1}\right)  =J\cap A_{1}$ by Lemma \ref{commut}.
So
\[
\Sigma_{a}\left(  A_{1}\right)  =J.
\]
Let $B_{1}=A_{1}/\Sigma_{a}\left(  A_{1}\right)  $. As $B_{1}=P\left(
B\right)  $ then $B_{1}=P\left(  B_{1}\right)  $, in particular $B_{1}$ is
commutative modulo $P\left(  B_{1}\right)  $. Then $A_{1}$ is commutative by
(*). Therefore $A_{1}=J$ and $P\left(  B\right)  =0$. Then $P^{\ast}\left(
B\right)  =0$, $B$ is commutative and $A$ is commutative by Lemma
\ref{nilcom}. As $P^{\ast}$ is a radical, $P^{\ast a}$ is a radical by
Proposition \ref{exc2}.
\end{proof}

\begin{remark}
\label{centr}In the assumptions of Theorem \ref{cent}, if $P^{\ast}$ is
hereditary then $P^{\ast a}$ is hereditary by Theorem $\ref{pab}$. If $P$ is
an algebraic hereditary radical and $P\geq\mathfrak{P}_{\beta}$, then
$\overline{P}$ is a strict and weakly hereditary under radical by Theorem
$\ref{closunder}$ and Lemma $\ref{aur}$, whence $\overline{P}^{\ast a}$ is a radical.
\end{remark}

\subsection{Centralization of classical radicals}

Now we apply Proposition \ref{cent} and Remark \ref{centr} to the classical radicals.

\begin{corollary}
\label{centRad}$\operatorname{rad}^{a}$, $\overline{\operatorname{rad}}^{\ast
a}$, and $\operatorname{Rad}^{a}$ are hereditary radicals.
\end{corollary}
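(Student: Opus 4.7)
The plan is to derive each of the three cases as a direct application of Theorem~\ref{cent} (or Remark~\ref{centr}), then invoke Theorem~\ref{pab}(2) to upgrade ``radical'' to ``hereditary radical.'' The overall shape is the same for all three maps: verify that the underlying preradical $P$ is a hereditary radical with $P \geq \mathfrak{P}_\beta$ that fits into one of the hypothesis packages of Theorem~\ref{cent}, observe that $P^\ast = P$ when $P$ is already a radical (so $P^{\ast a} = P^a$), and conclude heredity from Theorem~\ref{pab}(2).

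For $\operatorname{rad}^a$ I would use Theorem~\ref{cent}(1): the class $\mathfrak{U}_{\mathrm{a}}$ is algebraically universal, $\operatorname{rad}$ is pliant (algebraic preradicals are always pliant, as noted after the definition of pliancy), and it is a weakly hereditary under radical because it is a hereditary radical satisfying the classical inequality $\operatorname{rad} \geq \mathfrak{P}_\beta$. Theorem~\ref{cent} then produces $\operatorname{rad}^{\ast a} = \operatorname{rad}^a$ as a radical, and Theorem~\ref{pab}(2) gives heredity since $\operatorname{rad}$ is hereditary.

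For $\overline{\operatorname{rad}}^{\ast a}$ the statement is essentially the content of Remark~\ref{centr}: since $\operatorname{rad}$ is an algebraic hereditary radical with $\operatorname{rad} \geq \mathfrak{P}_\beta$, the remark yields that $\overline{\operatorname{rad}}^{\ast a}$ is a radical. Heredity then follows from Theorem~\ref{pab}(2) combined with Theorem~\ref{class}, which establishes that $\overline{\operatorname{rad}}^{\ast}$ is hereditary.

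For $\operatorname{Rad}^a$ the main point to verify is the Banach-heredity hypothesis of Theorem~\ref{cent}(2), since $\mathfrak{U}_{\mathrm{b}}$ is ground but not universal, so part (1) does not directly apply. I expect this to be the only real obstacle; the argument is that any Banach ideal $(L,\Vert\cdot\Vert_L)$ of a Banach algebra $A$ is itself a Banach algebra which is a (one-step) subideal of $A$, hence lies in $\mathfrak{U}_{\mathrm{b}}^{u}$, and by the heredity of the unique extension $\operatorname{rad}|_{\mathfrak{U}_{\mathrm{b}}^{u}}$ of $\operatorname{Rad}$ one gets $\operatorname{Rad}(L,\Vert\cdot\Vert_L) = L \cap \operatorname{Rad}(A)$. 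With this in hand, Theorem~\ref{cent}(2) yields that $\operatorname{Rad}^{\ast a} = \operatorname{Rad}^a$ is a radical, and Theorem~\ref{pab}(2) delivers heredity because $\operatorname{Rad}$ is hereditary (Section~\ref{sjac}).
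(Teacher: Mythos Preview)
Your proposal is correct and follows essentially the same approach as the paper: apply Theorem~\ref{cent} (via pliancy for $\operatorname{rad}$, Banach heredity for $\operatorname{Rad}$, and Remark~\ref{centr} for $\overline{\operatorname{rad}}^{\ast}$) to obtain that the centralizations are radicals, then invoke Theorem~\ref{pab}(2) together with the known heredity of the underlying radicals (Theorem~\ref{class} for $\overline{\operatorname{rad}}^{\ast}$) to conclude heredity. Your justification of Banach heredity for $\operatorname{Rad}$ is slightly roundabout---the cleaner route is simply that $\operatorname{rad}$ is algebraic, hence norm-independent by Lemma~\ref{ap0}, and hereditary on $\mathfrak{U}_{\mathrm{a}}$, so $\operatorname{Rad}(L,\Vert\cdot\Vert_L)=\operatorname{rad}(L)=L\cap\operatorname{rad}(A)=L\cap\operatorname{Rad}(A)$---but your version reaches the same conclusion.
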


\begin{proof}
The radical $\operatorname{Rad}$ satisfies the condition of Banach heredity,
$\operatorname{rad}$ is pliant and $\overline{\operatorname{rad}}$ is weakly
hereditary. By Theorem \ref{cent}, the centralizations of them are radicals.
As $\overline{\operatorname{rad}}^{\ast}$ is hereditary,  $\overline
{\operatorname{rad}}^{\ast a}$ is a hereditary radical.
\end{proof}

\begin{theorem}
\label{rada}$\operatorname{Rad}^{a}\left(  A\right)  =\operatorname{rad}%
^{\dim>1}\left(  A\right)  =\left\{  x\in A:\rho\left(  x\left[  a,b\right]
\right)  =0\text{ }\forall a,b\in A\right\}  $ for every Banach algebra $A$.
\end{theorem}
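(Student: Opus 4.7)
The plan is to establish the equalities $R_1 := \operatorname{Rad}^a(A) \subseteq R_3 := \{x\in A:\rho(x[a,b])=0\;\forall a,b\} \subseteq R_2 := \operatorname{rad}^{\dim>1}(A) \subseteq R_1$. First, I would unpack $\operatorname{Rad}^a$: by definition $\operatorname{Rad}^a = \Sigma_a \ast \operatorname{Rad}$, so $\operatorname{Rad}^a(A)=q^{-1}(\Sigma_a(A/\operatorname{Rad}(A)))$ with $q=q_{\operatorname{Rad}(A)}$. Since $A/\operatorname{Rad}(A)$ is semisimple and hence semiprime, Lemma \ref{commut}(2a) yields the concrete description
\[
R_1 = \{x\in A : x[a,b]\in\operatorname{Rad}(A) \text{ for all } a,b\in A\}.
\]
The inclusion $R_1\subseteq R_3$ is immediate, since elements of the Jacobson radical of a Banach algebra are quasinilpotent.

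For $R_2\subseteq R_1$, write $\operatorname{Rad}(A)=\bigcap_{\pi\in\operatorname{Irr}(A)}\ker\pi$ and check, for $x\in R_2$ and arbitrary $a,b\in A$, that $\pi(x[a,b])=0$ for every $\pi\in\operatorname{Irr}(A)$: if $\dim X_\pi=1$ then $\pi(A)$ is commutative so $\pi([a,b])=0$; if $\dim X_\pi>1$ then $\pi(x)=0$ by definition of $R_2$.

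The heart of the argument is $R_3\subseteq R_2$. Fix $\pi\in\operatorname{Irr}(A)$ with $\dim X_\pi>1$; since $A$ is a Banach, hence $Q_{\mathrm{b}}$-algebra, one may replace $\pi$ by an equivalent bounded representation on a Banach space $X_\pi$. Then Theorem \ref{sp11}(2) gives $\rho_{\mathcal B(X_\pi)}(\pi(y))\le \rho_A(y)$ for every $y\in A$, so for $x\in R_3$ the operator $\pi(x)[\pi(a),\pi(b)]=\pi(x[a,b])$ is quasinilpotent on $X_\pi$ for every $a,b\in A$. It remains to show that this forces $\pi(x)=0$. Assume $\pi(x)\ne 0$ and pick $\xi\in X_\pi$ with $\eta:=\pi(x)\xi\ne 0$. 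Two cases, both handled by Jacobson's density theorem applied to the strictly irreducible $\pi$: if $\xi,\eta$ are linearly independent, choose $a,b\in A$ with $\pi(a)\eta=\eta$, $\pi(a)\xi=0$, $\pi(b)\eta=\xi$, so that $[\pi(b),\pi(a)]\eta=\xi$ and therefore $\pi(x[b,a])\eta=\pi(x)\xi=\eta$, exhibiting $1$ as an eigenvalue of the quasinilpotent operator $\pi(x[b,a])$, a contradiction; if instead $\eta=\lambda\xi$ with $\lambda\ne 0$, take $\xi'$ independent of $\xi$ and $a,b\in A$ with $\pi(a)\xi=\xi',\pi(a)\xi'=0,\pi(b)\xi=0,\pi(b)\xi'=\xi$, obtaining $[\pi(a),\pi(b)]\xi=-\xi$ and thus $\pi(x[a,b])\xi=-\lambda\xi$, again a non-zero eigenvalue of a quasinilpotent operator.

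The main obstacle is this last step: extracting $\pi(x)=0$ from the mere quasinilpotence (not vanishing) of all the operators $\pi(x)[\pi(a),\pi(b)]$. The key is that the Jacobson density theorem provides enough flexibility in choosing commutators from $\pi(A)$ to manufacture, for any candidate non-zero vector $\eta$ in the range of $\pi(x)$, a commutator $[\pi(b),\pi(a)]$ whose action transports vectors so that $\pi(x)[\pi(b),\pi(a)]$ fixes $\eta$ up to a non-zero scalar, contradicting quasinilpotence. The restriction $\dim X_\pi>1$ is essential precisely because it is exactly what makes the commutators $[\pi(A),\pi(A)]$ act non-trivially and with sufficient range on $X_\pi$.
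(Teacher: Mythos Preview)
Your proof is correct and follows essentially the same approach as the paper: the explicit description of $\operatorname{Rad}^a(A)$ via Lemma~\ref{commut}(2a)/formula~(\ref{pa}), and Jacobson density to force $\pi(x)=0$ for $\dim X_\pi>1$. The paper avoids your case split by choosing the auxiliary vector $\eta$ linearly independent of $\zeta=\pi(x)\xi$ (rather than of $\xi$), and it does not need the detour through bounded representations, since a nonzero eigenvalue of $\pi(x[a,b])$ already lies in $\sigma_A^l(x[a,b])$ by Theorem~\ref{sp1}(1).
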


\begin{proof}
Set $J=\operatorname{Rad}^{a}\left(  A\right)  $. As $\operatorname{rad}%
^{\dim>1}\left(  A\right)  $ admits only one-dimensional strictly irreducible
representations,  $\operatorname{rad}^{\dim>1}\left(  A\right)  \subset J$. If
$\pi\left(  J\right)  \neq0$ for some $\pi\in\operatorname{Irr}^{\dim
>1}\left(  A\right)  $ then $\dim\pi\left(  A\right)  =\dim\pi\left(
J\right)  =1$ by Jacobson's density theorem and commutativity of $J$ modulo
$\operatorname{Rad}\left(  A\right)  $, a contradiction. So $J\subset
\operatorname{rad}^{\dim>1}\left(  A\right)  $.

Set $I=\left\{  x\in A:\rho\left(  x\left[  a,b\right]  \right)  =0\text{
}\forall a,b\in A\right\}  $. Then $J\subset I$ by (\ref{pa}) for
$P=\operatorname{Rad}$.

Let $x\in I$ be arbitrary. Assume that $\pi\left(  x\right)  \xi=\zeta\neq0$
for some $\pi\in\operatorname{Irr}^{\dim>1}\left(  A\right)  $. Taking a
vector $\eta$ such that $\zeta$ and $\eta$ are linearly independent, one can
find $a,b\in A$ by Jacobson's density theorem such that $\pi\left(  a\right)
\eta=\xi$, $\pi\left(  a\right)  \zeta=0$ and $\pi\left(  b\right)  \zeta
=\eta$. Then $\pi\left(  \left[  a,b\right]  \right)  \zeta=\xi$, whence
\[
\pi\left(  x\left[  a,b\right]  \right)  \zeta=\zeta,
\]
a contradiction with $\rho\left(  x\left[  a,b\right]  \right)  =0$. Therefore
$\pi\left(  x\right)  =0$ for every $\pi\in\operatorname{Irr}^{\dim>1}\left(
A\right)  $. This means that $x\in\operatorname{rad}^{\dim>1}\left(  A\right)
$ and $I\subset J$.
\end{proof}

\begin{remark}
$\operatorname{Rad}^{ar}\left(  A\right)  =\operatorname{Rad}^{ra}\left(
A\right)  =\left\{  x\in A:\rho\left(  x\left[  a,b\right]  \right)  =0\text{
}\forall a,b\in\widehat{A}\right\}  $ for every normed algebra $A$.
\end{remark}

Now we turn to the radicals of Baer, Levitzki, K\"{o}the, to the hypofinite
radical, and to their topological counterparts.

\begin{corollary}
$\mathfrak{P}_{\beta}^{a}$, $\mathfrak{P}_{\lambda}^{a}$, $\mathfrak{P}%
_{\kappa}^{a}$, $\mathfrak{R}_{\mathrm{hf}}^{a}$, $\mathcal{P}_{\beta}^{a}$,
$\mathcal{P}_{\lambda}^{a}$, $\mathcal{P}_{\kappa}^{a}$ and $\mathcal{R}%
_{\mathrm{hf}}^{a}$ are hereditary radicals.
\end{corollary}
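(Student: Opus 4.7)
The plan is to reduce everything to Theorem~\ref{cent} together with Theorem~\ref{pab} and Remark~\ref{centr}, which already package the required hypotheses. The only real work is to verify that the eight radicals satisfy those hypotheses.

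First I would handle the four algebraic radicals $\mathfrak{P}_{\beta}$, $\mathfrak{P}_{\lambda}$, $\mathfrak{P}_{\kappa}$, $\mathfrak{R}_{\mathrm{hf}}$ uniformly. Each of them is an algebraic hereditary radical on $\mathfrak{U}_{\mathrm{a}}$, hence in particular an algebraic under radical that is pliant (algebraic preradicals are always pliant, as noted in Section~\ref{definitions}). Heredity trivially implies weak heredity: if $A=P(A)$ and $I$ is a non-zero ideal then $P(I)=I\cap P(A)=I\neq 0$. The inequality $P\geq\mathfrak{P}_{\beta}$ is obvious for $\mathfrak{P}_{\beta},\mathfrak{P}_{\lambda},\mathfrak{P}_{\kappa}$ by (\ref{compnil}), and for $\mathfrak{R}_{\mathrm{hf}}$ it follows from Theorem~\ref{baerher} since $\mathfrak{P}_{\beta}=\mathfrak{R}_{\mathrm{hf}}\wedge\operatorname{rad}\leq\mathfrak{R}_{\mathrm{hf}}$. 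Therefore Theorem~\ref{cent}(1) applies and $P^{\ast a}$ is a radical; because each $P$ is already a radical we have $P^{\ast}=P$, so $P^{a}$ itself is a radical. Since $P$ is hereditary, Theorem~\ref{pab}(2) gives that $P^{a}$ is hereditary, which disposes of $\mathfrak{P}_{\beta}^{a}$, $\mathfrak{P}_{\lambda}^{a}$, $\mathfrak{P}_{\kappa}^{a}$ and $\mathfrak{R}_{\mathrm{hf}}^{a}$.

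Next I would treat the four topological counterparts via Remark~\ref{centr}, which asserts that for every algebraic hereditary radical $P\geq\mathfrak{P}_{\beta}$ the map $\overline{P}^{\ast a}$ is a radical. Specializing $P$ to $\mathfrak{P}_{\beta},\mathfrak{P}_{\lambda},\mathfrak{P}_{\kappa},\mathfrak{R}_{\mathrm{hf}}$ and invoking the identifications $\mathcal{P}_{\beta}=\overline{\mathfrak{P}_{\beta}}^{\ast}$, $\mathcal{P}_{\lambda}=\overline{\mathfrak{P}_{\lambda}}^{\ast}$, $\mathcal{P}_{\kappa}=\overline{\mathfrak{P}_{\kappa}}^{\ast}$ (from Section~3.2) and $\mathcal{R}_{\mathrm{hf}}=\overline{\mathfrak{R}_{\mathrm{hf}}}^{\ast}$ (from Theorem~\ref{hc}) produces the fact that $\mathcal{P}_{\beta}^{a}$, $\mathcal{P}_{\lambda}^{a}$, $\mathcal{P}_{\kappa}^{a}$ and $\mathcal{R}_{\mathrm{hf}}^{a}$ are radicals on $\mathfrak{U}_{\mathrm{n}}$. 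For heredity I appeal again to Theorem~\ref{pab}(2): the topological radicals $\mathcal{P}_{\beta}$, $\mathcal{P}_{\lambda}$, $\mathcal{P}_{\kappa}$ are hereditary by Theorem~\ref{class}, and $\mathcal{R}_{\mathrm{hf}}$ is hereditary as stated in Section~2.3.6. Since all of these still satisfy $P\geq\mathfrak{P}_{\beta}\geq\mathcal{P}_{\beta}$ in the topological sense, Theorem~\ref{pab}(2) yields the heredity of their centralizations.

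The main obstacle is essentially bookkeeping rather than a genuine difficulty: one must check that the hypothesis $P\geq\mathfrak{P}_{\beta}$ is preserved in both the algebraic and topological settings for the hypofinite radical, and that the pliant/strict condition required in Theorem~\ref{cent}(1) together with weak heredity is inherited by $\overline{P}$ from the algebraic $P$. Both points are already built into Remark~\ref{centr} via Theorem~\ref{closunder} and Lemma~\ref{aur}, so the verification is immediate once the inequality $\mathfrak{R}_{\mathrm{hf}}\geq\mathfrak{P}_{\beta}$ (given by Theorem~\ref{baerher}) is noted. No additional commutator analysis is needed beyond what is already contained in Theorem~\ref{cent}.
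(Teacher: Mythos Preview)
Your proposal is correct and follows essentially the same route as the paper's own proof, which simply invokes Theorem~\ref{cent} and Remark~\ref{centr} after noting that all eight radicals are hereditary and that the closures $\overline{\mathfrak{P}_{\beta}},\overline{\mathfrak{P}_{\lambda}},\overline{\mathfrak{P}_{\kappa}},\overline{\mathfrak{R}_{\mathrm{hf}}}$ are strict and weakly hereditary; you have merely fleshed out the bookkeeping explicitly. One small slip: in the last paragraph you write ``$P\geq\mathfrak{P}_{\beta}\geq\mathcal{P}_{\beta}$'', but the second inequality is reversed (in fact $\mathfrak{P}_{\beta}\leq\mathcal{P}_{\beta}$ on normed algebras); this is harmless since only $P\geq\mathfrak{P}_{\beta}$ is needed for Theorem~\ref{pab}(2).
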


\begin{proof}
Follows from Theorem \ref{cent} and Remark \ref{centr} because all radicals
are hereditary, and the closures of $\mathfrak{P}_{\beta}$, $\mathfrak{P}%
_{\lambda}$, $\mathfrak{P}_{\kappa}$, $\mathfrak{R}_{\mathrm{hf}}$ are strict
and weakly hereditary.
\end{proof}

Recall that the closed-nil radical $\mathcal{P}_{\mathrm{nil}}$ is the
restriction of $\mathcal{P}_{\beta}$ to Banach algebras.

\begin{corollary}
$\mathcal{P}_{\mathrm{nil}}^{a}$ is a hereditary radical.
\end{corollary}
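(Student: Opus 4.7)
The plan is to deduce this from the preceding corollary, which already establishes that $\mathcal{P}_{\beta}^{a}$ is a hereditary radical on $\mathfrak{U}_{\mathrm{n}}$. The essential observation is that $\mathcal{P}_{\mathrm{nil}}^{a}$ coincides with the restriction of $\mathcal{P}_{\beta}^{a}$ to $\mathfrak{U}_{\mathrm{b}}$. By Theorem \ref{classical}, $\mathcal{P}_{\beta}(A)=\mathcal{P}_{\mathrm{nil}}(A)$ for every Banach algebra $A$. Since the centralization is given by the purely pointwise formula $P^{a}(A)=\Sigma_{a}\ast P(A)=q_{P(A)}^{-1}(\Sigma_{a}(A/P(A)))$ and depends only on the value $P(A)$, it follows that $\mathcal{P}_{\mathrm{nil}}^{a}(A)=\mathcal{P}_{\beta}^{a}(A)$ for every $A\in\mathfrak{U}_{\mathrm{b}}$.

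It remains to check that restricting a hereditary topological radical from $\mathfrak{U}_{\mathrm{n}}$ to the ground class $\mathfrak{U}_{\mathrm{b}}$ preserves all required properties. Axiom 1 transfers because every morphism in $\mathfrak{U}_{\mathrm{b}}$ is a morphism in $\mathfrak{U}_{\mathrm{n}}$; Axiom 3 is pointwise and immediate; Axiom 4 on $\mathfrak{U}_{\mathrm{b}}$ concerns closed ideals of Banach algebras, which are themselves Banach algebras, so it reduces to the corresponding inclusion for $\mathcal{P}_{\beta}^{a}$ on $\mathfrak{U}_{\mathrm{n}}$. Heredity $\mathcal{P}_{\mathrm{nil}}^{a}(J)=J\cap\mathcal{P}_{\mathrm{nil}}^{a}(A)$ for closed ideals $J$ of a Banach algebra $A$ is similarly a special case of heredity of $\mathcal{P}_{\beta}^{a}$.

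The only point that requires a brief verification, and is the closest thing to an obstacle, is Axiom 2: one must know that $A/\mathcal{P}_{\mathrm{nil}}^{a}(A)$ is again in $\mathfrak{U}_{\mathrm{b}}$, i.e.\ that $\mathcal{P}_{\mathrm{nil}}^{a}(A)$ is closed in $A$. This follows because $A/\mathcal{P}_{\mathrm{nil}}(A)$ is a semiprime Banach algebra, so by Remark \ref{comsem} the ideal $\Sigma_{a}(A/\mathcal{P}_{\mathrm{nil}}(A))$ is closed there; being the preimage of a closed ideal under the continuous quotient map $q_{\mathcal{P}_{\mathrm{nil}}(A)}$, the ideal $\mathcal{P}_{\mathrm{nil}}^{a}(A)$ is closed in $A$. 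Once this is in place, Axiom 2 for $\mathcal{P}_{\mathrm{nil}}^{a}$ on $\mathfrak{U}_{\mathrm{b}}$ coincides with Axiom 2 for $\mathcal{P}_{\beta}^{a}$ applied to the Banach algebra $A/\mathcal{P}_{\mathrm{nil}}^{a}(A)$, and the corollary follows.
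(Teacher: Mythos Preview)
Your proof is correct and follows the same approach the paper intends: the corollary is stated without proof precisely because $\mathcal{P}_{\mathrm{nil}}$ is by definition the restriction of $\mathcal{P}_{\beta}$ to $\mathfrak{U}_{\mathrm{b}}$, so $\mathcal{P}_{\mathrm{nil}}^{a}$ is the restriction of the hereditary topological radical $\mathcal{P}_{\beta}^{a}$ established in the preceding corollary. Your separate verification that $\mathcal{P}_{\mathrm{nil}}^{a}(A)$ is closed is harmless but redundant, since $\mathcal{P}_{\beta}^{a}$ is already a \emph{topological} radical on $\mathfrak{U}_{\mathrm{n}}$ and hence closed-ideal-valued.
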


\begin{lemma}
\label{compa}Let $P_{1}$ and $P_{2}$ be radicals such that $\mathfrak{P}%
_{\beta}\leq P_{1}<P_{2}$. If $P_{1}=P_{2}$ on commutative algebras then
$P_{1}^{a}<P_{2}^{a}$.
\end{lemma}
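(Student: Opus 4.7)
The plan is to exhibit a single algebra on which the two centralizations differ, by reducing to the case of a noncommutative algebra that is simultaneously $P_2$-radical and $P_1$-semisimple. First, the non-strict inequality $P_1^{a}\leq P_2^{a}$ is immediate from the formula $P_i^{a}(A)=\{x\in A:x[a,b]\in P_i(A)\text{ for all }a,b\in A\}$, which is available because $P_i\geq\mathfrak{P}_{\beta}$ guarantees $A/P_i(A)$ is semiprime (see~(\ref{pa})).

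For the strict inequality, I would choose an algebra $C$ with $P_1(C)\subsetneq P_2(C)$ (which exists since $P_1<P_2$), set $D:=P_2(C)$, and consider
\[
A:=D/P_1(D).
\]
Using Axiom~4 for $P_1$, one has $P_1(D)\subset P_1(C)\cap D=P_1(C)\subsetneq D$, so $A\neq 0$. Axiom~2 for $P_1$ gives $P_1(A)=0$, while Axiom~3 yields $P_2(D)=D$, so the standard quotient morphism $D\to A$ forces $P_2(A)=A$ via Axiom~1. Thus $A$ is $P_1$-semisimple and $P_2$-radical. The key consequence of the hypothesis now enters: if $A$ were commutative, then the assumption $P_1=P_2$ on commutative algebras would yield $0=P_1(A)=P_2(A)=A$, a contradiction. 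Hence $A$ is noncommutative.

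With $A$ in hand, I would compute both centralizations directly. Since $P_2(A)=A$, formula~(\ref{pa}) gives $P_2^{a}(A)=A$. Since $P_1(A)=0$ and $A$ is semiprime, the same formula combined with Lemma~\ref{commut}(2a) gives $P_1^{a}(A)=\Sigma_a(A)$. Lemma~\ref{commut}(2) shows that $\Sigma_a(A)$ is contained in $Z(A)$ and is itself a commutative ideal; were $\Sigma_a(A)=A$, the algebra $A$ would coincide with its center and be commutative, contradicting the previous step. Therefore
\[
P_1^{a}(A)=\Sigma_a(A)\subsetneq A=P_2^{a}(A),
\]
which together with $P_1^{a}\leq P_2^{a}$ gives $P_1^{a}<P_2^{a}$.

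The only delicate point is the verification that $A=D/P_1(D)$ is nonzero and $P_1$-semisimple simultaneously; this is where the assumption $P_1\geq\mathfrak{P}_{\beta}$ (hence the inclusion $P_1(D)\subset P_1(C)\cap D$ via Axiom~4) and the strict inequality $P_1(C)\subsetneq P_2(C)$ are both essential. In the topological setting the same argument applies verbatim since $P_i(A)$ is closed and quotients by closed ideals of Banach algebras are again Banach, so no adjustments are needed for the three base classes $\mathfrak{U}_{\mathrm{a}},\mathfrak{U}_{\mathrm{n}},\mathfrak{U}_{\mathrm{b}}$.
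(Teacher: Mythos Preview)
Your proof is correct. You construct a specific witness algebra $A=P_2(C)/P_1(P_2(C))$ that is nonzero, $P_1$-semisimple, $P_2$-radical, and (via the commutative-case hypothesis) noncommutative; on this $A$ one reads off directly that $P_1^{a}(A)=\Sigma_a(A)\subsetneq A=P_2^{a}(A)$. The paper argues instead by contradiction from an arbitrary $A$ with $P_1(A)\neq P_2(A)$: assuming $P_1^{a}(A)=P_2^{a}(A)$, it passes to a quotient $B$, claims $B$ is commutative, concludes $P_2(B)=P_1(B)=0$ from the hypothesis, and then exhibits the nonzero $P_2$-radical ideal $P_2(A)/P_1(A)$ inside $P_2(B)$. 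As printed, the step ``$B$ is semiprime and therefore commutative by Lemma~\ref{commut}'' is not justified with $B=A/P_1(A)$; it does go through if one takes $B=P_1^{a}(A)/P_1(A)=P_2^{a}(A)/P_1(A)$, which is commutative by the definition of $P_1^{a}$, and the contradiction then follows since $P_2(A)\subset P_2^{a}(A)=P_1^{a}(A)$ forces $P_2(A)/P_1(A)\subset B$. Your constructive route sidesteps this delicacy entirely; the paper's route (once the right $B$ is chosen) yields the slightly sharper conclusion that $P_1^{a}$ and $P_2^{a}$ already differ on every algebra where $P_1$ and $P_2$ differ.
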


\begin{proof}
Let $A$ be an algebra such that $P_{1}\left(  A\right)  \neq P_{2}\left(
A\right)  $. Assume, to the contrary, that $P_{1}^{a}\left(  A\right)
=P_{2}^{a}\left(  A\right)  $. Let $B=A/P_{1}\left(  A\right)  $. Then $B$ is
semiprime and therefore commutative by Lemma \ref{commut}. On the other hand,
$B$ is $P_{1}$-semisimple and therefore $P_{2}$-semisimple by our assumption.
But $B$ is not $P_{2}$-semisimple because $P_{2}\left(  A\right)
/P_{1}\left(  A\right)  $ is $P_{2}$-radical, non-zero and lies in
$P_{2}\left(  B\right)  $, a contradiction.
\end{proof}

\begin{corollary}
$\operatorname{rad}_{\mathrm{n}}^{a}<\operatorname{rad}_{\mathrm{b}}%
^{a}<\operatorname{Rad}^{ra}$, $\mathfrak{P}_{\beta}^{a}<\mathfrak{P}%
_{\lambda}^{a}<\mathfrak{P}_{\kappa}^{a}$ and $\mathcal{P}_{\beta}%
^{a}<\mathcal{P}_{\lambda}^{a}<\mathcal{P}_{\kappa}^{a}$.
\end{corollary}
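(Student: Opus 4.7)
The plan is to obtain all three chains of strict inequalities by a single device, namely Lemma~\ref{compa} applied to each consecutive pair of radicals in the statement. So for each pair $(P_{1},P_{2})$ I will verify the three hypotheses of that lemma: (i) $\mathfrak{P}_{\beta}\leq P_{1}$, (ii) $P_{1}<P_{2}$ strictly, and (iii) $P_{1}=P_{2}$ on commutative algebras. Hypothesis (i) is immediate in every case: $\mathfrak{P}_{\beta}$ is the bottom of the Baer/Levitzki/K\"{o}the chain and is dominated by the Jacobson radical on $\mathfrak{U}_{\mathrm{a}}$, hence also by $\operatorname{Rad}^{r}$, $\operatorname{rad}_{\mathrm{n}}$, $\operatorname{rad}_{\mathrm{b}}$ on normed algebras, and by passing to the topological analogues one has $\mathfrak{P}_{\beta}\leq\mathcal{P}_{\beta}$. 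The strict inequalities required in (ii) are exactly the content of the earlier comparisons (\ref{compnil}) for the algebraic chain, Theorem~\ref{classical} for the topological chain, and (\ref{comprad}) (together with monotonicity of the centralization and regularization procedures) for the Jacobson chain.

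All the real work lies in (iii), the verification that each consecutive pair agrees on commutative algebras. For the Baer/Levitzki/K\"{o}the chain, I would invoke the classical facts: in a commutative algebra a finitely generated nil subalgebra is nilpotent, so every nil ideal is locally nilpotent and $\mathfrak{P}_{\lambda}=\mathfrak{P}_{\kappa}$; and the nilradical of a commutative algebra coincides with the intersection of its prime ideals, giving $\mathfrak{P}_{\beta}=\mathfrak{P}_{\kappa}$. For the topological chain $\mathcal{P}_{\beta},\mathcal{P}_{\lambda},\mathcal{P}_{\kappa}$, these three radicals are built from the corresponding algebraic ones by the closure and topological convolution procedures (Theorem~\ref{csscs} and the definitions in Theorem~\ref{classical}), so starting from equal algebraic preradicals on commutative normed algebras yields the same topological outputs. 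For the Jacobson chain I would first note that every strictly irreducible representation of a commutative algebra on a linear space $X$ has $\dim X=1$: the image is a commutative strictly irreducible subalgebra of $L(X)$, forcing $X$ to be one-dimensional by Schur's lemma. Consequently $\operatorname{rad}_{\mathrm{b}}(A)$ and $\operatorname{rad}_{\mathrm{n}}(A)$ both reduce to the intersection of kernels of continuous characters of $A$. Since continuous characters of $A$ correspond bijectively to characters of $\widehat{A}$ by restriction/extension, this common intersection equals $A\cap\operatorname{Rad}(\widehat{A})=\operatorname{Rad}^{r}(A)$, so $\operatorname{Rad}^{r}=\operatorname{rad}_{\mathrm{n}}=\operatorname{rad}_{\mathrm{b}}$ on commutative normed algebras.

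The main obstacle I anticipate is this last verification in the Jacobson chain: one must simultaneously use the one-dimensionality of commutative strictly irreducible representations (to reduce every $\operatorname{rad}_{?}(A)$ to an intersection of character kernels) and the bijective correspondence between continuous characters on $A$ and characters on $\widehat{A}$ (to identify this intersection with $\operatorname{Rad}^{r}(A)$). Once (iii) is in hand for each of the six consecutive pairs, Lemma~\ref{compa} applies directly and the corollary follows.
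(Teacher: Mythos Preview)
Your plan is exactly the paper's: invoke Lemma~\ref{compa} on each consecutive pair, drawing the strict inequalities $P_1<P_2$ from (\ref{compnil}), Theorem~\ref{classical} and (\ref{comprad}), and checking that $P_1=P_2$ on commutative algebras. Your verifications of hypothesis~(iii) are correct and in fact more explicit than the paper's one-line references; in particular your argument that on a commutative normed algebra all strictly irreducible representations are one-dimensional characters, that continuity of a character on $A$ is equivalent to its extendability to $\widehat{A}$, and hence that $\operatorname{Rad}^{r}=\operatorname{rad}_{\mathrm{n}}=\operatorname{rad}_{\mathrm{b}}$ there, is exactly what is needed.

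One point you should flag explicitly rather than glide over. The chain (\ref{comprad}) reads $\operatorname{Rad}^{r}<\operatorname{rad}_{\mathrm{n}}<\operatorname{rad}_{\mathrm{b}}$, so Lemma~\ref{compa} together with your commutative verification yields
\[
\operatorname{Rad}^{ra}<\operatorname{rad}_{\mathrm{n}}^{a}<\operatorname{rad}_{\mathrm{b}}^{a},
\]
not the printed order $\operatorname{rad}_{\mathrm{n}}^{a}<\operatorname{rad}_{\mathrm{b}}^{a}<\operatorname{Rad}^{ra}$. Since centralization is monotone (if $P_1\le P_2$ then $P_1^{a}\le P_2^{a}$, because $\Sigma_a$ is a preradical and the quotient map $A/P_1(A)\to A/P_2(A)$ carries $\Sigma_a(A/P_1(A))$ into $\Sigma_a(A/P_2(A))$), the printed inequality $\operatorname{rad}_{\mathrm{b}}^{a}<\operatorname{Rad}^{ra}$ is impossible. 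This is evidently a misprint in the position of $\operatorname{Rad}^{ra}$; both your argument and the ingredients cited in the paper's proof produce the corrected ordering above. Your proposal is otherwise complete.
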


\begin{proof}
The result follows from Lemma \ref{compa}, (\ref{comprad}), (\ref{compnil}),
Theorem \ref{classical}, the coincidence of $\operatorname{Rad}^{ra}$ with
$\operatorname{rad}_{\mathrm{n}}^{a}$ on commutative algebras (by Theorem
\ref{rada}), and also the coincidence of $\mathfrak{P}_{\kappa}^{a}$ with
$\mathfrak{P}_{\beta}^{a}$ on commutative algebras.
\end{proof}

\begin{problem}
Is $\mathcal{R}_{\mathrm{hc}}^{a}$ a radical?
\end{problem}

\subsection{The centralization of the tensor Jacobson radical}

It is not clear if $\mathcal{R}_{\mathrm{cq}}$ and $\mathcal{R}_{t}$ are
strict, so we cannot apply Theorem \ref{cent} to them. However, $\mathcal{R}%
_{\mathrm{cq}}^{a}$ is a radical by \cite{TR3}, and it will be proved that
$\mathcal{R}_{t}^{a}$ is a radical.

First we note that the maps $\mathcal{R}_{t}$ and $\mathcal{R}_{t}^{a}$ are regular.

\begin{theorem}
\label{regt}Let $A$ be a normed algebra. Then $\mathcal{R}_{t}\left(
B\right)  =B\cap\mathcal{R}_{t}\left(  A\right)  $ and $\mathcal{R}_{t}%
^{a}\left(  B\right)  =B\cap\mathcal{R}_{t}^{a}\left(  A\right)  $ for every
dense subalgebra $B$ of $A$.
\end{theorem}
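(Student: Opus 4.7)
The plan is to establish the first equality by a density-and-approximation argument, and then to derive the second from the first using the centralization formula together with the closedness of $\mathcal{R}_t(A)$ and the continuity of multiplication.

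For the first equality, the inclusion $B\cap\mathcal{R}_t(A)\subset\mathcal{R}_t(B)$ is immediate from the definition: the norm $\|\cdot\|_+$ and the operation $M\longmapsto M^n$ depend only on norms and products in the ambient algebra, which are unaltered by passing to the subalgebra $B$; hence every summable family in $B$ is summable in $A$ with the same tensor spectral radius, and the defining condition for $a\in\mathcal{R}_t(A)$ transfers to $\mathcal{R}_t(B)$. For the converse, let $a\in\mathcal{R}_t(B)$ and fix a summable family $M=(a_i)_i$ in $A$; only $\rho_t(\{a\}\sqcup M)\leq\rho_t(M)$ requires proof. Given $\varepsilon>0$, by density choose $b_i\in B$ with $\sum_i\|a_i-b_i\|<\varepsilon$; set $N=(b_i)_i$ (summable in $B$) and $C=(a_i-b_i)_i$, so $\|C\|_+<\varepsilon$. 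Substituting $a_i=b_i+c_i$ in every letter of every word of $(\{a\}\sqcup M)^n$ and expanding yields the termwise majorizations
\[
\|(\{a\}\sqcup M)^n\|_+\leq\|((\{a\}\sqcup N)\sqcup C)^n\|_+,\qquad \|N^n\|_+\leq\|(M\sqcup C)^n\|_+,
\]
whence $\rho_t(\{a\}\sqcup M)\leq\rho_t((\{a\}\sqcup N)\sqcup C)$ and $\rho_t(N)\leq\rho_t(M\sqcup C)$. Combining these with the subadditivity $\rho_t(K\sqcup L)\leq\rho_t(K)+\|L\|_+$ of the tensor spectral radius (a basic estimate from \cite{TR2}) and the hypothesis $\rho_t(\{a\}\sqcup N)=\rho_t(N)$ gives
\[
\rho_t(\{a\}\sqcup M)\leq\rho_t(\{a\}\sqcup N)+\varepsilon=\rho_t(N)+\varepsilon\leq\rho_t(M)+2\varepsilon,
\]
and letting $\varepsilon\to 0$ yields the desired inequality.

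For the second equality, recall that $\mathcal{R}_t\geq\mathcal{P}_\beta$, so $A/\mathcal{R}_t(A)$ and $B/\mathcal{R}_t(B)$ are semiprime; by Lemma \ref{commut} and formula (\ref{pa}) of Theorem \ref{pab}, an element $x$ of $A$ (resp.\ of $B$) lies in $\mathcal{R}_t^a(A)$ (resp.\ $\mathcal{R}_t^a(B)$) iff $x[y,z]\in\mathcal{R}_t(A)$ for all $y,z\in A$ (resp.\ $x[y,z]\in\mathcal{R}_t(B)$ for all $y,z\in B$). If $x\in B\cap\mathcal{R}_t^a(A)$ then, for any $y,z\in B$, $x[y,z]\in B\cap\mathcal{R}_t(A)=\mathcal{R}_t(B)$ by the first equality, so $x\in\mathcal{R}_t^a(B)$. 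Conversely, if $x\in\mathcal{R}_t^a(B)$ and $y,z\in A$, approximating $y,z$ by sequences $y_n,z_n\in B$ gives $x[y_n,z_n]\in\mathcal{R}_t(B)\subset\mathcal{R}_t(A)$, and these converge to $x[y,z]$, which therefore lies in $\overline{\mathcal{R}_t(A)}=\mathcal{R}_t(A)$ by closedness of $\mathcal{R}_t(A)$; hence $x\in\mathcal{R}_t^a(A)$.

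The main obstacle is the subadditivity estimate $\rho_t(K\sqcup L)\leq\rho_t(K)+\|L\|_+$: this is the analytic content that drives the argument, because $\rho_t$ is in general only upper semi-continuous on summable families (being an infimum of the continuous functions $M\longmapsto\|M^n\|_+^{1/n}$), so mere $\|\cdot\|_+$-approximation $N\to M$ does not transmit the tensor-radical identity. The estimate itself is obtained by counting words in $(K\sqcup L)^n$ according to the positions occupied by letters of $L$ and applying Fekete-type asymptotics to the runs $\|K^m\|_+^{1/m}\to\rho_t(K)$.
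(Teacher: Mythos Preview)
Your argument for the second equality is correct and, in fact, spells out the density/closedness step that the paper leaves implicit. The problem is in the first equality: the ``subadditivity'' estimate
\[
\rho_t(K\sqcup L)\le\rho_t(K)+\|L\|_+
\]
is \emph{false} for non-commuting families, and your whole approximation scheme hinges on it. A concrete counterexample in $M_2(\mathbb{C})$: take $K=(e_{12})$ and $L=(\varepsilon e_{21})$ with $0<\varepsilon<1$. Then $\rho_t(K)=0$ (since $e_{12}^2=0$) and $\|L\|_+=\varepsilon$, while the only nonzero words in $(K\sqcup L)^{2m}$ are the two alternating ones, each of norm $\varepsilon^{m}$, giving $\|(K\sqcup L)^{2m}\|_+=2\varepsilon^{m}$ and hence $\rho_t(K\sqcup L)=\sqrt{\varepsilon}>\varepsilon$. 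The Fekete-style counting you sketch cannot work here because when the $K$-letters are split into blocks by $L$-letters one has $\prod_j\|K^{m_j}\|_+\ge\|K^{\sum m_j}\|_+$, i.e.\ the inequality goes the wrong way; this is exactly the phenomenon the counterexample exploits. Note also that $\rho_t$ is only upper semicontinuous in the metric $\mathrm d$, so a bare approximation $N\to M$ cannot transport the identity $\rho_t(\{a\}\sqcup N)=\rho_t(N)$ to $M$ either.

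The paper sidesteps the analytic difficulty entirely by invoking the tensor characterization from \cite{TR2}: $a\in\mathcal{R}_t(B)$ iff $a\otimes c\in\operatorname{Rad}(B\widehat{\otimes}C)$ for every normed algebra $C$ and $c\in C$. Since $B$ is dense in $A$, the completed projective tensor products agree, $B\widehat{\otimes}C=A\widehat{\otimes}C$, so the two membership conditions are literally the same and $\mathcal{R}_t(B)=B\cap\mathcal{R}_t(A)$ drops out in one line. Once you have that, your proof of $\mathcal{R}_t^{a}(B)=B\cap\mathcal{R}_t^{a}(A)$ via formula~(\ref{pa}), density, and closedness of $\mathcal{R}_t(A)$ goes through unchanged.
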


\begin{proof}
As $\mathcal{R}_{t}\left(  B\right)  $ is the set of all $a\in B$ such that
$a\otimes c\in\operatorname{Rad}\left(  B\widehat{\otimes}C\right)
=\operatorname{Rad}\left(  A\widehat{\otimes}C\right)  $ for every normed
algebra $C$ and every $c\in C$ \cite{TR2}, then clearly
\[
\mathcal{R}_{t}\left(  B\right)  =B\cap\mathcal{R}_{t}\left(  A\right)  .
\]
By $\left(  \ref{pa}\right)  $, $\mathcal{R}_{t}^{a}\left(  B\right)
=\left\{  x\in B:x\left[  a,b\right]  \in\mathcal{R}_{t}\left(  B\right)
\text{ }\forall a,b\in B\right\}  $, whence we obtain that
\[
\mathcal{R}_{t}^{a}\left(  B\right)  =B\cap\mathcal{R}_{t}^{a}\left(
A\right)  .
\]

\end{proof}

We need here and will use also in the further sections a result on tensor
radius of operator families, analogous to the result about the joint spectral
radius established in \cite[Lemma 4.2]{ST00}. If $T$ is an operator on a space
$X$ and $Y$ is an invariant subspace for $T$ then $T|_{Y}$ denotes the
restriction of $T$ to $Y$, and $T|_{X/Y}$ --- the operator induced by $T$ on
$X/Y$. If $Y$ is invariant for a set $M$ of operators then $M|_{Y}=\left\{
T|_{Y}:T\in M\right\}  $ and $M|_{X/Y}=\left\{  T|_{X/Y}:T\in M\right\}  $;
this is transferred similarly to families of operators.

Recall that the tensor spectral radius $\rho_{t}$ is defined in Section
\ref{stensor} and
\[
\rho_{t}\left(  M^{m}\right)  =\rho_{t}\left(  M\right)  ^{m}%
\]
for every $m>0$ and every summable family $M$ in a normed algebra.

\begin{theorem}
\label{invar}Let $M=\left(  a_{i}\right)  _{1}^{\infty}$ be a summable family
of operators in $\mathcal{B}\left(  X\right)  $, and let $Y$ be an invariant
closed subspace for $M$. Then
\[
\rho_{t}\left(  M\right)  =\max\left\{  \rho_{t}\left(  M|_{X/Y}\right)
,\rho_{t}\left(  M|_{Y}\right)  \right\}  .
\]

\end{theorem}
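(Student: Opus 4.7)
The plan is to prove the two inequalities separately. The direction $\rho_t(M)\geq\max\{\rho_t(M|_Y),\rho_t(M|_{X/Y})\}$ is immediate: since each $a_i$ preserves $Y$, the maps $a\mapsto a|_Y$ and $a\mapsto a|_{X/Y}$ are algebra homomorphisms of norm at most one from $\{T\in\mathcal{B}(X):T(Y)\subset Y\}$ to $\mathcal{B}(Y)$ and $\mathcal{B}(X/Y)$ respectively. Hence $\|(M|_Y)^n\|_+\leq\|M^n\|_+$ and $\|(M|_{X/Y})^n\|_+\leq\|M^n\|_+$ for every $n$, and taking $n$-th roots yields the two required inequalities.

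For the reverse inequality, set $\gamma:=\max\{\rho_t(M|_Y),\rho_t(M|_{X/Y})\}$, fix $\epsilon>0$, and choose $m$ so large that $\|(M|_Y)^m\|_+<(\gamma+\epsilon)^m$ and $\|(M|_{X/Y})^m\|_+<(\gamma+\epsilon)^m$. Setting $N:=M^m$, one has $\rho_t(N)=\rho_t(M)^m$ and $\|N\|_+\leq\|M\|_+^m<\infty$, so it suffices to establish the combinatorial estimate
\[
\|N^k\|_+\leq(\gamma+\epsilon)^{mk}(C_1+C_2\,k)\qquad\text{for all }k\geq 1,
\]
with constants $C_1,C_2$ independent of $k$: taking $k$-th roots then yields $\rho_t(M)^m\leq(\gamma+\epsilon)^m$, and letting $\epsilon\to 0$ completes the proof.

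This estimate is the heart of the argument and parallels the proof of \cite[Lemma 4.2]{ST00}. Fix a sequence $\delta_l\downarrow 0$ with $\prod_l(1+\delta_l)<\infty$. For a word $w=b_{j_1}\cdots b_{j_k}$ (with each $b_{j_l}\in N$) and a unit vector $\xi\in X$, pick recursively approximate lifts $\tilde z_l\in X$ of $(b_{j_l}\cdots b_{j_k})|_{X/Y}(\xi+Y)\in X/Y$ with $\|\tilde z_l\|\leq(1+\delta_l)\|(b_{j_l}\cdots b_{j_k})|_{X/Y}\|$, and set $y_0:=\xi-\tilde z_{k+1}\in Y$. Iterating the block-triangular decomposition of $w$ along the product yields
\[
w\xi=(b_{j_1}|_Y\cdots b_{j_k}|_Y)\,y_0+\sum_{l=1}^{k}(b_{j_1}|_Y\cdots b_{j_{l-1}}|_Y)\,y'_l+\tilde z_1,
\]
where the cross-over errors $y'_l:=b_{j_l}\tilde z_{l+1}-\tilde z_l\in Y$ satisfy $\|y'_l\|\leq C\|b_{j_l}\|\|(b_{j_{l+1}}\cdots b_{j_k})|_{X/Y}\|$ for an absolute constant $C$ arising from the convergence of $\prod(1+\delta_l)$. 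Summing $\|w\|$ over the summable family $N^k$, each cross-over contribution factorizes into three independent sums and is bounded by $\|(N|_Y)^{l-1}\|_+\cdot\|N\|_+\cdot\|(N|_{X/Y})^{k-l}\|_+\leq\|N\|_+(\gamma+\epsilon)^{m(k-1)}$; summing over $l=1,\ldots,k$ produces the $C_2 k$-term, while the two diagonal contributions yield the constant $C_1$. The main obstacle is that $Y$ need not be complemented in $X$, so no bounded linear section of the quotient map $X\to X/Y$ exists; the use of approximate lifts with summable relative errors $\delta_l$ circumvents this, at the cost of absolute multiplicative constants that vanish in the $k$-th root limit.
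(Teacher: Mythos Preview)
Your argument is correct, but the paper's proof is considerably shorter because it avoids the full block-triangular decomposition along words of length $k$. Instead of tracking $k$ approximate lifts, the paper works only with products of length two: for $b_i,b_j\in M^n$ it picks a single near-extremal unit vector $x_{ij}$ for $b_ib_j$, then a single $y_{ij}\in Y$ close to $b_jx_{ij}$ modulo $Y$, and obtains the pointwise bound
\[
\|b_ib_j\|\le \|b_i|_Y\|\bigl(\|b_j\|+\|b_j|_{X/Y}\|\bigr)+\|b_i\|\,\|b_j|_{X/Y}\|.
\]
Summing over $i,j$ gives $\|M^{2n}\|_+\le \|M^n|_Y\|_+\|M^n\|_++\|M^n|_Y\|_+\|M^n|_{X/Y}\|_++\|M^n\|_+\|M^n|_{X/Y}\|_+$. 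Substituting $n=2k$ and using submultiplicativity yields $\|M^{4k}\|_+\le 4\|M^k\|_+^2\max\{\|M^k|_Y\|_+,\|M^k|_{X/Y}\|_+\}^2$, and taking $4k$-th roots produces $\rho_t(M)\le\rho_t(M)^{1/2}\max\{\rho_t(M|_Y),\rho_t(M|_{X/Y})\}^{1/2}$, from which the inequality follows at once.

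So where you invoke a sequence $(\delta_l)$ of controlled errors and a telescoping sum with $k$ cross-over terms, the paper needs only one lift and then extracts the result via a doubling trick. Your approach mirrors \cite[Lemma~4.2]{ST00} faithfully and has the virtue of being mechanical and generalizable (e.g.\ to longer chains of invariant subspaces without iterating), while the paper's two-step argument is slicker and sidesteps all bookkeeping of the summable error sequence.
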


\begin{proof}
It is clear that $\max\left\{  \rho_{t}\left(  M|_{X/Y}\right)  ,\rho
_{t}\left(  M|_{Y}\right)  \right\}  \leq\rho_{t}\left(  M\right)  $. Let us
prove the converse inequality. Let $q:X\longrightarrow V:=X/Y$ be the standard
quotient map. Note that
\[
\left\Vert M^{2n}\right\Vert _{+}=\sum_{i=1}^{\infty}\sum_{j=1}^{\infty
}\left\Vert b_{i}b_{j}\right\Vert
\]
where $M^{n}=\left(  b_{i}\right)  _{1}^{\infty}$. For every $\varepsilon>0$,
there is $x_{ij}\in X$ with $\left\Vert x_{ij}\right\Vert =1$ such that
$\left\Vert b_{i}b_{j}\right\Vert \leq\left\Vert b_{i}b_{j}x_{ij}\right\Vert
+\varepsilon.$ Clearly $\left\Vert b_{i}b_{j}x_{ij}\right\Vert \leq\left\Vert
b_{i}y\right\Vert +\left\Vert b_{i}\right\Vert \left\Vert b_{j}x_{ij}%
-y\right\Vert $ for any $y\in Y$. Take $y=y_{ij}$ such that $\left\Vert
b_{j}x_{ij}-y_{ij}\right\Vert \leq\left\Vert q\left(  b_{j}x_{ij}\right)
\right\Vert +\varepsilon$. Then
\[
\left\Vert y_{ij}\right\Vert \leq\left\Vert b_{j}x_{ij}\right\Vert +\left\Vert
b_{j}x_{ij}-y_{ij}\right\Vert \leq\left\Vert b_{j}x_{ij}\right\Vert
+\left\Vert q\left(  b_{j}x_{ij}\right)  \right\Vert +\varepsilon.
\]
Hence
\begin{align*}
\left\Vert b_{i}b_{j}\right\Vert  &  \leq\left\Vert b_{i}b_{j}x_{ij}%
\right\Vert +\varepsilon\leq\left\Vert b_{i}y_{ij}\right\Vert +\left\Vert
b_{i}\right\Vert \left\Vert b_{j}x_{ij}-y_{ij}\right\Vert +\varepsilon\\
&  \leq\left\Vert b_{i}|_{Y}\right\Vert \left(  \left\Vert b_{j}%
x_{ij}\right\Vert +\left\Vert q\left(  b_{j}x_{ij}\right)  \right\Vert
+\varepsilon\right)  +\left\Vert b_{i}\right\Vert \left(  \left\Vert q\left(
b_{j}x_{ij}\right)  \right\Vert +\varepsilon\right)  +\varepsilon
\end{align*}
Taking the limit for $\varepsilon\rightarrow0$, we get
\[
\Vert b_{i}b_{j}\Vert\leq\left\Vert b_{i}|_{Y}\right\Vert \left(  \left\Vert
b_{j}\right\Vert +\left\Vert b_{j}|_{V}\right\Vert \right)  +\left\Vert
b_{i}\right\Vert \left\Vert b_{j}|_{V}\right\Vert .
\]
Therefore
\begin{align*}
\left\Vert M^{2n}\right\Vert _{+}  &  \leq\sum_{i}\sum_{j}\left(  \left\Vert
b_{i}|_{Y}\right\Vert \left\Vert b_{j}\right\Vert +\left\Vert b_{i}%
|_{Y}\right\Vert \left\Vert b_{j}|_{V}\right\Vert +\left\Vert b_{i}\right\Vert
\left\Vert b_{j}|_{V}\right\Vert \right) \\
&  \leq\left\Vert M^{n}|_{Y}\right\Vert _{+}\left\Vert M^{n}\right\Vert
_{+}+\left\Vert M^{n}|_{Y}\right\Vert _{+}\left\Vert M^{n}|_{V}\right\Vert
_{+}+\left\Vert M^{n}\right\Vert _{+}\left\Vert M^{n}|_{V}\right\Vert _{+}.
\end{align*}
Let $n=2k$ in this inequality; as $\left\Vert M^{n}\right\Vert _{+}%
\leq\left\Vert M^{k}\right\Vert _{+}^{2}$ and
\begin{align*}
\left\Vert M^{n}|_{Y}\right\Vert _{+}  &  \leq\left\Vert M^{k}|_{Y}\right\Vert
_{+}\left\Vert M^{k}|_{Y}\right\Vert _{+}\leq\left\Vert M^{k}|_{Y}\right\Vert
_{+}\left\Vert M^{k}\right\Vert _{+},\\
\left\Vert M^{n}|_{V}\right\Vert _{+}  &  \leq\left\Vert M^{k}|_{V}\right\Vert
_{+}\left\Vert M^{k}|_{V}\right\Vert _{+}\leq\left\Vert M^{k}|_{V}\right\Vert
_{+}\left\Vert M^{k}\right\Vert _{+}%
\end{align*}
then
\begin{align*}
\left\Vert M^{2n}\right\Vert _{+}  &  \leq\left\Vert M^{k}\right\Vert _{+}%
^{2}\left(  \left\Vert M^{k}|_{Y}\right\Vert _{+}^{2}+\left\Vert M^{k}%
|_{Y}\right\Vert \left\Vert M^{k}|_{V}\right\Vert +\left\Vert M^{k}%
|_{V}\right\Vert _{+}^{2}\right) \\
&  \leq\left\Vert M^{k}\right\Vert _{+}^{2}\left(  \left\Vert M^{k}%
|_{Y}\right\Vert _{+}+\left\Vert M^{k}|_{V}\right\Vert _{+}\right)  ^{2}\\
&  \leq4\left\Vert M^{k}\right\Vert _{+}^{2}\max\left\{  \left\Vert M^{k}%
|_{Y}\right\Vert _{+},\left\Vert M^{k}|_{V}\right\Vert _{+}\right\}  ^{2}%
\end{align*}
Taking roots and setting $k\rightarrow\infty$, we obtain
\[
\rho_{t}\left(  M\right)  =\rho_{t}\left(  M\right)  ^{1/2}\max\left\{
\rho_{t}\left(  M|_{Y}\right)  ,\rho_{t}\left(  M|_{V}\right)  \right\}
^{1/2},
\]
i.e., $\rho_{t}\left(  M\right)  \leq\max\left\{  \rho_{t}\left(
M|_{Y}\right)  ,\rho_{t}\left(  M|_{V}\right)  \right\}  $.
\end{proof}

Let $A$ be a normed algebra. Recall that, for every element $a\in A$,
$\mathrm{L}_{a}$ and $\mathrm{R}_{a}$ are defined as operators $x\longmapsto
ax$ and $x\longmapsto xa$ on $A$, respectively. If $M=\left(  a_{n}\right)
_{1}^{\infty}$ is a family in $A$, let $\mathrm{L}_{M}$ and $\mathrm{R}_{M}$
denote the operator families $\left(  \mathrm{L}_{a_{n}}\right)  _{1}^{\infty
}$ and $\left(  \mathrm{R}_{a_{n}}\right)  _{1}^{\infty}$, respectively.

\begin{theorem}
\label{mul}Let $A$ be a normed algebra, and let $M=\left(  a_{n}\right)
_{1}^{\infty}$ be a summable family in $A$. Then $\rho_{t}\left(  M\right)
=\rho_{t}\left(  \mathrm{L}_{M}\right)  =\rho_{t}\left(  \mathrm{R}%
_{M}\right)  $ and $\rho_{t}\left(  \mathrm{L}_{M}\mathrm{R}_{M}\right)
=\rho_{t}\left(  M\right)  ^{2}$.
\end{theorem}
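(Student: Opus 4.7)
My plan is to prove the two halves of the first equality by sandwiching. The easy inequality $\rho_{t}(\mathrm{L}_M)\le\rho_t(M)$ is immediate from the pointwise bound $\|\mathrm{L}_a\|\le\|a\|$, which gives $\|\mathrm{L}_M^n\|_+\le\|M^n\|_+$ term by term and then passage to the $n$th root. For the reverse direction I would extend the left regular representation to the unitization $A^{1}$ and exploit the fact that each $\mathrm{L}_{a_n}$ preserves the closed subspace $A\subset A^{1}$. By Theorem~\ref{invar} applied to the invariant subspace $A$,
\[
\rho_{t}(\mathrm{L}_M^{A^{1}})=\max\bigl\{\rho_{t}(\mathrm{L}_M^{A^{1}}|_{A}),\,\rho_{t}(\mathrm{L}_M^{A^{1}}|_{A^{1}/A})\bigr\}.
\]
The induced action on $A^{1}/A\cong\mathbb{C}$ is zero (since $a_n\cdot 1\in A$), so the max equals $\rho_{t}(\mathrm{L}_M)$. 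On the other hand, evaluating the word $\mathrm{L}_{a_{i_1}}\cdots\mathrm{L}_{a_{i_n}}$ at $1_{A^{1}}$ returns $a_{i_1}\cdots a_{i_n}$, so $\|\mathrm{L}_M^{n}\|_{+,\mathcal{B}(A^{1})}\ge\|M^n\|_+$ and hence $\rho_{t}(\mathrm{L}_M^{A^{1}})\ge\rho_{t}(M)$. Combining these gives $\rho_{t}(\mathrm{L}_M)\ge\rho_{t}(M)$, and the proof for $\mathrm{R}_M$ is symmetric (right-multiplying by $1$).

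For the last identity, the key observation is that $\mathrm{L}_a$ and $\mathrm{R}_b$ commute and satisfy $\mathrm{L}_a\mathrm{L}_b=\mathrm{L}_{ab}$, $\mathrm{R}_a\mathrm{R}_b=\mathrm{R}_{ba}$. Using this I would verify that, up to the equivalence $\simeq$ of summable families introduced in Section~\ref{stensor},
\[
(\mathrm{L}_M\mathrm{R}_M)^{k}\;\simeq\;\mathrm{L}_{M^{k}}\,\mathrm{R}_{M^{k}},
\]
the reversal of the right index string being absorbed into the reindexing bijection $\mathbb{N}^{k}\to\mathbb{N}^{k}$. The upper estimate $\rho_{t}(\mathrm{L}_M\mathrm{R}_M)\le\rho_{t}(M)^{2}$ then follows from $\|\mathrm{L}_b\mathrm{R}_c\|\le\|b\|\|c\|$ summed over $M^{k}\times M^{k}$, which yields $\|(\mathrm{L}_M\mathrm{R}_M)^{k}\|_{+}\le\|M^{k}\|_{+}^{2}$, and taking the $k$th root.

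For the matching lower estimate I would again pass to $A^{1}$. Evaluating $\mathrm{L}_b\mathrm{R}_c$ at $1_{A^{1}}$ produces $bc$, so
\[
\|(\mathrm{L}_M\mathrm{R}_M)^{k}\|_{+,\mathcal{B}(A^{1})}\ \ge\ \sum_{b,c\in M^{k}}\|bc\|\ =\ \|M^{2k}\|_{+},
\]
and extracting a $k$th root sends the right side to $\rho_{t}(M)^{2}$. To transfer this from $\mathcal{B}(A^{1})$ back to $\mathcal{B}(A)$ I would reuse Theorem~\ref{invar}: the subspace $A$ is invariant for every $\mathrm{L}_b\mathrm{R}_c$ with $b,c\in A$, and in fact $\mathrm{L}_b\mathrm{R}_c(A^{1})\subset A$, so the quotient action is zero and
$\rho_{t}(\mathrm{L}_M\mathrm{R}_M^{A^{1}})=\rho_{t}(\mathrm{L}_M\mathrm{R}_M)$, completing the chain.

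The routine parts are the norm estimates and the evaluations at $1$; the only step that requires genuine care is the identification $(\mathrm{L}_M\mathrm{R}_M)^{k}\simeq\mathrm{L}_{M^{k}}\mathrm{R}_{M^{k}}$, where one must track the bijection between words and be sure the reordering is absorbed by $\simeq$, together with the correct bookkeeping for the invariant-subspace reduction (checking that both $\mathrm{L}_M^{A^{1}}$ and the two-sided family $\mathrm{L}_M\mathrm{R}_M^{A^{1}}$ leave $A$ invariant with trivial action on $A^{1}/A$, so that Theorem~\ref{invar} applies to both).
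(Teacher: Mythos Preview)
Your argument is correct, but the paper's proof is considerably more direct and avoids both the passage to $A^{1}$ and the appeal to Theorem~\ref{invar}. Instead of evaluating the operator words at the unit, the paper applies them to elements of $M$ itself: from
\[
\|M^{m+1}\|_{+}=\sum_{i_{1},\ldots,i_{m+1}}\bigl\|\mathrm{L}_{a_{i_{1}}}\cdots\mathrm{L}_{a_{i_{m}}}a_{i_{m+1}}\bigr\|
\le\|\mathrm{L}_{M}^{\,m}\|_{+}\,\|M\|_{+}
\]
one reads off $\rho_{t}(M)\le\rho_{t}(\mathrm{L}_{M})$ by taking roots and letting $m\to\infty$; the two-sided inequality follows in the same way from $\|M^{2m+1}\|_{+}\le\|(\mathrm{L}_{M}\mathrm{R}_{M})^{m}\|_{+}\,\|M\|_{+}$. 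This sidesteps the need to move between $\mathcal{B}(A)$ and $\mathcal{B}(A^{1})$, so no invariant-subspace reduction is required and the identity $(\mathrm{L}_{M}\mathrm{R}_{M})^{k}\simeq\mathrm{L}_{M^{k}}\mathrm{R}_{M^{k}}$ never enters. Your route is perfectly sound---the quotient action on $A^{1}/A$ really is zero, and Theorem~\ref{invar} (whose proof uses only norm estimates, not completeness) applies---but it invokes a comparatively heavy lemma where a two-line estimate suffices. The trick worth remembering is that one can test the operators on the family $M$ rather than on a unit, trading the exponent $m$ for $m+1$ at the cost of a harmless constant factor $\|M\|_{+}$.
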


\begin{proof}
It is clear that $\rho_{t}\left(  \mathrm{L}_{M}\right)  \leq\rho_{t}\left(
M\right)  $, and $\rho_{t}\left(  \mathrm{L}_{M}\mathrm{R}_{M}\right)
\leq\rho_{t}\left(  \mathrm{L}_{M}\right)  \rho_{t}\left(  \mathrm{R}%
_{M}\right)  \leq\rho_{t}\left(  M\right)  ^{2}$ by \cite[Proposition
3.4]{TR2}. Further,
\begin{align*}
\left\Vert M^{m+1}\right\Vert _{+}  &  =\Vert\mathrm{L}_{M}^{m}(M)\Vert
_{+}=\sum_{i_{1}=1}^{\infty}\cdots\sum_{i_{n}=1}^{\infty}\sum_{i_{n+1}%
=1}^{\infty}\left\Vert \mathrm{L}_{a_{i_{1}}}\cdots\mathrm{L}_{a_{i_{n}}%
}a_{i_{n+1}}\right\Vert \\
&  \leq\sum_{i_{1}=1}^{\infty}\cdots\sum_{i_{n}=1}^{\infty}\sum_{i_{n+1}%
=1}^{\infty}\left\Vert \mathrm{L}_{a_{i_{1}}}\cdots\mathrm{L}_{a_{i_{n}}%
}\right\Vert \left\Vert a_{i_{n+1}}\right\Vert =\left\Vert \mathrm{L}_{M}%
^{m}\right\Vert _{+}\left\Vert M\right\Vert _{+}%
\end{align*}
for every $m>0$; this implies $\rho_{t}\left(  M\right)  \leq\rho_{t}\left(
\mathrm{L}_{M}\right)  $. Similarly,
\[
\left\Vert M^{2m+1}\right\Vert _{+}\leq\left\Vert \left(  \mathrm{L}%
_{M}\mathrm{R}_{M}\right)  ^{m}\right\Vert _{+}\left\Vert M\right\Vert _{+},
\]
for every $m>0$, implies $\rho_{t}\left(  M\right)  ^{2}=\rho_{t}\left(
M^{2}\right)  \leq\rho_{t}\left(  \mathrm{L}_{M}\mathrm{R}_{M}\right)  $.
\end{proof}

Recall that $M/I$ is the image of $M$ in $A/I$.

\begin{corollary}
\label{invarmul}Let $A$ be a normed algebra, and let $I$ be a closed ideal of
$A$. Then $\rho_{t}\left(  M\right)  =\max\left\{  \rho_{t}\left(  M/I\right)
,\rho_{t}\left(  \mathrm{L}_{M}|_{I}\right)  \right\}  $ for every summable
family $M=\left(  a_{n}\right)  _{1}^{\infty}$ in $A$.
\end{corollary}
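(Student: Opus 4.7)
The plan is to derive the corollary by combining Theorem \ref{mul} (which identifies $\rho_t(M)$ with the tensor radius of the associated left multiplication family) with Theorem \ref{invar} (which gives the invariant-subspace decomposition for the tensor spectral radius of an operator family).

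First I would note that $I$, being a closed ideal of $A$, is in particular a closed invariant subspace for every left multiplication operator $\mathrm{L}_a$ with $a\in A$, and hence for the whole summable operator family $\mathrm{L}_M=(\mathrm{L}_{a_n})_{1}^{\infty}\subset\mathcal{B}(A)$. Applying Theorem \ref{invar} with $X=A$ and $Y=I$ yields
\[
\rho_t(\mathrm{L}_M)=\max\bigl\{\rho_t(\mathrm{L}_M|_{A/I}),\ \rho_t(\mathrm{L}_M|_{I})\bigr\}.
\]
By Theorem \ref{mul} applied to the normed algebra $A$, the left-hand side equals $\rho_t(M)$. It remains to identify the quotient term with $\rho_t(M/I)$: the restriction of $\mathrm{L}_{a_n}$ to $A/I$ coincides with the left multiplication operator by $a_n/I$ on $A/I$, so $\mathrm{L}_M|_{A/I}$ is precisely the family $\mathrm{L}_{M/I}$ on the normed algebra $A/I$. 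Invoking Theorem \ref{mul} once more, now for $A/I$, gives $\rho_t(\mathrm{L}_{M/I})=\rho_t(M/I)$. Substituting these two identifications into the displayed equation yields the claim.

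There is essentially no obstacle: the only point requiring a brief verification is that $M/I$ is summable in $A/I$, which is immediate from $\sum_n\|a_n/I\|\leq\sum_n\|a_n\|=\|M\|_+<\infty$, and that the operator-theoretic restriction $\mathrm{L}_M|_{A/I}$ agrees with the algebraic family $\mathrm{L}_{M/I}$, which is a straightforward compatibility of the quotient map $A\to A/I$ with left multiplication. Thus the corollary reduces to a one-line chain of equalities once Theorems \ref{invar} and \ref{mul} are in hand.
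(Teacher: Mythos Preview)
Your proof is correct and follows exactly the same route as the paper: apply Theorem~\ref{mul} to pass to $\mathrm{L}_M$, apply Theorem~\ref{invar} with $X=A$ and $Y=I$, identify $\mathrm{L}_M|_{A/I}=\mathrm{L}_{M/I}$, and apply Theorem~\ref{mul} once more in $A/I$. The paper's proof is the same chain of equalities condensed into a single display.
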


\begin{proof}
Indeed, it follows from Theorems \ref{mul} and \ref{invar} that
\begin{align*}
\rho_{t}\left(  M\right)   &  =\rho_{t}\left(  \mathrm{L}_{M}\right)
=\max\left\{  \rho_{t}\left(  \mathrm{L}_{M}|_{A/I}\right)  ,\rho_{t}\left(
\mathrm{L}_{M}|_{I}\right)  \right\} \\
&  =\max\left\{  \rho_{t}\left(  \mathrm{L}_{M/I}\right)  ,\rho_{t}\left(
\mathrm{L}_{M}|_{I}\right)  \right\}  =\max\left\{  \rho_{t}\left(
M/I\right)  ,\rho_{t}\left(  \mathrm{L}_{M}|_{I}\right)  \right\}  .
\end{align*}

\end{proof}

For a summable family $M=\left(  a_{n}\right)  _{1}^{\infty}$ in $A$, let%
\[
M|^{k}=\left(  a_{n}\right)  _{1}^{k},\;\text{ }M|_{k+1}=\left(  a_{n}\right)
_{k+1}^{\infty}\text{ }\;\text{and }\;\rho_{+}\left(  M\right)  =\sum
_{1}^{\infty}\rho\left(  a_{n}\right)  .
\]
As $\rho\left(  a_{n}\right)  \leq\left\Vert a_{n}\right\Vert $ for every
$n>0$, then $\rho_{+}\left(  M\right)  <\infty$ for every summable family $M$.

\begin{remark}
The inequality $\rho_{+}\left(  M\right)  \leq\rho_{t}\left(  M\right)  $ does
not hold even for commutative Banach algebra. Indeed, if $M=\left(
p,1-p,0,\ldots\right)  $ for a non-trivial idempotent $p$ then $\rho
_{t}\left(  M\right)  =1$ while $\rho_{+}\left(  M\right)  =2$. See the
calculation of $\rho_{t}\left(  M\right)  $ for commutative finite families
$M$ via joint spectra in \cite{M07}.
\end{remark}

\begin{lemma}
\label{comm}Let $M=\left(  a_{n}\right)  _{1}^{\infty}$ be a commutative
summable family in $A.$Then $\rho_{t}\left(  M\right)  \leq\rho_{+}\left(
M\right)  $.
\end{lemma}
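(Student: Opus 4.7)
The plan is to reduce to singleton families, since for a single element $(a)$ the definition gives immediately $\rho_t((a)) = \lim_k \|a^k\|^{1/k} = \rho(a)$. The main technical tool will be a subadditivity estimate: \emph{if $M_1$ and $M_2$ are summable families whose elements commute pairwise, then}
\begin{equation*}
\rho_t(M_1 \sqcup M_2) \leq \rho_t(M_1) + \rho_t(M_2).
\end{equation*}
Granting this, one iterates to obtain $\rho_t(M|^N) \leq \sum_{n=1}^N \rho(a_n)$ for every truncation, and applies the estimate once more to $M \simeq M|^N \sqcup M|_{N+1}$ to get
\begin{equation*}
\rho_t(M) \;\leq\; \rho_t(M|^N) + \rho_t(M|_{N+1}) \;\leq\; \sum_{n=1}^N \rho(a_n) + \|M|_{N+1}\|_+.
\end{equation*}
Letting $N\to\infty$, the right-hand side tends to $\rho_+(M)$, because $\|M|_{N+1}\|_+ = \sum_{n>N}\|a_n\| \to 0$ by summability, and this finishes the lemma.

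To prove the subadditivity estimate, I would classify the words appearing in $(M_1\sqcup M_2)^k$ by the subset $S \subseteq \{1,\dots,k\}$ of positions at which the factor is drawn from $M_1$. Since the elements of $M_1$ commute with those of $M_2$, each such word can be rewritten with all $M_1$-factors preceding all $M_2$-factors, and submultiplicativity of the norm yields
\begin{equation*}
\|(M_1 \sqcup M_2)^k\|_+ \;\leq\; \sum_{s=0}^{k} \binom{k}{s}\, \|M_1^s\|_+ \, \|M_2^{k-s}\|_+.
\end{equation*}
Fix $\eta > 0$ and choose a constant $C=C(\eta)$ such that $\|M_i^s\|_+ \leq C\,(\rho_t(M_i)+\eta)^s$ for all $s\geq 0$ and $i=1,2$; such a $C$ exists because $\|M_i^s\|_+^{1/s}\to\rho_t(M_i)$. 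The binomial theorem collapses the convolution to
\begin{equation*}
\|(M_1\sqcup M_2)^k\|_+ \;\leq\; C^2\,\bigl(\rho_t(M_1)+\rho_t(M_2)+2\eta\bigr)^k,
\end{equation*}
so $\limsup_{k}\|(M_1\sqcup M_2)^k\|_+^{1/k} \leq \rho_t(M_1)+\rho_t(M_2)+2\eta$, and $\eta\to 0$ completes the step.

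The only real obstacle lies in this subadditivity estimate: one must make sure that commutativity genuinely permits the regrouping of a word into its $M_1$- and $M_2$-contributions before submultiplicativity is applied, and that a uniform geometric majorant for each family (with the same constant $C$ for all powers $s\geq 0$, including $s=0$) is available so that the binomial sum collapses cleanly. Once this is in place, the passage from finite to infinite families is purely a matter of bookkeeping with the summability of $M$.
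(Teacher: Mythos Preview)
Your proof is correct and follows essentially the same route as the paper: split $M$ as $M|^{N}\sqcup M|_{N+1}$, use subadditivity of $\rho_t$ for commuting families to bound $\rho_t(M)$ by $\rho_t(M|^N)+\rho_t(M|_{N+1})$, iterate subadditivity on the finite truncation to get $\rho_t(M|^N)\leq\sum_{n\le N}\rho(a_n)$, and let the tail vanish. The only difference is that the paper cites the subadditivity inequality $\rho_t(M_1\sqcup M_2)\leq\rho_t(M_1)+\rho_t(M_2)$ for commuting families from \cite[Proposition~3.4]{TR2}, whereas you supply a direct binomial proof of it; your argument for that step is fine.
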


\begin{proof}
As $M|^{k}$ and $M|_{k+1}$ commute for every $k>0$, and $M=M|^{k}\sqcup
M|_{k+1}$, we obtain that
\[
\rho_{t}\left(  M\right)  \leq\rho_{t}\left(  M|^{k}\right)  +\rho_{t}\left(
M|_{k+1}\right)  \leq\rho_{+}\left(  M|^{k}\right)  +\rho_{t}\left(
M|_{k+1}\right)
\]
by \cite[Proposition 3.4]{TR2}. For every $\varepsilon>0$ there is $n>0$ such
that $\rho_{t}\left(  M|_{n+1}\right)  \leq\left\Vert M|_{n+1}\right\Vert
_{+}<\varepsilon$, whence
\[
\rho_{t}\left(  M\right)  \leq\sup_{k}\rho_{t}\left(  M|^{k}\right)  =\sup
_{k}\rho_{+}\left(  M|^{k}\right)  =\rho_{+}\left(  M\right)  .
\]

\end{proof}

\begin{corollary}
\label{cen}Let $A$ be a normed algebra, $I$ a closed ideal of $A$, and let
$M=\left(  a_{n}\right)  _{1}^{\infty}$ be a summable family in $A$. Then

\begin{enumerate}
\item If $A/I$ is commutative then $\rho_{t}\left(  M\right)  \leq\max\left\{
\rho_{+}\left(  M\right)  ,\rho_{t}\left(  \mathrm{L}_{M}|_{I}\right)
\right\}  $;

\item If $I$ is central then, for every $k>0$,
\[
\rho_{t}\left(  M\right)  \leq\max\left\{  \rho_{t}\left(  M/I\right)
,\rho_{t}\left(  M|^{k}\right)  +\rho_{t}\left(  M|_{k+1}\right)  \right\}  .
\]

\end{enumerate}
\end{corollary}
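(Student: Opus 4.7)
The plan is to reduce both statements to Corollary \ref{invarmul}, which already identifies $\rho_t(M)$ with $\max\{\rho_t(M/I),\rho_t(\mathrm{L}_M|_I)\}$. Each of (1) and (2) then amounts to bounding one of these two terms by the quantity that appears on the right-hand side of the assertion, while leaving the other untouched.

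For (1), I would use commutativity of $A/I$ to observe that $M/I$ is a commutative summable family in $A/I$. Lemma \ref{comm} then gives $\rho_t(M/I)\le \rho_+(M/I)$. Since the quotient norm satisfies $\|a/I\|\le\|a\|$, we get $\rho(a_n/I)\le\rho(a_n)$ for every $n$, hence $\rho_+(M/I)\le\rho_+(M)$. Plugging this back into Corollary \ref{invarmul} yields (1) immediately.

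The substantive work is in (2). Here I would exploit the centrality of $I$ through the key identity: since $I\subset Z(A)$, we have $ab=ba$ for every $a\in A$ and $b\in I$, so $\mathrm{L}_a|_I=\mathrm{R}_a|_I$ as operators on $I$. Writing $M=M|^k\sqcup M|_{k+1}$ and applying this identity \emph{only to the tail} gives the rewriting
\[
\mathrm{L}_M|_I \;=\; \mathrm{L}_{M|^k}|_I\,\sqcup\,\mathrm{L}_{M|_{k+1}}|_I \;=\; \mathrm{L}_{M|^k}|_I\,\sqcup\,\mathrm{R}_{M|_{k+1}}|_I
\]
as families of operators on $I$. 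The point of this trick is that although $\mathrm{L}_{a_i}$ and $\mathrm{L}_{a_j}$ generally fail to commute (so the naive decomposition is not a commuting pair even when $I$ is central), the identity $\mathrm{L}_a\mathrm{R}_{a'}=\mathrm{R}_{a'}\mathrm{L}_a$ holds on all of $A$ and therefore on $I$. Thus the rewritten decomposition is a disjoint union of two commuting summable families on $I$, and \cite[Proposition 3.4]{TR2} applies to give
\[
\rho_t(\mathrm{L}_M|_I)\;\le\;\rho_t(\mathrm{L}_{M|^k}|_I)+\rho_t(\mathrm{R}_{M|_{k+1}}|_I).
\]

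To finish, I would invoke Theorem \ref{invar} to pass from the whole of $A$ to its invariant subspace $I$ (using that $I$ is a two-sided ideal, so it is invariant both for $\mathrm{L}_{M|^k}$ and for $\mathrm{R}_{M|_{k+1}}$), and then Theorem \ref{mul} to identify $\rho_t(\mathrm{L}_{M|^k})=\rho_t(M|^k)$ and $\rho_t(\mathrm{R}_{M|_{k+1}})=\rho_t(M|_{k+1})$. Combined with Corollary \ref{invarmul} this yields (2). The only step that is not routine is the identification $\mathrm{L}_{a}|_I=\mathrm{R}_{a}|_I$, which is precisely what turns a potentially non-commuting decomposition of $\mathrm{L}_M|_I$ into a commuting one; every other ingredient is already in hand from earlier in this subsection.
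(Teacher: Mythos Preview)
Your proof of (1) is correct and identical to the paper's.

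Your proof of (2) is also correct, but it takes an unnecessary detour, and the justification you give for that detour is mistaken. You assert that ``the naive decomposition is not a commuting pair even when $I$ is central''. In fact it is: if $I\subset Z(A)$ and $x\in I$, then for any $a,b\in A$ we have $abx=a(bx)=(bx)a=b(xa)=b(ax)=bax$ (using that $bx\in I\subset Z(A)$ and $x\in Z(A)$), so $\mathrm{L}_a|_I$ and $\mathrm{L}_b|_I$ commute. Thus the entire family $\mathrm{L}_M|_I$ is commutative, and the paper simply observes this and applies \cite[Proposition 3.4]{TR2} directly to the split $\mathrm{L}_{M|^k}|_I\sqcup\mathrm{L}_{M|_{k+1}}|_I$, without converting the tail into right multiplications. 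Your trick of rewriting $\mathrm{L}_{M|_{k+1}}|_I=\mathrm{R}_{M|_{k+1}}|_I$ is valid (indeed $\mathrm{L}_a|_I=\mathrm{R}_a|_I$ for central $I$) and yields a commuting pair via the universal identity $\mathrm{L}_a\mathrm{R}_{a'}=\mathrm{R}_{a'}\mathrm{L}_a$, but it is a workaround for a non-problem. The remaining bounds $\rho_t(\mathrm{L}_{M|^k}|_I)\le\rho_t(M|^k)$ and $\rho_t(\mathrm{R}_{M|_{k+1}}|_I)\le\rho_t(M|_{k+1})$ via Theorems \ref{invar} and \ref{mul} are fine and match the paper's final step.
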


\begin{proof}
$\left(  1\right)  $ It follows from Lemma \ref{comm} and Corollary
\ref{invarmul} that
\[
\rho_{t}\left(  M\right)  \leq\max\left\{  \rho_{+}\left(  M/I\right)
,\rho_{t}\left(  \mathrm{L}_{M}|_{I}\right)  \right\}  \leq\max\left\{
\rho_{+}\left(  M\right)  ,\rho_{t}\left(  \mathrm{L}_{M}|_{I}\right)
\right\}
\]
since $\rho\left(  a_{n}/I\right)  \leq\rho\left(  a_{n}\right)  $ for every
$n$.

$\left(  2\right)  $ It is easy to check that $\mathrm{L}_{M}|_{I}$ is
commutative. Then
\begin{align*}
\rho_{t}\left(  \mathrm{L}_{M}|_{I}\right)   &  \leq\rho_{t}\left(
\mathrm{L}_{M|^{k}}|_{I}\right)  +\rho_{t}\left(  \mathrm{L}_{M|_{k+1}}%
|_{I}\right)  \leq\rho_{t}\left(  M|^{k}/I\right)  +\rho_{t}\left(
M|_{k+1}/I\right) \\
&  \leq\rho_{t}\left(  M|^{k}\right)  +\rho_{t}\left(  M|_{k+1}\right)
\end{align*}
by Lemma \ref{comm}, and
\begin{align*}
\rho_{t}\left(  M\right)   &  =\max\left\{  \rho_{t}\left(  M/I\right)
,\rho_{t}\left(  \mathrm{L}_{M}|_{I}\right)  \right\} \\
&  \leq\max\left\{  \rho_{t}\left(  M/I\right)  ,\rho_{t}\left(
M|^{k}\right)  +\rho_{t}\left(  M|_{k+1}\right)  \right\}  .
\end{align*}

\end{proof}

\begin{corollary}
\label{sum}Let $A$ be a normed algebra. If $A/\mathcal{R}_{t}\left(  A\right)
$ is commutative then $\rho_{t}\left(  M\right)  \leq\rho_{t}\left(
M|^{k}\right)  +\rho_{t}\left(  M|_{k+1}\right)  $ for every summable family
$M=\left(  a_{n}\right)  _{1}^{\infty}$ in $A$ and every $k>0$.
\end{corollary}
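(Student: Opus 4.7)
The plan is to reduce to the commutative case by passing to the quotient $B = A/\mathcal{R}_{t}(A)$ and exploiting the tensor radius formula (\ref{tr}). By the hypothesis, $B$ is commutative, so in particular the two subfamilies $M|^{k}/\mathcal{R}_{t}(A)$ and $M|_{k+1}/\mathcal{R}_{t}(A)$ commute with each other in $B$. The subadditivity of $\rho_{t}$ on commuting summable families (the property of $\rho_{t}$ already used in the proof of Lemma \ref{comm}, coming from \cite[Proposition 3.4]{TR2}) then yields
\[
\rho_{t}(M/\mathcal{R}_{t}(A)) \leq \rho_{t}(M|^{k}/\mathcal{R}_{t}(A)) + \rho_{t}(M|_{k+1}/\mathcal{R}_{t}(A)).
\]

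Next, I would apply (\ref{tr}) three times — once to $M$ itself, once to $M|^{k}$, and once to $M|_{k+1}$ — to replace each quotient tensor radius by the tensor radius in $A$. Since $M = M|^{k} \sqcup M|_{k+1}$, combining these equalities with the displayed inequality gives exactly
\[
\rho_{t}(M) \leq \rho_{t}(M|^{k}) + \rho_{t}(M|_{k+1}),
\]
as required.

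There is no real obstacle: all the heavy lifting has already been done. The only thing to verify is that (\ref{tr}) applies equally well to finite families (regarded as summable families with finitely many non-zero terms) and to arbitrary summable families, which is immediate from its formulation. In fact, this corollary can be viewed as the ``commutator-modulo-$\mathcal{R}_{t}$'' analogue of Corollary \ref{cen}(2): instead of needing a central ideal $I$ in $A$, the regular equality $\rho_{t}(M)=\rho_{t}(M/\mathcal{R}_{t}(A))$ lets us work in the commutative quotient $B$, where subadditivity for the disjoint union is automatic.
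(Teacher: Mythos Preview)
Your proposal is correct and follows essentially the same route as the paper's proof: apply (\ref{tr}) to pass to the commutative quotient, then use the subadditivity of $\rho_t$ on commuting families from \cite[Proposition 3.4]{TR2} (the step inside the proof of Lemma \ref{comm}). The only cosmetic difference is that the paper, after obtaining $\rho_t(M/\mathcal{R}_t(A)) \leq \rho_t(M|^k/\mathcal{R}_t(A)) + \rho_t(M|_{k+1}/\mathcal{R}_t(A))$, bounds the right-hand side by $\rho_t(M|^k) + \rho_t(M|_{k+1})$ without invoking (\ref{tr}) again (the trivial inequality $\rho_t(N/I)\leq\rho_t(N)$ suffices), whereas you use (\ref{tr}) to get equalities---either way works.
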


\begin{proof}
By $\left(  \ref{tr}\right)  $ and Lemma \ref{comm},
\[
\rho_{t}\left(  M\right)  =\rho_{t}\left(  M/\mathcal{R}_{t}\left(  A\right)
\right)  \leq\rho_{t}\left(  M|^{k}\right)  +\rho_{t}\left(  M|_{k+1}\right)
.
\]

\end{proof}

\begin{lemma}
\label{sum1}Let $A$ be an $\mathcal{R}_{t}$-semisimple algebra, and let $I$ be
a closed commutative ideal of $A$. If $A/I$ is commutative modulo
$\mathcal{R}_{t}\left(  A/I\right)  $ then $\rho_{t}\left(  \left\{
a\right\}  \sqcup M\right)  \leq\rho\left(  a\right)  +\rho_{t}\left(
M\right)  $ for every $a\in A$ and every summable family $M=\left(
a_{n}\right)  _{1}^{\infty}$ in $A$.
\end{lemma}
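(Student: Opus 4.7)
Proof proposal: The plan is to reduce the statement to the two main tensor-radius tools already established for families, namely Corollary \ref{cen}(2) (cutting by a central ideal) and Corollary \ref{sum} (summation bound for families in algebras commutative modulo $\mathcal{R}_t$). The only preparatory step needed is to upgrade the hypothesis ``$I$ is a closed commutative ideal'' to ``$I$ is central'', which will follow from semiprimeness of $A$.

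First I would observe that, by the comparison $\mathcal{P}_{\beta} \leq \mathcal{R}_{\mathrm{jhc}} \leq \mathcal{R}_{\mathrm{cq}} \leq \mathcal{R}_{t}$ (Theorem \ref{baerher} together with Section \ref{stensor}), the hypothesis $\mathcal{R}_{t}(A)=0$ forces $\mathcal{P}_{\beta}(A)=0$, so $A$ is semiprime. By Lemma \ref{commut}(2) applied to the closed commutative ideal $I$, we then have $I = \Sigma_{a}(I) = I \cap \Sigma_{a}(A) \subset Z(A)$, so $I$ is central in $A$.

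Next, set $N = \{a\} \sqcup M$, indexed so that $N|^{1} = \{a\}$ and $N|_{2} = M$. Because $I$ is central, Corollary \ref{cen}(2) with $k = 1$ yields
\[
\rho_{t}(N) \leq \max\bigl\{\rho_{t}(N/I),\; \rho_{t}(\{a\}) + \rho_{t}(M)\bigr\} = \max\bigl\{\rho_{t}(N/I),\; \rho(a) + \rho_{t}(M)\bigr\}.
\]
To control the first term I would pass to $A/I$: by hypothesis, $(A/I)/\mathcal{R}_{t}(A/I)$ is commutative, so Corollary \ref{sum} applies to the summable family $N/I$ in $A/I$, again with $k=1$, giving
\[
\rho_{t}(N/I) \leq \rho_{t}((N/I)|^{1}) + \rho_{t}((N/I)|_{2}) = \rho(a/I) + \rho_{t}(M/I) \leq \rho(a) + \rho_{t}(M),
\]
since the quotient map $A \to A/I$ is a contractive morphism and hence does not increase $\rho$ or $\rho_{t}$. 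Combining the two inequalities gives $\rho_{t}(\{a\} \sqcup M) \leq \rho(a) + \rho_{t}(M)$, as required.

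I do not expect a serious obstacle here; the content is an assembly of previous results. The one small subtlety worth double-checking is the verification that $\mathcal{P}_{\beta} \leq \mathcal{R}_{t}$ on normed algebras (so that $\mathcal{R}_{t}$-semisimplicity really implies semiprimeness), which is needed to identify the commutative ideal $I$ with a central ideal before Corollary \ref{cen}(2) can be invoked.
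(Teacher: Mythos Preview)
Your proof is correct and follows essentially the same route as the paper: first use semiprimeness of $A$ together with Lemma~\ref{commut} to upgrade the commutative ideal $I$ to a central one, then apply Corollary~\ref{cen}(2) with $k=1$ to the family $\{a\}\sqcup M$, and finally bound the quotient term $\rho_t((\{a\}\sqcup M)/I)$ via Corollary~\ref{sum}. The only difference is cosmetic: the paper asserts ``$A$ is semiprime'' without comment, whereas you explicitly justify it via $\mathcal{P}_{\beta}\leq\mathcal{R}_{\mathrm{cq}}\leq\mathcal{R}_{t}$, which is a welcome clarification.
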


\begin{proof}
As $A$ is a semiprime algebra, $I$ is a central ideal of $A$ by Lemma
\ref{commut}. By Corollary \ref{cen},%
\[
\rho_{t}\left(  \left\{  a\right\}  \sqcup M\right)  \leq\max\left\{  \rho
_{t}\left(  \left(  \left\{  a\right\}  \sqcup M\right)  /I\right)
,\rho\left(  a\right)  +\rho_{t}\left(  M\right)  \right\}  .
\]
By Corollary \ref{sum},
\[
\rho_{t}\left(  \left(  \left\{  a\right\}  \sqcup M\right)  /I\right)
\leq\rho\left(  a/I\right)  +\rho_{t}\left(  M/I\right)  \leq\rho\left(
a\right)  +\rho_{t}\left(  M\right)  .
\]
Therefore $\rho_{t}\left(  \left\{  a\right\}  \sqcup M\right)  \leq
\rho\left(  a\right)  +\rho_{t}\left(  M\right)  $.
\end{proof}

In the following theorem we will use the inequality
\begin{equation}
\rho_{t}\left(  \left\{  a+b\right\}  \sqcup M\right)  \leq\rho_{t}\left(
\left\{  a\right\}  \sqcup\left\{  b\right\}  \sqcup M\right)  \label{abm}%
\end{equation}
for every $a,b\in A$ and every summable family $M$ in $A$; this is a special
case of \cite[Proposition 3.3]{TR2}.

\begin{theorem}
$\mathcal{R}_{t}^{a}$ is a uniform regular topological radical on
$\mathfrak{U}_{\mathrm{n}}$.
\end{theorem}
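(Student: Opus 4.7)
The plan is to deduce that $\mathcal{R}_{t}^{a}$ is a radical from Proposition \ref{exc2}, then obtain uniformity and regularity from the corresponding properties of $\mathcal{R}_{t}$ together with already-established machinery. Since $\mathcal{R}_{t}\geq\mathfrak{P}_{\beta}$ (nilpotent ideals have zero tensor radius) and $\mathcal{R}_{t}$ is hereditary (being uniform), Theorem \ref{pab} already gives that $\mathcal{R}_{t}^{a}$ is a hereditary under radical, so the only missing property is Axiom 2. Note that $\mathcal{R}_{t}$ is not known to be strict or pliant in the senses needed, so Theorem \ref{cent} does not apply directly; the essential replacement will be Lemma \ref{sum1}, which is specific to the tensor radius.

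Following the template of the proof of Theorem \ref{cent}, by Proposition \ref{exc2} it suffices to show that any $\mathcal{R}_{t}$-semisimple algebra $B$ for which $B/\Sigma_{a}(B)$ is $\mathcal{R}_{t}^{a}$-radical must itself be commutative. I would set $J=\Sigma_{a}(B)$, a closed central ideal by Remark \ref{comsem}, and $I=J^{\perp}$; semiprimeness of $B$ forces $I\cap J=0$. Assuming $B$ is not commutative yields $[B,B]\subset I$, so $I\neq 0$. The restriction $f=q_{J}|_{I}$ is then an injective algebra homomorphism onto the ideal $K=(I+J)/J$ of $B/J$; if $K$ were commutative, Lemma \ref{nilcom} would force $I+J=J$, hence $I=0$, a contradiction. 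So $[K,K]\neq 0$, and since the assumption on $B/\Sigma_{a}(B)$ unpacks to $[B/J,B/J]\subset\mathcal{R}_{t}(B/J)$, the ideal $L:=K\cap\mathcal{R}_{t}(B/J)$ contains $[K,K]$ and is nonzero. Pick $0\neq a\in L$ and let $\tilde{a}\in I$ be its unique preimage under $f$; then $\tilde{a}\neq 0$.

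The crux is to derive the contradiction $\tilde{a}\in\mathcal{R}_{t}(B)=0$. Since $a\in\mathcal{R}_{t}(B/J)$, taking the empty summable family in the defining condition of $\mathcal{R}_t$ yields $\rho(a)=0$. Because $\tilde{a}\in I=J^{\perp}$ and $J$ is central, the subspace $J$ is invariant for $\mathrm{L}_{\tilde{a}}$ with $\mathrm{L}_{\tilde{a}}|_{J}=0$, while $\mathrm{L}_{\tilde{a}}|_{B/J}=\mathrm{L}_{a}$; applying Theorem \ref{invar} to the singleton family $\{\mathrm{L}_{\tilde{a}}\}$ with $Y=J$ and then Theorem \ref{mul} to pass from $\rho(\mathrm{L}_{\tilde{a}})$ to $\rho(\tilde{a})$ gives $\rho(\tilde{a})=\rho(a)=0$. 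The hypotheses of Lemma \ref{sum1} then hold with $A=B$ and $I=J$, yielding
\[
\rho_{t}(\{\tilde{a}\}\sqcup M)\leq\rho(\tilde{a})+\rho_{t}(M)=\rho_{t}(M)
\]
for every summable $M\subset B$. The reverse inequality $\rho_{t}(M)\leq\rho_{t}(\{\tilde{a}\}\sqcup M)$ is immediate from the definition of $\rho_{t}$ via $\|\cdot\|_{+}$, so equality holds and $\tilde{a}\in\mathcal{R}_{t}(B)=0$. Verifying this chain is the main obstacle, as it requires combining the tensor-radius identities in precisely the right configuration; this is also the step where the argument genuinely departs from the general Theorem \ref{cent}.

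Finally, uniformity of $\mathcal{R}_{t}^{a}$ follows formally: if $A$ is $\mathcal{R}_{t}^{a}$-radical so that $[A,A]\subset\mathcal{R}_{t}(A)$, then for any subalgebra $B\subset A$ the intersection $B\cap\mathcal{R}_{t}(A)$ is an ideal of $B$ and a subalgebra of the $\mathcal{R}_{t}$-radical algebra $\mathcal{R}_{t}(A)$, hence is $\mathcal{R}_{t}$-radical by uniformity of $\mathcal{R}_{t}$ and so lies in $\mathcal{R}_{t}(B)$; this gives $[B,B]\subset\mathcal{R}_{t}(B)$ and therefore $B$ is $\mathcal{R}_{t}^{a}$-radical. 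Regularity is immediate from Theorem \ref{regt} applied to the dense subalgebra $A\subset\widehat{A}$, which yields $\mathcal{R}_{t}^{a}(A)=A\cap\mathcal{R}_{t}^{a}(\widehat{A})$.
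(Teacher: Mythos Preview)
Your proof is correct and takes a genuinely different route from the paper at the crucial step. Both arguments reduce via Proposition~\ref{exc2} to showing that an $\mathcal{R}_t$-semisimple algebra $B$ with $B/\Sigma_a(B)$ commutative modulo $\mathcal{R}_t$ must be commutative, and both rely essentially on Lemma~\ref{sum1}. The paper, working in a Banach algebra, considers for arbitrary $a,b\in B$ the analytic family $c_\lambda=\exp(\lambda\operatorname{ad}(b))(a)$; since $\rho(c_\lambda)=\rho(a)$, Lemma~\ref{sum1} together with~(\ref{abm}) bounds $\rho_t(\{(c_\lambda-a)/\lambda\}\sqcup M)$ uniformly in $\lambda$, and subharmonicity of $\rho_t$ (from \cite[Theorem~3.16]{TR2}) plus a Liouville argument forces $[b,a]\in\mathcal{R}_t(B)=0$. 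You instead follow the structural template of Theorem~\ref{cent}: having located $0\neq\tilde a\in J^\perp$ with image $a\in\mathcal{R}_t(B/J)$, you exploit the triangular decomposition $\mathrm{L}_{\tilde a}|_J=0$, $\mathrm{L}_{\tilde a}|_{B/J}=\mathrm{L}_a$ via Theorems~\ref{invar} and~\ref{mul} to obtain $\rho(\tilde a)=\rho(a)=0$ directly, after which a single application of Lemma~\ref{sum1} gives $\tilde a\in\mathcal{R}_t(B)=0$. Your approach is more elementary, avoiding complex analysis entirely; the paper's subharmonicity technique, on the other hand, is a robust tool that recurs elsewhere in the theory. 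Note also that the paper genuinely needs the Banach setting for the exponential and for subharmonicity, so it establishes the radical property first on $\mathfrak{U}_{\mathrm b}$ and only then passes to $\mathfrak{U}_{\mathrm n}$ via regularity (Theorem~\ref{regt}); since the proof of Theorem~\ref{invar} uses only the quotient-norm definition and not completeness, your argument does not obviously require this detour, though you should make the Banach-versus-normed logic explicit when writing it up.
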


\begin{proof}
Let $A$ be an $\mathcal{R}_{t}$-semisimple algebra, and let $I$ be a closed
commutative ideal of $A$ such that $A/I$ is commutative modulo $\mathcal{R}%
_{t}\left(  A/I\right)  $. By Proposition \ref{exc2}, to prove that
$\mathcal{R}_{t}^{a}$ is a topological radical, it suffices to show that $A$
is commutative.

Assume that $A$ is a Banach algebra. Let $M=\left(  a_{n}\right)  _{1}%
^{\infty}$ be a summable family in $A$, and $a,b\in A$ be arbitrary. Put
$c_{\lambda}=\exp\left(  \lambda\operatorname*{ad}\left(  b\right)  \right)
\left(  a\right)  $ for every $\lambda\in\mathbb{C}$, and put $f\left(
\lambda\right)  =\left(  c_{\lambda}-a\right)  /\lambda$; then $\lambda
\longmapsto$ $f\left(  \lambda\right)  $ is an analytic function in $A$ and
$f\left(  0\right)  =\left[  b,a\right]  $. As $\rho\left(  c_{\lambda
}\right)  =\rho\left(  a\right)  $ then, by $\left(  \ref{abm}\right)  $ and
Lemma \ref{sum1},
\[
\rho_{t}\left(  \left\{  c_{\lambda}-a\right\}  \sqcup M\right)  \leq\rho
_{t}\left(  \left\{  c_{\lambda}\right\}  \sqcup\left\{  a\right\}  \sqcup
M\right)  \leq2\rho\left(  a\right)  +\rho_{t}\left(  M\right)
\]
for every $\lambda\in\mathbb{C}$. Replace $a$ by $a/\lambda$ for $\lambda
\neq0$; we obtain that
\begin{equation}
\rho_{t}\left(  \left\{  f\left(  \lambda\right)  \right\}  \sqcup M\right)
\leq2\rho\left(  a\right)  /\lambda+\rho_{t}\left(  M\right)  \label{rtb}%
\end{equation}
The function $\lambda\longmapsto\rho_{t}\left(  \left\{  f\left(
\lambda\right)  \right\}  \sqcup M\right)  $ is subharmonic by \cite[Theorem
3.16]{TR2} and bounded on $\mathbb{C}$ by $\left(  \ref{rtb}\right)  $;
therefore it is constant and
\[
\rho_{t}\left(  \left\{  \left[  b,a\right]  \right\}  \sqcup M\right)
=\rho_{t}\left(  \left\{  f\left(  0\right)  \right\}  \sqcup M\right)
=\lim_{\left\vert \lambda\right\vert \rightarrow\infty}\rho_{t}\left(
\left\{  f\left(  \lambda\right)  \right\}  \sqcup M\right)  \leq\rho
_{t}\left(  M\right)  .
\]
As $M$ is an arbitrary summable family in $A$ then $\left[  b,a\right]
\in\mathcal{R}_{t}\left(  A\right)  =0$, therefore $A$ is commutative.

As $\mathcal{R}_{t}$ is hereditary, it follows from Theorem \ref{pab} and
Proposition \ref{exc2} that $\mathcal{R}_{t}^{a}$ is a hereditary topological
radical on $\mathfrak{U}_{\mathrm{b}}$. As $\mathcal{R}_{t}^{a}$ is regular by
Theorem \ref{regt}, $\mathcal{R}_{t}^{a}$ is a hereditary topological radical
on $\mathfrak{U}_{\mathrm{n}}$.

Let $A$ be an $\mathcal{R}_{t}^{a}$-radical algebra, and let $B$ be a
subalgebra of $A$. If $a,b\in B$ then $\rho_{t}\left(  \left\{  \left[
b,a\right]  \right\}  \sqcup M\right)  =\rho_{t}\left(  M\right)  $ for every
summable family in $B$. Therefore $B$ is $\mathcal{R}_{t}^{a}$-radical and
$\mathcal{R}_{t}^{a}$ is uniform.
\end{proof}

\subsection{Spectral applications}

Let $\mathfrak{U}$ be a class of algebras. A property of algebras from
$\mathfrak{U}$ is called \textit{radical} (respectively, \textit{semisimple})
in $\mathfrak{U}$ if algebras with this property form the radical
(respectively, semisimple) class for some radical.

We start with $\operatorname{Rad}^{a}$-radical algebras. It is well known that
Banach algebras commutative modulo the Jacobson radical share with the
commutative ones the advantages of easy calculation of spectra and the
continuity of the functions $a\rightarrow\sigma(a)$ and $a\rightarrow\rho(a)$.
Moreover, it was shown in \cite{PZ77, Z77} that a Banach algebra is
commutative modulo radical if and only if one (or all) of the following
conditions holds:

\begin{enumerate}
\item[(1$_{a}$)] The function $a\rightarrow\rho(a)$ is submultiplicative;

\item[(2$_{a}$)] The function $a\rightarrow\rho(a)$ is subadditive;

\item[(3$_{a}$)] The function $a\rightarrow\rho(a)$ is uniformly continuous.
\end{enumerate}

Using the fact that $\operatorname{Rad}^{a}$ is a radical (Corollary
\ref{centRad}), we obtain the following result:

\begin{corollary}
\label{extCont}Let $A$ be a Banach algebra, let $I$ be a closed ideal of $A$
and let $\mathcal{F}$ be a family of closed ideals of $A$ with dense sum in
$A$. If either $I$ and $A/I$, or all ideals from $\mathcal{F}$ have one of the
properties $\left(  1_{a}\right)  $, $\left(  2_{a}\right)  $, $\left(
3_{a}\right)  $ then $A$ has the same property.
\end{corollary}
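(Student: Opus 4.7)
The plan is to recognize the three properties in question as different manifestations of a single radical-theoretic condition and then invoke the radical machinery built in the preceding section. The Pták--Zemánek theorem cited just before the corollary asserts that, for a Banach algebra $B$, each of $(1_a)$, $(2_a)$, $(3_a)$ is equivalent to the condition that $B$ is commutative modulo $\operatorname{Rad}(B)$, i.e.\ $[B,B]\subset\operatorname{Rad}(B)$. By the very definition of the centralization, this is the statement that $B=\operatorname{Rad}^a(B)$, that is, $B$ is $\operatorname{Rad}^a$-radical. Hence all three properties cut out the same class $\mathbf{Rad}(\operatorname{Rad}^a)$, and proving the corollary reduces to showing that this class is stable under the two constructions in the hypothesis.

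At this point the work is essentially done: by Corollary \ref{centRad}, $\operatorname{Rad}^a$ is a (hereditary) radical on $\mathfrak{U}_{\mathrm{b}}$. Therefore the first case --- when $I$ and $A/I$ are $\operatorname{Rad}^a$-radical --- is immediate from Theorem \ref{ext}(2), which tells us that the class $\mathbf{Rad}(P)$ of any over radical (in particular any radical) $P$ is stable under extensions. One concludes that $A\in\mathbf{Rad}(\operatorname{Rad}^a)$ and so $A$ satisfies the same property.

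For the second case, where all members of $\mathcal{F}$ are $\operatorname{Rad}^a$-radical and $\sum_{J\in\mathcal{F}}J$ is dense in $A$, I would invoke Theorem \ref{ch1}(1): the closure of the sum of any family of $P$-radical ideals is again $P$-radical, for any under radical $P$ (and $\operatorname{Rad}^a$ is such). Since by hypothesis the closure of $\sum_{J\in\mathcal{F}}J$ equals $A$, we get $A=\operatorname{Rad}^a(A)$, and again $A$ has the required property by Pták--Zemánek.

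The only real content is the identification of the property with $\operatorname{Rad}^a$-radicality; everything else is purely formal, using two general stability statements (Theorem \ref{ext} for extensions and Theorem \ref{ch1} for closed sums) that apply to any radical. There is no genuine obstacle, since the hard analytic input --- both the Pták--Zemánek characterization and the fact that $\operatorname{Rad}^a$ is itself a radical --- has already been recorded in the paper; the corollary is essentially a dictionary translation from classical language about spectral radius functions to radical-theoretic language, where the desired permanence properties are built in.
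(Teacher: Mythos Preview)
Your proof is correct and follows exactly the line the paper intends: the corollary is stated immediately after recording that $\operatorname{Rad}^a$ is a radical (Corollary~\ref{centRad}) and the Pt\'ak--Zem\'anek characterization, and is meant to be read as a direct consequence of these two facts. You have simply made explicit the two stability statements (Theorem~\ref{ext}(2) for extensions, Theorem~\ref{ch1}(1) for closed sums of ideals) that the paper leaves implicit.
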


In other words, the uniform continuity of $a\rightarrow\rho(a)$ is a radical
property of Banach algebras.

\begin{problem}
Will the statement of Corollary $\ref{extCont}$ stay true if one replaces the
uniform continuity of $a\rightarrow\rho(a)$ by the continuity of this map?
\end{problem}

Some results related to this problem can be found in Section \ref{continuity}.

In the theory of joint spectra and spectral radii the commutativity modulo the
radical $\mathcal{R}_{\mathrm{cq}}$ or $\mathcal{R}_{t}$ plays a role which is
at least partially similar to the role of commutativity modulo
$\operatorname{Rad}$ in the \textquotedblleft individual\textquotedblright%
\ spectral theory.

\begin{theorem}
\label{sp4}Let $A$ be a normed algebra. Then

\begin{enumerate}
\item If $M = \{a_{\alpha}: \alpha\in\Lambda\}$ is a family in $A$ which is
precompact and commutative modulo $\mathcal{R}_{\mathrm{cq}}\left(  A\right)
$ then $\rho\left(  M\right)  =\sup\left\{  \left\vert \lambda\right\vert
:\lambda\in\sigma_{\widehat{A}}^{l}\left(  M\right)  \right\}  ;$ (Here
$\left\vert \lambda\right\vert =\sup_{\alpha}\left\vert \lambda_{\alpha
}\right\vert $.)

\item If $M=\left(  a_{n}\right)  _{1}^{\infty}$ is a summable family which is
commutative modulo $\mathcal{R}_{t}\left(  A\right)  $ then $\rho_{t}\left(
M\right)  =\sup\left\{  \left\vert \lambda\right\vert _{+}:\lambda\in
\sigma_{\widehat{A}}^{l}\left(  M\right)  \right\}  =\sup\left\{  \left\vert
\lambda\right\vert _{+}:\lambda\in\sigma_{\widehat{A}}^{r}\left(  M\right)
\right\}  .$ (Here $\left\vert \lambda\right\vert _{+}=\sum_{1}^{\infty
}\left\vert \lambda_{n}\right\vert $.)
\end{enumerate}
\end{theorem}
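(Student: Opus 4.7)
The plan is to reduce both statements to the commutative case by passing to the appropriate quotient of $A$, and then to apply Gelfand-type arguments in the commutative Banach subalgebra generated by the family. For (1) I set $J=\mathcal{R}_{\mathrm{cq}}(A)$, $B=A/J$ and $N=M/J$; by $(\ref{cq})$, $\rho(M)=\rho(N)$, and by hypothesis $N$ is commutative and precompact in $B$. By $(\ref{comprad})$, $J\subset\operatorname{Rad}^{r}(A)\subset\operatorname{Rad}(\widehat{A})$, so the closure $\overline{J}^{\widehat{A}}$ lies in $\operatorname{Rad}(\widehat{A})$; Lemma \ref{sp0}(4) then identifies $\sigma^{l}_{\widehat{A}}(M)$ with $\sigma^{l}_{\widehat{A}/\overline{J}^{\widehat{A}}}(N)$, while a short quotient-norm calculation shows $\widehat{A}/\overline{J}^{\widehat{A}}\cong\widehat{B}$ isometrically. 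The reduction for (2) is parallel, using $(\ref{tr})$ and $\mathcal{R}_{t}\leq\operatorname{Rad}^{r}$; the $\sigma^{r}$ equality in (2) is obtained by applying the $\sigma^{l}$ argument to the opposite algebra $A^{\mathrm{op}}$, whose tensor radical and tensor radius agree with those of $A$.

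One inequality is free of any commutativity hypothesis. Given $\lambda\in\sigma^{l}_{\widehat{A}}(M)$, Theorem \ref{sp1}(1) furnishes $\pi\in\operatorname{Irr}((\widehat{A})^{1})$ and a nonzero $\xi$ with $\pi(a_{\alpha})\xi=\lambda_{\alpha}\xi$ for every $\alpha$; since $\widehat{A}$ is a Banach algebra, $\pi$ is automatically continuous. Applying $\pi$ to an arbitrary word $a_{w_{1}}\cdots a_{w_{n}}$ gives
\[
\lvert\lambda_{w_{1}}\cdots\lambda_{w_{n}}\rvert\,\|\xi\|\leq\|\pi\|\,\|a_{w_{1}}\cdots a_{w_{n}}\|\,\|\xi\|,
\]
and summing (resp.\ taking the supremum) over words of length $n$ yields $|\lambda|_{+}^{n}\leq\|\pi\|\,\|M^{n}\|_{+}$ and $|\lambda|^{n}\leq\|\pi\|\,\|M^{n}\|$. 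Taking $n$-th roots and using the definitions of $\rho$ and $\rho_{t}$ produces the inequalities $\rho(M)\geq\sup\{|\lambda|\}$ and $\rho_{t}(M)\geq\sup\{|\lambda|_{+}\}$.

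For the reverse inequalities we work in $\widehat{B}$. Let $C$ be the closed unital commutative subalgebra of $(\widehat{B})^{1}$ generated by $N$, with character space $\Delta$. A classical fact about commutative families asserts $\sigma^{l}_{\widehat{B}}(N)=\{\chi(N):\chi\in\Delta\}$: for each $\chi$, the left ideal generated in $(\widehat{B})^{1}$ by $N-\chi(N)$ is proper (if it contained $1$, extending $\chi$ through the quotient by the maximal ideal $\ker\chi\subset C^{1}$ would contradict the Gelfand--Mazur theorem); conversely, Lemma \ref{sp0}(2) gives $\sigma^{l}_{\widehat{B}}(N)\subset\sigma^{l}_{C}(N)=\{\chi(N)\}$. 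In $C$, Gelfand theory gives $\rho(N)=\sup_{\chi}\sup_{\alpha}|\chi(a_{\alpha})|$ (the precompact case) and, via the Gelfand image in $C(\Delta)$, the identity $\rho_{t}(N)=\sup_{\chi}\sum_{n}|\chi(a_{n})|$ (the summable case). Combined with the reduction, these are exactly the claimed formulas.

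The main obstacle will be the commutative upper bound for $\rho_{t}$: namely $\rho_{t}(N)\leq\sup_{\chi}|\chi(N)|_{+}$ inside a commutative Banach algebra. The naive pointwise bound $\sum_{w}\sup_{\delta\in\Delta}\prod_{j}|\hat{a}_{w_{j}}(\delta)|\leq\sum_{w}\prod_{j}\|\hat{a}_{w_{j}}\|_{\infty}$ only recovers Lemma \ref{comm} and is not sharp; one needs the finer estimate $\|N^{n}\|_{+}^{1/n}\to\sup_{\delta}h(\delta)$ with $h(\delta)=\sum_{n}|\hat{a}_{n}(\delta)|$. I would handle this either by a measure-theoretic averaging argument on $\Delta$ exchanging $\sum_{w}\sup_{\delta}$ with $\sup_{\delta}\sum_{w}$ up to an $n$-subexponential factor, or, more cleanly, by invoking the tensor characterisation of \cite[Theorem 3.36]{TR2} to realise $\rho_{t}(N)$ as the spectral radius of $\sum_{n}a_{n}\otimes\delta_{n}$ in an appropriate projective tensor product with a commutative $\ell^{1}$-convolution algebra, where Gelfand theory on the commutative tensor product closes the estimate.
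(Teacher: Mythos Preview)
Your reduction to the commutative quotient $\widehat{B}=\widehat{A/J}$ is fine, as is the ``free'' inequality $\sup|\lambda|\le\rho(M)$ obtained from Theorem~\ref{sp1}(1). The gap is in the opposite direction. You assert that
\[
\sigma^{l}_{\widehat{B}}(N)=\{\chi(N):\chi\in\Delta\},
\]
and in particular that every character value $\chi(N)$ lies in $\sigma^{l}_{\widehat{B}}(N)$. This is false in general: take $\widehat{B}=\mathcal{B}(\ell^{2}(\mathbb{Z}))$, $N=\{a\}$ with $a$ the bilateral shift, and $C$ the closed unital algebra generated by $a$. Then $\sigma_{C}(a)=\overline{\mathbb{D}}$ so there is a character with $\chi(a)=0$, but $a$ is invertible in $\widehat{B}$ and $0\notin\sigma^{l}_{\widehat{B}}(a)$. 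Your justification (``extending $\chi$ through the quotient $C^{1}/\ker\chi$ would contradict Gelfand--Mazur'') does not apply, because the relation $1=\sum b_{i}(a_{\alpha_{i}}-\chi(a_{\alpha_{i}}))$ takes place in the noncommutative algebra $(\widehat{B})^{1}$; the coefficients $b_{i}$ need not lie in $C^{1}$, so there is nothing to which $\chi$ can be applied. Without this inclusion you cannot pass from the Gelfand formula $\rho(N)=\sup_{\chi}|\chi(N)|$ (which is correct, though it already uses a nontrivial fact about precompact commuting families, not merely ``Gelfand theory'') to the required $\rho(N)\le\sup\{|\lambda|:\lambda\in\sigma^{l}_{\widehat{B}}(N)\}$.

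The paper avoids this difficulty by never comparing the two spectra. After passing to the completion and to the commutative quotient, it reduces (1) to finite subfamilies $N\subset M$ via the continuity result \cite[Theorem~6.3]{TR3} (equation~(\ref{sf1})), and then invokes M\"uller's formula $\rho(N)=\sup\{|\lambda|:\lambda\in\sigma^{l}_{A}(N)\}$ for finite commuting tuples \cite[Theorem~35.5]{M07} directly in the ambient algebra, together with the projection property of $\sigma^{l}$ \cite[Proposition~35.2]{M07}. For (2) the structure is the same: Corollary~\ref{sum} gives $\rho_{t}(M)=\lim_{n}\rho_{t}(M|^{n})$, and M\"uller \cite[Theorem~35.6]{M07} computes $\rho_{t}$ on the finite truncations. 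Your proposed routes for (2) --- a measure-theoretic exchange of $\sum_{w}\sup_{\delta}$ or the tensor characterisation of $\rho_{t}$ --- are not carried out, and in any case the finite-truncation argument is both shorter and already available from the lemmas in the paper.
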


\begin{proof}
As $\mathcal{R}_{\mathrm{cq}}\left(  A\right)  =A\cap\mathcal{R}_{\mathrm{cq}%
}\left(  \widehat{A}\right)  $, $\mathcal{R}_{t}\left(  A\right)
=A\cap\mathcal{R}_{t}\left(  \widehat{A}\right)  $ and the values of the joint
spectral and tensor radii do not change under taking the completion of $A$,
one may assume that $A$ is a Banach algebra.

$\left(  1\right)  $ As $\rho\left(  M\right)  =\rho\left(  M/\mathcal{R}%
_{\mathrm{cq}}\left(  A\right)  \right)  $ by \cite[Theorem 3.29]{TR2}, one
may assume that $M$ is commutative. By \cite[Theorem 6.3]{TR3}, the joint
spectral radius is continuous at $M$ and
\begin{equation}
\rho\left(  M\right)  =\sup\left\{  \rho\left(  N\right)  :N\text{ is a finite
subfamily of }M\right\}  . \label{sf1}%
\end{equation}
The first equality below holds by \cite[Theorem 35.5]{M07} while the other
relations are obvious:
\begin{align}
\rho\left(  N\right)   &  =\sup\left\{  \left\vert \lambda\right\vert
:\lambda\in\sigma_{A}^{l}\left(  N\right)  \right\} \label{sf2}\\
&  \leq\sup\left\{  \left\vert \lambda\right\vert :\lambda\in\sigma_{A}%
^{l}\left(  M\right)  \right\}  =\sup_{a_{\alpha}\in M}\rho\left(  a_{\alpha
}\right)  \leq\rho\left(  M\right)  .\nonumber
\end{align}
The result follows from $\left(  \ref{sf1}\right)  $ and $\left(
\ref{sf2}\right)  $.

$\left(  2\right)  $ As $\rho\left(  M\right)  =\rho\left(  M/\mathcal{R}%
_{\mathrm{t}}\left(  A\right)  \right)  $ by \cite[Theorem 4.18]{TR1}, one may
assume that $M$ is commutative. As $M$ is summable then for every
$\varepsilon>0$ there is $n>0$ such that $\sum_{n+1}^{\infty}\left\Vert
a_{k}\right\Vert <\varepsilon$. By Corollary \ref{sum},
\[
\rho\left(  M|^{n}\right)  \leq\rho\left(  M\right)  \leq\rho\left(
M|^{n}\right)  +\rho\left(  M|_{n+1}\right)  \leq\rho\left(  M|^{n}\right)
+\sum_{n+1}^{\infty}\left\Vert a_{k}\right\Vert <\rho\left(  M|^{n}\right)
+\varepsilon
\]
whence $\rho\left(  M\right)  =\lim_{n\rightarrow\infty}\rho\left(
M|^{n}\right)  $. By \cite[Theorem 35.6]{M07},
\[
\rho\left(  M|^{n}\right)  =\sup\left\{  \left\vert \lambda\right\vert
_{+}:\lambda\in\sigma_{\widehat{A}}^{l}\left(  M|^{n}\right)  \right\}  ,
\]
whence the result follows.
\end{proof}

It should be noted that the results of M\"{u}ller, used in the above proof,
were formulated in \cite{M07} for finite families in Banach algebras and for
the Harte spectrum, but they hold also for the left and right spectra due to
\cite[Proposition 35.2]{M07}.

It follows from Theorem \ref{sp4}$\left(  1\right)  $ that $\rho\left(
M\right)  =\sup_{a_{\alpha}\in M}\rho\left(  a_{\alpha}\right)  $ under the
posed conditions (this was proved also in \cite{TR3}). As a consequence, we
obtain that each $\mathcal{R}_{\mathrm{cq}}^{a}$-radical algebra is a
Berger-Wang algebra (i.e. $\rho(M)=r(M)$ for all precompact sets $M$ in $A$).
By \cite[Corollary 5.15]{TR3}, $\mathcal{R}_{\mathrm{hc}}\vee\mathcal{R}%
_{\mathrm{cq}}^{a}\leq\mathcal{R}_{\mathrm{bw}}$ and in particular for every
normed algebra $A$ the following \textit{algebra version of the} \textit{joint
spectral radius formula}
\[
\rho\left(  M\right)  =\max\left\{  \rho\left(  M/\left(  \mathcal{R}%
_{\mathrm{hc}}\vee\mathcal{R}_{\mathrm{cq}}^{a}\right)  \left(  A\right)
\right)  ,r\left(  M\right)  \right\}
\]
holds for every precompact set $M$ in $A$. We do not know however whether
every Berger-Wang algebra is $\mathcal{R}_{\mathrm{bw}}$-radical.

\begin{proposition}
Let $A$ be a normed algebra. If $A/\mathcal{R}_{\mathrm{bw}}\left(  A\right)
$ is a Berger-Wang algebra then $A$ is a Berger-Wang algebra.
\end{proposition}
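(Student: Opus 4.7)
The plan is to combine the defining property of $\mathcal{R}_{\mathrm{bw}}$ with the obvious monotonicity of the Berger--Wang radius under quotients. Recall from the preceding discussion that $\mathcal{R}_{\mathrm{bw}}$ is characterized as the largest topological radical for which the algebra version of the joint spectral radius formula holds, so for every normed algebra $A$ and every $M \in \mathfrak{k}(A)$ one has
\[
\rho(M)=\max\bigl\{\rho(M/\mathcal{R}_{\mathrm{bw}}(A)),\,r(M)\bigr\}.
\]

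First, I would invoke this formula for our given $A$ and a fixed precompact $M \subset A$. Next, denoting $B = A/\mathcal{R}_{\mathrm{bw}}(A)$ and $\widetilde{M} = M/\mathcal{R}_{\mathrm{bw}}(A) \in \mathfrak{k}(B)$, the hypothesis that $B$ is a Berger--Wang algebra gives $\rho(\widetilde{M})=r(\widetilde{M})$.

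The next step is to observe that the Berger--Wang radius is monotone under quotients: for every $a \in A$ we have $\sigma_{B}(a/\mathcal{R}_{\mathrm{bw}}(A)) \subset \sigma_A(a)$ (by Lemma \ref{sp0}(3) applied to the standard quotient map), hence $\rho(a/\mathcal{R}_{\mathrm{bw}}(A)) \leq \rho(a)$; applying this termwise inside the $\limsup$ defining $r$ yields $r(\widetilde{M}) \leq r(M)$. Combining these facts,
\[
\rho(M)=\max\{\rho(\widetilde{M}),r(M)\}=\max\{r(\widetilde{M}),r(M)\}=r(M),
\]
and since the reverse inequality $r(M)\leq \rho(M)$ is automatic, we conclude $\rho(M)=r(M)$, i.e., $A$ is a Berger--Wang algebra.

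There is essentially no obstacle here: the whole argument is a one-line application of the maximality property of $\mathcal{R}_{\mathrm{bw}}$ together with the elementary inequality $r(M/I) \leq r(M)$ for closed ideals $I$. The only point deserving care is making explicit that the relation $\rho(M)=\max\{\rho(M/\mathcal{R}_{\mathrm{bw}}(A)),r(M)\}$ holds for the given $A$ (which is built into the definition of $\mathcal{R}_{\mathrm{bw}}$ recalled above) and the monotonicity of $r$ under morphisms; both are already established in the excerpt.
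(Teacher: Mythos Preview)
Your proof is correct and follows essentially the same approach as the paper's. The paper's proof is the single displayed chain
\[
\rho(M)=\max\{\rho(M/\mathcal{R}_{\mathrm{bw}}(A)),r(M)\}=\max\{r(M/\mathcal{R}_{\mathrm{bw}}(A)),r(M)\}=r(M),
\]
which is exactly your argument, only with the monotonicity step $r(M/\mathcal{R}_{\mathrm{bw}}(A))\le r(M)$ left implicit in the last equality.
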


\begin{proof}
Indeed,
\[
\rho\left(  M\right)  =\max\left\{  \rho\left(  M/\mathcal{R}_{\mathrm{bw}%
}\left(  A\right)  \right)  ,r\left(  M\right)  \right\}  =\max\left\{
r\left(  M/\mathcal{R}_{\mathrm{bw}}\left(  A\right)  \right)  ,r\left(
M\right)  \right\}  =r\left(  M\right)
\]
for every precompact set $M$ in $A$. Therefore $A$ is a Berger-Wang algebra.
\end{proof}

The following theorem supplies us with a class of examples of Banach algebras
which are $\mathcal{R}_{\mathrm{cq}}^{a}$-radical (and therefore
$\mathcal{R}_{t}^{a}$-radical); these algebras hold an important place in the
theory of joint spectra of Lie representations (see for instance \cite{F93,
BS01, D06}).

Recall that algebras can be considered as Lie algebras with respect to the Lie
bracket $\left[  a,b\right]  =ab-ba$. A Lie subalgebra of an algebra $A$ is a
subspace $L\subset A$ with the property that $[a,b]\in L$ for all $a,b\in L$.

Operators $\mathrm{ad}_{L}\left(  a\right)  :x\longmapsto\left[  a,x\right]  $
on a Lie algebra $L$, for $a\in L$, are called \textit{adjoint operators} of
$L$.

For a Lie algebra $L$, one defines the upper (lower) central series $L^{[n]}$
(respectively $L_{[n]}$) by setting $L_{[1]}=L^{[1]}=L$, $L_{[n+1]}%
=[L,L_{[n]}]$ and $L^{[n+1]}=[L^{[n]},L^{[n]}]$; $L$ is called
\textit{nilpotent} (\textit{solvable}) if $L_{[n]}=0$ (respectively
$L^{[n]}=0$), for some $n$.

\begin{theorem}
\label{sp6}Let $A$ be a normed algebra and let $L$ be a Lie subalgebra of $A$.
If one of the following conditions holds:

\begin{enumerate}
\item $L$ is a nilpotent Lie algebra, and the inverse-closed subalgebra
generated by $L$ is dense in $A^{1}$;

\item $L$ is a finite-dimensional solvable Lie algebra, and the subalgebra
generated by $L$ is dense in $A$;
\end{enumerate}

\noindent then $A$ is commutative modulo $\mathcal{R}_{\mathrm{cq}}\left(
A\right)  $, i.e., $A=\mathcal{R}_{\mathrm{cq}}^{a}\left(  A\right)  $.
\end{theorem}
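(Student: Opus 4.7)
The plan is to prove that $B := A/\mathcal{R}_{\mathrm{cq}}(A)$ is commutative, which, because $\mathcal{R}_{\mathrm{cq}}^{a}$ is a topological radical (as established just before the theorem), is equivalent to $A = \mathcal{R}_{\mathrm{cq}}^{a}(A)$. Let $\tilde L := q(L)$, where $q : A \to B$ is the quotient morphism. Then $\tilde L$ inherits nilpotency (case 1) or finite-dimensional solvability (case 2) from $L$, and the density hypothesis on the (inverse-closed) subalgebra generated by $L$ passes to $B$ (respectively $B^{1}$) since $q$ is a continuous surjection. Because $\mathcal{R}_{\mathrm{cq}}(B) = 0$, it will suffice to prove that $\tilde L$ is commutative: then the subalgebra it generates in $B$ is commutative, and so is its norm closure, which equals $B$ in case (2) and $B^{1}$ in case (1) by the density assumptions; in the latter situation $B \subset B^{1}$ inherits commutativity.

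The technical heart is to show $[L,L] \subset \mathcal{R}_{\mathrm{cq}}(A)$. In case (2), $[L,L]$ is finite-dimensional, hence every bounded subset is precompact, and the Banach-algebraic analogue of Lie's theorem (Turovskii) asserts that every precompact subset of $[L,L]$ has joint spectral radius zero; combined with the characterization of $\mathcal{R}_{\mathrm{cq}}(A)$ as the largest ideal all of whose precompact subsets have joint spectral radius zero, this yields the desired inclusion. In case (1), $L$ need not be finite-dimensional, but any finitely generated Lie subalgebra $L_{0} \subset L$ is still nilpotent and therefore finite-dimensional (since a finitely generated nilpotent Lie algebra over $\mathbb{C}$ is spanned by iterated brackets of bounded length in its generators), so $[L_{0},L_{0}] \subset \mathcal{R}_{\mathrm{cq}}(A)$ by the case-(2) argument; writing $[L,L] = \bigcup_{L_{0}} [L_{0},L_{0}]$ over all finitely generated Lie subalgebras $L_{0}$ of $L$ and noting that the union of subsets of $\mathcal{R}_{\mathrm{cq}}(A)$ is a subset of $\mathcal{R}_{\mathrm{cq}}(A)$, we obtain $[L,L] \subset \mathcal{R}_{\mathrm{cq}}(A)$.

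The main obstacle is the spectral lemma invoked in case (2): the Banach-algebraic Lie theorem stating that the joint spectral radius vanishes on precompact subsets of $[L,L]$ for a finite-dimensional solvable Lie subalgebra $L$. This rests on subharmonicity of the joint spectral radius together with an inductive triangulation along the derived series of $L$, and is the essential new input; once it is granted, the remainder of the argument is a short density-and-continuity computation together with the trivial observation that $\mathcal{R}_{\mathrm{cq}}(A)$, being a closed ideal, passes through norm-closures.
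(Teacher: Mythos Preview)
Your argument has a genuine gap at the central inference in case~(2). Knowing that $\rho(M)=0$ for every precompact $M\subset [L,L]$ does \emph{not} give $[L,L]\subset\mathcal{R}_{\mathrm{cq}}(A)$. The description of $\mathcal{R}_{\mathrm{cq}}(A)$ as ``the largest ideal all of whose precompact subsets have joint spectral radius zero'' applies only to \emph{ideals}, and $[L,L]$ is merely a Lie subspace of $A$. Equivalently, membership $a\in\mathcal{R}_{\mathrm{cq}}(A)$ requires $\rho(\{a\}\cup M)=\rho(M)$ for every precompact $M\subset A$, not just for $M\subset [L,L]$. A strictly upper-triangular matrix $N$ in $M_2(\mathbb{C})$ illustrates the failure: $\mathbb{C}N$ is nilpotent (so all its precompact subsets have zero joint spectral radius), yet $\mathcal{R}_{\mathrm{cq}}(M_2(\mathbb{C}))=0$. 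The same gap propagates to your case~(1) reduction, since ``the case-(2) argument'' you invoke for each finite-dimensional $L_0$ is exactly this invalid step.

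The paper's proof circumvents this by never trying to push all of $[L,L]$ into the radical directly. In case~(1) it works in $A'=A/\mathcal{R}_{\mathrm{cq}}(A)$ and, assuming $L'$ is not commutative, uses Kleinecke--Shirokov to produce a nonzero quasinilpotent element $x\in[L',L']$ lying in the \emph{center} of $L'$; the inverse-closed density hypothesis is then used to show $x$ is central in $A'$, and a central quasinilpotent element automatically lies in $\mathcal{R}_{\mathrm{cq}}(A')=0$, a contradiction. In case~(2) the paper shows that the set $I$ of nilpotents in $L'$ is a Lie ideal generating a nilpotent associative subalgebra; using that $L'$ normalizes $I$ and that $L'$ generates a dense subalgebra, this subalgebra is a nilpotent \emph{ideal} of $A'$, hence contained in $\mathcal{R}_{\mathrm{cq}}(A')=0$, forcing $I=0$ and reducing to case~(1). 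In both parts the crucial move is to produce either a central element or an honest ideal, precisely the structure your argument lacks.
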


\begin{proof}
Let $A^{\prime}=A/\mathcal{R}_{\mathrm{cq}}\left(  A\right)  $, and let
$L^{\prime}$ be the image of $L$ in $A^{\prime}$.

$\left(  1\right)  $ If $L^{\prime}$ is not commutative then, by the
Kleinecke-Shirokov theorem, there is a non-zero quasinilpotent element
$x\in\left[  L^{\prime},L^{\prime}\right]  $ in the center of $L^{\prime}$.
Let $B_{0}$ be the subalgebra of $A^{\prime}$ generated by $L^{\prime}$. Then
clearly $x$ is in the center of $B_{0}$. Let $B_{1}$ be the subalgebra
generated by $B_{0}$ and all inverses of elements from $B_{0}$. Then $x$ is
again in the center of $B_{1}$. If we have already built $B_{n}$ and proved
that $x$ is in the center of $B_{n}$, then $B_{n+1}$ is defined as a
subalgebra generated by $B_{n}$ and all inverses of elements from $B_{n}$, and
it is clear that $x$ is in the center of $B_{n+1}$.

Let $C=\cup_{n=0}^{\infty}B_{n}$. Then $C$ is an inverse-closed subalgebra of
$A^{\prime}$; indeed, if $a\in B_{n}$ has an inverse $a^{-1}$ in $A^{\prime}$
then $a^{-1}\in B_{n+1}$. It is clear that $C$ is dense in $A^{\prime}$. Thus
$x$ is in the center of $A^{\prime}$. As $x$ commutes with elements of each
$M\in\mathfrak{k}\left(  A^{\prime}\right)  $ then
\[
\rho\left(  \left\{  x\right\}  \cup M\right)  \leq\max\left\{  \rho\left(
x\right)  ,\rho\left(  M\right)  \right\}  =\rho\left(  M\right)
\]
since $x$ is a quasinilpotent element. Therefore $x\in\mathcal{R}%
_{\mathrm{cq}}\left(  A^{\prime}\right)  =0$ (see Section \ref{2.3.4}), a contradiction.

So $L^{\prime}$ is commutative, whence $A$ is commutative modulo
$\mathcal{R}_{\mathrm{cq}}\left(  A\right)  $.

$\left(  2\right)  $ Let $I$ be the set of nilpotents in $L^{\prime}$. By
\cite[Proposition 24.1]{BS01}, $I$ is a Lie ideal of $L^{\prime}$. As $I$ is
finite-dimensional, the nilpotency indexes of elements of $I$ are uniformly
bounded, so $I$ is a nilpotent Lie algebra (by the algebraic Engel condition)
and, moreover, generates nilpotent subalgebra in $A$, whence $I^{n}=0$ for
some $n>0$. As $L^{\prime}I\subset IL^{\prime}+I$, it follows that $I$
generates a nilpotent ideal in the subalgebra generated by $L^{\prime}$ and
generates therefore a closed nilpotent ideal in $A^{\prime}$. But every
nilpotent ideal lies in$\mathcal{\ R}_{\mathrm{cq}}\left(  A^{\prime}\right)
=\mathcal{R}_{\mathrm{cq}}\left(  A/\mathcal{R}_{\mathrm{cq}}\left(  A\right)
\right)  =0$, whence $I=0$. This implies that $L^{\prime}$ is a nilpotent Lie
algebra; indeed, every eigenvector $x$ of an adjoint operator $\mathrm{ad}%
_{L^{\prime}}\left(  a\right)  $ corresponding to a non-zero eigenvalue is
nilpotent which is impossible. By $\left(  1\right)  $, $A^{\prime
}=\mathcal{R}_{\mathrm{cq}}^{a}\left(  A^{\prime}\right)  $, whence
$A^{\prime}$ is commutative and then $A=\mathcal{R}_{\mathrm{cq}}^{a}\left(
A\right)  $.
\end{proof}

Now we will discuss conditions under which the algebras generated (in the
above sense) by nilpotent Lie subalgebras belong to a special subclass of the
class of $\operatorname{Rad}^{a}$-radical algebras; this subclass occupies an
important place in the theory of linear operator equations and, more
generally, in the study of multiplication operators on Banach algebras (see
\cite{TR2}).

A normed algebra $A$ is called an\textit{ Engel algebra} if all operators
$\mathrm{ad}_{A}\left(  a\right)  $, $a\in A$, are quasinilpotent. It was
proved in \cite{ST05} (and can be deduced from earlier results of
\cite{AM2000}) that each Engel Banach algebra $A$ is commutative modulo
$\operatorname{Rad}(A)$. The converse is not true even for finite-dimensional
algebras, for instance for the algebra of all upper-triangular matrices.
Clearly the class of all Engel algebras is stable under taking closed ideals
and quotients; it evidently contains all commutative and all radical Banach
algebras; as a consequence, this class is not stable under extensions.

An operator $T$ on an algebra $A$ is call \textit{elementary }if $T=\sum
_{i=1}^{n}\mathrm{L}_{a_{i}}\mathrm{R}_{b_{i}}$ for some $a_{1},b_{1}%
,\ldots,a_{n},b_{n}\in A^{1}$. For instance, the identity operator on $A$ is
also elementary. Let $\mathcal{E\!\ell}\left(  A\right)  $ be the algebra of
all elementary operators on $A$.

\begin{lemma}
\label{sp12}Let $A$ be a Banach algebra, $L$ be a nilpotent Lie subalgebra of
$A$. Then the closed, inverse-closed subalgebra $B$ of $\mathcal{B}\left(
A^{1}\right)  $ generated by $\mathrm{L}_{L}+\mathrm{R}_{L}$ is commutative
modulo $\mathcal{R}_{\mathrm{cq}}\left(  B\right)  $.
\end{lemma}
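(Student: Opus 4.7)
The plan is to apply Theorem~\ref{sp6}(1) with the ambient Banach algebra $B$ and the Lie subalgebra $M:=\mathrm{L}_L+\mathrm{R}_L$ of $\mathcal{B}(A^1)$. Three things must be checked: (i) $M$ is a Lie subalgebra of $B$; (ii) $M$ is nilpotent; (iii) the density hypothesis of Theorem~\ref{sp6}(1) holds in $B$.

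For (i) and (ii) I would rely on the standard identities $[\mathrm{L}_a,\mathrm{L}_b]=\mathrm{L}_{[a,b]}$, $[\mathrm{R}_a,\mathrm{R}_b]=-\mathrm{R}_{[a,b]}$, and $[\mathrm{L}_a,\mathrm{R}_b]=0$, valid for any $a,b$ in an associative algebra. A direct computation gives
\[
[\mathrm{L}_{a_1}+\mathrm{R}_{b_1},\,\mathrm{L}_{a_2}+\mathrm{R}_{b_2}]=\mathrm{L}_{[a_1,a_2]}-\mathrm{R}_{[b_1,b_2]}\in M,
\]
so $M$ is a Lie subalgebra of $\mathcal{B}(A^1)$ (and of $B$). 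Moreover $\mathrm{L}_L$ is a Lie-homomorphic image of $L$ and $\mathrm{R}_L$ is a Lie-homomorphic image of $L^{\mathrm{op}}$, and both commute elementwise. A straightforward induction on the lower central series yields $M_{[n]}=(\mathrm{L}_L)_{[n]}+(\mathrm{R}_L)_{[n]}$, so nilpotency of $L$ transfers to $M$.

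For (iii), the required density is built into the definition of $B$: by hypothesis $B$ is the closure in $\mathcal{B}(A^1)$ of the inverse-closed subalgebra $C$ of $\mathcal{B}(A^1)$ generated by $M$, so the proof of Theorem~\ref{sp6}(1) applies nearly verbatim. If the image $M'$ of $M$ in $B':=B/\mathcal{R}_{\mathrm{cq}}(B)$ were non-commutative, nilpotency would give a non-zero element in the penultimate term of the lower central series of $M'$ and the Kleinecke-Shirokov theorem would produce a non-zero quasinilpotent $x\in Z(M')$; the observation that $xd=dx$ implies $xd^{-1}=d^{-1}x$ whenever $d^{-1}$ exists shows that $x$ commutes with the image of $C$ in $B'$, hence by density with all of $B'$. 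A central quasinilpotent element must belong to $\mathcal{R}_{\mathrm{cq}}(B')=0$, a contradiction, forcing $M'$ to be commutative and, by density of the subalgebra generated by $M'$ in $B'$, forcing $B'$ to be commutative as well.

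The one bookkeeping point that deserves care — and is the main obstacle — is the unital/non-unital status of $B$: if $I\notin B$ then no element of $B$ is invertible in $\mathcal{B}(A^1)$, so $C$ is automatically the plain subalgebra generated by $M$ and (iii) reduces to the tautology $\overline{C}=B$; if $I\in B$ the inverse-closure is genuine, and the preservation of centrality under inversion noted above is what makes the density argument in $B'$ go through. Either way the argument of Theorem~\ref{sp6}(1) transplants to $B$ and yields the claim.
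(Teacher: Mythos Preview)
Your proposal is correct and follows essentially the same route as the paper: verify that $M=\mathrm{L}_L+\mathrm{R}_L$ is a nilpotent Lie subalgebra and then invoke Theorem~\ref{sp6}(1). The paper's proof is a two-line citation of Theorem~\ref{sp6} (together with the remark that the closure of an inverse-closed subalgebra is inverse-closed), whereas you spell out the nilpotency computation for $M$, address the unital/non-unital bookkeeping, and re-sketch the Kleinecke--Shirokov argument from the proof of Theorem~\ref{sp6}(1); but the underlying strategy is identical.
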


\begin{proof}
Since the closure of an inverse-closed subalgebra of a Banach algebra is
inverse-closed and $\mathrm{L}_{L}+\mathrm{R}_{L}$ is a nilpotent Lie algebra,
we obtain that $B$ is commutative modulo $\mathcal{R}_{\mathrm{cq}}\left(
B\right)  $ by Theorem \ref{sp6}.
\end{proof}

\begin{theorem}
\label{sp20}Let $A$ be a Banach algebra, and let $L$ be a nilpotent Lie
subalgebra of $A$ such that the inverse-closed subalgebra generated by $L$ is
dense in $A^{1}$. Then the following statements are equivalent:

\begin{enumerate}
\item $A$ is an Engel algebra;

\item For every elementary operator $\sum\mathrm{L}_{a_{i}}\mathrm{R}_{b_{i}}%
$,
\[
\sigma_{\mathcal{B}\left(  A^{1}\right)  }\left(  \sum_{1}^{n}\mathrm{L}%
_{a_{i}}\mathrm{R}_{b_{i}}\right)  =\sigma_{A^{1}}\left(  \sum_{1}^{n}%
a_{i}b_{i}\right)  ;
\]

\item The closed subalgebra generated by $\mathrm{ad}_{A}\left(  A\right)  $
is compactly quasinilpotent;

\item $\mathrm{ad}_{A}\left(  a\right)  $ is quasinilpotent for every $a\in
L\backslash\left[  L,L\right]  $.
\end{enumerate}
\end{theorem}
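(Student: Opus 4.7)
My plan is to funnel the four equivalences through a single intermediate claim. First I would record the setup: by Theorem~\ref{sp6}(1) the algebra $A$ itself is commutative modulo $\mathcal{R}_{\mathrm{cq}}\left(A\right)$, and by Lemma~\ref{sp12} the closed inverse-closed subalgebra $B$ of $\mathcal{B}\left(A^{1}\right)$ generated by $\mathrm{L}_{L}+\mathrm{R}_{L}$ is commutative modulo $\mathcal{R}_{\mathrm{cq}}\left(B\right)$; write $B':=B/\mathcal{R}_{\mathrm{cq}}\left(B\right)$, a commutative Banach algebra. Since the inverse-closed subalgebra generated by $L$ is dense in $A^{1}$ and $\mathrm{L},\mathrm{R}$ are isometric algebra embeddings of $A^{1}$ into $\mathcal{B}\left(A^{1}\right)$, inverse-closedness of $B$ gives $\mathrm{L}_{A^{1}},\mathrm{R}_{A^{1}}\subset B$; consequently $\mathrm{ad}_{A}\left(A\right)\subset B$ and every elementary operator on $A^{1}$ lies in $B$. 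The inclusion $B\subset\mathcal{B}\left(A^{1}\right)$ is inverse-closed, whence $\sigma_{B}=\sigma_{\mathcal{B}\left(A^{1}\right)}$ on $B$, and the general inclusion $\mathcal{R}_{\mathrm{cq}}\left(B\right)\subset\operatorname{Rad}\left(B\right)$ guarantees that the quotient $q\colon B\to B'$ preserves spectra. The implication $(1)\Rightarrow(4)$ is then immediate; $(3)\Rightarrow(1)$ holds because singletons are precompact, so each $\mathrm{ad}_{A}\left(a\right)\in C$ is quasinilpotent; and $(2)\Rightarrow(1)$ follows on writing $\mathrm{ad}_{A}\left(a\right)=\mathrm{L}_{a}\mathrm{R}_{1}-\mathrm{L}_{1}\mathrm{R}_{a}$ as an elementary operator with $a\cdot1-1\cdot a=0$, whence (2) forces $\sigma\left(\mathrm{ad}_{A}\left(a\right)\right)=\sigma_{A^{1}}\left(0\right)=\left\{0\right\}$.

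The real content is the intermediate claim
\[
(\ast)\qquad\mathrm{ad}_{A}\left(A\right)\subset\operatorname{Rad}\left(B\right),
\]
which I would establish from either (1) or (4). Equivalently I would show $q\left(\mathrm{ad}_{A}\left(A\right)\right)\subset\operatorname{Rad}\left(B'\right)$, using that $\operatorname{Rad}\left(B'\right)=\operatorname{Rad}\left(B\right)/\mathcal{R}_{\mathrm{cq}}\left(B\right)$. Under (1) the claim is immediate: each $\mathrm{ad}_{A}\left(a\right)$ is quasinilpotent in $B$, hence in the commutative Banach algebra $B'$, and in a commutative Banach algebra the quasinilpotent elements are precisely $\operatorname{Rad}\left(B'\right)$. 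Under (4) I proceed in two stages. First, for $a\in L$: if $a\in\left[L,L\right]$, writing $a=\sum\left[b_{i},c_{i}\right]$ gives $\mathrm{ad}_{A}\left(a\right)=\sum\left[\mathrm{ad}_{A}\left(b_{i}\right),\mathrm{ad}_{A}\left(c_{i}\right)\right]\in\left[B,B\right]\subset\mathcal{R}_{\mathrm{cq}}\left(B\right)$; if $a\in L\setminus\left[L,L\right]$, (4) gives $\mathrm{ad}_{A}\left(a\right)$ quasinilpotent, so $q\left(\mathrm{ad}_{A}\left(a\right)\right)\in\operatorname{Rad}\left(B'\right)$; by linearity across a vector-space complement of $\left[L,L\right]$ in $L$ this gives $q\left(\mathrm{ad}_{A}\left(L\right)\right)\subset\operatorname{Rad}\left(B'\right)$. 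Second, the derivation identity
\[
\mathrm{ad}_{A}\left(ab\right)=\mathrm{L}_{a}\,\mathrm{ad}_{A}\left(b\right)+\mathrm{R}_{b}\,\mathrm{ad}_{A}\left(a\right)
\]
and the inversion formula $\mathrm{ad}_{A}\left(l^{-1}\right)=-\mathrm{W}_{l^{-1}}\,\mathrm{ad}_{A}\left(l\right)$ (which comes from $\left[l^{-1},x\right]=-l^{-1}\left[l,x\right]l^{-1}$) propagate the containment $q\left(\mathrm{ad}_{A}\left(\cdot\right)\right)\in\operatorname{Rad}\left(B'\right)$ through products and inverses, since $\operatorname{Rad}\left(B'\right)$ is an ideal of $B'$. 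Density of the inverse-closed subalgebra generated by $L$ in $A^{1}$, continuity of $\mathrm{ad}_{A}\colon A\to B$, and closedness of $\operatorname{Rad}\left(B'\right)$ then deliver $\left(\ast\right)$.

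Given $\left(\ast\right)$ the remaining implications close cleanly. For $(1)\Rightarrow(2)$: for an elementary operator $T=\sum\mathrm{L}_{a_{i}}\mathrm{R}_{b_{i}}$, the difference $T-\mathrm{L}_{\sum a_{i}b_{i}}=-\sum\mathrm{L}_{a_{i}}\mathrm{ad}_{A}\left(b_{i}\right)$ lies in the ideal $\operatorname{Rad}\left(B\right)$, so $\sigma_{B}\left(T\right)=\sigma_{B}\left(\mathrm{L}_{\sum a_{i}b_{i}}\right)=\sigma_{A^{1}}\left(\sum a_{i}b_{i}\right)$ via the isometric embedding $\mathrm{L}$ and $\sigma_{B}=\sigma_{\mathcal{B}\left(A^{1}\right)}$. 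For $(4)\Rightarrow(3)$: $\left(\ast\right)$ yields $C\subset\operatorname{Rad}\left(B\right)$, so for any precompact $K\subset C$ the equality $\rho\left(K\right)=\rho\left(K/\mathcal{R}_{\mathrm{cq}}\left(B\right)\right)$ from~\eqref{cq} reduces the computation to the commutative $B'$; applied to the precompact subset $K/\mathcal{R}_{\mathrm{cq}}\left(B\right)\subset\operatorname{Rad}\left(B'\right)$, Theorem~\ref{sp4}(1) together with the observation that in a commutative Banach algebra the only joint left spectrum of a family of quasinilpotents is the zero family yields $\rho\left(K/\mathcal{R}_{\mathrm{cq}}\left(B\right)\right)=0$, and hence (3). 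The main obstacle is the propagation step in the proof of $\left(\ast\right)$: since $\mathrm{ad}_{A}$ is not an algebra homomorphism, upgrading $q\left(\mathrm{ad}_{A}\left(L\right)\right)\subset\operatorname{Rad}\left(B'\right)$ to $q\left(\mathrm{ad}_{A}\left(A\right)\right)\subset\operatorname{Rad}\left(B'\right)$ must be carried out through the derivation identity, the ideal property of $\operatorname{Rad}\left(B'\right)$, and inverse-closed density in $A^{1}$. Notably, I deliberately aim only at $\left(\ast\right)$ and not at the stronger $\mathrm{ad}_{A}\left(A\right)\subset\mathcal{R}_{\mathrm{cq}}\left(B\right)$, which may fail because $\operatorname{Rad}\left(B'\right)$ can properly contain $\mathcal{R}_{\mathrm{cq}}\left(B'\right)=0$; the weaker $\left(\ast\right)$ is sufficient for both (2) and (3).
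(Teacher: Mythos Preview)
Your proof is correct and follows essentially the same route as the paper: both hinge on Lemma~\ref{sp12} (so that $B$ is commutative modulo $\mathcal{R}_{\mathrm{cq}}(B)$, hence quasinilpotents in $B$ coincide with $\operatorname{Rad}(B)$), and both propagate the Engel condition from $L$ to $A$ via the derivation and inversion identities for $\mathrm{ad}$ together with inverse-closed density---your hub claim $(\ast)$ is exactly the paper's observation that $T\in\operatorname{Rad}(B)$ in its proof of $(1)\Rightarrow(2)$, and your propagation for $(4)\Rightarrow(\ast)$ is the paper's $(4)\Rightarrow(1)$ rephrased (the paper's set $E=\{a:\rho(\mathrm{ad}(a))=0\}$ equals your $\{a:q(\mathrm{ad}(a))\in\operatorname{Rad}(B')\}$). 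One small inaccuracy worth noting: your closing remark that $(\ast)$ may be strictly weaker than $\mathrm{ad}_A(A)\subset\mathcal{R}_{\mathrm{cq}}(B)$ is not actually the case here, since for $B$ commutative modulo $\mathcal{R}_{\mathrm{cq}}(B)$ one has $\operatorname{Rad}(B)=\mathcal{R}_{\mathrm{cq}}(B)$ (any precompact $K\subset\operatorname{Rad}(B)$ satisfies $\rho(K)=\sup_{a\in K}\rho(a)=0$ by the consequence of Theorem~\ref{sp4}(1), so $\operatorname{Rad}(B)$ is itself compactly quasinilpotent)---but this does not affect your argument.
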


\begin{proof}
$\left(  2\right)  \Rightarrow\left(  1\right)  \Rightarrow\left(  4\right)  $
and $\left(  3\right)  \Rightarrow\left(  1\right)  $ are obvious; $\left(
1\right)  \Rightarrow\left(  3\right)  $ follows from Theorem \ref{sp6} and
Lemma \ref{sp12}.

$\left(  1\right)  \Rightarrow\left(  2\right)  $ Let $B$ be the closed,
inverse-closed subalgebra of $\mathcal{B}\left(  A^{1}\right)  $ generated by
$\mathrm{L}_{L}+\mathrm{R}_{L}$, and let $T=\sum_{1}^{n}\mathrm{L}_{a_{i}%
}\left(  \mathrm{R}_{b_{i}}-\mathrm{L}_{b_{i}}\right)  $ and $c=\sum_{1}%
^{n}a_{i}b_{i}$. As $B$ is inverse-closed,
\[
\sigma_{\mathcal{B}\left(  A^{1}\right)  }\left(  \sum_{1}^{n}\mathrm{L}%
_{a_{i}}\mathrm{R}_{b_{i}}\right)  =\sigma_{B}\left(  \sum_{1}^{n}%
\mathrm{L}_{a_{i}}\mathrm{R}_{b_{i}}\right)  =\sigma_{B}\left(  T+\mathrm{L}%
_{c}\right)  .
\]
By Lemma \ref{sp12}, $B$ is commutative modulo $\mathcal{R}_{\mathrm{cq}%
}\left(  B\right)  $. As $\mathcal{R}_{\mathrm{cq}}\left(  B\right)
\subset\operatorname*{Rad}\left(  B\right)  $ then $B$ is commutative modulo
$\operatorname*{Rad}\left(  B\right)  $, whence the set of all quasinilpotent
elements of $B$ coincides with $\operatorname*{Rad}\left(  B\right)  $. Then,
as $A$ is Engel, it follows that $T\in\operatorname*{Rad}\left(  B\right)  $
and $\sigma_{B}\left(  T+\mathrm{L}_{c}\right)  =\sigma_{B}\left(
\mathrm{L}_{c}\right)  $. But $\sigma_{B}\left(  \mathrm{L}_{c}\right)
=\sigma_{\mathcal{B}\left(  A^{1}\right)  }\left(  \mathrm{L}_{c}\right)
=\sigma_{A^{1}}\left(  c\right)  $ by \cite[Proposition 3.19]{BD73}, and the
result follows.

$\left(  4\right)  \Rightarrow\left(  1\right)  $ As $\left[  L,L\right]  $
consists of quasinilpotents, it follows that $\mathrm{ad}_{A}\left(  L\right)
$ consists of quasinilpotents. Let $E=\left\{  a\in A^{1}:\rho\left(
\mathrm{ad}_{A^{1}}\left(  a\right)  \right)  =0\right\}  $. It is easy to see
that $E$ is an algebra by using Lemma \ref{sp12}. If $x\in E$ is invertible in
$A$ then
\[
\rho\left(  \mathrm{ad}_{A^{1}}\left(  x^{-1}\right)  \right)  =\rho\left(
\mathrm{L}_{x^{-1}}\mathrm{ad}_{A^{1}}\left(  x\right)  \mathrm{R}_{x^{-1}%
}\right)  \leq\rho\left(  \mathrm{L}_{x^{-1}}\right)  \rho\left(
\mathrm{ad}_{A^{1}}\left(  x\right)  \right)  \rho\left(  \mathrm{R}_{x^{-1}%
}\right)  =0
\]
by Lemma \ref{sp12}, whence $E$ is an inverse-closed subalgebra of $A$. As
$L\subset E$ then $E$ is dense in $A$. Let $B$ be defined as in Lemma
\ref{sp12}; as $B$ is commutative modulo $\operatorname{Rad}\left(  B\right)
$ then the spectral radius is continuous on $B$. Then $E$ is closed and $A=E$.
\end{proof}

\begin{corollary}
\label{sp21}Let $A$ be a Banach algebra, and let $L$ be a nilpotent Lie
subalgebra of $A$ such that the inverse-closed subalgebra generated by $L$ is
dense in $A^{1}$. If $\sigma_{A^{1}}\left(  a\right)  $ is at most countable
for every $a\in L\backslash\left[  L,L\right]  $, then $A$ is an Engel algebra.
\end{corollary}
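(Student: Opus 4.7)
The strategy is to reduce to Theorem~\ref{sp20} by verifying its condition (4): that $\mathrm{ad}_A(a)$ is quasinilpotent for every $a\in L\setminus[L,L]$. Once this is established, the implication $(4)\Rightarrow(1)$ of that theorem yields that $A$ is Engel.

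Fix $a\in L\setminus[L,L]$. I would work inside the algebra $B$ of Lemma~\ref{sp12}, the closed, inverse-closed subalgebra of $\mathcal{B}(A^1)$ generated by $\mathrm{L}_L+\mathrm{R}_L$. Since $0\in L$, the identities $\mathrm{L}_c=\mathrm{L}_c+\mathrm{R}_0$ and $\mathrm{R}_c=\mathrm{L}_0+\mathrm{R}_c$ place $\mathrm{L}_L\cup\mathrm{R}_L$ in $B$; the density hypothesis on the inverse-closed subalgebra of $A^1$ generated by $L$, combined with the inverse-closedness and closedness of $B$ in $\mathcal{B}(A^1)$, extends this to $\mathrm{L}(A^1)\cup\mathrm{R}(A^1)\subset B$. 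In particular $\mathrm{ad}_A(a)=\mathrm{L}_a-\mathrm{R}_a\in B$, and inverse-closedness gives $\sigma_B(\mathrm{L}_a)=\sigma_{\mathcal{B}(A^1)}(\mathrm{L}_a)=\sigma_{A^1}(a)$, which is countable by assumption. Lemma~\ref{sp12} supplies that $\widetilde B:=B/\operatorname{Rad}(B)$ is commutative semisimple.

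The desired quasinilpotence $\rho_B(\mathrm{ad}_A(a))=0$ is then equivalent, via the Gelfand representation of $\widetilde B$, to $\phi(\mathrm{L}_a-\mathrm{R}_a)=0$ for every character $\phi$ of $\widetilde B$. Each such $\phi$ produces multiplicative linear functionals $\psi(x):=\phi(\mathrm{L}_x)$ and $\psi'(x):=\phi(\mathrm{R}_x)$ on $A^1$, using $\mathrm{L}_{xy}=\mathrm{L}_x\mathrm{L}_y$, $\mathrm{R}_{xy}=\mathrm{R}_y\mathrm{R}_x$, and the commutativity of $\widetilde B$; both factor through the commutative semisimple algebra $A^1/\operatorname{Rad}(A^1)$ guaranteed by Theorem~\ref{sp6}(1). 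The task becomes to prove $\psi(a)=\psi'(a)$. I plan to attack this by Riesz spectral theory: countability of $\sigma_{A^1}(a)$ makes it scattered, so for each isolated point $\lambda\in\sigma_{A^1}(a)$ there is a spectral idempotent $p_\lambda\in\overline{\operatorname{alg}(a)}\subset A^1$, whose image in the commutative semisimple quotient is the Gelfand characteristic function of $\hat a^{-1}(\lambda)$. For $\lambda\neq\mu$, one analyzes the idempotent $\mathrm{L}_{p_\lambda}\mathrm{R}_{p_\mu}\in B$, whose $\phi$-value is $\psi(p_\lambda)\psi'(p_\mu)$; showing this product vanishes for $\lambda\neq\mu$ forces $\psi$ and $\psi'$ to select the same spectral branch of $a$, so that $\psi(a)=\psi'(a)$ on the isolated part, and a Cantor--Bendixson induction on the scattered rank of $\sigma_{A^1}(a)$ extends the equality to its accumulation points.

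The main obstacle is precisely this core algebraic step: the vanishing of $\phi(\mathrm{L}_{p_\lambda}\mathrm{R}_{p_\mu})$ for $\lambda\neq\mu$. I expect it to follow from showing $\mathrm{L}_{p_\lambda}\mathrm{R}_{p_\mu}\in\operatorname{Rad}(B)$---perhaps by expressing this operator as a product of commutators annihilated by the commutativity of $\widetilde B$, or by an Aupetit scarcity argument applied to the analytic family $\lambda\mapsto\sigma_{A^1}(a+\lambda[b,c])$ with $[b,c]\in[L,L]\subset\operatorname{Rad}(A)$ (so the spectrum is constant and countable along the family). Once the equality $\phi(\mathrm{ad}_A(a))=0$ is secured for every character $\phi$ of $\widetilde B$, Gelfand theory gives $\mathrm{ad}_A(a)\in\operatorname{Rad}(B)$, so $\mathrm{ad}_A(a)$ is quasinilpotent in $B$, hence in $\mathcal{B}(A^1)$; Theorem~\ref{sp20} then closes the argument.
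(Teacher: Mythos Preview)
Your reduction to condition~(4) of Theorem~\ref{sp20} is the right target, and the Gelfand-theoretic setup in $\widetilde B=B/\operatorname{Rad}(B)$ is correct. But the proof has a genuine gap precisely where you locate it: the vanishing of $\phi(\mathrm{L}_{p_\lambda}\mathrm{R}_{p_\mu})$ for $\lambda\ne\mu$. None of the suggested routes is more than a sketch. A commutator factorization would need $\mathrm{L}_{p_\lambda}\mathrm{R}_{p_\mu}$ to lie in an ideal generated by $\mathrm{ad}$-images, which you have not established; the Aupetit scarcity idea applies to analytic families of \emph{spectra}, not to values of a fixed character; and a Cantor--Bendixson induction on $\sigma(a)$ does not obviously couple the two characters $\psi,\psi'$, which a priori are unrelated multiplicative functionals on $A^1$. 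Worse, the identity $\sigma(\sum\mathrm{L}_{a_i}\mathrm{R}_{b_i})=\sigma(\sum a_ib_i)$ that would immediately give what you want is, by Theorem~\ref{sp20}, \emph{equivalent} to the Engel property you are trying to prove.

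The paper's argument bypasses characters entirely, and the ingredient you are missing is \emph{local nilpotency}. Since $L$ is a nilpotent Lie algebra, $\mathrm{ad}_{A^1}(a)$ is nilpotent on $L$, hence (by the Leibniz rule) locally nilpotent on the associative subalgebra $A_0$ generated by $L$ and $1$. Because $\mathrm{L}_a$ and $\mathrm{R}_a$ commute, $\sigma_{\mathcal{B}(A^1)}(\mathrm{ad}(a))\subset\sigma(a)-\sigma(a)$ is countable. A result from \cite{ST05} (Corollary~3.7 there) then applies: an operator with countable spectrum that is locally nilpotent on a subspace is quasinilpotent on its closure. This yields quasinilpotence of $\mathrm{ad}(a)$ on $\overline{A_0}$. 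The remaining work is to show $\overline{A_0}=A^1$, i.e.\ that $\overline{A_0}$ is already inverse-closed; this is done via the spectral mapping theorem for the commuting family $L$, which forces every element of $A_0$ to have countable spectrum, so that its inverse lies in the closed subalgebra it generates by \cite[Theorem~5.11]{BD73}. Local nilpotency of $\mathrm{ad}(a)$ on the dense subalgebra is the engine of the argument; your approach never invokes it.
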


\begin{proof}
Let $A_{0}$ be the subalgebra of $A^{1}$ generated by $L$ and the identity
element $1$ of $A^{1}$. It follows that $\sigma_{\mathcal{B}\left(
A^{1}\right)  }\left(  \mathrm{ad}_{A^{1}}\left(  a\right)  \right)  $ is at
most countable for every $a\in L\backslash\left[  L,L\right]  $. As
$\mathrm{ad}_{A^{1}}\left(  a\right)  $ is nilpotent on $L$, it is locally
nilpotent on $A_{0}$ and is quasinilpotent on the closure $B$ of $A_{0}$ by
\cite[Corollary 3.7]{ST05}. To apply Theorem \ref{sp20}, it is sufficient to
show $B$ is an inverse-closed subalgebra of $A$.

Let $x\in B$ and $x^{-1}\in A$. Then there is a sequence $x_{n}$ of elements
of $A_{0}$ such that $x_{n}$ $\rightarrow x$ as $n\rightarrow\infty$. As the
map $a\longmapsto a^{-1}$ is continuous on the set of all invertible elements
of $A$ and this set is open, one can assume that for every $n$ there is
$x_{n}^{-1}\in A$ and the sequence $x_{n}^{-1}\rightarrow x^{-1}$ as
$n\rightarrow\infty$.

Note that, for every $n$, there is a polynomial $p_{n}$ such that $x_{n}%
=p_{n}\left(  a_{1},\ldots,a_{k}\right)  $ for some $a_{1},\ldots,a_{k}\in
L\backslash\left[  L,L\right]  $. By the spectral mapping theorem \cite{T85},
we obtain that
\[
\sigma_{A^{1}}\left(  p_{n}\left(  a_{1},\ldots,a_{k}\right)  \right)
=p_{n}\left(  \sigma_{A^{1}}\left(  a_{1},\ldots,a_{k}\right)  \right)
\subset p_{n}\left(  \sigma_{A^{1}}\left(  a_{1}\right)  \times\cdots
\times\sigma_{A^{1}}\left(  a_{k}\right)  \right)  .
\]
As $\sigma_{A^{1}}\left(  a_{1}\right)  \times\cdots\times\sigma_{A^{1}%
}\left(  a_{k}\right)  $ is at most countable, it follows that the spectrum
$\sigma_{A^{1}}\left(  x_{n}\right)  $ is also at most countable. By
\cite[Theorem 5.11]{BD73}, $x_{n}^{-1}$ lies in the closed subalgebra of
$A^{1}$ generated by $x_{n}$ and $1$. As $x_{n},1\in B$ then $x_{n}^{-1}\in B$
for every $n$. Therefore $x^{-1}\in B$.

We showed that $B$ is inverse-closed. Therefore $B=A^{1}$. As $\mathrm{ad}%
_{A}\left(  a\right)  $ is quasinilpotent for every $a\in L\backslash\left[
L,L\right]  $, it follows that $A$ is an Engel algebra by Theorem \ref{sp20}.
\end{proof}

Some related results are obtained in \cite{C12}.

\begin{problem}
\label{EngRad} Does every Banach algebra have the largest Engel ideal?
\end{problem}

\section{ Socle procedure and radicals}

\subsection{Socle}

Let $A\in\mathfrak{U}_{\mathrm{a}}$ be an algebra. The (left) \textit{socle}
$\operatorname{soc}(A)$ of $A$ is the sum of all minimal left ideals of $A$.
If $A$ has no minimal left ideals then $\operatorname{soc}(A)=0$;
$\operatorname{soc}(A)$ is an ideal of $A$ \cite[Lemma 30.9]{BD73}. If $I$ is
a minimal left ideal of $A$ and $I^{2}\neq0$ then there is a projection $p\in
I$ such that $I=Ap$, and every such projection $p$ is minimal \cite[Lemma
30.2]{BD73}. Recall that a non-zero projection $p$ of $A$ is \textit{minimal}
if $pAp$ is a division algebra.

Let $\mathrm{Min}(A)$ be the set of minimal projections of $A$. If $A$ is
semiprime then $\operatorname{soc}(A)$ equals the sum of all minimal right
ideals of $A$ (i.e. the \textit{right socle}) \cite[Proposition 30.10]{BD73}
and $L$ is a left (right) minimal ideal of $A$ if and only if $L=Ap$
(respectively, $L=pA$) for some $p\in\mathrm{Min}(A)$.

\begin{remark}
If $B\in\mathfrak{U}_{\mathrm{n}}$ is a division algebra then $B$ is
isomorphic to $\mathbb{C}$ by the Gelfand-Mazur theorem. If $A$ is a semiprime
Banach algebra then $\operatorname{soc}(A)$ is closed if and only if $A$ is
finite-dimensional \cite{T76}, $\overline{\operatorname{soc}(A)}%
\cap\operatorname{Rad}\left(  A\right)  =0$ \cite[Lemma 4]{GR91} and
$\operatorname{soc}(A)$ is the largest ideal of algebraic elements of $A$
\cite[Theorem 5]{GR91}.
\end{remark}

\begin{proposition}
\begin{enumerate}
\item The map $\operatorname{soc}$ is a preradical on $\mathfrak{U}%
_{\mathrm{a}}$.

\item If $A\in\mathfrak{U}_{\mathrm{a}}$ is semiprime then $\operatorname{soc}%
\left(  J\right)  =J\cap\operatorname{soc}\left(  A\right)  $ for each ideal
$J$ of $A$; in particular, $\operatorname{rad}\left(  A\right)  \cap
\operatorname{soc}\left(  A\right)  =0$.
\end{enumerate}
\end{proposition}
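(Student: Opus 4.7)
For (1), I plan to verify Axiom 1 directly: given a surjective homomorphism $f:A\longrightarrow B$ in $\mathfrak{U}_{\mathrm{a}}$, it suffices to show that the image $f(L)$ of every minimal left ideal $L$ of $A$ is either zero or a minimal left ideal of $B$. Surjectivity ensures $f(L)$ is a left ideal of $B$, and $L\cap\ker f$, being a left subideal of $L$, is either $L$ (so $f(L)=0$) or $0$ (so $f|_{L}$ is injective). In the injective case, any proper left subideal $M\subsetneq f(L)$ pulls back to $f^{-1}(M)\cap L$, a proper left subideal of $L$ that must be zero by minimality, forcing $M=0$. Summing over all minimal left ideals of $A$ gives $f(\operatorname{soc}(A))\subset\operatorname{soc}(B)$.

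For (2), the plan breaks into three parts: (a) ideals of semiprime algebras are semiprime; (b) a dictionary between minimal idempotents of $A$ lying in $J$ and those of $J$; (c) semisimplicity of $\operatorname{soc}(A)$ as a left $A$-module. Step (a) is the main obstacle. Assume $I$ is an ideal of $J$ with $I^{2}=0$, and let $M=A^{1}IA^{1}$ be the two-sided ideal of $A$ generated by $I$. A typical element of $M^{3}$ is a sum of products of the form $a\cdot i_{1}c_{1}i_{2}c_{2}i_{3}\cdot b$ with $i_{k}\in I$ and $a,b,c_{k}\in A^{1}$. Since $I\subset J$ and $J$ is an ideal of $A$, the element $j:=c_{1}i_{2}c_{2}\in A^{1}IA^{1}$ lies in $J$; since $I$ is an ideal of $J$, this gives $i_{1}j\in IJ\subset I$, and hence $(i_{1}j)i_{3}\in I\cdot I=0$. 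Thus $M^{3}=0$, and semiprimeness of $A$ forces $M=0$ and $I=0$.

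For step (b), the key identity is $pAp=pJp$ for every $p\in J$: since $p\in J$ and $J$ is an ideal of $A$, $pAp\subset J$, so $pAp=p(pAp)p\subset pJp$, and the reverse inclusion is trivial. Thus a non-zero idempotent $p\in J$ is a minimal idempotent of $A$ if and only if it is a minimal idempotent of $J$. For such $p$, writing $p=p\cdot p\in Jp$ we get $p=jp$ with $j\in J$, so $Ap=(Aj)p\subset Jp\subset Ap$, giving $Ap=Jp$. Using semiprimeness of $J$ from (a), every minimal left ideal of $J$ has the form $Jp$ for a minimal idempotent $p\in J$, and the dictionary identifies it with the minimal left ideal $Ap$ of $A$ contained in $J$; this yields $\operatorname{soc}(J)\subset J\cap\operatorname{soc}(A)$. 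For the reverse inclusion I invoke step (c): $\operatorname{soc}(A)$ is a semisimple left $A$-module (being a sum of simple submodules), so its submodule $J\cap\operatorname{soc}(A)$ also decomposes as a sum of simple left $A$-submodules, i.e., minimal left ideals of $A$ contained in $J$, each of which belongs to $\operatorname{soc}(J)$ by the dictionary.

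For the \emph{in particular} claim, I would apply the established equality to $J=\operatorname{rad}(A)$, which is semiprime by (a). In a semiprime Jacobson-radical algebra $B=\operatorname{rad}(A)$, any minimal left ideal has the form $Bp$ for a non-zero minimal idempotent $p\in B$ (semiprimeness rules out the alternative $L^{2}=0$); but a non-zero idempotent $p\in\operatorname{rad}(A)$ is impossible, since quasi-regularity of $p$ makes $1-p$ invertible in $A^{1}$, while $(1-p)^{2}=1-p$ forces the invertible idempotent $1-p$ to equal $1$. Hence $\operatorname{soc}(\operatorname{rad}(A))=0$, and therefore $\operatorname{rad}(A)\cap\operatorname{soc}(A)=0$.
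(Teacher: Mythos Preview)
Your proof is correct. Part (1) and the inclusion $\operatorname{soc}(J)\subset J\cap\operatorname{soc}(A)$ follow the same line as the paper (which dismisses (1) as ``straightforward'' and, like you, uses the identity $pAp=pJp$ and $Ap=Jp$ for idempotents $p\in J$ to set up the dictionary $\mathrm{Min}(J)=\{p\in\mathrm{Min}(A):Jp\neq 0\}$). One small imprecision: the identity $pAp=pJp$ as you stated it (``for every $p\in J$'') needs $p^{2}=p$, which is how you actually use it.

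The genuine difference is in the reverse inclusion $J\cap\operatorname{soc}(A)\subset\operatorname{soc}(J)$. The paper argues element-by-element: writing $a\in J\cap\operatorname{soc}(A)$ as $\sum b_ip_i+\sum c_jp_j'$ with $p_i\in\mathrm{Min}(J)$ and $Jp_j'=0$, the first sum lies in $\operatorname{soc}(J)$ (since $b_ip_i=(b_ip_i)p_i\in Jp_i$), while the second sum $c$ lies in the right annihilator $\{d\in J:Jd=0\}$, which is an ideal of $A$ with zero square and hence vanishes by semiprimeness of $A$. Your route is more structural: $\operatorname{soc}(A)$ is a semisimple left $A$-module, so its submodule $J\cap\operatorname{soc}(A)$ is again semisimple, and each simple summand is a minimal left ideal of $A$ contained in $J$, hence of the form $Ap=Jp\subset\operatorname{soc}(J)$ by the dictionary. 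Your argument is cleaner and exports a useful module-theoretic viewpoint; the paper's argument is more self-contained and extracts the explicit characterization $\mathrm{Min}(J)=\{p\in\mathrm{Min}(A):Jp\neq 0\}$, which is reused later (e.g.\ in the proof of Theorem~\ref{psoc}). Your explicit proof of step (a) (ideals of semiprime algebras are semiprime) is a nice addition, since the paper only asserts ``Clearly $J$ is a semiprime algebra''.
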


\begin{proof}
$\left(  1\right)  $ is straightforward.

$\left(  2\right)  $ Let $E=\{p\in\mathrm{Min}(A):Jp\neq0\}$. Clearly $J$ is a
semiprime algebra, so the minimal left ideals of $J$ are determined by
$\mathrm{Min}(J)$. Let $p\in\mathrm{Min}(J)$ and $L=Jp$. Then $L=Jp\subset
Ap\subset L$ because $p=pp\in Jp=L$. Hence $Jp=Ap$ is a left ideal of $A$ and
$p\in\mathrm{Min}(A)$ because $pJp=pAp$ is a division algebra. So $Jp$ is a
minimal left ideal of $A$. This proves that $\operatorname{soc}\left(
J\right)  \subset J\cap\operatorname{soc}\left(  A\right)  $ and
$\mathrm{Min}(J)\subset E$. Let us prove the converse inclusions.

If $p\in E$ then $Ap$ is a minimal left ideal of $A$. As $Jp\subset Ap$ is a
non-zero left ideal of $A$, it follows that $Jp=Ap$, whence $pJp=pAp$ is a
division algebra. Therefore $E\subset\mathrm{Min}(J)$. We proved that
\begin{equation}
\mathrm{Min}(J)=\{p\in\mathrm{Min}(A):Jp\neq0\}. \label{mip}%
\end{equation}

Let $a\in J\cap\operatorname{soc}\left(  A\right)  $ be arbitrary. Then
$a=\sum b_{i}p_{i}+\sum c_{j}p_{j}^{\prime}$ (both sums are finite) for all
$b_{i},c_{j}\in A$, where all $p_{i}\in\mathrm{Min}(J)$ and all $p_{j}%
^{\prime}\in\mathrm{Min}(A)$ with $Jp_{j}^{\prime}=0$. In other words,
$a=b+c$, where
\[
b:=\sum b_{i}p_{i}=\sum\left(  b_{i}p_{i}\right)  p_{i}\in\operatorname{soc}%
\left(  J\right)
\]
and $c:=\sum c_{j}p_{j}^{\prime}=a-b\in J$. By condition, $Jp_{j}^{\prime}=0$
for all $j$, so one has \ $Jc=0$. But the set $K=\left\{  d\in J:Jd=0\right\}
$ is an ideal of $A$ with $K^{2}=0$, whence $K=0$ and therefore $c=0$ and
$a=b\in\operatorname{soc}\left(  J\right)  $. Thus $J\cap\operatorname{soc}%
\left(  A\right)  \subset\operatorname{soc}\left(  J\right)  $.

If $\operatorname{rad}\left(  A\right)  \cap\operatorname{soc}\left(
A\right)  \neq0$ then $\operatorname{rad}\left(  A\right)  $ has a non-zero
socle and there is a non-zero projection $p$ in $\operatorname{rad}\left(
A\right)  $; this is impossible.
\end{proof}

Let $P$ be a preradical. Define the map $P^{\operatorname{soc}}$ by
\[
P^{\operatorname{soc}}=\operatorname{soc}\ast P,
\]
so $P^{\operatorname{soc}}$ is the preimage of $\operatorname{soc}\left(
A/P\left(  A\right)  \right)  $ in $A$, for each algebra $A$. Let
\[
\mathrm{Min}_{P}(A)=\left\{  x\in A:q_{P\left(  A\right)  }\left(  x\right)
\in\mathrm{Min}(A/P\left(  A\right)  )\right\}
\]
where $q_{P\left(  A\right)  }:A\longrightarrow A/P\left(  A\right)  $ is the
standard quotient map.

\begin{theorem}
\label{psoc}Let $P$ be a radical such that $P\geq\mathfrak{P}_{\beta}$. Then

\begin{enumerate}
\item $\overline{P^{\operatorname{soc}}}$ is a topological under radical;

\item If $P$ is topological then $\overline{P^{\operatorname{soc}}%
}=P^{\overline{\operatorname{soc}}}$;

\item If $P$ is algebraic then $P^{\operatorname{soc}}$ is an algebraic under
radical, and if $P$ is hereditary then $P^{\operatorname{soc}\ast}$ is hereditary.
\end{enumerate}
\end{theorem}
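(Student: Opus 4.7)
The plan is to reduce everything to properties of the socle on semiprime algebras. Since $P\geq\mathfrak{P}_\beta$, the quotients $A/P(A)$ and $J/P(J)$ are semiprime for every algebra $A$ and every ideal $J\subset A$, so we have at our disposal the hereditarity of the socle on semiprime algebras, $\operatorname{soc}(J')=J'\cap\operatorname{soc}(A/P(A))$ for every ideal $J'$ of $A/P(A)$, together with the fact that arbitrary left or right multipliers preserve the socle: each minimal left ideal $Bp$ satisfies $T(Bp)=T(B)p\subset Bp$ whenever $T$ is a left multiplier of a semiprime algebra $B$.

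I would first prove the algebraic statement of (3), as it carries the bulk of the work and immediately implies what Axiom~4 demands for (1). The map $P^{\operatorname{soc}}=\operatorname{soc}\ast P$ is a preradical by Lemma~\ref{conv}, so only Axiom~4 is at stake. Fix an ideal $J$ of $A$ and let $J'=(J+P(A))/P(A)$, an ideal of the semiprime algebra $A/P(A)$. The natural surjective homomorphism $f:J/P(J)\to J'$, $x+P(J)\mapsto x+P(A)$, is well defined because $P(J)\subset P(A)$ by Axiom~4 for $P$; applying the preradical property of $\operatorname{soc}$ and hereditarity of the socle on $A/P(A)$ gives $f(\operatorname{soc}(J/P(J)))\subset\operatorname{soc}(J')\subset\operatorname{soc}(A/P(A))$, whence $P^{\operatorname{soc}}(J)\subset P^{\operatorname{soc}}(A)$. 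The subtler point is to show that $P^{\operatorname{soc}}(J)$ is \emph{two-sided in $A$} and not merely in $J$: since $P(J)$ is an ideal of $A$, left multiplication by any $a\in A$ descends to a well-defined left multiplier $T_a$ of the semiprime algebra $J/P(J)$, and $T_a((J/P(J))p)=T_a(J/P(J))p\subset(J/P(J))p$ for every $p\in\mathrm{Min}(J/P(J))$, so $T_a$ preserves $\operatorname{soc}(J/P(J))$; the right-multiplier argument is symmetric. Together with the earlier inclusion this verifies Axiom~4 and makes $P^{\operatorname{soc}}$ an algebraic under radical.

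The remaining claims follow readily. For~(1), Axiom~1 for $\overline{P^{\operatorname{soc}}}$ transfers through closure by continuity of morphisms as in Theorem~\ref{closunder}, and Axiom~4 follows via the standard closure argument of Theorem~\ref{ovunt} (compare~(\ref{rk})) using the identity $K\cap\overline{I}^{(A)}=\overline{I}^{(K)}$ for any ideal $I$ of $A$ contained in $K\subset A$. For~(2), the hypothesis that $P$ is topological makes $P(A)$ closed and $q_{P(A)}$ an open continuous surjection, so~(\ref{gen0}) yields $q_{P(A)}^{-1}(\overline{\operatorname{soc}(A/P(A))})=\overline{q_{P(A)}^{-1}(\operatorname{soc}(A/P(A)))}$, which is precisely $P^{\overline{\operatorname{soc}}}=\overline{P^{\operatorname{soc}}}$. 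For the hereditarity in~(3), if $P$ is hereditary then $P(J)=J\cap P(A)$, so $f$ is in fact an \emph{isomorphism} of $J/P(J)$ onto $J'$; combined with hereditarity of the socle on $A/P(A)$, this upgrades the earlier inclusion to $P^{\operatorname{soc}}(J)=J\cap P^{\operatorname{soc}}(A)$, making $P^{\operatorname{soc}}$ an algebraic hereditary preradical, and Corollary~\ref{hera} then delivers the hereditarity of $P^{\operatorname{soc}\ast}$. The main obstacle throughout is the promotion of $P^{\operatorname{soc}}(J)$ from a $J$-ideal to an $A$-ideal, since this is precisely where the semiprimeness hypothesis $P\geq\mathfrak{P}_\beta$ is indispensable: the multiplier-invariance of the socle relies on semiprimeness of $J/P(J)$, and without it the $A$-ideal structure cannot be pushed through the quotient.
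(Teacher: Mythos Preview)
Your overall strategy is sound and close to the paper's, but there is a genuine gap: you claim ``only Axiom~4 is at stake'' after invoking Lemma~\ref{conv}, yet an under radical must also satisfy Axiom~3, and you never verify $P^{\operatorname{soc}}(P^{\operatorname{soc}}(A))=P^{\operatorname{soc}}(A)$. This is not automatic from the preradical property. The paper supplies the missing step: setting $B=P^{\operatorname{soc}}(A)$, one has $P(A)\subset B$ (trivially) and $P(B)\subset P(A)$ (Axiom~4 for $P$), while $P(A)$ is a $P$-radical ideal of $B$, so $P(A)=P(P(A))\subset P(B)$; hence $P(B)=P(A)$ and $B/P(B)=\operatorname{soc}(A/P(A))$. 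Then hereditarity of $\operatorname{soc}$ on the semiprime algebra $A/P(A)$ gives $\operatorname{soc}(B/P(B))=B/P(B)$, i.e.\ $P^{\operatorname{soc}}(B)=B$. This is short and uses only tools you already set up, but without it neither (3) nor (1) is complete as stated.

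On the parts you do address, your route differs mildly from the paper's. For the $A$-ideal property of $P^{\operatorname{soc}}(J)$ you descend $\mathrm{L}_a,\mathrm{R}_a$ to multipliers on the semiprime algebra $J/P(J)$ and observe they preserve minimal one-sided ideals, whereas the paper works inside $A/P(A)$ and uses \cite[Lemma~30.7]{BD73} to see that $q(Je)q(a)$ lands in a minimal left ideal of $q(J)$ or zero; both arguments are clean, with yours perhaps more conceptual. For the inclusion $P^{\operatorname{soc}}(J)\subset P^{\operatorname{soc}}(A)$ you push through the epimorphism $f:J/P(J)\to J'$ and then use $\operatorname{soc}(J')=J'\cap\operatorname{soc}(A/P(A))$, while the paper tracks minimal projections directly via $\mathrm{Min}_P(J)\subset\mathrm{Min}_P(A)\cup\{0\}$; these are equivalent. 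Your treatments of (2) via (\ref{gen0}) and of heredity via the isomorphism $f$ are correct and match the paper's.
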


\begin{proof}
In $\left(  2\right)  $ the equality $\overline{P^{\operatorname{soc}}%
}=P^{\overline{\operatorname{soc}}}$ is evident; $\left(  1\right)  $ is a
consequence of $\left(  3\right)  $ and Theorem \ref{closunder} in the
algebraic case and is proved similarly to $\left(  3\right)  $ otherwise. So
we will prove only $\left(  3\right)  $.

$\left(  3\right)  $ $P^{\operatorname{soc}}$ is a preradical by Lemma
\ref{conv}. Let $A$ be an algebra, and let $J$ be an ideal of $A$. Since
\ $A/P\left(  A\right)  $ and $J/P\left(  J\right)  $ are semiprime then, to
prove the inclusion $P^{\operatorname{soc}}\left(  J\right)  \subset
P^{\operatorname{soc}}\left(  A\right)  $, it suffices to show that
\begin{equation}
\mathrm{Min}_{P}(J)\subset\mathrm{Min}_{P}(A)\cup\left\{  0\right\}  .
\label{mipp}%
\end{equation}
Let $q^{\prime}:J\longrightarrow J/P\left(  J\right)  $ be the quotient map,
$q=q_{P\left(  A\right)  }$ (as above), and let $g:J/P\left(  J\right)
\longrightarrow A/P\left(  A\right)  $ be the homomorphism $q^{\prime}\left(
a\right)  \longmapsto q\left(  a\right)  $ for $a\in J$. Let $e\in
\mathrm{Min}_{P}(J)$ be arbitrary. Then $q^{\prime}\left(  eJe\right)  $ is a
division algebra. If $q\left(  e\right)  \neq0$ then $q\left(  eJe\right)
=g\left(  q^{\prime}\left(  eJe\right)  \right)  $ is also a division algebra
and $q\left(  e\right)  \in\mathrm{Min}\left(  q\left(  J\right)  \right)
\subset\mathrm{Min}\left(  A/P\left(  A\right)  \right)  $ by (\ref{mip}). In
all cases $e\in\mathrm{Min}_{P}(A)\cup\left\{  0\right\}  $. This completes
the proof of $\left(  \ref{mipp}\right)  $, so the inclusion
$P^{\operatorname{soc}}\left(  J\right)  \subset P^{\operatorname{soc}}\left(
A\right)  $ holds.

If $a\in A$ is arbitrary, then $q\left(  a\right)  q\left(  Je\right)  \subset
q\left(  Je\right)  $ and $q\left(  Je\right)  q\left(  a\right)  \subset
q\left(  Jea\right)  =q\left(  Jeea\right)  $. As $q\left(  ea\right)  \in
q\left(  J\right)  $ then, by \cite[Lemma 30.7]{BD73}, $q\left(  Je\left(
ea\right)  \right)  $ is a minimal left ideal of $q\left(  J\right)  $ or
zero. So $P^{\operatorname{soc}}\left(  J\right)  $ is an ideal of $A$.

Let $B=P^{\operatorname{soc}}\left(  A\right)  $. Then $P\left(  B\right)
\subset P\left(  A\right)  $. But $P\left(  A\right)  $ is a $P$-radical ideal
of $B$; so $P\left(  B\right)  =P\left(  A\right)  $. Hence
$\operatorname{soc}\left(  B/P\left(  B\right)  \right)  =\operatorname{soc}%
\left(  A/P\left(  A\right)  \right)  $. This shows that
$P^{\operatorname{soc}}\left(  P^{\operatorname{soc}}\left(  A\right)
\right)  =P^{\operatorname{soc}}\left(  A\right)  $. We proved that
$P^{\operatorname{soc}}$ is an under radical.

Let now $P$ be hereditary. Since $P\left(  J\right)  =J\cap P\left(  A\right)
$ then $g:q^{\prime}\left(  x\right)  \longmapsto q\left(  x\right)  $ is an
isomorphism of $J/P\left(  J\right)  $ onto the ideal $q\left(  J\right)  $ of
$A/P\left(  A\right)  $. Therefore
\[
g\left(  \mathrm{Min}\left(  J/P\left(  J\right)  \right)  \right)
=\mathrm{Min}\left(  q\left(  J\right)  \right)  =\left\{  q\left(  y\right)
\in\mathrm{Min}\left(  A/P\left(  A\right)  \right)  :q\left(  Jy\right)
\neq0\right\}
\]
by $\left(  \ref{mip}\right)  $. So if $y\in J$ and $q\left(  y\right)
\in\mathrm{Min}\left(  A/P\left(  A\right)  \right)  $ then $q^{\prime}\left(
y\right)  \in\mathrm{Min}\left(  J/P\left(  J\right)  \right)  $. It follows
that $J\cap P^{\operatorname{soc}}\left(  A\right)  \subset
P^{\operatorname{soc}}\left(  J\right)  $. Then $P^{\operatorname{soc}}$ is
hereditary. By Corollary \ref{hera}, $P^{\operatorname{soc}\ast}$ is hereditary.
\end{proof}

As a consequence of Theorem \ref{psoc}, $P^{\operatorname{soc}\ast}$ and
$\overline{P^{\operatorname{soc}}}^{\ast}$ are (algebraic and, respectively,
topological) radicals.

\subsection{Some applications}

\begin{lemma}
\label{onesid}Let $A$ be an algebra, and let $I$ be a one-sided ideal of $A$. Then

\begin{enumerate}
\item $\operatorname{rad}\left(  I\right)  =I\cap\operatorname{rad}\left(
A\right)  $.

\item If $J$ is a one-sided ideal of $I$ then $\operatorname{rad}\left(
J\right)  =J\cap\operatorname{rad}\left(  A\right)  $.
\end{enumerate}
\end{lemma}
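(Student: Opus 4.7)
The plan is to prove part (1) and derive part (2) from it. Part (2) follows by applying (1) twice: first to the pair $(I, J)$, giving $\operatorname{rad}(J) = J \cap \operatorname{rad}(I)$, and then to $(A, I)$, giving $\operatorname{rad}(I) = I \cap \operatorname{rad}(A)$; intersecting with $J \subset I$ yields $\operatorname{rad}(J) = J \cap \operatorname{rad}(A)$.

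For part (1), assume without loss of generality that $I$ is a left ideal of $A$, and use the characterization of $\operatorname{rad}$ as the unique largest two-sided ideal whose elements are quasi-invertible (equivalently, $\operatorname{rad}(A) = \bigcap \operatorname{Prim}(A^1)$ as in Section \ref{sjac}). The inclusion $I \cap \operatorname{rad}(A) \subset \operatorname{rad}(I)$ is the easy direction: for $a \in I \cap \operatorname{rad}(A)$ with left quasi-inverse $b \in A$ satisfying $b + a - ba = 0$, the identity $b = ba - a$ forces $b \in I$, since both $a$ and $ba$ lie in $I$; hence $a$ is left quasi-invertible inside $I$, and symmetrically right quasi-invertible. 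A direct check shows $I \cap \operatorname{rad}(A)$ is a two-sided ideal of $I$ (using that $\operatorname{rad}(A)$ is an ideal of $A$ and that $I^2 \subset I$), so it is contained in the largest quasi-invertible ideal $\operatorname{rad}(I)$.

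The reverse inclusion $\operatorname{rad}(I) \subset I \cap \operatorname{rad}(A)$ is the substantive direction. Given $a \in \operatorname{rad}(I)$ and any strictly irreducible $\pi \in \operatorname{Irr}(A^1)$ on $X$, I aim to show $\pi(a) = 0$. The case $\pi(I) = 0$ is immediate. Otherwise, $\pi(I)X$ is $A$-invariant (via $AI \subset I$) and nonzero, so equals $X$ by strict irreducibility; setting $X_0 = \{\xi \in X : \pi(I)\xi = 0\}$, which is an $I$-invariant subspace of $X$, the induced representation of $I$ on $X/X_0$ turns out to be strictly irreducible, so $a \in \operatorname{rad}(I)$ annihilates it, giving $\pi(a) X \subset X_0$, equivalently $\pi(I)\pi(a) = 0$. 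The main obstacle, which I expect to be the hard part, is the final passage from $\pi(I)\pi(a) = 0$ to $\pi(a) = 0$; the plan is to effect this by exploiting the quasi-invertibility relation $a = ba - b$ for some $b \in I$ together with $\pi(b)\pi(a) \in \pi(I)\pi(a) = 0$, iterating if necessary and invoking strict irreducibility of $\pi$ on the unitization $A^1$ to force $\pi(a) = 0$.
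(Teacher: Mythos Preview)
Your derivation of (2) from (1) matches the paper's, and your argument for the inclusion $I\cap\operatorname{rad}(A)\subset\operatorname{rad}(I)$ is correct. The difficulty you flag in the reverse inclusion is real, and in fact it cannot be resolved: the lemma as stated is \emph{false} for one-sided ideals.

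Take $A=M_2(\mathbb{C})$ and let $I=\{aE_{11}+bE_{21}:a,b\in\mathbb{C}\}$, the left ideal of matrices with vanishing second column. Then $\operatorname{rad}(A)=0$, so $I\cap\operatorname{rad}(A)=0$. But the multiplication in $I$ is $(aE_{11}+bE_{21})(cE_{11}+dE_{21})=acE_{11}+bcE_{21}$, and one checks directly that the unique maximal modular left ideal of $I$ is $\{bE_{21}:b\in\mathbb{C}\}$, whence $\operatorname{rad}(I)=\mathbb{C}E_{21}\neq 0$.

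Tracing your representation argument in this example: with $a=E_{21}$ and $\pi$ the identity representation of $A$ on $\mathbb{C}^2$, one finds $X_0=\mathbb{C}e_2$, and indeed $\pi(a)X=\mathbb{C}e_2\subset X_0$ so $\pi(I)\pi(a)=0$; but $\pi(a)=E_{21}\neq 0$. Your quasi-inverse step gives $b=-E_{21}$ and hence $\pi(a)=-\pi(b)$, which is consistent but does not force $\pi(a)=0$; no iteration will, since the conclusion itself fails.

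The paper's own proof is equally defective: it tacitly treats $\operatorname{rad}(I)$ as a left ideal of $A$ in order to invoke the characterization of $\operatorname{rad}(A)$ as the largest left ideal of left quasi-invertible elements. In the example, $E_{12}\cdot E_{21}=E_{11}\notin\operatorname{rad}(I)$, so $\operatorname{rad}(I)$ is not a left ideal of $A$. The statement is of course true for two-sided ideals (the standard heredity of $\operatorname{rad}$), and the downstream uses in the paper may be repairable under that hypothesis or by ad hoc arguments, but the lemma as written does not stand.
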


\begin{proof}
$\left(  1\right)  $ Let $I$ be a left ideal of $A$. As $\operatorname{rad}%
\left(  A\right)  $ is the largest left ideal consisting of left
quasi-invertible elements of $A$ \cite[Proposition 24.16]{BD73},
$\operatorname{rad}\left(  I\right)  \subset\operatorname{rad}\left(
A\right)  $ and also $I\cap\operatorname{rad}\left(  A\right)  \subset
\operatorname{rad}\left(  I\right)  $.

$\left(  2\right)  $ By $\left(  1\right)  $, we have that $\operatorname{rad}%
\left(  J\right)  =J\cap\operatorname{rad}\left(  I\right)  $. Then
\[
\operatorname{rad}\left(  J\right)  =J\cap\operatorname{rad}\left(  I\right)
=J\cap I\cap\operatorname{rad}\left(  A\right)  =J\cap\operatorname{rad}%
\left(  A\right)  .
\]

\end{proof}

\begin{proposition}
\label{onesided} Let $A$ be an algebra, and let $J$ be a one-sided ideal of
$A$. Then $\operatorname{rad}^{\operatorname{soc}}(J)\subset\operatorname{rad}%
^{\operatorname{soc}}(A)$.
\end{proposition}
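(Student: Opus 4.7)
The plan is to adapt the proof of Theorem \ref{psoc} to the one-sided setting. The main new ingredient is a one-line identity $pBp=pKp$ for $p$ a minimal idempotent in a one-sided ideal $K$ of a semiprime algebra $B$; with this in hand, the rest of the argument parallels the two-sided case and reduces everything to observations about minimal idempotents.

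First, I invoke Lemma \ref{onesid}(1) to write $\operatorname{rad}(J)=J\cap\operatorname{rad}(A)$. This makes the natural algebra homomorphism $g\colon J/\operatorname{rad}(J)\to A/\operatorname{rad}(A)=:B$ injective; its image $K:=q_{\operatorname{rad}(A)}(J)$ is a one-sided ideal of the semisimple algebra $B$. Because $J/\operatorname{rad}(J)$ is semisimple (and in particular semiprime), so is its isomorphic copy $K$; consequently every minimal left ideal of $K$ has non-zero square and is therefore of the form $Kp$ for some $p\in\mathrm{Min}(K)$, yielding $\operatorname{soc}(K)=\sum_{p\in\mathrm{Min}(K)}Kp$. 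It thus suffices to show that each $p\in\mathrm{Min}(K)$ lies in $\operatorname{soc}(B)$.

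Fix such a $p$ and assume without loss of generality that $K$ is a left ideal of $B$, the right-ideal case being handled by passing to the opposite algebra. The identity $pBp=pKp$ is proved as follows: the inclusion $pKp\subset pBp$ is trivial, and conversely, for any $b\in B$ the element $bp$ lies in $Bp\subset BK\subset K$ because $K$ is a left ideal, so setting $k:=bp\in K$ and using $p^{2}=p$ gives $pkp=p(bp)p=pbp$, which exhibits $pbp$ as an element of $pKp$. Since $pKp$ is a division algebra, so is $pBp$; hence $p$ is a minimal idempotent of $B$ and $p\in Bp\subset\operatorname{soc}(B)$.

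Piecing the strands together, $g(\operatorname{soc}(J/\operatorname{rad}(J)))=\operatorname{soc}(K)\subset\operatorname{soc}(B)$, so for any $x\in\operatorname{rad}^{\operatorname{soc}}(J)$ we have $q_{\operatorname{rad}(A)}(x)=g(q_{\operatorname{rad}(J)}(x))\in\operatorname{soc}(B)$, that is, $x\in\operatorname{rad}^{\operatorname{soc}}(A)$. The only non-formal moment is the verification of the identity $pBp=pKp$, which is a one-line consequence of the one-sided ideal property and $p^{2}=p$.
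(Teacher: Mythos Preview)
Your proof is correct and follows essentially the same route as the paper: pass to the semisimple quotient $B=A/\operatorname{rad}(A)$ using $\operatorname{rad}(J)=J\cap\operatorname{rad}(A)$, and then show that the socle of the one-sided ideal $K=q_{\operatorname{rad}(A)}(J)$ lands in $\operatorname{soc}(B)$. The only difference is in the transfer step: the paper shows that each minimal left ideal $L=Kp$ of $K$ is itself a minimal left ideal of $B$ (via $BL\subset L$ and minimality), whereas you show directly that each $p\in\mathrm{Min}(K)$ lies in $\mathrm{Min}(B)$ via the identity $pBp=pKp$; both yield $Kp\subset Bp\subset\operatorname{soc}(B)$ and are equivalent in strength.
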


\begin{proof}
Let $J$ be a left ideal. As $\operatorname{rad}\left(  J\right)  =J\cap$
$\operatorname{rad}\left(  A\right)  $ by Lemma \ref{onesid}, the standard map
$g:J/\operatorname{rad}\left(  J\right)  \longrightarrow
B:=A/\operatorname{rad}\left(  A\right)  $ defined by $g:x/\operatorname{rad}%
\left(  J\right)  \longmapsto x/\operatorname{rad}\left(  A\right)  $ is
one-to-one. Let $I=g\left(  J/\operatorname{rad}\left(  J\right)  \right)  $.
Then $I$ is a left ideal of $B$ and one may identify the socle of
$J/\operatorname{rad}\left(  J\right)  $ with the socle of $I$ (because $g$
and $g^{-1}$ are morphisms).

If $L$ is a minimal left ideal of $I$ there is $p\in\mathrm{Min}\left(
I\right)  $ such that $L=Ip$. Then $p\in L$, $BL$ is a left ideal of $B$ and
$BL\subset L$. So either $BL=0$ or $BL=L$. But the equality $BL=0$ is
impossible since $B$ is semisimple. So $L$ is a minimal left ideal of $B$.
This proves $\operatorname{soc}(I)\subset\operatorname{soc}(B)$ and the result follows.
\end{proof}

Recall that $\digamma\!\left(  A\right)  $ is the set of all finite rank
elements of $A$.

\begin{lemma}
\label{soc}Let $A$ be a semiprime normed algebra. Then

\begin{enumerate}
\item $\operatorname{soc}\left(  A\right)  =\digamma\!\left(  A\right)  $;

\item $\overline{\operatorname{soc}\left(  A\right)  }\cap\overline
{\operatorname{rad}\left(  A\right)  }=0$;

\item If $a,b\in\digamma\!\left(  A\right)  $ then $\mathrm{L}_{a}%
\mathrm{R}_{b}$ is a finite rank operator on $A$.
\end{enumerate}
\end{lemma}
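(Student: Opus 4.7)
My plan is to prove the three parts in the order $(1)$, $(3)$, $(2)$, centered on the auxiliary fact that for any two minimal projections $p,q$ of the semiprime normed algebra $A$, $\dim pAq\leq 1$. This fact combines the Gelfand--Mazur theorem (so that $pAp\cong qAq\cong\mathbb{C}$) with the observation that $pA$ is automatically a minimal right ideal in the semiprime setting (the mirror of minimality of $Ap$, noted in the preliminary discussion of the socle). To verify it, take a nonzero $x=pxq\in pAq$; since $A$ is semiprime, $xA\neq 0$, so the inclusion $xA\subset pA$ of nonzero right ideals must be an equality by minimality, giving some $c\in A$ with $xc=p$. For any $y\in pAq$, using $y=yq$ and $x=xq$,
\[
y=py=x(cy)=x(qcyq),
\]
and since $qcyq\in qAq=\mathbb{C}q$, we conclude $y\in\mathbb{C}x$.

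For part $(1)$, the inclusion $\operatorname{soc}(A)\subset\digamma\!\left(A\right)$ follows at once from the auxiliary fact: writing $x=\sum_{i}a_{i}p_{i}\in\operatorname{soc}(A)$ with each $p_{i}$ a minimal projection, we have $a_{i}p_{i}Aa_{j}p_{j}\subset a_{i}(p_{i}Ap_{j})$ (because $p_{i}(ra_{j})p_{j}\in p_{i}Ap_{j}$ for every $r\in A$), so $xAx$ is a finite sum of subspaces each of dimension at most one, whence $x\in\digamma\!\left(A\right)$. The reverse inclusion $\digamma\!\left(A\right)\subset\operatorname{soc}(A)$ is the classical Alexander--Barnes--Puhl theorem for semiprime (normed) algebras: the argument proceeds through Lemma \ref{reprfin}, whose bound $\sum_{i}\operatorname{rank}\pi_{i}(x)\leq\dim xAx<\infty$ forces $\pi(x)=0$ for all but finitely many equivalence classes of strictly irreducible representations; combining this with Jacobson density and the hereditary structure of the socle then exhibits $x$ as a finite sum of elements of minimal left ideals.

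Parts $(3)$ and $(2)$ then follow from $(1)$. For $(3)$, write $a=\sum_{i}a_{i}p_{i}$ and $b=\sum_{j}q_{j}b_{j}$ via $(1)$; then
\[
aAb\subset\sum_{i,j}a_{i}(p_{i}Aq_{j})b_{j}
\]
is a finite sum of subspaces of dimension at most one, so $\mathrm{L}_{a}\mathrm{R}_{b}$ has finite--dimensional range. For $(2)$, $(1)$ yields $\overline{\operatorname{soc}(A)}=\overline{\digamma\!\left(A\right)}$, and Lemma \ref{spfr}$(2)$ gives $\overline{\digamma\!\left(A\right)}\cap\overline{\operatorname{rad}(A)}=0$.

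The main obstacle will be the Alexander--Barnes--Puhl direction of $(1)$. The naive idea of showing $xAx$ itself is semiprime (and then using Wedderburn to produce minimal projections inside $A$) fails in general, so one must instead route through representation theory via Lemma \ref{reprfin}. The delicate point is to verify that the finitely many non--vanishing $\pi_{i}(x)$ correspond, via Jacobson density and the minimality of $Ap$, to concrete minimal left ideals of $A$ whose sum contains $x$; once this is done, everything else in the lemma assembles purely from the auxiliary dimension bound.
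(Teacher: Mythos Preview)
Your auxiliary fact $\dim pAq\leq 1$ is correct and makes part $(3)$ considerably cleaner than the paper's argument: the paper instead extracts $\mathrm{L}_{p_1}\mathrm{R}_{p_2}$ from $\mathrm{W}_{p_1+p_2}$ via a polarization-type identity, whereas your direct bound $aAb\subset\sum_{i,j}a_i(p_iAq_j)b_j$ (using left and right socle decompositions) is shorter and more transparent. Part $(2)$ is handled identically in both.

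For the inclusion $\digamma(A)\subset\operatorname{soc}(A)$ in $(1)$, however, you dismiss the ``naive'' Wedderburn approach too quickly. The paper makes exactly this idea work, not with $xAx$ but with $B=xA^{1}x+\mathbb{C}x$. The key observation is structural: $B$ is a \emph{right} ideal of the \emph{left} ideal $A^{1}x$, so by the one-sided radical formula (Lemma~\ref{onesid}) one gets $\operatorname{rad}(B)=B\cap\operatorname{rad}(A)\subset\digamma(A)\cap\operatorname{rad}(A)=0$, the last step by Lemma~\ref{spfr}. Thus $B$ is a finite-dimensional semisimple algebra, $x\in\operatorname{soc}(B)=\sum Bp_i$, and since $p_iA^{1}p_i=p_iBp_i=\mathbb{C}p_i$ each $p_i$ is already minimal in $A^{1}$. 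This is short, self-contained, and avoids representation theory entirely.

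By contrast, your representation-theoretic sketch is incomplete as written. Lemma~\ref{reprfin} does give that only finitely many inequivalent $\pi_i$ have $\pi_i(x)\neq 0$, each with finite rank image, but the passage from this to ``$x$ lies in a finite sum of minimal left ideals of $A$'' is not spelled out: one must still produce minimal projections of $A$ from the finite-rank operators $\pi_i(x)$, and this step (lifting back from the representation spaces to $A$) is precisely where the work lies. The route is viable in principle, but it is longer than the paper's, and the ``delicate point'' you flag is genuinely delicate.
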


\begin{proof}
$\left(  1\right)  $ Let $p\in\mathrm{Min}\left(  A\right)  $ be arbitrary. As
$pAp$ is a division normed algebra, $pAp$ is one-dimensional by the
Gelfand-Masur theorem, so $p\in\digamma\!\left(  A\right)  $, whence
$Ap\subset\digamma\!\left(  A\right)  $ and $\operatorname{soc}\left(
A\right)  \subset\digamma\!\left(  A\right)  $.

Let $x\in\digamma\!\left(  A\right)  $. Then the algebra $xA^{1}x$ is
finite-dimensional. As $A^{1}x$ is a left ideal of $A^{1}$ then
$\operatorname{rad}\left(  A^{1}x\right)  =A^{1}x\cap\operatorname{rad}\left(
A\right)  $ by Lemma \ref{onesid}. Let $B=xA^{1}x+\mathbb{C}x$. Then $B$ is a
finite-dimensional algebra and $B\subset\digamma\!\left(  A\right)  $.

As $B$ is a right ideal of $A^{1}x$, it follows that
\[
\operatorname{rad}\left(  B\right)  =B\cap\operatorname{rad}\left(  A\right)
\subset\digamma\!\left(  A\right)  \cap\operatorname{rad}\left(  A\right)  =0
\]
by Lemmas \ref{onesid} and \ref{spfr}. As $B$ is a finite-dimensional
semisimple algebra then $B=\operatorname{soc}\left(  B\right)  $. Therefore
$x\in L_{1}+\ldots+L_{n}$ where $L_{i}$ is a minimal left ideal of $B$ for
every $i$. Then there is $p_{i}$ $\in\mathrm{Min}\left(  B\right)  $, for
every $i$, such that $L_{i}=Bp_{i}$. As
\[
p_{i}A^{1}p_{i}=p_{i}\left(  p_{i}A^{1}p_{i}\right)  p_{i}\subset p_{i}%
Bp_{i}\subset p_{i}A^{1}p_{i}%
\]
and $p_{i}Bp_{i}=\mathbb{C}p_{i}$ then $p_{i}\in\mathrm{Min}\left(
A^{1}\right)  $ and $A^{1}p_{i}$ is a minimal left ideal of $A^{1}$, whence
$x\in\operatorname{soc}\left(  A^{1}\right)  $. It remains to note that if $A$
is infinite-dimensional then clearly $\operatorname{soc}\left(  A^{1}\right)
=\operatorname{soc}\left(  A\right)  $, and if $A$ is finite-dimensional then
$A$ is semisimple and $A=A^{1}$ by the classical Wedderburn results.

$\left(  2\right)  $ Follows from $\left(  1\right)  $ and Lemma \ref{spfr}.

$\left(  3\right)  $ One may assume that $A$ is unital. Let $p_{1},p_{2}%
\in\mathrm{Min}\left(  A\right)  $ be arbitrary. As $p_{1}+p_{2}%
\in\operatorname{soc}\left(  A\right)  $ then $\mathrm{L}_{p_{1}+p_{2}%
}\mathrm{R}_{p_{1}+p_{2}}$ is a finite rank operator, whence $\mathrm{L}%
_{p_{1}}\mathrm{R}_{p_{2}}+\mathrm{L}_{p_{2}}\mathrm{R}_{p_{1}}$ is a finite
rank operator on $A$. Then
\[
\mathrm{L}_{1-p_{2}}\left(  \mathrm{L}_{p_{1}}\mathrm{R}_{p_{2}}%
+\mathrm{L}_{p_{2}}\mathrm{R}_{p_{1}}\right)  =\mathrm{L}_{p_{1}}%
\mathrm{R}_{p_{2}}-\mathrm{L}_{p_{2}}\mathrm{L}_{p_{1}}\mathrm{R}_{p_{2}}%
\]
is a finite rank operator. As $\mathrm{L}_{p_{2}}\mathrm{L}_{p_{1}}%
\mathrm{R}_{p_{2}}=\left(  \mathrm{L}_{p_{2}}\mathrm{R}_{p_{2}}\right)
\mathrm{L}_{p_{1}}$ is a finite rank operator, $\mathrm{L}_{p_{1}}%
\mathrm{R}_{p_{2}}$ is a finite rank operator.

Let $a,b\in\operatorname{soc}\left(  A\right)  $. Then there are $p_{1}%
,\ldots,p_{n}\in\mathrm{Min}\left(  A\right)  $, $x_{1},y_{1},\ldots
,x_{n},y_{n}\in A$ such that $a=\sum_{i}x_{i}p_{i}$ and $b=\sum_{j}y_{j}p_{j}%
$. Then%
\[
\mathrm{L}_{a}\mathrm{R}_{b}=\sum_{i=1}^{n}\sum_{j=1}^{n}\mathrm{L}%
_{x_{i}p_{i}}\mathrm{R}_{y_{j}p_{j}}=\sum_{i=1}^{n}\sum_{j=1}^{n}%
\mathrm{L}_{x_{i}}\left(  \mathrm{L}_{p_{i}}\mathrm{R}_{p_{j}}\right)
\mathrm{R}_{y_{j}}%
\]
is a finite rank operator.
\end{proof}

\begin{remark}
$\left(  1\right)  $ and $\left(  3\right)  $ of Lemma $\ref{soc}$ were proved
for not necessary normed, semiprime algebras with lower socle instead of socle
in \cite[Lemma 3.1 and Theorem 3.3]{BE03}; the lower socle of $A$ is defined
as the ideal generated by all minimal projections $p$ with $pAp<\infty$. It
should be noted that our proofs are completely different from ones in
\cite{BE03}.
\end{remark}

Lemma \ref{soc}$\left(  3\right)  $ yields

\begin{corollary}
Every finite semiprime normed algebra is bifinite.
\end{corollary}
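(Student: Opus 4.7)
The proof will be a short direct deduction from Lemma \ref{soc}(3). The plan is to unwind the definitions and apply the lemma with no extra work: since $A$ is finite, every element of $A$ is a finite rank element, so $A = \digamma\!\left(A\right)$. Then for any $a,b \in A$, the hypothesis of Lemma \ref{soc}(3) is satisfied (semiprimeness is given, and $a,b \in \digamma\!\left(A\right)$), so $\mathrm{L}_{a}\mathrm{R}_{b}$ is a finite rank operator on $A$.

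The key observation is that the range of $\mathrm{L}_{a}\mathrm{R}_{b}$ is precisely $aAb$, so finiteness of rank means $\dim aAb < \infty$ for all $a,b \in A$. This is exactly the definition of bifiniteness. There is no real obstacle here; the lemma was set up to give this corollary, and the only thing to verify carefully is that the semiprimeness assumption (needed to invoke Lemma \ref{soc}) is inherited as part of the hypothesis. Thus the ``proof'' amounts to a single-line identification, and no transfinite or closure arguments are needed.
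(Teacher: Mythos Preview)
Your proposal is correct and is exactly the paper's approach: the corollary is stated as an immediate consequence of Lemma~\ref{soc}(3), and your unwinding of the definitions (finite means $A=\digamma\!\left(A\right)$, and the range of $\mathrm{L}_a\mathrm{R}_b$ is $aAb$) is precisely what that one-line deduction amounts to.
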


It follows that if $A$ is a semiprime normed algebra and $\mathcal{R}%
_{\mathrm{hf}}\left(  A\right)  \neq0$ then $\mathcal{E\!\ell}\left(
A\right)  $ has nonzero finite rank operators. The converse is also true.

\begin{theorem}
\label{fre}Let $A$ be an algebra. If there is a finite rank elementary
operator $T$ on $A$ then the image of $T$ is contained in $\mathfrak{R}%
_{\mathrm{hf}}\left(  A\right)  $.
\end{theorem}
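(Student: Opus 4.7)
The plan is to pass to the quotient $B = A/\mathfrak{R}_{\mathrm{hf}}(A)$, where by Theorem~\ref{hf}(9) there are no non-zero finite rank elements, and to show that the induced elementary operator vanishes. First I would observe that because $\mathfrak{R}_{\mathrm{hf}}(A)$ is a two-sided ideal, the operator $T = \sum_{i=1}^{n}\mathrm{L}_{a_{i}}\mathrm{R}_{b_{i}}$ preserves it, so $T$ descends to an elementary operator $\tilde{T} = \sum_{i=1}^{n}\mathrm{L}_{q(a_{i})}\mathrm{R}_{q(b_{i})}$ on $B$, where $q:A\longrightarrow B$ is the quotient map. Since $\tilde{T}\circ q = q\circ T$ and $q$ is surjective, $\dim\tilde{T}(B)\le\dim T(A)<\infty$. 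It therefore suffices to show that $\tilde{T}=0$: this immediately gives $T(A)\subset\ker q = \mathfrak{R}_{\mathrm{hf}}(A)$.

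Next I would record the two properties of $B$ that feed the argument. By Theorem~\ref{baerher} we have $\mathfrak{P}_{\beta}\leq\mathfrak{R}_{\mathrm{hf}}$, so $B$ is semiprime; and by Theorem~\ref{hf}(9), $B$ contains no non-zero finite rank element, i.e.\ $\digamma\!\left(B\right)=0$. The task reduces to the following semiprime statement: \emph{if $B$ is semiprime with $\digamma\!\left(B\right)=0$, then every finite rank elementary operator on $B$ is zero.} To establish this, I would show that any $v$ in the image of such an operator is a finite rank element of $B$ and therefore vanishes. Writing $v=\tilde{T}(x_{0})=\sum_{i}\alpha_{i}x_{0}\beta_{i}$ and unpacking the product shows
\[
vBv\;\subset\;\sum_{i,j}\alpha_{i}\,B\,\beta_{j},
\]
so it suffices to see that each $\alpha_{i}B\beta_{j}$ is finite-dimensional, whence $v\in\digamma\!\left(B\right)=0$ and $\tilde{T}=0$.

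The main obstacle is precisely this last finite-dimensionality: the finite-rank hypothesis on $\tilde{T}$ only controls the ``diagonal'' combination $\sum_{i}\alpha_{i}x\beta_{i}$, and cancellations among the summands can in principle leave the individual off-diagonal pieces $\alpha_{i}B\beta_{j}$ infinite-dimensional. To neutralise this I would first reduce, by absorbing redundancies, to the case that $\{\alpha_{i}\}$ and $\{\beta_{i}\}$ are each linearly independent; in this form a Martindale-type rigidity argument, available in semiprime algebras, shows that the map $B\otimes B^{\mathrm{op}}\longrightarrow L(B)$, $\alpha\otimes\beta\longmapsto\mathrm{L}_{\alpha}\mathrm{R}_{\beta}$, is injective on tensors with linearly independent factors, forcing each $\mathrm{L}_{\alpha_{i}}\mathrm{R}_{\beta_{j}}$ to have finite rank once $\tilde{T}$ does. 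A fallback, should this independence reduction turn out to be technically awkward, is to argue instead that the ideal $J$ generated by $\tilde{T}(B)$ is bifinite in $B$ and hence, via the chain $0\subset J$ in Theorem~\ref{hf}(5)(c), hypofinite; since $\mathfrak{R}_{\mathrm{hf}}(B)=0$, this forces $J=0$ and again $\tilde{T}=0$.
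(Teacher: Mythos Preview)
Your overall architecture is exactly the paper's: pass to $B=A/\mathfrak{R}_{\mathrm{hf}}(A)$, note that the induced elementary operator $\tilde T$ on $B$ still has finite rank, and use Theorem~\ref{hf}(9) (no non-zero finite rank elements in $B$) to conclude $\tilde T=0$. The paper does not prove the implication ``non-zero finite rank elementary operator on $B$ $\Rightarrow$ $B$ has a non-zero finite rank element'' in-text; it simply cites \cite[Lemma 7.1]{BT07} for exactly this statement. So structurally you and the paper agree; the only question is whether your inline substitute for that lemma is sound.

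Here there is a genuine gap. The step ``injectivity of $\alpha\otimes\beta\mapsto \mathrm{L}_\alpha\mathrm{R}_\beta$ on tensors with linearly independent factors forces each $\mathrm{L}_{\alpha_i}\mathrm{R}_{\beta_j}$ to have finite rank once $\tilde T$ does'' is a non sequitur. Martindale-type injectivity tells you only that $\tilde T\neq 0$ unless the tensor $\sum_i\alpha_i\otimes\beta_i$ vanishes; it says nothing about how finite rank of $\tilde T$ propagates to the individual operators $\mathrm{L}_{\alpha_i}\mathrm{R}_{\beta_j}$. (Finite rank is not an ideal in $L(B)$, so there is no a priori reason the components of a finite-rank sum are finite-rank.) Your fallback does not escape this: showing that the ideal $J$ generated by $\tilde T(B)$ is bifinite amounts, after unpacking, to showing that $\alpha_i B\beta_j$ is finite-dimensional for all $i,j$, which is precisely the unproved claim. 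So as written, the proposal reduces to the content of \cite[Lemma 7.1]{BT07} without supplying its proof; you should either cite that lemma (as the paper does) or reproduce its actual argument, which is more delicate than either sketch you gave.
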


\begin{proof}
Let $T=\sum_{i=1}^{n}\mathrm{L}_{a_{i}}\mathrm{R}_{b_{i}}$, $I=\mathfrak{R}%
_{\mathrm{hf}}\left(  A\right)  $ and $B=A/I$. Then the operator $S=\sum
_{i=1}^{n}\mathrm{L}_{a_{i}/I}\mathrm{R}_{b_{i}/I}$ is a finite rank
elementary operator on $B$. By \cite[Lemma 7.1]{BT07}, if $S\neq0$ then there
is a non-zero finite rank element of $B$ which is impossible. Therefore $S=0$
and the image of $T$ is contained in $\mathfrak{R}_{\mathrm{hf}}\left(
A\right)  $.
\end{proof}

In particular, if $A$ is normed then $\mathfrak{R}_{\mathrm{hf}}\left(
A\right)  \subset\mathcal{R}_{\mathrm{hf}}\left(  A\right)  $ and the image of
$T$ lies in $\mathcal{R}_{\mathrm{hf}}\left(  A\right)  $.

\begin{remark}
By \cite[Theorem 8.4]{BT07}, if $A$ is a Banach algebra and $T$ is a compact
elementary operator on $A$ then the image of $T$ is contained in
$\mathcal{R}_{\mathrm{hc}}\left(  A\right)  $. It should be noted that this
result holds also for normed algebras, with the same proof.
\end{remark}

\begin{problem}
Let $A$ be a Banach algebra, and let $T=\sum_{i=1}^{\infty}\mathrm{L}_{a_{i}%
}\mathrm{R}_{b_{i}}$ be a compact operator on $A$ with $\sum_{i}^{\infty
}\left\Vert a_{i}\right\Vert \left\Vert b_{i}\right\Vert <\infty$ for some
$a_{i}$, $b_{i}\in A$. Is the image of $T$ contained in $\mathcal{R}%
_{\mathrm{hc}}\left(  A\right)  $?
\end{problem}

\begin{theorem}
$\mathcal{R}_{\mathrm{hf}}=\mathcal{P}_{\beta}^{\overline{\operatorname{soc}%
}\ast}$.
\end{theorem}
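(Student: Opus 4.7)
The plan is to show that both $\mathcal{R}_{\mathrm{hf}}$ and $\mathcal{P}_{\beta}^{\overline{\operatorname{soc}}\ast}$ are topological radicals on $\mathfrak{U}_{\mathrm{n}}$ sharing the same semisimple class, and then invoke Corollary \ref{ineq} to conclude equality. That both maps are indeed radicals comes for free: $\mathcal{R}_{\mathrm{hf}}$ is a topological hereditary radical by construction, while applying Theorem \ref{psoc} with $P=\mathcal{P}_{\beta}$ (which satisfies $\mathcal{P}_{\beta}\geq\mathfrak{P}_{\beta}$ in the sense discussed before Theorem \ref{pab}) gives that $\mathcal{P}_{\beta}^{\overline{\operatorname{soc}}}=\overline{\mathcal{P}_{\beta}^{\operatorname{soc}}}$ is a topological under radical, and then Theorem \ref{ovunt} turns its convolution into a topological radical.

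Next I would describe $\mathbf{Sem}(\mathcal{P}_{\beta}^{\overline{\operatorname{soc}}\ast})$. Theorem \ref{sera} reduces the task to computing $\mathbf{Sem}(\mathcal{P}_{\beta}^{\overline{\operatorname{soc}}})$. Since
$$\mathcal{P}_{\beta}^{\overline{\operatorname{soc}}}(A)=q_{\mathcal{P}_{\beta}(A)}^{-1}\bigl(\overline{\operatorname{soc}(A/\mathcal{P}_{\beta}(A))}\bigr),$$
this ideal vanishes exactly when $\mathcal{P}_{\beta}(A)=0$ and $\operatorname{soc}(A)=0$. By Corollary \ref{baer}, $\mathcal{P}_{\beta}=\overline{\Sigma_{\beta}}^{\ast}$, and a short check (any ideal $I$ with $I^{2}=0$ lies in $\Sigma_{\beta}(A)\subset\mathcal{P}_{\beta}(A)$) shows that $\mathcal{P}_{\beta}(A)=0$ is equivalent to $A$ being semiprime. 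For semiprime normed $A$, Lemma \ref{soc}(1) identifies $\operatorname{soc}(A)$ with $\digamma\!(A)$, so
$$\mathbf{Sem}(\mathcal{P}_{\beta}^{\overline{\operatorname{soc}}\ast})=\{A\in\mathfrak{U}_{\mathrm{n}}:A\text{ is semiprime and }\digamma\!(A)=0\}.$$

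Then I would show that $\mathbf{Sem}(\mathcal{R}_{\mathrm{hf}})$ is described by the same condition. If $\mathcal{R}_{\mathrm{hf}}(A)=0$, then by Theorem \ref{baerher} $\mathcal{P}_{\beta}(A)\subset\mathcal{R}_{\mathrm{hf}}(A)=0$, so $A$ is semiprime; moreover any nonzero $x\in\digamma\!(A)$ would generate an algebraic ideal $I\subset\digamma\!(A)$ by Theorem \ref{hf}(3), whence $I$ is approximable, $\overline{I}$ is closed-hypofinite (the same passage, via \cite[Proposition 3.48]{TR2}, used in the proof of Theorem \ref{hc}), and so $\overline{I}\subset\mathcal{R}_{\mathrm{hf}}(A)=0$, forcing $x=0$. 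Conversely, if $\mathcal{R}_{\mathrm{hf}}(A)\neq0$, then $J:=\mathcal{R}_{\mathrm{hf}}(A)$ is closed-hypofinite, so $J$ (viewed as its own quotient by $0$) contains a nonzero finite rank element of $J$, i.e.\ $\digamma\!(J)\neq0$, and Theorem \ref{hf}(2) delivers $J\cap\digamma\!(A)\neq0$, contradicting $\digamma\!(A)=0$.

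The main obstacle is precisely this last step: transferring a finite rank element from the ideal $J$ back to the ambient algebra $A$. Theorem \ref{hf}(2) is exactly the right tool (with the dichotomy $J\digamma\!(J)=0$ versus $J\digamma\!(J)\neq0$), and one should note that although Theorem \ref{hf} is stated in the algebraic context its argument is purely algebraic and applies verbatim to normed algebras. Combined with the identification $\operatorname{soc}(A)=\digamma\!(A)$ for semiprime normed $A$ (Lemma \ref{soc}(1)), this matches the two semisimple classes, and Corollary \ref{ineq} closes the proof.
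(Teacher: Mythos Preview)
Your proof is correct and follows essentially the same approach as the paper: both compare semisimple classes and invoke Corollary \ref{ineq}/Theorem \ref{equality}. The paper's argument is terser --- it simply observes that $\mathcal{P}_{\beta}^{\overline{\operatorname{soc}}\ast}$-semisimple means semiprime with $\operatorname{soc}(A)=0$, hence $\digamma\!(A)=0$ by Lemma \ref{soc}(1), hence $\mathcal{R}_{\mathrm{hf}}$-semisimple; and conversely $\digamma\!(A)=0$ forces $A$ semiprime directly (any ideal $I$ with $I^2=0$ consists of rank-zero elements) and $\operatorname{soc}(A)=0$. You supply more detail on the equivalence $\mathcal{R}_{\mathrm{hf}}(A)=0\Leftrightarrow\digamma\!(A)=0$ via Theorem \ref{hf}(2,3), and you route semiprimeness through Theorem \ref{baerher} rather than arguing it directly, but the substance is the same.
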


\begin{proof}
Let $A$ be a normed algebra. If $A$ is $\mathcal{P}_{\beta}^{\overline
{\operatorname{soc}}\ast}$-semisimple then $A$ is semiprime and
$\operatorname{soc}\left(  A\right)  =0$. By Lemma \ref{soc}, $A$ has no
non-zero finite rank elements, whence $A$ is $\mathcal{R}_{\mathrm{hf}}%
$-semisimple. Therefore $\mathcal{R}_{\mathrm{hf}}\leq\mathcal{P}_{\beta
}^{\overline{\operatorname{soc}}\ast}$ by Theorem \ref{equality}.

Conversely, if $A$ has no non-zero finite rank elements then $A$ is semisimple
and $\operatorname{soc}\left(  A\right)  =0$. Therefore $\mathcal{P}_{\beta
}^{\overline{\operatorname{soc}}\ast}\leq\mathcal{R}_{\mathrm{hf}}$ by Theorem
\ref{equality}.
\end{proof}

\begin{theorem}
$\mathcal{R}_{\mathrm{hc}}=\mathcal{R}_{\mathrm{hf}}\vee\mathcal{R}%
_{\mathrm{jhc}}=\mathcal{R}_{\mathrm{jhc}}^{\overline{\operatorname{soc}}\ast
}\leq\mathcal{R}_{\mathrm{cq}}^{\overline{\operatorname{soc}}\ast}$ on Banach algebras.
\end{theorem}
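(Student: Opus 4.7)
The plan is to prove the three claims of the theorem separately: (i) $\mathcal{R}_{\mathrm{hc}}=\mathcal{R}_{\mathrm{hf}}\vee\mathcal{R}_{\mathrm{jhc}}$, (ii) $\mathcal{R}_{\mathrm{hf}}\vee\mathcal{R}_{\mathrm{jhc}}=\mathcal{R}_{\mathrm{jhc}}^{\overline{\operatorname{soc}}\ast}$, and (iii) $\mathcal{R}_{\mathrm{jhc}}^{\overline{\operatorname{soc}}\ast}\leq\mathcal{R}_{\mathrm{cq}}^{\overline{\operatorname{soc}}\ast}$. Inequality (iii) is immediate from Theorem \ref{baerher} (which gives $\mathcal{R}_{\mathrm{jhc}}=\mathcal{R}_{\mathrm{hc}}\wedge\mathcal{R}_{\mathrm{cq}}\leq\mathcal{R}_{\mathrm{cq}}$) combined with the isotonicity of both $R\mapsto\overline{\operatorname{soc}}\ast R$ (clear from its formula) and the convolution procedure $Q\mapsto Q^{\ast}$ (Theorem \ref{isotone}). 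The $\leq$-half of (i) is equally short: $\mathcal{R}_{\mathrm{hf}}\leq\mathcal{R}_{\mathrm{hc}}$ by Theorem \ref{hc} and $\mathcal{R}_{\mathrm{jhc}}\leq\mathcal{R}_{\mathrm{hc}}$ by definition, so their supremum is at most $\mathcal{R}_{\mathrm{hc}}$.

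For (ii) I set $P:=\overline{\operatorname{soc}}\ast\mathcal{R}_{\mathrm{jhc}}$, and for a Banach algebra $A$ write $q:A\longrightarrow B:=A/\mathcal{R}_{\mathrm{jhc}}(A)$. Since $\mathcal{P}_{\beta}\leq\mathcal{R}_{\mathrm{jhc}}$ (Theorem \ref{baerher}), $B$ is semiprime, and Lemma \ref{soc}(1) gives $\operatorname{soc}(B)=\digamma(B)$. To prove $\mathcal{R}_{\mathrm{hf}}\vee\mathcal{R}_{\mathrm{jhc}}\leq P^{\ast}$, the nontrivial part is $\mathcal{R}_{\mathrm{hf}}\leq P^{\ast}$; by Theorem \ref{hc}, $\mathcal{R}_{\mathrm{hf}}=\overline{\Sigma_{\mathrm{hf}}}^{\ast}$, so by isotonicity of $(\cdot)^{\ast}$ it is enough to check $\overline{\Sigma_{\mathrm{hf}}}\leq P$. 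Every bifinite ideal $J$ of $A$ maps into $\digamma(B)=\operatorname{soc}(B)$, whence $\Sigma_{\mathrm{hf}}(A)\subset q^{-1}(\operatorname{soc}(B))\subset P(A)$, and the inclusion is preserved under closure. For the reverse inclusion, it suffices to prove $P\leq\mathcal{R}_{\mathrm{hf}}\vee\mathcal{R}_{\mathrm{jhc}}$, since the right-hand side is already a radical. If $A=P(A)$ then $B=\overline{\operatorname{soc}(B)}=\overline{\digamma(B)}$, and by Theorem \ref{hf}(3) every finite rank element of $B$ generates a bifinite ideal, so $B\subset\overline{\Sigma_{\mathrm{hf}}}(B)\subset\mathcal{R}_{\mathrm{hf}}(B)$. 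Thus $A$ is an extension of the $\mathcal{R}_{\mathrm{jhc}}$-radical ideal $\mathcal{R}_{\mathrm{jhc}}(A)$ by the $\mathcal{R}_{\mathrm{hf}}$-radical quotient $B$, and Theorem \ref{ext}(2) yields $A=(\mathcal{R}_{\mathrm{hf}}\vee\mathcal{R}_{\mathrm{jhc}})(A)$.

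The remaining, and genuinely nontrivial, inequality is $\mathcal{R}_{\mathrm{hc}}\leq\mathcal{R}_{\mathrm{hf}}\vee\mathcal{R}_{\mathrm{jhc}}$. By Theorem \ref{hc}, $\mathcal{R}_{\mathrm{hc}}=\Sigma_{\mathrm{hc}}^{\ast}$, and combining Theorem \ref{ch1}(1) (closure of a sum of $P$-radical ideals is $P$-radical) with Theorem \ref{ch1}(2) (gap-quotients lift through increasing transfinite chains), it suffices to show that every bicompact Banach algebra $B$ is $(\mathcal{R}_{\mathrm{hf}}\vee\mathcal{R}_{\mathrm{jhc}})$-radical. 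Set $C:=B/\mathcal{R}_{\mathrm{jhc}}(B)$; $C$ is again bicompact (bicompactness is inherited by quotients), so every element of $C$ is compact and $\mathcal{R}_{\mathrm{hc}}(C)=C$. Hence $\operatorname{Rad}(C)\subset\mathcal{R}_{\mathrm{hc}}(C)\cap\operatorname{Rad}(C)=\mathcal{R}_{\mathrm{jhc}}(C)=0$, so $C$ is semisimple. The crux of the proof is now the classical theorem of J.~C.~Alexander that in a semiprime Banach algebra every compact element lies in the closed socle; this gives $C=\overline{\operatorname{soc}(C)}$, and the argument of the preceding paragraph then shows $C$ is $\mathcal{R}_{\mathrm{hf}}$-radical. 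One more application of Theorem \ref{ext}(2) finishes $B$, and the transfinite lift via Theorem \ref{ch1} completes the proof.

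The main obstacle is precisely this Alexander-type input. If one wishes to remain internal to the paper, one would supply it by hand as follows: in the semisimple bicompact $C$ each operator $\mathrm{W}_{c}$ is compact, which forces $\sigma_{C^{1}}(c)$ to be countable with $0$ as its only possible accumulation point, so the Riesz idempotents $p_{\lambda}$ associated with the nonzero spectral points are well defined, lie in the socle of $C$ (by semisimplicity together with Lemma \ref{soc}(1)), and can be used to exhibit $c$ as a norm-limit of finite rank elements. Apart from this single input, the whole proof is a mechanical combination of the monotonicity of the procedures $\overline{(\cdot)}$, $\ast$ and $(\cdot)^{\ast}$ from Section 4, the identifications $\mathcal{R}_{\mathrm{hf}}=\overline{\Sigma_{\mathrm{hf}}}^{\ast}$, $\mathcal{R}_{\mathrm{hc}}=\Sigma_{\mathrm{hc}}^{\ast}$ from Theorem \ref{hc}, the socle-finite rank equality of Lemma \ref{soc}(1), and extension and transfinite-chain stability (Theorems \ref{ext} and \ref{ch1}).
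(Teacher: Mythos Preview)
Your proof is correct, but the route differs from the paper's in an instructive way.

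The paper argues entirely via \emph{semisimple classes} and Theorem~\ref{equality}(1). It shows directly that a $\mathcal{R}_{\mathrm{jhc}}^{\overline{\operatorname{soc}}\ast}$-semisimple Banach algebra $A$ (so $\mathcal{R}_{\mathrm{jhc}}(A)=0$, hence $A$ semiprime, and $\operatorname{soc}(A)=0$) has no nonzero compact elements: if $a$ were compact, every Riesz projection at a nonzero spectral point would be a finite rank element (impossible), so $a$ is quasinilpotent; since $A^{1}a$ consists of compact elements one gets $A^{1}a\subset\operatorname{Rad}(A)$, and then $a=0$. This single spectral-projection observation simultaneously yields $\mathcal{R}_{\mathrm{hc}}\leq\mathcal{R}_{\mathrm{jhc}}^{\overline{\operatorname{soc}}\ast}$ and, by the same template with $\mathcal{R}_{\mathrm{cq}}$ in place of $\mathcal{R}_{\mathrm{jhc}}$, the final inequality $\mathcal{R}_{\mathrm{hc}}\leq\mathcal{R}_{\mathrm{cq}}^{\overline{\operatorname{soc}}\ast}$. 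The converse inclusion $\mathcal{R}_{\mathrm{jhc}}^{\overline{\operatorname{soc}}\ast}\leq\mathcal{R}_{\mathrm{hc}}$ is dispatched by the trivial observation that an $\mathcal{R}_{\mathrm{hc}}$-semisimple algebra is semiprime with zero socle.

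You instead work via \emph{radical classes} and Theorem~\ref{equality}(2): you reduce (i) to proving that every bicompact Banach algebra is $(\mathcal{R}_{\mathrm{hf}}\vee\mathcal{R}_{\mathrm{jhc}})$-radical, pass to the $\mathcal{R}_{\mathrm{jhc}}$-semisimple quotient, and invoke Alexander's theorem to get $C=\overline{\operatorname{soc}(C)}$. Your final sketch of Alexander's theorem is exactly the spectral-projection step the paper uses directly. So the mathematical core is the same; the paper's packaging avoids the detour through bicompact algebras and the extension/transfinite-chain machinery of Theorems~\ref{ext} and~\ref{ch1}. On the other hand, your derivation of (iii) from isotonicity of $R\mapsto R^{\overline{\operatorname{soc}}}$ and $(\cdot)^{\ast}$ is cleaner than the paper's separate semisimple-class argument for that inequality.
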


\begin{proof}
Let $A$ be a Banach algebra. If $A$ is $\mathcal{R}_{\mathrm{jhc}}%
^{\overline{\operatorname{soc}}\ast}$-semisimple then $A$ has no non-zero
compact elements which lie in $\operatorname{Rad}\left(  A\right)  $, and
$\operatorname{soc}\left(  A\right)  =0$. Hence $A$ is semiprime, so $A$ has
no non-zero finite rank elements by Lemma \ref{soc}. Thus $A$ is $\left(
\mathcal{R}_{\mathrm{hf}}\vee\mathcal{R}_{\mathrm{jhc}}\right)  $-semisimple.

It follows that $A$ has no non-zero compact elements. Indeed, if $a\in A$ is a
compact element then every spectral projection of $a$ corresponding to a
non-zero eigenvalue is clearly a finite rank element. So $a$ is
quasinilpotent. But $A^{1}a$ consists of compact elements, whence
$A^{1}a\subset\operatorname{Rad}\left(  A\right)  $. By the assumption, $a=0$.
Thus $A$ is $\mathcal{R}_{\mathrm{hc}}$-semisimple.

By Theorem \ref{equality},%
\[
\mathcal{R}_{\mathrm{jhc}}^{\overline{\operatorname{soc}}\ast}\leq
\mathcal{R}_{\mathrm{hf}}\vee\mathcal{R}_{\mathrm{jhc}}\leq\mathcal{R}%
_{\mathrm{hc}}%
\]
on Banach algebras. Assume now that $A$ has no non-zero compact elements. Then
it is clear that $A$ is semiprime, has no non-zero finite rank elements and
$\operatorname{soc}\left(  A\right)  =0$. This proves the converse.

Let now $A$ be $\mathcal{R}_{\mathrm{cq}}^{\overline{\operatorname{soc}}\ast}%
$-semisimple. As $\mathcal{R}_{\mathrm{cq}}\left(  A\right)  =0$ then $A$ is
semiprime, and as $\operatorname{soc}\left(  A\right)  =0$ then $A$ has no
non-zero finite rank elements by Lemma \ref{soc}. As above, if $a\in A$ is a
compact element then $a\in\operatorname{Rad}\left(  A\right)  $. But
\[
\mathcal{R}_{\mathrm{hc}}\left(  A\right)  \cap\operatorname{Rad}\left(
A\right)  \subset\mathcal{R}_{\mathrm{cq}}\left(  A\right)  =0
\]
by $\left(  \ref{aff}\right)  $ (the inclusion follows by the algebraic
version of the joint spectral radius formula $\left(  \ref{af}\right)  $),
whence $a=0$. Therefore $A$ has no non-zero compact elements, i.e., $A$ is
$\mathcal{R}_{\mathrm{hc}}$-semisimple. By Theorem \ref{equality},
$\mathcal{R}_{\mathrm{hc}}\leq\mathcal{R}_{\mathrm{cq}}^{\overline
{\operatorname{soc}}\ast}$ on Banach algebras.
\end{proof}

\section{The kernel-hull closures of radicals and the primitivity procedure}

\subsection{The kernel-hull closures of radicals}

Let $A$ be an algebra; $\operatorname{Prim}\left(  A\right)  $ is called the
\textit{structure space} of $A$. For any subset $E$ of $A$, let $\mathrm{hull}%
\left(  E;A\right)  $ or simply $\mathrm{h}(E;A)$ be the set of all
$I\in\operatorname{Prim}\left(  A\right)  $ with $E\subset I$; this set is
called a \textit{hull} of $E$. For any set $W\subset\operatorname{Prim}\left(
A\right)  $, let \textrm{ker}$(W;A)$ or $\mathrm{k}(W;A)$ be defined by
$\mathrm{k}(W;A)=\cap_{I\in W}I$; this ideal is called a \textit{kernel} of
$W$. Note that the Jacobson radical $\operatorname{rad}$ coincides with the
\textit{kernel-hull closure} (briefly, $\mathrm{kh}$\textit{-closure}) of
zero:
\[
\operatorname{rad}\left(  A\right)  :=\cap\operatorname{Prim}\left(  A\right)
=\mathrm{kh}\left(  \left\{  0\right\}  ;A^{1}\right)  :=\mathrm{k}\left(
\mathrm{h}\left(  \left\{  0\right\}  ;A^{1}\right)  ;A^{1}\right)
\]
for each $A\in\mathfrak{U}_{\mathrm{a}}$. The operation of the
\textit{hull-kernel closure}
\[
M\longmapsto\mathrm{hk}\left(  M;A\right)  :=\mathrm{h}\left(  \mathrm{k}%
\left(  M;A\right)  ;A\right)  ,
\]
for $M\subset\operatorname{Prim}\left(  A\right)  $, determines the
\textit{Jacobson topology} on $\operatorname{Prim}\left(  A\right)  $
\cite[Section 1.1]{B67}: closed sets are the sets of form $\mathrm{hk}\left(
M;A^{1}\right)  $. As is known, this topology is not Hausdorff in general.

Let $\Omega$ be a primitive map, and let $\Pi_{\Omega}$ be the related ideal
map on $\mathfrak{U}$ (Section \ref{primitive}). Let $A$ be an algebra, and
let
\[
\operatorname{Irr}_{\Omega}\left(  A\right)  =\left\{  \pi\in
\operatorname{Irr}\left(  A\right)  :\ker\pi\in\Omega\left(  A\right)
\right\}  .
\]
For every ideal $J$ of $A$, put
\[
\mathrm{h}_{\Omega}\left(  J;A\right)  =\mathrm{h}\left(  J;A\right)
\cap\Omega\left(  A\right)  .
\]

\begin{lemma}
\label{khi}Let $A$ be an algebra and $I,J$ be ideals of $A$. Then

\begin{enumerate}
\item If $I\subset J$ then $\mathrm{kh}_{\Omega}\left(  I;A\right)
\subset\mathrm{kh}_{\Omega}\left(  J;A\right)  $;

\item $\mathrm{kh}_{\Omega}\left(  I\cap J;J\right)  =J\cap\mathrm{kh}%
_{\Omega}\left(  I;A\right)  $;

\item $\mathrm{kh}_{\Omega}\left(  J;A\right)  =q_{J}^{-1}\left(  \Pi_{\Omega
}\left(  A/J\right)  \right)  $;

\item If $A$ is a $Q$-algebra then $I\subset\overline{I}\subset\mathrm{kh}%
_{\Omega}\left(  I;A\right)  =\mathrm{kh}_{\Omega}\left(  \overline
{I};A\right)  $.
\end{enumerate}
\end{lemma}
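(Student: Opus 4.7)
\medskip

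\noindent\textbf{Proof plan.}

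Parts (1) and (3) are essentially formal consequences of the definitions and the axiom $(3_{pm})$ for primitive maps. For (1) I would simply note that if $I\subset J$ then $\mathrm{h}_{\Omega}(J;A)\subset\mathrm{h}_{\Omega}(I;A)$, so taking intersections reverses the inclusion. For (3) I would use $(3_{pm})$ to write $\Omega(A/J)=\{K/J:K\in\Omega(A),\,J\subset K\}$ and then invoke the elementary identity $q_{J}^{-1}\bigl(\bigcap_{\alpha}K_{\alpha}/J\bigr)=\bigcap_{\alpha}K_{\alpha}$ from $(\ref{pr})$, together with the convention $\Pi_{\Omega}(A/J)=A/J$ when $\Omega(A/J)=\varnothing$, matching $\mathrm{kh}_{\Omega}(J;A)=A$.

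Part (4) follows from the fact, recorded in the proof of Theorem \ref{pm1}(1), that every primitive ideal of a $Q$-algebra is closed: each member of $\mathrm{h}_{\Omega}(I;A)$ is a closed ideal containing $I$, hence contains $\overline{I}$, which gives $\mathrm{h}_{\Omega}(I;A)=\mathrm{h}_{\Omega}(\overline{I};A)$ and thus equality of the kernels; the inclusion $\overline{I}\subset\mathrm{kh}_{\Omega}(I;A)$ then holds because the right-hand side is an intersection of closed ideals.

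The main work is in (2), and here the key point is \emph{primality of primitive ideals}: if $K\in\operatorname{Prim}(A)$ and $X,Y$ are ideals of $A$ with $XY\subset K$, then $X\subset K$ or $Y\subset K$ (standard, from strict irreducibility of the associated representation). Using axiom $(2_{pm})$ I would write
\[
\mathrm{kh}_{\Omega}(I\cap J;J)=\bigcap\bigl\{J\cap K:K\in\Omega(A),\,J\not\subset K,\,I\cap J\subset J\cap K\bigr\},
\]
and
\[
J\cap\mathrm{kh}_{\Omega}(I;A)=\bigcap\bigl\{J\cap K:K\in\Omega(A),\,I\subset K\bigr\}.
\]
The obstacle is to identify these two intersections. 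The easy direction is that $I\subset K$ implies $I\cap J\subset J\cap K$, and the terms with $J\subset K$ contribute $J$ to the second intersection and so are harmless. For the reverse direction, given $K\in\Omega(A)$ with $J\not\subset K$ and $I\cap J\subset K$, the relation $IJ\subset I\cap J\subset K$ together with primality of $K$ forces $I\subset K$ (since $J\not\subset K$). This yields the equality of the two families of intersected ideals.

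Finally I would check the degenerate cases (empty hulls, giving $\mathrm{kh}_{\Omega}(\cdot;\cdot)=$ the whole algebra) separately to make sure the equalities in (2) and (3) remain correct; this is bookkeeping rather than substance.
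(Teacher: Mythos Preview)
Your argument is correct. Parts (1), (3), and (4) match the paper's treatment essentially verbatim. For part (2), the paper argues via representations: given $\tau\in\operatorname{Irr}_{\Omega}(J)$ with $\tau(I\cap J)=0$, its extension $\pi\in\operatorname{Irr}_{\Omega}(A)$ satisfies $\pi(I)\tau(J)=\pi(IJ)=0$, whence $\pi(I)=0$ by irreducibility; and conversely. You instead invoke primality of primitive ideals directly: from $IJ\subset I\cap J\subset K$ and $J\not\subset K$, conclude $I\subset K$. These are the same argument in dual dress---the primality of $K=\ker\pi$ is exactly the ideal-theoretic shadow of the irreducibility of $\pi$---so the two proofs coincide in substance. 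Your version is marginally more economical since it avoids unpacking the $\operatorname{Irr}_{\Omega}$ notation, while the paper's version makes the role of axiom $(2_{pm})$ (extension and restriction of irreducible representations) more explicit.
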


\begin{proof}
$\left(  1\right)  $ It is clear that $\mathrm{kh}\left(  I;A\right)
\subset\mathrm{kh}\left(  J;A\right)  $, whence $\mathrm{kh}_{\Omega}\left(
I;A\right)  \subset\mathrm{kh}_{\Omega}\left(  J;A\right)  $.

$\left(  2\right)  $ Let $K=I\cap J$, and let $\tau\in\operatorname{Irr}%
_{\Omega}(J)$ be arbitrary with $\tau(K)=0$. Then, for any representation
$\pi\in\operatorname{Irr}_{\Omega}(A)$ extending $\tau$ (on the same
representation space), we have that
\[
\pi(I)\tau(J)=\pi(IJ)=\tau(IJ)=0.
\]
It follows that $\pi(I)=0$. Conversely, the restriction $\tau$ of each $\pi
\in\operatorname{Irr}_{\Omega}\left(  A\right)  $ with $\pi(I)=0$ to $J$
vanishes on $K$ and if $\tau\neq0$ then $\tau\in\operatorname{Irr}_{\Omega
}(J)$.

If $a\in J\cap\mathrm{kh}_{\Omega}(I;A)$ then $\pi(a)=0$ for every $\pi
\in\operatorname{Irr}_{\Omega}(A)$ with $\pi\left(  I\right)  =0$. For
$\tau\in\operatorname{Irr}_{\Omega}(J)$ corresponding to $\pi$, we have that
$\tau\left(  K\right)  =0$ and $\tau(a)=\pi(a)=0$, whence $a\in\mathrm{kh}%
_{\Omega}\left(  K;J\right)  $.

If $a\in\mathrm{kh}_{\Omega}\left(  K;J\right)  $ then $\tau\left(  a\right)
=0$ for every $\tau\in\operatorname{Irr}_{\Omega}(J)$ with $\tau(K)=0$. For
$\pi\in\operatorname{Irr}_{\Omega}(A)$ corresponding to $\tau$, we have that
$\pi(I)=0$ and either $\pi(J)=0$ (whence $\pi(a)=0$) or $\pi|_{J}%
\in\operatorname{Irr}_{\Omega}(J)$ with $\pi(a)=\tau\left(  a\right)  =0$,
whence $a\in J\cap\mathrm{kh}_{\Omega}\left(  I;A\right)  $.

$\left(  3\right)  $ is obvious.

$\left(  4\right)  $ As each $J\in\mathrm{h}_{\Omega}\left(  I;A\right)  $ is
closed then $\overline{I}\subset J$, whence $\overline{I}\subset
\mathrm{kh}_{\Omega}\left(  I;A\right)  $.
\end{proof}

Let $\Omega$ be a primitive map, and let $P$ be a preradical. Define the map
$P^{\mathrm{kh}_{\Omega}}$ by
\[
P^{\mathrm{kh}_{\Omega}}\left(  A\right)  =\mathrm{kh}_{\Omega}\left(
P\left(  A\right)  ;A\right)
\]
for every algebra $A$. It follows from Lemma \ref{khi}$\left(  3\right)  $
that
\begin{equation}
P^{\mathrm{kh}_{\Omega}}\left(  A\right)  =\left(  \Pi_{\Omega}\ast P\right)
\left(  A\right)  =q_{P\left(  A\right)  }^{-1}\left(  \Pi_{\Omega}\left(
A/P\left(  A\right)  \right)  \right)  \label{radstarp}%
\end{equation}
for every algebra $A$.

In the following theorem and corollary we assume that $\Omega$ is a primitive
map defined on a base class\textbf{ $\mathfrak{U}$,} and one of the following
conditions, listed in Theorem \ref{pm1}, holds:

\begin{enumerate}
\item[$\left(  1_{kh}\right)  $] $\Omega$ is pliant and either $\mathfrak{U}%
=\mathfrak{U}_{\mathrm{a}}$ or $\mathfrak{U}_{\mathrm{q}}\subset
\mathfrak{U}\subset\mathfrak{U}_{\mathrm{n}}$ (correspondingly, $\Pi_{\Omega}$
is either a hereditary radical or a hereditary preradical);

\item[$(2_{kh})$] $\mathfrak{U}\subset\mathfrak{U}_{\mathrm{q}}$ ($\Pi
_{\Omega}$ is a hereditary topological radical).
\end{enumerate}

\begin{theorem}
\label{kh}Let $\Omega$ be a primitive map satisfying $\left(  1_{kh}\right)  $
or $\left(  2_{kh}\right)  $, and let $P$ be a preradical. Then

\begin{enumerate}
\item $P^{\mathrm{kh}_{\Omega}}$ is a preradical, and $P^{\mathrm{kh}_{\Omega
}}=\overline{P}^{\mathrm{kh}_{\Omega}}$ on $Q$-algebras;

\item If $P$ is an under radical then $P^{\mathrm{kh}_{\Omega}}$ is an under radical;

\item If $P$ is hereditary then $P^{\mathrm{kh}_{\Omega}}$ is hereditary;

\item If $\Omega$ and $P$ are pliant then $P^{\mathrm{kh}_{\Omega}}$ is pliant.
\end{enumerate}
\end{theorem}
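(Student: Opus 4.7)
The plan is to exploit the identity $P^{\mathrm{kh}_{\Omega}} = \Pi_{\Omega} \ast P$ furnished by equation (\ref{radstarp}); this reduces each of the four clauses to a general fact about the convolution of preradicals applied to the pair $(\Pi_{\Omega}, P)$. Under either hypothesis $(1_{kh})$ or $(2_{kh})$, Theorem \ref{pm1} guarantees that $\Pi_{\Omega}$ is a hereditary (pre)radical, hence in particular a preradical satisfying Axiom $4$ (any hereditary preradical automatically satisfies Axioms $3$ and $4$). This is the common ground on which the four assertions rest.

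For (1), the fact that $P^{\mathrm{kh}_{\Omega}}$ is a preradical is immediate from Lemma \ref{conv}(1) applied to the two preradicals $\Pi_{\Omega}$ and $P$. For the equality $P^{\mathrm{kh}_{\Omega}} = \overline{P}^{\mathrm{kh}_{\Omega}}$ on $Q$-algebras, I would apply Lemma \ref{khi}(4) with $I := P(A)$: since the primitive ideals of a $Q$-algebra are closed, $\mathrm{kh}_{\Omega}(P(A); A) = \mathrm{kh}_{\Omega}(\overline{P(A)}; A)$, and the right-hand side is by definition $\overline{P}^{\mathrm{kh}_{\Omega}}(A)$. For (2), I would invoke Proposition \ref{ax4}(1a) with $R_{1} := P$ and $R := \Pi_{\Omega}$: both are under radicals by hypothesis and by the remark above, so the resulting map $R_{2} = \Pi_{\Omega} \ast P = P^{\mathrm{kh}_{\Omega}}$ is an under radical.

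For (3), I would give a direct argument rather than appealing to Theorem \ref{oper3}, which would require pliantness of $\Pi_{\Omega}$ that is not assumed here. Fix an ideal $J$ of an algebra $A$ and set $I := P(A)$. Heredity of $P$ gives $J \cap I = P(J)$, so Lemma \ref{khi}(2) yields
\[
P^{\mathrm{kh}_{\Omega}}(J) = \mathrm{kh}_{\Omega}(P(J); J) = \mathrm{kh}_{\Omega}(J \cap I; J) = J \cap \mathrm{kh}_{\Omega}(I; A) = J \cap P^{\mathrm{kh}_{\Omega}}(A),
\]
which is exactly heredity of $P^{\mathrm{kh}_{\Omega}}$.

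For (4), pliantness of $\Omega$ gives (again by Theorem \ref{pm1}) pliantness of $\Pi_{\Omega}$, so together with the hypothesis on $P$ both factors of the convolution respect arbitrary algebraic morphisms. An inspection of the proof of Lemma \ref{conv}(1) shows that its only ingredients are the existence of the induced map on quotients and Axiom $1$ for the two preradicals; with algebraic morphisms in place of (topological) morphisms, the very same argument delivers Axiom $1$ for $\Pi_{\Omega} \ast P = P^{\mathrm{kh}_{\Omega}}$ with respect to algebraic morphisms. The only genuinely delicate point in this plan is the normed non-$Q$-algebra subcase of $(1_{kh})$, where $\Pi_{\Omega}$ need not be closed-valued; but this causes no trouble, since Axiom $4$ for $\Pi_{\Omega}$ is read off directly from its heredity in Theorem \ref{pm1}(2) without any closedness requirement, and the convolution formula is itself indifferent to whether $P(A)$ or $\Pi_{\Omega}(A/P(A))$ happens to be closed.
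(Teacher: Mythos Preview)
Your proposal is correct and follows essentially the same approach as the paper. The only notable difference is in part (1): you invoke Lemma \ref{conv}(1) to get the preradical property of $\Pi_{\Omega}\ast P$, whereas the paper gives the direct representation-theoretic argument (for $\pi\in\operatorname{Irr}_{\Omega}(B)$ with $\pi(P(B))=0$, the composition $\pi\circ f$ lies in $\operatorname{Irr}_{\Omega}(A)$ and kills $P(A)$); these are equivalent, and for parts (2)--(4) your citations (Proposition \ref{ax4}(1a), Lemma \ref{khi}(2), and the algebraic version of Lemma \ref{conv}(1)) match the paper's own references and arguments.
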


\begin{proof}
$\left(  1\right)  $ If $f:A\longrightarrow B$ is a morphism then $f\left(
P\left(  A\right)  \right)  \subset P\left(  B\right)  $ and $f\left(
P\left(  A\right)  \right)  $ is an ideal of $B$. We will show that
\[
f\left(  \mathrm{kh}_{\Omega}\left(  P\left(  A\right)  ;A\right)  \right)
\subset\mathrm{kh}_{\Omega}\left(  f\left(  P\left(  A\right)  \right)
;B\right)  .
\]
Indeed, let $\pi\in\operatorname{Irr}_{\Omega}(B)$ be arbitrary and
$\pi(P\left(  B\right)  )=0$. Then $\tau=\pi\circ f\in\operatorname{Irr}%
_{\Omega}(A)$ and $\tau(P\left(  A\right)  )=0$. Then $\tau(a)=0$ for any
$a\in\mathrm{kh}_{\Omega}(P\left(  A\right)  ;A)$, whence $\pi(f(a))=0$.
Therefore $f(a)\in\mathrm{kh}_{\Omega}(P\left(  B\right)  ,B)$.

As $\mathrm{kh}_{\Omega}\left(  f\left(  P\left(  A\right)  \right)
;B\right)  \subset\mathrm{kh}_{\Omega}\left(  P\left(  B\right)  ;B\right)  $
by Lemma \ref{khi} then
\[
f\left(  P^{\mathrm{kh}_{\Omega}}\left(  A\right)  \right)  =f\left(
\mathrm{kh}_{\Omega}\left(  P\left(  A\right)  ;A\right)  \right)
\subset\mathrm{kh}_{\Omega}\left(  P\left(  B\right)  ;B\right)
=P^{\mathrm{kh}_{\Omega}}\left(  B\right)
\]
and $P^{\mathrm{kh}_{\Omega}}$ is a preradical.

If $A$ is a $Q$-algebra then
\[
P^{\mathrm{kh}_{\Omega}}\left(  A\right)  =\mathrm{kh}_{\Omega}\left(
P\left(  A\right)  ;A\right)  =\mathrm{kh}_{\Omega}\left(  \overline{P\left(
A\right)  };A\right)  =\mathrm{kh}_{\Omega}\left(  \overline{P}\left(
A\right)  ;A\right)  =\overline{P}^{\mathrm{kh}_{\Omega}}\left(  A\right)
\]
by Lemma \ref{khi}$\left(  4\right)  $.

It remains to note that primitive ideals of a $Q$-algebra are closed. Hence
$P^{\mathrm{kh}_{\Omega}}$ is topological on $Q$-algebras.

$\left(  2\right)  $ follows from $\left(  \ref{radstarp}\right)  $ and
Theorem \ref{oper}.

$\left(  3\right)  $ Let $A$ be an algebra, and let $J$ be an ideal of $A$. If
$P\left(  J\right)  =J\cap P\left(  A\right)  $ then
\[
P^{\mathrm{kh}_{\Omega}}\left(  J\right)  =\mathrm{kh}_{\Omega}\left(
P\left(  J\right)  ;J\right)  =J\cap\mathrm{kh}_{\Omega}\left(  P\left(
A\right)  ;A\right)  =J\cap P^{\mathrm{kh}_{\Omega}}\left(  A\right)
\]
by Lemma \ref{khi}, whence $P^{\mathrm{hk}_{\Omega}}$ is hereditary.

$\left(  4\right)  $ is obvious.
\end{proof}

\begin{corollary}
\label{khs}Let $\Omega$ be a primitive map satisfying $\left(  1_{kh}\right)
$ or $\left(  2_{kh}\right)  $, and let $P$ be an under radical. Then

\begin{enumerate}
\item $P^{\mathrm{kh}_{\Omega}\mathrm{\ast}}=\Pi_{\Omega}\vee P=\Pi_{\Omega
}^{\ast}\vee P^{\ast}$;

\item $P^{\mathrm{kh}_{\Omega}\mathrm{\ast}}$ is a radical, and
$P^{\mathrm{kh}_{\Omega}\mathrm{\ast}}=\overline{P}^{\mathrm{kh}_{\Omega
}\mathrm{\ast}}$ on $Q$-algebras;

\item If $P$ is a hereditary preradical then $\overline{P}^{\mathrm{kh}%
_{\Omega}\mathrm{\ast}}$ is hereditary on $Q$-algebras.

\item If $\Omega$ and $P$ are pliant then $P^{\mathrm{kh}_{\Omega}%
\mathrm{\ast}}$ is pliant.
\end{enumerate}
\end{corollary}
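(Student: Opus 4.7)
The four parts follow a unified strategy: translate $P^{\mathrm{kh}_{\Omega}}$ into a convolution with $\Pi_{\Omega}$ via the identity $(\ref{radstarp})$, and then invoke the general machinery of Sections 3 and 4.

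For part (1), the key observation is that equation $(\ref{radstarp})$ gives $P^{\mathrm{kh}_{\Omega}}=\Pi_{\Omega}\ast P$. By the hypotheses $(1_{kh})$ or $(2_{kh})$, $\Pi_{\Omega}$ is a hereditary preradical (Theorem \ref{pm1}), hence satisfies Axiom 4 and is an under radical; $P$ is an under radical by assumption. Corollary \ref{ovun3}(1) applied to the pair $(\Pi_{\Omega},P)$ then delivers the whole chain of equalities
\[
P^{\mathrm{kh}_{\Omega}\ast}=(\Pi_{\Omega}\ast P)^{\ast}=\Pi_{\Omega}\vee P=\Pi_{\Omega}^{\ast}\vee P^{\ast}.
\]

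For part (2), Theorem \ref{kh}(2) shows that $P^{\mathrm{kh}_{\Omega}}$ is an under radical, so Theorem \ref{ovunt}(1) immediately yields that $P^{\mathrm{kh}_{\Omega}\ast}$ is a radical. On $Q$-algebras, Theorem \ref{kh}(1) gives $P^{\mathrm{kh}_{\Omega}}=\overline{P}^{\mathrm{kh}_{\Omega}}$; since the convolution procedure depends only on the underlying preradical and the stabilization process of its transfinite chain, the two $\ast$-closures coincide on $\mathfrak{U}_{\mathrm{q}}$.

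For part (3), by Theorem \ref{kh}(3), $P^{\mathrm{kh}_{\Omega}}$ is hereditary. On $Q$-algebras, because every $P^{\mathrm{kh}_{\Omega}}(A)$ is an intersection of closed primitive ideals and hence is itself closed, the successive ideals $I_{\alpha}$ in the convolution chain for $P^{\mathrm{kh}_{\Omega}}$ are automatically closed. Thus the topological convolution chain on $\mathfrak{U}_{\mathrm{q}}$ behaves algebraically: at successor stages, $I_{\alpha+1}=q_{I_{\alpha}}^{-1}(P^{\mathrm{kh}_{\Omega}}(A/I_{\alpha}))$ is closed, and the heredity of $P^{\mathrm{kh}_{\Omega}}$ propagates by the standard quotient-preimage argument used in the proof of Theorem \ref{kh}(3); at a limit ordinal $\alpha$, the union $\bigcup_{\alpha'<\alpha}I_{\alpha'}$ need only be closed up in $A$, and heredity with respect to any closed ideal $J$ is maintained because, by the inductive hypothesis, $I_{\alpha'}\cap J=P_{\alpha'}(J)\subset J$ for all $\alpha'<\alpha$, so taking the closure in $A$ of $\bigcup I_{\alpha'}$ and intersecting with $J$ agrees with the closure in $J$ of $\bigcup(I_{\alpha'}\cap J)$ by the standard fact that for $X\subset J$ one has $\overline{X}^{(A)}=\overline{X}^{(J)}$. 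Then the limit of the chain is hereditary by Lemma \ref{her}. Since $P^{\mathrm{kh}_{\Omega}}=\overline{P}^{\mathrm{kh}_{\Omega}}$ on $Q$-algebras, the claim for $\overline{P}^{\mathrm{kh}_{\Omega}\ast}$ follows. The main obstacle I foresee is precisely this limit-ordinal step: justifying that the closure operation at limit ordinals does not destroy heredity in the topological setting. The resolution relies crucially on the fact that the ideals in the chain lie inside $J$ by induction, which reduces the topological closure to the algebraic one.

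For part (4), if $\Omega$ and $P$ are pliant, then $P^{\mathrm{kh}_{\Omega}}$ is pliant by Theorem \ref{kh}(4). Pliancy—Axiom 1 for algebraic morphisms—is preserved at successor stages of the convolution chain because the argument in Lemma \ref{conv}(1) goes through verbatim for algebraic morphisms, and at limit stages because the union of pliant chains is pliant and (if necessary) the closure of a pliant image under an algebraic isomorphism is again the corresponding closed image. Hence $P^{\mathrm{kh}_{\Omega}\ast}$ is pliant.
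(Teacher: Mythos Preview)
Parts (1) and (4) are fine and match the paper's approach.

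The real gap is in parts (2) and (3), and it is the same gap in both places: you have not confronted the difference between the \emph{algebraic} and the \emph{topological} convolution procedures. In part (2), the point of the equality $P^{\mathrm{kh}_{\Omega}\ast}=\overline{P}^{\mathrm{kh}_{\Omega}\ast}$ on $Q$-algebras (see the Remark immediately following the Corollary) is that the left-hand side is produced by the \emph{algebraic} $\ast$ while the right-hand side uses the \emph{topological} $\ast$; these differ precisely at limit ordinals, where the topological version takes closures of unions. So the fact that $P^{\mathrm{kh}_{\Omega}}=\overline{P}^{\mathrm{kh}_{\Omega}}$ on $Q$-algebras does \emph{not} automatically give coincidence of the two $\ast$-closures. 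The paper runs a genuine transfinite induction on the two chains $(R_{\alpha})$ and $(S_{\alpha})$ generated by $P^{\mathrm{kh}_{\Omega}}\ast\Pi_{\Omega}$ and $\overline{P}^{\mathrm{kh}_{\Omega}}\ast\Pi_{\Omega}$: the key device at a limit ordinal $\alpha$ is Lemma~\ref{khi}(4), which gives $\mathrm{kh}_{\Omega}(\cup_{\alpha'<\alpha}R_{\alpha'+1}(A);A)=\mathrm{kh}_{\Omega}(\overline{\cup_{\alpha'<\alpha}S_{\alpha'+1}(A)};A)$, so that after one more application of $\mathrm{kh}_{\Omega}$ the chains re-align. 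Your one-line ``the convolution procedure depends only on the underlying preradical'' skips exactly this.

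In part (3) your limit-ordinal argument breaks for non-closed ideals $J$: the identity $\overline{X}^{(A)}=\overline{X}^{(J)}$ fails when $J$ is not closed, and more to the point the inclusion $J\cap\overline{\bigcup I_{\alpha'}}^{(A)}\subset\overline{\bigcup(I_{\alpha'}\cap J)}^{(A)}$ is not justified (an element of $J$ approximable by elements of $\bigcup I_{\alpha'}$ need not be approximable by elements of $J\cap\bigcup I_{\alpha'}$). The paper avoids this entirely: since $P^{\mathrm{kh}_{\Omega}}$ is an \emph{algebraic} hereditary preradical, Corollary~\ref{hera} gives heredity of the algebraic $P^{\mathrm{kh}_{\Omega}\ast}$ directly (no closures at limit stages, so Lemma~\ref{her}(2) applies), and then part (2) transfers this to $\overline{P}^{\mathrm{kh}_{\Omega}\ast}$ on $Q$-algebras. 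That is the intended route, and it is why the non-trivial work in (2) is indispensable for (3).
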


\begin{proof}
$\left(  1\right)  $ is clear.

$\left(  2\right)  $ Assume that $P$ is an (algebraic/topological) under
radical. It is clear that
\[
P^{\mathrm{kh}_{\Omega}\mathrm{\ast}}=\operatorname{rad}\vee P=\left(
\operatorname{rad}\vee P\right)  \vee\operatorname{rad}=\left(  P^{\mathrm{kh}%
_{\Omega}}\ast\operatorname{rad}\right)  ^{\ast}.
\]
Let $\left(  R_{\alpha}\right)  $ and $\left(  S_{\alpha}\right)  $ be the
algebraic and topological convolution chains of under radicals generated by
$P^{\mathrm{kh}_{\Omega}}\ast\Pi_{\Omega}$ and by $\overline{P}^{\mathrm{kh}%
_{\Omega}}\ast\Pi_{\Omega}$, respectively.

Let $A$ be a $Q$-algebra. By Theorem \ref{kh}$\left(  1\right)  $,
\[
P^{\mathrm{kh}_{\Omega}}\ast\Pi_{\Omega}\left(  A\right)  =\overline
{P}^{\mathrm{kh}_{\Omega}}\ast\Pi_{\Omega}\left(  A\right)  .
\]
We prove by transfinite induction that $R_{\alpha+1}\left(  A\right)
=S_{\alpha+1}\left(  A\right)  $ for every ordinal $\alpha$, and $R_{\alpha
}^{\mathrm{kh}_{\Omega}}\left(  A\right)  =S_{\alpha}^{\mathrm{kh}_{\Omega}%
}\left(  A\right)  $ for a limit ordinal $\alpha$. Indeed, the step
$\alpha\mapsto\alpha+1$ is easy if $R_{\alpha}\left(  A\right)  =S_{\alpha
}\left(  A\right)  $. Assume that $R_{\alpha}^{\mathrm{kh}_{\Omega}}\left(
A\right)  =S_{\alpha}^{\mathrm{kh}_{\Omega}}\left(  A\right)  $ for some limit
ordinal $\alpha$. Then
\begin{align*}
R_{\alpha+1}\left(  A\right)   &  =P^{\mathrm{kh}_{\Omega}}\ast\Pi_{\Omega
}\left(  R_{\alpha}\left(  A\right)  \right)  =P^{\mathrm{kh}}\ast\Pi_{\Omega
}\left(  R_{\alpha}^{\mathrm{kh}}\left(  A\right)  \right) \\
&  =\overline{P}^{\mathrm{kh}}\ast\Pi_{\Omega}\left(  S_{\alpha}^{\mathrm{kh}%
}\left(  A\right)  \right)  =\overline{P}^{\mathrm{kh}}\ast\Pi_{\Omega}\left(
S_{\alpha}\left(  A\right)  \right) \\
&  =S_{\alpha+1}\left(  A\right)  .
\end{align*}
So we proved the step $\alpha\mapsto\alpha+1$.

So it remains to show that $R_{\alpha}^{\mathrm{kh}_{\Omega}}\left(  A\right)
=S_{\alpha}^{\mathrm{kh}_{\Omega}}\left(  A\right)  $ for each limit ordinal
$\alpha$. Assume by induction, that $R_{\alpha^{\prime}+1}\left(  A\right)
=S_{\alpha^{\prime}+1}\left(  A\right)  $ for every ordinal $\alpha^{\prime
}<\alpha$. Then
\begin{align*}
R_{\alpha}^{\mathrm{kh}_{\Omega}}\left(  A\right)   &  =\mathrm{kh}_{\Omega
}\left(  \cup_{\alpha^{\prime}<\alpha}R_{\alpha^{\prime}+1}\left(  A\right)
;A\right)  =\mathrm{kh}_{\Omega}\left(  \cup_{\alpha^{\prime}<\alpha}%
S_{\alpha^{\prime}+1}\left(  A\right)  ;A\right) \\
&  =\mathrm{kh}_{\Omega}\left(  \overline{\cup_{\alpha^{\prime}<\alpha
}S_{\alpha^{\prime}+1}\left(  A\right)  };A\right)  =S_{\alpha}^{\mathrm{kh}%
}\left(  A\right)
\end{align*}
by Lemma \ref{khi}$\left(  4\right)  $.

Therefore there is an ordinal $\gamma$ such that
\[
P^{\mathrm{kh}_{\Omega}\mathrm{\ast}}\left(  A\right)  =R_{\gamma+1}\left(
A\right)  =R_{\gamma+2}\left(  A\right)  =S_{\gamma+2}\left(  A\right)
=S_{\gamma+1}\left(  A\right)  =\overline{P}^{\mathrm{kh}_{\Omega}%
\mathrm{\ast}}\left(  A\right)  .
\]

$\left(  3\right)  $ follows from $\left(  2\right)  $ and Corollary
\ref{hera}.

$\left(  4\right)  $ follows from Theorem \ref{kh}$\left(  4\right)  $.
\end{proof}

\begin{remark}
In $\left(  3\right)  $ of Corollary $\ref{khs}$ we state the equality of two
radicals on $Q$-algebras if one of them is obtained by application of the
algebraic convolution procedure while the other one is obtained by application
of the topological convolution procedure. So the resulting radical can be
considered as an algebraic radical as well as a topological radical,
simultaneously. This gathers all advantages of both classes of radicals.
\end{remark}

\subsection{Primitivity procedure}

Let $\Omega$ be a primitive map, and let $R$ be a preradical. Define
$R^{p_{\Omega}}$ by
\[
R^{p_{\Omega}}\left(  A\right)  =\cap\left\{  \left(  R\ast I;A\right)
:I\in\Omega\left(  A\right)  \cup\left\{  A\right\}  \right\}
\]
for every algebra $A$; we recall that
\[
\left(  R\ast I;A\right)  =q_{I}^{-1}\left(  P\left(  A/I\right)  \right)  .
\]
\textbf{I}n other words, $R^{p_{\Omega}}(A)$ consists of all elements $a\in A$
such that $a/I\in P(A/I)$ for all $I\in\Omega(A)$.

If $\Omega\left(  A\right)  =\operatorname{Prim}\left(  A\right)  $ then we
write $R^{p}$ instead of $R^{p_{\Omega}}$. The procedure $R\longmapsto
R^{p_{\Omega}}$ is called the $\Omega$-\textit{primitivity procedure}.

\begin{remark}
If $R\leq\Pi_{\Omega}$ then $A/I$ is $\Pi_{\Omega}$-semisimple and therefore
$R$-semisimple, whence $\left(  R\ast I;A\right)  =I$ for every $I\in
\Omega\left(  A\right)  \cup\left\{  A\right\}  $; thus $R^{p}\left(
A\right)  =\Pi_{\Omega}\left(  A\right)  $. In particular this is the case for
$R\leq\operatorname{rad}$.
\end{remark}

For $R^{p_{\Omega}}$ to have sufficiently convenient properties, one has to
impose additional requirements on $R$. One of possible approaches is to
require that $R$ well behaves on Banach ideals. We mean the condition of
Banach heredity (see $(\ref{BanRad})$).

\begin{lemma}
\label{ban}Let $R$ be a preradical on $\mathfrak{U}_{\mathrm{b}}$ satisfying
the condition of Banach heredity. If $A$ is a Banach algebra, $I$ and $J$ are
closed ideals of $A$, then
\[
\left(  R\ast\left(  J\cap I\right)  ;J\right)  =J\cap\left(  R\ast
I;A\right)  .
\]

\end{lemma}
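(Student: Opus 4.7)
The plan is to reduce the identity to a direct application of Banach heredity via the isometric identification of $J/(J\cap I)$ with the Banach ideal $(J+I)/I$ of $A/I$ furnished by Lemma \ref{bi}.

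First I would unpack both sides using the definition $(R\ast K;B)=q_{K}^{-1}(R(B/K))$. The right side becomes $J\cap q_{I}^{-1}(R(A/I))$, and its elements are exactly those $x\in J$ for which $x/I\in R(A/I)$. The left side is $q_{J\cap I}^{-1}(R(J/(J\cap I)))$, whose elements are the $x\in J$ with $x/(J\cap I)\in R(J/(J\cap I))$. So the lemma is equivalent to the assertion that, under the canonical algebraic isomorphism $\phi:J/(J\cap I)\to(J+I)/I$ sending $x/(J\cap I)\mapsto x/I$, we have
\[
\phi(R(J/(J\cap I)))=\bigl((J+I)/I\bigr)\cap R(A/I).
\]

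Next I would invoke Lemma \ref{bi}: it tells us that $(J+I)/I$, equipped with the norm $\|\cdot\|_{(J+I)/I}$, is a Banach ideal of $A/I$, and that $\phi$ is an isometric isomorphism from $J/(J\cap I)$ (with its usual quotient norm) onto this Banach ideal. In particular, the two Banach algebras have identical $R$-ideals, so $\phi(R(J/(J\cap I)))=R((J+I)/I,\|\cdot\|_{(J+I)/I})$. Applying the condition of Banach heredity $(\ref{BanRad})$ to the Banach algebra $A/I$ and its Banach ideal $(J+I)/I$ yields
\[
R\bigl((J+I)/I,\|\cdot\|_{(J+I)/I}\bigr)=\bigl((J+I)/I\bigr)\cap R(A/I),
\]
which is precisely the required equality.

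The only substantive point is the isometric identification and the verification that $(J+I)/I$ with $\|\cdot\|_{(J+I)/I}$ really is a Banach ideal of $A/I$, so that Banach heredity is applicable; however, both facts are already provided by Lemma \ref{bi} and the remark following it. The rest is a routine translation between preimages under $q_{J\cap I}$ and $q_{I}$, using that $\phi$ intertwines these quotient maps on $J$. I do not anticipate any essential obstacle.
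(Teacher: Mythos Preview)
Your proposal is correct and follows essentially the same approach as the paper: both arguments identify $J/(J\cap I)$ isometrically with the Banach ideal $(J+I)/I$ of $A/I$ via Lemma \ref{bi}, then apply the Banach heredity condition to obtain $\phi(R(J/(J\cap I)))=((J+I)/I)\cap R(A/I)$, and finally translate this back through the quotient maps. The paper's write-up is slightly more explicit about the relation $q_I|_J=\phi\circ q$ used in the final preimage computation, but the content is the same.
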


\begin{proof}
Let $K=\left(  J+I\right)  /I$ with the norm $\left\Vert \cdot\right\Vert
_{K}$ defined in Lemma \ref{bi}. Then the natural map $\phi:J/\left(  J\cap
I\right)  \longrightarrow\left(  K;\left\Vert \cdot\right\Vert _{K}\right)  $
is an isometry. Let $q:J\longrightarrow J/\left(  J\cap I\right)  $ and
$q_{I}:A\longrightarrow A/I$ be standard quotient maps. Considering $\phi$ as
an algebraic morphism, one may write that $q_{I}|_{J}=\phi\circ q$. By the
assumption,
\[
\phi\left(  R\left(  \left(  J/\left(  J\cap I\right)  \right)  \right)
\right)  =R\left(  K;\left\Vert \cdot\right\Vert _{K}\right)  =K\cap R\left(
A/I\right)
\]
As $q_{I}|_{J}=\phi\circ q$ then
\[
q^{-1}\left(  R\left(  \left(  J/\left(  J\cap I\right)  \right)  \right)
\right)  =\left(  q_{I}|_{J}\right)  ^{-1}\left(  K\cap R\left(  A/I\right)
\right)  .
\]
It is clear that $\left(  q_{I}|_{J}\right)  ^{-1}\left(  K\right)  =\left(
J+I\right)  \cap J=J$. Then
\[
J\cap\left(  q_{I}|_{J}\right)  ^{-1}\left(  R\left(  A/I\right)  \right)
=J\cap q_{I}^{-1}\left(  R\left(  A/I\right)  \right)
\]
and the result follows.
\end{proof}

Another way to investigate the primitivity procedure is connected with
modifications of Axiom $1$. Namely, we may require that $R$ be either pliant
or strict (i.e. satisfies $(\ref{PRER})$); clearly if $\mathfrak{U}%
=\mathfrak{U}_{\mathrm{b}}$ then this holds for all preradicals.

In the following theorem and corollary we join both approaches.

\begin{theorem}
\label{pri}Let $%
\Omega
$ be a primitive map on $\mathfrak{U}$ satisfying $\left(  1_{kh}\right)  $ or
$\left(  2_{kh}\right)  $, and let $R$ be a preradical on $\mathfrak{U}$. Then

\begin{enumerate}
\item If $\Omega$ and $R$ are pliant then

\begin{enumerate}
\item $R^{p_{\Omega}}$ is a pliant preradical;

\item If $R$ is a radical then $R^{p_{\Omega}}$ satisfies Axiom $4$;

\item If $R$ is an hereditary preradical then $R^{p_{\Omega}}$ is a hereditary preradical.
\end{enumerate}

\item If $R$ is strict, $\mathfrak{U}$ is universal and contained in
$\mathfrak{U}_{\mathrm{q}}$ then

\begin{enumerate}
\item $R^{p_{\Omega}}$ is a strict preradical;

\item If $R$ is a topological radical then $R^{p_{\Omega}}$ satisfies Axiom
$4$;

\item If $R$ is a hereditary topological preradical then $R^{p_{\Omega}}$ is a
hereditary topological preradical.
\end{enumerate}

\item If $R$ is defined on $\mathfrak{U}_{\mathrm{b}}$ and satisfies the
condition of Banach heredity then $R^{p_{\Omega}}$ is a hereditary preradical
on $\mathfrak{U}_{\mathrm{b}}$.
\end{enumerate}
\end{theorem}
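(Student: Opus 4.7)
The plan is to unify the three parts via a single framework. I first observe that $R^{p_\Omega}(A)=\bigcap_{I\in\Omega(A)\cup\{A\}}(R\ast I;A)$ is an intersection of ideals $(R\ast I;A)=q_I^{-1}(R(A/I))$ of $A$, which are closed in the topological settings since primitive ideals of $Q$-algebras are closed and $R$ is a closed ideal map. Hence $R^{p_\Omega}$ is automatically a (closed) ideal map, containing $\Pi_\Omega$ (the term $I=A$ is trivial and handles the possibility $\Omega(A)=\varnothing$).

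For Axiom 1, given a morphism $f\colon A\to B$ and $J\in\Omega(B)$, I would pull back to $K=f^{-1}(J)$. The identification $B\cong A/\ker f$ together with property $(3_{pm})$ shows that $K\in\Omega(A)\cup\{A\}$, and the induced map $\tilde f\colon A/K\to B/J$ is a bijective morphism (a topological isomorphism when $f$ is open and continuous). Applying Axiom 1 for $R$ (and pliancy in part 1a, strictness in part 2a) to $\tilde f$ transfers $R(A/K)$ into $R(B/J)$, whence $f(R^{p_\Omega}(A))\subset(R\ast J;B)$; intersecting over $J$ yields Axiom 1 for $R^{p_\Omega}$ in all three settings with the corresponding refinement (pliant, strict, or general preradical).

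The heart of the proof is Axiom 4 and heredity. Using property $(2_{pm})$, $\Omega(I)=\{J\cap I:J\in\Omega(A),\,J\cap I\neq I\}$, I would analyze each term $(R\ast(J\cap I);I)$ via the algebraic isomorphism
\[
\phi\colon I/(J\cap I)\longrightarrow (I+J)/J,\qquad y+(J\cap I)\longmapsto y+J,
\]
onto the ideal $(I+J)/J$ of $A/J$. The target identity is
\[
(R\ast(J\cap I);I)=I\cap (R\ast J;A),
\]
from which heredity $R^{p_\Omega}(I)=I\cap R^{p_\Omega}(A)$ (and the weaker containment $R^{p_\Omega}(I)\subset R^{p_\Omega}(A)$ needed for Axiom 4) follow by intersection over $J$, with $J\supset I$ contributing trivially. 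In the pliant case (1a--c), pliancy of $R$ gives $\phi(R(I/(J\cap I)))=R((I+J)/J)$; combined with heredity of $R$ applied to the ideal $(I+J)/J$ of $A/J$ (for 1c), this yields the identity, while for Axiom 4 without heredity (1b) I use only that $R((I+J)/J)$ is an ideal of $A/J$ contained in $R(A/J)$, which gives both that $(R\ast(J\cap I);I)$ is an ideal of $A$ and that it lies inside $(R\ast J;A)$. For part 3 (Banach heredity), Lemma \ref{ban} is precisely the target identity (with $I\leftrightarrow J$), so part 3 drops out directly.

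The main technical obstacle is the strict case (part 2c): strictness of $R$ yields only the closure-level equality $\overline{\phi(R(I/(J\cap I)))}=R((I+J)/J)$, whereas the target identity requires an exact equality of $\phi$-preimages. Here I plan to exploit that $R(I/(J\cap I))$ is already closed in $I/(J\cap I)$ and that in the universal class $\mathfrak{U}\subset\mathfrak{U}_\mathrm{q}$ heredity of $R$ yields $R((I+J)/J)=(I+J)/J\cap R(A/J)$, so that the question reduces to whether every $y\in I$ with $y+J\in R(A/J)$ satisfies $y+(J\cap I)\in R(I/(J\cap I))$. Bridging this gap by approximating $y+J$ from within $\phi(R(I/(J\cap I)))$ and lifting the approximants to $I/(J\cap I)$, using that $Q$-algebra quotients of closed ideals in $\mathfrak{U}$ remain in $\mathfrak{U}$ so that closedness of $R$ applies, will complete the proof. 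Once the key identity is established in each setting, the remaining structural claims (Axiom 4, heredity, topological properties) follow routinely by intersection over $J$.
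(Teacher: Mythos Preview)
Your overall framework is the same as the paper's: pull back primitive ideals under morphisms for Axiom~1, and analyze each term $(R\ast(J\cap I);I)$ via the canonical bijection $\phi\colon I/(J\cap I)\to(I+J)/J$ onto an ideal of $A/J$ to obtain Axiom~4 and heredity. Your arguments for (1a--c), (2a--b), and (3) are correct and essentially identical to the paper's (it uses the embedding $g_I\colon K/(K\cap I)\to q_I(K)$ in place of your $\phi$, and Lemma~\ref{ban} for part~(3)).

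The gap is in (2c). You are right that strictness yields only $\overline{\phi(R(I/(J\cap I)))}=R((I+J)/J)$, whereas heredity requires $\phi^{-1}\bigl(R((I+J)/J)\bigr)\subset R(I/(J\cap I))$, i.e.\ the equality without the closure. Your proposed fix---approximate $y+J\in R((I+J)/J)$ by elements $\phi(x_n)$ with $x_n\in R(I/(J\cap I))$ and then lift---cannot work: $\phi^{-1}$ is in general discontinuous (the quotient norm on $I/(J\cap I)$ dominates, but need not be equivalent to, the norm inherited from $A/J$), so $\phi(x_n)\to\phi(x)$ tells you nothing about $x_n\to x$, and the closedness of $R(I/(J\cap I))$ in the source is of no use. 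The paper, for its part, does not supply a separate argument to remove the closure; it simply asserts the exact equality $g_I(R(K/(K\cap I)))=R(q_I(K))=q_I(K)\cap R(A/I)$ as the content of strictness together with heredity of $R$, and proceeds directly to the heredity computation. So you have correctly flagged a subtlety that the paper's proof passes over without comment, but your own proposed resolution does not close it.
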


\begin{proof}
$\left(  1\text{a}\right)  \And\left(  2\text{a}\right)  $ Let
$f:A\longrightarrow B$ be an algebraic morphism [bounded epimorphism]. As
$\left(  \pi\circ f\right)  \in\operatorname{Irr}_{\Omega}\left(  A\right)  $
for every $\pi\in\operatorname{Irr}_{\Omega}\left(  B\right)  $, then
$J:=f^{-1}\left(  I\right)  \in\Omega\left(  A\right)  $ for every $I\in
\Omega\left(  B\right)  $. Then $f$ induces an algebraic morphism [bounded
epimorphism] $f_{0}:A/J\longrightarrow B/I$ such that $f_{0}\circ q_{J}%
=q_{I}\circ f$. Hence
\begin{align}
f\left(  R\ast J\right)   &  =f\left(  q_{J}^{-1}\left(  R\left(  A/J\right)
\right)  \right)  \subset q_{I}^{-1}\left(  f_{0}\left(  R\left(  A/J\right)
\right)  \right) \nonumber\\
&  \subset q_{I}^{-1}\left(  R\left(  B/I\right)  \right)  =R\ast I \label{fr}%
\end{align}
for every $I\in\Omega\left(  B\right)  $. So $f\left(  R^{p_{\Omega}}\left(
A\right)  \right)  \subset R^{p_{\Omega}}\left(  B\right)  $.

Assume that $\mathfrak{U}$ is universal and $R$ satisfies $\left(
\ref{PRER}\right)  $ for continuous isomorphisms. Let $f:A\longrightarrow B$
be a continuous isomorphism. Then $f^{-1}:B\longrightarrow A$ is an algebraic
morphism and $f_{0}:A/J\longrightarrow B/I$ is a continuous isomorphism, where
$J=f^{-1}\left(  I\right)  $. Then
\[
f\left(  R\ast J\right)  =f\left(  \left(  f_{0}\circ q_{J})^{-1}\left(
f_{0}\left(  R\left(  A/J\right)  \right)  \right)  \right)  \right)
=q_{I}^{-1}\left(  \left(  R\left(  A/J\right)  \right)  \right)  =R\ast I.
\]
As $\Omega\left(  A\right)  =\left\{  f^{-1}\left(  I\right)  :I\in
\Omega\left(  B\right)  \right\}  $ then
\begin{align*}
f\left(  R^{p_{\Omega}}\left(  A\right)  \right)   &  =\cap_{I\in\Omega\left(
B\right)  \cup\left\{  B\right\}  }f\left(  R\ast f^{-1}\left(  I\right)
;A\right) \\
&  =\cap_{I\in\Omega\left(  B\right)  \cup\left\{  B\right\}  }\left(  R\ast
I;B\right)  =R^{p_{\Omega}}\left(  B\right)  .
\end{align*}
Therefore $R^{p_{\Omega}}$ satisfies $\left(  \ref{PRER}\right)  $ for
continuous isomorphisms.

$\left(  1\text{b}\right)  \And\left(  2\text{b}\right)  $ If $K$ is an ideal
of $A$ then $\Omega\left(  K\right)  \cup\left\{  K\right\}  =\left\{  K\cap
I:I\in\Omega\left(  A\right)  \cup\left\{  A\right\}  \right\}  $. As $K\cap
I$ is an ideal of $A$ then $\left(  R\ast\left(  K\cap I\right)  ;K\right)  $
is an ideal of $A$. Therefore $R^{p_{\Omega}}\left(  K\right)  $ is an ideal
of $A$.

Let $q_{I}^{\prime}:K\longrightarrow K/\left(  K\cap I\right)  $ and
$q_{I}:A\longrightarrow A/I$ be standard quotient maps. There is a [bounded]
injective imbedding $g_{I}$ of $K/\left(  K\cap I\right)  $ onto an ideal
$q_{I}\left(  K\right)  $ of $A/I$, whence
\begin{equation}
g_{I}\left(  R\left(  K/\left(  K\cap I\right)  \right)  \right)  \subset
R\left(  q_{I}\left(  K\right)  \right)  \subset R\left(  A/I\right)  .
\label{pkk}%
\end{equation}
As $q_{I}|_{K}=g_{I}\circ q_{I}^{\prime}$ then
\begin{align}
R^{p_{\Omega}}\left(  K\right)   &  =\cap_{I\in\Omega\left(  A\right)
\cup\left\{  A\right\}  }\left(  q_{I}^{\prime}\right)  ^{-1}\left(  R\left(
K/\left(  K\cap I\right)  \right)  \right) \nonumber\\
&  =\cap_{I\in\Omega\left(  A\right)  \cup\left\{  A\right\}  }\left(
g_{I}\circ q_{I}^{\prime}\right)  ^{-1}\left(  g_{I}\left(  R\left(  K/\left(
K\cap I\right)  \right)  \right)  \right) \label{pkk2}\\
&  \subset\cap_{I\in\Omega\left(  A\right)  \cup\left\{  A\right\}  }%
g_{I}^{-1}\left(  R\left(  A/I\right)  \right)  =R^{p_{\Omega}}\left(
A\right)  .\nonumber
\end{align}
Therefore $R^{p_{\Omega}}$ satisfies Axiom $4$.

$\left(  1\text{c}\right)  \And\left(  2\text{c}\right)  $ As $R$ is
hereditary, and also is algebraic or satisfies $\left(  \ref{PRER}\right)  $
for continuous isomorphisms, then we rewrite $\left(  \ref{pkk}\right)  $ as
follows:
\[
g_{I}\left(  R\left(  K/\left(  K\cap I\right)  \right)  \right)  =R\left(
q_{I}\left(  K\right)  \right)  =q_{I}\left(  K\right)  \cap R\left(
A/I\right)
\]
for every $I\in\Omega\left(  A\right)  \cup\left\{  A\right\}  $. It follows
from $\left(  \ref{pkk2}\right)  $ that
\begin{align*}
R^{p_{\Omega}}\left(  K\right)   &  =\cap_{I\in\Omega\left(  A\right)
\cup\left\{  A\right\}  }\left(  q_{I}|_{K}\right)  ^{-1}\left(  q_{I}\left(
K\right)  \cap R\left(  A/I\right)  \right) \\
&  =\cap_{I\in\Omega\left(  A\right)  \cup\left\{  A\right\}  }\left(  K\cap
q_{I}^{-1}\left(  R\left(  A/I\right)  \right)  \right) \\
&  =K\cap\left(  \cap_{I\in\Omega\left(  A\right)  \cup\left\{  A\right\}
}q_{I}^{-1}\left(  R\left(  A/I\right)  \right)  \right)  =K\cap R^{p_{\Omega
}}\left(  A\right)  .
\end{align*}
Therefore $R^{p_{\Omega}}$ is a hereditary preradical.

$\left(  3\right)  $ Let $f:A\longrightarrow B$ be a topological morphism of
Banach algebras. Then $\left(  \pi\circ f\right)  \in\operatorname{Irr}%
_{\Omega}\left(  A\right)  $ for every $\pi\in\operatorname{Irr}_{\Omega
}\left(  B\right)  $, whence $J:=f^{-1}\left(  I\right)  \in\Omega\left(
A\right)  \cup\left\{  A\right\}  $ for every $I\in\Omega\left(  B\right)
\cup\left\{  B\right\}  $. Then $f$ induces a topological morphism
$f_{0}:A/J\longrightarrow B/I$ such that $f_{0}\circ q_{J}=q_{I}\circ f$.
Hence
\[
f\left(  R\ast J\right)  \subset R\ast I
\]
as in $\left(  \ref{fr}\right)  $, for every $I\in\Omega\left(  B\right)
\cup\left\{  B\right\}  $. So $f\left(  R^{p_{\Omega}}\left(  A\right)
\right)  \subset R^{p_{\Omega}}\left(  B\right)  $.

By our assumption, for a closed ideal $K$ of $A$,
\begin{align*}
R^{p_{\Omega}}\left(  K\right)   &  =\cap_{I\in\Omega\left(  A\right)
\cup\left\{  A\right\}  }\left(  R\ast\left(  K\cap I\right)  ;K\right) \\
&  =\cap_{I\in\Omega\left(  A\right)  \cup\left\{  A\right\}  }K\cap\left(
R\ast I;A\right) \\
&  =K\cap\left(  \cap_{I\in\Omega\left(  A\right)  \cup\left\{  A\right\}
}\left(  R\ast I;A\right)  \right)  =K\cap R^{p_{\Omega}}\left(  A\right)
\end{align*}
by Lemma \ref{ban}.
\end{proof}

Now we apply the convolution procedure to obtain a radical.

\begin{corollary}
Let $%
\Omega
$ be a primitive map on $\mathfrak{U}$ satisfying $\left(  1_{kh}\right)  $ or
$\left(  2_{kh}\right)  $, and let $R$ be a preradical. Then

\begin{enumerate}
\item If $\Omega$ and $R$ are pliant then $R^{p_{\Omega}\ast}$ is a pliant
hereditary radical;

\item If $R$ is a closed ideal map satisfying $\left(  \ref{PRER}\right)  $
(i.e. $R$ is strict), $\mathfrak{U}$ is universal and contained in
$\mathfrak{U}_{\mathrm{q}}$ then $R^{p_{\Omega}\ast}$ is a topological radical
on $\mathfrak{U}$;

\item If $R$ is defined on $\mathfrak{U}_{\mathrm{b}}$ and satisfies the
condition of Banach heredity then $R^{p_{\Omega}\ast}$ is a radical on Banach algebras;

\item $\Pi_{\Omega}\vee R=\Pi_{\Omega}^{\ast}\vee R^{\ast}\leq R^{p_{\Omega
}\ast}$.
\end{enumerate}
\end{corollary}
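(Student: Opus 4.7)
My plan is to deduce the corollary by applying the convolution procedure (Theorem~\ref{ovunt}) to the preradical $R^{p_\Omega}$, whose structural properties in each of the settings (1)--(3) have just been established in Theorem~\ref{pri}. Under hypothesis (1), Theorem~\ref{pri}(1) says $R^{p_\Omega}$ is a pliant hereditary preradical; under (2), Theorem~\ref{pri}(2) gives that it is a strict hereditary preradical with closed ideal values on $\mathfrak{U} \subset \mathfrak{U}_{\mathrm{q}}$; under (3), Theorem~\ref{pri}(3) makes it a hereditary preradical on Banach algebras. In every case $R^{p_\Omega}$ is an under radical, so Theorem~\ref{ovunt} immediately yields that $R^{p_\Omega\ast}$ is a radical. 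Pliancy in (1) and the topological/closed character in (2) and (3) are transferred through the convolution chain by successor-step inspection (both $q_{P_\alpha(A)}^{-1}$ and the preradical property of $R^{p_\Omega}$ preserve these features) and by the limit-step formulas~(\ref{bb}).

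Heredity of $R^{p_\Omega\ast}$ is obtained by different means in each case. In case (1), $R^{p_\Omega}$ is an algebraic hereditary preradical, so Corollary~\ref{hera} applies directly. For (2) and (3), I would carry out a transfinite induction along the convolution chain $(P_\alpha)$ in the spirit of Corollary~\ref{hera}, using Lemma~\ref{her}(1) at limit ordinals (here the chain is decreasing in the dual sense appropriate for successor-iteration analysis, so the limit step reduces to an intersection) and, at successor ordinals, reducing heredity of $P_{\alpha+1} = R^{p_\Omega} \ast P_\alpha$ to heredity of $R^{p_\Omega}$ itself applied to the closed ideal $q_{P_\alpha(A)}(J)$ of $A/P_\alpha(A)$ (in case~(2)), respectively to the Banach ideal $q_{P_\alpha(A)}(J)$ with the norm of Lemma~\ref{bi} (in case~(3)). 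The map $J/P_\alpha(J) \to q_{P_\alpha(A)}(J)$ is a bijective morphism of the required kind, which together with Theorem~\ref{pri}(2c)--(3) makes the successor step work exactly as in the proof of Theorem~\ref{oper3}.

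For (4), the preradical property of $R$ gives $q_I(R(A)) \subset R(A/I)$, hence $R(A) \subset q_I^{-1}(R(A/I)) = R\ast I$ for every $I \in \Omega(A) \cup \{A\}$; intersecting over all such $I$ shows $R \leq R^{p_\Omega}$. Likewise $\Pi_\Omega(A) \subset I \subset R\ast I$ for each $I \in \Omega(A)$, whence $\Pi_\Omega \leq R^{p_\Omega}$, and therefore $\Pi_\Omega + R \leq R^{p_\Omega}$. Applying the convolution procedure and invoking its isotonicity (Theorem~\ref{isotone}(1)) together with Lemma~\ref{ovun2}(1) and Corollary~\ref{ovun3}(1) (applicable because $\Pi_\Omega$ and $R$ are under radicals in every case considered) yields
\[
\Pi_\Omega \vee R \;=\; \Pi_\Omega^\ast \vee R^\ast \;=\; (\Pi_\Omega \ast R)^\ast \;\leq\; R^{p_\Omega\ast},
\]
which is exactly (4).

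The main obstacle will be the heredity propagation in cases (2) and (3): the closure operation at limit ordinals of a topological convolution chain is precisely the mechanism that can destroy heredity in general (cf.\ Dixon's Example~7.1 and the discussion preceding Theorem~\ref{heredit}). The induction must therefore exploit specifically that $R^{p_\Omega}$ is built as an intersection of pullbacks along quotients by primitive ideals, so that the Banach-ideal calculation of Lemma~\ref{ban} — already engineered into Theorem~\ref{pri}(3) — is available at every successor step, and that in case~(2) the $Q$-algebra assumption forces all the primitive ideals involved to be closed.
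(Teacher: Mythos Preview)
Your overall strategy---apply Theorem~\ref{ovunt} to $R^{p_\Omega}$, whose structure is controlled by Theorem~\ref{pri}---is exactly the paper's approach; the paper's proof of (1)--(3) reads in full ``follow from the properties of convolution procedure.'' Your argument for (4) is likewise essentially the paper's: show $\Pi_\Omega \le R^{p_\Omega}$ and $R \le R^{p_\Omega}$ (the paper phrases the latter as ``every $R$-radical algebra is $R^{p_\Omega}$-radical''), then pass to the convolution.

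However, you have over-read the statement in two places. First, parts (2) and (3) do \emph{not} assert heredity of $R^{p_\Omega\ast}$---only that it is a (topological) radical. Your entire ``main obstacle'' paragraph, the invocation of Lemma~\ref{ban}, the Banach-ideal calculation at successor steps, and the worry about closures destroying heredity at limit ordinals are all directed at a conclusion the corollary does not claim. Once you drop the heredity target, (2) and (3) are immediate from Theorem~\ref{pri}(2b)/(3) (which give Axiom~4, hence an under radical) and Theorem~\ref{ovunt}. Second, your sentence ``Under hypothesis (1), Theorem~\ref{pri}(1) says $R^{p_\Omega}$ is a pliant hereditary preradical'' overstates what Theorem~\ref{pri}(1) delivers: part~(1a) gives pliancy, but heredity in~(1c) requires $R$ itself to be hereditary, which the bare hypothesis ``$\Omega$ and $R$ are pliant'' does not provide. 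Your subsequent appeal to Corollary~\ref{hera} for heredity of $R^{p_\Omega\ast}$ therefore rests on an assumption you have not justified from the stated hypotheses; the paper's one-line proof is equally terse on this point, so the gap (if any) is shared, but you should be aware that the heredity claim in (1) is not as automatic as your write-up suggests.
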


\begin{proof}
$\left(  1\right)  $-$\left(  3\right)  $ follow from the properties of
convolution procedure.

$\left(  4\right)  $ It is clear that
\[
\Pi_{\Omega}\left(  A\right)  =\cap_{I\in\Omega\left(  A\right)  \cup\left\{
A\right\}  }q_{I}^{-1}\left(  0\right)  \subset\cap_{I\in\Omega\left(
A\right)  \cup\left\{  A\right\}  }q_{I}^{-1}\left(  R\left(  A/I\right)
\right)  =R^{p_{\Omega}}\left(  A\right)
\]
for an algebra $A$, and every $R$-radical algebra is $R^{p_{\Omega}}$-radical.
This implies the result.
\end{proof}

\section{Scattered algebras}

Our aim in this section is to give a consistent exposition of the theory of
scattered radical. This radical was announced in \cite{TR0} and already
applied in \cite{BT07, TR2}.

\subsection{Thin ideals}

Let $K\subset\mathbb{C}$ be a bounded set. Recall that the
\textit{polynomially convex hull} $\mathrm{{pc}}(K)$ of $K$ is the set of all
$z\in\mathbb{C}$ such that%
\[
\left\vert p\left(  z\right)  \right\vert \leq\sup_{w\in K}\left\vert p\left(
w\right)  \right\vert
\]
for every polynomial $p$, and $K$ is \textit{polynomially convex} if
$K=\mathrm{{pc}}(K)$. By \cite[Lemma 3.1.3]{G69}, a compact subset
$K\subset\mathbb{C}$ is polynomially convex if and only if the complement of
$K$ is connected. We say that $H$ is a \textit{polynomially convex
neighborhood }of\textit{ }$K$ if $K$ is contained in the interior of $H$ and
$H=\mathrm{{pc}}\left(  H\right)  $ (hence $H$ is a compact subset of
$\mathbb{C}$).

An algebra $A$ is called\textit{ inessential} (or \textit{Riesz }%
\cite{BMSW82}) if $\sigma\left(  a\right)  $ is finite or countable with the
only one limit point at $0$ for every $a\in A$; $A$ is \textit{scattered} if
$\sigma\left(  a\right)  $ is at most countable for every $a\in A$. These
notions are transferred to ideals. As ideals are spectral subalgebras, it is
not important with respect to what -- the algebra or the ideal -- the spectrum
of an element of the ideal is considered. The algebras $\mathcal{F}\left(
X\right)  $ of finite rank operators and $\mathcal{K}\left(  X\right)  $ of
compact operators on a Banach space $X$ are inessential, and they are
inessential ideals of $\mathcal{B}\left(  X\right)  $.

By \cite[Corollary 5.7.6]{A91}, if $I$ is an inessential ideal of a Banach
algebra $A$ then $\overline{I}$ and $\mathrm{kh}\left(  I;A\right)  $ are also
inessential ideals of $A$ and have the same set of projections as $I$.

\begin{lemma}
\label{ines}Let $A$ be a Banach algebra, and let $I$ be an inessential ideal
of $A$. If $a\in A$ and $V$ is a polynomially convex neighborhood of
$\sigma(a/I)$ then there is a finite set $Z$ such that $\sigma(a)\subset V\cup
Z$.
\end{lemma}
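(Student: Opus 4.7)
The plan is to construct a spectral idempotent that separates $K := \sigma(a) \setminus V$ from the part of the spectrum lying in $V$, verify that this idempotent lies in $I$, and then invoke the inessentiality of $I$ to force $K$ to be finite.

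Before applying functional calculus, I would first normalise by translation. Assuming $A$ is unital (adjoin a unit if not), pick any $\mu \in \sigma(a/I)$, which is nonempty by the Gelfand--Mazur theorem, and replace $a$ by $a - \mu$ and $V$ by $V - \mu$. The translate $V - \mu$ is again a compact polynomially convex neighbourhood (polynomial convexity and compactness are translation-invariant) of $\sigma((a-\mu)/I) = \sigma(a/I) - \mu$, and $0 = \mu - \mu$ now lies in $\mathrm{int}(V-\mu)$. Consequently $K = \sigma(a-\mu) \setminus (V-\mu)$ becomes a compact subset of $\mathbb{C} \setminus (V-\mu)$ that is bounded away from $0$ and disjoint from $\sigma((a-\mu)/I)$. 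Any finite set $Z$ found for $a-\mu$ yields a finite set $Z + \mu$ for $a$, so this reduction is harmless.

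With $K$ and $V$ now disjoint compact subsets of $\mathbb{C}$, choose disjoint open neighbourhoods $U_K \supset K$ and $U_V \supset V$; the function $f$ equal to $1$ on $U_K$ and $0$ on $U_V$ is holomorphic on $U := U_K \cup U_V \supset \sigma(a) \cup \sigma(a/I)$. Set $e := f(a)$. Since $f^2 = f$, $e$ is an idempotent. Passing to the quotient, $e + I = f(a/I) = 0$ because $\sigma(a/I) \subset U_V$ where $f$ vanishes; hence $e \in I$. Letting $g(z) := z f(z)$, the spectral mapping theorem for the holomorphic functional calculus gives
\[
\sigma(ae) \;=\; g(\sigma(a)) \;=\; g(K) \cup g(\sigma(a) \cap V) \;=\; K \cup \{0\}.
\]

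Because $e \in I$ forces $ae \in I$, the inessentiality of $I$ implies that $\sigma(ae)$ is at most countable with accumulation points, if any, only at $0$. Hence $K \subset \sigma(ae)$ is at most countable with accumulation points contained in $\{0\}$; but by the preliminary translation $K$ is bounded away from $0$, so $K$ has no accumulation points at all. A discrete compact subset of $\mathbb{C}$ is finite, so $K$ is finite. Translating back, $Z := K + \mu$ is the required finite set with $\sigma(a) \subset V \cup Z$. The main subtlety is the translation step: without arranging $0 \in \mathrm{int}(V)$, the set $K$ could a priori accumulate at $0$, and the inessentiality estimate would only furnish countability rather than finiteness of $K$.
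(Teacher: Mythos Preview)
Your argument has a genuine gap at the sentence ``With $K$ and $V$ now disjoint compact subsets of $\mathbb{C}$\ldots''. The set $K = \sigma(a)\setminus V$ is the intersection of the compact set $\sigma(a)$ with the \emph{open} set $\mathbb{C}\setminus V$, so it is relatively open in $\sigma(a)$ and need not be closed. If $\overline{K}$ meets $\partial V$ --- equivalently, if $\sigma(a)$ accumulates on $\partial V$ from outside $V$ --- then every open neighbourhood of $V$ already hits $K$, and no disjoint pair $U_K\supset K$, $U_V\supset V$ exists. Consequently the idempotent $e=f(a)$ cannot be formed. The translation step, which places $0$ in $\mathrm{int}(V)$, controls where $0$ sits but does nothing to prevent $\sigma(a)$ from crossing $\partial V$. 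A configuration such as $\sigma(a)=\overline{\mathbb{D}}$, $\sigma(a/I)=\{0\}$, $V=\overline{\mathbb{D}(0,1/2)}$ illustrates the obstruction: here $K=\{1/2<|z|\le 1\}$ is not compact and no separation is possible. Of course the lemma forbids this situation, but that is precisely what you are trying to prove, so appealing to separability is circular.

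The paper's proof bypasses this difficulty by invoking a structural theorem from the Riesz--Fredholm theory of inessential ideals (\cite[Theorem~5.7.4]{A91}): with $\Delta$ the set of isolated points of $\sigma(a)$ whose Riesz projections lie in $I$, one has $D:=\sigma(a)\setminus\Delta\subset\mathrm{pc}(\sigma(a/\overline{I}))$. Since $\mathrm{pc}(\sigma(a/I))\subset V$, every point of $\sigma(a)\setminus V$ is isolated in $\sigma(a)$, and compactness forces this set to be finite. In effect, Aupetit's theorem does the hard work of showing that $\sigma(a)$ cannot cross $\partial V$ ``thickly''; your functional-calculus construction would only succeed \emph{after} one knows this, not as a substitute for it.
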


\begin{proof}
Let $\Delta$ be the set of all isolated points in $\sigma(a)$ with Riesz
(spectral) projections in $I$, and $D=\sigma(a)\setminus\Delta$. By
\cite[Theorem 5.7.4]{A91} and Corollary \ref{sinq},
\[
D\subset\mathrm{pc}\left(  \sigma\left(  a/\overline{I}\right)  \right)
=\mathrm{pc}(\sigma\left(  a/I\right)  ).
\]
Then $D\subset V$, and only a finite number of points in $\sigma(a)$ can be
outside of $V$ because otherwise $\sigma(a)$ would have a limit point outside
of $D$ which contradicts the definition of $D$. Therefore
\[
\sigma(a)\subset V\cup Z
\]
for some finite set $Z\subset\mathbb{C}$.
\end{proof}

Let us say that an ideal $I$ of an algebra $A$ is \textit{thin} if for each
$a\in A$ there is a countable set $Z\subset\mathbb{C}$ such that%
\begin{equation}
\sigma(a)\subset\mathrm{{pc}}(\sigma(a/I))\cup Z. \label{incZ}%
\end{equation}

\begin{lemma}
\label{inc}Let $A$ be a [normed] algebra, and let $J\subset I$ be [closed]
ideals of $A$. If $I$ is a thin ideal of $A$ then $J$ is a thin ideal of $A$.
\end{lemma}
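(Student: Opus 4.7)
The proof will be essentially a one-step reduction, exploiting monotonicity of the polynomially convex hull with respect to the inclusion of spectra under quotient morphisms.

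First I would fix an arbitrary $a \in A$ and invoke the hypothesis that $I$ is thin to obtain a countable set $Z \subset \mathbb{C}$ with
\[
\sigma_{A}(a) \subset \mathrm{pc}(\sigma_{A/I}(a/I)) \cup Z.
\]
The goal is then to replace $\sigma_{A/I}(a/I)$ by $\sigma_{A/J}(a/J)$ on the right-hand side, using the \emph{same} countable set $Z$; in other words, no new exceptional points are needed when passing from the larger ideal $I$ to the smaller ideal $J$.

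The key observation is that $J \subset I$ yields a canonical surjective homomorphism $q : A/J \longrightarrow A/I$ (with kernel $I/J$) sending $a/J$ to $a/I$. By Lemma~\ref{sp0}$(3)$ applied to $q$, one has
\[
\sigma_{A/I}(a/I) \subset \sigma_{A/J}(a/J).
\]
A direct check from the definition of polynomial convex hull shows that $K_1 \subset K_2$ implies $\mathrm{pc}(K_1) \subset \mathrm{pc}(K_2)$: if $|p(z)| \leq \sup_{w\in K_1}|p(w)|$ for every polynomial $p$, then \textit{a fortiori} $|p(z)| \leq \sup_{w \in K_2}|p(w)|$. Combining these,
\[
\mathrm{pc}(\sigma_{A/I}(a/I)) \subset \mathrm{pc}(\sigma_{A/J}(a/J)),
\]
whence $\sigma_{A}(a) \subset \mathrm{pc}(\sigma_{A/J}(a/J)) \cup Z$, proving that $J$ is a thin ideal of $A$.

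There is no real obstacle here; the only subtlety worth noting is the unital/non-unital distinction that could arise when comparing spectra across quotients, but this is already absorbed into Lemma~\ref{sp0}$(3)$ as stated. In the topological setting one just notes that $J$ being a closed ideal of $A$ contained in the closed ideal $I$ makes $q$ an open continuous surjection, so Lemma~\ref{sp0}$(3)$ applies verbatim.
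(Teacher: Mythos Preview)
Your proof is correct and follows essentially the same approach as the paper: the paper's proof is the single sentence ``It is sufficient to note that $\sigma(a/I)\subset\sigma(a/J)$ for every $a\in A$,'' and you have simply spelled out the justification for this inclusion (via the canonical surjection $A/J\to A/I$ and Lemma~\ref{sp0}(3)) and the monotonicity of $\mathrm{pc}$ that makes it suffice.
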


\begin{proof}
It is sufficient to note that $\sigma(a/I)\subset$ $\sigma(a/J)$ for every
$a\in A$.
\end{proof}

If $I$ is a thin ideal of a $Q$-algebra then $\overline{I}$ is also thin by
Corollary \ref{sinq}. Recall that the spectrum of each element of a
$Q$-algebra is a compact set in $\mathbb{C}$.

\begin{lemma}
\label{okr} Let $A$ be a $Q$-algebra. An ideal $I$ of $A$ is thin if and only
if for each $a\in A$ and for every polynomially convex neighborhood $V$ of
$\sigma(a/I)$, there is a countable set $Z\subset\mathbb{C}$ such that
$\sigma(a)\subset V\cup Z$.
\end{lemma}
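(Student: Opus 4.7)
The plan is to prove the two implications separately. Throughout, since $A$ is a $Q$-algebra, the Remark following Corollary~\ref{sinq} guarantees that $\sigma(a/I)$ is a compact subset of $\mathbb{C}$ for every $a\in A$, so $\mathrm{pc}(\sigma(a/I))$ is a well-defined polynomially convex compact set.

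The forward direction is immediate: if $I$ is thin and $V$ is any polynomially convex neighborhood of $\sigma(a/I)$, then $V\supset\sigma(a/I)$ together with $V=\mathrm{pc}(V)$ gives $V\supset\mathrm{pc}(\sigma(a/I))$. Choosing the countable set $Z$ from \eqref{incZ}, we obtain $\sigma(a)\subset\mathrm{pc}(\sigma(a/I))\cup Z\subset V\cup Z$.

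For the reverse direction, set $K=\mathrm{pc}(\sigma(a/I))$, and my plan is to construct a decreasing sequence $(V_n)$ of polynomially convex neighborhoods of $\sigma(a/I)$ (equivalently, of $K$) satisfying $\bigcap_n V_n=K$. Concretely, let $W_n=\{z\in\mathbb{C}:d(z,K)<1/n\}$ and let $V_n=\mathrm{pc}(\overline{W_n})$, which in $\mathbb{C}$ is obtained from $\overline{W_n}$ by adjoining the bounded components of its complement. Then each $V_n$ is polynomially convex compact with $K$ in its interior. By hypothesis, for each $n$ there exists a countable $Z_n\subset\mathbb{C}$ with $\sigma(a)\subset V_n\cup Z_n$. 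Setting $Z=\bigcup_n Z_n$, which is countable, any $\lambda\in\sigma(a)\setminus Z$ satisfies $\lambda\notin Z_n$ and hence $\lambda\in V_n$ for every $n$; thus $\lambda\in\bigcap_n V_n=K$, yielding $\sigma(a)\subset K\cup Z$ as required.

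The main obstacle is the topological claim that $\bigcap_n V_n=K$. The inclusion $K\subset\bigcap_n V_n$ is trivial. For the reverse inclusion, I would argue that any $z\notin K$ lies in the unbounded (hence only) component of the connected open set $\mathbb{C}\setminus K$, and so can be joined to a point of large modulus by a compact path $\gamma\subset\mathbb{C}\setminus K$. Since $d(\gamma,K)>0$, for all sufficiently large $n$ the path $\gamma$ avoids $\overline{W_n}$, placing $z$ in the unbounded component of $\mathbb{C}\setminus\overline{W_n}$ and therefore outside $\mathrm{pc}(\overline{W_n})=V_n$. This reduces the lemma to standard planar topology and the characterization $\mathrm{pc}(L)=L\cup\{\text{bounded components of }\mathbb{C}\setminus L\}$ underlying \cite[Lemma 3.1.3]{G69}.
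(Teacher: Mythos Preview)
Your proof is correct and follows essentially the same approach as the paper: both directions reduce to writing $K=\mathrm{pc}(\sigma(a/I))$ as the intersection of a countable family of polynomially convex neighborhoods and taking the union of the corresponding exception sets. The only cosmetic difference is in how the neighborhoods are built: you take polynomially convex hulls of shrinking $\varepsilon$-neighborhoods of $K$, whereas the paper covers $\mathbb{C}\setminus K$ by a countable family of open disks $D_k$ with $\overline{D_k}\cap K=\varnothing$ and, for each $k$, constructs a polynomially convex neighborhood $V_k$ of $K$ disjoint from $D_k$; both constructions hinge on the same topological fact (the connectedness of $\mathbb{C}\setminus K$ allows a path to infinity), and your version is arguably a little more direct.
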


\begin{proof}
$\Rightarrow$ is obvious.

$\Leftarrow$ Let $a\in A$, and let $Z_{V}:=\sigma(a)\setminus V$ be countable
for each polynomially convex neighborhood $V$ of $\sigma(a/I)$. Let
$D\subset\mathbb{C}$ be an open disk with
\[
\overline{D}\cap\mathrm{{pc}}(\sigma(a/I))=\varnothing.
\]
As $\mathrm{{pc}}(\sigma(a/I))$ is polynomially convex, its complement $C$ is
connected, so it contains $E_{t}=\{z:\left\vert z\right\vert \geq t\}$ for
sufficiently large $t>0$, and a curve $L\subset C$ connecting $D$ with $E_{t}%
$. Let $U\subset C$ be a connected neighborhood of $E_{t}\cup L\cup
\overline{D}$. Then the complement $V$ of $U$ is a compact neighborhood of
$\sigma(a/I)$. Since the complement of $V$ is connected then $V$ is
polynomially convex. By the assumption,
\[
\sigma(a)\subset V\cup Z_{V}%
\]
where $Z_{V}$ is countable (or finite) by assumption.

The set $\mathbb{C}\setminus\mathrm{{pc}}(\sigma(a/I))$ is open and has
therefore an inscribed covering by open disks. Taking an inscribed subcovering
by disks with rational radii and coordinates of centers one can find a
sequence $\left(  D_{k}\right)  _{1}^{\infty}$ of open disks such that
\[
\cup_{k=1}^{\infty}D_{k}=\mathbb{C}\setminus\mathrm{{pc}}(\sigma(a/I))\text{
and all }\overline{D_{k}}\cap\mathrm{{pc}}(\sigma(a/I))=\varnothing;
\]
let $V_{k}$ be compact neighborhoods of $\mathrm{{pc}}(\sigma(a/I))$
non-intersecting $D_{k}$. Then, as each $V_{k}$ lies in the complement
$D_{k}^{\mathrm{c}}$ of $D_{k}$,
\[
\mathrm{{pc}}(\sigma(a/I))\subset\cap_{k=1}^{\infty}V_{k}\subset\cap
_{k=1}^{\infty}D_{k}^{\mathrm{c}}\subset\left(  \cup_{k=1}^{\infty}%
D_{k}\right)  ^{\mathrm{c}}=\mathrm{{pc}}(\sigma(a/I)),
\]
and
\[
\sigma(a)\subset\cap_{k=1}^{\infty}(V_{k}\cup Z_{V_{k}})\subset\left(
\cap_{k=1}^{\infty}V_{k}\right)  \cup\left(  \cup_{k=1}^{\infty}Z_{V_{k}%
}\right)  =\mathrm{{pc}}(\sigma(a/I))\cup Z
\]
where $Z=\cup_{k=1}^{\infty}Z_{V_{k}}$ is a countable set.
\end{proof}

Taking into account Lemma $\ref{ines}$, we see that inessential ideals of
Banach algebras are thin ideals.

\begin{corollary}
\label{th}Let $A$ be a $Q$-algebra. Then each thin ideal $J$ is a scattered algebra.
\end{corollary}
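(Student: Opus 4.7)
The plan is to unwind the definitions directly and observe that thinness becomes a very strong constraint once we feed in an element of the ideal itself.

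Let $J$ be a thin ideal of the $Q$-algebra $A$ and fix $a \in J$. I will apply the thinness condition to this particular $a$. Since $a \in J$, the coset $a/J$ is the zero element of $A/J$, and hence $\sigma_{A/J}(a/J) = \{0\}$ (computed in $(A/J)^{1}$ in the standard way). Consequently $\mathrm{pc}(\sigma(a/J)) = \mathrm{pc}(\{0\}) = \{0\}$. Plugging this into (\ref{incZ}) gives a countable set $Z \subset \mathbb{C}$ with
\[
\sigma_{A}(a) \subset \{0\} \cup Z,
\]
so $\sigma_{A}(a)$ is at most countable.

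To conclude that $J$ itself is scattered, I need the spectrum computed within $J$ rather than within $A$. Since every ideal is a spectral subalgebra (cf.\ the $Q$-algebra discussion preceding Lemma \ref{sp0}), one has $\sigma_{J}(a)\setminus\{0\} = \sigma_{A}(a)\setminus\{0\}$, so $\sigma_{J}(a)$ is also at most countable. As $a \in J$ was arbitrary, $J$ is scattered by definition.

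There is essentially no obstacle here; the only thing worth checking carefully is the computation $\sigma_{A/J}(a/J) = \{0\}$ for $a \in J$ (note that $A/J$ need not itself be a $Q$-algebra since $J$ may fail to be closed, but this is irrelevant: the spectrum of the zero element in $(A/J)^{1}$ is $\{0\}$ regardless, and the hypothesis that $A$ is a $Q$-algebra ensures that the spectra appearing in (\ref{incZ}) are compact subsets of $\mathbb{C}$ to which the polynomial convex hull applies meaningfully). The argument also shows, more generally, that for \emph{any} $a \in A$ whose image $a/J$ has at most countable spectrum, $a$ itself has at most countable spectrum.
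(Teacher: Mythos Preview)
Your proof is correct and follows essentially the same approach as the paper's: both observe that for $a\in J$ one has $\sigma(a/J)=\{0\}$, whence (\ref{incZ}) gives $\sigma(a)\subset\{0\}\cup Z$ countable. The paper's proof is terser because the spectral-subalgebra point you spell out is handled in the surrounding discussion of inessential/scattered ideals.
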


\begin{proof}
Indeed, if $a\in J$ then $\sigma(a/J)=\{0\}$, whence, by (\ref{incZ}),
$\sigma(a)\subset\{0\}\cup Z$ is countable.
\end{proof}

\begin{lemma}
\label{stepext} Let $A$ be a $Q$-algebra, and let $I\subset J$ be closed
ideals of $A$. If $I$ and $J/I$ are thin ideals of $A$ and $A/I$,
respectively, then $J$ is a thin ideal of $A$.
\end{lemma}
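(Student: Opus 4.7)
The plan is to establish the thinness of $J$ via the neighborhood characterization supplied by Lemma \ref{okr}: I fix $a\in A$ and a polynomially convex neighborhood $V$ of $\sigma(a/J)$, and aim to produce a countable set $Z\subset\mathbb{C}$ with $\sigma(a)\subset V\cup Z$.

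First I would unpack the quotient. Since $(A/I)/(J/I)\cong A/J$, the spectrum $\sigma((a/I)/(J/I))$ coincides with $\sigma(a/J)$, so $V$ is also a polynomially convex neighborhood of $\sigma((a/I)/(J/I))$. Applying the thinness of $J/I$ in $A/I$ (through Lemma \ref{okr}) to the element $a/I$ and the neighborhood $V$ yields a countable set $Z_1$ with $\sigma(a/I)\subset V\cup Z_1$; replacing $Z_1$ by $Z_1\cap\sigma(a/I)$ if necessary, I may assume $Z_1$ is bounded.

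The key step is then the inclusion
\[
\mathrm{pc}(\sigma(a/I))\subset V\cup Z_1.
\]
Since $V$ is polynomially convex, $\mathbb{C}\setminus V$ is connected and unbounded; removing the countable set $Z_1$ from this connected open subset of $\mathbb{R}^{2}$ preserves path-connectedness, because any two points can be joined by a polygonal path avoiding a prescribed countable set. Therefore $\mathbb{C}\setminus(V\cup Z_1)=(\mathbb{C}\setminus V)\setminus Z_1$ is a connected, unbounded subset of $\mathbb{C}\setminus\sigma(a/I)$, and so lies entirely in the unbounded component of $\mathbb{C}\setminus\sigma(a/I)$. Consequently every bounded component of $\mathbb{C}\setminus\sigma(a/I)$ is contained in $V\cup Z_1$, and since the polynomial hull of a compact planar set equals the set together with the bounded components of its complement, this gives the displayed inclusion.

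Finally, the thinness of $I$ in $A$, applied to $a$ via the defining condition (\ref{incZ}), furnishes a countable set $Z_2$ with $\sigma(a)\subset\mathrm{pc}(\sigma(a/I))\cup Z_2$; combining with the displayed inclusion yields $\sigma(a)\subset V\cup Z_1\cup Z_2$, so $Z:=Z_1\cup Z_2$ works. The only non-routine ingredient is the polynomial-hull step above, whose whole content is the two-dimensional topological fact that a countable set cannot separate $\mathbb{R}^{2}$; the remainder of the argument merely tracks spectra under the canonical quotient $(A/I)/(J/I)\cong A/J$ and invokes the two thinness hypotheses one after the other.
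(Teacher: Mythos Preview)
Your proof is correct and follows the same strategy as the paper's: apply thinness of $J/I$ to bound $\sigma(a/I)$, use the fact that a polynomially convex set together with a countable set is polynomially convex to pass to $\mathrm{pc}(\sigma(a/I))$, then apply thinness of $I$. The only cosmetic difference is that the paper works directly with $\mathrm{pc}(\sigma(a/J))$ in place of your neighborhood $V$ (verifying (\ref{incZ}) rather than the Lemma~\ref{okr} criterion), and simply asserts the polynomial-convexity fact that you take the trouble to justify via the non-separation of $\mathbb{R}^2$ by countable sets.
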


\begin{proof}
Let $a\in A$ be arbitrary. Since the standard morphism $\tau:A/I\rightarrow
A/J$ has the kernel $J/I$ then, by Lemma \ref{okr},
\[
\sigma(a/I)\subset\mathrm{{pc}}(\sigma(a/J))\cup Z
\]
where $Z$ is a countable set. As the union of a polynomially convex set and a
(bounded) countable set in $\mathbb{C}$ is polynomially convex then
\[
\mathrm{{pc}}(\sigma(a/I))\subset\mathrm{{pc}}(\sigma(a/J))\cup Z.
\]
Since $I$ is thin then
\[
\sigma(a)\subset\mathrm{{pc}}(\sigma(a/I\ ))\cup Z^{\prime}%
\]
where $Z^{\prime}$ is countable. Thus%
\[
\sigma(a)\subset\mathrm{{pc}}(\sigma(a/J))\cup Z\cup Z^{\prime}.
\]

\end{proof}

The following quite general result establishes a kind of spectral continuity.

\begin{lemma}
\label{net}\label{limit} Let $A$ be a $Q$-algebra, let $\left(  J_{\alpha
}\right)  _{\alpha\in\Lambda}$ be an up-directed net of closed ideals of $A$
and $J=\overline{\bigcup_{\alpha\in\Lambda}J_{\alpha}}$. Then

\begin{enumerate}
\item
\begin{equation}
\sigma(a/J)=\bigcap_{\alpha\in\Lambda}{\sigma}(a/J_{\alpha}); \label{int1}%
\end{equation}

\item If all ideals $J_{\alpha}$ are thin then $J$ is thin.
\end{enumerate}
\end{lemma}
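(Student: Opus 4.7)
For part (1), the inclusion $\sigma(a/J)\subset\bigcap_\alpha\sigma(a/J_\alpha)$ is immediate: since each $J_\alpha\subset J$, there is a natural (surjective) morphism $A/J_\alpha\longrightarrow A/J$, and Lemma \ref{sp0}(3) gives $\sigma(a/J)\subset\sigma(a/J_\alpha)$ for every $\alpha$. For the reverse inclusion, passing to $A^1$ if necessary, one may assume $A$ is unital. Fix $\lambda\notin\sigma(a/J)$ and choose $b\in A$ with $c_1:=(a-\lambda)b-1\in J$ and $c_2:=b(a-\lambda)-1\in J$. Since $J=\overline{\bigcup_\alpha J_\alpha}$ and the net $(J_\alpha)$ is up-directed, for any $\varepsilon<1$ we may choose one index $\alpha$ and elements $x_1,x_2\in J_\alpha$ with $\Vert c_i-x_i\Vert<\varepsilon$. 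The crucial observation is that the quotient of a $Q$-algebra by a closed ideal is again a $Q$-algebra (lifting an element of norm less than $1$ to a representative of norm less than $1$ and using that $\sum y^n$ converges in $A$ for $\Vert y\Vert<1$). Hence in the $Q$-algebra $A/J_\alpha$ the elements $(a/J_\alpha-\lambda)(b/J_\alpha)=1+(c_1-x_1)/J_\alpha$ and $(b/J_\alpha)(a/J_\alpha-\lambda)=1+(c_2-x_2)/J_\alpha$ are invertible, so $a/J_\alpha-\lambda$ has both a right and a left inverse and is therefore invertible; thus $\lambda\notin\sigma(a/J_\alpha)$.

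For part (2), I will use (1) together with Lemma \ref{okr}. Fix $a\in A$ and a polynomially convex neighborhood $V$ of $\sigma(a/J)$. The key intermediate claim is that $\sigma(a/J_\alpha)\subset V$ for some $\alpha$. To see this, note that each $\sigma(a/J_\alpha)$ is a compact subset of a fixed disk in $\mathbb{C}$, and that the family $\{\sigma(a/J_\alpha)\setminus V^\circ\}$ is downward directed (because $J_\alpha\subset J_\beta$ implies $\sigma(a/J_\beta)\subset\sigma(a/J_\alpha)$). If every set in this family were nonempty, the finite intersection property in a compact set would yield a point in $\bigcap_\alpha\sigma(a/J_\alpha)\setminus V^\circ$, contradicting (1) since $\sigma(a/J)\subset V^\circ$. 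Choose such an $\alpha$; then $\mathrm{pc}(\sigma(a/J_\alpha))\subset\mathrm{pc}(V)=V$, and since $J_\alpha$ is thin there is a countable $Z\subset\mathbb{C}$ with
\[
\sigma(a)\subset\mathrm{pc}(\sigma(a/J_\alpha))\cup Z\subset V\cup Z.
\]
Lemma \ref{okr} then yields thinness of $J$.

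The main technical point is the reverse inclusion in (1): it is exactly the step where the $Q$-algebra hypothesis is used, via the Neumann-series criterion and the fact that the quotient property is inherited by $A/J_\alpha$. Once (1) is established, (2) is a straightforward compactness-plus-polynomial-convexity argument, and the appeal to Lemma \ref{okr} packages everything in the right form.
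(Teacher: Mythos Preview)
Your proof is correct and follows essentially the same approach as the paper. For part (1) the paper inverts $1+(x-y)$ in $A$ itself (using the $Q$-algebra Neumann series) and then pushes down, whereas you invert directly in the quotient $A/J_\alpha$; for part (2) both arguments use (1), a compactness argument to get $\sigma(a/J_\alpha)\subset V$ for some $\alpha$, and Lemma~\ref{okr} --- your compactness step is spelled out more carefully than the paper's, but the content is the same.
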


\begin{proof}
$\left(  1\right)  $ If $\lambda\notin\sigma(a/J)$ then there are $b\in A$ and
$x\in J$ with $(a-\lambda)b=1+x$. Hence one can find $\alpha\in\Lambda$ and
$y\in J_{\alpha}$ with $\Vert x-y\Vert<1$. By \cite[Proposition 2.2.7]{P94},
$\sum_{n>0}\left(  y-x\right)  ^{n}$ converges in $A$. Therefore $1-(y-x)$ is
invertible and $1+(x-y)=z^{-1}$ for some $z\in A$, whence $(a-\lambda
)b=z^{-1}+y$ and
\[
(a-\lambda)bz=1+yz.
\]
Therefore $\lambda\notin\sigma^{l}(a/J_{\alpha})$. Similarly, there is
$\delta\in\Lambda$ with $\lambda\notin\sigma^{r}(a/J_{\delta})$. So
$\lambda\notin\sigma(a/J_{\gamma})$ for $\gamma>\alpha$ and $\gamma>\beta$.
Hence
\[
\bigcap_{\alpha\in\Lambda}\sigma(a/J_{\alpha})\subset\sigma(a/J).
\]
The converse inclusion is evident.

$\left(  2\right)  $ Let $V$ be a polynomially convex neighborhood of
$\sigma(a/J)$ then, by (\ref{int1}) and compactness of $\sigma(a/J)$, there is
$\alpha\in\Lambda$ with $\sigma(a/J_{\alpha})\subset V$. Since $J_{\alpha}$ is
thin there is a countable set $Z\subset\mathbb{C}$ such that $\sigma(a)\subset
V\cup Z$. It remains to apply Lemma \ref{okr}.
\end{proof}

\begin{theorem}
\label{spec} Let $A$ be a $Q$-algebra, and let $\left(  J_{\alpha}\right)
_{{\alpha}\leq\gamma}$ be an increasing transfinite chain of closed ideals of
$A$ with $J_{0}=0$. If each ideal $J_{{\alpha}+1}/J_{\alpha}$ is a thin ideal
of $A/J_{\alpha}$ then all ideals $J_{\alpha}$ are thin ideals of $A$ and
therefore scattered algebras.
\end{theorem}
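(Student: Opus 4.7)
The plan is to prove, by transfinite induction on $\alpha\leq\gamma$, that each $J_{\alpha}$ is a thin ideal of $A$; the final assertion that $J_{\alpha}$ is then scattered will follow immediately from Corollary \ref{th}.

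The base case $\alpha=0$ is trivial since $J_{0}=0$ and $\sigma(a)\subset\mathrm{pc}(\sigma(a/0))=\mathrm{pc}(\sigma(a))$ for every $a\in A$. For the successor step $\alpha=\beta+1$, I would apply Lemma \ref{stepext} to the pair of closed ideals $J_{\beta}\subset J_{\beta+1}$: by the inductive hypothesis $J_{\beta}$ is a thin ideal of $A$, and by the assumption of the theorem $J_{\beta+1}/J_{\beta}$ is a thin ideal of $A/J_{\beta}$, so Lemma \ref{stepext} yields that $J_{\beta+1}$ is a thin ideal of $A$.

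For a limit ordinal $\alpha$, recall from (\ref{bb}) that
\[
J_{\alpha}=\overline{\bigcup_{\alpha'<\alpha}J_{\alpha'}}.
\]
The family $(J_{\alpha'})_{\alpha'<\alpha}$ is up-directed (in fact totally ordered by inclusion), and by the inductive hypothesis each $J_{\alpha'}$ is a thin ideal of $A$. Hence Lemma \ref{net}(2) applies and gives that $J_{\alpha}$ is a thin ideal of $A$. This closes the transfinite induction, and Corollary \ref{th} finishes the proof since every thin ideal of a $Q$-algebra is scattered.

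The argument is essentially a bookkeeping exercise once Lemmas \ref{stepext} and \ref{net} are in hand; the only subtle point is making sure the inductive hypothesis at limit ordinals supplies heredity with respect to the \emph{same} ambient algebra $A$ (not just with respect to the intermediate quotients), which it does, since both lemmas produce thin ideals of $A$ itself.
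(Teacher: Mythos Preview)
Your proof is correct and follows the same route as the paper: transfinite induction on $\alpha$, using Lemma~\ref{stepext} for successor ordinals, Lemma~\ref{net}(2) for limit ordinals, and Corollary~\ref{th} to conclude scatteredness. The only difference is cosmetic---you spell out the base case and the remark about the ambient algebra, which the paper leaves implicit.
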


\begin{proof}
Using transfinite induction, assume that for increasing transfinite chains of
ideals with final ordinals $<\alpha$ the statement is true, i.e. all
$J_{\alpha^{\prime}}$ with $\alpha^{\prime}<\alpha$ are thin ideals. If
$\alpha$ is a limit ordinal then the result follows from Lemma \ref{limit}%
$\left(  2\right)  $. Otherwise $\alpha=\alpha^{\prime}+1$ for some
$\alpha^{\prime}$ and the result follows from Lemma \ref{stepext} applied to
ideals $J_{\alpha^{\prime}}\subset J_{{\alpha}^{\prime}+1}$. Therefore all
ideals $J_{\alpha}$ are thin. By Corollary \ref{th}, they are also scattered.
\end{proof}

\subsection{Scattered radical}

Recall that the algebraic under radical $\operatorname{rad}%
^{\operatorname{soc}}=\operatorname{soc}\ast\operatorname{rad}$ sends each
algebra $A$ to the ideal $\{x\in A:x/\operatorname{rad}(A)\in
\operatorname{soc}(A/\operatorname{rad}(A))\}$. This ideal, the socle modulo
radical, was called in \cite[Definition F.3.1]{BMSW82} by the
\textit{presocle} of $A$ and denoted by $\operatorname{psoc}(A)$. We preserve
this notation for the map itself: $\operatorname{psoc}:=\operatorname{soc}%
\ast\operatorname{rad}$.

Now we can apply the algebraic convolution procedure and obtain the hereditary
algebraic radical $\operatorname{psoc}^{\ast}$ on $\mathfrak{U}_{\mathrm{a}}$.

\begin{theorem}
The restriction of $\operatorname{psoc}^{\ast}$ to $\mathfrak{U}_{q}$ is a
hereditary topological radical on $Q$-algebras.
\end{theorem}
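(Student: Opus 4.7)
My plan is to verify (i) that $\operatorname{psoc}^{\ast}$ is a hereditary algebraic radical and (ii) that $\operatorname{psoc}^{\ast}(A)$ is a closed ideal for each $A\in\mathfrak{U}_{q}$. Since open continuous surjective morphisms of $Q$-algebras are in particular algebraic morphisms, and algebraic heredity is stronger than its closed-ideal version, once (ii) is in hand all remaining requirements for a hereditary topological radical on $\mathfrak{U}_{q}$ transfer automatically from the algebraic case.

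Part (i) is an application of Theorem \ref{psoc}(3) with $P=\operatorname{rad}$: the Jacobson radical is a hereditary algebraic radical satisfying $\mathfrak{P}_{\beta}\leq\operatorname{rad}$, so $\operatorname{psoc}=\operatorname{rad}^{\operatorname{soc}}$ is a hereditary algebraic under radical and its algebraic convolution $\operatorname{psoc}^{\ast}$ is a hereditary algebraic radical.

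For part (ii), I would run the algebraic convolution chain $(I_{\alpha})$ of $\operatorname{psoc}$ alongside the topological closure-convolution chain $(J_{\alpha})$ given by $J_{0}=0$, $J_{\alpha+1}=q_{J_{\alpha}}^{-1}\bigl(\overline{\operatorname{psoc}(A/J_{\alpha})}\bigr)$, and $J_{\alpha}=\overline{\bigcup_{\alpha'<\alpha}J_{\alpha'}}$ at limit ordinals. By Theorem \ref{psoc}(1), $\overline{\operatorname{psoc}}$ is a topological under radical, so $(J_{\alpha})$ consists of closed ideals of $A$ and stabilizes at a closed $J_{\gamma}=\overline{\operatorname{psoc}}^{\ast}(A)$. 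A stagewise transfinite induction gives $I_{\alpha}\subset J_{\alpha}$, hence $\operatorname{psoc}^{\ast}(A)\subset J_{\gamma}$. For the reverse inclusion, I would identify $\overline{\operatorname{psoc}}^{\ast}$ with $\operatorname{psoc}^{\ast}$ on $\mathfrak{U}_{q}$ by comparing semisimple classes: applying Theorem \ref{sera} to the under radicals $\operatorname{psoc}$ and $\overline{\operatorname{psoc}}$, for any normed algebra $B$ one has $\operatorname{psoc}^{\ast}(B)=0\iff\operatorname{psoc}(B)=0\iff\overline{\operatorname{psoc}(B)}=0\iff\overline{\operatorname{psoc}}^{\ast}(B)=0$. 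Viewing both maps as over radicals on $\mathfrak{U}_{q}$ and applying Theorem \ref{equality}(1) in both directions (or Corollary \ref{ineq}) forces $\operatorname{psoc}^{\ast}=\overline{\operatorname{psoc}}^{\ast}$ on $\mathfrak{U}_{q}$, so $\operatorname{psoc}^{\ast}(A)=J_{\gamma}$ is closed.

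The main obstacle I foresee is making that last comparison rigorous: Theorem \ref{equality}(1) compares over radicals on a common class, and at the moment of application the algebraic $\operatorname{psoc}^{\ast}$ has not yet been shown to produce closed ideals on $\mathfrak{U}_{q}$, so it is not a priori an over radical in the topological sense on that class. A fallback strategy avoiding this circularity is a direct transfinite induction along $(J_{\alpha})$: use Theorem \ref{spec}, together with Lemma \ref{ines} applied in the completion and the spectral subalgebra property of $Q$-algebras, to show that each gap $\overline{\operatorname{psoc}(A/J_{\alpha})}$ is a thin closed ideal and that every such thin closed ideal is $\operatorname{psoc}^{\ast}$-radical (the essential point being that in a semisimple $Q$-algebra in which every element has countable spectrum, isolated spectral points produce Riesz projections in the socle, so the presocle is nonzero and the iteration cannot stall), whereupon Theorem \ref{ch1}(2) delivers $J_{\gamma}\subset\operatorname{psoc}^{\ast}(A)$.
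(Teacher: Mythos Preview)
Your part (i) and the reduction of the problem to closedness of $\operatorname{psoc}^{\ast}(A)$ on $Q$-algebras are correct. The difficulty lies entirely in (ii), and you have correctly diagnosed the circularity in your primary argument: to apply Theorem~\ref{equality}(1) or Corollary~\ref{ineq} in the direction $\overline{\operatorname{psoc}}^{\ast}\leq\operatorname{psoc}^{\ast}$ one must form $A/\operatorname{psoc}^{\ast}(A)$ inside $\mathfrak{U}_q$, which presupposes the very closedness you are trying to prove.

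The fallback, however, also has real gaps for $Q$-algebras. The argument that ``isolated spectral points produce Riesz projections in the socle'' tacitly uses the holomorphic functional calculus, but for a non-complete $Q$-algebra $B$ the contour integral $\frac{1}{2\pi i}\oint (z-a)^{-1}\,dz$ lands only in $\widehat{B^1}$, not in $B^1$; so Barnes' theorem does not transfer directly. Working in the completion does not help either: $\operatorname{rad}(B)$ need not equal $B\cap\operatorname{rad}(\widehat{B})$ for a $Q$-algebra (only $\operatorname{Rad}^r\leq\operatorname{rad}$ on $\mathfrak{U}_q$), so $\operatorname{psoc}(B)$ has no simple relation to $\operatorname{psoc}(\widehat{B})$, and Lemma~\ref{ines} cannot be invoked for the ideal you want. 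Finally, when you test $\operatorname{psoc}^{\ast}$-radicality of $\overline{\operatorname{psoc}(A/J_\alpha)}$ you must consider quotients by \emph{arbitrary} ideals $K$, and those quotients need not be normed, so the socle/spectrum machinery is unavailable there.

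The paper's argument sidesteps all of this with a single algebraic identity. Since $\operatorname{rad}\leq\operatorname{psoc}$, Corollary~\ref{ovun3}(1) gives $\operatorname{psoc}^{\ast}=\operatorname{rad}\vee\operatorname{psoc}=(\operatorname{rad}\ast\operatorname{psoc})^{\ast}$; and by (\ref{radstarp}) with $\Omega=\operatorname{Prim}$, one has $\operatorname{rad}\ast\operatorname{psoc}=\operatorname{psoc}^{\mathrm{kh}}$. Thus $\operatorname{psoc}^{\ast}=\operatorname{psoc}^{\mathrm{kh}\ast}$, and Corollary~\ref{khs}(2)--(3) applies: on $Q$-algebras $\operatorname{psoc}^{\mathrm{kh}\ast}=\overline{\operatorname{psoc}}^{\mathrm{kh}\ast}$ is a hereditary topological radical. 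The point of the $\mathrm{kh}$-step is Lemma~\ref{khi}(4): in a $Q$-algebra primitive ideals are closed, so $\mathrm{kh}(I)=\mathrm{kh}(\overline{I})$, which makes the algebraic and topological convolution chains of $\operatorname{psoc}^{\mathrm{kh}}$ coincide stage by stage---precisely the synchronization your direct comparison of the $\operatorname{psoc}$- and $\overline{\operatorname{psoc}}$-chains cannot achieve.
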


\begin{proof}
As $\operatorname{rad}\leq\operatorname{psoc}$ then
\[
\operatorname{rad}\vee\operatorname{psoc}=\operatorname{psoc}^{\ast}.
\]
(Recall that $\operatorname{rad}\vee\operatorname{psoc}$ is the smallest
radical that is larger than or equal to $\operatorname{rad}$ and
$\operatorname{psoc}$.) One can realize $\operatorname{rad}\vee
\operatorname{psoc}$ as the action of the algebraic convolution procedure,
namely
\[
\operatorname{rad}\vee\operatorname{psoc}=\left(  \operatorname{rad}%
\ast\operatorname{psoc}\right)  ^{\ast}.
\]
But $\operatorname{rad}\ast\operatorname{psoc}=\operatorname{psoc}%
^{\mathrm{kh}}$. So
\[
\operatorname{psoc}^{\ast}=\operatorname{psoc}^{\mathrm{kh}\ast}.
\]
By Corollary \ref{khs}, $\operatorname{psoc}^{\mathrm{kh}\ast}$ is a
hereditary topological radical on $Q$-algebras.
\end{proof}

Define map $\mathcal{R}_{s}$ by
\[
\mathcal{R}_{s}=\operatorname{psoc}^{\ast}\text{ on Banach algebras;}%
\]
this map is a hereditary topological radical on $\mathfrak{U}_{\mathrm{b}}$;
it is called the \textit{scattered radical}.

\begin{lemma}
\label{scat} Let $A$ be a Banach algebra. Then $\mathcal{R}_{s}(A)$ is a thin
ideal of $A$ and a scattered algebra.
\end{lemma}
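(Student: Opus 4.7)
The plan is to realize $\mathcal{R}_s(A)$ as the terminal member of a transfinite chain whose gap-quotients are thin, and then invoke Theorem \ref{spec}. Since Banach algebras are $Q$-algebras, the preceding theorem gives $\mathcal{R}_s = \operatorname{psoc}^{*}$ as a topological radical on $A$, and by Corollary \ref{khs}(2) the algebraic and topological convolution chains for $\operatorname{psoc}$ produce the same terminal ideal. Thus I can work with the topological chain $(I_\alpha)_{\alpha\le\gamma}$ of closed ideals of $A$ with $I_0=0$, successor step $I_{\alpha+1}/I_\alpha=\overline{\operatorname{psoc}(A/I_\alpha)}$ (closure taken in $A/I_\alpha$), and closure of union at limit ordinals, stabilizing at $I_\gamma=\mathcal{R}_s(A)$.

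The heart of the argument is to show that for every Banach algebra $B$ the presocle $\operatorname{psoc}(B)$ is a thin ideal of $B$. Let $C=B/\operatorname{Rad}(B)$ and $q:B\to C$ the quotient map, so $C$ is semisimple and $\operatorname{psoc}(B)=q^{-1}(\operatorname{soc}(C))$. The socle $\operatorname{soc}(C)$ consists of finite-rank (hence Riesz) elements, so it is an inessential ideal of $C$; by Lemma \ref{ines} it is thin in $C$. Now for any $a\in B$ one has $\sigma_B(a)=\sigma_C(q(a))$ by Lemma \ref{sp0}(4), and $B/\operatorname{psoc}(B)\cong C/\operatorname{soc}(C)$ identifies $\sigma(a/\operatorname{psoc}(B))$ with $\sigma(q(a)/\operatorname{soc}(C))$. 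Applying the thinness of $\operatorname{soc}(C)$ in $C$ to $q(a)$ yields a countable (in fact finite) $Z\subset\mathbb{C}$ with
\[
\sigma_B(a)=\sigma_C(q(a))\subset\mathrm{pc}(\sigma(q(a)/\operatorname{soc}(C)))\cup Z=\mathrm{pc}(\sigma(a/\operatorname{psoc}(B)))\cup Z,
\]
so $\operatorname{psoc}(B)$ is thin in $B$. By the observation following Lemma \ref{inc} (closure of a thin ideal in a $Q$-algebra is thin, via Corollary \ref{sinq}), $\overline{\operatorname{psoc}(B)}$ is also a thin ideal of $B$.

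Applying the previous paragraph to each quotient $B=A/I_\alpha$ (again a Banach $Q$-algebra) gives that every gap-quotient $I_{\alpha+1}/I_\alpha=\overline{\operatorname{psoc}(A/I_\alpha)}$ is a thin ideal of $A/I_\alpha$. Theorem \ref{spec} then asserts that every $I_\alpha$ is a thin ideal of $A$ and hence, by Corollary \ref{th}, a scattered algebra. In particular $\mathcal{R}_s(A)=I_\gamma$ is a thin ideal of $A$ and a scattered algebra, which is the claim.

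The main obstacle is the auxiliary lemma that $\operatorname{psoc}(B)$ is thin: once one recognizes the presocle as the preimage of the socle of $B/\operatorname{Rad}(B)$, thinness reduces via Lemma \ref{sp0}(4) and the isomorphism $B/\operatorname{psoc}(B)\cong C/\operatorname{soc}(C)$ to the classical inessentiality of $\operatorname{soc}(C)$ in a semisimple Banach algebra (embodied in Lemma \ref{ines}). After that, the transfinite step is purely formal and handled by Theorem \ref{spec}.
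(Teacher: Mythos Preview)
Your proof is correct and follows the same route as the paper: realize $\mathcal{R}_s(A)$ as the end of the topological convolution chain for $\overline{\operatorname{psoc}}$, verify that each gap-quotient $\overline{\operatorname{psoc}}(A/I_\alpha)$ is a thin ideal of $A/I_\alpha$, and invoke Theorem \ref{spec}. The only cosmetic difference is that the paper cites \cite{BMSW82} directly for the inessentiality of $\overline{\operatorname{psoc}}(B)$ (hence thinness via Lemmas \ref{ines} and \ref{okr}), whereas you spell out the reduction to the inessentiality of $\operatorname{soc}(B/\operatorname{Rad}(B))$ using $\sigma_B(a)=\sigma_{B/\operatorname{Rad}(B)}(q(a))$.
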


\begin{proof}
Let $\left(  R_{\alpha}\right)  _{\alpha}$ be the convolution chain of under
radicals generated by $\overline{\operatorname{psoc}}$. As $\overline
{\operatorname{psoc}}^{\ast}=\mathcal{R}_{s}$ by Corollary \ref{khs}, then
there is $\gamma$ such that $R_{\gamma}\left(  A\right)  =\mathcal{R}%
_{s}\left(  A\right)  $ and all gap-quotients $R_{\alpha+1}\left(  A\right)
/R_{\alpha}\left(  A\right)  =\overline{\operatorname{psoc}}\left(
A/R_{\alpha}\left(  A\right)  \right)  $ are inessential ideals by
\cite[Section R]{BMSW82}, and therefore thin ideals by Lemmas \ref{ines} and
\ref{okr}. By Theorem \ref{spec}, $\mathcal{R}_{s}(A)$ is a thin ideal of $A$
and a scattered algebra.
\end{proof}

Let $\mathcal{S}\left(  A\right)  $ be the set of all elements of $A$ with
(finite or) countable spectrum.

\begin{theorem}
\label{large} Let $A$ be a Banach algebra. Then

\begin{enumerate}
\item $\mathcal{R}_{s}(A)$ contains all one-sided, non-necessarily closed,
scattered ideals of $A$;

\item $\mathcal{R}_{s}(A)$ is the largest thin ideal and the largest scattered
ideal of $A$;

\item $\mathcal{R}_{s}(A)=\{a\in A:\sigma(ax)\text{ is countable for each
}x\in A\}$;

\item $\mathcal{R}_{s}(A)$ is the largest ideal of $A$ contained in
$\mathcal{S}\left(  A\right)  $;

\item $\mathcal{R}_{s}(A)+\mathcal{S}\left(  A\right)  \subset\mathcal{S}%
\left(  A\right)  $.
\end{enumerate}
\end{theorem}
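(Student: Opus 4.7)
The plan is to establish (1) first; then (2)--(4) follow almost formally, and (5) will come from the thin-ideal structure of $\mathcal{R}_{s}(A)$ provided by Lemma \ref{scat}. For (1), let $J$ be a one-sided (say left) scattered ideal of $A$. I pass to $B=A/\mathcal{R}_{s}(A)$ and let $J'$ be the image of $J$ in $B$. Since $\operatorname{rad}\le\mathcal{R}_{s}$, the algebra $B$ is semisimple, and since $B$ is $\mathcal{R}_{s}$-semisimple, $\operatorname{psoc}(B)=\operatorname{soc}(B)=0$. As spectra only shrink under passage to quotients, $J'$ is again scattered. Supposing, for contradiction, that $J'\neq 0$, I choose $a\in J'$ with $\sigma(a)\neq\{0\}$ (such $a$ exists because a left ideal of quasinilpotents in a semisimple Banach algebra lies in the radical). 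Every non-empty countable compact subset of $\mathbb{C}$ has isolated points, so I can pick an isolated $\lambda\in\sigma(a)\setminus\{0\}$. Writing the holomorphic idempotent $f$ that equals $1$ near $\lambda$ and $0$ near the rest of $\sigma(a)$ as $f(z)=zg(z)$ (possible since $f$ vanishes near $0$), the Riesz projection is $p=f(a)=g(a)a=ag(a)$ with $g(a)\in B$ (as $g(0)=0$), and therefore $p=g(a)a\in BJ'\subset J'$. Consequently $pBp\subset J'$ and $pBp$ is a unital semisimple scattered Banach algebra.

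The main technical obstacle is the structural claim that \emph{every non-zero unital semisimple scattered Banach algebra $C$ has $\operatorname{soc}(C)\neq 0$}. I expect to prove it either by invoking the density of the socle in a semisimple Banach algebra whose elements all have countable spectra (a theorem in the Aupetit--Zem\'anek tradition), or, more self-containedly, by iterating the Riesz-projection construction along corners $p_{1}Cp_{1}\supset p_{2}Cp_{2}\supset\cdots$ and using a Cantor--Bendixson argument on the countable scattered spectra of non-scalar elements to arrive at a one-dimensional corner. Granting this for $C=pBp$, choose a minimal projection $q\in pBp$; the identity $q=pqp$ yields $qBq=q(pBp)q=\mathbb{C}q$, so $q$ is minimal in $B$, contradicting $\operatorname{soc}(B)=0$. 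Hence $J'=0$, i.e.\ $J\subset\mathcal{R}_{s}(A)$, establishing (1). Part (2) is then immediate: by Lemma \ref{scat}, $\mathcal{R}_{s}(A)$ is both thin and scattered; every scattered two-sided ideal of $A$ lies in $\mathcal{R}_{s}(A)$ by (1); and every thin ideal is scattered by Corollary \ref{th}, so also lies in $\mathcal{R}_{s}(A)$.

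For (3), set $R=\{a\in A:\sigma(ax)\text{ is countable for every }x\in A\}$. Since $\mathcal{R}_{s}(A)$ is an ideal of scattered elements, $\mathcal{R}_{s}(A)\subset R$ is clear. Conversely, given $a\in R$, the left ideal $Aa$ is scattered because $\sigma_{A}(xa)\cup\{0\}=\sigma_{A}(ax)\cup\{0\}$ is countable for every $x$, so $Aa\subset\mathcal{R}_{s}(A)$ by (1). Writing $\bar a$ for the image of $a$ in $B$, this gives $\bar aB=0$. But $\{x\in B:xB=0\}$ is a two-sided ideal of $B$ in which every element squares to zero (since $x^{2}\in xB=0$), hence a nil ideal contained in $\operatorname{rad}(B)=0$; therefore $\bar a=0$ and $a\in\mathcal{R}_{s}(A)$. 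Part (4) follows in the same way: $\mathcal{R}_{s}(A)\subset\mathcal{S}(A)$ by Lemma \ref{scat}, and any ideal $I\subset\mathcal{S}(A)$ is scattered, hence contained in $\mathcal{R}_{s}(A)$ by (2).

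For (5), fix $a\in\mathcal{R}_{s}(A)$ and $b\in\mathcal{S}(A)$. In $B$ the cosets of $a+b$ and of $b$ coincide, so $\sigma_{B}((a+b)/\mathcal{R}_{s}(A))\subset\sigma_{A}(b)$ is a countable compact subset of $\mathbb{C}$. The complement of any countable set in $\mathbb{C}$ is path connected, so every countable compact subset of $\mathbb{C}$ is polynomially convex; thus $\mathrm{pc}(\sigma_{B}((a+b)/\mathcal{R}_{s}(A)))$ coincides with $\sigma_{B}((a+b)/\mathcal{R}_{s}(A))$ itself, a countable set. Applying the thinness of $\mathcal{R}_{s}(A)$ (Lemma \ref{scat}) to $a+b$ yields a countable $Z\subset\mathbb{C}$ with $\sigma_{A}(a+b)\subset\mathrm{pc}(\sigma_{B}((a+b)/\mathcal{R}_{s}(A)))\cup Z$, and the right-hand side is countable, so $a+b\in\mathcal{S}(A)$.
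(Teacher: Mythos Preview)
Your argument is correct and follows the paper's route closely: pass to $B=A/\mathcal{R}_{s}(A)$, find a Riesz projection $p$ in the image of the scattered one-sided ideal, and derive a nonzero minimal projection in $B$ to contradict $\operatorname{psoc}(B)=0$. The ``main technical obstacle'' you isolate is precisely Barnes' theorem \cite{B68}, which the paper cites directly; there is no need to reprove it via Cantor--Bendixson. The only structural difference is that the paper applies Barnes' theorem to the closed left ideal $Bp$ and then invokes Proposition~\ref{onesided} to lift $\operatorname{psoc}(Bp)\neq 0$ to $\operatorname{psoc}(B)\neq 0$, whereas you apply it to the corner $pBp$ and lift a minimal projection directly via $qBq=q(pBp)q$; both work equally well.

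Two small corrections. In part~(3), from $Aa\subset\mathcal{R}_{s}(A)$ you get $B\bar a=0$, not $\bar a B=0$; your nilpotent--annihilator argument then goes through with $\{x\in B:Bx=0\}$ in place of $\{x\in B:xB=0\}$ (alternatively, start from the right ideal $aA$, whose elements $ax$ have countable spectrum directly, to obtain $\bar aB=0$ as written). Note that this extra annihilator step, handling the possibility $a\notin Aa$ in the non-unital case, is a detail the paper's proof leaves implicit. Parts~(2), (4), and~(5) match the paper's arguments.
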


\begin{proof}
$\left(  1\right)  $ Let $J$ be a scattered left ideal in $A$. Let
$B=A/\mathcal{R}_{s}(A)$, and let $I$ be the image of $J$ in $B$. Then $B$ is
semisimple and $I$ is a scattered left ideal of $B$. If it is non-zero then it
contains an element $a$ with non-zero spectrum and therefore there is an
isolated non-zero point $\lambda$ in its spectrum. As $\sigma_{B}\left(
a\right)  \subset\sigma_{I}\left(  a\right)  $ and $\lambda\in\sigma_{I}%
^{a}\left(  a\right)  \subset\sigma_{B}^{a}\left(  a\right)  $, $\lambda$ is
an isolated point of $\sigma_{B}\left(  a\right)  $. As $\lambda\neq0$, the
corresponding Riesz projection $p$ of $a$ belongs to $Ba\subset I$ (see for
instance the proof of Lemma 5.7.1 of \cite{A91}). As $p\neq0$ then $Bp$ is a
closed non-zero scattered left ideal of $B$. By Barnes' Theorem \cite{B68},
$\operatorname{psoc}(Bp)\neq0$. By Proposition \ref{onesided},
$\operatorname{psoc}(B)\neq0$. Thus $\mathcal{R}_{s}(B)\neq0$, a
contradiction. Therefore $I=0$, whence $J\subset\mathcal{R}_{s}(A)$.

$\left(  2\right)  $ It follows from $\left(  1\right)  $ that $\mathcal{R}%
_{s}(A)$ is the \textit{largest scattered ideal} of $A$. Let $I$ be a thin
ideal of $A$. Then it is a scattered ideal of $A$ by Corollary \ref{th}, and
so $I\subset\mathcal{R}_{s}(A)$.

$\left(  3\right)  $ Let $a\in A$ and $J=Aa$. If $a\in\mathcal{R}_{s}(A)$ then
$J$ is contained in $\mathcal{R}_{s}(A)$ and its elements have countable
spectra. Conversely, if $J$ is scattered then it is contained in
$\mathcal{R}_{s}(A)$ by Proposition \ref{large}.

$\left(  4\right)  $ If $I$ is an ideal of $A$ contained in $\mathcal{S}%
\left(  A\right)  $ then it is a scattered ideal of $A$. By $\left(  2\right)
$, $I\subset\mathcal{R}_{s}(A)$.

$\left(  5\right)  $ Let $a\in\mathcal{S}\left(  A\right)  $ and
$b\in\mathcal{R}_{s}(A)$. As $\mathcal{R}_{s}(A)$ is a thin ideal of $A$ then
\[
\sigma\left(  a+b\right)  \subset\mathrm{pc}\left(  \sigma\left(
a/\mathcal{R}_{s}(A)\right)  \right)  \cup N
\]
for some countable set $N\subset\mathbb{C}$. But $\sigma\left(  a/\mathcal{R}%
_{s}(A)\right)  $ is countable, whence
\[
\mathrm{pc}\left(  \sigma\left(  a/\mathcal{R}_{s}(A)\right)  \right)
=\sigma\left(  a/\mathcal{R}_{s}(A)\right)
\]
is countable and $a+b\in\mathcal{S}\left(  A\right)  $.
\end{proof}

\begin{corollary}
$\mathcal{R}_{s}$ is a uniform radical on $\mathfrak{U}_{\mathrm{b}}$.
\end{corollary}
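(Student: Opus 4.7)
The plan is to exploit the spectral characterization of $\mathcal{R}_{s}$ given in Theorem \ref{large}: an ideal of a Banach algebra is contained in $\mathcal{R}_{s}$ precisely when its elements have countable spectrum. So, given an $\mathcal{R}_{s}$-radical Banach algebra $A$ and a Banach subalgebra $B\in\mathfrak{U}_{\mathrm{b}}$, it suffices to show that every $b\in B$ satisfies $\sigma_{B}(b)\in\mathcal{S}(B)$ (with spectra computed in $B^{1}$). Indeed, $B$ is an ideal of itself, so once $B\subset\mathcal{S}(B)$ is established, Theorem \ref{large}(4) gives $B\subset\mathcal{R}_{s}(B)\subset B$, hence $B=\mathcal{R}_{s}(B)$.

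First, I would record that $A=\mathcal{R}_{s}(A)$ forces $A$ to be scattered: by Lemma \ref{scat} (or Theorem \ref{large}(2)), $\sigma_{A}(b)$ is at most countable for each $b\in A$. Next, I would compare $\sigma_{A}(b)$ and $\sigma_{B}(b)$ via the classical boundary principle. The inclusion $\sigma_{A}(b)\subset\sigma_{B}(b)$ is automatic, and every $\lambda\in\partial\sigma_{B}(b)$ is a topological divisor of zero in $B^{1}$ (being an approximate eigenvalue of $\mathrm{L}_{b-\lambda}$ on $B^{1}$), hence also in $A^{1}$ thanks to the continuous inclusion, so that $\partial\sigma_{B}(b)\subset\sigma_{A}(b)$. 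Consequently $\sigma_{B}(b)\setminus\sigma_{A}(b)$ is open in $\mathbb{C}$, and each of its connected components is a bounded component of the open set $\mathbb{C}\setminus\sigma_{A}(b)$.

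The last observation to invoke is that a countable subset of $\mathbb{C}$ does not disconnect the plane: through any two points of $\mathbb{C}\setminus\sigma_{A}(b)$ one can draw lines avoiding the countable set $\sigma_{A}(b)$ entirely, so $\mathbb{C}\setminus\sigma_{A}(b)$ is path-connected, and being unbounded it has no bounded components at all. Therefore $\sigma_{B}(b)=\sigma_{A}(b)$ is countable, so $B\subset\mathcal{S}(B)$, and the argument closes as above.

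The main delicate point I expect is the precise handling of what ``subalgebra $B\in\mathfrak{U}_{\mathrm{b}}$'' means: if $B$ carries a possibly different complete Banach algebra norm, one needs continuity of the inclusion $B\hookrightarrow A$ (and the corresponding unital inclusion $B^{1}\hookrightarrow A^{1}$) in order to transfer topological divisors of zero from $B^{1}$ to $A^{1}$. For closed subalgebras with the inherited norm this is automatic; the general case requires a brief justification. The rest of the argument is standard planar topology plus the spectral characterization of $\mathcal{R}_{s}$ already in hand.
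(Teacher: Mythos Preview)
Your proposal is correct and follows the same idea as the paper: the paper's entire proof is the single sentence ``Indeed, every closed subalgebra of a scattered Banach algebra is scattered,'' which it treats as evident. You simply unpack this fact via the standard spectral-permanence argument (boundary points of $\sigma_B(b)$ lie in $\sigma_A(b)$, and a countable set cannot bound any holes), and then close with Theorem~\ref{large}(4). Your caution about non-inherited norms on $B$ is unnecessary here---in this paper the convention (cf.\ the remark after Axiom~4) is that subalgebras carry the induced norm, so ``$B\in\mathfrak{U}_{\mathrm{b}}$'' just means $B$ is closed---but the core argument is exactly what the paper's one-line proof is relying on.
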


\begin{proof}
Indeed, every closed subalgebra of a scattered Banach algebra is scattered.
\end{proof}

\begin{corollary}
\label{ScatBH} The radical $\mathcal{R}_{s}$ satisfy the condition of Banach
heredity $(\ref{BanRad})$.
\end{corollary}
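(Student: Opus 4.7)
The plan is to prove the equality $\mathcal{R}_{s}(L,\|\cdot\|_{L})=L\cap\mathcal{R}_{s}(A)$ for every Banach ideal $L$ of a Banach algebra $A$ by showing both inclusions via the characterization of $\mathcal{R}_{s}$ as the largest scattered ideal (Theorem~\ref{large}(2)), combined with the spectral-subalgebra identity $\sigma_{B}(a)\setminus\{0\}=\sigma_{A}(a)\setminus\{0\}$ valid for any (algebraic) ideal $B$ of $A$.

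For the easy direction $L\cap\mathcal{R}_{s}(A)\subset\mathcal{R}_{s}(L)$: the set $J:=L\cap\mathcal{R}_{s}(A)$ is automatically a two-sided ideal of $L$, and since $\mathcal{R}_{s}(A)$ is a scattered algebra by Lemma~\ref{scat}, every $j\in J$ has countable spectrum in $A$. Because $J$ is an ideal of $L$ and $L$ an ideal of $A$, passing between $\sigma_{J}$, $\sigma_{L}$ and $\sigma_{A}$ changes the spectrum by at most $\{0\}$, so $J$ is a scattered ideal of the Banach algebra $(L,\|\cdot\|_{L})$, and Theorem~\ref{large}(2) applied to $L$ forces $J\subset\mathcal{R}_{s}(L)$.

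The reverse inclusion is the substantive part. I would reduce it to showing that $\mathcal{R}_{s}(L)$ is in fact a two-sided ideal of $A$, not merely of $L$. Once this is established, Lemma~\ref{scat} says $\mathcal{R}_{s}(L)$ is a scattered algebra, hence a scattered ideal of $A$, and Theorem~\ref{large}(2) applied to $A$ yields $\mathcal{R}_{s}(L)\subset\mathcal{R}_{s}(A)$; combined with $\mathcal{R}_{s}(L)\subset L$ this gives the inclusion. To show $A$-invariance, fix $a\in\mathcal{R}_{s}(L)$ and $y\in A$ and set $b=ya\in L$. For arbitrary $u,v\in L$ one computes
\[
ubv=(uy)(av),
\]
where $uy\in L$ (since $L$ is an ideal of $A$) and $av\in\mathcal{R}_{s}(L)$ (since $\mathcal{R}_{s}(L)$ is an ideal of $L$), so $ubv\in L\cdot\mathcal{R}_{s}(L)\subset\mathcal{R}_{s}(L)$. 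Passing to $\bar{L}:=L/\mathcal{R}_{s}(L)$ and writing $\bar{b}$ for the image of $b$, this yields $\bar{L}\bar{b}\bar{L}=0$. Since $\operatorname{rad}\leq\operatorname{psoc}$, one has $\operatorname{Rad}\leq\mathcal{R}_{s}$ on Banach algebras, so $\bar{L}$ is semisimple and in particular semiprime. A standard semiprimeness argument (using \cite[Lemma 30.4]{BD73}: $(\bar{b}\bar{L})^{2}\subset\bar{b}(\bar{L}\bar{b}\bar{L})=0$ forces $\bar{b}\bar{L}=0$, then the right-annihilator two-sided ideal of $\bar{L}$ has zero square and must vanish) gives $\bar{b}=0$, i.e.\ $ya\in\mathcal{R}_{s}(L)$; the symmetric identity $u(ay)v=(ua)(yv)$ handles $ay\in\mathcal{R}_{s}(L)$.

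The main obstacle is precisely this semiprime-quotient step. Everything else is either a routine application of Theorem~\ref{large}(2) or a use of the fact that ideals are spectral subalgebras. The appeal to semiprimeness of $L/\mathcal{R}_{s}(L)$ requires only $\mathcal{R}_{s}\geq\operatorname{Rad}$, which is already built into the construction $\mathcal{R}_{s}=\operatorname{psoc}^{\ast}$ since $\operatorname{rad}\leq\operatorname{psoc}$; and no use of the particular completeness of the norm $\|\cdot\|_{L}$ enters into the argument for $A$-invariance beyond the fact that $\mathcal{R}_{s}(L)$ is a genuine closed ideal of $(L,\|\cdot\|_{L})$.
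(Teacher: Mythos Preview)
Your proof is correct, but it takes a genuinely different route from the paper's. The paper's argument is a one-liner exploiting the algebraic origin of $\mathcal{R}_{s}$: since $\mathcal{R}_{s}=\operatorname{psoc}^{\ast}$ on Banach algebras and $\operatorname{psoc}^{\ast}$ is a \emph{hereditary algebraic} radical on $\mathfrak{U}_{\mathrm{a}}$ (Theorem~\ref{psoc}(3) together with Corollary~\ref{hera}), one simply observes that $\operatorname{psoc}^{\ast}$ does not see the norm at all, so $\mathcal{R}_{s}(L,\|\cdot\|_{L})=\operatorname{psoc}^{\ast}(L)=L\cap\operatorname{psoc}^{\ast}(A)=L\cap\mathcal{R}_{s}(A)$. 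Your approach instead works entirely within the spectral description of $\mathcal{R}_{s}$ via Theorem~\ref{large}, proving by hand that $\mathcal{R}_{s}(L)$ is a two-sided ideal of $A$ through a semiprimeness argument in $L/\mathcal{R}_{s}(L)$. What the paper's approach buys is brevity and a transparent reason (norm-independence of an algebraic radical) for why Banach heredity holds; what your approach buys is independence from the general machinery of algebraic heredity --- it would work for any radical given as ``largest ideal with a spectral property'' provided the quotient by it is semiprime. A minor terminological slip: after $\bar b\bar L=0$ you want the \emph{left} annihilator $\{x\in\bar L:x\bar L=0\}$, not the right one, but the argument you sketch is correct.
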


\begin{proof}
Let $L$ be a Banach ideal of a Banach algebra $A$. This means that $L$ is an
ideal of $A$ and there is an injective continuous homomorphism $f$ of a Banach
algebra $B$ to $A$ with $f(B)=L$. Since $f$ is an isomorphism of algebras $B$
and $L$, and $\operatorname{psoc}^{\ast}$ is a hereditary radical on
$\mathfrak{U}_{a}$, we have that
\begin{align*}
\mathcal{R}_{s}((L,\Vert\cdot\Vert_{B}))  &  =f(\mathcal{R}_{s}%
(B))=f(\operatorname{psoc}^{\ast}(B))=\operatorname{psoc}^{\ast}(L)\\
&  =L\cap\operatorname{psoc}^{\ast}(A)=L\cap\mathcal{R}_{s}(A).
\end{align*}

\end{proof}

\begin{corollary}
\label{lar}Let $A$ be a Banach algebra, and let $I$ be a (non-necessary
closed) ideal of $A$. Then

\begin{enumerate}
\item If $I$ is scattered then $I$ is a thin ideal of $A$;

\item If $I$ and $A/I$ are scattered then $A$ is scattered;

\item If $f:A\longrightarrow B$ is an algebraic morphism of Banach algebras
then
\[
f\left(  \mathcal{R}_{s}(A)\right)  \subset\mathcal{R}_{s}(B).
\]

\end{enumerate}
\end{corollary}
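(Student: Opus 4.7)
My plan is to deduce all three parts from Theorem \ref{large}, Lemma \ref{scat}, and elementary facts about polynomial convexity of countable sets.

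For part (1), since $I$ is scattered (as a one-sided ideal, in particular), Theorem \ref{large}(1) gives $I\subset\mathcal{R}_s(A)$. The natural surjective homomorphism $A/I\longrightarrow A/\mathcal{R}_s(A)$ is a morphism of algebras, so by Lemma \ref{sp0}(3), $\sigma(a/\mathcal{R}_s(A))\subset\sigma(a/I)$ for every $a\in A$, whence $\mathrm{pc}(\sigma(a/\mathcal{R}_s(A)))\subset\mathrm{pc}(\sigma(a/I))$. Since $\mathcal{R}_s(A)$ is a thin ideal of $A$ by Lemma \ref{scat}, there is a countable $Z\subset\mathbb{C}$ with $\sigma(a)\subset\mathrm{pc}(\sigma(a/\mathcal{R}_s(A)))\cup Z\subset\mathrm{pc}(\sigma(a/I))\cup Z$, so $I$ is thin. (This essentially extends Lemma \ref{inc} from closed to arbitrary ideals by routing through $\mathcal{R}_s(A)$.)

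For part (2), by (1) the ideal $I$ is thin, so for every $a\in A$ we have $\sigma(a)\subset\mathrm{pc}(\sigma(a/I))\cup Z$ with $Z$ countable. Because $A/I$ is scattered, $\sigma(a/I)$ is a countable compact subset of $\mathbb{C}$. The key observation is that a countable compact subset $K\subset\mathbb{C}$ is polynomially convex: a countable set cannot disconnect the plane (between any two points of $\mathbb{C}\setminus K$ there are uncountably many disjoint arcs, so one avoids $K$), so the complement of $K$ is connected, and by \cite[Lemma 3.1.3]{G69} (cited in the paper) $K=\mathrm{pc}(K)$. Hence $\mathrm{pc}(\sigma(a/I))=\sigma(a/I)$ is countable, and therefore $\sigma(a)$ is countable; so $A$ is scattered.

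For part (3), since $f:A\longrightarrow B$ is an algebraic (hence surjective) homomorphism of Banach algebras, $f(\mathcal{R}_s(A))$ is an ideal of $B$. For any $x\in\mathcal{R}_s(A)$, Lemma \ref{sp0}(3) gives $\sigma_B(f(x))\subset\sigma_A(x)$, which is countable because $\mathcal{R}_s(A)$ is scattered (Lemma \ref{scat}). Thus $f(\mathcal{R}_s(A))\subset\mathcal{S}(B)$ is an ideal of $B$ consisting of elements with countable spectrum, and Theorem \ref{large}(4) (the largest ideal contained in $\mathcal{S}(B)$ is $\mathcal{R}_s(B)$) yields $f(\mathcal{R}_s(A))\subset\mathcal{R}_s(B)$.

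The only delicate step is the polynomial-convexity fact used in (2); everything else is a direct application of earlier results. Part (3) is noteworthy because $f$ need not be continuous, yet surjectivity alone is enough via the spectral inclusion of Lemma \ref{sp0}, bypassing the topological preradical axiom that would otherwise fail.
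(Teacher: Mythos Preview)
Your proof is correct. The paper takes somewhat different routes in each part, so a brief comparison is in order.

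For (1), the paper also starts from Theorem \ref{large} to get $\overline{I}\subset\mathcal{R}_s(A)$, but then uses heredity to write $\overline{I}=\mathcal{R}_s(\overline{I})$ and re-runs the convolution chain (as in Lemma \ref{scat}) to conclude that $\overline{I}$, hence $I$, is thin. Your argument is cleaner: once $I\subset\mathcal{R}_s(A)$ and $\mathcal{R}_s(A)$ is already known to be thin, the spectral inclusion $\sigma(a/\mathcal{R}_s(A))\subset\sigma(a/I)$ does the job immediately.

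For (2), the paper's route is structural rather than spectral: it passes to $\overline{I}$ and $A/\overline{I}$ (both scattered, hence $\mathcal{R}_s$-radical by Theorem \ref{large}) and invokes Theorem \ref{ext} (stability of $\mathbf{Rad}(\mathcal{R}_s)$ under extensions). Your direct argument via the polynomial convexity of countable compact sets is more elementary and avoids the radical machinery; the paper's route, on the other hand, illustrates how the extension property of radicals absorbs such facts automatically.

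For (3), the paper simply observes that $\mathcal{R}_s=\operatorname{psoc}^*$ and that $\operatorname{psoc}^*$ is an \emph{algebraic} radical on $\mathfrak{U}_{\mathrm{a}}$, so Axiom 1 holds for every surjective algebra homomorphism. Your argument reaches the same conclusion through the spectral characterization (Theorem \ref{large}(4)) together with $\sigma_B(f(x))\subset\sigma_A(x)$; this is equally valid and shows that the pliancy of $\mathcal{R}_s$ can be read off directly from its description as the largest scattered ideal, without appealing to its construction via $\operatorname{psoc}^*$.
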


\begin{proof}
$\left(  1\right)  $ Indeed, $\overline{I}\subset\mathcal{R}_{s}(A)$ by
Theorem \ref{large}. As $\mathcal{R}_{s}$ is hereditary, $\overline
{I}=\mathcal{R}_{s}(\overline{I})$. Taking $\left(  R_{\alpha}\right)  $ as in
Lemma \ref{scat} we see that all $R_{\alpha}\left(  I\right)  $ are thin
ideals of $A$. Therefore $\mathcal{R}_{s}(\overline{I})$ is a thin ideal of
$A$, hence $\overline{I}$ and $I$ are also thin.

$\left(  2\right)  $ It follows that $\overline{I}$ and $A/\overline{I}$ are
also scattered. Then $\overline{I}=\mathcal{R}_{s}(\overline{I})$ and
$A/\overline{I}=\mathcal{R}_{s}\left(  A/\overline{I}\right)  $. By Theorem
\ref{ext}, $A=\mathcal{R}_{s}\left(  A\right)  $. Therefore $A$ is scattered.

$\left(  3\right)  $ Indeed, $\mathcal{R}_{s}$ inherits the algebraic
properties of $\operatorname{psoc}^{\ast}$.
\end{proof}

\begin{theorem}
\label{univ}If an algebra $A$ is a subideal of a Banach algebra (see Section
$\ref{uni}$) or is algebraically isomorphic to the quotient of a subideal of a
Banach algebra by a non-necessarily closed ideal then

\begin{enumerate}
\item $\operatorname{psoc}^{\ast}\left(  A\right)  =I_{A}$ where
$I_{A}:=\{a\in A:\sigma_{A}(ax)$ is countable for each $x\in A\}$;

\item Every scattered ideal of $A$ is a thin ideal of $A$.
\end{enumerate}
\end{theorem}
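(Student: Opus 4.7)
The plan is to prove (1) first, splitting along the hypothesis, and then derive (2) from (1) via Lemmas \ref{inc} and \ref{scat}.

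For $A$ an $n$-subideal of a Banach algebra $B$, realised by a chain $A=I_{0}\triangleleft I_{1}\triangleleft\cdots\triangleleft I_{n}=B$, heredity of $\operatorname{psoc}^{\ast}$ (Corollary \ref{hera}) iterated along this chain yields
\[
\operatorname{psoc}^{\ast}(A)=A\cap\operatorname{psoc}^{\ast}(B)=A\cap\mathcal{R}_{s}(B),
\]
using $\operatorname{psoc}^{\ast}|_{\mathfrak{U}_{\mathrm{b}}}=\mathcal{R}_{s}$; at the same time the chain realises $A$ as a spectral subalgebra of $B$. The inclusion $\operatorname{psoc}^{\ast}(A)\subset I_{A}$ is then immediate from Theorem \ref{large}(3) combined with the spectral-subalgebra property. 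For the reverse inclusion I would run an induction along the chain: assuming $\sigma_{I_{k}}(ay)$ is countable for every $y\in I_{k}$, pick $z\in I_{k+1}$; then $az\in I_{k}$ and $(az)^{2}=a(zaz)$ with $zaz\in I_{k}$ because $I_{k}\triangleleft I_{k+1}$. Applying the inductive hypothesis to $x=zaz$ shows that $\sigma_{I_{k}}((az)^{2})$ is countable, the spectral mapping theorem then gives countability of $\sigma_{I_{k}}(az)$, and the spectral-subalgebra property lifts this to $\sigma_{I_{k+1}}(az)$. The base case $k=0$ is the hypothesis $a\in I_{A}$, so after $n$ steps $\sigma_{B}(ab)$ is countable for every $b\in B$, placing $a$ in $\mathcal{R}_{s}(B)$ by Theorem \ref{large}(3).

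For $A\cong C/J$ with $C$ a subideal of a Banach algebra, the previous case gives $\operatorname{psoc}^{\ast}(C)=I_{C}$ and I would reduce to it via Corollary \ref{sinq}. That corollary identifies $\sigma_{A}(cd/J)$ with $\sigma_{C^{1}/L}(cd/L)$, where $L=\operatorname{kh}(J;C^{1})$ in the first case of \ref{sinq} (or $\operatorname{kh}(J;C)$ in the second), translating the defining countability condition of $I_{A}$ into one in the quotient $C^{1}/L$. Since $L$ is closed, $C^{1}/L$ is a semisimple Banach algebra; the subideal case applied to $C^{1}/L$ therefore identifies $I_{C^{1}/L}$ with $\operatorname{psoc}^{\ast}(C^{1}/L)=\mathcal{R}_{s}(C^{1}/L)$. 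Pulling this equivalence back along the natural morphism $A\to C^{1}/L$ and appealing to Axiom~1 yields $\operatorname{psoc}^{\ast}(A)\subset I_{A}$, while reading Corollary \ref{sinq} in the opposite direction together with the Banach-algebra conclusion for $C^{1}/L$ delivers $I_{A}\subset\operatorname{psoc}^{\ast}(A)$.

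For part (2), any scattered ideal $K$ of $A$ is contained in $I_{A}$: for $a\in K$ and $x\in A$ one has $ax\in K$, and the ideal-spectral-subalgebra property gives $\sigma_{A}(ax)\setminus\{0\}=\sigma_{K}(ax)\setminus\{0\}$, which is countable by scatteredness of $K$. By (1), therefore $K\subset\operatorname{psoc}^{\ast}(A)$, and Lemma \ref{inc} reduces thinness of $K$ to that of $\operatorname{psoc}^{\ast}(A)$. The latter is transferred from the Banach setting: in the subideal case from the thinness of $\mathcal{R}_{s}(B)$ in $B$ (Lemma \ref{scat}) via the spectral-subalgebra property, and in the quotient case through Corollary \ref{sinq}, which routes the polynomial-convex-hull inclusion defining thinness through the Banach quotient $C^{1}/L$. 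The hard part will be the quotient case of (1): since $A=C/J$ need not be a $Q$-algebra, the thin-ideal machinery of Lemmas~\ref{okr}--\ref{limit} and Theorem \ref{spec} does not apply directly, and Corollary \ref{sinq} is precisely the tool that routes every spectral computation in $A$ through the genuine Banach quotient $C^{1}/L$ where those techniques apply.
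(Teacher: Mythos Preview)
Your treatment of the subideal case is sound. For (1) it is essentially the paper's argument, just carried out in one sweep from $A$ to the ambient Banach algebra $B$ rather than one ideal-step at a time; the key ingredient $I_A=A\cap I_C$ for an ideal $A$ of $C$ rests on the same $(az)^2=a(zaz)$ computation you wrote down. For (2) your route differs from the paper's: you observe that any scattered ideal $K$ of $A$ sits inside $\operatorname{psoc}^{\ast}(A)=A\cap\mathcal{R}_s(B)$ and then deduce thinness of $K$ from thinness of $A\cap\mathcal{R}_s(B)$ in $A$, which you pull back from the thinness of $\mathcal{R}_s(B)$ in $B$ via the spectral-subalgebra relations along the chain. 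This works and is arguably cleaner than the paper's argument, which instead generates from $K$ the ideal $J$ of $C$ with $J^3\subset K\subset J$, uses the inductive hypothesis (2) on $C$ to get $J^3$ thin, and then climbs back to $K$ through Lemma~\ref{stepext} and the nilpotency of $\overline{K}/\overline{K^3}$.

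There is, however, a genuine gap in your quotient case. You route the spectra through $C^{1}/L$ with $L=\mathrm{kh}(J;C^{1})$ and then assert that $C^{1}/L$ is a semisimple \emph{Banach} algebra. But $C$ is only a subideal of a Banach algebra, so $C^{1}$ is a $Q$-algebra but in general neither a Banach algebra nor a subideal of one (the unitization of an ideal is not an ideal). Consequently $C^{1}/L$ is not known to lie in any class to which your subideal case or Theorem~\ref{large} applies, and the identification $\operatorname{psoc}^{\ast}(C^{1}/L)=\mathcal{R}_{s}(C^{1}/L)$ is unjustified. The paper avoids this by invoking part (3) of Corollary~\ref{sinq} rather than part (1): since $C$ is a $Q$-algebra, $\sigma_{C/J}=\sigma_{C/\overline{J}}$, and $C/\overline{J}$ \emph{is} again a subideal of a Banach algebra because $\mathfrak{U}_{\mathrm{b}}^{u}$ is universal. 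One then checks (using that $\overline{J}/J$ has zero spectrum and hence lies in $\operatorname{rad}(C/J)\subset\operatorname{psoc}^{\ast}(C/J)$, together with stability under extensions) that both $\operatorname{psoc}^{\ast}$ and the set $I_{\,\cdot\,}$ pull back correctly along the surjection $C/J\to C/\overline{J}$. Your outline becomes correct once you replace $C^{1}/L$ by $C/\overline{J}$ throughout and drop the word ``Banach''.
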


\begin{proof}
Note that every subideal of a Banach algebra is a normed $Q$-algebra.

Let $C$ be a $Q$-algebra for which $\left(  1\right)  $ and $\left(  2\right)
$ hold (under the substitution $A=C$; for instance these conditions hold for
Banach algebras), and let $A$ be an ideal of $C$. Then $\operatorname{psoc}%
^{\ast}\left(  A\right)  =A\cap I_{C}$ by heredity of $\operatorname{psoc}%
^{\ast}$. As clearly $I_{A}=A\cap I_{C}$ then $\operatorname{psoc}^{\ast
}\left(  A\right)  =I_{A}$.

Let $K$ be a scattered ideal of $A$, and let $J$ be the ideal of $C$ generated
by $K$. Then
\[
J^{3}\subset K\subset J
\]
by \cite[Lemma 1.1.5]{AR79}, whence $J^{3}$ is a scattered ideal of $C$. Then
$J^{3}$ is a thin ideal of $C$ by $\left(  2\right)  $ and therefore a thin
ideal of $A$. As $K^{3}\subset J^{3}$ then $K^{3}$ and $\overline{K^{3}}$ are
thin ideals of $A$ by Lemma \ref{inc}. As $\overline{K}/\overline{K^{3}}$ is
nilpotent then it is a thin ideal of $A/$ $\overline{K^{3}}$. By Lemma
\ref{stepext}, $\overline{K}$ is a thin ideal of $A$, so $K$ is also a thin
ideal of $A$.

Thus $\left(  1\right)  $ and $\left(  2\right)  $ hold for $A$: therefore the
steps $0\mapsto1$ and $n\mapsto n+1$ of induction for $n$-subideals of Banach
algebras are valid, for every $n$. Then the proof is completed for subideals
of Banach algebras.

Let now $f:A\longrightarrow$ $B/J$ be an algebraic isomorphism of $A$ onto
$B/J$, where $B$ is a subideal of a Banach algebra and $J$ is an ideal of $B$.
If $a\in A$ and $b\in B$ is any element such that $f\left(  a\right)  =b/J$,
then clearly $\sigma\left(  a\right)  =\sigma\left(  b/J\right)  $ and it
suffices to check $\left(  1\right)  $ and $\left(  2\right)  $ for $B/J$. As
$\sigma\left(  b/J\right)  =\sigma\left(  b/\overline{J}\right)  $ for every
$b\in B$ by Lemma \ref{sinq}, the proof is reduced to the case of
$B/\overline{J}$. But $B/\overline{J}$ is a subideal of a Banach algebra (see
for instance \cite[Theorem 2.24]{TR1}), and the result follows from the above.
\end{proof}

Theorem \ref{univ} extends the main properties of the scattered radical from
Banach algebras to subideals of Banach algebras. Recall that subideals of
Banach algebras form the smallest universal class $\mathfrak{U}_{\mathrm{b}%
}^{u}$ generated by Banach algebras. Thus we extend the denotation
$\mathcal{R}_{s}$ for $\operatorname{psoc}^{\ast}$ on subideals of Banach algebras.

The \textit{regular scattered radical} $\mathcal{R}_{s}^{r}$, obtained from
$\mathcal{R}_{s}$ by the regular procedure, extends $\mathcal{R}_{s}$ to
normed algebras, and it is determined as
\[
\mathcal{R}_{s}^{r}\left(  A\right)  =\left\{  x\in A:\sigma_{\widehat{A}%
}\left(  ax\right)  \text{ is at most countable }\forall\text{ }%
a\in\widehat{A}\right\}  .
\]

\begin{theorem}
\label{hc-scat} $\mathcal{R}_{\mathrm{hc}}\leq\mathcal{R}_{s}=\mathcal{R}%
_{\mathrm{hf}}\vee\operatorname{Rad}=\mathcal{R}_{\mathrm{hc}}\vee
\operatorname{Rad}$ on Banach algebras.
\end{theorem}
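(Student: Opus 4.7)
The plan is to prove the three pieces in the order: (A) $\mathcal{R}_s=\mathcal{R}_{\mathrm{hf}}\vee\operatorname{Rad}$, (B) $\mathcal{R}_{\mathrm{hc}}\leq\mathcal{R}_s$, (C) $\mathcal{R}_{\mathrm{hf}}\vee\operatorname{Rad}=\mathcal{R}_{\mathrm{hc}}\vee\operatorname{Rad}$; the intermediate inequality $\mathcal{R}_{\mathrm{hc}}\leq\mathcal{R}_s$ will then also be recovered from the second equality. Throughout I compare radicals via their semisimple classes, using Theorem \ref{equality}: two radicals coincide iff they have the same semisimple class, and for the supremum $P_1\vee P_2$ one has $\mathbf{Sem}(P_1\vee P_2)=\mathbf{Sem}(P_1)\cap\mathbf{Sem}(P_2)$.

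For (A), I will characterize $(\mathcal{R}_{\mathrm{hf}}\vee\operatorname{Rad})$-semisimple Banach algebras as those that are semisimple and contain no non-zero finite rank elements (the latter by applying the analog of Theorem \ref{hf}(9) to $\mathcal{R}_{\mathrm{hf}}$, since a semisimple algebra without non-zero finite rank elements is closed-hypofinite-free). For such $A$, Lemma \ref{soc}(1) gives $\operatorname{soc}(A)=\digamma(A)=0$; since $A$ is semisimple, $\operatorname{psoc}(A)=\operatorname{soc}(A)=0$, whence $\mathcal{R}_s(A)=\operatorname{psoc}^{\ast}(A)=0$. Conversely, if $\mathcal{R}_s(A)=0$ then $\operatorname{Rad}(A)=0$ and $A$ is semiprime, so any non-zero $\mathcal{R}_{\mathrm{hf}}(A)$ would contain a non-zero finite rank element $x$; by Lemma \ref{soc}(1), $x\in\operatorname{soc}(A)=\operatorname{psoc}(A)\subset\mathcal{R}_s(A)=0$, a contradiction. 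Thus the semisimple classes agree and (A) follows from Corollary \ref{ineq}.

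For (B), I will show bicompact ideals of semisimple Banach algebras are scattered. Any $a$ in a bicompact ideal $I$ is a compact element (take $b=a$), and in a semisimple Banach algebra compact elements have at most countable spectrum — this is the classical Riesz-element theorem for compact elements in semisimple Banach algebras (\cite{BMSW82}, Section R). Hence $I$ is a scattered ideal and by Theorem \ref{large}(2) is contained in $\mathcal{R}_s$. Using Theorem \ref{hc}, $\mathcal{R}_{\mathrm{hc}}=\Sigma_{\mathrm{hc}}^{\ast}$, I will run a transfinite induction along the convolution chain $(R_\alpha)$ of $\Sigma_{\mathrm{hc}}$ applied to $B:=A/\operatorname{Rad}(A)$: at successor steps, $R_{\alpha+1}(B)/R_\alpha(B)=\Sigma_{\mathrm{hc}}(B/R_\alpha(B))$ is a closed sum of bicompact ideals (bicompactness passes to quotients), each scattered and hence inside $\mathcal{R}_s(B/R_\alpha(B))$, which by Proposition \ref{ov}(1b) equals $\mathcal{R}_s(B)/R_\alpha(B)$; the limit step is immediate from closedness of $\mathcal{R}_s(B)$. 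Therefore $\mathcal{R}_{\mathrm{hc}}(B)\subset\mathcal{R}_s(B)$. Pulling back along the quotient and using $\operatorname{Rad}(A)\subset\mathcal{R}_s(A)$ yields $\mathcal{R}_{\mathrm{hc}}(A)\subset\mathcal{R}_s(A)$.

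For (C), the inequality $\mathcal{R}_{\mathrm{hf}}\vee\operatorname{Rad}\leq\mathcal{R}_{\mathrm{hc}}\vee\operatorname{Rad}$ is immediate from Theorem \ref{hc} ($\mathcal{R}_{\mathrm{hf}}\leq\mathcal{R}_{\mathrm{hc}}$), while the reverse follows from (A) and (B):
$$\mathcal{R}_{\mathrm{hc}}\vee\operatorname{Rad}\leq\mathcal{R}_s\vee\operatorname{Rad}=\mathcal{R}_s=\mathcal{R}_{\mathrm{hf}}\vee\operatorname{Rad}.$$
The main obstacle is the semisimple Riesz fact used in (B) — specifically, the reduction of a potentially non-semisimple $A$ to $A/\operatorname{Rad}(A)$ requires (i) bicompactness of ideals is preserved under topological morphisms (which factorizes $\mathrm{L}_{\pi(a)}\mathrm{R}_{\pi(b)}\circ\pi=\pi\circ\mathrm{L}_a\mathrm{R}_b$), and (ii) the classical fact that compact elements in semisimple Banach algebras are Riesz, for which I will cite \cite{BMSW82}. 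The rest is routine bookkeeping with the convolution procedure and Theorem \ref{equality}.
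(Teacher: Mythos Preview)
Your part (A) is correct and actually cleaner than the paper's argument: you use directly that $\operatorname{psoc}(A)=0$ forces $\mathcal{R}_s(A)=\operatorname{psoc}^{\ast}(A)=0$, whereas the paper appeals to Barnes' theorem (a non-zero scattered semisimple Banach algebra has non-zero socle) to reach the same conclusion. Part (C) is fine and matches the paper.

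Part (B) has a genuine gap. You invoke the Riesz fact for compact elements \emph{only in semisimple Banach algebras}, and then run the transfinite induction over the convolution chain of $\Sigma_{\mathrm{hc}}$ in $B=A/\operatorname{Rad}(A)$. But at successor steps you need that bicompact ideals of $B/R_\alpha(B)$ are scattered, and $B/R_\alpha(B)$ is a quotient of a semisimple algebra, not itself semisimple in general. So the hypothesis you set up does not propagate through the induction. The repair is that Alexander's theorem (\cite{Al68}) gives ``compact element $\Rightarrow$ at most countable spectrum'' in \emph{any} Banach algebra, with no semisimplicity assumption. Once you use that, the whole detour through $A/\operatorname{Rad}(A)$ and the hand-rolled transfinite induction become unnecessary: every bicompact ideal of any Banach algebra is scattered, hence $\Sigma_{\mathrm{hc}}\le\mathcal{R}_s$; since $\mathcal{R}_s$ is a radical, Theorem~\ref{ovunt} gives $\mathcal{R}_{\mathrm{hc}}=\Sigma_{\mathrm{hc}}^{\ast}\le\mathcal{R}_s$ immediately. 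This is exactly the paper's route, and it is both shorter and gap-free.
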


\begin{proof}
Let $A$ be a Banach algebra. Any compact element of a Banach algebra $A$ has
countable spectrum \cite{Al68} and any bicompact Banach algebra is scattered.
Then any closed bicompact ideal of $A$ is scattered and contained in
$\mathcal{R}_{s}(A)$, whence $\Sigma_{\mathrm{hc}}\left(  A\right)
\subset\mathcal{R}_{s}\left(  A\right)  $ and $\Sigma_{\mathrm{hc}}%
\leq\mathcal{R}_{s}$ in general. As $\mathcal{R}_{s}$ is a radical, it follows
that $\mathcal{R}_{\mathrm{hf}}\leq\mathcal{R}_{\mathrm{hc}}=\Sigma
_{\mathrm{hc}}^{\ast}\leq\mathcal{R}_{s}$ by Theorems \ref{ovunt} and \ref{hc}.

It is clear that $\operatorname{Rad}\leq\mathcal{R}_{s}$, so $\mathcal{R}%
_{\mathrm{hf}}\vee\operatorname{Rad}\leq\mathcal{R}_{s}$.

Let $A$ be $\left(  \mathcal{R}_{\mathrm{hf}}\vee\operatorname{Rad}\right)
$-semisimple, and let $I=\mathcal{R}_{s}\left(  A\right)  $. As $A$ is
semisimple and has no non-zero finite rank elements, $I$ is also semisimple
and has no non-zero finite rank elements. However, if $I\neq0$ then $I$ has a
non-zero socle by Barnes' theorem \cite{B68}. Thus, by Lemma \ref{soc}, $I$
has non-zero finite rank elements, and so $A$ has such elements, a
contradiction. Therefore $I=0$, and $A$ is $\mathcal{R}_{s}$-semisimple. By
Theorem \ref{equality}, $\mathcal{R}_{s}\leq\mathcal{R}_{\mathrm{hf}}%
\vee\operatorname{Rad}$.

Furthermore,
\[
\mathcal{R}_{s}=\mathcal{R}_{\mathrm{hc}}\vee\mathcal{R}_{s}=\mathcal{R}%
_{\mathrm{hc}}\vee\mathcal{R}_{\mathrm{hf}}\vee\operatorname{Rad}%
=\mathcal{R}_{\mathrm{hc}}\vee\operatorname{Rad}.
\]

\end{proof}

Let $A$ be an algebra, and let $\widehat{\operatorname{Irr}}\left(  A\right)
$ be the set of classes of equivalent strictly irreducible representations of
$A$. Then $\pi\longmapsto\ker\pi$ is a map from $\widehat{\operatorname{Irr}%
}\left(  A\right)  $ onto $\operatorname{Prim}(A)$, so
$\widehat{\operatorname{Irr}}\left(  A\right)  $ inherits the Jacobson
topology from $\operatorname{Prim}(A)$: the preimages of closed sets in
$\operatorname{Prim}(A)$ determine closed sets in $\widehat{\operatorname{Irr}%
}\left(  A\right)  $. The following statement says that one can identify these
topological spaces for scattered Banach algebras.

\begin{theorem}
\label{over}Let $A$ be a Banach algebra. If $A$ is scattered then for any
primitive ideal $I$, there is only one, up to the equivalence, strictly
irreducible representation with kernel $I$.
\end{theorem}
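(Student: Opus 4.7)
The plan is to pass to the primitive quotient $A/I$, extract a minimal idempotent using the scatteredness hypothesis through Barnes' theorem, and then conclude by a standard Jacobson-type argument: a faithful strictly irreducible representation of an algebra that contains a minimal idempotent $e$ is determined up to equivalence by the simple left module $Ae$.

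First I will reduce to the case $I=0$. Since $I$ is closed (being primitive in a Banach algebra), $A/I$ is a Banach algebra; the inclusion $\sigma_{A/I}(a/I)\subseteq\sigma_A(a)$ shows that $A/I$ remains scattered, and by construction $A/I$ is primitive. Strictly irreducible representations of $A$ with kernel $I$ correspond bijectively, in a way that preserves equivalence, to faithful strictly irreducible representations of $A/I$, so from now on I may assume $A$ itself is a nonzero scattered primitive Banach algebra, and I must show that any two faithful strictly irreducible representations are equivalent.

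Next I will produce a minimal idempotent in $A$. Since $A$ is primitive, $\operatorname{rad}(A)=0$ and hence $\operatorname{psoc}(A)=\operatorname{soc}(A)$. By Barnes' theorem, invoked exactly as in the proof of Theorem~\ref{large}, any nonzero scattered Banach algebra has nonzero presocle; applied to $A$ this yields $\operatorname{soc}(A)\neq 0$, so there exists a minimal idempotent $e\in A$ with $Ae$ a minimal (hence simple) left $A$-module. Finally, given any faithful strictly irreducible $\pi:A\to L(X_\pi)$, faithfulness forces $\pi(e)\neq 0$, so I can pick $\xi\in X_\pi$ with $\eta:=\pi(e)\xi\neq 0$ and consider the left $A$-module map
\[
\phi:Ae\longrightarrow X_\pi,\qquad x\longmapsto\pi(x)\xi.
\]
Its image contains $\pi(A)\eta=X_\pi$ by strict irreducibility, and simplicity of $Ae$ together with $\phi\neq 0$ forces injectivity; hence $\phi$ intertwines $\pi$ with the canonical representation of $A$ on $Ae$. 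Since the latter does not depend on $\pi$, any two such representations are mutually equivalent, proving the theorem.

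The main obstacle I expect is the second step: the conclusion $\operatorname{soc}(A)\neq 0$ is exactly where scatteredness enters in an essential way. A primitive Banach algebra need not have nonzero socle in general (for instance the Calkin algebra on a separable infinite-dimensional Hilbert space is primitive with trivial socle), so Barnes' theorem — or, equivalently, a Riesz-projection argument extracting a finite-rank element from a nonzero isolated point of some spectrum — is indispensable here. Once a minimal idempotent is in hand, the remaining argument is purely algebraic.
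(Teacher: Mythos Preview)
Your proof is correct and follows essentially the same route as the paper: reduce to the primitive scattered quotient $A/I$, use Barnes' theorem to obtain a nonzero socle and hence a minimal idempotent, and then deduce uniqueness of the faithful irreducible representation. The only difference is in the final step, where the paper invokes Lemma~\ref{reprfin} (the rank inequality $\sum_i \operatorname{rank}\pi_i(p)\le\dim pAp=1$) to rule out two inequivalent faithful irreducibles, while you give the equivalent classical module-theoretic argument that every such representation is isomorphic to $Ae$; both are standard and equally short.
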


\begin{proof}
Any representation with kernel $I$ defines a faithful representation of $A/I$.
Since $A/I$ is a primitive Banach algebra, we have only to prove that a
faithful representation of a primitive scattered algebra $B$ is unique (up to
the equivalence).

Since $B$ is scattered and semisimple, the socle of $B$ is non-zero. So it
contains a minimal projection $p$. Taking into account that $\dim pBp=1$ and
applying Lemma \ref{reprfin}, we have that there is only one strictly
irreducible representation $\pi$ with $\pi(p)\neq0$. But the last condition
holds for each faithful representation.
\end{proof}

A topological space is called \textit{dispersed} if it does not contain
perfect subspaces, i.e., closed subsets without isolated points.

\begin{theorem}
\label{sparse} If a Banach algebra $A$ is scattered then the space
$\operatorname{Prim}(A)$ of its primitive ideals is \textit{dispersed}.
\end{theorem}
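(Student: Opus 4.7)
The plan is to argue by contradiction, reducing to a situation where the whole structure space has no isolated points, and then deriving an isolated primitive ideal directly from the presence of a minimal idempotent in the socle.

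First, since primitive ideals of $A$ contain $\operatorname{Rad}(A)$ and $\operatorname{Prim}(A)\cong \operatorname{Prim}(A/\operatorname{Rad}(A))$ as topological spaces (via $I\mapsto I/\operatorname{Rad}(A)$), and a quotient of a scattered Banach algebra is scattered, I may assume $A$ is semisimple. Suppose toward a contradiction that $\operatorname{Prim}(A)$ contains a nonempty perfect closed subset $F$. Let $J=\mathrm{k}(F;A)=\bigcap_{I\in F}I$. Because $F$ is closed in the Jacobson topology, $F=\mathrm{hk}(F;A)=\mathrm{h}(J;A)$, so the homeomorphism $I\mapsto I/J$ identifies $\operatorname{Prim}(A/J)$ with $F$. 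Since $A/J$ is a non-zero scattered Banach algebra (by Corollary \ref{lar}(2) applied to the passage to a quotient, or since $\mathcal{R}_{s}$ is a radical), replacing $A$ by $A/J$ lets me assume further that $A$ is a non-zero semisimple scattered Banach algebra whose space $\operatorname{Prim}(A)$ has no isolated points.

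Since $A=\mathcal{R}_{s}(A)\neq 0$ is semisimple, Barnes' theorem (already invoked in the proofs of Theorems \ref{large} and \ref{hc-scat}) yields $\operatorname{soc}(A)\neq 0$, so $A$ contains a minimal idempotent $p$ with $pAp=\mathbb{C}p$. Let $\pi$ be the strictly irreducible representation of $A$ acting by left multiplication on the minimal left ideal $Ap$, and set $B=ApA$ and $I_{0}=\ker\pi$. The plan is now to show that $I_{0}$ is the unique primitive ideal of $A$ that does not contain $B$, which will mean that $\{I_{0}\}=\operatorname{Prim}(A)\setminus\mathrm{h}(B;A)$ is open in $\operatorname{Prim}(A)$, contradicting the absence of isolated points.

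Let $I\in\operatorname{Prim}(A)$ with $B\not\subset I$, and let $\tau$ be any strictly irreducible representation of $A$ with $\ker\tau=I$ (such a $\tau$ exists by definition of primitivity; its equivalence class is in fact unique by Theorem \ref{over}, but I do not need this here). Since $B\not\subset I$, $\tau(p)\neq 0$, and from $pAp=\mathbb{C}p$ one gets $\tau(p)\tau(A)\tau(p)=\mathbb{C}\tau(p)$, so $\tau(p)$ is an idempotent of rank one on $X_{\tau}$. The restrictions $\pi|_{B}$ and $\tau|_{B}$ are strictly irreducible representations of the ideal $B$ (by (\ref{p3})), and since $B$ is generated as an ideal by the minimal idempotent $p$, every strictly irreducible representation of $B$ is non-zero on $p$. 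The key step is the standard density-theorem argument: any strictly irreducible representation $\sigma$ of $B$ with $\sigma(p)\neq 0$ is equivalent to $\pi|_{B}$ via the $B$-module map $Bp\to X_{\sigma}$, $bp\mapsto\sigma(b)\eta_{0}$ for a fixed non-zero $\eta_{0}\in\sigma(p)X_{\sigma}$, whose bijectivity is forced by strict irreducibility and the one-dimensionality of $\sigma(p)X_{\sigma}$. Hence $\tau|_{B}\sim\pi|_{B}$, and since a strictly irreducible representation of $A$ not vanishing on $B$ is determined up to equivalence by its restriction to $B$ (by the uniqueness of the extension described in the passage preceding (\ref{p4})), we conclude $\tau\sim\pi$, so $I=I_{0}$.

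The main obstacle is this last paragraph: making the density/extension argument fully rigorous while staying within the framework already developed in the paper. Everything else (reduction to the semisimple case, reduction to $\operatorname{Prim}(A)$ having no isolated points, and appeal to Barnes' theorem) is a routine application of results already proved; the whole weight of the proof rests on showing that a minimal idempotent in a semisimple scattered Banach algebra produces an isolated point in the structure space.
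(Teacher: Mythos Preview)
Your proof is correct and follows essentially the same route as the paper: pass to the quotient by the kernel of a closed subset $E$ of $\operatorname{Prim}(A)$, obtain a semisimple scattered Banach algebra, produce a minimal idempotent $p$, and show that the primitive ideal missing $p$ is unique, hence isolated in $E$.

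The only real difference is in how the uniqueness of that primitive ideal is established. You set up an explicit $B$-module isomorphism $Bp\to X_\sigma$ and then invoke the unique-extension principle for irreducible representations of an ideal; this works, but it is exactly the ``main obstacle'' you flag. The paper bypasses it entirely with the one-line application of Lemma~\ref{reprfin}: since $\dim pBp=1$, for any finite set of pairwise inequivalent strictly irreducible representations $\pi_1,\dots,\pi_n$ one has $\sum_i\mathrm{rank}(\pi_i(p))\leq 1$, so at most one $\pi_i$ can satisfy $\pi_i(p)\neq 0$, and hence at most one primitive ideal fails to contain $p$. This dimensional count replaces your density/extension argument and removes the need to verify well-definedness, injectivity, and the uniqueness of extensions by hand. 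Your initial reduction to a semisimple $A$ is harmless but unnecessary, since passing to $A/J$ already lands you in a semisimple algebra.
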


\begin{proof}
Let $E$ be a closed subset in $\operatorname{Prim}(A)$, $J=\cap_{I\in E}I$ and
$B=A/J$. All primitive ideals of $B$ are of the form $I/J$, $I\in E$ (because
$E$ is closed) and their intersection is trivial. Thus $B$ is a semisimple
scattered algebra, whence it contains a minimal projection $p$. Since $\dim
pBp=1$, there is, by Lemma \ref{reprfin}, only one primitive ideal
$I_{0}^{\prime}=I_{0}/J$ of $B$ that does not contain $p$. It follows that
$I_{0}$ does not contain the intersection of all $I\neq I_{0}$ in $E$. Hence
$I_{0}$ is an isolated point in $E$ and $E$ is not perfect.
\end{proof}

\begin{corollary}
If $A$ is a separable scattered Banach algebra then the spaces
$\operatorname{Prim}(A)$ and $\widehat{\operatorname{Irr}}\left(  A\right)  $
are countable.
\end{corollary}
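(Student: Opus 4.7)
The plan is to reduce the problem to counting, via the Cantor--Bendixson-like structure provided by the scattered radical. By Theorem~\ref{over} the map $\pi\mapsto\ker\pi$ is a bijection of $\widehat{\operatorname{Irr}}(A)$ onto $\operatorname{Prim}(A)$, so it suffices to show that $\operatorname{Prim}(A)$ is countable. Since $A$ is scattered, $A=\mathcal{R}_{s}(A)$, so the convolution chain $(R_{\alpha}(A))_{\alpha\leq\gamma}$ of closed ideals generated by $\overline{\operatorname{psoc}}$ (as in the proof of Lemma~\ref{scat}) satisfies $R_{0}(A)=0$, $R_{\gamma}(A)=A$, and
\[
R_{\alpha+1}(A)/R_{\alpha}(A)=\overline{\operatorname{psoc}(A/R_{\alpha}(A))}.
\]
I would exploit this stratification together with separability in two steps.

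First I would argue that the length $\gamma$ is a countable ordinal. For any $\alpha<\gamma$ the quotient $A/R_{\alpha}(A)$ is a nonzero scattered Banach algebra, so either its radical is nonzero (in which case $\operatorname{psoc}(A/R_{\alpha}(A))\supsetneq 0$ automatically) or, after dividing by the radical, Barnes' theorem (invoked in the proof of Theorem~\ref{hc-scat}) supplies a minimal projection, showing $\operatorname{psoc}(A/R_{\alpha}(A))\neq 0$. Hence $R_{\alpha}(A)\subsetneq R_{\alpha+1}(A)$ for all $\alpha<\gamma$. Choosing $a_{\alpha}\in R_{\alpha+1}(A)$ with $d(a_{\alpha},R_{\alpha}(A))=1$, one finds for $\alpha<\beta<\gamma$ that $a_{\alpha}\in R_{\beta}(A)$, so $\|a_{\beta}-a_{\alpha}\|\geq d(a_{\beta},R_{\beta}(A))=1$. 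The uncountable case would contradict separability of $A$, so $\gamma$ is countable.

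Next I would show that each ``level'' contributes only countably many primitive ideals. Every $I\in\operatorname{Prim}(A)$ has a unique $\alpha(I)<\gamma$ with $R_{\alpha(I)}(A)\subset I$ and $R_{\alpha(I)+1}(A)\not\subset I$ (the supremum is attained because $I$ is closed, using the transfinite union condition~(\ref{bb})). For fixed $\alpha$, those $I$ correspond to primitive ideals of $B_{\alpha}:=A/R_{\alpha}(A)$ not containing $\overline{\operatorname{psoc}(B_{\alpha})}$; since $A$ is a $Q$-algebra, primitive ideals are closed (Theorem~\ref{pm1}), so this is the same as not containing $\operatorname{psoc}(B_{\alpha})$, which, via the quotient map $B_{\alpha}\to C_{\alpha}:=B_{\alpha}/\operatorname{Rad}(B_{\alpha})$, is the same as primitive ideals of the separable semisimple Banach algebra $C_{\alpha}$ not containing $\operatorname{soc}(C_{\alpha})$. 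Using primality of primitive ideals together with the fact that distinct minimal closed two-sided ideals $M_{\lambda}\subset C_{\alpha}$ satisfy $M_{\lambda}M_{\mu}=0$, each such primitive ideal is the annihilator of exactly one $M_{\lambda}$, so the set we must count is in bijection with the equivalence classes of minimal projections in $C_{\alpha}$.

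Finally I would count those equivalence classes by a short norm estimate. For representatives $p_{\lambda}\in M_{\lambda}$ the relation $p_{\lambda}p_{\mu}=0$ for $\lambda\neq\mu$ gives $(p_{\lambda}-p_{\mu})^{2}=p_{\lambda}+p_{\mu}$, while $p_{\lambda}=p_{\lambda}(p_{\lambda}+p_{\mu})$ forces $\|p_{\lambda}+p_{\mu}\|\geq 1$ (note $\|p_{\lambda}\|\geq 1$ from $p_{\lambda}=p_{\lambda}^{2}$), and therefore $\|p_{\lambda}-p_{\mu}\|\geq 1$. Separability of $C_{\alpha}$ (a quotient of $A$) then implies there are only countably many such $\lambda$. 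Combining the countability of $\gamma$ with countability at each level yields $\operatorname{Prim}(A)$ countable, and hence $\widehat{\operatorname{Irr}}(A)$ countable as well. The main obstacle, I expect, is the careful identification at each level of primitive ideals of $A$ with minimal closed two-sided ideals of the semisimple quotient $C_{\alpha}$, in particular verifying that primitive ideals in the relevant Banach algebras are closed and that the annihilator construction gives a bijection; the rest is a routine separability count.
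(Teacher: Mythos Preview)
Your proof is correct, but the approach differs from the paper's. You work with the algebraic convolution chain $(R_{\alpha}(A))$ that builds up the scattered radical and then, at each level, analyze the socle of the semisimple quotient $C_{\alpha}$ to count primitive ideals via orthogonal minimal projections. The paper instead invokes Theorem~\ref{sparse} to know that $\operatorname{Prim}(A)$ is dispersed, and then runs a Cantor--Bendixson argument directly on the topological space $\operatorname{Prim}(A)$: setting $X_{0}=\operatorname{Prim}(A)$, $X_{\alpha+1}=$ the non-isolated points of $X_{\alpha}$, and $X_{\alpha}=\cap_{\alpha'<\alpha}X_{\alpha'}$ at limits, dispersedness forces $X_{\delta}=\varnothing$; then the same separability trick (choosing elements at mutual distance $>1$, now using the hull--kernel topology rather than the ideal chain) shows both that $\delta$ is countable and that each $X_{\alpha}\setminus X_{\alpha+1}$ is countable.

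The paper's route is shorter and purely topological once Theorem~\ref{sparse} is available: it avoids the socle machinery entirely and treats both countings (length and level) by the same device. Your route is more structural and stays closer to the definition of $\mathcal{R}_{s}$; the step you flagged as the main obstacle does need the care you anticipate, namely that for minimal projections $p_{I},p_{J}$ attached to distinct primitive ideals $I,J$ (not containing the socle) one has $p_{I}p_{J}=0$. This follows from Lemma~\ref{reprfin} (forcing $p_{I}\in J$ and $p_{J}\in I$) together with semisimplicity of $C_{\alpha}$, and your appeal to the homogeneous components of the socle is an acceptable packaging of the same fact. With that in place, the norm estimate and the separability count go through as you wrote.
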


\begin{proof}
It follows from Theorem \ref{over} that if $\text{Prim}(A)$ is countable then
$\widehat{\operatorname{Irr}}\left(  A\right)  $ is countable.

In any topological space $X$ there is a decreasing transfinite chain of sets:
let $X_{0}=X$, $X_{\alpha+1}$ be the set of all non-isolated points in
$X_{\alpha}$ for every ordinal $\alpha$, and let $X_{\alpha}=\cap
_{\alpha^{\prime}<\alpha}X_{\alpha^{\prime}}$ if $\alpha$ is a limit ordinal.
If $X$ is dispersed then $X_{\delta}=\varnothing$ for some $\delta$.

Let $X=\operatorname{Prim}(A)$. As $X$ is dispersed by Theorem \ref{sparse},
let $\left(  X_{\alpha}\right)  _{\alpha\leq\delta}$ be the chain of the above
sets of $X$, with $X_{\delta}=\varnothing$. Choose $I_{\alpha}\in X_{\alpha
}\backslash X_{\alpha+1}$ for every ordinal $\alpha<\delta$. By the
construction, $I_{\alpha}\notin\overline{\left\{  I_{\alpha^{\prime}}:{\alpha
}^{\prime}>{\alpha}\right\}  }$ in the Jacobson topology. Hence there are
$x_{\alpha}\in(\cap_{{\alpha}^{\prime}>{\alpha}}I_{\alpha^{\prime}})\setminus
I_{\alpha}$. Multiplying by a constant, we can have%
\[
\operatorname{dist}(x_{\alpha},I_{\alpha})>1.
\]
Let now $\alpha^{\prime}>\alpha$. Then $x_{\alpha}\in I_{\alpha^{\prime}}$
and
\[
\Vert x_{\alpha^{\prime}}-x_{\alpha}\Vert\geq\operatorname{dist}%
(x_{\alpha^{\prime}},I_{\alpha^{\prime}})>1.
\]
So the last inequality holds for all ordinals $\alpha,\alpha^{\prime}$ with
$\alpha\neq\alpha^{\prime}$. Since $A$ is separable, we obtain that $\delta$
is a countable ordinal.

Now it suffices to show that each set $X_{\alpha}\setminus X_{\alpha+1}$ is
countable. This can be done by the same trick because each ideal in
$X_{\alpha}\setminus X_{\alpha+1}$ is not contained in the closure of the set
of the others.
\end{proof}

As an example let us look at the algebras $C(X)$ where $X$ is a compact set.

\begin{corollary}
\label{scatcx} An algebra $C(X)$ is scattered if and only if $X$ is dispersed.
\end{corollary}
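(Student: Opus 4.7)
The plan is to use Gelfand duality to match $C(X)$ with the topological space $X$, and then apply Theorem \ref{sparse} in one direction and Theorem \ref{spec} (via a Cantor--Bendixson stratification) in the other. For compact Hausdorff $X$, $C(X)$ is a commutative unital Banach algebra (hence a $Q$-algebra), its maximal/primitive ideals are $M_{x}=\{f:f(x)=0\}$ for $x\in X$, and $x\mapsto M_{x}$ is a homeomorphism from $X$ onto $\operatorname{Prim}(C(X))$ in the Jacobson topology; closed ideals $I$ of $C(X)$ correspond bijectively to closed subsets $Z(I)\subset X$ via $I\leftrightarrow\{x:f(x)=0\,\forall f\in I\}$, with $C(X)/I\cong C(Z(I))$ under restriction and $\overline{\sum_{\beta}I_{\beta}}\leftrightarrow\cap_{\beta}Z(I_{\beta})$.

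The direction $(\Rightarrow)$ is then immediate: if $C(X)$ is scattered, Theorem \ref{sparse} gives that $\operatorname{Prim}(C(X))$ is dispersed, and under the above homeomorphism so is $X$.

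For the direction $(\Leftarrow)$, assume $X$ is dispersed and form the Cantor--Bendixson derivatives $X^{(0)}=X$, $X^{(\alpha+1)}:=$ set of limit points of $X^{(\alpha)}$, and $X^{(\alpha)}=\cap_{\beta<\alpha}X^{(\beta)}$ at limit $\alpha$. If $X^{(\alpha)}\neq\varnothing$ and $X^{(\alpha+1)}=X^{(\alpha)}$, then $X^{(\alpha)}$ is a nonempty closed perfect subspace of $X$, contradicting dispersedness; hence the chain is strictly decreasing until it stabilizes at $\varnothing$, and there is an ordinal $\delta$ with $X^{(\delta)}=\varnothing$. Put $I_{\alpha}:=\{f\in C(X):f|_{X^{(\alpha)}}=0\}$. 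Then $(I_{\alpha})_{\alpha\le\delta}$ is an increasing transfinite chain of closed ideals with $I_{0}=0$ and $I_{\delta}=C(X)$; the limit-ordinal identity $I_{\alpha}=\overline{\cup_{\beta<\alpha}I_{\beta}}$ follows from the Gelfand correspondence, since both sides are the closed ideal whose zero set is $X^{(\alpha)}=\cap_{\beta<\alpha}X^{(\beta)}$. Under the isomorphism $C(X)/I_{\alpha}\cong C(X^{(\alpha)})$, the gap-quotient $I_{\alpha+1}/I_{\alpha}$ identifies with the ideal of $g\in C(X^{(\alpha)})$ vanishing on $X^{(\alpha+1)}$, i.e., supported on the discrete open set $D_{\alpha}:=X^{(\alpha)}\setminus X^{(\alpha+1)}$ of isolated points of $X^{(\alpha)}$. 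For any such $g$ and any $\varepsilon>0$, the set $\{x\in X^{(\alpha)}:|g(x)|\ge\varepsilon\}$ is closed in the compact $X^{(\alpha)}$ and contained in the discrete $D_{\alpha}$, hence finite; consequently $g$ takes only countably many values, accumulating only at $0$, so $\sigma(g)$ is countable. Thus $I_{\alpha+1}/I_{\alpha}$ is a scattered ideal of the Banach algebra $C(X)/I_{\alpha}$ and, by Corollary \ref{lar}(1), is a thin ideal of $C(X)/I_{\alpha}$. Theorem \ref{spec} then yields that $I_{\delta}=C(X)$ is a thin ideal of the $Q$-algebra $C(X)$, which by Corollary \ref{th} forces $C(X)$ itself to be scattered.

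The only mildly delicate point I anticipate is verifying the closure condition at limit ordinals required by Theorem \ref{spec}; this is handled by the Gelfand correspondence, which turns a transfinite closure of a union of ideals into the transfinite intersection of their zero sets. Everything else is routine Cantor--Bendixson bookkeeping together with the straightforward spectral calculation in $C(Y)$ for compact $Y$.
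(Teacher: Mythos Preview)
Your proof is correct and follows the same overall architecture as the paper: Theorem \ref{sparse} for $(\Rightarrow)$, and Theorem \ref{spec} applied to a transfinite chain of ideals with thin gap-quotients for $(\Leftarrow)$.

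The difference lies in how the chain is built. The paper observes that each nonzero quotient $C(X)/I\cong C(Y)$ has an isolated point $z\in Y$, and the one-dimensional ideal of functions supported at $z$ is trivially thin; iterating this gives a chain whose gap-quotients are all one-dimensional. You instead use the Cantor--Bendixson derivatives $X^{(\alpha)}$, producing a chain indexed by the Cantor--Bendixson rank of $X$ whose gap-quotients are the (generally infinite-dimensional) $c_{0}$-like algebras supported on the isolated points of $X^{(\alpha)}$; you then need the extra step of showing these are scattered and invoking Corollary \ref{lar}(1) to conclude they are thin. Your chain is shorter and more canonical from the topological side, at the cost of a slightly heavier verification at each stage; the paper's chain is longer but each gap-quotient is as simple as possible. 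Both routes are clean, and your handling of the limit-ordinal closure condition via the Gelfand correspondence is exactly right.
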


\begin{proof}
It follows from Theorem \ref{sparse} that if $C(X)$ is scattered then $X$ is
dispersed. Conversely, let $X$ be dispersed. Any quotient of $C(X)$ by a
closed ideal is isomorphic to $C(Y)$, where $Y$ is a compact subset of $X$.
Since $Y$ is not perfect, there is an isolated point $z\in Y$. The ideal of
all functions in $C(Y)$ that vanish outside $z$ is one-dimensional, hence it
is thin. This allows us to construct a transfinite chain $\left(  I_{\alpha
}\right)  _{\alpha\leq\gamma}$ of closed ideals of $C(X)$, such that
$I_{0}=0,$ $I_{\gamma}=C(X)$ and all gap-quotients of the chain are thin
ideals. By Theorem \ref{spec}, $C(X)$ is scattered.
\end{proof}

\begin{remark}
The above corollary gives a proof of the fact that the image of a dispersed
space under a continuous map is dispersed. Indeed, if $Y=f(X)$ then $C(Y)$ is
isomorphic to a closed subalgebra $B$ of $C(X)$. If $X$ is dispersed then
$C(X)$ is scattered. Then $B$ is scattered, so $C(Y)$ is scattered. Hence $Y$
is dispersed.
\end{remark}

\subsection{Scattered radical on hereditarily semisimple Banach algebras}

By Theorem \ref{hc-scat},\textbf{ }all hypocompact Banach algebras are
scattered. The converse is not true in general because all Jacobson radical
algebras are scattered. We will show here that if the radicals of an algebra
and all its quotients are trivial that these conditions are equivalent.

Let $A$ be a [normed] algebra. Let us call $A$ \textit{hereditarily
semisimple} if all its quotients by [closed] ideals are semisimple. It is
obvious that the class of all hereditarily semisimple [normed] algebras has
the following properties:

\begin{enumerate}
\item All quotients by [closed] ideals are in this class;

\item If a [closed] ideal $I$ of a [normed] algebra $A$ and the quotient $A/I$
are in this class then so is $A$.
\end{enumerate}

\begin{proposition}
\label{perPrim} A Banach algebra $A$ is hereditarily semisimple if and only if
each closed ideal of $A$ is the intersection of a family of primitive ideals.
\end{proposition}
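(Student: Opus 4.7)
The plan is to reduce both directions to the identification $\mathrm{kh}(I;A)/I = \operatorname{rad}(A/I)$ for each closed ideal $I$ of $A$, where $\mathrm{kh}(I;A) = \cap_{J \in \mathrm{h}(I;A)} J$ is the intersection of all primitive ideals containing $I$. Recall that for a Banach algebra every primitive ideal is closed, so $\mathrm{kh}(I;A)$ is automatically a closed ideal, and the condition that $I$ is an intersection of primitive ideals is simply the equality $I = \mathrm{kh}(I;A)$.

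First I would establish the key identity. By $(\ref{p5})$, $\operatorname{Prim}(A/I) = \{J/I : J \in \mathrm{h}(I;A)\}$, and by $(\ref{pr})$,
\begin{equation*}
\cap\operatorname{Prim}(A/I) = \cap_{J \in \mathrm{h}(I;A)}(J/I) = \Bigl(\cap_{J \in \mathrm{h}(I;A)} J\Bigr)/I = \mathrm{kh}(I;A)/I.
\end{equation*}
Since the Jacobson radical of $A/I$ is the intersection of its primitive ideals (using $(\ref{p1})$ in the non-unital case to see that adjoining a unit does not affect the radical), one gets $\operatorname{rad}(A/I) = \mathrm{kh}(I;A)/I$.

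For the forward direction, assume $A$ is hereditarily semisimple and let $I$ be a closed ideal. Then $\operatorname{rad}(A/I) = 0$, so by the identity above $\mathrm{kh}(I;A)/I = 0$, i.e.\ $I = \mathrm{kh}(I;A)$, which exhibits $I$ as the intersection of the primitive ideals lying in $\mathrm{h}(I;A)$. Conversely, if every closed ideal $I$ of $A$ equals $\mathrm{kh}(I;A)$, then $\operatorname{rad}(A/I) = \mathrm{kh}(I;A)/I = 0$, so $A/I$ is semisimple for every closed ideal $I$, hence $A$ is hereditarily semisimple.

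There is no real obstacle here: the result is essentially a reformulation of the semisimplicity of $A/I$ in terms of the primitive ideals of $A$ containing $I$, via the standard quotient correspondence. The only small care needed is the non-unital case, which is handled by $(\ref{p1})$ so that $\operatorname{rad}(A/I)$ is still the intersection of the primitive ideals of $A/I$ in the sense used throughout the paper.
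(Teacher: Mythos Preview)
Your proposal is correct and follows essentially the same route as the paper: both arguments hinge on the identity $\mathrm{kh}(I;A)/I=\operatorname{Rad}(A/I)$, from which each direction is an immediate consequence. The paper verifies this identity by an explicit two-inclusion argument, while you obtain it more succinctly from the prepared facts (\ref{p5}) and (\ref{pr}); the content is identical.
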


\begin{proof}
Let $J$ be a closed ideal of $A$ and let $K=\mathrm{kh}(J)$. Then
$K/J=\operatorname{Rad}(A/J)$. Indeed, if $I\in\operatorname{Prim}(A/J)$ then
there is $I^{\prime}\in\operatorname{Prim}(A)$ with $J\subset I^{\prime}$ and
$I=I^{\prime}/J$. Since $K\subset I^{\prime}$, we have that $K/J\subset I$.
Thus
\[
K/J\subset\cap_{I\in\operatorname{Prim}(A/J)^{1}}I=\operatorname{Rad}(A/J).
\]
Conversely, if $K^{\prime}=q_{J}^{-1}\left(  \operatorname{Rad}(A/J)\right)  $
then $K^{\prime}\subset I^{\prime}$ for any primitive ideal $I^{\prime}\supset
I$. Hence $K^{\prime}\subset K$ and $\operatorname{Rad}(A/J)\subset
q_{J}(K)=K/J$.

Now if $A$ is hereditarily semisimple then $K/J=0$, whence $J=K$, the
intersection of primitive ideals. Conversely, if $\mathrm{kh}(J)=J$ for all
$J$, then $\operatorname{Rad}(A/J)=0$, whence $A$ is hereditarily semisimple.
\end{proof}

In commutative case the condition that each closed ideal of a Banach algebra
$A$ is the intersection of primitive ideals, is sometimes formulated as $A$
possesses the spectral synthesis. Among group algebras $L^{1}(G)$, only the
algebras of compact groups have this property (Malliavin's Theorem, see
\cite{Rud}).

\begin{theorem}
\label{simher}Let $A$ be a hereditarily semisimple Banach algebra. Then the
following conditions are equivalent:

\begin{enumerate}
\item $A$ is scattered;

\item $A$ is hypocompact;

\item $A$ is a closed-hypofinite algebra;

\item Every non-zero quotient of $A$ has a minimal left ideal.
\end{enumerate}
\end{theorem}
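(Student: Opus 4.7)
My plan is to establish the cycle $(3) \Rightarrow (2) \Rightarrow (1) \Rightarrow (4) \Rightarrow (3)$, exploiting the fact (central throughout the preceding section) that hereditary semisimplicity is preserved by quotients and passes to closed ideals in a controlled way, so that each condition, being formulated via ``every non-zero quotient,'' propagates nicely along the cycle.

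The implication $(3) \Rightarrow (2)$ is immediate: a finite rank element $a$ has $\mathrm{W}_a$ of finite rank, hence compact, so it is a compact element, and thus every closed-hypofinite algebra is hypocompact. The implication $(2) \Rightarrow (1)$ is Theorem~\ref{hc-scat}: hypocompact means $A = \mathcal{R}_{\mathrm{hc}}(A)$, and $\mathcal{R}_{\mathrm{hc}} \leq \mathcal{R}_s$ forces $A = \mathcal{R}_s(A)$.

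For $(1) \Rightarrow (4)$, let $B = A/J$ be a non-zero quotient of $A$ by a closed ideal. Since $\mathcal{R}_s$ is a radical, the scatteredness of $A$ passes to $B$ (the standard quotient map sends $A = \mathcal{R}_s(A)$ into $\mathcal{R}_s(B)$). By the hereditary semisimplicity of $A$, the algebra $B$ is semisimple, hence semiprime. A scattered semisimple Banach algebra has non-zero socle by Barnes' theorem (cited in the proof of Theorem~\ref{hc-scat}), so $\operatorname{soc}(B) \neq 0$, and as the socle is by definition the sum of minimal left ideals, $B$ has a minimal left ideal.

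For $(4) \Rightarrow (3)$, let $B = A/J$ be a non-zero quotient. Again $B$ is semisimple by hereditary semisimplicity, hence semiprime. By (4), $B$ admits a minimal left ideal $L$; since $B$ is semiprime and $L^2 \neq 0$ (else $L$ would be a non-zero nilpotent ideal in a semiprime algebra), we have $L = Bp$ for a minimal projection $p \in B$, so that $pBp$ is a division algebra. As $B$ is a Banach algebra, $pBp$ is a division Banach algebra, hence $pBp \cong \mathbb{C}$ by Gelfand-Mazur. Thus $\dim pBp = 1 < \infty$, so $p$ is a non-zero finite rank element of $B$, proving that $A$ is closed-hypofinite. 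I do not expect any real obstacle here; the only point worth care is checking that the inherited structural properties (scatteredness, hereditary semisimplicity, existence of minimal left ideals) genuinely descend to arbitrary closed-ideal quotients, but this is a direct consequence of the radical axioms and the remarks on hereditary semisimplicity following Proposition~\ref{perPrim}.
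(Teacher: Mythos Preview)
Your proof is correct and follows essentially the same cycle $(1)\Rightarrow(4)\Rightarrow(3)\Rightarrow(2)\Rightarrow(1)$ as the paper. The only cosmetic difference is in $(1)\Rightarrow(4)$: you invoke Barnes' theorem directly, while the paper instead argues that $\operatorname{soc}(B)=0$ would force $\operatorname{psoc}(B)=0$ (since $B$ is semisimple), hence $\mathcal{R}_s(B)=\operatorname{psoc}^{\ast}(B)=0$, contradicting $\mathcal{R}_s(B)=B$; and in $(4)\Rightarrow(3)$ you spell out the passage from a minimal left ideal to a minimal projection via semiprimeness and Gelfand--Mazur, which the paper compresses to a single sentence.
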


\begin{proof}
$(1)\Longrightarrow(4)$ Let $A$ be scattered, and let $J$ be a closed ideal of
$A$. Then $A/J$ is also scattered. So it suffices to show that a scattered
semisimple Banach algebra $A$ has non-zero socle. But if $\operatorname{soc}%
(A)=0$ then $\operatorname{psoc}(A)=0$, whence $\operatorname{psoc}^{\ast
}(A)=0$, i.e., $\mathcal{R}_{s}(A)=0$ while $\mathcal{R}_{s}(A)=A$, a contradiction.

$(4)\Longrightarrow(3)$ follows from the fact that each minimal projection is
a finite rank element.

$(3)\Longrightarrow(2)$ is evident.

$(2)\Longrightarrow(1)$ follows from Theorem \ref{hc-scat}.
\end{proof}

An example of a hereditarily semisimple algebra is the algebra $C(X)$ of all
continuous functions on a compact space $X$. We saw that it is scattered if
and only if $X$ does not contain perfect subsets. Hence it is hypocompact
under the same condition.

Since all C*-algebras are semisimple and their quotients are again
C*-algebras, we have that C*-algebras are hereditarily semisimple.

Note that in the theory of C*-algebras the term \textquotedblleft
scattered\textquotedblright\ is used for a class of C*-algebras $A$ satisfying
the following equivalent conditions:

\begin{enumerate}
\item[(1$_{s}$)] Each positive functional on $A$ is the sum of a sequence of
pure functionals;

\item[(2$_{s}$)] $A$ is a type I C*-algebra (= GCR C*-algebra) and
$\operatorname{Prim}(A)$ is dispersed in the hull-kernel topology;

\item[(3$_{s}$)] $A$ is of type I and the maximal ideal space of its center is dispersed;

\item[(4$_{s}$)] $A$ admits a superposition series $\left(  I_{\gamma}\right)
$ of closed ideals and each gap-quotient $I_{\gamma+1}/I_{\gamma}$ is
isomorphic to $\mathcal{K}(H_{\gamma})$ for some Hilbert space $H_{\gamma}$;

\item[(5$_{s}$)] Each self-adjoint element of $A$ has countable spectrum;

\item[(6$_{s}$)] Each C*-subalgebra of $A$ is AF (approximately finite-dimensional);

\item[(7$_{s}$)] The dual space $A^{\ast}$ of $A$ has the Radon-Nikodym property.
\end{enumerate}

For the proof of the equivalence see \cite{Kus} and the references therein. We
will show that this class of algebras are exactly the scattered C*-algebras.

\begin{theorem}
\label{eq-scat} A C*-algebra $A$ satisfies the equivalent conditions $(1_{s}%
)$-$(7_{s})$ if and only if it is scattered.
\end{theorem}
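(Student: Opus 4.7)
The plan is to establish the equivalence by exploiting two different conditions from the list $(1_s)$-$(7_s)$, choosing the most convenient one for each direction. Since the equivalence of $(1_s)$-$(7_s)$ themselves is classical and cited from \cite{Kus}, it suffices to prove that a scattered C*-algebra (in our sense) satisfies one of them, and that one of them implies scatteredness. The natural choices are $(5_s)$ for the forward direction and $(4_s)$ for the reverse.

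For the forward direction, suppose $A = \mathcal{R}_s(A)$ is scattered in our sense. Then by definition every element of $A$ has at most countable spectrum, so in particular every self-adjoint element does. This is precisely condition $(5_s)$, and hence, via the classical equivalences, $A$ satisfies $(1_s)$-$(7_s)$. Nothing further is required here.

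For the reverse direction, we use condition $(4_s)$: there exists a superposition series $(I_\gamma)_{\gamma \leq \delta}$ of closed ideals of $A$ with $I_0 = 0$, $I_\delta = A$, closure at limit ordinals, and each gap-quotient $I_{\gamma+1}/I_\gamma \cong \mathcal{K}(H_\gamma)$. Each $\mathcal{K}(H_\gamma)$ is scattered in our sense since every compact operator has at most countable spectrum accumulating only at $0$; equivalently, $\mathcal{K}(H_\gamma)$ is inessential, and therefore a thin ideal in the ambient C*-algebra $A/I_\gamma$ by Lemma \ref{ines}. Thus $(I_\gamma)$ is an increasing transfinite chain of closed ideals of $A$ whose gap-quotients are thin ideals of the corresponding quotient algebras. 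Theorem \ref{spec} then yields that $A$ itself is a thin ideal of $A$, which by Corollary \ref{th} means that $A$ is a scattered algebra, i.e., $A = \mathcal{R}_s(A)$.

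The only technical point worth checking is that the superposition series from $(4_s)$ satisfies the closure requirement $I_\gamma = \overline{\cup_{\gamma' < \gamma} I_{\gamma'}}$ at limit ordinals in the sense demanded by Theorem \ref{spec}; but this is built into the standard definition of such a composition series of closed ideals in the C*-algebra setting, so no additional verification is needed. As an alternative route, one could bypass Theorem \ref{spec} entirely and argue by transfinite induction using the extension-closure of $\mathbf{Rad}(\mathcal{R}_s)$ (Theorem \ref{ext}(2), applicable since $\mathcal{R}_s$ is an over radical) together with Theorem \ref{ch1}(1) to handle the limit ordinal step via closures of up-directed unions of $\mathcal{R}_s$-radical ideals.
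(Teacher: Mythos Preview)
Your proof is correct. The forward direction is identical to the paper's (scattered $\Rightarrow (5_s)$). For the reverse direction you and the paper both start from $(4_s)$, but the paper takes a shorter path: it simply observes that each gap-quotient $\mathcal{K}(H_\gamma)$ is bicompact, so the composition series exhibits $A$ as hypocompact, and then invokes Theorem~\ref{hc-scat} ($\mathcal{R}_{\mathrm{hc}}\leq\mathcal{R}_s$) to conclude $A$ is scattered. You instead appeal to the thin-ideal machinery (Lemmas~\ref{ines}, \ref{okr} and Theorem~\ref{spec}), or alternatively to the radical-theoretic Theorem~\ref{ch1}(2). Both routes are valid; the paper's is marginally more economical since it packages the transfinite-chain argument inside the already-established inequality $\mathcal{R}_{\mathrm{hc}}\leq\mathcal{R}_s$, while yours stays closer to the spectral definition and avoids the detour through the hypocompact radical.
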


\begin{proof}
Clearly each scattered C*-algebra satisfies $\left(  6_{s}\right)  $ and
$\left(  5_{s}\right)  $. On the other hand, if $A$ satisfies $\left(
4_{s}\right)  $ then $A$ is hypocompact, so it is scattered by Theorem
\ref{hc-scat}.
\end{proof}

Thus taking into account that C*-algebras are hereditary semisimple and using
Theorem \ref{simher} one can add to the above list of the equivalent
properties of a C*-algebra the following ones:

\begin{enumerate}
\item[(8$_{s}$)] All elements of $A$ have countable spectra;

\item[(9$_{s}$)] Each non-zero quotient of $A$ has a non-zero compact element;

\item[(10$_{s}$)] Each non-zero quotient of $A$ has a non-zero finite element;

\item[(11$_{s}$)] Every non-zero quotient of $A$ has a minimal projection.
\end{enumerate}

It was shown in \cite{AkWr} that an element $a$ of a C*-algebra $A$ is compact
if and only if the map $x\longmapsto axa$ is weakly compact. So one can add
also the condition:

\begin{enumerate}
\item[(12$_{s}$)] Each non-zero quotient of $A$ has a non-zero weakly compact element.
\end{enumerate}

\section{\label{continuity}Spectral continuity and radicals}

Spectral continuity, that is the continuity of such functions of an operator
(or an element of Banach algebra) as spectrum, special parts of spectrum,
spectral radius and so on, is a very convenient property when it holds. For
the information on this subject see for example \cite{Bur} and the references
therein. We consider it from the viewpoint of theory of topological radicals.
An important role here is played by the scattered radical $\mathcal{R}_{s}$,
its primitivity extension $\mathcal{R}_{s}^{p}$ and the radical $\mathcal{R}%
_{s}^{p\ast}$ obtained from $\mathcal{R}_{s}^{p}$ by the convolution
procedure. So we begin with a study of these ideal maps.

\subsection{$\mathcal{R}_{s}^{a}$, $\mathcal{R}_{s}^{p}$, $\mathcal{R}%
_{s}^{p\ast}$ and continuity of the spectrum}

\begin{proposition}
$\mathcal{R}_{s}^{a}$ is a hereditary topological radical.
\end{proposition}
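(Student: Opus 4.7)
The plan is to derive the proposition directly from Theorem \ref{cent}(2) applied to $P = \mathcal{R}_s$, combined with Theorem \ref{pab}(2) for heredity and Remark \ref{comsem} for closedness. First I would check the four hypotheses of Theorem \ref{cent}(2): that $\mathcal{R}_s$ is a weakly hereditary under radical on $\mathfrak{U}_{\mathrm{b}}$, that $\mathcal{R}_s \geq \mathfrak{P}_\beta$, and that $\mathcal{R}_s$ satisfies the condition of Banach heredity $(\ref{BanRad})$. That $\mathcal{R}_s$ is a hereditary topological radical on Banach algebras was established in Section 8, so in particular it is an under radical; heredity at once gives weak heredity, since for a non-zero ideal $I$ of an $\mathcal{R}_s$-radical algebra $A$ one has $\mathcal{R}_s(I) = I \cap \mathcal{R}_s(A) = I \neq 0$; the inequality $\mathcal{R}_s \geq \operatorname{Rad} \geq \mathcal{P}_\beta \geq \mathfrak{P}_\beta$ follows by combining Theorem \ref{hc-scat} with the fact that $\mathfrak{P}_\beta \subset \operatorname{rad}$; and Banach heredity is exactly Corollary \ref{ScatBH}.

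Given these hypotheses, Theorem \ref{cent}(2) asserts that $\mathcal{R}_s^{\ast a}$ is a radical on $\mathfrak{U}_{\mathrm{b}}$. Because $\mathcal{R}_s$ is itself a (topological) radical, the convolution chain for $\mathcal{R}_s$ stabilizes at the first step, so $\mathcal{R}_s^\ast = \mathcal{R}_s$ and consequently
\[
\mathcal{R}_s^{a} \;=\; \mathcal{R}_s^{\ast a}
\]
is a radical. Heredity of $\mathcal{R}_s^{a}$ then follows immediately from Theorem \ref{pab}(2), which states that the centralization of a hereditary radical $P$ with $P \geq \mathfrak{P}_\beta$ is again a hereditary preradical.

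Finally I would check that $\mathcal{R}_s^{a}$ is closed-valued, making it a \emph{topological} radical on $\mathfrak{U}_{\mathrm{b}}$. Since $\mathcal{R}_s \geq \mathcal{P}_\beta$, the quotient $A/\mathcal{R}_s(A)$ is $\mathcal{R}_s$-semisimple and in particular semiprime, so Remark \ref{comsem} guarantees that $\Sigma_a(A/\mathcal{R}_s(A))$ is closed; therefore $\mathcal{R}_s^{a}(A) = q_{\mathcal{R}_s(A)}^{-1}\bigl(\Sigma_a(A/\mathcal{R}_s(A))\bigr)$ is a closed ideal of $A$. I do not expect a genuine obstacle at any stage: the entire argument is a bookkeeping exercise that matches the known properties of $\mathcal{R}_s$ from Sections~5 and~8 against the hypothesis list of Theorem \ref{cent}(2), the only nontrivial input being Banach heredity from Corollary \ref{ScatBH}.
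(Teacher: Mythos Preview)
Your proposal is correct and follows essentially the same route as the paper: invoke Banach heredity of $\mathcal{R}_s$ (Corollary \ref{ScatBH}) to apply Theorem \ref{cent}(2), then use Theorem \ref{pab}(2) (which is what Remark \ref{centr} points to) for heredity. Your extra verifications---weak heredity from heredity, the inequality $\mathcal{R}_s\geq\mathfrak{P}_\beta$, and closedness via Remark \ref{comsem}---are all routine and exactly the checks implicit in the paper's two-line proof.
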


\begin{proof}
Since the radical $\mathcal{R}_{s}$ satisfies the condition of Banach heredity
(Corollary \ref{ScatBH}), its centralization $\mathcal{R}_{s}^{a}$ is a
hereditary topological radical on $\mathfrak{U}_{b}$, by Theorem \ref{cent}
and Remark \ref{centr}.
\end{proof}

The Banach heredity implies, by Theorem \ref{pri}$\left(  3\right)  $, that
the primitivity extension $\mathcal{R}_{s}^{p}$ of $\mathcal{R}_{s}$ is a
hereditary preradical on $\mathfrak{U}_{\mathrm{b}}$. Recall that a Banach
algebra $A$ is $\mathcal{R}_{s}^{p}$-radical if $A/I$ is $\mathcal{R}_{s}%
$-radical (= scattered), for each primitive ideal $I$ of $A$.

\begin{theorem}
\label{dif-C} The classes $\mathbf{Rad}(\mathcal{R}_{s}^{p})\setminus
\mathbf{Rad}(\mathcal{R}_{s}^{a})$ and $\mathbf{Rad}(\mathcal{R}_{s}%
^{a})\setminus\mathbf{Rad}(\mathcal{R}_{s}^{p})$ are non-empty.
\end{theorem}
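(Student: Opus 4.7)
The plan is to produce explicit Banach algebras lying in each of the two set differences; both can be chosen as C*-algebras. The guiding intuition is that $\mathcal{R}_{s}^{p}$-radicality is a condition on primitive quotients which is insensitive to commutativity, while $\mathcal{R}_{s}^{a}$-radicality concerns commutativity modulo the scattered radical and is insensitive to the size of primitive quotients beyond whether they carry a scattered structure.

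For $\mathbf{Rad}(\mathcal{R}_{s}^{p})\setminus\mathbf{Rad}(\mathcal{R}_{s}^{a})$ I will take $A:=C([0,1])\otimes M_{n}$ with $n\geq 2$. Its primitive ideals are the evaluation kernels $I_{t}=\{f:f(t)=0\}$, so each quotient $A/I_{t}\cong M_{n}$ is finite-dimensional, hence scattered, giving $A\in\mathbf{Rad}(\mathcal{R}_{s}^{p})$. Since $A$ is non-commutative, it then suffices to show that $\mathcal{R}_{s}(A)=0$. This follows from condition $(11_{s})$ in Theorem \ref{eq-scat}: a non-zero scattered ideal $\mathcal{R}_{s}(A)$ would, as a scattered C*-algebra, contain a minimal projection $p$ of $A$; viewing $p$ as a continuous $M_{n}$-valued projection on $[0,1]$, it must have constant rank $k\geq 1$, so $pAp\cong C([0,1],M_{k})$ is not a division algebra, contradicting the minimality of $p$. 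Alternatively, given any $0\neq f\in A$, one can produce $g\in A$ (of the form $t\mapsto tg_{0}$ for a suitable rank-one $g_{0}\in M_{n}$) for which the continuous function $t\mapsto t\operatorname{tr}(g_{0}f(t))$ is non-constant, exhibiting uncountable spectrum of $gf$ and thus forcing $f\notin\mathcal{R}_{s}(A)$ via Theorem \ref{large}(3).

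For $\mathbf{Rad}(\mathcal{R}_{s}^{a})\setminus\mathbf{Rad}(\mathcal{R}_{s}^{p})$ I will take $H=L^{2}[0,1]$, let $T\in\mathcal{B}(H)$ be the multiplication operator $(T\xi)(t)=t\xi(t)$, and let $B$ be the closed $*$-subalgebra of $\mathcal{B}(H)$ generated by $T$ and $\mathcal{K}(H)$. Since $T$ has no point spectrum, its image in the Calkin algebra has spectrum $[0,1]$, so $B/\mathcal{K}(H)$ is isomorphic to $C([0,1])$. Because $\mathcal{K}(H)\subset B$ acts strictly irreducibly on $H$, the inclusion $B\hookrightarrow\mathcal{B}(H)$ is a faithful strictly irreducible representation, whence $\{0\}\in\operatorname{Prim}(B)$; and $B$ is not scattered since $\sigma(T)=[0,1]$, hence $B\notin\mathbf{Rad}(\mathcal{R}_{s}^{p})$. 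On the other hand $\mathcal{K}(H)\subseteq\mathcal{R}_{s}(B)$ is immediate. The reverse inclusion reduces, via $q(\mathcal{R}_{s}(B))\subseteq\mathcal{R}_{s}(B/\mathcal{K}(H))$ for the quotient map $q$, to showing $\mathcal{R}_{s}(C([0,1]))=0$; this follows from Theorem \ref{large}(3), since for any non-zero $f\in C([0,1])$ the product $fg$ with $g(t)=t$ has the uncountable spectrum $\{tf(t):t\in[0,1]\}$ (the function $t\mapsto tf(t)$ is continuous and non-constant). Thus $\mathcal{R}_{s}(B)=\mathcal{K}(H)$, and since $B/\mathcal{K}(H)$ is commutative, $[B,B]\subseteq\mathcal{K}(H)=\mathcal{R}_{s}(B)$, so $B\in\mathbf{Rad}(\mathcal{R}_{s}^{a})$.

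The main technical point in both examples will be the computation of the scattered radical. In the first it is the ruling out of minimal projections in $A$ (equivalently, the construction of a product $gf$ of uncountable spectrum); in the second it is the vanishing $\mathcal{R}_{s}(C([0,1]))=0$. Once these are in hand, the remaining verifications — the identification of primitive ideals, the strict irreducibility of $B$ on $H$, and the commutativity of $B/\mathcal{K}(H)$ — are standard applications of operator-algebra and C*-algebraic facts together with the machinery of Sections 7 and 8.
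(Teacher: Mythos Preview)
Your proof is correct and follows the same overall strategy as the paper: exhibit two explicit C*-algebras, one for each set difference. The paper uses $C([0,1],\mathcal{K}(H))$ for the first example and the Toeplitz algebra for the second, while you use $C([0,1],M_n)$ and the C*-algebra generated by a multiplication operator together with the compacts; in each case the structural features being exploited (scattered primitive quotients but no commutativity modulo $\mathcal{R}_s$, versus commutativity modulo $\mathcal{K}(H)$ but irreducibility and non-scatteredness) are identical. Two minor points of comparison: for the first example the paper argues directly that $A$ has no proper ideal with commutative quotient, whereas you establish the stronger fact $\mathcal{R}_s(A)=0$; and for the second, your computation $\mathcal{R}_s(B)=\mathcal{K}(H)$ is more than required --- the inclusion $\mathcal{K}(H)\subseteq\mathcal{R}_s(B)$ already gives $[B,B]\subseteq\mathcal{R}_s(B)$.
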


\begin{proof}
Let $A=C([0,1],\mathcal{K}(H))$ be the C*-algebra of all norm-continuous
$\mathcal{K}(H)$-valued functions on $[0,1]$. It is well known that
$\operatorname{Prim}(A)$ is isomorphic to $[0,1]$ via the map $t\mapsto I_{t}$
where $I_{t}$ is the ideal of all functions equal $0$ at $t$. Since
$A/I_{t}\cong\mathcal{K}(H)$ then $A$ is $\mathcal{R}_{s}^{p}$-radical. On the
other hand, $A$ has no ideals $J$ with commutative $A/J$, so it is not
$\mathcal{R}_{s}^{a}$-radical.

To show the non-voidness of $\mathbf{Rad}(\mathcal{R}_{s}^{a})\setminus
\mathbf{Rad}(\mathcal{R}_{s}^{p})$, let us consider the Toeplitz algebra
$A_{t}$ that is the C*-algebra generated by the unilateral shift $V$ in the
space $H=l_{2}(\mathbb{N})$, acting by the rule $Ve_{n}=e_{n+1}$, where
$(e_{n})_{n=1}^{\infty}$ is the standard basis in $l_{2}(\mathbb{N})$. It is
known \cite[Theorem 5.1.5]{Dav} that $A_{t}$ contains the ideal $\mathcal{K}%
(H)$ and that $A_{t}/\mathcal{K}(H)$ is isomorphic to the algebra
$C(\mathbb{T})$ of all continuous functions on the unit circle. Since
$\mathcal{K}(H)\subset\mathcal{R}_{s}(A_{t})$, we get that $A_{t}%
\in\mathbf{Rad}(\mathcal{R}_{s}^{a})$. On the other hand, the identity
representation of $A_{t}$ is irreducible, while $A_{t}$ is not scattered,
because $\sigma(V)=\mathbb{D}$, the unit disk. Therefore $A_{t}$ is not
$\mathcal{R}_{s}^{p}$-radical.
\end{proof}

Now let us consider the radical $\mathcal{R}_{s}^{p{\ast}}$ which is obtained
from $\mathcal{R}_{s}^{p}$ by the convolution procedure. We will see later
that this radical plays a very important role in continuity of the spectral radius.

\begin{lemma}
\label{upperstar} $\mathcal{R}_{s}^{a}<\mathcal{R}_{s}^{p{\ast}}$ on
$\mathfrak{U}_{\mathrm{b}}$.
\end{lemma}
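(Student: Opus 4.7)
The plan is to prove the inequality $\mathcal{R}_{s}^{a}\leq\mathcal{R}_{s}^{p\ast}$ first, and then to exhibit a Banach algebra that witnesses the strict inequality, using the example already constructed in the proof of Theorem \ref{dif-C}.

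For the inequality, I would fix a Banach algebra $A$ and set $J=\mathcal{R}_{s}^{a}(A)$. By heredity of $\mathcal{R}_{s}$ (see Corollary \ref{ScatBH}) applied to the ideal $J$ of $A$, one has $\mathcal{R}_{s}(J)=J\cap\mathcal{R}_{s}(A)=\mathcal{R}_{s}(A)$, the last equality because $\mathcal{R}_{s}(A)\subset J$ is built into the definition of the centralization. By Lemma \ref{commut} applied to the semiprime algebra $A/\mathcal{R}_{s}(A)$, the image of $J$ in $A/\mathcal{R}_{s}(A)$ coincides with $\Sigma_{a}(A/\mathcal{R}_{s}(A))\subset Z(A/\mathcal{R}_{s}(A))$, so $J/\mathcal{R}_{s}(J)$ is commutative. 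A commutative Banach algebra is $\mathcal{R}_{s}^{p}$-radical because for every primitive ideal $I$ the quotient is $\mathbb{C}$ by Gelfand--Mazur and hence scattered; on the other hand every scattered algebra is $\mathcal{R}_{s}$-radical and therefore $\mathcal{R}_{s}^{p}$-radical. Thus both $\mathcal{R}_{s}(J)$ and $J/\mathcal{R}_{s}(J)$ lie in $\mathbf{Rad}(\mathcal{R}_{s}^{p})\subset\mathbf{Rad}(\mathcal{R}_{s}^{p\ast})$. Since the class of $\mathcal{R}_{s}^{p\ast}$-radical algebras is stable under extensions (Theorem \ref{ext}), $J$ itself is $\mathcal{R}_{s}^{p\ast}$-radical, whence by Axiom~4 applied to $J\triangleleft A$,
\[
\mathcal{R}_{s}^{a}(A)=J=\mathcal{R}_{s}^{p\ast}(J)\subset\mathcal{R}_{s}^{p\ast}(A).
\]

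For strictness I would reuse the C*-algebra $A=C([0,1],\mathcal{K}(H))$ from the first half of the proof of Theorem \ref{dif-C}. Its primitive ideals are the evaluation ideals $I_{t}$, with $A/I_{t}\cong\mathcal{K}(H)$ scattered, so $A$ is $\mathcal{R}_{s}^{p}$-radical and in particular $\mathcal{R}_{s}^{p\ast}(A)=A$. On the other hand, since $\mathcal{K}(H)$ is simple and has trivial center, $A$ has no non-zero closed ideal $K$ with commutative $A/K$; combined with Theorem \ref{hc-scat} (which gives $\mathcal{R}_{s}(A)=\mathcal{R}_{\mathrm{hc}}(A)=0$ because $A$ is semisimple and has no non-zero compact elements), one checks that $\mathcal{R}_{s}^{a}(A)=\{x\in A:x[A,A]=0\}=0$. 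Thus $\mathcal{R}_{s}^{a}(A)=0\neq A=\mathcal{R}_{s}^{p\ast}(A)$, which establishes $\mathcal{R}_{s}^{a}<\mathcal{R}_{s}^{p\ast}$.

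The main technical point is the first step: one must verify that the two ingredients $\mathcal{R}_{s}(J)$ and $J/\mathcal{R}_{s}(J)$ of the centralization really are $\mathcal{R}_{s}^{p\ast}$-radical, which hinges on the observation that commutative Banach algebras are automatically $\mathcal{R}_{s}^{p}$-radical. The remaining verifications (Banach heredity of $\mathcal{R}_{s}$, extension stability, and the computation of $\mathcal{R}_{s}$ and $\mathcal{R}_{s}^{a}$ for $C([0,1],\mathcal{K}(H))$) are essentially bookkeeping using results already established in the paper.
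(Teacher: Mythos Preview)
Your proof is correct and follows essentially the same route as the paper: show that $\mathcal{R}_{s}^{a}(A)$ is an extension of a scattered ideal by a commutative quotient, observe that both pieces are $\mathcal{R}_{s}^{p}$-radical (the commutative part because its primitive quotients are $\mathbb{C}$), invoke extension stability of $\mathbf{Rad}(\mathcal{R}_{s}^{p\ast})$, and deduce strictness from the example $C([0,1],\mathcal{K}(H))$ of Theorem~\ref{dif-C}. The paper phrases the last step more economically by citing Theorems~\ref{equality} and~\ref{dif-C} directly: since that algebra lies in $\mathbf{Rad}(\mathcal{R}_{s}^{p})\setminus\mathbf{Rad}(\mathcal{R}_{s}^{a})\subset\mathbf{Rad}(\mathcal{R}_{s}^{p\ast})\setminus\mathbf{Rad}(\mathcal{R}_{s}^{a})$, the inequality must be strict---there is no need to compute $\mathcal{R}_{s}^{a}(A)=0$ exactly. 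Your extra computation is harmless, but note that the appeal to Theorem~\ref{hc-scat} for ``$\mathcal{R}_{s}(A)=\mathcal{R}_{\mathrm{hc}}(A)$'' is not quite what that theorem says (it only gives $\mathcal{R}_{\mathrm{hc}}\leq\mathcal{R}_{s}$); the equality on C*-algebras comes from Theorem~\ref{simher}, and the claim that $A$ has no non-zero compact elements, while true, is not entirely immediate.
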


\begin{proof}
All commutative Banach algebras belong to $\mathcal{R}_{s}^{p}$ because their
strictly irreducible representations are one-dimensional. The quotient
$\mathcal{R}_{s}^{a}(A)/\mathcal{R}_{s}(A)$ is a commutative algebra, for
every Banach algebra $A$. Thus we have that $\mathcal{R}_{s}^{a}(A)$ is an
extension of an $\mathcal{R}_{s}^{p{\ast}}$-radical algebra by an
$\mathcal{R}_{s}^{p{\ast}}$-radical algebra. Since $\mathcal{R}_{s}^{p{\ast}}$
is a radical, $\mathcal{R}_{s}^{a}(A)$ is $\mathcal{R}_{s}^{p{\ast}}$-radical.
By Theorems \ref{equality} and \ref{dif-C}, $\mathcal{R}_{s}^{a}%
<\mathcal{R}_{s}^{p{\ast}}$.
\end{proof}

Since $A_{t}$ is in $\mathbf{Rad}(R_{s}^{a}) \setminus\mathbf{Rad}(R_{s}^{p}%
)$, we conclude that $R_{s}^{p} \neq R_{s}^{p\ast}$. In other words, we obtain
the following:

\begin{corollary}
The under radical $\mathcal{R}_{s}^{p}$ is not a radical.
\end{corollary}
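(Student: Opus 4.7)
The plan is to derive the corollary directly from Lemma \ref{upperstar} and Theorem \ref{dif-C}, which together already pinpoint a specific Banach algebra separating $\mathcal{R}_s^p$ from its convolution closure $\mathcal{R}_s^{p\ast}$. The key conceptual point is that by Theorem \ref{ovunt}, $\mathcal{R}_s^{p\ast}$ is \emph{the smallest} radical that is greater than or equal to the under radical $\mathcal{R}_s^p$; consequently, if $\mathcal{R}_s^p$ were itself a radical, one would necessarily have $\mathcal{R}_s^p = \mathcal{R}_s^{p\ast}$. Therefore it suffices to exhibit a Banach algebra $A$ for which $\mathcal{R}_s^p(A)\neq \mathcal{R}_s^{p\ast}(A)$.

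First I would recall that the Toeplitz algebra $A_t$ constructed in the proof of Theorem \ref{dif-C} satisfies $A_t\in \mathbf{Rad}(\mathcal{R}_s^a)\setminus \mathbf{Rad}(\mathcal{R}_s^p)$: indeed $\mathcal{K}(H)\subset \mathcal{R}_s(A_t)$ and $A_t/\mathcal{K}(H)\cong C(\mathbb{T})$ is commutative, so $A_t$ is commutative modulo $\mathcal{R}_s(A_t)$ and hence $\mathcal{R}_s^a$-radical; on the other hand the identity representation of $A_t$ is strictly irreducible but $A_t$ itself is non-scattered, so the corresponding primitive ideal $\{0\}$ witnesses that $A_t$ is not $\mathcal{R}_s^p$-radical, i.e.\ $\mathcal{R}_s^p(A_t)\subsetneq A_t$.

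Next, applying Lemma \ref{upperstar} (the strict inequality $\mathcal{R}_s^a<\mathcal{R}_s^{p\ast}$, whose proof only needs $\mathcal{R}_s^a \le \mathcal{R}_s^{p\ast}$), we deduce that $A_t$ is $\mathcal{R}_s^{p\ast}$-radical, that is $\mathcal{R}_s^{p\ast}(A_t)=A_t$. Combining this with the previous paragraph yields $\mathcal{R}_s^p(A_t)\subsetneq \mathcal{R}_s^{p\ast}(A_t)$, and hence $\mathcal{R}_s^p\neq \mathcal{R}_s^{p\ast}$ as maps on $\mathfrak{U}_{\mathrm{b}}$.

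Finally I would conclude: since by Theorem \ref{pri} the map $\mathcal{R}_s^p$ is a hereditary preradical on $\mathfrak{U}_{\mathrm{b}}$ (using the Banach heredity of $\mathcal{R}_s$, Corollary \ref{ScatBH}), it is in particular an under radical, so that Theorem \ref{ovunt}(1) applies; were $\mathcal{R}_s^p$ a radical, the minimality property of the convolution would force the equality $\mathcal{R}_s^p = \mathcal{R}_s^{p\ast}$, contradicting the strict inclusion above. Therefore $\mathcal{R}_s^p$ is not a radical. There is no real obstacle here beyond correctly invoking these two earlier results; the whole argument is essentially a remark that the already-established strict inequality $\mathcal{R}_s^p \lneq \mathcal{R}_s^{p\ast}$ is incompatible with $\mathcal{R}_s^p$ being a radical.
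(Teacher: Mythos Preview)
Your proof is correct and follows essentially the same approach as the paper: the paper also derives the corollary from the observation that the Toeplitz algebra $A_t$ lies in $\mathbf{Rad}(\mathcal{R}_s^a)\setminus\mathbf{Rad}(\mathcal{R}_s^p)$ (Theorem \ref{dif-C}), which combined with $\mathcal{R}_s^a\le\mathcal{R}_s^{p\ast}$ (Lemma \ref{upperstar}) forces $\mathcal{R}_s^p\neq\mathcal{R}_s^{p\ast}$, contradicting the minimality of $\mathcal{R}_s^{p\ast}$ among radicals dominating $\mathcal{R}_s^p$. You have simply spelled out the implicit step via Theorem \ref{ovunt} more explicitly than the paper does.
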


Recall some facts about continuity of the spectrum.

Let $A$ be a Banach algebra. It is well known that $\sigma$ is upper
continuous, i.e. if $a_{n}\rightarrow a$ then for every open neighborhood $V$
of $\sigma\left(  a\right)  $ there is $m>0$ such that $\sigma\left(
a_{n}\right)  \subset V$ for every $n>m$. Moreover, Newburgh's theorem
\cite[Lemma 6.15]{S76} says that for every clopen subset $\sigma_{0}$ of
$\sigma\left(  a\right)  $ and every open neighborhood $V_{0}$ of $\sigma_{0}%
$, there is $m_{0}$ such that $\sigma_{n}\subset V_{0}$ for $n>m_{0}$ and some
clopen subsets $\sigma_{n}$ of $\sigma\left(  a_{n}\right)  $.

In particular, if $\sigma\left(  a\right)  $ is at most countable then the
spectrum $\sigma$ is continuous at $a$, i.e. $\sigma\left(  a_{n}\right)
\rightarrow\sigma\left(  a\right)  $ by the Hausdorff metric. For continuity
of $\sigma$ at $a$ it is sufficient to have that for every $\lambda\in
\sigma\left(  a\right)  $ there is a sequence $\lambda_{n}\in\sigma\left(
a_{n}\right)  $ such that $\lambda_{n}\rightarrow\lambda$ as $n\rightarrow
\infty$.

We need the following result of Zemanek \cite[Remark 1]{Z80}.

\begin{lemma}
\label{zem} Let $A$ be a Banach algebra, and let $a,a_{n}\in A$,
$a_{n}\rightarrow a$ as $n\rightarrow\infty$. If $\sigma_{A/I}\left(
a_{n}/I\right)  \rightarrow\sigma_{A/I}\left(  a/I\right)  $, for every
$I\in\operatorname{Prim}\left(  A\right)  $, then $\sigma_{A}\left(
a_{n}\right)  \rightarrow\sigma_{A}\left(  a\right)  $.
\end{lemma}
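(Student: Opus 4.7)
The plan is to reduce the claim to lower semicontinuity of the spectrum, since upper semicontinuity of $\sigma$ is automatic in any Banach algebra (a standard consequence of openness of the set of invertible elements in $A^{1}$). Thus it suffices to show that for every $\lambda\in\sigma(a)$ there exists a sequence $\lambda_{n}\in\sigma(a_{n})$ with $\lambda_{n}\to\lambda$; combined with upper semicontinuity this yields convergence in Hausdorff metric.

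To produce such a sequence, I would invoke Theorem~\ref{sp11}$(2)$, which gives the decomposition
\[
\sigma_{A}(a)=\bigcup_{I\in\operatorname{Prim}(A^{1})}\sigma_{A^{1}/I}(a/I).
\]
So, given $\lambda\in\sigma(a)$, there is some $I\in\operatorname{Prim}(A^{1})$ with $\lambda\in\sigma_{A^{1}/I}(a/I)$. I would split into two cases according to whether $I$ lies in $\operatorname{Prim}(A)$ or is the exceptional ideal $I=A$ arising in the non-unital case via~\eqref{p1}.

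In the main case $I\in\operatorname{Prim}(A)$, the containment $\sigma_{A/I}(a_{n}/I)\subset\sigma_{A}(a_{n})$ holds by Lemma~\ref{sp0}$(3)$ applied to the quotient morphism $q_{I}:A\to A/I$. The hypothesis $\sigma_{A/I}(a_{n}/I)\to\sigma_{A/I}(a/I)$ in Hausdorff metric then delivers, for $\lambda\in\sigma_{A/I}(a/I)$, a sequence $\lambda_{n}\in\sigma_{A/I}(a_{n}/I)\subset\sigma_{A}(a_{n})$ with $\lambda_{n}\to\lambda$, which is exactly what is needed.

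In the residual case $I=A$ (occurring only when $A$ is not unital, where $A^{1}/A\cong\mathbb{C}$), one has $\sigma_{A^{1}/A}(a/A)=\{0\}$, forcing $\lambda=0$; but when $A$ is not unital every element of $A$ has $0$ in its spectrum relative to $A^{1}$, so the constant sequence $\lambda_{n}=0\in\sigma(a_{n})$ works. There is no real obstacle here — the only subtlety, and the step worth stating carefully, is the bookkeeping that reconciles the index set $\operatorname{Prim}(A^{1})$ appearing in Theorem~\ref{sp11}$(2)$ with the index set $\operatorname{Prim}(A)$ in the hypothesis of the lemma, which is handled precisely by this two-case split.
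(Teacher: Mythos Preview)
Your proposal is correct and follows essentially the same approach as the paper. The paper does not actually prove Lemma~\ref{zem} itself (it is attributed to Zem\'anek \cite[Remark~1]{Z80}), but it immediately proves the more general Lemma~\ref{doubleZem} by exactly your strategy: reduce to lower semicontinuity via upper continuity, set $G=\liminf_{n}\sigma(a_{n})$, and use the decomposition $\sigma(a)=\overline{\cup_{\alpha}\sigma(f_{\alpha}(a))}$ (your Theorem~\ref{sp11}(2)) to conclude $\sigma(a)\subset G$; your explicit bookkeeping separating $\operatorname{Prim}(A)$ from the extra ideal $A\in\operatorname{Prim}(A^{1})$ in the non-unital case is a clean way to fill in the one detail the paper leaves implicit.
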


It is convenient to use a more general result of this type.

\begin{lemma}
\label{doubleZem} Let $\{f_{\alpha}:\alpha\in\Lambda\}$ be a set of
homomorphisms of a Banach algebra $A$ to some $Q$-algebras $A_{\alpha}$. Let
$a\in A$ be such that
\begin{equation}
\sigma(a)=\overline{\cup_{\alpha\in\Lambda}\sigma(f_{\alpha}(a))}.
\label{unionS}%
\end{equation}
If $a_{n}\rightarrow a$ and $\sigma(f_{\alpha}(a_{n}))\rightarrow
\sigma(f_{\alpha}(a))$, for each $\alpha\in\Lambda$, then $\sigma
(a_{n})\rightarrow\sigma(a)$.
\end{lemma}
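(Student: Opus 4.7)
My plan is to reduce the problem to lower semicontinuity of the spectrum at $a$ and to extract an appropriate sequence $\lambda_{n} \in \sigma(a_{n})$ approaching each prescribed $\lambda \in \sigma(a)$ by a two-step approximation that uses the hypothesis $(\ref{unionS})$ and then the spectral convergence of the images $f_{\alpha}(a_{n})$. Since upper semicontinuity of the spectrum is automatic in any Banach algebra, convergence $\sigma(a_{n}) \to \sigma(a)$ in the Hausdorff metric reduces to the lower-semicontinuity statement: for every $\lambda \in \sigma(a)$ there exist $\lambda_{n} \in \sigma(a_{n})$ with $\lambda_{n} \to \lambda$.

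The first step is to record the elementary inclusion $\sigma_{A_{\alpha}}(f_{\alpha}(x)) \subset \sigma_{A}(x)$ for every $x \in A$ and every $\alpha \in \Lambda$. This follows by extending $f_{\alpha}$ to a unital homomorphism $f_{\alpha}^{1}: A^{1} \to A_{\alpha}^{1}$ sending the adjoined identity to the identity of $A_{\alpha}^{1}$: if $y \in A^{1}$ is an inverse of $x - \lambda$ then $f_{\alpha}^{1}(y)$ is an inverse of $f_{\alpha}(x) - \lambda$ in $A_{\alpha}^{1}$. In particular $\sigma(f_{\alpha}(a_{n})) \subset \sigma(a_{n})$ for all $\alpha$ and $n$, so any point of some $\sigma(f_{\alpha}(a_{n}))$ is automatically a point of $\sigma(a_{n})$.

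The second step is the approximation argument. Fix $\lambda \in \sigma(a)$ and $\varepsilon > 0$. By $(\ref{unionS})$ there are $\alpha \in \Lambda$ and $\mu \in \sigma(f_{\alpha}(a))$ with $|\lambda - \mu| < \varepsilon/2$. Since, by hypothesis, $\sigma(f_{\alpha}(a_{n})) \to \sigma(f_{\alpha}(a))$ in the Hausdorff metric, for all $n$ sufficiently large there is $\mu_{n} \in \sigma(f_{\alpha}(a_{n}))$ with $|\mu - \mu_{n}| < \varepsilon/2$, and by step one $\mu_{n} \in \sigma(a_{n})$ with $|\lambda - \mu_{n}| < \varepsilon$. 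Choosing $\varepsilon = 1/k$ and extracting diagonally produces a sequence $\lambda_{n} \in \sigma(a_{n})$ with $\lambda_{n} \to \lambda$, which is the required lower semicontinuity. Combined with the automatic upper semicontinuity this gives $\sigma(a_{n}) \to \sigma(a)$.

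There is essentially no obstacle; the statement is a direct generalization of Zemanek's Lemma \ref{zem}, the only substitution being that the canonical decomposition $\sigma(a) = \cup_{I \in \operatorname{Prim}(A^{1})} \sigma_{A^{1}/I}(a/I)$ supplied by Theorem \ref{sp11}(2) is replaced by the hypothesis $(\ref{unionS})$, and the quotient maps $q_{I}$ are replaced by arbitrary homomorphisms into $Q$-algebras. The only care needed is that the target algebras be $Q$-algebras, so that the spectra $\sigma(f_{\alpha}(a))$ are compact subsets of $\mathbb{C}$ and the Hausdorff-metric convergence $\sigma(f_{\alpha}(a_{n})) \to \sigma(f_{\alpha}(a))$ is meaningful.
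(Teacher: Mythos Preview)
Your proof is correct and follows essentially the same approach as the paper: both reduce to lower semicontinuity via the inclusion $\sigma(f_{\alpha}(x))\subset\sigma(x)$ and the hypothesis on convergence of the image spectra. The paper phrases this more compactly by introducing $G=\liminf_{n}\sigma(a_{n})$, observing that $G$ is closed and contains each $\sigma(f_{\alpha}(a))$, and then invoking~(\ref{unionS}); your explicit $\varepsilon$-argument and diagonal extraction unwind exactly this.
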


\begin{proof}
Let $G=\liminf_{n\rightarrow\infty}\sigma(a_{n}):=\{\lambda\in\mathbb{C}%
:\lambda=\lim_{n\rightarrow\infty}\lambda_{n}\text{ for some }\lambda_{n}%
\in\sigma(a_{n})\}$. By the upper continuity, it suffices to show that
$\sigma(a)\subset G$. But $G$ is closed and contains all $\sigma(f_{\alpha
}(a))$. So the result follows from (\ref{unionS}).
\end{proof}

Note that apart from the case $f_{\alpha}=q_{I_{\alpha}}$, $I_{\alpha}%
\in\operatorname{Prim}\left(  {A}\right)  $, the condition (\ref{unionS})
holds in many other situations, for example when $\Lambda$ is finite and
$\cap_{\alpha\in\Lambda}I_{\alpha}\subset\operatorname{Rad}(A)$ (see Theorem
\ref{spec2}).

\begin{theorem}
\label{takCont} Let $A$ be a Banach algebra. Then each element $a\in
\mathcal{R}_{s}^{p}\left(  A\right)  $ is a point of the spectrum continuity.
\end{theorem}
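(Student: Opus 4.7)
The plan is to apply Zemanek's theorem (Lemma~\ref{zem}) to reduce the continuity of $\sigma_A$ at $a$ to the continuity of $\sigma_{A/I}$ at $a/I$ for every primitive ideal $I$ of $A$. The definition of the primitivity extension gives us exactly the needed hypothesis on each quotient: $a\in\mathcal{R}_s^p(A)$ means that $a/I\in\mathcal{R}_s(A/I)$ for every $I\in\operatorname{Prim}(A)$, and by Theorem~\ref{large} (or directly by Lemma~\ref{scat}, or by the definition of $\mathcal{R}_s$) such an element has at most countable spectrum in $A/I$.

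The next step is to invoke the classical fact, recorded in the paragraphs preceding Lemma~\ref{zem}, that the spectrum function is continuous at every element with at most countable spectrum. Concretely, upper semicontinuity of $\sigma$ combined with Newburgh's theorem yields that if $\sigma_{A/I}(a/I)$ is countable, then for any sequence $b_n \to a/I$ in $A/I$ one has $\sigma_{A/I}(b_n)\to\sigma_{A/I}(a/I)$ in the Hausdorff metric; indeed countability forces every point of $\sigma_{A/I}(a/I)$ to be isolated in a relatively clopen neighborhood, so Newburgh's theorem supplies, for each such point, an element of $\sigma_{A/I}(b_n)$ converging to it, which together with upper semicontinuity gives Hausdorff convergence.

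To assemble the argument: suppose $a_n\to a$ in $A$. Fix $I\in\operatorname{Prim}(A)$. Since $q_I:A\longrightarrow A/I$ is a contractive homomorphism, $a_n/I\to a/I$ in $A/I$. Since $a/I$ has countable spectrum, the preceding paragraph gives $\sigma_{A/I}(a_n/I)\to\sigma_{A/I}(a/I)$. As $I$ was arbitrary, Lemma~\ref{zem} yields $\sigma_A(a_n)\to\sigma_A(a)$, establishing continuity of $\sigma$ at $a$.

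The only point requiring care — and what I would call the main obstacle — is the classical lemma about continuity of the spectrum at elements with countable spectrum; once this is invoked, together with the characterization $a\in\mathcal{R}_s^p(A)\iff a/I\in\mathcal{R}_s(A/I)$ for all $I\in\operatorname{Prim}(A)$, the result follows by a direct application of Lemma~\ref{zem}. Note also that this proof uses nothing beyond the primitivity extension property of $a$, so it in fact shows that continuity fails to improve when one passes to $\mathcal{R}_s^{p*}$ by the convolution procedure (a point the authors will want to revisit in light of Theorem~\ref{dif-C} and Lemma~\ref{upperstar}).
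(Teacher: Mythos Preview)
Your proof is correct and follows essentially the same approach as the paper: reduce to the quotients $A/I$ via Lemma~\ref{zem}, observe that $a/I\in\mathcal{R}_s(A/I)$ has at most countable spectrum, and invoke Newburgh's theorem for continuity at such points. The paper's proof is just a terser version of exactly this argument.
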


\begin{proof}
Let $a\in\mathcal{R}_{s}^{p}\left(  A\right)  $, $a_{n}\in A$, $a_{n}%
\rightarrow a$; by Lemma \ref{zem}, it suffices to show that $\sigma
(a_{n}/I)\rightarrow\sigma(a/I)$, for each $I\in\operatorname{Prim}(A)$. Since
$\sigma(a/I)$ is countable of finite, the fact follows from the Newburgh theorem.
\end{proof}

We show now that Theorem \ref{takCont} does not transfer to $\mathcal{R}%
_{s}^{a}$ and therefore to $\mathcal{R}_{s}^{p\ast}$.

\begin{theorem}
\label{discont} The spectrum in an $\mathcal{R}_{s}^{a}$-radical Banach
algebra can be discontinuous.
\end{theorem}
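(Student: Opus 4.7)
The plan is to exhibit a concrete $\mathcal{R}_{s}^{a}$-radical Banach algebra together with a norm-convergent sequence in it whose spectra fail to converge in the Hausdorff metric. A natural candidate is built by perturbing a suitable quasinilpotent operator on $\ell^2$ inside a carefully chosen subalgebra of $\mathcal{B}(\ell^2)$.

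First, I would invoke a classical Kakutani--Apostol-type construction: there exists a quasinilpotent operator $T\in\mathcal{B}(\ell^{2})$ (for instance a weighted shift with suitably arranged weights) such that $\rho(T)=0$ but, for some constant $c>0$, one can find a sequence of compact operators $F_{n}\in\mathcal{K}(\ell^{2})$ with $\|F_{n}\|\to 0$ and $\rho(T+F_{n})\geq c$ for every $n$. Such constructions are standard in the literature on discontinuity of the spectral radius. This is the only non-trivial operator-theoretic input; once this example is in hand, the rest is essentially formal.

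Next, I would take $A$ to be the closed unital subalgebra of $\mathcal{B}(\ell^{2})$ generated by $T$ and $\mathcal{K}(\ell^{2})$. Then $\mathcal{K}(\ell^{2})$ is a closed ideal of $A$ consisting of compact operators, which have countable spectra, so by Theorem~\ref{large} we have $\mathcal{K}(\ell^{2})\subseteq\mathcal{R}_{s}(A)$. The quotient $A/\mathcal{K}(\ell^{2})$ is generated, as a Banach algebra, by the single coset $T+\mathcal{K}(\ell^{2})$, and is therefore commutative. Consequently the further quotient $A/\mathcal{R}_{s}(A)$ is commutative, i.e.\ $[A,A]\subseteq\mathcal{R}_{s}(A)$, and the characterization (\ref{pa}) applied to $P=\mathcal{R}_{s}$ yields $A=\mathcal{R}_{s}^{a}(A)$; that is, $A$ is an $\mathcal{R}_{s}^{a}$-radical Banach algebra.

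Finally, set $T_{n}=T+F_{n}\in A$. Then $T_{n}\to T$ in norm, whereas $\sigma(T_{n})$ contains a point of modulus at least $c$ for every $n$, while $\sigma(T)=\{0\}$ since $T$ is quasinilpotent. Hence $\sigma(T_{n})$ does not converge to $\sigma(T)$ in the Hausdorff metric, so the map $a\mapsto\sigma(a)$ is discontinuous at $T\in A$, completing the argument. The main obstacle is the invocation of the Kakutani--Apostol construction of the quasinilpotent $T$ with spectrally unstable compact perturbations; verifying that the resulting algebra $A$ is $\mathcal{R}_{s}^{a}$-radical and that the spectra fail to converge is then routine from the material already developed in Sections~5 and~8 of the paper.
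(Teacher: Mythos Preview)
Your overall strategy---take the closed subalgebra generated by a single operator and $\mathcal{K}(\ell^2)$, observe that the quotient by the compacts is singly generated and hence commutative, and conclude that the algebra is $\mathcal{R}_s^{a}$-radical---is exactly the paper's strategy. The problem is the operator-theoretic input you invoke.

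You claim there is a quasinilpotent $T$ and compact $F_n\to 0$ with $\rho(T+F_n)\ge c>0$. This is impossible: the spectral radius is upper semicontinuous (it is the infimum of the continuous functions $a\mapsto\|a^n\|^{1/n}$), so $T_n\to T$ forces $\limsup\rho(T_n)\le\rho(T)=0$, whence $\rho(T_n)\to 0$. In other words, every quasinilpotent element is automatically a point of continuity of $\rho$, and therefore your proposed sequence cannot exist. No ``Kakutani--Apostol-type construction'' produces this; Kakutani's weighted shift is not quasinilpotent, and the discontinuity there goes the other way (the spectral radius drops along the approximating sequence, it does not rise above $\rho(T)$).

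The paper avoids this obstruction by choosing a point of \emph{spectral} (not spectral-radius) discontinuity where the limiting spectrum is large rather than a singleton. It uses Lumer's example on $\ell^2(\mathbb{Z})$: with $W$ the bilateral shift, the rank-one perturbations $W_k$ converge to an operator $W_\infty$ with $\sigma(W_k)=\mathbb{T}$ for all $k$ but $\sigma(W_\infty)=\mathbb{D}$, so the Hausdorff limit fails even though $\rho(W_k)=\rho(W_\infty)=1$. The C*-algebra generated by $W$ and $\mathcal{K}(H)$ is then $\mathcal{R}_s^{a}$-radical by your own argument (the quotient by $\mathcal{K}(H)$ is commutative, being generated by the image of the normal operator $W$). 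Replacing your quasinilpotent $T$ by any single operator exhibiting genuine spectral discontinuity under compact perturbation---Lumer's is the cleanest---repairs the proof along the lines you intended.
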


\begin{proof}
We use the example of spectral discontinuity proposed by G. Lumer (see
\cite[Problem 86]{Halm}). Let $H=l^{2}(\mathbb{Z})$, and let $\{e_{n}%
:n\in\mathbb{Z}\}$ be the natural orthonormal basis in $H$. Let us denote by
$W$ the shift operator $W$ on $H$ ($We_{n}=e_{n+1}$). For each $k\in
\mathbb{N}$, let $W_{k}$ be the operator acting on the basis by the formulas
$W_{k}e_{n} = e_{n+1}$, for $n\neq0$, $W_{k}e_{0} = \frac{1}{k}e_{1}$. Then
$W_{k}\to W_{\infty}$ where $W_{\infty}e_{n}= e_{n+1}$ for $n\neq0$,
$W_{\infty}e_{0} = 0$. It is easy to check that $\sigma(W_{k}) = \mathbb{T}$
while $\sigma(W_{\infty}) = \mathbb{D}$, the unit disk.

Since $W_{k}-W$ is a rank one operator, for each $k$, all operators $W_{k}$,
$k=1,2,...,\infty$, belong to the C*-algebra $A$ generated by $W$ and the
ideal $\mathcal{K}(H)$ of all compact operators. Then $A/\mathcal{K}(H)$ is
commutative (it is generated by the image of the normal operator $W$ in the
Calkin algebra $\mathcal{B}(H)/\mathcal{K}(H)$). Therefore $A\in
\mathbf{\mathbf{Rad}}\left(  \mathcal{R}_{s}^{a}\right)  $ since it is
commutative modulo the scattered ideal $\mathcal{K}(H)$.
\end{proof}

\begin{corollary}
The spectrum continuity is not a radical/semisimple property, even in Banach algebras.
\end{corollary}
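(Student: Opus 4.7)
The plan is to derive the corollary directly from Theorem \ref{discont} by exploiting the closure of both radical and semisimple classes under extensions (Theorem \ref{ext}). Let $W_{\sigma}$ denote the class of Banach algebras on which the function $a\longmapsto\sigma(a)$ is continuous. It suffices to exhibit a short exact sequence $0\to I\to A\to A/I\to 0$ of Banach algebras with $I,A/I\in W_{\sigma}$ but $A\notin W_{\sigma}$; then neither $W_{\sigma}=\mathbf{Rad}(P)$ for some over radical nor $W_{\sigma}=\mathbf{Sem}(P)$ for some under radical can hold, by parts $(2)$ and $(1)$ of Theorem \ref{ext} respectively.

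The example is the one already constructed in the proof of Theorem \ref{discont}: let $A$ be the C*-algebra generated by the bilateral shift $W$ on $H=\ell^{2}(\mathbb{Z})$ together with the ideal $I=\mathcal{K}(H)$. I would proceed in three short steps. First, $I\in W_{\sigma}$: every compact operator has at most countable spectrum, so by the Newburgh argument recalled just before Lemma \ref{zem} (or equivalently by Theorem \ref{takCont} applied to the $\mathcal{R}_{s}$-radical algebra $\mathcal{K}(H)$), the spectrum function is continuous on $I$. Second, $A/I\in W_{\sigma}$: the image of $W$ in the Calkin algebra is normal, so $A/I$ is a commutative C*-algebra, where the spectrum is well known to depend continuously on the element. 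Third, $A\notin W_{\sigma}$: this is precisely Lumer's calculation reproduced in Theorem \ref{discont}, namely $W_{k}\to W_{\infty}$ in $A$ while $\sigma(W_{k})=\mathbb{T}$ and $\sigma(W_{\infty})=\mathbb{D}$.

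Combining these three facts with Theorem \ref{ext} finishes the argument: if $W_{\sigma}$ coincided with $\mathbf{Rad}(P)$ (respectively $\mathbf{Sem}(P)$) for some over radical (respectively under radical) $P$, then from $I,A/I\in W_{\sigma}$ one would conclude $A\in W_{\sigma}$, contradicting step three.

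I do not expect any serious obstacle: all three verifications are immediate from results already in the paper, and the reduction to extension-stability is the canonical way to rule out radical/semisimple characterizations of a class of algebras. The only mildly delicate point is making sure the two notions (``radical property'' and ``semisimple property'') are both killed by a \emph{single} extension, which is why I phrase the obstruction as failure of extension-stability rather than invoking upper or lower bounds among existing radicals.
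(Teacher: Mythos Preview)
Your proof is correct and follows essentially the same route as the paper's: both use the algebra $A$ from Theorem \ref{discont}, observe that $\mathcal{K}(H)$ (scattered) and $A/\mathcal{K}(H)$ (commutative) lie in $W_{\sigma}$ while $A$ does not, and conclude via the extension-stability of radical and semisimple classes (Theorem \ref{ext}). Your write-up is somewhat more explicit about invoking Theorem \ref{ext} and about verifying $I,A/I\in W_{\sigma}$, but the underlying argument is identical.
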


\begin{proof}
Indeed, assume, to the contrary, that there is a radical $P$ on Banach
algebras such that $\mathbf{Rad}\left(  P\right)  $ or $\mathbf{Sem}\left(
P\right)  $ coincides with the class of algebras with the property of the
spectrum continuity. Then this class contains commutative algebras and
scattered algebras. As $\mathbf{Rad}\left(  P\right)  $ or $\mathbf{Sem}%
\left(  P\right)  $ is stable under extensions, then it contains the algebra
constructed in Theorem \ref{discont}, a contradiction.
\end{proof}

In the same time, $\mathbf{Rad}\left(  \operatorname{rad}^{\dim=\infty
}|_{\mathfrak{U}_{\mathrm{q}}}\right)  $ is contained in the class of algebras
with the property of the spectrum continuity.

\begin{problem}
Is there the largest radical $P$ such that $\mathbf{Rad}\left(  P\right)  $ is
contained in the class of Banach algebras with the property of the spectrum continuity?
\end{problem}

At the moment, the largest radical $P$ about which we know that $P$-radical
algebras enjoy the property of spectral continuity, is $\operatorname{rad}%
^{\operatorname{dim}=\infty}$ (which is clearly majorized by $\mathcal{R}%
_{s}^{p}$).

We finish by a discussion of the restrictions of the ideal maps $\mathcal{R}%
_{s}^{p}$, $\mathcal{R}_{s}^{a}$ and $\mathcal{R}_{s}^{p\ast}$ to the class
$\mathfrak{U}_{\mathrm{c}^{\ast}}$.

It was already mentioned that an irreducible *-representation of a C*-algebra
is strictly irreducible and that for each primitive ideal $I$ of a C*-algebra
$A$, the quotient $A/I$ is isometrically isomorphic to the image of $A$ in an
irreducible *-representation. These facts will be multiply used below.
Moreover, dealing with C*-algebra we usually write representation meaning *-representation.

\begin{theorem}
\label{CCR-R} Let $A$ be a C*-algebra. Then $\mathcal{R}_{s}^{p{\ast}%
}(A)=\mathcal{R}_{\mathfrak{gcr}}(A)$.
\end{theorem}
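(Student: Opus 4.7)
The plan is to prove both inclusions separately, exploiting the characterization $\mathcal{R}_{\mathfrak{gcr}}(A)=\Pi_{\Omega_{\mathfrak{gcr}}}(A)=\bigcap\{I\in\operatorname{Prim}(A):A/I\text{ has no elementary ideals}\}$.

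For the inclusion $\mathcal{R}_{\mathfrak{gcr}}(A)\subset\mathcal{R}_s^{p\ast}(A)$, I would appeal to the classical Dixmier structure theorem: every GCR C*-algebra admits a composition series $(J_\alpha)_{\alpha\le\gamma}$ of closed ideals with $J_0=0$, $J_\gamma=\mathcal{R}_{\mathfrak{gcr}}(A)$, continuous at limit ordinals, and with each gap-quotient $J_{\alpha+1}/J_\alpha$ a CCR C*-algebra. The key observation is that any CCR algebra $C$ is $\mathcal{R}_s^p$-radical: for $I\in\operatorname{Prim}(C)$ the image $\pi_I(C)$ is an irreducible C*-algebra contained in $\mathcal{K}(H_{\pi_I})$, hence equals $\mathcal{K}(H_{\pi_I})$, which is scattered. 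Therefore every CCR gap-quotient is $\mathcal{R}_s^{p\ast}$-radical, and since $\mathbf{Rad}(\mathcal{R}_s^{p\ast})$ is closed under extensions and transfinite chains (Theorems \ref{ext} and \ref{ch1}), a straightforward transfinite induction shows $\mathcal{R}_{\mathfrak{gcr}}(A)\in\mathbf{Rad}(\mathcal{R}_s^{p\ast})$, i.e.\ $\mathcal{R}_{\mathfrak{gcr}}(A)\subset\mathcal{R}_s^{p\ast}(A)$.

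For the opposite inclusion, I would fix any $I\in\Omega_{\mathfrak{gcr}}(A)$ and set $B=A/I$; it suffices to prove $\mathcal{R}_s^{p\ast}(B)=0$, because then $q_I(\mathcal{R}_s^{p\ast}(A))\subset\mathcal{R}_s^{p\ast}(B)=0$ forces $\mathcal{R}_s^{p\ast}(A)\subset I$, and intersecting over $I\in\Omega_{\mathfrak{gcr}}(A)$ yields the desired containment. So assume $B$ is a primitive C*-algebra with no elementary ideals. Because $0\in\operatorname{Prim}(B)$, we have $\mathcal{R}_s^p(B)\subset\mathcal{R}_s(B)$, and the convolution chain then gives $\mathcal{R}_s^{p\ast}(B)\subset\mathcal{R}_s(B)^{\ast}$; hence it suffices to show $\mathcal{R}_s(B)=0$.

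Suppose, for a contradiction, $J:=\mathcal{R}_s(B)\ne 0$. As a C*-algebra, $J$ is scattered, so by the equivalent characterization $(11_s)$ (Theorem \ref{eq-scat}) it has a minimal projection $p$. Since $p\in J$ and $J$ is an ideal of $B$, one checks $pBp=pJp$ is one-dimensional, so $p$ is minimal in $B$ as well, and then by a standard argument the closed ideal $\overline{BpB}$ is an elementary ideal of $B$, contradicting $I\in\Omega_{\mathfrak{gcr}}(A)$. Thus $\mathcal{R}_s(B)=0$, which completes the proof.

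\textbf{Main obstacle.} The technical heart is the step that a primitive C*-algebra with no elementary ideals admits no non-zero scattered ideal. It hinges on two classical facts that must be invoked cleanly: scattered C*-algebras contain minimal projections (part of the equivalences in Theorem \ref{eq-scat}), and a minimal projection of a C*-algebra generates an elementary closed ideal. The rest of the argument consists in packaging known facts about CCR composition series and in handling the convolution/transfinite-chain machinery that the paper has already set up.
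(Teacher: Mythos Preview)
Your proof is correct and follows essentially the same line as the paper. The first inclusion is identical: CCR algebras are $\mathcal{R}_s^{p}$-radical because their primitive quotients are $\mathcal{K}(H)$, and then one climbs the composition series.

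For the reverse inclusion the paper argues dually via radical classes (any $\mathcal{R}_s^{p}$-radical C*-algebra is GCR, since its primitive images are scattered, hence GCR by Theorem~\ref{eq-scat}, hence contain compact operators; then one runs the transfinite chain), while you work on the semisimple side via the formula $\mathcal{R}_{\mathfrak{gcr}}=\Pi_{\Omega_{\mathfrak{gcr}}}$. The two routes are equivalent and rest on the same key fact, namely that a nonzero scattered C*-ideal forces a minimal projection and hence an elementary ideal in a primitive C*-algebra.

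One phrasing to clean up: the sentence ``the convolution chain then gives $\mathcal{R}_s^{p\ast}(B)\subset\mathcal{R}_s(B)^{\ast}$'' is not justified as stated, because the pointwise inclusion $\mathcal{R}_s^{p}(B)\subset\mathcal{R}_s(B)$ uses that $B$ itself is primitive, and this need not persist for the quotients appearing in the convolution chain. What you actually need (and use) is simpler: once you have shown $\mathcal{R}_s(B)=0$, the inclusion $\mathcal{R}_s^{p}(B)\subset\mathcal{R}_s(B)$ gives $\mathcal{R}_s^{p}(B)=0$, so the convolution chain never leaves $0$ and $\mathcal{R}_s^{p\ast}(B)=0$. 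Just delete the intermediate claim.
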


\begin{proof}
Since $\mathcal{R}_{s}^{p{\ast}}$ and $\mathcal{R}_{\mathfrak{gcr}}$ are
radicals, it suffices to show (taking into account Corollary \ref{ineq}) that
$\mathbf{Rad}(\mathcal{R}_{\mathfrak{gcr}})=\mathbf{Rad}(\mathcal{R}%
_{s}^{p{\ast}})\cap\mathfrak{U}_{\mathrm{c}\ast}$.

Let $A$ be a CCR-algebra; then its image in each irreducible representation
$\pi$ coincides with $\mathcal{K}(H_{\pi})$, for some Hilbert space $H_{\pi}$;
so $\pi\left(  A\right)  $ is scattered and therefore $\mathcal{R}_{s}%
$-radical. Hence $A$ is $\mathcal{R}_{s}^{p{\ast}}$-radical. Since each
GCR-algebra $B$ admits an increasing transfinite chain $\left(  I_{\alpha
}\right)  _{\alpha\leq\gamma}$ with CCR gap-quotients and $I_{0}=0$,
$I_{\gamma}=B$, then $\mathbf{Rad}(\mathcal{R}_{s}^{p{\ast}})$ contains all
GCR C*-algebras.

Conversely, each $\mathcal{R}_{s}$-radical C*-algebra is a GCR-algebra by
Theorem \ref{eq-scat}. If a C*-algebra $A$ is $\mathcal{R}_{s}^{p}$-radical,
then, by definition, for each irreducible representation $\pi$, $\pi(A)$ is
$\mathcal{R}_{s}$-radical, so it is a GCR-algebra. It follows that $\pi(A)$
contains a non-zero compact operator. Thus $A$ is a GCR-algebra.

Since each $\mathcal{R}_{s}^{p{\ast}}$-radical C*-algebra $C$ admits an
increasing transfinite chain $\left(  J\right)  _{\alpha\leq\delta}$ with
$\mathcal{R}_{s}^{p}$-radical gap-quotients and $J_{0}=0$, $J_{\delta}=C$,
then $\mathbf{Rad}(R_{s}^{p{\ast}})\cap\mathfrak{U}_{\mathrm{c}^{\ast}}$
consists of GCR C*-algebras.
\end{proof}

It follows from Theorem \ref{CCR-R} that $\mathcal{R}_{s}^{p{\ast}}$ can be
considered as a natural extension of the GCR-radical from the class
$\mathfrak{U}_{\mathrm{c}^{\ast}}$ to all Banach algebras.

We saw in the proof of Theorem \ref{CCR-R} that the class of all
$\mathcal{R}_{s}^{p}$-radical C*-algebras contains the class of all
CCR-algebras. The inclusion is strict: for example it is not difficult to
construct a C*-algebra $A\subset\mathcal{B}(H)$ that contains $\mathcal{K}(H)$
and such that $A/\mathcal{K}(H)\cong\mathcal{K}(H)$ --- it is scattered but
not CCR.

The examples in Theorem \ref{dif-C} show that CCR-algebras need not be
$\mathcal{R}_{s}^{a}$-radical, and that $\mathcal{R}_{s}^{a}$-radical
C*-algebras are not necessarily CCR. The full description of $\mathcal{R}%
_{s}^{a}$-radical C*-algebras is very non-trivial: the famous work \cite{BDF}
shows that even the classification of commutative extensions of the algebra
$\mathcal{K}(H)$ is related to deep homological constructions.

Concluding this subsection let us show that spectrum is continuous {\it at normal
elements} in a wide class of C*-algebras that are far from being GCR --- for
example, in C*-algebras of free groups.

A C*-algebra $A$ is called \textit{residually finite-dimensional} (RFD, for
short)\textit{ } if there is a family $\{\pi_{\alpha}:\alpha\in\Lambda\}$ of
finite-dimensional representations of $A$ with $\cap_{\alpha\in\Lambda}\ker
\pi_{\alpha}=0$. Such algebras are $\operatorname{rad}^{\operatorname{dim}%
<\infty}$-semisimple.

\begin{theorem}
\label{RFD} Let $A$ be an RFD C*-algebra. Then every normal element $a$ of $A$
is a point of continuity of the spectrum $\sigma_{A}$.
\end{theorem}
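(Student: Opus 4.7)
The plan is to reduce continuity of $\sigma_A$ at $a$ to the well-known continuity of spectrum in finite-dimensional algebras, by applying Lemma \ref{doubleZem} to the separating family $\{\pi_\alpha:\alpha\in\Lambda\}$ of finite-dimensional representations of $A$. To invoke that lemma one needs the spectral identity
\begin{equation}
\sigma_A(a) \;=\; \overline{\bigcup_{\alpha\in\Lambda}\sigma(\pi_\alpha(a))}, \label{RFDclaim}
\end{equation}
and this is the key (and only non-trivial) structural step. This is where normality of $a$ enters crucially.

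To prove \eqref{RFDclaim}, I would note that the inclusion $\supset$ is automatic since each $\pi_\alpha$ is a $\ast$-homomorphism of C*-algebras. For the converse, fix $\lambda\in\sigma_A(a)$ and $\varepsilon>0$, and choose a continuous function $f_\varepsilon:\mathbb{C}\longrightarrow[0,1]$ compactly supported in the open $\varepsilon$-disk $D_\varepsilon(\lambda)$ with $f_\varepsilon(\lambda)=1$. Since $a$ is normal, the continuous functional calculus produces an element $f_\varepsilon(a)$ in the commutative C*-subalgebra $C^\ast(a,1)\subset A^1$; by the spectral theorem $f_\varepsilon(a)\neq 0$ because $f_\varepsilon$ does not vanish on $\sigma_A(a)$. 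As $\cap_\alpha\ker\pi_\alpha=0$, there exists $\alpha$ with $\pi_\alpha(f_\varepsilon(a))\neq 0$. Because $\pi_\alpha$ is a $\ast$-homomorphism it intertwines continuous functional calculus on normal elements, so $\pi_\alpha(f_\varepsilon(a))=f_\varepsilon(\pi_\alpha(a))$, and by the spectral mapping theorem $\sigma(\pi_\alpha(a))\cap\operatorname{supp} f_\varepsilon\neq\varnothing$. Hence some point of $\sigma(\pi_\alpha(a))$ lies within $\varepsilon$ of $\lambda$. Letting $\varepsilon\to 0$ establishes \eqref{RFDclaim}.

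With \eqref{RFDclaim} in hand, I would finish by checking the remaining hypothesis of Lemma \ref{doubleZem}: namely that $\sigma(\pi_\alpha(a_n))\to\sigma(\pi_\alpha(a))$ in Hausdorff distance whenever $a_n\to a$ in $A$. Since $\pi_\alpha$ is contractive, $\pi_\alpha(a_n)\to\pi_\alpha(a)$ in $\pi_\alpha(A)$, and $\pi_\alpha(A)$ is a finite-dimensional C*-algebra (a finite direct sum of full matrix algebras). On such an algebra the spectrum is continuous: the characteristic polynomial depends polynomially on matrix entries, and the roots of a polynomial depend continuously on its coefficients. Therefore $\sigma(\pi_\alpha(a_n))\to\sigma(\pi_\alpha(a))$ for every $\alpha$, and Lemma \ref{doubleZem} delivers $\sigma_A(a_n)\to\sigma_A(a)$.

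The main obstacle is purely conceptual — the spectral identity \eqref{RFDclaim}, which genuinely uses normality; for a non-normal $a$ the use of functional calculus in a single variable fails, and indeed spectrum need not be continuous at non-normal elements in RFD algebras (as illustrated by the Lumer example inside the Toeplitz algebra). Once \eqref{RFDclaim} is secured the remainder is automatic from finite-dimensional linear algebra.
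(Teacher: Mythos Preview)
Your proof is correct and follows the same overall plan as the paper: invoke Lemma~\ref{doubleZem} with the separating family $\{\pi_\alpha\}$, reducing everything to the spectral identity $\sigma_A(a)=\overline{\cup_\alpha\sigma(\pi_\alpha(a))}$ together with continuity of the spectrum in finite-dimensional algebras. The genuine difference is in how the inclusion $\sigma_A(a)\subset\overline{\cup_\alpha\sigma(\pi_\alpha(a))}$ is obtained. The paper argues by contradiction: if $\lambda_0$ were separated by $\varepsilon$ from every $\sigma(\pi_\alpha(a))$ then, since $a$ (hence each $\pi_\alpha(a)$) is normal, $\|(\pi_\alpha(a)-\lambda_0)^{-1}\|=\rho((\pi_\alpha(a)-\lambda_0)^{-1})<\varepsilon^{-1}$ uniformly in $\alpha$; the direct sum of these inverses then inverts $\pi(a)-\lambda_0$ in $\mathcal{B}(\oplus_\alpha H_\alpha)$, and inverse-closedness of the C*-subalgebra $\pi(A)$ forces $\lambda_0\notin\sigma_A(a)$. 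Your route via bump functions and the continuous functional calculus is more direct and conceptually cleaner; it uses normality through the availability of the $C(\sigma(a))$-calculus rather than through the equality $\|\cdot\|=\rho(\cdot)$ on normals, and it avoids the auxiliary direct-sum representation entirely. One small point: when $A$ is non-unital and $\lambda$ is close to $0$, the element $f_\varepsilon(a)$ may lie in $A^1\setminus A$, so the implication ``$f_\varepsilon(a)\neq 0\Rightarrow\pi_\alpha(f_\varepsilon(a))\neq 0$ for some $\alpha$'' needs the family extended to $A^1$ (adjoin the trivial character; $A^1$ remains RFD). Finally, your parenthetical about the Lumer example is off target: the algebra in Theorem~\ref{discont} (and the Toeplitz algebra you mention) contains $\mathcal{K}(H)$ and hence has no nonzero finite-dimensional representations, so it is not RFD and does not illustrate any failure for non-normal elements within the RFD class.
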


\begin{proof}
Let $\left(  \pi_{\alpha}\right)  _{\alpha\in\Lambda}$ be a family of
finite-dimensional representations of $A$ with $\cap_{\alpha\in\Lambda}\ker
\pi_{\alpha}=0$. Let $(a_{n})$ be a sequence of elements of $A$ tending to
$a$. Since $\sigma(\pi_{\alpha}(a_{n}))\rightarrow\sigma(\pi_{\alpha}(a))$ for
each $\alpha$, it is sufficient to show that
\[
\sigma(a)\subset\overline{\cup_{\alpha\in\Lambda}\sigma(\pi_{\alpha}(a))}%
\]
and apply Proposition \ref{doubleZem}.

Let $\lambda_{0}\in\sigma(a)\setminus\overline{\cup_{\alpha\in\Lambda}%
\sigma(\pi_{\alpha}(a))}$, then there is a disk $D_{\varepsilon}(\lambda
_{0})=\{\lambda:|\lambda-\lambda_{0}|<\varepsilon\}$ which does not intersect
$\cup_{\alpha\in\Lambda}\sigma(\pi_{\alpha}(a))$. Therefore for each $\alpha$
there is an element $b_{\alpha}\in\pi_{\alpha}(A)$ which is inverse to
$\pi_{\alpha}\left(  a-\lambda_{0}\right)  $ and
\[
\left\Vert b_{\alpha}\right\Vert =\left\Vert \left(  \pi_{\alpha}\left(
a-\lambda_{0}\right)  \right)  ^{-1}\right\Vert =\rho\left(  \pi_{\alpha
}\left(  a-\lambda_{0}\right)  \right)  ^{-1}<\varepsilon^{-1}%
\]
since $a$ is normal.

Let $H=\oplus_{\alpha\in\Lambda}H_{\alpha}$ and $\pi=\oplus_{\alpha\in\Lambda
}\pi_{\alpha}$, the direct sum of representations $\pi_{\alpha}$. Then
$\ker(\pi)=0$ and $B=\pi(A)$ is a C*-algebra isomorphic to $A$. The element
$b=\oplus_{\alpha\in\Lambda}b_{\alpha}$ belongs to $\mathcal{B}(H)$, because
$\sup_{\alpha\in\Lambda}\Vert b_{\alpha}\Vert<\infty$. It follows easily from
the definition of $b_{\alpha}$ that
\[
b\pi(a-\lambda_{0})=\pi(a-\lambda_{0})b=1.
\]
Therefore $\lambda_{0}\notin\sigma_{\mathcal{B}(H)}(\pi(a))$; since $\pi(A)$
is a C*-algebra then it is an inverse-closed subalgebra of $\mathcal{B}\left(
H\right)  $ and
\[
\lambda_{0}\notin\sigma_{\pi(A)}(\pi\left(  a\right)  )=\sigma(a),
\]
a contradiction.
\end{proof}

\subsection{Continuity of the spectral radius}

It turns out that the situation with the continuity of the spectral radii is different.

Let $A$ be a Banach algebra. Recall that the function $\rho:a\longmapsto$
$\rho\left(  a\right)  $ is upper continuous because it is the infimum of
continuous functions $a\longmapsto\left\Vert a^{n}\right\Vert ^{1/n}$.

Let $\mathcal{V}_{\rho}$ be the class of all Banach algebras $A$ satisfying
the following condition

\begin{enumerate}
\item[(1$_{\rho}$)] For every Banach algebra $B$ containing $A$ as a closed
ideal, any element $b\in B$ with $\rho\left(  b\right)  >\rho\left(
b/A\right)  $ is a point of continuity of the function $\rho$ in $B$.
\end{enumerate}

\begin{theorem}
\label{pcr}Let $B$ be a Banach algebra, and let $A\in\mathcal{V}_{\rho}$ be
its closed ideal. Then $A$ consists of points of continuity of $\rho$ in $B$.
\end{theorem}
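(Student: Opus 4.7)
The proof should be essentially a direct verification, split into two cases according to whether $\rho(a)$ vanishes or not. The key observation is that for $a\in A$ the image $a/A$ in $B/A$ is zero, so the inequality $\rho(a)>\rho(a/A)$ in the defining condition of $\mathcal{V}_{\rho}$ reduces to $\rho(a)>0$. Thus for elements of $A$ with non-zero spectral radius, continuity of $\rho$ at $a$ in $B$ is immediate from the definition of the class $\mathcal{V}_{\rho}$.

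The remaining case is $\rho(a)=0$, and here one does not need the assumption $A\in\mathcal{V}_{\rho}$ at all. Indeed, the spectral radius is always upper semicontinuous on a Banach algebra (being the infimum of the continuous functions $x\longmapsto\|x^{n}\|^{1/n}$), so for any sequence $b_{n}\to a$ in $B$ we have
\[
0\leq\liminf_{n\to\infty}\rho(b_{n})\leq\limsup_{n\to\infty}\rho(b_{n})\leq\rho(a)=0,
\]
which forces $\rho(b_{n})\to0=\rho(a)$. Hence $\rho$ is continuous at $a$.

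Putting the two cases together yields the assertion. I do not foresee any real obstacle; the entire content of the statement is the harmless observation that $a/A=0$ whenever $a\in A$, which turns the hypothesis defining $\mathcal{V}_{\rho}$ into exactly what is needed, while the degenerate case $\rho(a)=0$ is covered by upper semicontinuity alone. The plan is therefore to state these two cases in one short paragraph, with no additional machinery required.
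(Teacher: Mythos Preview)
Your proposal is correct and follows essentially the same approach as the paper's proof: the paper also splits into the cases $\rho(a)>0$ (handled by condition $(1_{\rho})$, since $a/A=0$) and $\rho(a)=0$ (handled by upper semicontinuity). Your write-up is in fact slightly more explicit than the paper's in spelling out why $\rho(a/A)=0$.
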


\begin{proof}
Let $a\in A$. If $\rho\left(  a\right)  >0$ then $a$ is a point of continuity
of $\rho$ in $B$ by $\left(  1_{\rho}\right)  $. If $\rho\left(  a\right)  =0$
then \textbf{ }continuity of $\rho$ at $a$ follows from the upper continuity.
\end{proof}

\begin{lemma}
\label{410}Let $A$ be a normed algebra, and let $\left(  J_{\alpha}\right)
_{\alpha\in\Lambda}$ be an up-directed net of closed ideals of $A$,
$J=\overline{\bigcup_{\alpha\in\Lambda}J_{\alpha}}$. Then
\begin{align*}
\left\Vert M/J\right\Vert  &  =\lim\left\Vert M/J_{\alpha}\right\Vert
=\inf\left\Vert M/J_{\alpha}\right\Vert ,\\
\rho\left(  M/J\right)   &  =\lim\rho\left(  M/J_{\alpha}\right)  =\inf
\rho\left(  M/J_{\alpha}\right)
\end{align*}
for every precompact set $M$ in $A$.
\end{lemma}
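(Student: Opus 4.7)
The plan is to handle the norm statement first, then deduce the spectral-radius statement by applying the norm result to the powers $M^n$ and interchanging infima.

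Monotonicity is immediate: whenever $J_\alpha \subset J_\beta \subset J$, the corresponding quotient maps factor through one another, so $\|a/J\| \le \|a/J_\beta\| \le \|a/J_\alpha\|$ for every $a \in A$. Taking suprema over $a \in M$ gives $\|M/J\| \le \|M/J_\beta\| \le \|M/J_\alpha\|$, and similarly $\rho(M/J) \le \rho(M/J_\beta) \le \rho(M/J_\alpha)$ (the joint spectral radius does not increase under morphisms). Hence both nets $\left(\|M/J_\alpha\|\right)$ and $\left(\rho(M/J_\alpha)\right)$ are decreasing, so their infima coincide with their limits, and both dominate $\|M/J\|$ and $\rho(M/J)$, respectively.

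For the reverse inequality in the norm statement, I would use precompactness of $M$ to reduce to a finite approximation. Given $\varepsilon > 0$, pick a finite $\varepsilon$-net $\{a_1, \ldots, a_n\} \subset M$. Using $J = \overline{\bigcup_\alpha J_\alpha}$, for each $i$ choose $\alpha_i$ and $y_i \in J_{\alpha_i}$ with $\|a_i - y_i\| < \|a_i/J\| + \varepsilon \le \|M/J\| + \varepsilon$. By up-directedness of $\Lambda$ there exists $\alpha$ with $J_{\alpha_i} \subset J_\alpha$ for every $i$, giving $\|a_i/J_\alpha\| \le \|M/J\| + \varepsilon$. For arbitrary $a \in M$, pick $a_i$ within $\varepsilon$, so $\|a/J_\alpha\| \le \|a_i/J_\alpha\| + \varepsilon \le \|M/J\| + 2\varepsilon$. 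Taking the supremum over $a$ yields $\|M/J_\alpha\| \le \|M/J\| + 2\varepsilon$, and letting $\varepsilon \to 0$ completes the proof of the first display.

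For the spectral-radius statement, note that $M^n$ (being the image of $M^{\times n}$ under continuous multiplication) is precompact, and that the image of $M^n$ in $A/J_\alpha$ is exactly $(M/J_\alpha)^n$. Applying the norm equality already proved to the precompact set $M^n$ gives $\|M^n/J\| = \inf_\alpha \|M^n/J_\alpha\|$, hence $\|M^n/J\|^{1/n} = \inf_\alpha \|M^n/J_\alpha\|^{1/n}$. Then
\begin{align*}
\rho(M/J) &= \inf_n \|M^n/J\|^{1/n} = \inf_n \inf_\alpha \|M^n/J_\alpha\|^{1/n} \\
&= \inf_\alpha \inf_n \|M^n/J_\alpha\|^{1/n} = \inf_\alpha \rho(M/J_\alpha),
\end{align*}
where the interchange of the two infima is unconditional. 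Combined with the decreasing monotonicity above, this gives both $\inf$ and $\lim$ equalities.

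The main obstacle is the reverse inequality $\|M/J_\alpha\| \le \|M/J\| + \text{o}(1)$, because one cannot simply pick a single $\alpha$ that works for all $a \in M$ at once; this is precisely where precompactness of $M$ enters, supplying a finite net so that the up-directedness of $\Lambda$ can produce a common $\alpha$. Everything else (monotonicity, the $M^n$ reduction, and the interchange of infima) is formal once this step is in place.
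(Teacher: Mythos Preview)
Your proof is correct. The paper does not actually prove this lemma; it simply cites \cite[Lemma 4.10]{TR3}. Your argument is the standard one and matches the structure the paper uses when it does prove the analogous statement for the tensor radius (Lemma~\ref{ten}): monotonicity gives one direction, precompactness plus up-directedness gives a common index $\alpha$ that works for a finite $\varepsilon$-net and hence for all of $M$, and the spectral-radius statement follows by applying the norm result to $M^n$ and interchanging the two infima.
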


\begin{proof}
This is \cite[Lemma 4.10]{TR3}.
\end{proof}

Let $\mathcal{F}_{\mathcal{V}_{\rho}}$ be the family of all under radicals
whose radical classes are contained in $\mathcal{V}_{\rho}$.

\begin{theorem}
\label{itc}$\vee\mathcal{F}_{\mathcal{V}_{\rho}}$ is a radical and
$\vee\mathcal{F}_{\mathcal{V}_{\rho}}\in\mathcal{F}_{\mathcal{V}_{\rho}}$.
\end{theorem}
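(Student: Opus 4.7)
The statement splits into two parts: that $P:=\vee\mathcal{F}_{\mathcal{V}_{\rho}}$ is a radical, and that $P$ belongs to $\mathcal{F}_{\mathcal{V}_{\rho}}$, i.e.\ $\mathbf{Rad}(P)\subset\mathcal{V}_{\rho}$. The first part is automatic from the machinery already developed: $\mathrm{H}_{\mathcal{F}_{\mathcal{V}_{\rho}}}$ is an under radical by Theorem \ref{dix0}(1), so $\vee\mathcal{F}_{\mathcal{V}_{\rho}}=\mathrm{H}_{\mathcal{F}_{\mathcal{V}_{\rho}}}^{\ast}$ is a radical by Theorem \ref{ovunt}(1). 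The substance of the proof is the second part, for which I would fix a $P$-radical Banach algebra $A$, a Banach algebra $B$ containing $A$ as a closed ideal, and an element $b\in B$ with $\rho(b)>\rho(b/A)$, and argue that $\rho$ is continuous at $b$.

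\textbf{The chain and the drop of the spectral radius.} Apply Theorem \ref{chain}(2a) to the family $\mathcal{F}_{\mathcal{V}_{\rho}}$ and the single algebra $A$ to obtain a convolution chain $(T_{\alpha})_{\alpha\leq\gamma}$ of under radicals built from members of $\mathcal{F}_{\mathcal{V}_{\rho}}$, with $T_{\gamma}(A)=P(A)=A$ and $T_{\alpha+1}=P_{\alpha}\ast T_{\alpha}$ for some $P_{\alpha}\in\mathcal{F}_{\mathcal{V}_{\rho}}$; in particular $T_{\alpha+1}(A)/T_{\alpha}(A)=P_{\alpha}(A/T_{\alpha}(A))$ is $P_{\alpha}$-radical and so lies in $\mathcal{V}_{\rho}$. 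Because each $T_{\alpha}$ is a topological under radical satisfying Axiom 4 and $A$ is a closed ideal of $B$, every $T_{\alpha}(A)$ is a closed ideal of $B$. Set $r_{\alpha}:=\rho(b/T_{\alpha}(A))$; this is non-increasing with $r_{0}=\rho(b)$ and $r_{\gamma}=\rho(b/A)<\rho(b)$, so there is a least ordinal $\beta$ with $r_{\beta}<r_{0}$. Lemma \ref{410} applied to the up-directed net $(T_{\alpha'}(A))_{\alpha'<\beta}$ (whose closed union equals $T_{\beta}(A)$ at a limit ordinal) rules out the possibility that $\beta$ is a limit ordinal, for then we would get the contradiction $r_{\beta}=\lim_{\alpha'<\beta}r_{\alpha'}=\rho(b)$. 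Hence $\beta=\alpha+1$ with $r_{\alpha}=\rho(b)$ and $r_{\alpha+1}<\rho(b)$.

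\textbf{Applying $\mathcal{V}_{\rho}$ and lifting continuity to $b$.} Form the quotient $\widetilde{B}:=B/T_{\alpha}(A)$ with the closed ideal $\widetilde{A}:=T_{\alpha+1}(A)/T_{\alpha}(A)\in\mathcal{V}_{\rho}$ and the image $\widetilde{b}:=q_{T_{\alpha}(A)}(b)$. Then $\rho(\widetilde{b})=r_{\alpha}=\rho(b)>r_{\alpha+1}=\rho(\widetilde{b}/\widetilde{A})$, so the defining property of $\mathcal{V}_{\rho}$, applied to $\widetilde{A}$ as a closed ideal of $\widetilde{B}$, furnishes continuity of $\rho$ at $\widetilde{b}$ in $\widetilde{B}$. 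To return to $B$, take $b_{n}\to b$ in $B$; the images $\widetilde{b}_{n}\to\widetilde{b}$ in $\widetilde{B}$ give $\rho(\widetilde{b}_{n})\to\rho(b)$, and since $\rho(b_{n})\geq\rho(\widetilde{b}_{n})$ always holds, one has $\liminf\rho(b_{n})\geq\rho(b)$; combined with upper continuity of $\rho$, this yields $\rho(b_{n})\to\rho(b)$. The main obstacle I anticipate lies in the bookkeeping needed to check that the ideals $T_{\alpha}(A)$, constructed a priori only as closed ideals of $A$, are genuinely closed ideals of $B$ and that their gap-quotients embed correctly as closed ideals of the relevant quotients of $B$; this is handled cleanly by Axiom 4 for the under radicals $T_{\alpha}$ together with the fact that closedness in $A$ implies closedness in $B$, as $A$ itself is closed in $B$.
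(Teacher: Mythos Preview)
Your proof is correct and follows essentially the same approach as the paper's. The paper builds the chain of ideals via Theorem \ref{ch2}(1) and checks by explicit transfinite induction that each $I_{\alpha}$ is an ideal of $B$, then argues by contradiction; you obtain the same chain via Theorem \ref{chain}(2a), invoke Axiom 4 for the under radicals $T_{\alpha}$ to get $T_{\alpha}(A)\trianglelefteq B$ at once, and argue directly---but the core mechanism (locate the first ordinal where $\rho(b/T_{\alpha}(A))$ drops, exclude limit ordinals via Lemma \ref{410}, apply the $\mathcal{V}_{\rho}$ property in the quotient, and lift continuity back using $\rho(b_{n})\geq\rho(b_{n}/T_{\alpha}(A))$ together with upper semicontinuity) is identical.
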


\begin{proof}
Indeed, \textrm{H}$_{\mathcal{F}_{\mathcal{V}_{\rho}}}$ is an under radical by
Theorem \ref{dix0}. As $\vee\mathcal{F}_{\mathcal{V}_{\rho}}=$\textrm{H}%
$_{\mathcal{F}_{\mathcal{V}_{\rho}}}^{\ast}$ then $\vee\mathcal{F}%
_{\mathcal{V}_{\rho}}$ is a radical by Theorem \ref{ovunt}.

Let $A$ be a $\left(  \vee\mathcal{F}_{\mathcal{V}_{\rho}}\right)  $-radical
Banach algebra. By Theorem \ref{ch2}, there is an increasing transfinite chain
$\left(  I_{\alpha}\right)  _{\alpha\leq\gamma}$ of closed ideals of $A$ such
that $I_{0}=0$, $I_{\gamma}=A$ and for each $\alpha<\gamma$ there is
$P\in\mathcal{F}_{\mathcal{V}_{\rho}}$ with $I_{\alpha+1}/I_{\alpha}=$
$P\left(  A/I_{\alpha}\right)  $.

Let $B$ be a Banach algebra for which $A$ is an ideal. It follows that
$I_{1}=P_{1}\left(  A\right)  $ for some $P_{1}\in\mathcal{F}_{\mathcal{V}%
_{\rho}}$. So $I_{1}$ is an ideal of $B$. Similarly, $I_{2}/I_{1}=P_{2}\left(
A/I_{1}\right)  $ for some $P_{2}\in\mathcal{F}_{\mathcal{V}_{\rho}}$ and
therefore $I_{2}/I_{1}$ is an ideal of $B/I_{1}$. As $I_{2}=q_{I_{1}}%
^{-1}\left(  P_{2}\left(  A/I_{1}\right)  \right)  $ then $I_{2}$ is an ideal
of $B$. Applying the transfinite induction, it is easy to check that all
$I_{\alpha}$ are ideals of $B$.

Assume, to the contrary, that $A\notin\mathcal{V}_{\rho}$. Then there are a
Banach algebra $B$ and an element $b\in B$ with $\rho\left(  b\right)
>\rho\left(  b/A\right)  $ which is a point of discontinuity of $\rho$. So
there is a sequence $\left(  b_{n}\right)  \subset B$ such that $b_{n}%
\rightarrow b$ as $n\rightarrow\infty$, but $\lim\sup\rho\left(  b_{n}\right)
<\rho\left(  b\right)  $.

Take the first ordinal $\alpha^{\prime}$ for which $\rho\left(  b\right)
\neq\rho\left(  b/I_{\alpha^{\prime}}\right)  $. By Lemma \ref{410},
$\alpha^{\prime}$ is not a limit ordinal. So there is an ordinal
$\alpha<\gamma$ such that
\begin{equation}
\rho\left(  b\right)  =\rho\left(  b/I_{\alpha}\right)  >\rho\left(
b/I_{\alpha+1}\right)  . \label{cc}%
\end{equation}
Let $C=B/I_{\alpha}$, $J=I_{\alpha+1}/I_{\alpha}$ and $x=b/I_{\alpha}$. Then
one can rewrite $\left(  \ref{cc}\right)  $ as%
\[
\rho\left(  x\right)  >\rho\left(  x/J\right)  .
\]
As $J\in\mathcal{V}_{\rho}$ is a closed ideal of $C$ then $x$ is a point of
continuity of $\rho$. Then
\[
\lim\inf\rho\left(  b_{n}\right)  \geq\lim\inf\rho\left(  b_{n}/I_{\alpha
}\right)  =\rho\left(  b/I_{\alpha}\right)  =\rho\left(  b\right)  >\lim
\sup\rho\left(  b_{n}\right)  ,
\]
a contradiction.
\end{proof}

The radical $\vee\mathcal{F}_{\mathcal{V}_{\rho}}$ is denoted by
$\mathcal{R}_{\overrightarrow{\rho}}$ and is called the $\rho$%
\textit{-continuity radical}. Now we extend our knowledge about $\mathbf{Rad}%
\left(  \mathcal{R}_{\overrightarrow{\rho}}\right)  $.

\begin{theorem}
\label{mod-scat-r0}$\mathcal{R}_{s}^{p\ast}\leq$ $\mathcal{R}%
_{\overrightarrow{\rho}}$.
\end{theorem}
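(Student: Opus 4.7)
The plan is to reduce the stated inequality to the claim that every $\mathcal{R}_{s}^{p}$-radical Banach algebra belongs to the class $\mathcal{V}_{\rho}$. By Theorem~\ref{pri}(3) applied to $\Omega=\operatorname{Prim}$, together with the Banach heredity of $\mathcal{R}_{s}$ from Corollary~\ref{ScatBH}, $\mathcal{R}_{s}^{p}$ is a hereditary preradical on $\mathfrak{U}_{\mathrm{b}}$, hence an under radical. Once $\mathbf{Rad}(\mathcal{R}_{s}^{p})\subseteq\mathcal{V}_{\rho}$ is established, $\mathcal{R}_{s}^{p}\in\mathcal{F}_{\mathcal{V}_{\rho}}$, whence $\mathcal{R}_{s}^{p}\leq\vee\mathcal{F}_{\mathcal{V}_{\rho}}=\mathcal{R}_{\overrightarrow{\rho}}$. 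The convolution procedure is isotone (Theorem~\ref{isotone}(1)) and $\mathcal{R}_{\overrightarrow{\rho}}$ is a radical (Theorem~\ref{itc}), so this would yield $\mathcal{R}_{s}^{p\ast}\leq\mathcal{R}_{\overrightarrow{\rho}}^{\ast}=\mathcal{R}_{\overrightarrow{\rho}}$.

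So let $A$ be an $\mathcal{R}_{s}^{p}$-radical Banach algebra embedded as a closed ideal of a Banach algebra $B$, let $b\in B$ with $\rho(b)>\rho(b/A)$, and let $b_{n}\to b$ in $B$; by upper semicontinuity of $\rho$ it suffices to establish $\liminf_{n}\rho(b_{n})\geq\rho(b)$. The core device is the Riesz (holomorphic) functional calculus. Choose $\varepsilon>0$ with $\rho(b/A)+2\varepsilon<\rho(b)$, put $K=\sigma_{B}(b)\cap\{|z|\geq\rho(b/A)+\varepsilon\}$, and pick a finite system of Jordan contours $\Gamma$ lying in the annulus $\{\rho(b/A)+\varepsilon/2<|z|<\rho(b)+1\}$ that winds once around each point of $K$ and not at all around $\sigma_{B}(b)\setminus K$. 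Set
\[
p:=\frac{1}{2\pi i}\oint_{\Gamma}(z-b)^{-1}\,dz.
\]
Then $p$ is an idempotent commuting with $b$, and by the spectral mapping theorem $\sigma_{B}(pb)=K\cup\{0\}$, so $\rho(pb)=\rho(b)$. Applying the quotient $q_{A}\colon B\to B/A$ under the integral sign, $q_{A}(p)$ is the contour integral of $(z-b/A)^{-1}$ over $\Gamma$; since $\Gamma$ lies in the resolvent of $b/A$ (as $\Gamma\subset\{|z|>\rho(b/A)+\varepsilon/2\}$) and winds zero times around $\sigma_{B/A}(b/A)$, this integral vanishes, giving $p\in A$. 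For $n$ large the upper semicontinuity of spectrum places $\sigma_{B}(b_{n}/A)$ inside $\{|z|\leq\rho(b/A)+\varepsilon/4\}$ and $\sigma_{B}(b_{n})$ in a neighborhood of $\sigma_{B}(b)$ small enough that $\Gamma$ lies in the resolvent of both $b_{n}$ and $b_{n}/A$. The same argument then shows that the projections $p_{n}:=\tfrac{1}{2\pi i}\oint_{\Gamma}(z-b_{n})^{-1}\,dz$ lie in $A$, commute with $b_{n}$, and converge to $p$ in norm by uniform convergence of the resolvents on $\Gamma$.

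Consequently $p_{n}b_{n}=b_{n}p_{n}\in A$ and $p_{n}b_{n}\to pb$ in $A$. Because $A=\mathcal{R}_{s}^{p}(A)$, Theorem~\ref{takCont} applies at the point $pb\in A$, so $\sigma_{A}(p_{n}b_{n})\to\sigma_{A}(pb)$ and in particular $\rho(p_{n}b_{n})\to\rho(pb)=\rho(b)$. Combined with the elementary estimate $\rho(p_{n}b_{n})\leq\rho(b_{n})$, which comes from $(p_{n}b_{n})^{k}=p_{n}b_{n}^{k}$, this yields $\liminf_{n}\rho(b_{n})\geq\rho(b)$, as required. The principal obstacle is precisely the Riesz projection construction: arranging a single contour $\Gamma$ such that the projections $p$ and all $p_{n}$ belong to the ideal $A$ and such that $p_{n}\to p$ in norm, while $\sigma_{B}(pb)$ captures exactly the outer piece of $\sigma_{B}(b)$ so that $\rho(pb)=\rho(b)$. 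Once this is in place, the spectrum-continuity consequence of $\mathcal{R}_{s}^{p}$-radicality (Theorem~\ref{takCont}) closes the argument cleanly.
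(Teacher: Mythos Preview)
Your reduction of the statement to showing $\mathbf{Rad}(\mathcal{R}_{s}^{p})\subset\mathcal{V}_{\rho}$ is correct, but the Riesz projection construction has a genuine gap. For the contour integral $\oint_{\Gamma}(z-b)^{-1}\,dz$ to make sense, $\Gamma$ must lie in the resolvent of $b$, and for it to separate $K=\sigma_{B}(b)\cap\{|z|\geq\rho(b/A)+\varepsilon\}$ from $\sigma_{B}(b)\setminus K$, the set $K$ must be \emph{clopen} in $\sigma_{B}(b)$. Nothing in the hypothesis that $A$ is $\mathcal{R}_{s}^{p}$-radical guarantees this. Take $A=C([0,1],\mathcal{K}(H))$, which is $\mathcal{R}_{s}^{p}$-radical (Theorem~\ref{dif-C}), and $B=A^{1}$. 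For $b=a+\tfrac12$ with $a(t)=tp$ ($p$ a rank-one projection), one has $\sigma_{B}(b)=[\tfrac12,\tfrac32]$, $\rho(b/A)=\tfrac12$, $\rho(b)=\tfrac32$; since $\sigma_{B}(b)$ is connected, no contour in the resolvent of $b$ can isolate an outer piece, and your construction collapses.

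Your argument \emph{does} work when $A$ is $\mathcal{R}_{s}$-radical (scattered): then $A$ is a thin ideal (Lemma~\ref{scat} and Theorem~\ref{large}), so $\sigma_{B}(b)\cap\{|z|>\rho(b/A)\}$ is countable, and one can choose a radius $r\in(\rho(b/A),\rho(b))$ avoided by $\sigma_{B}(b)$, making $K_{r}=\sigma_{B}(b)\cap\{|z|>r\}$ genuinely clopen. The paper's proof supplies exactly the missing step that bridges $\mathcal{R}_{s}^{p}$ to $\mathcal{R}_{s}$: it picks a primitive ideal $I$ of $B$ with $\rho(b/I)$ close to $\rho(b)$ and works in $B/I$, where the image $\overline{q_{I}(A)}$ is scattered (because $I\cap A\in\operatorname{Prim}(A)$ and $A/(I\cap A)$ is scattered by the $\mathcal{R}_{s}^{p}$-hypothesis). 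In that quotient the thin-ideal argument applies and gives continuity of $\rho$ at $b/I$, which then lifts back. So your Riesz-projection idea is essentially the right mechanism, but it has to be executed after passing to a primitive quotient, not in $B$ itself.
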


\begin{proof}
Let $B$ be a Banach algebra, let $A$ be a closed $\mathcal{R}_{s}^{p}$-radical
ideal of $B$, and let $b\in B$ be such that $\rho\left(  b\right)
>\rho\left(  b/A\right)  $. Let a sequence $b_{n}$ tend to $b$ as
$n\rightarrow\infty$.

Assume, to the contrary, that $\lim\sup\rho(b_{n})<\rho(b)$. Choose
$\varepsilon>0$ such that
\begin{equation}
\rho(b)-\varepsilon>\max\left\{  \rho\left(  b/A\right)  ,\lim\sup\rho
(b_{n})\right\}  . \label{ee}%
\end{equation}
As $\rho(b)=\sup_{I\in\operatorname{Prim}(B^{1})}\rho(b/I)$ then there is a
primitive ideal $I$ of $B$ such that
\[
\rho(b/I)>\rho(b)-\varepsilon.
\]
Thus
\[
\rho(b/I)>\rho\left(  b/A\right)  \geq\rho(q_{I}(b)/\overline{q_{I}(A)}).
\]
Since $q_{I}\left(  A\right)  $ is an $\mathcal{R}_{s}$-radical ideal of $B/I$
then so is $\overline{q_{I}(A)})$. It follows from Theorem \ref{pcr} that
$b/I$ is a point of continuity of $\rho$. In particular,
\[
\lim\inf\left(  \rho\left(  b_{n}\right)  \right)  \geq\lim\inf\left(
\rho\left(  b_{n}/I\right)  \right)  =\rho(b/I),
\]
a contradiction with $\left(  \ref{ee}\right)  $.

We proved that any $\mathcal{R}_{s}^{p}$-radical algebra lies in
$\mathcal{V}_{\rho}$. So $\mathcal{R}_{s}^{p}\in\mathcal{F}_{\mathcal{V}%
_{\rho}}$and $\mathcal{R}_{s}^{p}\leq\mathcal{R}_{\overrightarrow{\rho}}$. As
$\mathcal{R}_{s}^{p\ast}$ is the smallest radical that is larger than or equal
to $\mathcal{R}_{s}^{p}$ then $\mathcal{R}_{s}^{p\ast}\leq\mathcal{R}%
_{\overrightarrow{\rho}}$.
\end{proof}

\begin{corollary}
$\mathcal{R}_{s}^{a}\leq\mathcal{R}_{\overrightarrow{\rho}}$; in particular,
all $\mathcal{R}_{s}^{a}$-radical ideals of a Banach algebra consist of the
points of continuity for $\rho$ in the algebra.
\end{corollary}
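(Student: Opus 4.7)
The plan is to derive the corollary directly from the results immediately preceding it, so the work is entirely one of assembly rather than of genuine new argument. The first statement, $\mathcal{R}_{s}^{a}\leq \mathcal{R}_{\overrightarrow{\rho}}$, will follow by chaining two already-established inequalities: Lemma \ref{upperstar} gives $\mathcal{R}_{s}^{a}<\mathcal{R}_{s}^{p\ast}$ on $\mathfrak{U}_{\mathrm{b}}$, and Theorem \ref{mod-scat-r0} gives $\mathcal{R}_{s}^{p\ast}\leq\mathcal{R}_{\overrightarrow{\rho}}$. Transitivity of $\leq$ on preradicals then yields the required inequality on Banach algebras.

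For the second assertion, I would take a closed $\mathcal{R}_{s}^{a}$-radical ideal $A$ of a Banach algebra $B$. Because $\mathcal{R}_{s}^{a}\leq\mathcal{R}_{\overrightarrow{\rho}}$, the implication \eqref{pprs} (that $P_{1}\leq P_{2}$ implies $\mathbf{Rad}(P_{1})\subset\mathbf{Rad}(P_{2})$) shows that $A$ is $\mathcal{R}_{\overrightarrow{\rho}}$-radical. Theorem \ref{itc} asserts that $\mathcal{R}_{\overrightarrow{\rho}}\in\mathcal{F}_{\mathcal{V}_{\rho}}$, so $\mathbf{Rad}(\mathcal{R}_{\overrightarrow{\rho}})\subset\mathcal{V}_{\rho}$; hence $A\in\mathcal{V}_{\rho}$. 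Finally, Theorem \ref{pcr} applied to the ambient algebra $B$ and its closed ideal $A\in\mathcal{V}_{\rho}$ delivers exactly the conclusion that every element of $A$ is a point of continuity of the spectral radius $\rho$ in $B$.

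There is no real obstacle here; the whole point of having developed the machinery ($\mathcal{V}_{\rho}$, the radical $\mathcal{R}_{\overrightarrow{\rho}}=\vee\mathcal{F}_{\mathcal{V}_{\rho}}$, Theorems \ref{pcr}, \ref{itc}, \ref{mod-scat-r0}, and Lemma \ref{upperstar}) is precisely to make the corollary a two-line deduction. If anything were to go subtly wrong it would be the domain of definition --- one should verify that the inequality $\mathcal{R}_{s}^{a}\leq\mathcal{R}_{\overrightarrow{\rho}}$, which is only asserted on $\mathfrak{U}_{\mathrm{b}}$ via Lemma \ref{upperstar}, suffices for the second claim; but the second claim is itself stated only for Banach algebras, so the matching is automatic.
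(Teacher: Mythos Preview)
Your proposal is correct and follows exactly the paper's approach: the paper's proof is the single line ``It follows from Lemma \ref{upperstar} and Theorem \ref{mod-scat-r0},'' which is precisely the chain $\mathcal{R}_{s}^{a}<\mathcal{R}_{s}^{p\ast}\leq\mathcal{R}_{\overrightarrow{\rho}}$ you use. Your derivation of the ``in particular'' clause via Theorems \ref{itc} and \ref{pcr} is what the paper leaves implicit, but it is the intended unpacking.
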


\begin{proof}
It follows from Lemma \ref{upperstar} and Theorem \ref{mod-scat-r0}.
\end{proof}

\begin{corollary}
\label{cont-CCR} Let $A$ be a C*-algebra, $J=\mathcal{R}_{\mathfrak{gcr}}(A)$,
the largest GCR-ideal of $A$. Then the spectral radius is continuous at every
point $a\in J$.
\end{corollary}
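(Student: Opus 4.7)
The proof is essentially a concatenation of three results already established in the paper, so the plan is to simply trace how they combine.

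First, I would invoke Theorem \ref{CCR-R}, which identifies the largest GCR-ideal of a C*-algebra with the value of the radical $\mathcal{R}_{s}^{p\ast}$: namely $J = \mathcal{R}_{\mathfrak{gcr}}(A) = \mathcal{R}_{s}^{p\ast}(A)$. Next, by Theorem \ref{mod-scat-r0}, we have the radical inequality $\mathcal{R}_{s}^{p\ast} \leq \mathcal{R}_{\overrightarrow{\rho}}$ on Banach algebras, so $J \subseteq \mathcal{R}_{\overrightarrow{\rho}}(A)$.

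The final step is to apply Theorem \ref{itc}, which asserts that $\mathcal{R}_{\overrightarrow{\rho}} \in \mathcal{F}_{\mathcal{V}_{\rho}}$; in particular the algebra $\mathcal{R}_{\overrightarrow{\rho}}(A)$ lies in the class $\mathcal{V}_{\rho}$. Since $\mathcal{R}_{\overrightarrow{\rho}}(A)$ is a closed ideal of $A$, Theorem \ref{pcr} guarantees that every element of $\mathcal{R}_{\overrightarrow{\rho}}(A)$ is a point of continuity of $\rho$ in $A$. Because $J$ is contained in $\mathcal{R}_{\overrightarrow{\rho}}(A)$, the same holds for every $a \in J$, which is the desired conclusion.

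There is essentially no obstacle here: all of the heavy lifting has already been done. The only thing to be careful about is the logical quantification in the definition of $\mathcal{V}_{\rho}$ (the continuity condition is phrased in terms of an arbitrary ambient Banach algebra containing the ideal), but Theorem \ref{pcr} has already been designed precisely to strip away that quantifier in exactly the situation we need.
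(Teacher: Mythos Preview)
Your proof is correct and follows the same route as the paper; the paper's own proof simply cites Theorem~\ref{CCR-R} and Theorem~\ref{mod-scat-r0}, leaving implicit the unpacking via Theorems~\ref{itc} and~\ref{pcr} that you spell out explicitly.
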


\begin{proof}
It follows from Theorem \ref{CCR-R} and Theorem \ref{mod-scat-r0}.
\end{proof}

\begin{problem}
Is $\mathcal{R}_{\overrightarrow{\rho}}$ the largest radical among all
radicals $P$ for which $P$-radical ideals consist of the points of continuity
for the spectral radius?
\end{problem}

\begin{problem}
If the spectral radius is continuous on a C*-algebra $A$, is $A$ a GCR-algebra?
\end{problem}

\subsection{Continuity of the joint spectral radius}

Let $A$ be a normed algebra. Here it will be convenient to denote by $\rho
_{j}$ (instead of $\rho$) the function $M\longmapsto$ $\rho\left(  M\right)  $
defined on bounded sets of $A$. It is upper continuous with respect to
Hausdorff's distance \cite[Proposition 3.1]{ST00}, that is
\[
\underset{k\rightarrow\infty}{\lim\sup\;}\rho\left(  M_{k}\right)  \leq
\rho\left(  M\right)
\]
if $M_{k}\rightarrow M$ in the sense that $\operatorname{dist}\left(
M_{k},M\right)  \rightarrow0$ as $k\rightarrow\infty$. If
$\underset{k\rightarrow\infty}{\lim}\rho\left(  M_{k}\right)  =\rho\left(
M\right)  $ for each sequence $M_{k}\rightarrow M$ then we say that $M$ is
a\textit{ point of continuity for the joint spectral radius}.

Let $\mathcal{V}_{\rho_{j}}$ be the class of all normed algebras $A$
satisfying the following condition

\begin{enumerate}
\item[(1$_{j}$)] For every normed algebra $B$ containing $A$ as a closed
ideal, any precompact set $M\subset B$ with $\rho\left(  M\right)
>\rho\left(  M/A\right)  $ is a point of continuity of the function $\rho_{j}$.
\end{enumerate}

Let $\mathcal{F}_{\mathcal{V}_{\rho_{j}}}$ be the family of all topological
under radicals whose radical classes are contained in $\mathcal{V}_{\rho_{j}}$.

\begin{theorem}
Let $\mathcal{R}_{\overrightarrow{\rho_{j}}}=\vee\mathcal{F}_{\mathcal{V}%
_{\rho_{j}}}$. Then

\begin{enumerate}
\item $\mathcal{R}_{\overrightarrow{\rho_{j}}}$ is a radical and
$\mathcal{R}_{\overrightarrow{\rho_{j}}}\in\mathcal{F}_{\mathcal{V}_{\rho_{j}%
}}$;

\item For every normed algebra $A$, every precompact subset $M$ of
$\mathcal{R}_{\overrightarrow{\rho_{j}}}\left(  A\right)  $ is a point of
continuity of $\rho_{j}$;

\item For every normed algebra $A$, $\rho\left(  M\right)  =\sup\left\{
\rho\left(  K\right)  :K\subset M\text{ is finite}\right\}  $ for every
precompact set $M$ in $\mathcal{R}_{\overrightarrow{\rho_{j}}}\left(
A\right)  $;

\item $\mathcal{R}_{\mathrm{hc}}^{r}\vee\mathcal{R}_{\mathrm{cq}}^{a}%
\leq\mathcal{R}_{\overrightarrow{\rho_{j}}}$.
\end{enumerate}
\end{theorem}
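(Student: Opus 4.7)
The plan is to mirror the template provided by Theorem~\ref{itc} and its supporting lemmas, replacing the usual spectral radius by the joint spectral radius throughout. Parts (1)--(3) are essentially formal consequences of the upper semi-continuity of $\rho_j$ with respect to Hausdorff distance and of Lemma~\ref{410} (which, crucially, is stated for both $\|\cdot\|$ and $\rho$ on precompact sets). Part (4) is the substantive one.

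For (1): By Theorem~\ref{dix0}, $\mathrm{H}_{\mathcal{F}_{\mathcal{V}_{\rho_j}}}$ is an under radical; hence $\mathcal{R}_{\overrightarrow{\rho_j}}=\mathrm{H}_{\mathcal{F}_{\mathcal{V}_{\rho_j}}}^{\ast}$ is a radical by Theorem~\ref{ovunt}. To verify $\mathcal{R}_{\overrightarrow{\rho_j}}\in\mathcal{F}_{\mathcal{V}_{\rho_j}}$, take an $\mathcal{R}_{\overrightarrow{\rho_j}}$-radical normed algebra $A$ sitting as a closed ideal in some $B$, a precompact $M\subset B$ with $\rho(M)>\rho(M/A)$, and a sequence $M_n\to M$. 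By Theorem~\ref{ch2}(1) choose an increasing transfinite chain $(I_\alpha)_{\alpha\le\gamma}$ of closed ideals of $A$ with $I_0=0$, $I_\gamma=A$ and each gap-quotient of the form $P_\alpha(A/I_\alpha)$ for some $P_\alpha\in\mathcal{F}_{\mathcal{V}_{\rho_j}}$; transfinite induction, exactly as in the proof of Theorem~\ref{itc}, shows every $I_\alpha$ is an ideal of $B$. Assuming $\limsup\rho(M_n)<\rho(M)$, Lemma~\ref{410} guarantees that $\alpha\mapsto\rho(M/I_\alpha)$ is non-increasing and continuous at limit ordinals, so there is a least $\alpha'$ with $\rho(M/I_{\alpha'})<\rho(M)$ and $\alpha'=\alpha+1$ is a successor. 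In $C:=B/I_\alpha$ the precompact set $N:=M/I_\alpha$ then satisfies $\rho(N)>\rho(N/J)$ for the closed ideal $J:=I_{\alpha+1}/I_\alpha\in\mathbf{Rad}(P_\alpha)\subset\mathcal{V}_{\rho_j}$, so $N$ is a continuity point of $\rho_j$ in $C$, giving $\liminf\rho(M_n)\ge\rho(M/I_\alpha)=\rho(M)$, a contradiction.

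For (2): Put $J:=\mathcal{R}_{\overrightarrow{\rho_j}}(A)$; by (1) $J\in\mathcal{V}_{\rho_j}$ and $J$ is a closed ideal of $A$. If $M\subset J$ is precompact with $\rho(M)>0$, then $\rho(M/J)=0<\rho(M)$ and $(1_j)$ applied to $J\subset A$ gives continuity at $M$; if $\rho(M)=0$, upper semi-continuity does the job. For (3) let $K_n\subset M$ be a finite $1/n$-net; then $K_n\to M$ in Hausdorff distance, $\rho(K_n)\le\rho(M)$ by monotonicity on inclusions, and by (2) $\rho(K_n)\to\rho(M)$, whence $\rho(M)=\sup\{\rho(K):K\subset M\text{ finite}\}$.

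For (4) it suffices to show $\mathcal{R}_{\mathrm{hc}}^{r},\mathcal{R}_{\mathrm{cq}}^{a}\in\mathcal{F}_{\mathcal{V}_{\rho_j}}$, because $\mathcal{R}_{\overrightarrow{\rho_j}}$ is the supremum in the class of radicals. Given $A\in\mathbf{Rad}(\mathcal{R}_{\mathrm{hc}}^{r})$ embedded as a closed ideal of $B$: heredity of $\mathcal{R}_{\mathrm{hc}}$ on Banach algebras, combined with the regularization formula \eqref{rproc}, yields $A\subset\mathcal{R}_{\mathrm{hc}}^{r}(B)$, so by the algebra version of the joint spectral radius formula \eqref{af}, $\rho(M)>\rho(M/A)\ge\rho(M/\mathcal{R}_{\mathrm{hc}}^{r}(B))$ forces $\rho(M)=r(M)$; I would then deduce continuity at $M$ from the continuity of $r$ at points where $r(M)=\rho(M)$ (which is the content of the results leading to \eqref{af} in \cite{TR3}, see also \eqref{afo}). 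For $A\in\mathbf{Rad}(\mathcal{R}_{\mathrm{cq}}^{a})$ the quotient $A/\mathcal{R}_{\mathrm{cq}}(A)$ is commutative, so for every precompact $N\subset A$ one has $\rho(N)=\rho(N/\mathcal{R}_{\mathrm{cq}}(A))=\sup_{a\in N}\rho(a)$ by Theorem~\ref{sp4}(1) together with \eqref{cq}; applying this after passing to the quotient $B/A$ and exploiting upper semi-continuity of $\rho$ at elements (Newburgh) on the commutative image in $B/\mathcal{R}_{\mathrm{cq}}(B)$, together with \eqref{af}, will yield continuity at $M$ whenever $\rho(M)>\rho(M/A)$. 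The main obstacle I foresee is precisely this last step for $\mathcal{R}_{\mathrm{cq}}^{a}$: bridging from the pointwise formula $\rho(N)=\sup_a\rho(a)$ on $A$ to an estimate $\liminf_n\rho(M_n)\ge\rho(M)$ for precompact families in the larger algebra $B$ whose commutativity is only modulo $\mathcal{R}_{\mathrm{cq}}(B)$, and I expect this to require an approximation by finite subfamilies (justified by part (3) in the radical case, and by \cite[Theorem~6.3]{TR3} for commutative precompact sets) combined with the individual-spectral-radius continuity already established in Corollary~\ref{cont-CCR} and its proof.
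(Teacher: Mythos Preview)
Your treatment of parts (1)--(3) matches the paper's exactly: the paper also says ``repeating the argument in Theorem~\ref{itc}'' using Lemma~\ref{410}, and deduces (2) and (3) as you do.

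The gap is in part (4). The paper does not attempt a direct verification; it simply invokes \cite[Theorem~6.3]{TR3}, which establishes that $(1_j)$ holds for ideals that are $\mathcal{R}_{\mathrm{hc}}^{r}\vee\mathcal{R}_{\mathrm{cq}}^{a}$-radical, treated as a \emph{single} radical. Your strategy of checking $\mathcal{R}_{\mathrm{hc}}^{r}\in\mathcal{F}_{\mathcal{V}_{\rho_j}}$ and $\mathcal{R}_{\mathrm{cq}}^{a}\in\mathcal{F}_{\mathcal{V}_{\rho_j}}$ separately runs into a real obstruction. In the $\mathcal{R}_{\mathrm{hc}}^{r}$ case you correctly obtain $\rho(M)=r(M)$ from \eqref{af}, but then you claim continuity ``from the continuity of $r$ at points where $r(M)=\rho(M)$''. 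That is not available: the relevant tool in the paper is Lemma~\ref{sirano}, which needs the \emph{additional} hypothesis that every element of $M$ is a continuity point of the ordinary spectral radius, and nothing about a generic ambient algebra $B$ gives you that. In the $\mathcal{R}_{\mathrm{cq}}^{a}$ case you yourself flag the obstacle: the commutativity-modulo-$\mathcal{R}_{\mathrm{cq}}$ information lives on $A$, not on $B$, so the bound $\liminf\rho(M_n)\ge\rho(M)$ does not follow from Theorem~\ref{sp4}(1) and \eqref{cq} alone.

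In short, the componentwise approach does not close, and the paper avoids the issue entirely by citing the external result for the combined radical. If you want a self-contained argument you must reproduce the content of \cite[Theorem~6.3]{TR3}, which handles the two ingredients jointly rather than one at a time.
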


\begin{proof}
$\left(  1\right)  \And\left(  4\right)  $ Using Lemma \ref{410} and repeating
the argument in Theorem \ref{itc}, we obtain that $\mathcal{R}%
_{\overrightarrow{\rho_{j}}}$ is a radical and all $\mathcal{R}%
_{\overrightarrow{\rho_{j}}}$-radical algebras lie in $\mathcal{V}_{\rho_{j}}%
$. It was proved in \cite[Theorem 6.3]{TR3} that for $\mathcal{R}%
_{\mathrm{hc}}^{r}\vee\mathcal{R}_{\mathrm{cq}}^{a}$-radical ideals the
condition $\left(  1_{j}\right)  $ holds. Therefore $\left(  4\right)  $ is valid.

$\left(  2\right)  $ is similar to the proof of Theorem \ref{itc}.

$\left(  3\right)  $ follows from $\left(  2\right)  $.
\end{proof}

We call $\mathcal{R}_{\overrightarrow{\rho_{j}}}$ the $\rho_{j}$%
-\textit{continuity radical}.

\begin{corollary}
The joint spectral radius is continuous on precompact subsets of any scattered C*-algebra.
\end{corollary}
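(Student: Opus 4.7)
The plan is to reduce the assertion to the inequality $\mathcal{R}_{\mathrm{hc}}^{r}\vee\mathcal{R}_{\mathrm{cq}}^{a}\leq\mathcal{R}_{\overrightarrow{\rho_{j}}}$ (part (4) of the theorem just proved) combined with the characterization of scattered C*-algebras via the hypocompact radical. Concretely, I will argue that for a scattered C*-algebra $A$ one has $\mathcal{R}_{\overrightarrow{\rho_{j}}}(A)=A$, after which part (2) of the theorem immediately gives continuity of $\rho_{j}$ at every precompact subset of $A$.

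The key identification runs through Theorem \ref{simher}. A C*-algebra is hereditarily semisimple (each quotient by a closed ideal is again a C*-algebra and hence semisimple), so the theorem applies to $A$ and yields that $A$ scattered is equivalent to $A$ hypocompact. Therefore $\mathcal{R}_{\mathrm{hc}}(A)=A$. Since $A$ is a Banach algebra, $\widehat{A}=A$ and the regularization formula $(\ref{rproc})$ gives $\mathcal{R}_{\mathrm{hc}}^{r}(A)=A\cap\mathcal{R}_{\mathrm{hc}}(A)=A$. Consequently
\[
A=\mathcal{R}_{\mathrm{hc}}^{r}(A)\subset\bigl(\mathcal{R}_{\mathrm{hc}}^{r}\vee\mathcal{R}_{\mathrm{cq}}^{a}\bigr)(A)\subset\mathcal{R}_{\overrightarrow{\rho_{j}}}(A)\subset A,
\]
using part (4) of the theorem in the middle inclusion. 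Thus $\mathcal{R}_{\overrightarrow{\rho_{j}}}(A)=A$, and part (2) of the theorem states that every precompact subset $M\subset\mathcal{R}_{\overrightarrow{\rho_{j}}}(A)=A$ is a point of continuity of $\rho_{j}$, which is exactly the claim.

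There is really no obstacle here beyond citing the correct results: the nontrivial inputs (the equivalence of scattered with hypocompact for hereditarily semisimple Banach algebras in Theorem \ref{simher}, and the inclusion $\mathcal{R}_{\mathrm{hc}}^{r}\vee\mathcal{R}_{\mathrm{cq}}^{a}\leq\mathcal{R}_{\overrightarrow{\rho_{j}}}$) have already been established in the paper. The only micro-check worth stating explicitly is that $\mathcal{R}_{\mathrm{hc}}^{r}=\mathcal{R}_{\mathrm{hc}}$ on Banach algebras, which is immediate from the definition of the regularization procedure since taking the completion is trivial. In spirit, the proof is simply: scattered $\Rightarrow$ hypocompact $\Rightarrow$ $\mathcal{R}_{\overrightarrow{\rho_{j}}}$-radical $\Rightarrow$ $\rho_{j}$ is continuous on precompact subsets.
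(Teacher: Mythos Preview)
Your argument is correct and matches the paper's intended reasoning: the corollary is placed immediately after the theorem on $\mathcal{R}_{\overrightarrow{\rho_{j}}}$ precisely because scattered C*-algebras are hypocompact (via Theorem~\ref{simher} or equivalently Theorem~\ref{eq-scat}), hence $\mathcal{R}_{\mathrm{hc}}^{r}$-radical, and part~(4) together with part~(2) of that theorem yield the conclusion. There is nothing to add.
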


In Section \ref{intersection} this result will be extended to all GCR-algebras.

\subsection{Continuity of the tensor radius}

The function $\rho_{t}:N\longmapsto\rho_{t}\left(  N\right)  $ defined on
summable families $N=\left(  a_{m}\right)  _{1}^{\infty}$ of elements of $A$
is upper continuous with respect to the metric $\mathrm{d}\left(  N^{\prime
},N\right)  =\sum_{1}^{\infty}\left\Vert a_{m}^{\prime}-a_{m}\right\Vert $
where $N^{\prime}=\left(  a_{m}^{\prime}\right)  _{1}^{\infty}$ (see
\cite[Proposition 3.12]{TR2}). We say that a family $N$ is a \textit{point of
continuity of the tensor spectral radius} if $\rho_{t}(N_{n})\rightarrow
\rho_{t}(N)$, for any sequence $N_{n}$ of summable families in $A$ that tends
to $N$ with respect to the metric $\mathrm{d}$.

Let $\mathcal{V}_{\rho_{t}}$ be the class of all normed algebras $A$
satisfying the following condition

\begin{enumerate}
\item[(1$_{t}$)] For every normed algebra $B$ containing $A$ as a closed
ideal, any summable family $N$ in $B$ with $\rho_{t}\left(  N\right)
>\rho_{t}\left(  N/A\right)  $ is a point of continuity of the function
$\rho_{t}$.
\end{enumerate}

\begin{lemma}
\label{ten}Let $A$ be a normed algebra, let $\left(  J_{\alpha}\right)
_{\alpha\in\Lambda}$ be an up-directed net of closed ideals of $A$ and
$J=\overline{\bigcup_{\alpha\in\Lambda}J_{\alpha}}$. Then
\begin{align}
\left\Vert N/J\right\Vert _{+}  &  =\lim\left\Vert N/J_{\alpha}\right\Vert
_{+}=\inf\left\Vert N/J_{\alpha}\right\Vert _{+},\label{econt1}\\
\rho_{t}\left(  N/J\right)   &  =\lim\rho_{t}\left(  N/J_{\alpha}\right)
=\inf\rho_{t}\left(  N/J_{\alpha}\right)  \label{econt}%
\end{align}
for every summable family $N=\left(  a_{m}\right)  _{1}^{\infty}$ in $A$.
\end{lemma}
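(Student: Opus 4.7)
The plan is to first establish the equality \eqref{econt1} for the summability norms, and then derive \eqref{econt} from it by applying it to the powers $N^{k}$. Throughout, monotonicity is automatic: since the net $\left(J_{\alpha}\right)$ is up-directed and $J_{\alpha}\subset J$, each map $x\longmapsto x/J_{\alpha}$ is contracting to $x\longmapsto x/J$, so the nets $\left\Vert N/J_{\alpha}\right\Vert _{+}$ and $\rho_{t}\left(N/J_{\alpha}\right)$ are eventually decreasing, their limits coincide with their infima, and the inequalities $\left\Vert N/J\right\Vert _{+}\leq\inf_{\alpha}\left\Vert N/J_{\alpha}\right\Vert _{+}$ and $\rho_{t}\left(N/J\right)\leq\inf_{\alpha}\rho_{t}\left(N/J_{\alpha}\right)$ are immediate. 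So the content is the reverse inequalities.

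For \eqref{econt1}, I would fix $\varepsilon>0$ and exploit summability of $N$ to choose $n$ with $\sum_{m>n}\left\Vert a_{m}\right\Vert <\varepsilon/2$. For each $m\leq n$, pick $y_{m}\in J$ with $\left\Vert a_{m}-y_{m}\right\Vert <\left\Vert a_{m}/J\right\Vert +\varepsilon/(4n)$, then approximate each $y_{m}$ within $\varepsilon/(4n)$ by some $z_{m}\in J_{\alpha_{m}}$ (using $J=\overline{\cup_{\alpha}J_{\alpha}}$), and use up-directedness to find a single $\alpha$ with all $z_{m}\in J_{\alpha}$. Then $\left\Vert a_{m}/J_{\alpha}\right\Vert \leq\left\Vert a_{m}/J\right\Vert +\varepsilon/(2n)$ for $m\leq n$, and $\left\Vert a_{m}/J_{\alpha}\right\Vert \leq\left\Vert a_{m}\right\Vert $ for $m>n$, giving
\[
\left\Vert N/J_{\alpha}\right\Vert _{+}\leq\left\Vert N/J\right\Vert _{+}+\varepsilon/2+\varepsilon/2=\left\Vert N/J\right\Vert _{+}+\varepsilon.
\]
Since $\varepsilon$ was arbitrary, $\inf_{\alpha}\left\Vert N/J_{\alpha}\right\Vert _{+}\leq\left\Vert N/J\right\Vert _{+}$, proving \eqref{econt1}.

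For \eqref{econt}, observe that $N^{k}$ is summable for each $k$ (since $\left\Vert N^{k}\right\Vert _{+}\leq\left\Vert N\right\Vert _{+}^{k}<\infty$), so \eqref{econt1} applies to $N^{k}$ and yields $\left\Vert N^{k}/J\right\Vert _{+}=\inf_{\alpha}\left\Vert N^{k}/J_{\alpha}\right\Vert _{+}$. Then, using $\rho_{t}(M)=\inf_{k}\left\Vert M^{k}\right\Vert _{+}^{1/k}$ and interchanging the two infima,
\[
\rho_{t}\left(N/J\right)=\inf_{k}\inf_{\alpha}\left\Vert N^{k}/J_{\alpha}\right\Vert _{+}^{1/k}=\inf_{\alpha}\inf_{k}\left\Vert N^{k}/J_{\alpha}\right\Vert _{+}^{1/k}=\inf_{\alpha}\rho_{t}\left(N/J_{\alpha}\right),
\]
and the monotone convergence argument recovers the limit formula.

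The only real obstacle is the first step: unlike the bounded-set version in Lemma~\ref{410}, the norm $\left\Vert \cdot\right\Vert _{+}$ is a genuine infinite sum, so one cannot directly approximate all coordinates $a_{m}$ simultaneously in a single $J_{\alpha}$. The tail-truncation via summability is precisely what reduces the problem to finitely many coordinates, after which up-directedness of the net supplies the uniform $\alpha$. Once \eqref{econt1} is in hand, the passage to \eqref{econt} is purely formal.
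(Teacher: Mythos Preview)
Your proof is correct and follows essentially the same route as the paper: truncate the summable family to finitely many terms to reduce the norm equality to a finite approximation problem handled via up-directedness, then apply the norm equality to the powers $N^{k}$ to recover the tensor radius equality. Your interchange-of-infima argument for \eqref{econt} is actually a bit cleaner than the paper's $\varepsilon$-version, but the content is identical.
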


\begin{proof}
As $\left\Vert N/J\right\Vert _{+}\leq\left\Vert N/J_{\alpha}\right\Vert _{+}$
then
\[
\rho_{t}(N/J)=\inf_{n}\left\Vert N^{n}/J\right\Vert _{+}^{1/n}\leq\inf
_{\alpha}\inf_{n}\left\Vert N^{n}/J_{\alpha}\right\Vert _{+}^{1/n}%
=\inf_{\alpha}\rho(N/J_{\alpha})
\]
and
\[
\left\Vert N/J\right\Vert _{+}\leq\inf_{\alpha}\left\Vert N/J_{\alpha
}\right\Vert _{+}\leq\underset{\alpha}{\,\lim\inf}\left\Vert N/J_{\alpha
}\right\Vert _{+}.
\]
By our assumption, for every $a_{m}$ and $\varepsilon_{m}>0$ there exists
$\alpha=\alpha(a_{m},\varepsilon_{m})$ such that
\begin{equation}
\left\Vert a_{m}/J_{\alpha}\right\Vert \leq\left\Vert a_{m}/J\right\Vert
+\varepsilon_{m}. \label{econt2}%
\end{equation}
For $\varepsilon>0$, take $k>0$ such that $\mathrm{d}\left(  N|_{1}%
^{k},N\right)  <\varepsilon$. Let $N_{k}=N|_{1}^{k}$. Take $\varepsilon_{m}>0$
such that $\sum_{1}^{k}\varepsilon_{m}<\varepsilon$. Then $\mathrm{d}\left(
N_{k}/J_{\gamma},N/J_{\gamma}\right)  \leq\mathrm{d}\left(  N_{k},N\right)
<\varepsilon$, and%
\[
\left\Vert N/J_{\gamma}\right\Vert _{+}\leq\mathrm{d}\left(  N_{k}/J_{\gamma
},N/J_{\gamma}\right)  +\left\Vert N_{k}/J_{\gamma}\right\Vert _{+}%
\leq\left\Vert N_{k}/J\right\Vert _{+}+\varepsilon\leq\left\Vert
N/J\right\Vert _{+}+\varepsilon
\]
for $\gamma\geq\max\left\{  \alpha(a_{m},\varepsilon_{m}):m=1,\ldots
,k\right\}  $ by $\left(  \ref{econt2}\right)  $. Therefore
\begin{equation}
\inf_{\alpha}\left\Vert N/J_{\alpha}\right\Vert _{+}\leq\underset{\alpha
}{\,\lim\sup\,}\left\Vert N/J_{\alpha}\right\Vert _{+}\leq\left\Vert
N/J\right\Vert _{+} \label{econt3}%
\end{equation}
that implies (\ref{econt1}). Take $n>0$ such that $\left\Vert N^{n}%
/J\right\Vert _{+}^{1/n}\leq\rho(N/J)+\varepsilon.$ It follows from
(\ref{econt3}) applied to $N^{n}$ that
\begin{align*}
\inf_{\alpha}\rho_{t}(N/J_{\alpha})  &  \leq\underset{\alpha}{\lim\sup
}\,\,\rho_{t}(N/J_{\alpha})\leq\underset{\alpha}{\,\lim\sup\,}\Vert
N^{n}/J_{\alpha}\Vert_{+}^{1/n}\leq\left\Vert N^{n}/J\right\Vert _{+}^{1/n}\\
&  \leq\rho_{t}(N/J)+\varepsilon.
\end{align*}
This implies (\ref{econt}).
\end{proof}

\begin{lemma}
\label{ten2}Let $A$ be a commutative Banach algebra. Then $\rho_{t}$ is
uniformly continuous on $B$ with respect to the metric $\mathrm{d}$.
\end{lemma}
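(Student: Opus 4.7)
The plan is to exploit the commutativity via the Gelfand-type description of $\rho_t$ supplied by Theorem \ref{sp4}(2), which reduces the estimate to a straightforward $\ell^1$--triangle inequality over characters. Concretely, I would show that $\rho_t$ is $1$-Lipschitz on summable families with respect to $\mathrm{d}$, which is in particular uniformly continuous (and of course the statement ``on $B$'' has to be read as ``on the set of summable families in $A$'').

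Here is how I would proceed. First, since $A$ is commutative, every summable family in $A$ is trivially commutative modulo $\mathcal{R}_t(A)$, so Theorem \ref{sp4}(2) applies and gives
\[
\rho_t(N)=\sup\bigl\{|\lambda|_+:\lambda\in\sigma^l_{\widehat{A}}(N)\bigr\}
\]
for every summable family $N=(a_n)_1^\infty$ in $A$. Since $\widehat{A}$ is commutative, its left joint spectrum of a family is exactly the image of the family under the characters of $\widehat{A}$, so
\[
\rho_t(N)=\sup_{\phi\in\Phi}\sum_{n=1}^{\infty}|\phi(a_n)|,
\]
where $\Phi$ is the character space of $\widehat{A}$.

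Next, for summable families $N=(a_n)$ and $N'=(b_n)$ in $A$ and for any character $\phi\in\Phi$, the ordinary triangle inequality in $\ell^1$ yields
\[
\sum_{n}|\phi(a_n)|\;\le\;\sum_{n}|\phi(b_n)|\;+\;\sum_{n}|\phi(a_n-b_n)|.
\]
Taking the supremum over $\phi\in\Phi$ and using the Gelfand formula for the difference family $N-N':=(a_n-b_n)$ (again commutative, hence Theorem \ref{sp4}(2) applies) gives
\[
\rho_t(N)\le\rho_t(N')+\rho_t(N-N').
\]
Now Lemma \ref{comm} applied to the commutative family $N-N'$ yields
\[
\rho_t(N-N')\le\rho_+(N-N')=\sum_{n}\rho(a_n-b_n)\le\sum_{n}\|a_n-b_n\|=\mathrm{d}(N,N').
\]
Swapping the roles of $N$ and $N'$ produces the symmetric inequality, and therefore
\[
|\rho_t(N)-\rho_t(N')|\le\mathrm{d}(N,N').
\]
This is $1$-Lipschitz continuity of $\rho_t$ on summable families with respect to $\mathrm{d}$, which is more than enough for uniform continuity.

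The only mild subtlety is the passage to the completion: Theorem \ref{sp4}(2) is stated so that $\lambda$ is taken in $\sigma^l_{\widehat{A}}$, so the representing characters live on $\widehat{A}$, not on $A$. This causes no problem because each $\phi\in\Phi$ is continuous and $\|\cdot\|_+$-bounds for the original family and its termwise difference are computed with the norm on $A$, which agrees with the norm inherited from $\widehat{A}$. Thus no hidden completion or density argument is needed, and the main (indeed only) step is the application of Theorem \ref{sp4}(2) plus Lemma \ref{comm}.
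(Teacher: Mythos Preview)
Your proof is correct and follows essentially the same approach as the paper: both use the Gelfand-type formula from Theorem~\ref{sp4}(2) to express $\rho_t$ as a supremum of $\ell^1$-sums over characters, then exploit subadditivity to obtain the $1$-Lipschitz bound $|\rho_t(N)-\rho_t(N')|\le \mathrm{d}(N,N')$. The only cosmetic differences are that the paper cites \cite[Propositions 3.3 and 3.4]{TR2} for subadditivity while you re-derive it from the character formula, and the paper bounds $\rho_t(M-N)$ directly via $|f(a_n-b_n)|\le\|a_n-b_n\|$ rather than passing through Lemma~\ref{comm}; neither changes the substance of the argument.
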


\begin{proof}
One can assume that $A$ is unital. Let $M=\left(  a_{n}\right)  _{1}^{\infty}$
and $N=\left(  b_{n}\right)  _{1}^{\infty}$ be summable families in $A$, and
let $\mathcal{F}$ be the set of all multiplicative functionals $f$ on $A$ with
$\left\Vert f\right\Vert =f\left(  1\right)  =1$. By Theorem \ref{sp4},
\[
\rho_{t}\left(  M\right)  =\sup\left\{  \sum_{1}^{\infty}\left\vert f\left(
a_{n}\right)  \right\vert :f\in\mathcal{F}\right\}
\]
and, by \cite[Propositions 3.3 and 3.4]{TR2}, $\rho_{t}$ is subadditive on
$A$, whence
\begin{align*}
\left\vert \rho_{t}\left(  M\right)  -\rho_{t}\left(  N\right)  \right\vert
&  \leq\rho_{t}\left(  M-N\right)  =\sup\left\{  \sum_{1}^{\infty}\left\vert
f\left(  a_{n}-b_{n}\right)  \right\vert :f\in\mathcal{F}\right\} \\
&  \leq\sum_{1}^{\infty}\left\Vert a_{n}-b_{n}\right\Vert =\mathrm{d}\left(
M,N\right)  .
\end{align*}

\end{proof}

Let $\mathcal{F}_{\mathcal{V}_{\rho_{t}}}$ be the family of all topological
under radicals whose radical classes are contained in $\mathcal{V}_{\rho_{t}}$.

\begin{theorem}
Let $\mathcal{R}_{\overrightarrow{\rho_{t}}}=\vee\mathcal{F}_{\mathcal{V}%
_{\rho_{t}}}$. Then

\begin{enumerate}
\item $\mathcal{R}_{\overrightarrow{\rho_{t}}}$ is a radical and
$\mathcal{R}_{\overrightarrow{\rho_{t}}}$ $\in\mathcal{F}_{\mathcal{V}%
_{\rho_{t}}}$;

\item For every normed algebra $A$, every summable family $N$ of
$\mathcal{R}_{\overrightarrow{\rho_{t}}}\left(  A\right)  $ is a point of
continuity of $\rho_{t}$;

\item For every normed algebra $A$, $\rho_{t}\left(  N\right)  =\sup_{k}%
\rho_{t}\left(  N|_{1}^{k}\right)  $ for every summable family $N$ in
$\mathcal{R}_{\overrightarrow{\rho_{t}}}\left(  A\right)  $;

\item $\mathcal{R}_{t}^{a}\leq\mathcal{R}_{\overrightarrow{\rho_{t}}}$.
\end{enumerate}
\end{theorem}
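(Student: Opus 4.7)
The plan is to model the proof on the analogous statements already proved for $\mathcal{R}_{\overrightarrow{\rho}}$ (Theorem~\ref{itc}) and $\mathcal{R}_{\overrightarrow{\rho_{j}}}$, replacing the usual/joint spectral radius by the tensor radius and exploiting Lemma~\ref{ten} (the tensor analogue of Lemma~\ref{410}) at the critical limit-ordinal step.

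For (1), I would first invoke Theorem~\ref{dix0} to conclude that $\mathrm{H}_{\mathcal{F}_{\mathcal{V}_{\rho_{t}}}}$ is an under radical, whence $\vee\mathcal{F}_{\mathcal{V}_{\rho_{t}}}=\mathrm{H}_{\mathcal{F}_{\mathcal{V}_{\rho_{t}}}}^{\ast}$ is a radical by Theorem~\ref{ovunt}. To show $\mathcal{R}_{\overrightarrow{\rho_{t}}}\in\mathcal{F}_{\mathcal{V}_{\rho_{t}}}$, I take an $\mathcal{R}_{\overrightarrow{\rho_{t}}}$-radical algebra $A$ realized as a closed ideal of a normed algebra $B$, and apply Theorem~\ref{ch2}(1) to $\mathcal{F}_{\mathcal{V}_{\rho_{t}}}$ to obtain an increasing transfinite chain $(I_{\alpha})_{\alpha\leq\gamma}$ of closed ideals of $A$ with $I_{0}=0$, $I_{\gamma}=A$, each gap-quotient $I_{\alpha+1}/I_{\alpha}$ being $P$-radical and equal to $P(A/I_{\alpha})$ for some $P\in\mathcal{F}_{\mathcal{V}_{\rho_{t}}}$. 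Transfinite induction (exactly as in Theorem~\ref{itc}) shows that every $I_{\alpha}$ is an ideal of $B$. Then, assuming a summable family $N$ in $B$ satisfies $\rho_{t}(N)>\rho_{t}(N/A)$ but were not a point of continuity, one picks the least ordinal $\alpha'$ with $\rho_{t}(N)\neq\rho_{t}(N/I_{\alpha'})$; Lemma~\ref{ten} rules out limit ordinals, so $\alpha'=\alpha+1$ and we find $\rho_{t}(N/I_{\alpha})>\rho_{t}(N/I_{\alpha+1})$. Since $I_{\alpha+1}/I_{\alpha}\in\mathcal{V}_{\rho_{t}}$ is a closed ideal of $B/I_{\alpha}$, condition~$(1_{t})$ forces $N/I_{\alpha}$ to be a point of continuity of $\rho_{t}$ in $B/I_{\alpha}$, contradicting $\lim\sup\rho_{t}(N_{n})<\rho_{t}(N)$ as in the proof of Theorem~\ref{itc}.

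For (2), I apply (1) with $B=A$: if $N$ is a summable family in $\mathcal{R}_{\overrightarrow{\rho_{t}}}(A)$ and $\rho_{t}(N)>0$, then $\rho_{t}(N/\mathcal{R}_{\overrightarrow{\rho_{t}}}(A))=0<\rho_{t}(N)$, so $(1_{t})$ gives continuity at $N$; if $\rho_{t}(N)=0$ the upper continuity of $\rho_{t}$ with respect to $\mathrm{d}$ (cited from \cite[Proposition 3.12]{TR2}) yields continuity. For (3), I note that $N|_{1}^{k}\to N$ in the metric $\mathrm{d}$ because the tail $\sum_{k+1}^{\infty}\|a_{n}\|\to 0$ by summability; combining continuity of $\rho_{t}$ at $N$ obtained in (2) with the obvious monotonicity $\rho_{t}(N|_{1}^{k})\leq\rho_{t}(N|_{1}^{k+1})\leq\rho_{t}(N)$ (which follows since $(N|_{1}^{k})^{n}$ is a subfamily of $N^{n}$, so $\|(N|_{1}^{k})^{n}\|_{+}\leq\|N^{n}\|_{+}$), I obtain $\rho_{t}(N)=\sup_{k}\rho_{t}(N|_{1}^{k})$.

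The main obstacle is (4): showing $\mathcal{R}_{t}^{a}\in\mathcal{F}_{\mathcal{V}_{\rho_{t}}}$, i.e.\ that every $\mathcal{R}_{t}^{a}$-radical closed ideal $A$ of any normed algebra $B$ satisfies $(1_{t})$. The natural route mirrors the proof of $\mathcal{R}_{\mathrm{cq}}^{a}\leq\mathcal{R}_{\overrightarrow{\rho_{j}}}$ from \cite[Theorem 6.3]{TR3}: reduce to showing that summable families $N$ in $B$ with $\rho_{t}(N)>\rho_{t}(N/A)$ are points of continuity by successively factoring out $\mathcal{R}_{t}(A)$ (which is an ideal of $B$ by Axiom~4 for $\mathcal{R}_{t}$) and then $A/\mathcal{R}_{t}(A)$. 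After the first factorization we use the tensor radius formula (\ref{tr}) $\rho_{t}(N)=\rho_{t}(N/\mathcal{R}_{t}(\cdot))$ to pass to the quotient where the image of $A$ is commutative, and then invoke Lemma~\ref{ten2} (uniform continuity of $\rho_{t}$ on commutative algebras with respect to $\mathrm{d}$) together with Lemma~\ref{sum1} (subadditivity of $\rho_{t}$ across a central ideal) to propagate the continuity from $\rho_{t}(N/A)$ back to $\rho_{t}(N)$, exactly as Lemma~\ref{zem} was used for the ordinary spectral radius. The delicate point is that Lemma~\ref{sum1} was established under the assumption that the ambient algebra is $\mathcal{R}_{t}$-semisimple and the ideal is central; handling the general $B$ requires first passing to $B/\mathcal{R}_{t}(B)$ via (\ref{tr}) so that the image of $A$ becomes a closed central commutative ideal there, after which Lemma~\ref{sum1} applies verbatim.
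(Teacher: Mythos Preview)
Your treatment of parts (1)--(3) matches the paper's: the paper also says to repeat the argument of Theorem~\ref{itc} using Lemma~\ref{ten} in place of Lemma~\ref{410}, and derives (2) and (3) from (1) exactly as you describe.

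For part (4), however, your plan has a genuine gap. The reduction to an $\mathcal{R}_{t}$-semisimple ambient algebra $B$ via (\ref{tr}) is correct and is also what the paper does; after this reduction the image of $A$ is indeed a closed commutative central ideal. But your proposed continuation via Lemma~\ref{sum1} does not go through: that lemma requires the additional hypothesis that $B/A$ be commutative modulo $\mathcal{R}_{t}(B/A)$, which is nowhere assumed here (the $\mathcal{R}_{t}^{a}$-radicality concerns $A$, not $B/A$). Subadditivity across a central ideal in the form of Lemma~\ref{sum1} or Corollary~\ref{cen} therefore is not available, and the analogy with Lemma~\ref{zem} does not supply a mechanism to ``propagate continuity'' from $\rho_{t}(N/A)$ to $\rho_{t}(N)$.

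The paper's argument avoids this by passing to the left-multiplication representation. Using Theorem~\ref{mul} one has $\rho_{t}(N)=\rho_{t}(\mathrm{L}_{N})$, and since $A$ is a closed invariant subspace for $\mathrm{L}_{B}$, Theorem~\ref{invar} gives
\[
\rho_{t}(N)=\max\{\rho_{t}(\mathrm{L}_{N}|_{B/A}),\ \rho_{t}(\mathrm{L}_{N}|_{A})\}=\max\{\rho_{t}(N/A),\ \rho_{t}(\mathrm{L}_{N}|_{A})\}.
\]
When $\rho_{t}(N)>\rho_{t}(N/A)$ this forces $\rho_{t}(N)=\rho_{t}(\mathrm{L}_{N}|_{A})$. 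The key observation, which your proposal is missing, is that \emph{the algebra $\mathrm{L}_{B}|_{A}$ is commutative} (because $A$ is central in $B$, so $\mathrm{L}_{b}|_{A}=\mathrm{R}_{b}|_{A}$ and hence $\mathrm{L}_{b_{1}}\mathrm{L}_{b_{2}}|_{A}=\mathrm{L}_{b_{2}}\mathrm{L}_{b_{1}}|_{A}$). Now Lemma~\ref{ten2} applies to $\mathrm{L}_{B}|_{A}$, giving $\rho_{t}(\mathrm{L}_{N_{n}}|_{A})\to\rho_{t}(\mathrm{L}_{N}|_{A})=\rho_{t}(N)$ whenever $N_{n}\to N$, and since $\rho_{t}(N_{n})\ge\rho_{t}(\mathrm{L}_{N_{n}}|_{A})$ this yields $\liminf\rho_{t}(N_{n})\ge\rho_{t}(N)$, completing the proof of $(1_{t})$.
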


\begin{proof}
$\left(  1\right)  $ Using Lemma \ref{ten} and repeating the argument in
Theorem \ref{itc}, we obtain that $\mathcal{R}_{\overrightarrow{\rho_{t}}}$ is
a radical and all $\mathcal{R}_{\overrightarrow{\rho_{t}}}$-radical algebras
lie in $\mathcal{V}_{\rho_{t}}$.

$\left(  2\right)  $ is similar to Theorem \ref{itc}.

$\left(  3\right)  $ follows from $\left(  2\right)  $.

$\left(  4\right)  $ It is sufficient to check the condition $\left(
1_{t}\right)  $ for any $\mathcal{R}_{t}$-semisimple Banach algebra $B$. Let
$A$ be a closed $\mathcal{R}_{t}^{a}$-radical ideal of $B$. As $A$ is
$\mathcal{R}_{t}$-semisimple, it is a central ideal. By Theorem \ref{mul},
\begin{equation}
\rho_{t}\left(  N\right)  =\rho_{t}\left(  \mathrm{L}_{N}\right)  \label{d0}%
\end{equation}
for every summable family $N=\left(  a_{m}\right)  _{1}^{\infty}$ in $B$. As
$A$ is a closed invariant subspace for $\mathrm{L}_{B}$ then
\begin{equation}
\rho_{t}\left(  \mathrm{L}_{N}\right)  =\max\left\{  \rho_{t}\left(
\mathrm{L}_{N}|_{B/A}\right)  ,\rho_{t}\left(  \mathrm{L}_{N}|_{A}\right)
\right\}  \label{d1}%
\end{equation}
by Theorem \ref{invar}. It is clear that
\begin{equation}
\rho_{t}\left(  \mathrm{L}_{N}|_{B/A}\right)  =\rho_{t}\left(  \mathrm{L}%
_{N/A}\right)  =\rho_{t}\left(  N/A\right)  . \label{d2}%
\end{equation}

Let now $\rho_{t}\left(  N\right)  >\rho_{t}\left(  N/A\right)  $. It follows
from $\left(  \ref{d0}\right)  $, $\left(  \ref{d1}\right)  $ and $\left(
\ref{d2}\right)  $ that
\[
\rho_{t}\left(  N\right)  =\rho_{t}\left(  \mathrm{L}_{N}|_{A}\right)
\]
Let $\left(  N_{n}\right)  $ be a sequence of summable families of $B$ such
that $N_{n}\rightarrow N$ in the metric \textrm{d }as $n\rightarrow\infty$.
Then \textrm{ } $\mathrm{L}_{N_{n}}|_{A}\rightarrow\mathrm{L}_{N}|_{A}$. It is
easy to check that the algebra $\mathrm{L}_{B}|_{A}$ is commutative. By Lemma
\ref{ten2}, $\rho_{t}$ is continuous on $\mathrm{L}_{B}|_{A}$, whence
\[
\lim\inf\rho_{t}\left(  N_{n}\right)  \geq\lim\inf\rho_{t}\left(
\mathrm{L}_{N_{n}}|_{A}\right)  =\rho_{t}\left(  \mathrm{L}_{N}|_{A}\right)
=\rho_{t}\left(  N\right)
\]
and $\lim\rho_{t}\left(  N_{n}\right)  =\rho_{t}\left(  N\right)  $.

This implies that $\mathbf{Rad}\left(  \mathcal{R}_{t}^{a}\right)
\subset\mathcal{V}_{\rho_{t}}$. As $\mathcal{R}_{\overrightarrow{\rho_{t}}}$
is the largest radical with such property, $\mathcal{R}_{t}^{a}\leq
\mathcal{R}_{\overrightarrow{\rho_{t}}}$.
\end{proof}

We call $\mathcal{R}_{\overrightarrow{\rho_{t}}}$ the $\rho_{t}$%
\textit{-continuity radical}.

\section{Estimations of the joint spectral radius\label{intersection}}

In this section we study how various spectral characteristics of an element or
a subset of a Banach algebra can be expressed via its images in quotients of
the algebra, and how the characteristics of these images depend on the
corresponding ideals. The main attention is devoted to the joint spectral radius.

For example, it is well known that for each element $a$ of any Banach algebra
$A$
\begin{equation}
\rho(a)=\sup\{\rho(a/I):I\in\operatorname{Prim}(A)\}. \label{ind}%
\end{equation}
Can $\operatorname{Prim}(A)$ be changed here by arbitrary family of (primitive
or not) ideals whose intersection is contained in $\operatorname{Rad}(A)$? Is
it possible to extend (\ref{ind}) to $\rho(M)$, where $M$ is a precompact
subset of $A$? How in general depend $\rho(a/I)$ and $\rho(M/I)$ on $I$?
Questions of this type often arise in the spectral theory.

In what follows by an \textit{operator} we mean a bounded linear operator.

\subsection{Finite families of ideals with trivial intersection}

We begin with the spectrum.

\begin{theorem}
\label{spec2} Let $A$ be a Banach algebra, $I_{1}$,...,$I_{n}$ be closed
ideals of $A$, and let $\cap_{k=1}^{n}I_{k}\subset\operatorname{Rad}(A)$. Then%
\[
\sigma(a)=\cup_{k=1}^{n}\sigma(a/I_{k}),
\]
for any element $a\in A$.
\end{theorem}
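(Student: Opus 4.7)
The inclusion $\cup_{k=1}^{n}\sigma(a/I_{k})\subset\sigma(a)$ is immediate from Lemma~\ref{sp0}$(3)$ (or from the fact that each $q_{I_{k}}:A\longrightarrow A/I_{k}$ extends naturally to a unital surjective homomorphism $A^{1}\longrightarrow(A/I_{k})^{1}$, so it sends invertible elements to invertible elements). The heart of the proof is the reverse inclusion, and my plan is to reduce it to invertibility modulo the Jacobson radical.

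First I would fix $\lambda\notin\cup_{k=1}^{n}\sigma(a/I_{k})$ and set $y=a-\lambda\in A^{1}$. Then for each $k$ there exist $b_{k},c_{k}'\in A^{1}$ with $yb_{k}-1\in I_{k}$ and $c_{k}'y-1\in I_{k}$. The main step is a Chinese-remainder-type construction: given right pseudo-inverses modulo each $I_{k}$, produce a single right pseudo-inverse modulo $\cap_{k}I_{k}$. For $n=2$, writing $yb_{1}=1+c_{1}$ with $c_{1}\in I_{1}$ and $yb_{2}=1+c_{2}$ with $c_{2}\in I_{2}$, the element
\[
b=b_{1}+b_{2}-b_{1}yb_{2}
\]
satisfies $yb=(1+c_{1})+(1+c_{2})-(1+c_{1})(1+c_{2})=1-c_{1}c_{2}$, and $c_{1}c_{2}\in I_{1}I_{2}\subset I_{1}\cap I_{2}$. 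For general $n$ I would iterate this construction inductively, pairing a pseudo-inverse for $I_{1}$ with one built by induction for $J=I_{2}\cap\cdots\cap I_{n}$, to obtain $b\in A^{1}$ with $yb-1\in\cap_{k=1}^{n}I_{k}$. The analogous argument on the left (applied to the opposite algebra, or mirrored directly) gives $c\in A^{1}$ with $cy-1\in\cap_{k=1}^{n}I_{k}$.

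Finally, since $\cap_{k=1}^{n}I_{k}\subset\operatorname{Rad}(A)$ and $\operatorname{rad}(A^{1})=\operatorname{rad}(A)$ by \eqref{p1}, we have $yb=1-r_{1}$ and $cy=1-r_{2}$ with $r_{1},r_{2}\in\operatorname{Rad}(A^{1})$. Elements of $\operatorname{Rad}(A^{1})$ are quasinilpotent, so $1-r_{1}$ and $1-r_{2}$ are invertible in $A^{1}$; hence $y$ has a right inverse $b(1-r_{1})^{-1}$ and a left inverse $(1-r_{2})^{-1}c$, which must coincide, so $y$ is invertible in $A^{1}$ and $\lambda\notin\sigma(a)$.

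The only nonroutine part is the combinatorial identity that glues the local pseudo-inverses into a global one; the $n=2$ formula above is essentially forced (it is the multiplicative analogue of $1-(1-u)(1-v)$), and the inductive step requires no new idea. The appeal to $\operatorname{rad}(A^{1})=\operatorname{rad}(A)$ is precisely what makes the hypothesis $\cap_{k}I_{k}\subset\operatorname{Rad}(A)$ (rather than $\cap_{k}I_{k}=0$) sufficient.
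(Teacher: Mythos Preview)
Your proof is correct and follows essentially the same approach as the paper. The paper also reduces to the $n=2$ case and glues one-sided pseudo-inverses: from $ba=1+i$ and $ca=1+j$ it multiplies $(ba-1)(ca-1)=ij\in I\cap J$ to obtain a left inverse $b+c-bac$ modulo $I\cap J$, which is exactly your formula read on the other side; it then invokes $\sigma(a)=\sigma(a/\operatorname{Rad}(A))$ rather than inverting $1-r$ explicitly, but that is the same step.
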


\begin{proof}
Let $I,J$ be ideals of $A$. Assuming that $0\notin\sigma(a/I)\cup\sigma(a/J),$
we prove that $0\notin\sigma(a/K)$ where $K=I\cap J$. Indeed, since $a$ is
invertible in $A/I$ and $A/J$ then there are $b,c\in A$ with $ba=1+i$ and
$ca=1+j$ where $i\in I$, $j\in J$. Therefore%
\[
(ba-1)(ca-1)=ij\in K.
\]
It follows that $(bac-c-b)a\in1+K$ that is $a$ is left invertible in $A/K$.
Similarly, $a$ is right invertible in $A/K$. Thus $a$ is invertible in $A/K$.

Now using induction, we obtain that
\[
\sigma(a/\cap_{k=1}^{n}I_{k})\subset\cup_{k=1}^{n}\sigma(a/I_{k}).
\]

In our assumptions this implies that
\[
\sigma(a)=\sigma(a/\operatorname{Rad}(A))\subset\cup_{k=1}^{n}\sigma
(a/I_{k});
\]
the converse inclusion is evident.
\end{proof}

Theorem \ref{spec2} implies that if $\mathcal{F}$ is a finite set of closed
ideals with intersection in $\operatorname{Rad}(A)$ then
\begin{equation}
\rho(a)=\sup_{J\in\mathcal{F}}\rho(a/J)\text{ for each }a\in A.
\label{rho-one}%
\end{equation}
Our next aim is to establish a similar result for the joint spectral radius.

We use the following statement proved in \cite[Corollary 4.3]{ST00}.

\begin{lemma}
\label{chain0} Let $M$ be a bounded set of operators on a Banach space $X$,
and let $X\supset Z_{1}\supset...\supset Z_{n}$ be a chain of closed subspaces
invariant for all operators in $M$. Then
\[
\rho(M)=\max\left\{  {\rho}\left(  {M|}_{{X/Z_{1}}}\right)  {,\rho}\left(
{M|}_{{Z_{1}/Z_{2}}}\right)  {,...,\rho}\left(  {M|}_{{Z_{n-1}/Z_{n}}}\right)
{,\rho}\left(  {M|}_{{Z_{n}}}\right)  \right\}  ,
\]
where by $M|_{Z_{k}/Z_{k+1}}$ we mean the family of operators induced by $M$
in the quotient Banach space $Z_{k}/Z_{k+1}$.
\end{lemma}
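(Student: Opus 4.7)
The plan is to reduce everything to the case $n=1$ and then iterate. So first I would prove the following key lemma: if $Y$ is a single closed $M$-invariant subspace of $X$, then
\[
\rho(M)=\max\{\rho(M|_{X/Y}),\,\rho(M|_{Y})\}.
\]
Once this is established, the full statement follows by induction on $n$: apply it with $Y=Z_{1}$ to obtain $\rho(M)=\max\{\rho(M|_{X/Z_{1}}),\rho(M|_{Z_{1}})\}$, then apply the inductive hypothesis to the chain $Z_{1}\supset Z_{2}\supset\cdots\supset Z_{n}$ inside $Z_{1}$ to expand $\rho(M|_{Z_{1}})$ into the remaining maxima, and combine.

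For the single-invariant-subspace case, the inequality $\max\{\rho(M|_{X/Y}),\rho(M|_{Y})\}\leq\rho(M)$ is straightforward, since restricting to $Y$ and passing to $X/Y$ are both norm-decreasing on each power $M^{k}$. The substantive content is the reverse inequality, and here I would parallel the argument already used in Theorem \ref{invar} for the tensor radius, but working with the operator norm $\|M^{k}\|=\sup\{\|a_{1}\cdots a_{k}\|:a_{i}\in M\}$ instead of $\|\cdot\|_{+}$. Concretely, given $a\in M^{2k}$ written as $a=bc$ with $b,c\in M^{k}$ and $x\in X$ of unit norm, I would choose $y\in Y$ with $\|cx-y\|\leq\|q(cx)\|+\varepsilon\leq\|M^{k}|_{X/Y}\|+\varepsilon$, and estimate
\[
\|ax\|\leq\|b(cx-y)\|+\|by\|\leq\|M^{k}\|\bigl(\|M^{k}|_{X/Y}\|+\varepsilon\bigr)+\|M^{k}|_{Y}\|\,\|y\|,
\]
with $\|y\|\leq\|cx\|+\|cx-y\|\leq\|M^{k}\|+\|M^{k}|_{X/Y}\|+\varepsilon$. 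Letting $\varepsilon\to0$ yields
\[
\|M^{2k}\|\leq 3\,\|M^{k}\|\cdot\max\{\|M^{k}|_{X/Y}\|,\,\|M^{k}|_{Y}\|\}.
\]

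Taking $2k$-th roots, using $\|M^{2k}\|^{1/(2k)}\to\rho(M)$, and letting $\beta:=\max\{\rho(M|_{X/Y}),\rho(M|_{Y})\}$, this inequality gives $\rho(M)^{2}\leq\rho(M)\cdot\beta$, so $\rho(M)\leq\beta$ (the case $\rho(M)=0$ being trivial). This completes the base case and hence, via the induction described above, the lemma.

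The main obstacle is the precise form of the three-term estimate for $\|M^{2k}\|$: one has to choose the approximating vector $y\in Y$ carefully so that both $\|cx-y\|$ and $\|y\|$ are controlled by the quantities that appear on the quotient and on $Y$ respectively, and the resulting bound must have the property that after $2k$-th roots the quotient-and-restriction data still dominates $\rho(M)$. This is exactly the technical step that distinguishes the proof from the easy upper-semicontinuity-type inequality; everything else (monotonicity of $\rho$ under restriction and passage to quotients, and the transfinite/finite induction on the chain) is routine.
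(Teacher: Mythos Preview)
Your proof is correct. The paper does not actually prove this lemma; it simply cites it as \cite[Corollary~4.3]{ST00}. Your argument --- reducing by induction to a single invariant subspace $Y$ and then establishing the key inequality $\|M^{2k}\|\le 3\,\|M^{k}\|\cdot\max\{\|M^{k}|_{X/Y}\|,\|M^{k}|_{Y}\|\}$ via the same splitting-and-approximation estimate that the paper carries out in Theorem~\ref{invar} for the tensor radius --- is precisely the approach of the cited reference, so you have recovered the intended proof.
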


Let $A$ be a Banach algebra. Recall (sf. Section \ref{banach}) that by a
\textit{Banach ideal} of $A$ we call any ideal $I$ of $A$ which is complete
with respect to a norm $\Vert\cdot\Vert_{I}$ such that
\[
\Vert x\Vert_{I}\geq\Vert x\Vert
\]
for all $x\in I;$ recall also that $\mathrm{W}_{a}:=\mathrm{L}_{a}%
\mathrm{R}_{a}$, for $a\in A$, and $\mathrm{W}_{M}:=\left\{  \mathrm{W}%
_{a}:a\in M\right\}  $, for $M\subset A$. It is clear that all ideals of $A$
are invariant subspaces for the operators $\mathrm{W}_{a}$.

Let $\rho_{\left(  A;\Vert\cdot\Vert\right)  }(M)$ denote the joint spectral
radius of a bounded set $M$ in the algebra $\left(  A;\Vert\cdot\Vert\right)
$.

\begin{lemma}
\label{ineQuot} Let $A$ be a Banach algebra, and let $I$ be a Banach ideal of
$A$. Then

\begin{enumerate}
\item All operators $\mathrm{W}_{a}|_{I}$, $a\in A$, are bounded in the norm
$\Vert\cdot\Vert_{I}$;

\item For each bounded subset $M$ of $A$,%
\begin{equation}
\rho_{\left(  I;\Vert\cdot\Vert_{I}\right)  }(\mathrm{W}_{M}|_{I})\leq
\rho_{\left(  A;\Vert\cdot\Vert\right)  }(M)^{2}. \label{BanId}%
\end{equation}

\end{enumerate}
\end{lemma}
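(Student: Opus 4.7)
The plan is to apply the closed graph theorem twice: first to show that for each fixed $a\in A$ the multiplications $\mathrm{L}_a|_I$ and $\mathrm{R}_a|_I$ are bounded operators on the Banach space $(I,\|\cdot\|_I)$, and second to show that the resulting maps $a\longmapsto \mathrm{L}_a|_I$ and $a\longmapsto \mathrm{R}_a|_I$ from $(A,\|\cdot\|)$ to $\mathcal{B}((I,\|\cdot\|_I))$ are themselves bounded. Once both are in hand, statement (1) is immediate because $\mathrm{W}_a|_I=\mathrm{L}_a|_I\circ\mathrm{R}_a|_I$, and statement (2) will follow from the associativity identity $\mathrm{W}_{a_1}\cdots\mathrm{W}_{a_n}(x)=(a_1\cdots a_n)\,x\,(a_n\cdots a_1)$.

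For the first closed-graph argument, fix $a\in A$ and consider $T\colon (I,\|\cdot\|_I)\longrightarrow (I,\|\cdot\|_I)$, $x\longmapsto ax$. If $x_n\to x$ and $ax_n\to y$ in $\|\cdot\|_I$, the inequality $\|\cdot\|_I\ge\|\cdot\|$ transports both convergences to $\|\cdot\|$; continuity of $\mathrm{L}_a$ on $(A,\|\cdot\|)$ gives $ax_n\to ax$ in $\|\cdot\|$, so $y=ax$ and the graph is closed. The same works for $\mathrm{R}_a|_I$. For the second, the linear map $\Phi\colon (A,\|\cdot\|)\longrightarrow \mathcal{B}((I,\|\cdot\|_I))$ sending $a$ to $\mathrm{L}_a|_I$ has closed graph: if $a_n\to a$ in $A$ and $\Phi(a_n)\to S$ in operator norm, then for every $x\in I$ we have $\Phi(a_n)x=a_nx\to Sx$ in $\|\cdot\|_I$ and hence in $\|\cdot\|$, while also $a_nx\to ax$ in $\|\cdot\|$, so $Sx=ax=\Phi(a)x$. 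Consequently there exists a constant $C>0$, independent of $a$, with
\[
\|\mathrm{L}_a|_I\|_{(I;\|\cdot\|_I)}\le C\|a\|\quad\text{and}\quad\|\mathrm{R}_a|_I\|_{(I;\|\cdot\|_I)}\le C\|a\|\qquad(a\in A).
\]

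For (2), the identity above yields $\mathrm{W}_{a_1}\cdots\mathrm{W}_{a_n}|_I=\mathrm{L}_{a_1\cdots a_n}|_I\circ\mathrm{R}_{a_n\cdots a_1}|_I$, whence
\[
\|\mathrm{W}_{a_1}\cdots\mathrm{W}_{a_n}|_I\|_{(I;\|\cdot\|_I)}\le C^2\,\|a_1\cdots a_n\|\,\|a_n\cdots a_1\|\le C^2\,\|M^n\|^2
\]
for all $a_1,\dots,a_n\in M$. Taking the supremum over $a_i\in M$ gives $\|(\mathrm{W}_M|_I)^n\|_{(I;\|\cdot\|_I)}\le C^2\|M^n\|^2$, and extracting $n$-th roots and passing to the limit produces the desired estimate
\[
\rho_{(I;\|\cdot\|_I)}(\mathrm{W}_M|_I)\le \lim_{n\to\infty}C^{2/n}\|M^n\|^{2/n}=\rho_{(A;\|\cdot\|)}(M)^2.
\]

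The only genuine obstacle is the closed-graph step for $\Phi$, and this is precisely where the hypothesis $\|\cdot\|_I\ge\|\cdot\|$ is essential: it is what allows an operator-norm limit in $\mathcal{B}((I,\|\cdot\|_I))$ to be identified pointwise with the $\|\cdot\|$-limit $ax$, and hence to promote the pointwise boundedness $\|\mathrm{L}_a|_I\|_I\le C_a$ obtained from the first application to the uniform estimate $\|\mathrm{L}_a|_I\|_I\le C\|a\|$ that drives the joint-spectral-radius bound.
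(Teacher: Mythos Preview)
Your proof is correct and follows essentially the same route as the paper. The paper quotes a result of Barnes \cite{B68} for the bimodule inequality $\|axb\|_I\le C\|a\|\,\|x\|_I\,\|b\|$ and then proceeds exactly as you do; your two closed-graph arguments simply reprove that inequality (in the equivalent form $\|\mathrm{L}_a|_I\|\le C\|a\|$, $\|\mathrm{R}_b|_I\|\le C\|b\|$) rather than citing it, making your argument self-contained.
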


\begin{proof}
$\left(  1\right)  $ It was proved in \cite{B68} that there is a constant
$C>0$ such that
\[
||axb||_{I}\leq C||a||_{A}||x||_{I}||b||_{A}%
\]
for all $a,b\in A^{1}$, $x\in I$. Therefore $\Vert\mathrm{W}_{a}|_{I}\Vert
_{I}\leq C\Vert a\Vert^{2}$ which proves (1).

$\left(  2\right)  $ As $||\mathrm{W}_{M^{n}}||_{I}\leq C\left(  ||M^{n}%
||_{A}\right)  ^{2}$ then $||\mathrm{W}_{M^{n}}||_{I}^{1/n}\leq C^{1/n}\left(
||M^{n}||_{A}\right)  ^{2/n}$, and taking $n\rightarrow\infty$ we obtain (2).
\end{proof}

\begin{proposition}
\label{finPlus} Let $A$ be a Banach algebra, let $I_{1},...,I_{n}$ be closed
ideals of $A$, and let $J=\cap_{i=1}^{n}I_{i}$. Then
\begin{equation}
\rho(M)^{2}=\max\{\max_{1\leq i\leq n}{\rho(M/I_{i})^{2}},\rho(\mathrm{W}%
_{M}|_{J})\}, \label{MorLon}%
\end{equation}
for each bounded subset $M$ of $A$.
\end{proposition}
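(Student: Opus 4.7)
The plan is to apply Lemma \ref{chain0} to the family $\mathrm{W}_M \subset \mathcal{B}(A)$ along the descending chain of closed two-sided ideals
\[
A = N_0 \supset N_1 \supset \cdots \supset N_n = J, \qquad N_k := I_1 \cap \cdots \cap I_k.
\]
Since each $N_k$ is an ideal of $A$, it is invariant under every $\mathrm{W}_a$, and Lemma \ref{chain0} gives
\[
\rho_{\mathcal{B}(A)}(\mathrm{W}_M) = \max\Bigl\{\max_{1 \le k \le n}\rho(\mathrm{W}_M|_{N_{k-1}/N_k}),\; \rho(\mathrm{W}_M|_J)\Bigr\}.
\]

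The next step is to bound each gap-quotient contribution by $\rho(M/I_k)^2$. By the second isomorphism theorem, $N_{k-1}/N_k = N_{k-1}/(N_{k-1}\cap I_k)$ is algebraically isomorphic to $(N_{k-1}+I_k)/I_k \subset A/I_k$, and Lemma \ref{bi} endows the latter with a complete norm $\|\cdot\|_{(N_{k-1}+I_k)/I_k}$ that simultaneously turns the isomorphism into an isometry and makes $(N_{k-1}+I_k)/I_k$ a Banach ideal of $A/I_k$. Under this identification the action of $\mathrm{W}_a$ on $N_{k-1}/N_k$ is transported to that of $\mathrm{W}_{a/I_k}$ restricted to the Banach ideal, so Lemma \ref{ineQuot} applied in the Banach algebra $A/I_k$ yields
\[
\rho(\mathrm{W}_M|_{N_{k-1}/N_k}) = \rho_{((N_{k-1}+I_k)/I_k,\,\|\cdot\|_{(N_{k-1}+I_k)/I_k})}(\mathrm{W}_{M/I_k}) \le \rho_{A/I_k}(M/I_k)^2.
\]

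Combining the two displays gives $\rho_{\mathcal{B}(A)}(\mathrm{W}_M) \le \max\{\max_k \rho(M/I_k)^2,\ \rho(\mathrm{W}_M|_J)\}$, and the opposite inequality relating the right-hand side to $\rho(M)^2$ is immediate: each $\rho(M/I_k)^2 \le \rho(M)^2$ because quotients do not increase the joint spectral radius, and Lemma \ref{ineQuot} applied to the closed (hence Banach) ideal $J$ of $A$ with its ambient norm gives $\rho(\mathrm{W}_M|_J) \le \rho(M)^2$. Thus the proof reduces to the identification
\[
\rho_{\mathcal{B}(A)}(\mathrm{W}_M) = \rho(M)^2,
\]
whose easy half $\rho(\mathrm{W}_M) \le \rho(M)^2$ follows at once from $\mathrm{W}_{a_1}\cdots\mathrm{W}_{a_n}(x) = (a_1\cdots a_n)\,x\,(a_n\cdots a_1)$, which yields $\|\mathrm{W}_M^n\|\le\|M^n\|^2$.

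The main obstacle will be the matching lower estimate $\rho(M)^2 \le \rho_{\mathcal{B}(A)}(\mathrm{W}_M)$. This is the joint-spectral-radius analogue of the equality $\rho_t(\mathrm{L}_M\mathrm{R}_M)=\rho_t(M)^2$ proved for the tensor radius in Theorem \ref{mul}, and the argument will follow the same scheme: after passing to the unitisation $A^1$, any word of length $2m+1$ in $M$ can be realised as $\mathrm{L}_{a_1\cdots a_m}\mathrm{R}_{a_{m+2}\cdots a_{2m+1}}(a_{m+1})$, giving $\|M^{2m+1}\|\le \|(\mathrm{L}_M\mathrm{R}_M)^m\|\,\|M\|$ and thereby $\rho(M)^2\le\rho(\mathrm{L}_M\mathrm{R}_M)$; the delicate part is then reducing this estimate for the product family $\mathrm{L}_M\mathrm{R}_M$ to the corresponding estimate for the diagonal family $\mathrm{W}_M$, which is what is needed to match exactly the term $\rho(\mathrm{W}_M|_J)$ appearing in the statement.
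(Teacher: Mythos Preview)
Your argument tracks the paper's proof almost exactly: the same descending chain $A\supset I_1\supset I_1\cap I_2\supset\cdots\supset J$, the same appeal to Lemma~\ref{chain0} for the family $\mathrm{W}_M$, and the same use of Lemma~\ref{bi} plus Lemma~\ref{ineQuot} to bound each gap-quotient contribution by $\rho(M/I_k)^2$. The paper also declares the reverse inequality ``trivial'' for the same reasons you give.

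The one place you diverge is in your treatment of the identity $\rho(\mathrm{W}_M)=\rho(M)^2$. The paper simply writes $\rho(M)^2=\rho(\mathrm{W}_M)$ as the first step, treating it as an established fact from the authors' earlier work on joint spectral radii (see \cite{ST02,TR3}); it is not reproved here. You are right that the inequality $\rho(M)^2\le\rho(\mathrm{W}_M)$ is the nontrivial half and that the argument you sketch only yields $\rho(M)^2\le\rho(\mathrm{L}_M\mathrm{R}_M)$ for the full product family, not immediately for the diagonal $\mathrm{W}_M=\{\mathrm{L}_a\mathrm{R}_a:a\in M\}$. This reduction from $\mathrm{L}_M\mathrm{R}_M$ to $\mathrm{W}_M$ is a genuine (though known) lemma; it is not something you can read off from Theorem~\ref{mul}, whose tensor-radius statement also concerns $\mathrm{L}_M\mathrm{R}_M$ rather than $\mathrm{W}_M$. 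So your proof is complete modulo citing that background identity --- which is precisely what the paper does --- but the ``delicate part'' you flag is real and should be attributed rather than left hanging.
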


\begin{proof}
We will prove $\leq$ because the converse inequality is trivial.

For $k\leq n$, let $J_{k}=I_{1}\cap I_{2}\cap...\cap I_{k}$. Applying Lemma
\ref{chain0} to the family $\mathrm{W}=\mathrm{W}_{M}$ of operators on $A$ and
the chain $A\supset J_{1}\supset J_{2}...\supset J_{n}$, we obtain that
\begin{align*}
\rho(M)^{2}  &  =\rho(\mathrm{W}_{M})\\
&  =\max\left\{  \rho(\mathrm{W}_{M}|_{A/J_{1}}),\rho(\mathrm{W}_{M}%
|_{J_{1}/J_{2}}),...,\rho(\mathrm{W}_{M}|_{J_{n-1}/J_{n}}),\rho(\mathrm{W}%
_{M}|_{J_{n}})\right\}  .
\end{align*}
So it suffices to show that
\begin{equation}
\rho(\mathrm{W}_{M}|_{J_{k-1}/J_{k}})\leq\rho(M/I_{k})^{2} \label{ch}%
\end{equation}
for each $k$ (assuming $J_{0}=A$).

For $k=1$, the inequality (\ref{ch}) is in fact an equality:
\[
\rho(\mathrm{W}_{M}|_{A/I_{1}})=\rho(M/I_{1})^{2}.
\]
For a fixed $k$ with $1<k\leq n$, let $B=A/I_{k}$ and $q=q_{I_{k}%
}:A\rightarrow B$. Then the algebraic isomorphism $\phi$ of $I:=q(J_{k-1}%
)=(J_{k-1}+I_{k})/I_{k}$ onto the Banach algebra $C:=J_{k-1}/(I_{k}\cap
J_{k-1})=J_{k-1}/J_{k}$ allows us to supply $I$ with a new norm%
\[
\Vert x\Vert_{I}=\Vert\phi(x)\Vert_{C},
\]
and it is easy to check that $I$ is a Banach ideal of $B$ in this norm. It
follows from the definition of the norm $\Vert\cdot\Vert_{I}$ that
$\Vert\mathrm{W}_{a}|_{J_{k-1}/J_{k}}\Vert=\Vert\mathrm{W}_{q(a)}|_{I}%
\Vert_{I}$, for $a\in A$. Therefore
\begin{equation}
\rho(\mathrm{W}_{M}|_{J_{k-1}/J_{k}})=\rho_{\left(  I;\left\Vert
\cdot\right\Vert _{I}\right)  }(\mathrm{W}_{q(M)}|_{I}). \label{temp-eq}%
\end{equation}
Applying Lemma \ref{ineQuot} to the subset $q(M)$ of $B$ we get:
\[
\rho_{\left(  I;\left\Vert \cdot\right\Vert _{I}\right)  }(\mathrm{W}%
_{q(M)}|_{I})\leq\rho_{B}(q(M))^{2}=\rho(M/I_{k})^{2}.
\]
By (\ref{temp-eq}), this is a reformulation of (\ref{ch}).
\end{proof}

\begin{corollary}
\label{inv-rho} Let $A$ be a Banach algebra, let $\mathcal{F}=\left\{
{I_{1},...,I_{n}}\right\}  $ be a finite family of closed ideals of $A$, such
that $\cap_{i=1}^{n}I_{i}\subset\mathcal{R}_{\mathrm{cq}}(A)$. Then
\begin{equation}
\rho(M)=\max\left\{  {\rho(M/I_{1}),...,\rho(M/I_{n})}\right\}  ,
\label{finRho0}%
\end{equation}
for each precompact subset $M$ of $A$.
\end{corollary}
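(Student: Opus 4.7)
The plan is to reduce the problem to the case of trivial intersection by passing to the quotient algebra $B = A/J$, where $J = \cap_{i=1}^{n} I_{i}$, and then invoke Proposition \ref{finPlus} there. In $B$ the images $\widetilde{I}_{i} := I_{i}/J$ are closed ideals with $\cap_{i=1}^{n}\widetilde{I}_{i} = 0$. Applying Proposition \ref{finPlus} to the precompact set $M/J \subset B$ and the family $\{\widetilde{I}_{1},\ldots,\widetilde{I}_{n}\}$ therefore yields
\[
\rho_{B}(M/J)^{2} = \max\Bigl\{\max_{1 \leq i \leq n} \rho_{B}\bigl((M/J)/\widetilde{I}_{i}\bigr)^{2},\; \rho\bigl(\mathrm{W}_{M/J}|_{\{0\}}\bigr)\Bigr\} = \max_{i}\rho(M/I_{i})^{2},
\]
since the restriction of any operator to $\{0\}$ is zero and $(A/J)/(I_{i}/J) \cong A/I_{i}$ naturally.

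The remaining step is to identify $\rho_{A}(M)$ with $\rho_{B}(M/J)$; this is exactly where the hypothesis $J \subset \mathcal{R}_{\mathrm{cq}}(A)$ enters. The bound $\rho_{B}(M/J) \leq \rho_{A}(M)$ holds because joint spectral radii decrease under morphisms. For the reverse inequality, I would factor the standard quotient $\pi : A \to A/\mathcal{R}_{\mathrm{cq}}(A)$ through $B$ as $\pi = \phi \circ q_{J}$ (possible because $J \subset \mathcal{R}_{\mathrm{cq}}(A)$) and apply (\ref{cq}), which requires the precompactness of $M$:
\[
\rho_{A}(M) = \rho_{A/\mathcal{R}_{\mathrm{cq}}(A)}(\pi(M)) = \rho_{A/\mathcal{R}_{\mathrm{cq}}(A)}\bigl(\phi(M/J)\bigr) \leq \rho_{B}(M/J).
\]
Combining this with the first display and extracting square roots gives (\ref{finRho0}).

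There is really no substantive obstacle once one sees the right reformulation: the residual term $\rho(\mathrm{W}_{M}|_{J})$ that appears when Proposition \ref{finPlus} is applied directly in $A$ cannot be handled in $A$ itself, but it vanishes trivially after passing to $A/J$, and the passage to $A/J$ costs nothing thanks to (\ref{cq}). Thus no direct estimate of $\rho(\mathrm{W}_{M}|_{J})$ against $\max_{i}\rho(M/I_{i})^{2}$ is required, which would otherwise be the delicate point.
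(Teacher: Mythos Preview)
Your proof is correct and follows essentially the same approach as the paper's: both pass to the quotient $A/J$ with $J=\cap_i I_i$ so that Proposition~\ref{finPlus} applies with trivial residual term, then invoke the identity $\rho(M)=\rho(M/\mathcal{R}_{\mathrm{cq}}(A))$ from (\ref{cq}) together with $J\subset\mathcal{R}_{\mathrm{cq}}(A)$ to identify $\rho_A(M)$ with $\rho_{A/J}(M/J)$. The paper phrases the last step as the chain $\rho(M)=\rho(M/\mathcal{R}_{\mathrm{cq}}(A))\leq\rho(M/J)$ rather than via an explicit factorization, but the content is the same.
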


\begin{proof}
The equality (\ref{finRho0}) is a consequence of Theorem \ref{finPlus}, if
$\cap_{i=1}^{n}I_{i}=0$. It follows that in general
\begin{equation}
\rho(M/(\cap_{i=1}^{n}I_{i}))=\max\left\{  {\rho(M/I_{1}),...,\rho(M/I_{n}%
)}\right\}  . \label{finRho1}%
\end{equation}
Indeed, setting $J=\cap_{i=1}^{n}I_{i}$, $J_{i}=I_{i}/J$ and $\widetilde{M}%
=M/J$, we have that%
\[
\cap_{i=1}^{n}J_{i}=0
\]
whence $\rho(\widetilde{M})=\max_{i}\rho(\widetilde{M}/J_{i})$, and it
suffices to note that $\widetilde{M}/J_{i}$ corresponds to $M/I_{i}$ with
respect to the standard isomorphism of $A/I_{i}$ onto $(A/J)/J_{i}$.

Now, since $\cap_{i=1}^{n}I_{i}\subset\mathcal{R}_{\mathrm{cq}}(A)$ and
$\left(  \ref{cq}\right)  $ holds for precompact sets, we obtain that
\[
\rho(M)=\rho(M/\mathcal{R}_{\mathrm{cq}}(A))\leq\rho(M/\left(  \cap_{i=1}%
^{n}I_{i}\right)  )=\max\left\{  {\rho(M/I_{1}),...,\rho(M/I_{n})}\right\}  .
\]
The converse inequality is evident.
\end{proof}

\subsection{Arbitrary families of ideals with trivial intersection}

The result of Theorem \ref{spec2} does not extend to arbitrary families of
ideals. It suffices to show that the equality (\ref{rho-one}) fails in general.

\begin{example}
Let $\{e_{n}:1\leq n<\infty\}$ be an orthonormal basis in a Hilbert space $H$,
and let $A$ be the algebra of all operators on $H$ preserving the subspaces
$H_{n}=\mathrm{span}(e_{1},...,e_{n})$. Let $K_{n}=\{T\in A:T|_{H_{n}}=0\}$.
Then all $K_{n}$ are closed ideals of $A$, and
\[
\cap_{n=1}^{\infty}K_{n}=0.
\]
Let $S$ be the backward shift: $Se_{n}=e_{n-1}$, $Se_{1}=0$. Then $S\in A$ and
all elements $S/K_{n}$ are nilpotent,
\[
\rho(S/K_{n})=0
\]
while $\sigma(S)=\{\lambda\in\mathbb{C}:|\lambda|\leq1\}$ and
\[
\rho(S)=1.
\]

\end{example}

Let us first of all give a slight extension of Theorem \ref{spec2}.

\begin{proposition}
Let $A$ be an algebra, and let $\mathcal{F}=\left(  I_{\alpha}\right)
_{\alpha\in\Lambda}$ be a family of ideals of $A$. For each $\alpha\in\Lambda
$, set $J_{\alpha}=\cap_{\beta\neq\alpha}I_{\beta}$. If $\cap_{\alpha
\in\Lambda}I_{\alpha}\subset\operatorname{rad}\left(  A\right)  $ then%
\[
\sigma\left(  a\right)  =\left(  \cup_{\alpha\in\Lambda}\sigma\left(
a/I_{\alpha}\right)  \right)  \cup\left(  \cap_{\alpha\in\Lambda}\sigma\left(
a/J_{\alpha}\right)  \right)  ,
\]
for each $a\in A$
\end{proposition}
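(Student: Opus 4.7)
The plan is to prove equality of sets by separately establishing the two inclusions, with the nontrivial content being in $\subset$. For $\supset$, for each $\alpha$ the standard quotient map $q_{I_\alpha}\colon A\to A/I_\alpha$ is a surjective homomorphism, hence by Lemma \ref{sp0}(3) (or its one-sided variants) one has $\sigma(a/I_\alpha)\subset\sigma(a)$; the same argument gives $\sigma(a/J_\alpha)\subset\sigma(a)$ for every $\alpha$, and so both $\cup_\alpha\sigma(a/I_\alpha)$ and $\cap_\alpha\sigma(a/J_\alpha)$ sit inside $\sigma(a)$.

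For the nontrivial inclusion $\subset$, I would fix $\lambda\in\sigma(a)$ and assume $\lambda\notin\bigcup_{\alpha\in\Lambda}\sigma(a/I_\alpha)$; the goal is then to show that $\lambda\in\sigma(a/J_{\alpha_0})$ for every $\alpha_0\in\Lambda$. Suppose, toward a contradiction, that $\lambda\notin\sigma(a/J_{\alpha_0})$ for some $\alpha_0$. Then $a-\lambda$ is invertible both modulo $I_{\alpha_0}$ and modulo $J_{\alpha_0}$. I would now invoke precisely the two-ideal argument used in the proof of Theorem \ref{spec2}: if $a-\lambda$ is invertible modulo $I$ and modulo $J$ then it is invertible modulo $I\cap J$. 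Applying this with $I=I_{\alpha_0}$ and $J=J_{\alpha_0}$, and observing that
\[
I_{\alpha_0}\cap J_{\alpha_0}=I_{\alpha_0}\cap\bigcap_{\beta\neq\alpha_0}I_\beta=\bigcap_{\alpha\in\Lambda}I_\alpha,
\]
one obtains that $\lambda\notin\sigma\bigl(a/\bigcap_\alpha I_\alpha\bigr)$.

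To finish, I would use the hypothesis $\bigcap_\alpha I_\alpha\subset\operatorname{rad}(A)$ to conclude $\sigma\bigl(a/\bigcap_\alpha I_\alpha\bigr)=\sigma(a)$, contradicting $\lambda\in\sigma(a)$. This last reduction is the standard argument already used in the proof of Theorem \ref{sp11}(1) (and encapsulated in Lemma \ref{sp0}(4) for the left spectrum): if $(a-\lambda)b-1=x\in\operatorname{rad}(A^1)$ (using $\operatorname{rad}(A^1)=\operatorname{rad}(A)$ from (\ref{p1})) then $1+x$ is invertible in $A^1$ and hence $(a-\lambda)b(1+x)^{-1}=1$; combined with the analogous right-sided statement one gets $\lambda\notin\sigma(a)$.

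There is essentially no obstacle beyond bookkeeping: the entire proof rests on the two-ideal lemma already isolated in Theorem \ref{spec2} together with the identity $I_{\alpha_0}\cap J_{\alpha_0}=\bigcap_\alpha I_\alpha$, which is exactly what makes the ``diagonal'' set $\cap_\alpha\sigma(a/J_\alpha)$ the correct term to add to $\cup_\alpha\sigma(a/I_\alpha)$ in order to recover $\sigma(a)$. The only mildly delicate point is to remember that since $A$ need not be unital, spectra must be computed in $A^1$, and one must invoke $\operatorname{rad}(A^1)=\operatorname{rad}(A)$ to legitimize absorbing $\bigcap_\alpha I_\alpha$ into the Jacobson radical at the final step.
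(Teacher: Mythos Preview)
Your proposal is correct and follows essentially the same approach as the paper. The paper applies Theorem \ref{spec2} to the pair $I_\alpha$, $J_\alpha$ (noting $I_\alpha\cap J_\alpha=\cap_\beta I_\beta\subset\operatorname{rad}(A)$) to get $\sigma(a)=\sigma(a/I_\alpha)\cup\sigma(a/J_\alpha)$ for each $\alpha$, and then observes that if $\lambda\in\sigma(a)\setminus\cup_\alpha\sigma(a/I_\alpha)$ it must lie in every $\sigma(a/J_\alpha)$; your contrapositive formulation and explicit unwinding of the radical reduction amount to the same argument with more detail.
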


\begin{proof}
For each $\alpha\in\Lambda$, $I_{\alpha}\cap J_{\alpha}\subset
\operatorname{rad}\left(  A\right)  $, whence%
\[
\sigma\left(  a\right)  =\sigma\left(  a/I_{\alpha}\right)  \cup\sigma\left(
a/J_{\alpha}\right)
\]
by Theorem \ref{spec2}. Now if $\lambda\in\sigma\left(  a\right)  $ and
$\lambda\notin\cup_{\alpha\in\Lambda}\sigma\left(  a/I_{\alpha}\right)  $ then
$\lambda\in\sigma\left(  a/J_{\alpha}\right)  $ for every $\alpha$. Therefore
$\lambda\in\cap_{\alpha\in\Lambda}\sigma\left(  a/J_{\alpha}\right)  $.
\end{proof}

The following result shows that for scattered Banach algebras the situation is
sufficiently satisfactory.

\begin{theorem}
Let $A$ be a Banach algebra. If the spectrum of an element $a$ of $A$ is the
closure of its isolated points then $\sigma\left(  a\right)  =\overline
{\cup_{\alpha\in\Lambda}\sigma\left(  a/I_{\alpha}\right)  }$ for each family
$\mathcal{F}=\left(  I_{\alpha}\right)  _{\alpha\in\Lambda}$ of closed ideals
with $\cap_{\alpha\in\Lambda}I_{\alpha}\subset\operatorname{Rad}(A)$.
\end{theorem}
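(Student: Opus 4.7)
The plan is to establish the non-trivial inclusion $\sigma(a)\subset\overline{\bigcup_{\alpha\in\Lambda}\sigma(a/I_{\alpha})}$; the reverse inclusion is immediate from $\sigma(a/I_{\alpha})\subset\sigma(a)$ together with the closedness of $\sigma(a)$. By the hypothesis that $\sigma(a)$ is the closure of its isolated points, it suffices to show that \emph{every isolated point $\lambda$ of $\sigma(a)$ lies in $\sigma(a/I_{\alpha})$ for some $\alpha$}. Throughout I would pass to $A^{1}$ so as to avoid repeated case distinctions for non-unital $A$: spectra in $A$ and $A^{1}$ coincide by definition, $\operatorname{rad}(A^{1})=\operatorname{rad}(A)$ by (\ref{p1}), each $I_{\alpha}$ is automatically an ideal of $A^{1}$, and $\sigma_{A/I_{\alpha}}(a/I_{\alpha})=\sigma_{A^{1}/I_{\alpha}}(a/I_{\alpha})$.

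Fix an isolated point $\lambda$ of $\sigma(a)$ and form the Riesz spectral projection
\[
p=\frac{1}{2\pi i}\int_{\gamma}(z-a)^{-1}\,dz\in A^{1},
\]
where $\gamma$ is a small positively oriented Jordan curve enclosing $\lambda$ and excluding $\sigma(a)\setminus\{\lambda\}$. Standard Riesz functional calculus gives a non-zero idempotent $p$ commuting with $a$. Since no non-zero idempotent is quasinilpotent, $p\notin\operatorname{rad}(A^{1})$. From the hypothesis $\bigcap_{\alpha}I_{\alpha}\subset\operatorname{Rad}(A)=\operatorname{rad}(A^{1})$ one then extracts $\alpha\in\Lambda$ with $p\notin I_{\alpha}$.

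Argue by contradiction: suppose $\lambda\notin\sigma(a/I_{\alpha})$. Then $(z-a/I_{\alpha})^{-1}$ is holomorphic on an open neighbourhood $U$ of $\lambda$; after shrinking $\gamma$ inside $U$, the integrand $(z-a/I_{\alpha})^{-1}$ is holomorphic on the closed disk bounded by $\gamma$, so Cauchy's theorem gives
\[
\frac{1}{2\pi i}\int_{\gamma}(z-a/I_{\alpha})^{-1}\,dz=0.
\]
The quotient map $q=q_{I_{\alpha}}:A^{1}\longrightarrow A^{1}/I_{\alpha}$ is a bounded algebra homomorphism that intertwines resolvents, $q((z-a)^{-1})=(z-a/I_{\alpha})^{-1}$ for $z$ in the resolvent set of both $a$ and $a/I_{\alpha}$; combining this with boundedness of $q$ and uniform norm-convergence of the Riemann sums along the compact contour $\gamma$ yields
\[
q(p)=\frac{1}{2\pi i}\int_{\gamma}q\!\left((z-a)^{-1}\right)dz=\frac{1}{2\pi i}\int_{\gamma}(z-a/I_{\alpha})^{-1}\,dz=0,
\]
that is $p\in I_{\alpha}$, a contradiction.

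The proof is essentially routine; the only real points of care are the unitisation bookkeeping (handled uniformly by the preliminary reduction to $A^{1}$) and the justification that $q_{I_{\alpha}}$ may be pulled inside the contour integral, which is classical from continuity of $q_{I_{\alpha}}$ and uniform continuity of $z\mapsto(z-a)^{-1}$ on $\gamma$. I do not foresee a substantive obstacle: the argument is driven entirely by Riesz's projection idempotent and the fact that non-zero idempotents evade $\operatorname{Rad}(A)$.
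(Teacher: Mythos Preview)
Your proof is correct and follows essentially the same approach as the paper's: both reduce to showing that each isolated point $\lambda$ of $\sigma(a)$ lies in some $\sigma(a/I_{\alpha})$, both use the Riesz projection $p$ for $\lambda$, and both derive a contradiction from $p\in\bigcap_{\alpha}I_{\alpha}\subset\operatorname{Rad}(A)$. The only cosmetic difference is in the mechanism for proving $p/I_{\alpha}=0$ when $\lambda\notin\sigma(a/I_{\alpha})$: you push the quotient map through the contour integral and invoke Cauchy's theorem, while the paper observes that $(a-\lambda)p$ is quasinilpotent and $(a-\lambda)/I_{\alpha}$ is invertible, forcing the idempotent $p/I_{\alpha}$ to be quasinilpotent and hence zero.
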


\begin{proof}
It suffices to show that $\lambda\in\cup_{\alpha\in\Lambda}\sigma\left(
a/I_{\alpha}\right)  $ for each isolated point $\lambda\in\sigma(a)$. Assume,
to the contrary, that%
\[
\lambda\notin\sigma(a/I_{\alpha})
\]
for each $\alpha$. Let $p$ be the Riesz projection of $a$ corresponding to
$\lambda$. Then $(a-\lambda)p$ is quasinilpotent whence $(a-\lambda
)p/I_{\alpha}$ is quasinilpotent for each $\alpha$. But $(a-\lambda
)/I_{\alpha}$ is an invertible element of the algebra $A/I_{\alpha}$ that
commutes with $p/I_{\alpha}$. Therefore $p/I_{\alpha}$ is quasinilpotent. As
it is idempotent, $p/I_{\alpha}=0$ whence $p\in I_{\alpha}$. Then%
\[
p\in\cap_{\alpha\in\Lambda}I_{\alpha}\subset\operatorname{Rad}(A),
\]
a contradiction.
\end{proof}

Let us now come to consideration of the joint spectral characteristics of
subsets in $A$. Recall that the \textit{Berger-Wang spectral radius} $r(M)$ of
a bounded family $M\subset A$ is defined by
\[
r(M)=\limsup_{n\rightarrow\infty}\sup_{a\in M^{n}}\rho(a)^{1/n}.
\]

\begin{proposition}
\label{r-cond} Let $A$ be a Banach algebra, and let $\mathcal{F}$ be a family
of closed ideals such that
\[
\rho(a)=\sup_{J\in\mathcal{F}}\rho(a/J)
\]
for each $a\in A.$ Then
\begin{equation}
r(M)=\sup_{I\in\mathcal{F}}r(M/I) \label{r-bound}%
\end{equation}
for each bounded set $M\subset A$.
\end{proposition}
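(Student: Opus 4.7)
The plan is to prove the non-trivial direction $r(M) \le \sup_{I\in\mathcal{F}} r(M/I)$; the reverse follows at once from $\rho(b/J) \le \rho(b)$ applied to $b \in M^n$, which gives $v_n(M/J) \le v_n(M)$ for every $n$ and hence $r(M/J) \le r(M)$. The approach rests on a preliminary observation: for \emph{any} bounded subset $N$ of a Banach algebra,
\[
r(N) = \sup_{n \ge 1} v_n(N), \qquad v_n(N) := \sup_{b \in N^n} \rho(b)^{1/n},
\]
i.e., the $\limsup$ in the definition of the Berger-Wang radius is actually attained as a genuine supremum. This follows from the identity $\rho(b^k) = \rho(b)^k$: for any $b \in N^n$ the element $b^k$ lies in $N^{kn}$, so $v_{kn}(N)^{kn} \ge \rho(b)^k$, yielding $v_{kn}(N) \ge v_n(N)$ for all $k,n \ge 1$. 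Hence $\limsup_m v_m(N) \ge v_{n_0}(N)$ for every $n_0$, and combining with the trivial $\limsup \le \sup$ gives the claim.

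With this in hand, the proof is a straightforward exchange of suprema. For each $n \ge 1$ and each $a \in M^n \subset A$, the hypothesis applied to the element $a$ itself yields $\rho(a) = \sup_{J \in \mathcal{F}} \rho(a/J)$, so that (using $(M/J)^n = M^n/J$)
\[
v_n(M)^n = \sup_{a \in M^n} \rho(a) = \sup_{a \in M^n}\sup_{J \in \mathcal{F}} \rho(a/J) = \sup_{J \in \mathcal{F}} \sup_{b \in (M/J)^n} \rho(b) = \sup_{J\in\mathcal{F}} v_n(M/J)^n.
\]
Taking $n$th roots and then $\sup_n$ on both sides, and interchanging the two suprema on the right (legitimate since both range over arbitrary index sets), we obtain
\[
r(M) = \sup_n v_n(M) = \sup_n \sup_{J\in\mathcal{F}} v_n(M/J) = \sup_{J \in \mathcal{F}} \sup_n v_n(M/J) = \sup_{J\in\mathcal{F}} r(M/J),
\]
as desired.

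The main (and essentially only) obstacle is recognizing the preliminary lemma $r(N) = \sup_n v_n(N)$; without it one would be stuck trying to interchange $\limsup_n$ and $\sup_J$, which fails in general even for sequences of functions bounded on an arbitrary set. Once the $\limsup$ is known to be a $\sup$, the remaining argument uses nothing beyond the trivial fact that suprema commute, and in particular requires no structural hypothesis on the family $\mathcal{F}$ or on the algebra $A$ beyond the assumed spectral radius formula.
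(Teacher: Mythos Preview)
Your proof is correct and follows essentially the same route as the paper's: both rely on the identity $r(M)=\sup_{n}\sup_{a\in M^{n}}\rho(a)^{1/n}$, then substitute the hypothesis and interchange suprema. The paper simply cites this identity from \cite[(2.1)]{TR3}, whereas you supply a self-contained proof of it via $\rho(b^{k})=\rho(b)^{k}$; otherwise the arguments are identical.
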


\begin{proof}
By \cite[(2.1)]{TR3},
\[
r(M)=\sup_{n}\sup_{a\in M^{n}}\rho(a)^{1/n}.
\]
Hence by (\ref{rho-one}),
\[
r(M)=\sup_{n}\sup_{a\in M^{n}}\sup_{I\in\mathcal{F}}\rho(a/I)^{1/n}=\sup
_{I\in\mathcal{F}}\sup_{n}\sup_{a\in M^{n}}\rho(a/I)^{1/n}=\sup_{I\in
\mathcal{F}}r(M/I).
\]

\end{proof}

\begin{corollary}
\label{r-fin}Let $A$ be a Banach algebra, and let $\mathcal{F}$ be a finite
family of closed ideals. If $\cap\{I:I\in\mathcal{F}\}\subset
\operatorname{Rad}(A)$ then
\[
r(M)=\sup_{I\in\mathcal{F}}r(M/I)
\]
for each bounded set $M\subset A$.
\end{corollary}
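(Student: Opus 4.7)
The plan is to deduce this directly from Proposition \ref{r-cond}, whose hypothesis is precisely the individual spectral radius formula
\[
\rho(a)=\sup_{J\in\mathcal{F}}\rho(a/J)\qquad(a\in A).
\]
So the only thing I need to verify is that this hypothesis is fulfilled under the present assumption that $\mathcal{F}$ is finite and $\bigcap_{I\in\mathcal{F}}I\subset\operatorname{Rad}(A)$.

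First, I would invoke Theorem \ref{spec2}: since $\mathcal{F}=\{I_{1},\dots,I_{n}\}$ is finite and $\bigcap_{k=1}^{n}I_{k}\subset\operatorname{Rad}(A)$, one has
\[
\sigma(a)=\bigcup_{k=1}^{n}\sigma(a/I_{k})
\]
for every $a\in A$. Taking the supremum of $|\lambda|$ over this union yields
\[
\rho(a)=\max_{1\le k\le n}\rho(a/I_{k})=\sup_{J\in\mathcal{F}}\rho(a/J),
\]
which is exactly the identity (\ref{rho-one}) already recorded right after Theorem \ref{spec2}.

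With this in hand, Proposition \ref{r-cond} applies verbatim to the family $\mathcal{F}$, giving $r(M)=\sup_{I\in\mathcal{F}}r(M/I)$ for every bounded subset $M\subset A$. There is no real obstacle here: the corollary is essentially a one-line specialization of Proposition \ref{r-cond} using Theorem \ref{spec2} as the source of the needed scalar-spectral-radius identity. The only point worth flagging is that finiteness of $\mathcal{F}$ is used exactly once, in the application of Theorem \ref{spec2} (where the inductive argument on the number of ideals from that theorem's proof genuinely requires the family to be finite), and no further finiteness is needed in the passage from $\rho$ to $r$, since that passage is purely formal via the definition $r(M)=\sup_{n}\sup_{a\in M^{n}}\rho(a)^{1/n}$.
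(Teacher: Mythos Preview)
Your proposal is correct and matches the paper's intended argument: the corollary is stated immediately after Proposition~\ref{r-cond} with no separate proof, and the paper has already recorded (\ref{rho-one}) as a consequence of Theorem~\ref{spec2}, so the deduction is exactly the one-line specialization you describe.
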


Now we consider the joint spectral radius $\rho(M)$.

\begin{proposition}
\label{finInt} Let $A$ be a Banach algebra, let $\mathcal{F}$ be a family of
closed ideals of $A$, and let $\mathcal{E}$ be the set of all finite
intersections of ideals from $\mathcal{F}$. Then
\[
\rho(M)^{2}=\max\left\{  \max_{I\in\mathcal{F}}\rho(M/I)^{2},\inf
_{J\in\mathcal{E}}\rho(\mathrm{W}_{M}|_{J})\right\}
\]
for every bounded set $M$ in $A$.
\end{proposition}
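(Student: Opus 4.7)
The plan is to reduce the statement to a direct application of Proposition~\ref{finPlus} and the monotonicity of the max-operation. Throughout, assume $\mathcal{F}$ is non-empty (otherwise both sides are vacuous); consequently $\mathcal{F}\subset\mathcal{E}$. Set $L:=\sup_{I\in\mathcal{F}}\rho(M/I)^{2}$ and $K:=\inf_{J\in\mathcal{E}}\rho(\mathrm{W}_{M}|_{J})$.

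For the inequality $\rho(M)^{2}\geq\max\{L,K\}$, the estimate $\rho(M)^{2}\geq L$ is immediate because the joint spectral radius does not increase under quotient maps. For $\rho(M)^{2}\geq K$, fix an arbitrary $J\in\mathcal{E}$ and write $J=\bigcap_{i=1}^{n}I_{i}$ with $I_{1},\ldots,I_{n}\in\mathcal{F}$; Proposition~\ref{finPlus} applied to this finite subfamily yields
\[
\rho(M)^{2}=\max\!\Bigl\{\max_{1\leq i\leq n}\rho(M/I_{i})^{2},\ \rho(\mathrm{W}_{M}|_{J})\Bigr\}\geq\rho(\mathrm{W}_{M}|_{J}),
\]
and taking the infimum over $J\in\mathcal{E}$ gives $\rho(M)^{2}\geq K$.

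For the converse $\rho(M)^{2}\leq\max\{L,K\}$, apply Proposition~\ref{finPlus} again to each $J=\bigcap_{i=1}^{n}I_{i}\in\mathcal{E}$ and use $\rho(M/I_{i})^{2}\leq L$ to obtain
\[
\rho(M)^{2}\leq\max\{L,\ \rho(\mathrm{W}_{M}|_{J})\}.
\]
Taking the infimum over $J\in\mathcal{E}$ reduces the problem to the elementary identity
\[
\inf_{J\in\mathcal{E}}\max\{L,\ \rho(\mathrm{W}_{M}|_{J})\}=\max\{L,\ K\},
\]
which follows at once from the monotonicity of $t\mapsto\max\{L,t\}$: the left side is clearly $\geq\max\{L,K\}$ because each term is, and the reverse inequality is obtained by selecting, for any $\varepsilon>0$, some $J_{\varepsilon}\in\mathcal{E}$ with $\rho(\mathrm{W}_{M}|_{J_{\varepsilon}})\leq K+\varepsilon$ and letting $\varepsilon\downarrow 0$.

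No serious obstacle is expected: the content of the argument is entirely packaged in Proposition~\ref{finPlus}, so the whole matter reduces to bookkeeping of the index sets and the $\max$-$\inf$ interchange. The only point warranting care is that $\mathcal{E}$ consists of \emph{finite} intersections --- this is exactly what makes Proposition~\ref{finPlus} applicable to each individual $J\in\mathcal{E}$ --- and that the quantity $\rho(\mathrm{W}_{M}|_{J})$ refers to the joint spectral radius of the restrictions to the closed invariant subspace $J$, so all operators in $\mathrm{W}_{M}|_{J}$ are well-defined and uniformly bounded.
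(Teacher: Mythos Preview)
Your proof is correct and follows essentially the same route as the paper: both reduce the statement entirely to Proposition~\ref{finPlus} applied to an arbitrary finite intersection $J\in\mathcal{E}$. The only cosmetic difference is that the paper argues by a one-line case distinction (if the infimum is strictly below $\rho(M)^{2}$, pick a witnessing $J$ and read off the conclusion from Proposition~\ref{finPlus}), whereas you take the infimum over all $J$ and invoke the elementary $\max$--$\inf$ interchange; the content is identical.
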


\begin{proof}
Only the inequality $\leq$ should be proved. If $\inf_{J\in\mathcal{E}}%
\rho(\mathrm{W}_{M}|_{J})<\rho(M)^{2}$ then there are ideals $I_{1}%
,...,I_{n}\in\mathcal{F}$ with
\[
\rho(\mathrm{W}_{M}|_{J})<\rho(M)^{2}%
\]
where $J=I_{1}\cap I_{2}\cap\cdots\cap I_{n}$. By Proposition \ref{finPlus},
\[
\rho(M)^{2}=\max\left\{  \max_{1\leq i\leq n}{\rho(M/I_{i})^{2}}%
,\rho(\mathrm{W}_{M}|J)\right\}  =\max_{1\leq i\leq n}\rho(M/I_{i})^{2}%
\leq\max_{I\in\mathcal{F}}\rho(M/I)^{2}%
\]
and we are done.
\end{proof}

Now we shall estimate the number $\inf_{J\in\mathcal{E}}\rho(W_{M}|J)\}$. Let
$\mathcal{E}$ be a family of closed subspaces of a Banach space $X$, and let
\begin{align*}
\operatorname{Alg}(\mathcal{E})  &  =\{T\in\mathcal{B}(X):TY\subset Y\text{
for all }Y\in\mathcal{E}\},\\
\left\Vert T\right\Vert _{\mathcal{E}}  &  =\inf\{\Vert T|_{Y}\Vert
:Y\in\mathcal{E}\}\text{ for any }T\in\operatorname{Alg}(\mathcal{E}),\\
\ker\mathcal{E}  &  =\{T\in\operatorname{Alg}(\mathcal{E}):\left\Vert
T\right\Vert _{\mathcal{E}}=0\}.
\end{align*}
For a bounded set $N\subset$ $\operatorname{Alg}(\mathcal{E})$, we write
\[
\left\Vert N\right\Vert _{\mathcal{E}}=\sup_{T\in N}\left\Vert T\right\Vert
_{\mathcal{E}}.
\]

\begin{lemma}
\label{decr} Let $X$ be a Banach space, and let $\mathcal{E}$ be a family of
closed subspaces of $X$, closed under finite intersections and satisfying the
condition $\cap_{Y\in\mathcal{E}}Y=0$. Then

\begin{enumerate}
\item[(1$_{e}$)] $\left\Vert \cdot\right\Vert _{\mathcal{E}}$ is a seminorm on
$\operatorname{Alg}(\mathcal{E})$;

\item[(2$_{e}$)] $\left\Vert TS\right\Vert _{\mathcal{E}}\leq\left\Vert
T\right\Vert _{\mathcal{E}}\left\Vert S\right\Vert _{\mathcal{E}}$ for any
$T,S\in\operatorname{Alg}(\mathcal{E})$;

\item[(3$_{e}$)] $\left\Vert T\right\Vert _{\mathcal{E}}\leq\Vert T\Vert$ for
all $T\in\operatorname{Alg}(\mathcal{E})$;

\item[(4$_{e}$)] $\left\Vert T\right\Vert _{\mathcal{E}}=0$ for each operator
$T\in\operatorname{Alg}(\mathcal{E})$ which is compact on some $Y\in
\mathcal{E}$.
\end{enumerate}
\end{lemma}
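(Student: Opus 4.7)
The plan is to verify the four properties in turn. Properties (1), (2), (3) will be quick consequences of the fact that $\mathcal{E}$ is closed under finite intersections (which allows the infimum in the definition of $\|\cdot\|_{\mathcal{E}}$ to be "realized jointly" on a single common subspace). Property (4) is the substantive one and will rely on the hypothesis $\cap_{Y\in\mathcal{E}}Y=0$ together with compactness.

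For (1)--(3), I would use the following trick: given $Y_1,Y_2\in\mathcal{E}$, set $Y=Y_1\cap Y_2\in\mathcal{E}$; then $\|R|_Y\|\leq \|R|_{Y_i}\|$ for $i=1,2$ and any $R\in\operatorname{Alg}(\mathcal{E})$. For the triangle inequality, $\|(T+S)|_Y\|\leq\|T|_Y\|+\|S|_Y\|\leq\|T|_{Y_1}\|+\|S|_{Y_2}\|$, and taking infima over $Y_1,Y_2$ independently yields $\|T+S\|_{\mathcal{E}}\leq\|T\|_{\mathcal{E}}+\|S\|_{\mathcal{E}}$; homogeneity is immediate. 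Submultiplicativity follows by the same device: $\|TS|_Y\|\leq\|T|_Y\|\,\|S|_Y\|\leq\|T|_{Y_1}\|\,\|S|_{Y_2}\|$. Part (3) is trivial since $X\in\operatorname{Alg}(\mathcal{E})$ would give $\|T|_X\|=\|T\|$; even without $X\in\mathcal{E}$, each $\|T|_Y\|\leq\|T\|$.

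For (4), suppose $T\in\operatorname{Alg}(\mathcal{E})$ is compact on some $Y_0\in\mathcal{E}$. First I would reduce to subspaces contained in $Y_0$: the subfamily $\mathcal{E}_0=\{Y_0\cap Y : Y\in\mathcal{E}\}\subset\mathcal{E}$ is still closed under finite intersections, has intersection $Y_0\cap\bigcap_{Y\in\mathcal{E}}Y=0$, and $\inf_{Z\in\mathcal{E}_0}\|T|_Z\|\geq\|T\|_{\mathcal{E}}$, so it suffices to show this infimum is zero. Order $\mathcal{E}_0$ by reverse inclusion, making it a directed set. The key step, and the main obstacle, is:

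\emph{Claim.} $\lim_{Z\in\mathcal{E}_0}\|T|_Z\|=0$.

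Assume otherwise; then along some cofinal subnet there exist $y_\alpha\in Z_\alpha$ with $\|y_\alpha\|\leq 1$ and $\|Ty_\alpha\|\geq\delta>0$. Since $y_\alpha\in Y_0$ and $T|_{Y_0}$ is compact, $(Ty_\alpha)$ has a convergent subnet with limit $z$ satisfying $\|z\|\geq\delta$. Here the crucial observation is that $T$ preserves each $Z\in\mathcal{E}_0$, so $Ty_\alpha\in Z_\alpha$; by directedness, for any fixed $Z\in\mathcal{E}_0$ we eventually have $Z_\alpha\subset Z$, hence $Ty_\alpha\in Z$ eventually, and since $Z$ is closed, $z\in Z$. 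This holds for every $Z\in\mathcal{E}_0$, so $z\in\bigcap_{Z\in\mathcal{E}_0}Z=0$, contradicting $\|z\|\geq\delta$. The claim, and hence (4), follows. The subtlety to watch carefully is that both (a) $T$-invariance of the $Z_\alpha$ is what allows the conclusion $Ty_\alpha\in Z_\alpha$, and (b) compactness of $T|_{Y_0}$ (not just of $T$ on all of $X$) is what produces the convergent subnet inside $Y_0$; I would state these explicitly.
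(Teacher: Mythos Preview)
Your proof is correct and follows essentially the same approach as the paper's. For $(1_e)$--$(3_e)$ the paper likewise reduces to the inequality $\|T|_{Y_1\cap Y_2}\|\leq\min\{\|T|_{Y_1}\|,\|T|_{Y_2}\|\}$; for $(4_e)$ the paper packages the same compactness idea as a finite-intersection-property argument (the sets $\{\zeta\in\overline{T((Z\cap Y_0)_\odot)}:\|\zeta\|\geq t\}$ form a centered family of compact subsets of $Y_0$ whose intersection must lie in $\cap_Z(Z\cap Y_0)=0$), which is equivalent to your subnet extraction.
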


\begin{proof}
$\left(  1_{e}\right)  $ and $\left(  2_{e}\right)  $ follow from the fact
that $\Vert T|_{Y_{1}\cap Y_{2}}\Vert\leq\min\{\Vert T|_{Y_{1}}\Vert,\Vert
T|_{Y_{2}}\Vert\}$,

$\left(  3_{e}\right)  $ is evident.

$\left(  4_{e}\right)  $ Let $T\in\operatorname{Alg}(\mathcal{E})$ be compact
on $Y\in\mathcal{E}$. Let $Z_{\odot}$ be the closed unit ball of
$Z\in\mathcal{E}$, and $K_{Z}=\overline{T(Z_{\odot})}$. If $\left\Vert
T\right\Vert _{\mathcal{E}}>0$, choose a number $t$ such that $0<t<\left\Vert
T\right\Vert _{\mathcal{E}}$. For each $Z\in\mathcal{E}$, let $D_{Z}%
=\{\zeta\in K_{Z}:\Vert\zeta\Vert\geq t\}$. Then $\left(  D_{Z\cap Y}\right)
_{Z\in\mathcal{E}}$ is a centered family of compact sets, so there is an
element $z$ in their intersection. It is non-zero and belongs to $\cap\{Y\cap
Z:Z\in\mathcal{E}\}=0$, a contradiction.
\end{proof}

It follows from $\left(  3_{e}\right)  $ that $\left\Vert \cdot\right\Vert
_{\mathcal{E}}$ is continuous with respect to $\Vert\cdot\Vert$ on
$\operatorname{Alg}(\mathcal{E})$. This together with $\left(  2_{e}\right)  $
yield that $\ker\mathcal{E}$ is a closed ideal of $\operatorname{Alg}%
(\mathcal{E})$.

We return to the conditions of Proposition \ref{finInt}. Let $A$ be a Banach
algebra, and let $J$ be a closed ideal of $A$. Let $\mathbb{K}_{J}\left(
A\right)  $ be the set of all closed ideals $I$ of $A$ such that $I$ is
generated by a compact element of $J$. (Recall that $a$ is a \textit{compact
element} of $A$ if $\mathrm{W}_{a}$ is a compact operator on $A$.) Set
\[
\mathbb{K}_{\mathcal{E}}\left(  A\right)  =\cup_{J\in\mathcal{E}}%
\mathbb{K}_{J}\left(  A\right)  .
\]

\begin{lemma}
\label{bik}Let $A$ be a Banach algebra, let $\mathcal{F}$ be a family of
closed ideals of $A$ with zero intersection, and let $\mathcal{E}$ be the set
of all finite intersections of ideals from $\mathcal{F}$. If $K\in
\mathbb{K}_{\mathcal{E}}\left(  A\right)  $ then $\mathrm{L}_{K}\mathrm{R}%
_{K}\subset\ker\mathcal{E}$ and
\[
\inf_{J\in\mathcal{E}}\rho(\mathrm{W}_{M}|_{J})\leq\rho(M)\inf_{K\in
\mathbb{K}_{\mathcal{E}}\left(  A\right)  }\rho(M/K)
\]
for every bounded set $M$ in $A$.
\end{lemma}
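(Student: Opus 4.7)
The plan is to verify the two assertions separately, the first via the compactness criterion in Lemma~\ref{decr}(4) and the second via a decomposition modulo $K$ coupled with a sup-inf exchange.

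For $\mathrm{L}_{K}\mathrm{R}_{K}\subset\ker\mathcal{E}$, the hypothesis $K\in\mathbb{K}_{\mathcal{E}}(A)$ gives $K=\overline{A^{1}kA^{1}}$ for a compact element $k$ belonging to some $J_{0}\in\mathcal{E}$. Joint continuity of $(x,y)\longmapsto\mathrm{L}_{x}\mathrm{R}_{y}$ in operator norm reduces the claim to simple tensors $x=akb$, $y=ckd$. For these, commutativity of every $\mathrm{L}_{\cdot}$ with every $\mathrm{R}_{\cdot}$ yields the rearrangement $\mathrm{L}_{akb}\mathrm{R}_{ckd}=\mathrm{L}_{a}\mathrm{R}_{d}\mathrm{W}_{k}\mathrm{L}_{b}\mathrm{R}_{c}$, which is compact on $A$ because $\mathrm{W}_{k}$ is; hence $\mathrm{L}_{x}\mathrm{R}_{y}$ is compact on $A$ for all $x,y\in K$. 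Each $Y\in\mathcal{E}$ is a closed ideal of $A$, hence invariant for every such $\mathrm{L}_{x}\mathrm{R}_{y}$, so the restriction stays compact. Since $\mathcal{F}\subset\mathcal{E}$ and $\cap\mathcal{F}=0$ one has $\cap\mathcal{E}=0$, and assertion~(4) of Lemma~\ref{decr} delivers $\mathrm{L}_{x}\mathrm{R}_{y}\in\ker\mathcal{E}$.

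For the inequality I fix $K\in\mathbb{K}_{\mathcal{E}}(A)$, treat $M$ as precompact so that the sup-inf exchange used below is available, and set $\alpha_{n}=\sup_{T\in\mathrm{W}_{M}^{n}}\|T\|_{\mathcal{E}}$. The first step is the identity $\inf_{J\in\mathcal{E}}\|\mathrm{W}_{M}^{n}|_{J}\|=\alpha_{n}$: norm-continuity of $a\longmapsto\mathrm{W}_{a}$ together with precompactness of $M$ makes $\mathrm{W}_{M}^{n}$ precompact in $\mathcal{B}(A)$, so I cover it by finitely many $\varepsilon$-balls about operators $T_{1},\dots,T_{m}$, select $Y_{i}\in\mathcal{E}$ with $\|T_{i}|_{Y_{i}}\|\leq\|T_{i}\|_{\mathcal{E}}+\varepsilon$, and take $Y=Y_{1}\cap\cdots\cap Y_{m}\in\mathcal{E}$; then $\|\mathrm{W}_{M}^{n}|_{Y}\|\leq\alpha_{n}+2\varepsilon$. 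Submultiplicativity of $\|\cdot\|_{\mathcal{E}}$ from Lemma~\ref{decr}(2) makes $\alpha_{n}^{1/n}$ decrease to its infimum, and combining the two facts produces $\inf_{J}\rho(\mathrm{W}_{M}|_{J})=\lim_{n}\alpha_{n}^{1/n}$.

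It remains to show $\lim_{n}\alpha_{n}^{1/n}\leq\rho(M)\rho(M/K)$. For each tuple $(a_{1},\dots,a_{n})\in M^{n}$ I rewrite $\mathrm{W}_{a_{1}}\cdots\mathrm{W}_{a_{n}}=\mathrm{L}_{u}\mathrm{R}_{v}$ with $u=a_{1}\cdots a_{n}$, $v=a_{n}\cdots a_{1}$, and decompose $u=\tilde{u}+c_{u}$, $v=\tilde{v}+c_{v}$ with $c_{u},c_{v}\in K$, $\|\tilde{u}\|\leq\|u/K\|+\varepsilon$, and $\|\tilde{v}\|\leq\|v/K\|+\varepsilon$. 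Expanding $\mathrm{L}_{u}\mathrm{R}_{v}=\mathrm{L}_{\tilde{u}}\mathrm{R}_{\tilde{v}}+\mathrm{L}_{\tilde{u}}\mathrm{R}_{c_{v}}+\mathrm{L}_{c_{u}}\mathrm{R}_{\tilde{v}}+\mathrm{L}_{c_{u}}\mathrm{R}_{c_{v}}$, the last summand lies in $\ker\mathcal{E}$ by the first part of the lemma, so subadditivity of $\|\cdot\|_{\mathcal{E}}$ and the bound $\|\cdot\|_{\mathcal{E}}\leq\|\cdot\|$ give $\|\mathrm{L}_{u}\mathrm{R}_{v}\|_{\mathcal{E}}\leq\|\tilde{u}\|\|\tilde{v}\|+\|\tilde{u}\|\|c_{v}\|+\|c_{u}\|\|\tilde{v}\|$. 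Taking the supremum over tuples controls $\|\tilde{u}\|,\|\tilde{v}\|$ by $\|(M/K)^{n}\|+\varepsilon$ and $\|c_{u}\|,\|c_{v}\|$ by $\|M^{n}\|+\|(M/K)^{n}\|+\varepsilon$, and the limits $\|(M/K)^{n}\|^{1/n}\to\rho(M/K)$ and $\|M^{n}\|^{1/n}\to\rho(M)$ then give $\lim_{n}\alpha_{n}^{1/n}\leq\rho(M)\rho(M/K)$. Taking the infimum over $K\in\mathbb{K}_{\mathcal{E}}(A)$ completes the proof. The main obstacle is the sup-inf interchange in the middle step, which rests essentially on precompactness of $M$ together with the directed (finite-intersection) structure of $\mathcal{E}$.
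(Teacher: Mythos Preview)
Your argument follows the paper's route closely: part one via the factorisation through $\mathrm{W}_{k}$ and Lemma~\ref{decr}, part two via a decomposition modulo $K$ combined with the seminorm $\|\cdot\|_{\mathcal{E}}$. Two small points are worth noting.

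In part one you assert that $\mathrm{L}_{x}\mathrm{R}_{y}$ is compact on $A$ because $\mathrm{W}_{k}$ is. By the paper's convention a compact element of $J_{0}$ only guarantees that $\mathrm{W}_{k}|_{J_{0}}$ is compact, not $\mathrm{W}_{k}$ on all of $A$. Your factorisation still yields compactness of $\mathrm{L}_{x}\mathrm{R}_{y}|_{J_{0}}$ (every factor preserves the ideal $J_{0}$), and that is exactly what Lemma~\ref{decr}(4) needs with $Y=J_{0}\in\mathcal{E}$; the paper instead first gets $\|\mathrm{W}_{k}\|_{\mathcal{E}}=0$ from Lemma~\ref{decr}(4) and then deduces $\|\mathrm{L}_{x}\mathrm{R}_{y}\|_{\mathcal{E}}=0$ via the submultiplicativity in Lemma~\ref{decr}(2). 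The two variants are equivalent.

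In part two you explicitly restrict to precompact $M$ in order to justify the sup--inf exchange $\inf_{J}\|\mathrm{W}_{M}^{\,n}|_{J}\|=\sup_{T\in\mathrm{W}_{M}^{\,n}}\|T\|_{\mathcal{E}}$, and your finite-net argument using the down-directedness of $\mathcal{E}$ is correct. The paper carries out the very same exchange --- it passes from $\|\mathrm{W}_{M^{n}}\|_{\mathcal{E}}\le 5\|M^{n}/K\|\,\|M^{n}\|$ to the existence of $J\in\mathcal{E}$ with $\|\mathrm{W}_{M^{n}}|_{J}\|\le 6\|M^{n}/K\|\,\|M^{n}\|$ --- but asserts it without comment while stating the lemma for arbitrary bounded $M$. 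Your restriction is therefore not a defect of your proof relative to the paper's; you have simply made explicit a hypothesis that both arguments use, and precompactness is all that the subsequent applications (Theorem~\ref{inv-rho0} onward) require.
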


\begin{proof}
Let $L\in\mathcal{E}$, let $a$ be a compact element of $L$, $I$ the ideal of
$A$ generated by $a$, and $K=\overline{I}$. Then $\mathrm{W}_{a}%
\in\operatorname{Alg}(\mathcal{E})$ and it is compact on $L$. By Lemma
\ref{decr}, $\left\Vert \mathrm{W}_{a}\right\Vert _{\mathcal{E}}=0$. Note that
the operator $\mathrm{L}_{x}\mathrm{R}_{y}$ for any $x,y\in I$ is represented
as a finite sum $\sum\mathrm{L}_{u_{i}}\mathrm{R}_{v_{i}}\mathrm{W}%
_{a}\mathrm{L}_{s_{i}}\mathrm{R}_{t_{i}}$ for some $u_{i},v_{i},s_{i},t_{i}\in
A^{1}$ whence
\[
\left\Vert \mathrm{L}_{x}\mathrm{R}_{y}\right\Vert _{\mathcal{E}}=\left\Vert
\sum\mathrm{L}_{u_{i}}\mathrm{R}_{v_{i}}\mathrm{W}_{a}\mathrm{L}_{s_{i}%
}\mathrm{R}_{t_{i}}\right\Vert _{\mathcal{E}}\leq\sum\left\Vert \mathrm{L}%
_{u_{i}}\mathrm{R}_{v_{i}}\right\Vert _{\mathcal{E}}\left\Vert \mathrm{W}%
_{a}\right\Vert _{\mathcal{E}}\left\Vert \mathrm{L}_{s_{i}}\mathrm{R}_{t_{i}%
}\right\Vert _{\mathcal{E}}=0.
\]
So $\mathrm{L}_{K}\mathrm{R}_{K}\subset\ker\mathcal{E}$.

Let $M$ be a bounded set in $A$. Denote $\Vert M/K\Vert$ by $d$. For $a,b\in
M$ and $\varepsilon>0$ choose $u,v\in K$ with
\[
\Vert a-u\Vert\leq d+\varepsilon\text{ and }\Vert b-v\Vert\leq d+\varepsilon.
\]
Then $\left\Vert \mathrm{L}_{u}\mathrm{R}_{v}\right\Vert _{\mathcal{E}}=0$
whence
\begin{align*}
\left\Vert \mathrm{L}_{a}\mathrm{R}_{b}\right\Vert _{\mathcal{E}}  &
=\left\Vert \mathrm{L}_{u}\mathrm{R}_{v}+\mathrm{L}_{u}\mathrm{R}%
_{b-v}+\mathrm{L}_{a-u}\mathrm{R}_{v}+\mathrm{L}_{a-u}\mathrm{R}%
_{b-v}\right\Vert _{\mathcal{E}}\\
&  \leq\left\Vert \mathrm{L}_{u}\mathrm{R}_{b-v}\right\Vert _{\mathcal{E}%
}+\left\Vert \mathrm{L}_{a-u}\mathrm{R}_{v}\right\Vert _{\mathcal{E}%
}+\left\Vert \mathrm{L}_{a-u}\mathrm{R}_{b-v}\right\Vert _{\mathcal{E}}\\
&  \leq\Vert u\Vert(d+\varepsilon)+\Vert v\Vert(d+\varepsilon)+(d+\varepsilon
)^{2}\\
&  \leq(\Vert a\Vert+d+\varepsilon)(d+\varepsilon)+(\Vert b\Vert
+d+\varepsilon)(d+\varepsilon)+(d+\varepsilon)^{2}.
\end{align*}
Taking $\varepsilon\rightarrow0$ we get that $\left\Vert \mathrm{L}%
_{a}\mathrm{R}_{b}\right\Vert _{\mathcal{E}}\leq d(\Vert a\Vert+\Vert
b\Vert+3d)\leq5\Vert M/K\Vert\Vert M\Vert$. Therefore
\begin{equation}
\left\Vert \mathrm{W}_{M}\right\Vert _{\mathcal{E}}\leq5\Vert M/K\Vert\Vert
M\Vert. \label{eFq}%
\end{equation}
Changing $M$ by $M^{n}$ in (\ref{eFq}) we have $\left\Vert \mathrm{W}_{M^{n}%
}\right\Vert _{\mathcal{E}}\leq5\Vert M^{n}/K\Vert\Vert M^{n}\Vert$.
Therefore, there is $J\in\mathcal{E}$ with%
\[
\Vert\mathrm{W}_{M^{n}}|_{J}\Vert\leq6\Vert M^{n}/K\Vert\Vert M^{n}\Vert
\]
whence $\rho(\mathrm{W}_{M}|_{J})\leq\Vert\mathrm{W}_{M^{n}}|_{J}\Vert
^{1/n}\leq6^{1/n}\Vert M^{n}/K\Vert^{1/n}\Vert M^{n}\Vert^{1/n}$. Thus
\[
\inf_{J\in\mathcal{E}}\rho(\mathrm{W}_{M}|_{J})\leq\inf_{n}\left(
6^{1/n}\Vert M^{n}/K\Vert^{1/n}\Vert M^{n}\Vert^{1/n}\right)  =\rho
(M/K)\rho(M)
\]
for any $K\in\mathbb{K}\left(  A\right)  .$ This is what we need.
\end{proof}

\begin{theorem}
\label{inv-rho0} Let $A$ be a Banach algebra, let $\mathcal{F}$ be a family of
closed ideals of $A$ such that $\cap_{I\in\mathcal{F}}I=0$, and let
$\mathcal{E}$ be the set of all finite intersections of ideals from
$\mathcal{F}$. Then, for any bounded subset $M$ of $A$,
\begin{equation}
\rho(M)=\max\left\{  \inf_{K\in\mathbb{K}_{\mathcal{E}}\left(  A\right)  }%
\rho(M/K),\max_{I\in\mathcal{F}}{\rho(M/I)}\right\}  . \label{genRho}%
\end{equation}

\end{theorem}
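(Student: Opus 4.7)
The plan is to derive $(\ref{genRho})$ as a direct synthesis of Proposition \ref{finInt} and Lemma \ref{bik}; no new machinery is needed beyond what is already established. The easy direction $\geq$ is immediate: for any closed ideal $J$ of $A$, the quotient homomorphism $A\longrightarrow A/J$ is contractive as a Banach algebra map, so $\rho(M/J)\leq\rho(M)$. Applying this both to $J=I\in\mathcal{F}$ and to $J=K\in\mathbb{K}_{\mathcal{E}}(A)$, then taking $\max$ over $I$, $\inf$ over $K$, and the maximum of the two resulting numbers, yields a quantity bounded by $\rho(M)$.

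For the reverse inequality, introduce the abbreviations
\[
\alpha=\max_{I\in\mathcal{F}}\rho(M/I),\qquad\beta=\inf_{K\in\mathbb{K}_{\mathcal{E}}(A)}\rho(M/K),\qquad\gamma=\inf_{J\in\mathcal{E}}\rho(\mathrm{W}_{M}|_{J}).
\]
Proposition \ref{finInt} yields $\rho(M)^{2}=\max\{\alpha^{2},\gamma\}$, while the displayed inequality in Lemma \ref{bik} reads $\gamma\leq\rho(M)\beta$. The case $\rho(M)=0$ is trivial, so assume $\rho(M)>0$. If the maximum in $\rho(M)^{2}=\max\{\alpha^{2},\gamma\}$ is attained at $\alpha^{2}$, then $\rho(M)=\alpha\leq\max\{\alpha,\beta\}$; otherwise $\rho(M)^{2}\leq\gamma\leq\rho(M)\beta$, and dividing by $\rho(M)$ gives $\rho(M)\leq\beta\leq\max\{\alpha,\beta\}$. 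Combined with the easy direction, this establishes $(\ref{genRho})$.

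The genuine work has already been done in the preceding two results; this theorem is a synthesis step rather than a new combinatorial argument. The principal subtlety worth flagging is that the hypothesis $\cap_{I\in\mathcal{F}}I=0$ enters only through Lemma \ref{bik}, where via Lemma \ref{decr} it guarantees that the seminorm $\left\Vert\cdot\right\Vert_{\mathcal{E}}$ vanishes on every operator $\mathrm{W}_{a}$ with $a$ a compact element of some $J\in\mathcal{E}$; this vanishing is precisely what propagates to the bound $\gamma\leq\rho(M)\beta$ on which the decisive case of the argument rests. As minor bookkeeping, $\mathbb{K}_{\mathcal{E}}(A)$ is non-empty as long as $\mathcal{F}$ is (the zero ideal is generated by the compact element $0$ of any $J\in\mathcal{E}$), so $\beta$ is well defined and automatically bounded above by $\rho(M)$.
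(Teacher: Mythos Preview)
Your proof is correct and follows essentially the same approach as the paper: both combine Proposition \ref{finInt} (giving $\rho(M)^2=\max\{\alpha^2,\gamma\}$) with Lemma \ref{bik} (giving $\gamma\le\rho(M)\beta$) and then cancel a factor of $\rho(M)$. The only cosmetic difference is that the paper absorbs your case split into the single chain $\rho(M)^2\le\max\{\rho(M)\beta,\alpha^2\}\le\rho(M)\max\{\beta,\alpha\}$, using $\alpha\le\rho(M)$ for the second step.
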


\begin{proof}
Using the result of Lemma \ref{bik}, we get from Proposition \ref{finInt}
that
\begin{align*}
\rho(M)^{2}  &  \leq\max\left\{  \rho(M)\inf_{K\in\mathbb{K}_{\mathcal{E}%
}\left(  A\right)  }\rho(M/K),\max_{I\in\mathcal{F}}{\rho(M/I)}^{2}\right\} \\
&  \leq\rho(M)\max\left\{  \inf_{K\in\mathbb{K}_{\mathcal{E}}\left(  A\right)
}\rho(M/K),\max_{I\in\mathcal{F}}{\rho(M/I)}\right\}
\end{align*}
whence we obtain the inequality $\leq$. The opposite inequality is trivial.
\end{proof}

\subsection{The joint spectral radius and primitive ideals}

We already mentioned and used the fact that the equality (\ref{rho-one}) holds
for $\mathcal{F}=\operatorname{Prim}(A)$. Since $\cap\{I:I\in
\operatorname{Prim}(A)\}=\operatorname{Rad}(A)$, it follows from Proposition
\ref{r-cond} that
\begin{equation}
r(M)=\sup_{I\in\operatorname{Prim}(A)}r(M/I)\text{ for each bounded }M\subset
A. \label{r-prim}%
\end{equation}
Since $\rho(\pi(a))\leq\rho(a/\ker\pi)$ the following well known equality
extends (\ref{rho-one}), for the case $\mathcal{F}=\operatorname{Prim}(A)$:
\begin{equation}
\rho(a)=\sup_{\pi\in\operatorname{Irr}(A)}\rho(\pi(a)),\text{ for each }a\in
A. \label{rho-one-pi}%
\end{equation}
Arguing as in the proof of Proposition \ref{r-cond} we obtain a more strong
version of (\ref{r-prim}):

\begin{proposition}
\label{3.1}Let $A$ be a Banach algebra. Then
\[
r(M)=\sup_{\pi\in\operatorname{Irr}(A)}r(\pi(M))
\]
for each bounded subset $M$ of $A$.
\end{proposition}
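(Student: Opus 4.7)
The plan is to follow exactly the pattern used in the proof of Proposition~\ref{r-cond}, with $\operatorname{Prim}(A)$ replaced by $\operatorname{Irr}(A)$ and the role of the identity $\rho(a)=\sup_{I\in\operatorname{Prim}(A)}\rho(a/I)$ taken over by the refinement (\ref{rho-one-pi}), that is, $\rho(a)=\sup_{\pi\in\operatorname{Irr}(A)}\rho(\pi(a))$ for every $a\in A$. The other ingredient is the formula
\[
r(N)=\sup_{n}\sup_{b\in N^{n}}\rho(b)^{1/n}
\]
from \cite[(2.1)]{TR3}, which I would apply both to $N=M$ and to $N=\pi(M)$ for each strictly irreducible representation $\pi$.

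First I would note that because $\pi$ is an algebra homomorphism, $\pi(M^{n})=\pi(M)^{n}$ for every $n\geq 1$. Using the two formulas above and interchanging suprema one obtains
\begin{align*}
r(M) &= \sup_{n}\sup_{a\in M^{n}}\rho(a)^{1/n}
     = \sup_{n}\sup_{a\in M^{n}}\sup_{\pi\in\operatorname{Irr}(A)}\rho(\pi(a))^{1/n}\\
     &= \sup_{\pi\in\operatorname{Irr}(A)}\sup_{n}\sup_{a\in M^{n}}\rho(\pi(a))^{1/n}
     = \sup_{\pi\in\operatorname{Irr}(A)}\sup_{n}\sup_{b\in\pi(M)^{n}}\rho(b)^{1/n}\\
     &= \sup_{\pi\in\operatorname{Irr}(A)}r(\pi(M)),
\end{align*}
which is the required equality.

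The only point that requires a brief comment is the unambiguity of $\rho(\pi(a))$: since the spectrum of an element depends only on the unitization of the algebra in which it is considered and $\pi(A)\cong A/\ker\pi$ as algebras, the value $\rho(\pi(a))$ does not depend on whether we regard $\pi(a)$ as an element of $\pi(A)$ or of any larger (operator) algebra containing it, and agrees with $\rho(a/\ker\pi)$. This justifies the use of (\ref{rho-one-pi}) in the form we need. I do not anticipate any real obstacle; the argument is essentially a mechanical transcription of Proposition~\ref{r-cond}, and the whole content of the statement is the passage from the ``primitive quotient'' viewpoint $M/I$ to the ``representation'' viewpoint $\pi(M)$, together with the harmless swap of two suprema.
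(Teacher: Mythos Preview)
Your proof is correct and is exactly the argument the paper intends: the paper says only ``Arguing as in the proof of Proposition~\ref{r-cond}'', and your computation is precisely that argument with (\ref{rho-one-pi}) substituted for (\ref{rho-one}). Your closing aside is unnecessary and slightly off --- the paper itself only claims $\rho(\pi(a))\le\rho(a/\ker\pi)$, not equality --- but this plays no role in the chain of equalities, which uses (\ref{rho-one-pi}) as a black box.
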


We are looking for the conditions that provide the validity of similar
statements for the joint spectral radius:
\begin{align}
\rho(M)  &  =\sup_{I\in\operatorname{Prim}(A)}\rho(M/I)\text{ for each
precompact }M\subset A,\label{rhorepr}\\
\rho(M)  &  =\sup_{\pi\in\operatorname{Irr}(A)}\rho(\pi(M))\text{ for each
precompact }M\subset A. \label{rhoreprPi}%
\end{align}


Recall that a bounded subset $M$ of a normed algebra $A$ is a \textit{point of
continuity} of the joint spectral radius if $\rho(M_{n})\rightarrow\rho(M)$,
for each sequence of bounded subsets $M_{n}\subset A$ that tends to $M$ in the
Hausdorff metric.

\begin{lemma}
\label{sirano} Let $A$ be a normed algebra, and let $M$ be a bounded set of
$A$ such that $\rho(M)=r(M)$. If $M$ consists of the points of continuity of
the (usual) spectral radius then $M$ is a point of continuity of the joint
spectral radius.
\end{lemma}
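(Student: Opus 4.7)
The joint spectral radius $\rho_{j}: N \mapsto \rho(N)$ is the infimum of the continuous functions $N \mapsto \|N^{n}\|^{1/n}$ on bounded subsets of $A$ with respect to the Hausdorff distance, hence upper semi-continuous. So the plan reduces to showing that if $M_{k} \to M$ in Hausdorff distance, then $\liminf_{k} \rho(M_{k}) \ge \rho(M)$.

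Using the hypothesis $\rho(M) = r(M)$ together with the elementary reformulation
\[
r(M) \;=\; \sup_{n \ge 1}\; \sup_{a \in M^{n}} \rho(a)^{1/n}
\]
(which follows from $\rho(a^{m})^{1/(nm)} = \rho(a)^{1/n}$ for $a \in M^{n}$ and monotonicity along doubling), I would fix $\varepsilon > 0$ and pick $n$ and a product $a = b_{1} \cdots b_{n} \in M^{n}$ with every $b_{j} \in M$ such that $\rho(a)^{1/n} > \rho(M) - \varepsilon$. From Hausdorff convergence I pick, for each $j$, an element $b_{j}^{(k)} \in M_{k}$ with $b_{j}^{(k)} \to b_{j}$, and set $a^{(k)} = b_{1}^{(k)} \cdots b_{n}^{(k)} \in M_{k}^{n}$. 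The elementary estimate $\rho((a^{(k)})^{m})^{1/m} \le \|M_{k}^{nm}\|^{1/m}$ gives $\rho(a^{(k)})^{1/n} \le \rho(M_{k})$, so it is enough to prove $\liminf_{k} \rho(a^{(k)}) \ge \rho(a)$.

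The hard part will be precisely this last inequality: together with the always-valid $\limsup_{k} \rho(a^{(k)}) \le \rho(a)$ it amounts to showing $\rho$ is continuous at the product $a$, whereas the hypothesis only gives continuity of $\rho$ at each factor $b_{j}$, and the set of continuity points of $\rho$ need not be closed under multiplication. To get around this I would exploit the cyclic invariance $\rho(xy) = \rho(yx)$, writing $\rho(a^{(k)}) = \rho(b_{j}^{(k)} c_{j}^{(k)})$ where $c_{j}^{(k)} = b_{j+1}^{(k)} \cdots b_{n}^{(k)} b_{1}^{(k)} \cdots b_{j-1}^{(k)} \to c_{j}$, and then replace one approximate factor at a time by its limit via the subharmonic function $\lambda \mapsto \rho\bigl((b_{j}^{(k)} + \lambda(b_{j} - b_{j}^{(k)})) c_{j}^{(k)}\bigr)$ on $\mathbb{C}$; the continuity hypothesis on $b_{j}$ should control the boundary behaviour of this function at $\lambda = 1$, and after iterating the swap over $j = 1, \ldots, n$ and a diagonal argument in $k$ this is intended to deliver $\rho(a^{(k)}) \to \rho(a)$.

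Once this is in hand, $\liminf_{k} \rho(M_{k}) \ge \rho(a)^{1/n} > \rho(M) - \varepsilon$; letting $\varepsilon \to 0$ and combining with the initial upper semi-continuity yields $\rho(M_{k}) \to \rho(M)$, so $M$ is a point of continuity of $\rho_{j}$.
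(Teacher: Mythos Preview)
Your overall strategy coincides with the paper's: use upper semicontinuity of $\rho_j$ to reduce to $\liminf_k \rho(M_k)\ge\rho(M)$, invoke $\rho(M)=r(M)=\sup_n\sup_{a\in M^n}\rho(a)^{1/n}$ to find a word $a=b_1\cdots b_n\in M^n$ with $\rho(a)^{1/n}$ close to $\rho(M)$, approximate each factor from $M_k$ to obtain $a^{(k)}\in M_k^{\,n}$, and use $\rho(a^{(k)})^{1/n}\le\rho(M_k)$. The paper runs the same argument (phrased as a contradiction after normalizing so that $\lim\rho(M_n)<1<\rho(M)$).

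At the decisive step the paper simply writes ``since $\rho$ is continuous at $T$'' and proceeds. You are right to flag this: the hypothesis only asserts continuity of $\rho$ at each element of $M$, not at products in $M^{n}$, and the set of continuity points of $\rho$ is not in general multiplicatively closed. In every application the paper makes of this lemma, $M$ sits inside an ideal of continuity points (either $\mathcal{R}_{s}^{p\ast}(A)\subset\mathcal{R}_{\overrightarrow{\rho}}(A)$ or the whole algebra), so $M^{n}$ automatically consists of continuity points and the assertion is justified; but this is tacit rather than stated in the lemma.

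Your proposed repair, however, does not close the gap. The subharmonic function
\[
\lambda\ \longmapsto\ \rho\bigl((b_j^{(k)}+\lambda(b_j-b_j^{(k)}))\,c_j^{(k)}\bigr)
\]
is not governed by continuity of $\rho$ at $b_j$: that hypothesis says $\rho(x)\to\rho(b_j)$ as $x\to b_j$, but gives no information about $\rho(xc)$ for $x$ near $b_j$ and a second factor $c$. So neither cyclic invariance nor subharmonicity lets you pin down the value at $\lambda=1$, and the factor-by-factor swap does not go through. To make the argument rigorous you need precisely what the paper uses in practice: an assumption ensuring that the product $a$ itself is a continuity point of $\rho$ (for instance, that $M$ lies in an ideal or subalgebra on which $\rho$ is continuous).
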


\begin{proof}
Let $M_{n}\to M$. Since the joint spectral radius is upper semicontinuous
\cite[Proposition 3.1]{ST00}, we have only to show that $\liminf_{n\to\infty
}\rho(M_{n}) \ge\rho(M)$.

Assume, to the contrary, that
\[
\lim_{n\rightarrow\infty}\rho(M_{n})<1<\rho(M).
\]
Since $\rho(M)=r(M)$, there are $k\in\mathbb{N}$ and $T\in M^{k}$ such that
$\rho(T)>1$. Clearly there are $T_{n}\in M_{n}^{k}$ with $T_{n}\rightarrow T$;
since $\rho$ is continuous at $T$ then $\rho(T_{n})\rightarrow\rho(T)$. But
\[
\rho(T_{n})\leq\rho(M_{n}^{k})=\rho(M_{n})^{k}<1
\]
for sufficiently big $n$, a contradiction.
\end{proof}

Recall that a normed algebra $A$ is a \textit{Berger-Wang algebra} if
$\rho(M)=r(M)$, for each precompact subset $M$ of $A$. It follows immediately
from Proposition \ref{3.1} that (\ref{rhorepr}) and (\ref{rhoreprPi}) hold for
every Berger-Wang algebra.

\begin{theorem}
\label{rhoJ} Let $A$ be a Berger-Wang Banach algebra. Then every precompact
subset of $\mathcal{R}_{s}^{p\ast}(A)$ is a point of continuity of the joint
spectral radius.
\end{theorem}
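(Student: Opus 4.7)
The plan is to combine three earlier results: the inequality $\mathcal{R}_{s}^{p\ast}\leq\mathcal{R}_{\overrightarrow{\rho}}$ established in Theorem \ref{mod-scat-r0}, the spectral-radius continuity property of $\mathcal{R}_{\overrightarrow{\rho}}$-radical ideals (Theorem \ref{pcr} and Theorem \ref{itc}), and the interpolation Lemma \ref{sirano} that bridges continuity of the one-element spectral radius and continuity of the joint spectral radius under the Berger-Wang hypothesis.

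Let $M$ be a precompact subset of $\mathcal{R}_{s}^{p\ast}(A)$. First I would verify the two hypotheses of Lemma \ref{sirano} for this $M$. The equality $\rho(M)=r(M)$ is immediate from the assumption that $A$ is a Berger-Wang algebra. For the pointwise continuity of $a\mapsto\rho(a)$ on $M$, I would argue as follows: by Theorem \ref{mod-scat-r0},
\begin{equation*}
M\subset\mathcal{R}_{s}^{p\ast}(A)\subset\mathcal{R}_{\overrightarrow{\rho}}(A).
\end{equation*}
By Theorem \ref{itc}, $\mathcal{R}_{\overrightarrow{\rho}}\in\mathcal{F}_{\mathcal{V}_{\rho}}$, so the $\mathcal{R}_{\overrightarrow{\rho}}$-radical ideal $\mathcal{R}_{\overrightarrow{\rho}}(A)$ belongs to $\mathcal{V}_{\rho}$. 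Applying Theorem \ref{pcr} to the Banach algebra $A$ and its closed ideal $\mathcal{R}_{\overrightarrow{\rho}}(A)\in\mathcal{V}_{\rho}$, every element of $\mathcal{R}_{\overrightarrow{\rho}}(A)$ is a point of continuity of the usual spectral radius in $A$. In particular, every element of $M$ has this property.

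Finally, I would invoke Lemma \ref{sirano}: since $\rho(M)=r(M)$ and each element of $M$ is a point of continuity of the usual spectral radius, $M$ is a point of continuity of the joint spectral radius in $A$. This yields the conclusion.

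Since the argument is essentially an assembly of pre-proved facts, there is no serious obstacle; the only thing worth double-checking is the applicability of Theorem \ref{pcr}, namely that $\mathcal{R}_{\overrightarrow{\rho}}(A)$ really is in the class $\mathcal{V}_{\rho}$ (so that the hypothesis of that theorem is met with $A$ in the role of both the ambient algebra and the ideal). This is ensured by Axiom 3 for the radical $\mathcal{R}_{\overrightarrow{\rho}}$, which gives $\mathcal{R}_{\overrightarrow{\rho}}(\mathcal{R}_{\overrightarrow{\rho}}(A))=\mathcal{R}_{\overrightarrow{\rho}}(A)$, together with the inclusion $\mathbf{Rad}(\mathcal{R}_{\overrightarrow{\rho}})\subset\mathcal{V}_{\rho}$ supplied by Theorem \ref{itc}.
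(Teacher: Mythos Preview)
Your proof is correct and follows essentially the same route as the paper's: the paper's one-line proof simply cites Theorem \ref{mod-scat-r0} and Lemma \ref{sirano}, leaving implicit the step you spell out via Theorems \ref{itc} and \ref{pcr} (that elements of $\mathcal{R}_{\overrightarrow{\rho}}(A)\supset\mathcal{R}_{s}^{p\ast}(A)$ are continuity points of the usual spectral radius). Your expansion of that implicit step is accurate, and the final appeal to Lemma \ref{sirano} using the Berger-Wang hypothesis is exactly what the paper intends.
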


\begin{proof}
Follows from Theorem \ref{mod-scat-r0} and Lemma \ref{sirano}.
\end{proof}


Recall that a non-necessarily Hausdorff topological space $T$ is called
\textit{quasicompact} if each its open covering contains a finite subcovering.
A function $\phi:T\longrightarrow\mathbb{R}$ is \textit{lower (upper)
semicontinuous} if for each $\lambda\in\mathbb{R}$, the set $\{t\in
T:\phi(t)\leq\lambda\}$ (respectively $\{t\in T:\phi(t)\geq\lambda\}$) is closed.

The following result is a variation of the classical Dini Theorem:

\begin{theorem}
\label{Dini} Let $f_{n}$ be a decreasing sequence of functions on a
quasicompact space $T$ pointwise converging to a function $f$. If all $f_{n}$
are upper semicontinuous then
\[
\sup_{t\in T}f_{n}(t)\rightarrow\sup_{t\in T}f(t)\text{ as }n\rightarrow
\infty.
\]

\end{theorem}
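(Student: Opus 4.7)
The plan is to argue by contradiction, converting the statement about suprema into a statement about superlevel sets, which are closed by upper semicontinuity and decreasing by monotonicity of the sequence. Set
\[
s_n = \sup_{t \in T} f_n(t), \qquad s = \sup_{t\in T} f(t).
\]
Since $f_n \geq f_{n+1} \geq f$ pointwise, the sequence $(s_n)$ is decreasing and bounded below by $s$, so $\ell := \lim_{n} s_n$ exists and $\ell \geq s$. The goal is to show $\ell = s$, so I suppose for contradiction that $\ell > s$ and pick any $\lambda \in (s,\ell)$.

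For each $n$, introduce
\[
F_n = \{t \in T : f_n(t) \geq \lambda\}.
\]
Upper semicontinuity of $f_n$ makes $F_n$ closed in $T$. Since $s_n \geq \ell > \lambda$, by definition of supremum there is a point where $f_n$ exceeds $\lambda$, so $F_n$ is nonempty. Moreover the inequality $f_n \geq f_{n+1}$ implies $F_1 \supseteq F_2 \supseteq \cdots$, so $(F_n)$ is a decreasing chain of nonempty closed subsets.

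Now I invoke quasicompactness in its finite-intersection-property form: in a quasicompact space, any family of closed sets for which every finite subfamily has nonempty intersection must itself have nonempty total intersection. The family $(F_n)$ trivially satisfies the finite intersection property (any finite subfamily intersects in its smallest member, which is nonempty), so $\bigcap_n F_n \neq \varnothing$. Choose $t^{*}$ in this intersection; then $f_n(t^{*}) \geq \lambda$ for every $n$, and pointwise convergence $f_n \to f$ gives
\[
f(t^{*}) = \lim_{n\to\infty} f_n(t^{*}) \geq \lambda > s = \sup_{t \in T} f(t),
\]
a contradiction. Hence $\ell = s$, which is exactly the claim.

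The argument is short and there is no genuine obstacle: the only delicate points are (i) recognizing that upper semicontinuity of each $f_n$, rather than continuity, is precisely what is needed to make the superlevel sets $F_n$ closed, and (ii) using quasicompactness through the finite intersection property characterization rather than through open covers (since $T$ is not assumed Hausdorff, the closed-set version is the safe one to invoke). Note also that no assumption on $f$ itself (such as upper semicontinuity) is used; the monotonicity of the sequence $(f_n)$ is what makes this a Dini-type statement and allows the strict inequality $\lambda > s$ to propagate to the contradiction.
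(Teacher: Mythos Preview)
Your proof is correct and takes essentially the same approach as the paper: both argue by contradiction using the decreasing chain of closed superlevel sets $\{t:f_n(t)\geq\lambda\}$ and invoke quasicompactness via the finite intersection property to find a point $t^*$ where $f(t^*)$ exceeds the supremum of $f$. The only cosmetic difference is that the paper phrases it as ``for any $d>\sup_t f(t)$, eventually $\sup_t f_n(t)\leq d$'' rather than setting up the limit $\ell$ explicitly.
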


\begin{proof}
Choose a number $d>\sup_{t\in T}f(t)$. If $\sup_{t\in T}f_{n}(t)>d$, for all
$n$, then the sets $E_{n}=\{t\in T:f_{n}(t)\geq d\}$ are non-empty and closed.
Since $E_{n}\subset E_{n-1}$ and $T$ is quasicompact, there is a point
$t_{0}\in\cap_{n}E_{n}$. Thus $f_{n}(t_{0})\geq d$ for all $n$, and therefore
\[
f(t_{0})=\lim_{n\rightarrow\infty}f_{n}(t_{0})\geq d>\sup_{t\in T}f(t),
\]
a contradiction.

It follows that
\[
\lim_{n\rightarrow\infty}\sup_{t\in T}f_{n}(t)=\inf_{n\rightarrow\infty}%
\sup_{t\in T}f_{n}(t)\leq\sup_{t\in T}f(t).
\]
On the other hand,
\[
\lim_{n\rightarrow\infty}\sup_{t\in T}f_{n}(t)\geq\sup_{t\in T}f(t)
\]
because $f_{n}(t)\geq f(t)$ for all $t$.
\end{proof}

We consider the following properties which a Banach algebra $A$ can have:

\begin{enumerate}
\item[$\left(  1_{c}\right)  $] For each $a\in A$, $\Vert a\Vert=\sup\{\Vert
a/I\Vert:I\in\operatorname{Prim}(A)\}$;

\item[$\left(  2_{c}\right)  $] For each $a\in A$, the map $I\longmapsto\Vert
a/I\Vert$ is upper semicontinuous on $\operatorname{Prim}(A)$.
\end{enumerate}

\begin{lemma}
\label{UpSem} Let $A$ be a Banach algebra, and let $M$ be a precompact subset
of $A$. Then

\begin{enumerate}
\item If $A$ has the property $\left(  1_{c}\right)  $ then $\Vert M\Vert
=\sup_{I\in\operatorname{Prim}(A)}\Vert M/I\Vert$;

\item If $A$ has the property $\left(  2_{c}\right)  $ then the map
$I\longmapsto\Vert M/I\Vert$ is upper semicontinuous on $\operatorname{Prim}%
(A)$.
\end{enumerate}
\end{lemma}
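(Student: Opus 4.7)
The plan is to handle the two parts separately, reducing each to a pointwise statement about individual elements via an approximation. Part (1) is essentially a trivial interchange of suprema, while part (2) requires a genuine use of precompactness.

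For part (1), I would simply write $\|M\| = \sup_{a\in M}\|a\|$ and $\|M/I\| = \sup_{a\in M}\|a/I\|$, and then compute
\[
\sup_{I\in\operatorname{Prim}(A)}\|M/I\|
= \sup_{I}\sup_{a\in M}\|a/I\|
= \sup_{a\in M}\sup_{I}\|a/I\|
= \sup_{a\in M}\|a\|
= \|M\|,
\]
where the third equality uses hypothesis $(1_c)$ applied to each $a\in M$. No precompactness is needed here, only boundedness.

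For part (2), fix $I_0\in\operatorname{Prim}(A)$ and $\lambda>\|M/I_0\|$; I would show there is a neighborhood $U$ of $I_0$ in the Jacobson topology such that $\|M/I\|<\lambda$ for every $I\in U$. Choose $\varepsilon>0$ with $\|M/I_0\|+2\varepsilon<\lambda$, and, using precompactness of $M$, pick a finite $\varepsilon$-net $a_1,\dots,a_n\in M$ so that every $a\in M$ satisfies $\|a-a_i\|<\varepsilon$ for some $i$; then $\|a/I\|\le\|a_i/I\|+\varepsilon$ for each $I$, and taking the supremum over $a$ gives
\[
\|M/I\|\le\max_{1\le i\le n}\|a_i/I\|+\varepsilon.
\]
The key step is now that the function $\psi(I):=\max_i\|a_i/I\|$ is a finite maximum of upper semicontinuous functions (by hypothesis $(2_c)$ applied to each $a_i$) and hence upper semicontinuous. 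Since $\psi(I_0)\le\|M/I_0\|<\lambda-2\varepsilon$, upper semicontinuity of $\psi$ at $I_0$ yields a neighborhood $U$ of $I_0$ with $\psi(I)<\lambda-\varepsilon$ for all $I\in U$, whence $\|M/I\|<\lambda$ on $U$, as required.

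The only non-routine point is the approximation step in (2): without precompactness, $\|M/I\|$ would be a supremum of upper semicontinuous functions over an infinite set, and such a supremum is in general only lower, not upper, semicontinuous. The precompactness of $M$ is exactly what allows the reduction to a finite maximum up to an arbitrarily small error, which is the mechanism that preserves upper semicontinuity in the passage from elements to sets.
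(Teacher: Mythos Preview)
Your proof is correct and follows essentially the same approach as the paper. For (1) you use a direct interchange of suprema where the paper writes out an equivalent $\varepsilon$-argument; for (2) both you and the paper reduce to a finite $\varepsilon$-net and use that a finite maximum of upper semicontinuous functions is upper semicontinuous---you phrase this via neighborhoods of $I_0$, the paper via the closed superlevel set $\{I:\Vert M/I\Vert\geq\lambda\}=\cap_{\varepsilon>0}\{I:\Vert N_\varepsilon/I\Vert\geq\lambda-\varepsilon\}$, but the mechanism is identical.
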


\begin{proof}
(1) For each $\varepsilon>0$, choose $a\in M$ with $\Vert a\Vert>\Vert
M\Vert-\varepsilon$. Using $\left(  1_{c}\right)  $, choose $I_{0}%
\in\operatorname{Prim}(A)$ with $\Vert a/I_{0}\Vert>\Vert a\Vert-\varepsilon$.
Then
\[
\Vert M\Vert<\Vert a/I_{0}\Vert+2\varepsilon\leq\Vert M/I_{0}\Vert
+2\varepsilon\leq\sup_{I}\Vert M/I\Vert+2\varepsilon.
\]
Taking $\varepsilon\rightarrow0$ we get that $\Vert M\Vert\leq\sup_{I}\Vert
M/I\Vert$, the converse is evident.

(2) If $N\subset M$ is finite then $I\longmapsto\Vert N/I\Vert$ is upper
semicontinuous because the set
\[
\{I\in\operatorname{Prim}(A):\Vert N/I\Vert\geq\lambda\}=\cup_{a\in N}%
\{I\in\operatorname{Prim}(A):\Vert a/I\Vert\geq\lambda\}
\]
is closed being a finite union of closed sets. Now, for each $\varepsilon>0$,
let $N_{\varepsilon}$ be a finite $\varepsilon$-net in $M$. Then
$E_{\varepsilon}:=\{I\in\operatorname{Prim}(A):\Vert N_{\varepsilon}%
/I\Vert\geq\lambda-\varepsilon\}$ is closed and therefore
\[
\{I\in\operatorname{Prim}(A):\Vert M/I\Vert\geq\lambda\}=\cap_{\varepsilon
>0}E_{\varepsilon}%
\]
is closed.
\end{proof}

\begin{theorem}
\label{coinc} Let $A$ be a Banach algebra satisfying $\left(  1_{c}\right)  $
and $\left(  2_{c}\right)  $, and let $M$ be a precompact subset of $A$. If
$\rho(M/I)=r(M/I)$ for all $I\in\operatorname{Prim}(A)$, then
\begin{equation}
\rho(M)=\sup_{I\in\operatorname{Prim}(A)}\rho(M/I)=r(M). \label{both}%
\end{equation}

\end{theorem}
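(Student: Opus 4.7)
The idea is to combine the hypotheses $(1_c)$, $(2_c)$ and the coincidence $\rho(M/I)=r(M/I)$ with a Dini-type argument on the quasicompact space $\operatorname{Prim}(A^1)$. First I would promote the two conditions to precompact subsets: by Lemma \ref{UpSem}, the equality $\|M^n\|=\sup_{I\in\operatorname{Prim}(A)}\|M^n/I\|$ holds for every $n$, and for each fixed $n$ the map
\[
\varphi_n\colon I\longmapsto \|M^n/I\|^{1/n}
\]
is upper semicontinuous on $\operatorname{Prim}(A)$ (extended by the value $0$ at the point $I=A$ in $\operatorname{Prim}(A^1)$, which does not affect the supremum since $M\subset A$).

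The key observation is that the subsequence $\varphi_{2^k}$ is \emph{decreasing} in $k$: indeed, for each $I$,
\[
\|M^{2^{k+1}}/I\| = \|(M^{2^k})^2/I\| \leq \|M^{2^k}/I\|^2,
\]
so $\varphi_{2^{k+1}}(I)\leq\varphi_{2^k}(I)$. Since $\varphi_n(I)\to\rho(M/I)$ along any subsequence (by the definition of the joint spectral radius applied inside the quotient $A/I$, which is a Banach algebra), we have $\varphi_{2^k}(I)\searrow\rho(M/I)$ pointwise on $\operatorname{Prim}(A^1)$. Next I would invoke quasicompactness of $\operatorname{Prim}(A^1)$ in the Jacobson topology together with Theorem \ref{Dini} to conclude that
\[
\sup_{I}\varphi_{2^k}(I)\longrightarrow \sup_{I}\rho(M/I)\quad \text{as }k\to\infty.
\]
On the other hand, Lemma \ref{UpSem}(1) applied to $M^{2^k}$ yields $\sup_I\varphi_{2^k}(I)=\|M^{2^k}\|^{1/2^k}$, and this subsequence tends to $\rho(M)$. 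Hence $\rho(M)=\sup_{I\in\operatorname{Prim}(A)}\rho(M/I)$, which is the first equality of \eqref{both}.

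For the second equality I would use the hypothesis $\rho(M/I)=r(M/I)$ for all $I\in\operatorname{Prim}(A)$ together with the identity
\[
r(M)=\sup_{I\in\operatorname{Prim}(A)}r(M/I),
\]
which follows from Proposition \ref{r-cond} applied to $\mathcal{F}=\operatorname{Prim}(A)$, since $\rho(a)=\sup_I\rho(a/I)$ for every $a\in A$. Combining the two displayed suprema gives $\rho(M)=\sup_I r(M/I)=r(M)$, completing \eqref{both}. The main subtlety, which deserves careful handling, is the monotonicity trick that turns the non-monotone sequence $\varphi_n$ into the decreasing subsequence $\varphi_{2^k}$ so that Dini's theorem applies; once that is in place, quasicompactness of $\operatorname{Prim}(A^1)$ and Lemma \ref{UpSem} do the rest.
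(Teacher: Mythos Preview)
Your proposal is correct and follows essentially the same route as the paper: the paper defines $f_n(I)=\|M^{2^n}/I\|^{1/2^n}$, observes these are upper semicontinuous (Lemma~\ref{UpSem}) and decreasing, applies the Dini-type Theorem~\ref{Dini} to interchange the limit and the supremum, and then gets the second equality from the identity $r(M)=\sup_I r(M/I)$ (the paper cites Theorem~\ref{3.1}, which rests on Proposition~\ref{r-cond} exactly as you do). Your explicit passage to $\operatorname{Prim}(A^1)$ to secure quasicompactness is in fact a point the paper's proof leaves implicit.
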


\begin{proof}
The functions $I\longmapsto f_{n}(I):=\Vert M^{2^{n}}/I\Vert^{1/{2^{n}}}$ are
upper semicontinuous by Lemma \ref{UpSem}(2). Moreover, they decrease
($f_{n+1}(I)\leq f_{n}(I)$) and
\[
\lim_{n}f_{n}(I)=\rho(M/I)=r(M/I).
\]
Thus by Theorem \ref{Dini}, $\sup_{I}f_{n}(I)\rightarrow\sup_{I}\rho(M/I)$.
Using Lemma \ref{UpSem}(1), one gets
\[
\sup_{I}f_{n}(I)=\sup_{I}\Vert M^{2^{n}}/I\Vert^{1/{2^{n}}}=\Vert M^{2^{n}%
}\Vert^{1/{2^{n}}}\rightarrow\rho(M).
\]
We proved that
\[
\rho(M)=\sup_{I\in\operatorname{Prim}(A)}\rho(M/I);
\]
the second equality follows from Theorem \ref{3.1}.
\end{proof}

It is convenient to formulate an analogue of Theorem \ref{coinc} for algebras
of vector-valued functions on arbitrary compacts.

\begin{theorem}
\label{functions} Let $B$ be a Berger-Wang Banach algebra, and $A=C(T,B)$, the
algebra of all continuous $B$-valued functions from a quasicompact space $T$
to $B$, supplied with the sup-norm. Then $A$ is a Berger-Wang algebra.

If the spectral radius function $x\longmapsto\rho(x)$ is continuous on $B$
then the same is true for $A$, and moreover, the joint spectral radius is
continuous on $B$.
\end{theorem}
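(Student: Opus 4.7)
The plan is to express $\rho_A$ and $r_A$ in terms of pointwise data on $T$ and invoke Theorem \ref{Dini}. For a precompact $M\subset A$ and $t\in T$, set $M(t):=\{f(t):f\in M\}\subset B$. Since evaluation $e_t:A\to B$ is a contractive surjective homomorphism (constants lie in $A$), $M(t)$ is precompact in $B$ and $(M^n)(t)=M(t)^n$. The sup-norm on $A$ yields
\[
\|M^n\|_A=\sup_{t\in T}\|M(t)^n\|_B.
\]
An $\varepsilon$-net argument (each $f\in M$ is uniformly approximated by an element of some finite $N\subset M$) shows that $t\mapsto\|M(t)\|_B$ is a uniform limit of continuous functions $t\mapsto\|N(t)\|_B$, hence continuous on $T$. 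Since $M^n$ is again precompact (multiplication is continuous), the same applies to $M^n$, so $t\mapsto\|M(t)^n\|_B$ is continuous on the quasicompact $T$ for every $n$.

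I would then pass to the dyadic subsequence $f_n(t):=\|M(t)^{2^n}\|_B^{1/2^n}$: submultiplicativity gives $f_{n+1}\le f_n$, so this is a decreasing sequence of continuous (in particular upper semicontinuous) functions on $T$ with pointwise limit $\rho_B(M(t))$. Theorem \ref{Dini} then yields
\[
\rho_A(M)=\lim_n\|M^{2^n}\|_A^{1/2^n}=\lim_n\sup_{t\in T}f_n(t)=\sup_{t\in T}\rho_B(M(t)).
\]
Specialising to $M=\{g\}$ gives $\rho_A(g)=\sup_t\rho_B(g(t))$ for every $g\in A$. Since $g\in M^n$ implies $g(t)\in M(t)^n$, one has $\sup_{g\in M^n}\rho_A(g)^{1/n}\ge\sup_{g\in M^n}\rho_B(g(t))^{1/n}$, whence $r_A(M)\ge r_B(M(t))$ for every $t$. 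Using the Berger--Wang hypothesis on $B$,
\[
r_A(M)\ge\sup_{t\in T}r_B(M(t))=\sup_{t\in T}\rho_B(M(t))=\rho_A(M),
\]
and the reverse inequality is automatic, so $A$ is a Berger--Wang algebra.

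For the continuity clause, the identity $\rho_A(a)=\sup_t\rho_B(a(t))$ does the work. Given $a\in A$, the image $a(T)$ is compact in $B$; if $\rho_B$ is continuous on $B$ it is uniformly continuous on an open neighbourhood of $a(T)$. Hence for $a_n\to a$ in $A$ one has $a_n(t)\to a(t)$ uniformly in $t$, so $\rho_B(a_n(t))\to\rho_B(a(t))$ uniformly, giving $\rho_A(a_n)\to\rho_A(a)$. Thus $\rho$ is continuous on $A$. Since $A$ is a Berger--Wang algebra and every element of $A$ is a point of continuity of $\rho$, Lemma \ref{sirano} immediately implies that every precompact subset of $A$ is a point of continuity of the joint spectral radius.

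The main obstacle I expect is the passage from the (generally non-monotone) sequence $\|M^n\|_A^{1/n}$ to a monotone one: this is why the dyadic subsequence $\{2^n\}$ is essential, as in the proof of Theorem \ref{coinc}. A second subtlety is the continuity (not merely lower semicontinuity) of the functions $t\mapsto\|M(t)^n\|_B$, which genuinely requires precompactness of $M$ so that the sup over $M$ can be realised as a uniform limit of finite suprema; without it Dini's theorem would not apply.
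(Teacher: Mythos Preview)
Your argument for the Berger--Wang property is essentially the paper's: both reduce to Dini's theorem (Theorem~\ref{Dini}) applied to the dyadic sequence $f_n(t)=\|M(t)^{2^n}\|^{1/2^n}$ on the quasicompact space $T$, exactly as in Theorem~\ref{coinc} with $T$ replacing $\operatorname{Prim}(A)$. Your explicit $\varepsilon$-net verification of the continuity of $t\mapsto\|M(t)\|$ is a welcome detail the paper leaves implicit.

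There is, however, a genuine gap in your continuity argument for $\rho$ on $A$. You assert that continuity of $\rho_B$ on $B$ forces uniform continuity on an \emph{open neighbourhood} of the compact set $a(T)$. In an infinite-dimensional Banach algebra open sets are not precompact, so a continuous function has no reason to be uniformly continuous on them; this step is unjustified as written. You can repair it by observing that for $a_n\to a$ in $A$ the set $\overline{a(T)\cup\bigcup_n a_n(T)}$ is compact (each $a_k(T)$ is compact and for large $n$ lies in any prescribed neighbourhood of $a(T)$), and $\rho_B$ is uniformly continuous there. The paper sidesteps the issue entirely with a cleaner liminf argument: from pointwise convergence $\rho_B(a_\lambda(t))\to\rho_B(a(t))$ one gets $\rho_B(a(t))\le\liminf_\lambda\rho_A(a_\lambda)$ for every $t$, hence $\rho_A(a)\le\liminf_\lambda\rho_A(a_\lambda)$; upper semicontinuity of $\rho$ supplies the reverse inequality. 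This avoids any appeal to uniform continuity. The final step via Lemma~\ref{sirano} is identical in both proofs.
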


\begin{proof}
We argue as in the proof of Theorem \ref{coinc} with the change of
$\operatorname{Prim}(A)$ by $T$. The analogue of (1$_{c}$) follows from the
definition of the norm in $A$. The analogue of (2$_{c}$) holds by definition
(functions are continuous, so the norms are continuous).

Let $M$ be a precompact subset of $A$; for each $t\in T$, let
$M(t)=\{a(t):a\in M\}$. Arguing as above, we prove that
\begin{equation}
\rho(M)=\sup_{t\in T}\rho(M(t))=r(M). \label{rhocont}%
\end{equation}

Let $x\longmapsto\rho(x)$ be continuous on $B$. To check the continuity of
$\rho$ on $A$, note that for $a\in A$
\[
\rho(a)=\sup_{t\in T}\rho(a(t))
\]
(a special case of (\ref{rhocont})). Now if $a_{\lambda}\rightarrow a$ in $A$
then $a_{\lambda}(t)\rightarrow a(t)$ in $B$, for each $t\in T$, whence
\[
\rho(a(t))=\lim_{\lambda}\rho(a_{\lambda}(t))\leq\underset{\lambda}{\liminf
}\rho(a_{\lambda}).
\]
Therefore $\rho(a)\leq\underset{\lambda}{\liminf}\rho(a_{\lambda})$. Since
upper semicontinuity holds in general, we get the continuity of $a\longmapsto
\rho(a)$ on $A$.

The continuity of the joint spectral radius follows from Lemma \ref{sirano}.
\end{proof}

One can take for $B$ an arbitrary algebra of compact operators on a Banach
space $X$ (for this case, the equality $\rho(M) = r(M)$ was established in
\cite{ST00}).

\subsection{A C*-algebra version of the joint spectral radius formula}

In \cite{TR3} it was shown that for each Banach algebra $A$ and each
precompact set $M\subset A$,
\begin{equation}
\rho(M)=\max\{r(M),\rho(M/\mathcal{R}_{\mathrm{hc}}(A))\}. \label{GBWF}%
\end{equation}
It is interesting and important question if dealing with C*-algebras one can
change in (\ref{GBWF}) the ideal $\mathcal{R}_{\mathrm{hc}}(A)$ by the much
larger ideal $\mathcal{R}_{\mathfrak{gcr}}(A)$. In other words, we study the
validity of the equality
\begin{equation}
\rho(M)=\max\{r(M),\rho(M/\mathcal{R}_{\mathfrak{gcr}}(A))\}. \label{GBWFC}%
\end{equation}
Our approach is based on the consideration of primitive ideals; in particular
we study C*-algebras for which the equality (\ref{both}) holds.

The results of previous sections give some valuable information on this
question. Note that the condition (1$_{c}$) holds for each C*-algebra. The
condition (2$_{c}$) holds for C*-algebras with Hausdorff spectra. Indeed, it
is proved in \cite[Proposition 3.3.7]{Dixmier} that for each element $a\in A$,
the set $\{I\in\operatorname{Prim}(A):\Vert a/I\Vert\geq\lambda\}$ is
quasicompact; thus if $\operatorname{Prim}(A)$ is Hausdorff then this set is
closed, whence the function $I\longmapsto\Vert a/I\Vert$ is upper
semicontinuous (in fact, continuous because it is always lower semicontinuous
\cite[Proposition 3.3.2]{Dixmier}). Thus we obtain from Theorem \ref{coinc}
that the equality (\ref{both}) holds for each CCR C*-algebra $A$ with
Hausdorff $\operatorname{Prim}(A)$.

We will obtain much more general results here. Let us say that a topological
space has the\textit{ property }$(QC)$ if the intersection of any
down-directed net of non-empty quasicompact subsets is non-empty. Of course,
each Hausdorff space has this property.

\begin{lemma}
\label{QC} Let $T$ be a topological space. Then

\begin{enumerate}
\item If $T=\cup_{\lambda\in\Lambda}U_{\lambda}$, where the net $\{U_{\lambda
}:{\lambda\in\Lambda}\}$ is up-directed, all $U_{\lambda}$ are open and have
property $(QC)$, then $T$ has property $(QC)$;

\item If $T=F\cup U$, where $F$ is closed and Hausdorff, $U$ is open and has
property $(QC)$, then $T$ has property $(QC)$;

\item If $T$ has the property $(QC)$ then each subset of $T$ has this property.
\end{enumerate}
\end{lemma}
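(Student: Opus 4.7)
The plan is to treat the three parts in the order $(3)$, $(1)$, $(2)$, since each later item reduces in part to the previous ones.

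For $(3)$, the key observation is that quasicompactness is intrinsic: a subset $K$ of $S\subset T$ is quasicompact as a subspace of $S$ if and only if it is quasicompact as a subspace of $T$. Hence a down-directed net of non-empty quasicompact sets in $S$ is also a down-directed net of non-empty quasicompact sets in $T$, and property $(QC)$ of $T$ gives a non-empty intersection. This step is essentially a formality.

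For $(1)$, let $\{K_\alpha\}_{\alpha\in A}$ be a down-directed net of non-empty quasicompact subsets of $T$. Fix any index $\alpha_0$. The open cover $\{U_\lambda\}$ of $T$ covers $K_{\alpha_0}$, and quasicompactness of $K_{\alpha_0}$ yields a finite subcover; since $\{U_\lambda\}$ is up-directed, there is a single $\lambda_0$ with $K_{\alpha_0}\subset U_{\lambda_0}$. By down-directedness of the net, every $K_\alpha$ contains some $K_\beta$ with $\beta\geq\alpha,\alpha_0$, so $\bigcap_\alpha K_\alpha=\bigcap_{\alpha\geq\alpha_0}K_\alpha$, and the latter is a down-directed net of non-empty quasicompact subsets of $U_{\lambda_0}$. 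By part $(3)$ and the assumption that $U_{\lambda_0}$ has property $(QC)$, this intersection is non-empty.

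For $(2)$, dichotomize on the net. If some $K_{\alpha_0}$ is contained in $U$, then, as above, the tail $\{K_\alpha:\alpha\geq\alpha_0\}$ lives in $U$, and part $(3)$ together with property $(QC)$ of $U$ give $\bigcap K_\alpha\neq\varnothing$. Otherwise $K_\alpha\cap F\neq\varnothing$ for every $\alpha$, and $\{K_\alpha\cap F\}$ is again a down-directed net of non-empty quasicompact subsets (each is closed in the quasicompact set $K_\alpha$, hence quasicompact). Since $F$ is Hausdorff, every quasicompact subset of $F$ is closed, and the finite intersection property for a down-directed family of non-empty compact sets in a Hausdorff space forces $\bigcap(K_\alpha\cap F)\neq\varnothing$; this is the standard argument that Hausdorff spaces have property $(QC)$. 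The main conceptual point here, and the only mild obstacle, is verifying that $K_\alpha\cap F$ is quasicompact in its own right (closed subset of a quasicompact set) and recognizing that Hausdorffness of $F$ alone suffices to conclude the finite-intersection argument in $F$.
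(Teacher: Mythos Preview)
Your proof is correct and follows essentially the same approach as the paper's. The only difference is organizational: you prove $(3)$ first and invoke the intrinsicness of quasicompactness explicitly in $(1)$ and $(2)$, whereas the paper proves the items in the order $(1),(2),(3)$ and in $(2)$ separately verifies that a closed set meets a quasicompact set in a quasicompact set; the underlying ideas---extract a single $U_{\lambda_0}$ via quasicompactness and up-directedness, and in $(2)$ dichotomize on whether the net eventually lies in $U$---are identical.
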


\begin{proof}
Let $\{E_{\omega}: \omega\in\Omega\}$ be a down-directed set of quasicompact subsets.

(1) For some $\omega_{1}\in\Omega$, let $\lambda_{1},...,\lambda_{n}\in
\Lambda$ be such that $E_{\omega_{1}}\subset\cup_{k=1}^{n}U_{\lambda_{k}}$.
Since the net $\{U_{\lambda}:{\lambda\in\Lambda}\}$ is up-directed,%
\[
E_{\omega_{1}}\subset U_{\lambda_{0}}%
\]
for some $\lambda_{0}\in\Lambda$. Then all $E_{\omega}$ with $\omega
>\omega_{1}$ are contained in the space $U_{\lambda_{0}}$ which has the
property $(QC)$. By definition, their intersection is non-empty whence
$\cap_{\omega\in\Omega}E_{\omega}$ is non-empty.

(2) Let us check firstly that the intersection of a quasicompact set $K$ with
a closed set $W$ is quasicompact. Indeed, if $\{U_{\lambda}:{\lambda\in
\Lambda}\}$ is a family of open subsets in $T$ with $K\cap W\subset
\cup_{\lambda\in\Lambda}U_{\lambda}$ then
\[
K\subset(\cup_{\lambda\in\Lambda}U_{\lambda})\cup(T\setminus W);
\]
choosing a finite subcovering and removing $T\setminus W$ we obtain a finite
subcovering of $K\cap W$.

Now in assumptions of (2) all sets $E_{\omega}\cap F$ are quasicompact subsets
of a Hausdorff space, so they are compact. Hence if all $E_{\omega}\cap F$ are
non-empty then they have non-empty intersection.

On the other hand, if $E_{\omega_{0}}\cap F=\varnothing$, for some $\omega
_{0}\in\Omega$, then $E_{\omega_{0}}\subset U$, and the same is true for all
$\omega>\omega_{0}$, so the intersection is non-empty because $U$ has property
$(QC)$.

(3) Let $W$ be an arbitrary subset of $T$. It follows easily from the
definition that if $E\subset W$ is quasicompact in $W$ then it is quasicompact
in $T$. Thus a down-directed net of quasicompact subsets of $W$ is a
down-directed net of quasicompact subsets of $T$; by the assumptions, the
intersection is non-void.
\end{proof}

Let $\mathfrak{C}_{\mathfrak{qc}}$ denote the class of all C*-algebras $A$ for
which $\operatorname{Prim}(A)$ has the property $(QC)$.

\begin{corollary}
\label{C-QC} Let $A$ be a C*-algebra. Then

\begin{enumerate}
\item If $\left(  J_{\lambda}\right)  _{{\lambda\in\Lambda}}$ is an
up-directed net of closed ideals of $A$ such that $\cup_{\lambda\in\Lambda
}J_{\lambda}$ is dense in $A$ and $J_{\lambda}\in\mathfrak{C}_{\mathfrak{qc}}$
for all $\lambda$, then $A\in$ $\mathfrak{C}_{\mathfrak{qc}}$;

\item If $A$ has an ideal $J\in\mathfrak{C}_{\mathfrak{qc}}$ such that
$\operatorname{Prim}(A/J)$ is Hausdorff, then $A\in$ $\mathfrak{C}%
_{\mathfrak{qc}}$;

\item If $A\in\mathfrak{C}_{\mathfrak{qc}}$ then all closed ideals and all
quotients of $A$ belong to $\mathfrak{C}_{\mathfrak{qc}}$.
\end{enumerate}
\end{corollary}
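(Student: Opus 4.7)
The plan is to derive each of the three statements directly from the corresponding parts of Lemma \ref{QC}, using the standard identifications between closed ideals of a C*-algebra $A$ and open subsets of its primitive ideal space $\operatorname{Prim}(A)$ (in the Jacobson topology). Specifically, for each closed ideal $J$ of $A$, set
\[
U_J := \{I \in \operatorname{Prim}(A) : J \not\subset I\}, \qquad F_J := \operatorname{Prim}(A) \setminus U_J = \{I \in \operatorname{Prim}(A) : J \subset I\}.
\]
The set $U_J$ is open, and by $(\ref{p4})$ the map $I \longmapsto I \cap J$ is a homeomorphism of $U_J$ onto $\operatorname{Prim}(J)$; similarly by $(\ref{p5})$, $I \longmapsto I/J$ is a homeomorphism of $F_J$ onto $\operatorname{Prim}(A/J)$. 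These facts are standard (cf.\ \cite[Proposition 3.2.1]{Dixmier}) and will be used without further comment.

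For $(1)$, the family $\{U_{J_\lambda} : \lambda \in \Lambda\}$ is up-directed because $(J_\lambda)$ is, and each $U_{J_\lambda} \cong \operatorname{Prim}(J_\lambda)$ carries property $(QC)$ by hypothesis. Density of $\cup_\lambda J_\lambda$ in $A$ implies that no primitive ideal of $A$ contains every $J_\lambda$, whence $\operatorname{Prim}(A) = \cup_{\lambda} U_{J_\lambda}$. Lemma \ref{QC}$(1)$ then yields property $(QC)$ for $\operatorname{Prim}(A)$.

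For $(2)$, write $\operatorname{Prim}(A) = F_J \cup U_J$. The closed part $F_J$ is homeomorphic to $\operatorname{Prim}(A/J)$ and is therefore Hausdorff by assumption, while the open part $U_J$ is homeomorphic to $\operatorname{Prim}(J)$ and thus has property $(QC)$. Lemma \ref{QC}$(2)$ applies directly. For $(3)$, if $J$ is a closed ideal of $A$, then $\operatorname{Prim}(J) \cong U_J$ and $\operatorname{Prim}(A/J) \cong F_J$ are both subsets of $\operatorname{Prim}(A)$, and hence inherit property $(QC)$ by Lemma \ref{QC}$(3)$.

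The only point requiring care is the verification that the identifications $U_J \cong \operatorname{Prim}(J)$ and $F_J \cong \operatorname{Prim}(A/J)$ are genuine homeomorphisms for the Jacobson topologies, so that open, closed, and quasicompact subsets transfer correctly between the two sides. This is classical and presents no real obstacle; once it is in place the corollary is a routine translation of Lemma \ref{QC} into the language of primitive ideal spaces.
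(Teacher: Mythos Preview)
Your proof is correct and follows essentially the same approach as the paper: both reduce each part to the corresponding part of Lemma \ref{QC} via the standard homeomorphisms $\operatorname{Prim}(J)\cong U_J$ and $\operatorname{Prim}(A/J)\cong F_J$. Your argument for (1) is in fact slightly cleaner, since you observe directly that density of $\cup_\lambda J_\lambda$ forces $\operatorname{Prim}(A)=\cup_\lambda U_{J_\lambda}$ (no closure needed), which is exactly the hypothesis of Lemma \ref{QC}(1).
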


\begin{proof}
(1) One has that
\[
\operatorname{Prim}(A)\cong\overline{\cup_{\lambda\in\Lambda}%
\operatorname{Prim}(J_{\lambda})},
\]
if $\operatorname{Prim}(J_{\lambda})$ is identified with the set of all
$I\in\operatorname{Prim}(A)$ that do not contain $J_{\lambda}$. All
$\operatorname{Prim}(J_{\lambda})$ are open and have the property $(QC)$.
Using Lemma \ref{QC}$\left(  1\right)  $ we get that $\operatorname{Prim}(A)$
has property $(QC)$. So $A\in\mathfrak{C}_{\mathfrak{qc}}$.

(2) In this case $\operatorname{Prim}(A)\cong\operatorname{Prim}%
(J)\cup\operatorname{Prim}(A/J)$ and it remains to apply Lemma \ref{QC}%
$\left(  2\right)  $.

(3) Follows from Lemma \ref{QC}$\left(  3\right)  $ because, for each closed
ideal $J$ of $A$, the space $\operatorname{Prim}(J)$ (respectively,
$\operatorname{Prim}(A/J)$) is homeomorphic to an open (respectively, closed)
subset of $\operatorname{Prim}(A)$.
\end{proof}

\begin{corollary}
\label{GCR-QC} All GCR C*-algebras belong to $\mathfrak{C}_{\mathfrak{qc}}$.
\end{corollary}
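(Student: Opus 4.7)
The plan is to invoke the classical structure theory of GCR C*-algebras to obtain a composition series whose successive quotients have Hausdorff primitive ideal space, and then to iterate Corollary \ref{C-QC} along this series by transfinite induction. Concretely, I will use the Dixmier structure theorem: every non-zero GCR C*-algebra $B$ contains a non-zero closed ideal with Hausdorff primitive spectrum (e.g.\ its largest continuous-trace ideal is non-zero), and quotients of GCR algebras are GCR. Using this together with Zorn's lemma (or transfinite recursion) applied to the class of GCR algebras, I obtain a transfinite chain $(I_\alpha)_{\alpha\leq\gamma}$ of closed ideals of $A$ with $I_0=0$, $I_\gamma = A$, $I_\alpha = \overline{\cup_{\beta<\alpha}I_\beta}$ at limit ordinals, and such that each gap-quotient $I_{\alpha+1}/I_\alpha$ has Hausdorff spectrum.

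Once such a chain is in hand, I would show $I_\alpha \in \mathfrak{C}_{\mathfrak{qc}}$ for every $\alpha\leq\gamma$ by transfinite induction. The base case $I_0=0$ is trivial since $\operatorname{Prim}(0) = \varnothing$ vacuously has property $(QC)$. For a successor $\alpha+1$, the ideal $I_\alpha$ of $I_{\alpha+1}$ lies in $\mathfrak{C}_{\mathfrak{qc}}$ by the induction hypothesis, and $\operatorname{Prim}(I_{\alpha+1}/I_\alpha)$ is Hausdorff by construction, so Corollary \ref{C-QC}(2) applied to $I_{\alpha+1}$ gives $I_{\alpha+1}\in\mathfrak{C}_{\mathfrak{qc}}$. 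For a limit ordinal $\alpha$, the family $(I_\beta)_{\beta<\alpha}$ is an up-directed net of closed ideals of $I_\alpha$ with dense union and, by induction, every $I_\beta\in\mathfrak{C}_{\mathfrak{qc}}$; Corollary \ref{C-QC}(1) then yields $I_\alpha\in\mathfrak{C}_{\mathfrak{qc}}$. Setting $\alpha=\gamma$ finishes the argument.

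The principal obstacle is external to the paper: it is the justification of the existence of the refined composition series with Hausdorff-spectrum quotients. The weaker composition series built into the definition of GCR (with CCR quotients) is not enough, because CCR C*-algebras need not have Hausdorff primitive ideal space, and property $(QC)$ is not automatic in that setting. One must therefore explicitly invoke (and cite) Dixmier's theorem that a non-zero GCR C*-algebra always has a non-zero closed ideal of continuous trace, and then build the series by transfinite recursion, passing to $A/I_\alpha$ at each successor step. Once this classical input is granted, the remainder of the proof is a direct application of Corollary \ref{C-QC} parts (1) and (2) with no further analytic content.
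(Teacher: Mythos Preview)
Your proposal is correct and is essentially the same as the paper's proof: the paper also invokes the Dixmier composition series with Hausdorff-spectrum gap-quotients (citing \cite[Proposition 4.5.3 and Theorem 4.5.5]{Dixmier}) and then runs the identical transfinite induction using Corollary~\ref{C-QC}(1) at limit ordinals and Corollary~\ref{C-QC}(2) at successors. Your explicit recognition that the CCR composition series is insufficient and that one needs the finer chain with Hausdorff gap-quotients is exactly the point the paper handles by that citation.
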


\begin{proof}
It is known \cite[Proposition 4.5.3 and Theorem 4.5.5]{Dixmier} that if $A$ is
a GCR algebra then there is an increasing transfinite chain $\left(
J_{\alpha}\right)  _{\alpha\leq\delta}$ of closed ideals such that $J_{0}=0$,
$J_{\delta}=A$ and all gap-quotients $J_{\alpha+1}/J_{\alpha}$ have Hausdorff
space of primitive ideals.

Assume, to the contrary, that there is a smallest ordinal $\gamma$ for which
$J_{\gamma}\notin\mathfrak{C}_{\mathfrak{qc}}$. If $\gamma$ is limit then
$J_{\gamma}\in\mathfrak{C}_{\mathfrak{qc}}$ by Lemma \ref{C-QC}$\left(
1\right)  $; if not, then $\gamma=\alpha+1$ for some ordinal $\alpha$, whence
$J_{\gamma}\in\mathfrak{C}_{\mathfrak{qc}}$ by Lemma \ref{C-QC}$\left(
2\right)  $, a contradiction.
\end{proof}

At the moment we do not know an example of a C*-algebra which does not belong
to $\mathfrak{C}_{\mathfrak{qc}}$.

\begin{lemma}
\label{norm-qc}Let $A$ be a C*-algebra, and let $M$ be a precompact subset of
$A$. Then, for any $t>0$, the set $\{I\in\operatorname{Prim}(A):\Vert
M/I\Vert\geq t\}$ is quasicompact.
\end{lemma}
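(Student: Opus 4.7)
My plan is to reduce the claim to the case of a single element (where the analogous statement is Dixmier's Proposition 3.3.7, already used in the paper) by embedding $M$ as one element of an auxiliary C*-algebra. Since $M$ is precompact, $K := \overline{M}$ is compact; I would form the C*-algebra $B := C(K, A)$ of norm-continuous $A$-valued functions on $K$ with the sup-norm and pointwise operations, and consider the distinguished element $\varphi \in B$ defined by $\varphi(x) = x$.

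I would then invoke the standard identification $\operatorname{Prim}(B) \cong \operatorname{Prim}(A) \times K$ as topological spaces with the product topology, under which a pair $(I, x)$ corresponds to the primitive ideal $I_x := \{f \in B : f(x) \in I\}$ of $B$ whose quotient $B/I_x$ is canonically isometrically isomorphic to $A/I$ via $f + I_x \longmapsto f(x) + I$; in particular, $\|\varphi/I_x\| = \|\varphi(x)/I\| = \|x/I\|$ for every $(I, x)$. Applying Dixmier's single-element quasicompactness result to $\varphi \in B$ at level $t$ yields that
\[
E := \{(I, x) \in \operatorname{Prim}(B) : \|\varphi/I_x\| \geq t\} = \{(I, x) : \|x/I\| \geq t\}
\]
is quasicompact. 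Projecting along the continuous coordinate projection $\pi : \operatorname{Prim}(A) \times K \to \operatorname{Prim}(A)$, the image $\pi(E)$ is again quasicompact, because continuous images of quasicompact sets are quasicompact (a routine verification not requiring the Hausdorff axiom). Since $K$ is compact and $x \mapsto \|x/I\|$ is norm-continuous on $K$, the supremum $\|M/I\| = \sup_{x \in K} \|x/I\|$ is attained, so
\[
\pi(E) = \{I \in \operatorname{Prim}(A) : \exists x \in K,\; \|x/I\| \geq t\} = \{I \in \operatorname{Prim}(A) : \|M/I\| \geq t\},
\]
and the result follows.

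The main technical point, and the only obstacle worth highlighting, is the identification $\operatorname{Prim}(B) \cong \operatorname{Prim}(A) \times K$ as topological spaces. This is standard for $B = C(K, A)$: every irreducible representation of $C(K, A)$ has the form $f \mapsto \pi(f(x))$ for some $\pi \in \operatorname{Irr}(A)$ and $x \in K$, and because $C(K)$ is nuclear the Jacobson topology on $\operatorname{Prim}(B)$ coincides with the product topology on $\operatorname{Prim}(A) \times K$. Granted this identification, the proof reduces to the single-element quasicompactness theorem together with the continuity of a projection, so no delicate covering argument for the set $M$ itself is needed.
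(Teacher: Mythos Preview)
Your argument is correct and takes a genuinely different route from the paper. The paper proceeds directly: it reruns Dixmier's covering argument for Proposition 3.3.7, the only new ingredient being that for any closed ideal $J$ there is a primitive ideal $I\supset J$ with $\Vert M/I\Vert=\Vert M/J\Vert$; this is obtained by choosing $x_n\in M$ with $\Vert x_n/J\Vert\to\Vert M/J\Vert$, passing to a limit $x$ by precompactness, and invoking Dixmier's Lemma 3.3.6 for the single element $x$. Your approach instead promotes the precompact set $M$ to a single element $\varphi$ of the tensor algebra $B=C(K,A)$, applies Dixmier's 3.3.7 once to $\varphi$, and then projects down via $\operatorname{Prim}(B)\cong K\times\operatorname{Prim}(A)$. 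This is conceptually clean and avoids reopening Dixmier's argument, but it trades a two-line limiting step for a heavier structural input (the homeomorphism $\operatorname{Prim}(C(K,A))\cong K\times\operatorname{Prim}(A)$, which is standard but not entirely trivial; note also that nuclearity of $C(K)$ gives the uniqueness of the tensor norm, while the topological identification is a separate fact). The paper's route is more self-contained; yours illustrates a useful general device of replacing a compact family by one element of a function algebra.
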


\begin{proof}
We may argue as in \cite[Proposition 3.3.7]{Dixmier}, where the statement is
proved for one-element sets, if we establish that for each closed ideal $J$ of
$A$, there is a primitive ideal $I\supset J$ with $\Vert M/I\Vert=\Vert
M/J\Vert$. To show the existence of $I$, let $x_{n}\in M$ be such that
\[
\Vert M/J\Vert=\lim_{n\rightarrow\infty}\Vert x_{n}/J\Vert.
\]
By precompactness of $M$ we may assume that $x_{n}\rightarrow x\in A$, and it
remains to take $I$ with $\Vert x/I\Vert=\Vert x/J\Vert$ (\cite[Lemma
3.3.6]{Dixmier}).
\end{proof}

\begin{theorem}
\label{QC-rho}Let $A$ be a C*-algebra. If $A\in\mathfrak{C}_{\mathfrak{qc}}$
then%
\[
\rho(M)=\sup_{I\in\operatorname{Prim}(A)}\rho(M/I)
\]
for each precompact subset $M$ of $A$.
\end{theorem}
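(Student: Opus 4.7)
The inequality $\rho(M)\geq\sup_{I\in\operatorname{Prim}(A)}\rho(M/I)$ is automatic, since $\|M^{n}/I\|\leq\|M^{n}\|$ for all $n$ and each $I\in\operatorname{Prim}(A)$. Hence the task is to show the reverse inequality, and the plan is to run the Dini-type argument of Theorem \ref{coinc}, replacing the hypothesis of upper semicontinuity by the weaker \emph{quasicompactness of sublevel sets} supplied by Lemma \ref{norm-qc}, using the property $(QC)$ of $\operatorname{Prim}(A)$ as the substitute for quasicompactness of the ambient space.

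First I would set $f_{n}(I)=\|M^{2^{n}}/I\|^{1/2^{n}}$ and $f(I)=\rho(M/I)$ for $I\in\operatorname{Prim}(A)$. Then $(f_{n})$ is a pointwise decreasing sequence with $f_{n}\searrow f$. Since every C*-algebra enjoys property $(1_{c})$ (Dixmier, quoted before Lemma \ref{UpSem}), Lemma \ref{UpSem}(1) applied to the precompact set $M^{2^{n}}$ gives
\[
\sup_{I\in\operatorname{Prim}(A)}f_{n}(I)=\|M^{2^{n}}\|^{1/2^{n}}\xrightarrow[n\to\infty]{}\rho(M).
\]
Thus the reverse inequality will follow once I show that $\sup_{I}f_{n}(I)\to\sup_{I}f(I)$.

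Suppose, for contradiction, that this limit fails; then there is $d$ with $\sup_{I}f(I)<d<\sup_{I}f_{n}(I)$ for all $n$ (using monotonicity). Define
\[
E_{n}=\{I\in\operatorname{Prim}(A):f_{n}(I)\geq d\}=\{I\in\operatorname{Prim}(A):\|M^{2^{n}}/I\|\geq d^{2^{n}}\}.
\]
By Lemma \ref{norm-qc} applied to the precompact set $M^{2^{n}}$, each $E_{n}$ is a quasicompact subset of $\operatorname{Prim}(A)$; by construction $E_{1}\supset E_{2}\supset\cdots$ and every $E_{n}$ is non-empty. Here is exactly where the hypothesis $A\in\mathfrak{C}_{\mathfrak{qc}}$ intervenes: by definition of property $(QC)$, the intersection $\bigcap_{n}E_{n}$ is non-empty. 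Any $I_{0}$ in this intersection satisfies $f_{n}(I_{0})\geq d$ for every $n$, hence $f(I_{0})=\lim_{n}f_{n}(I_{0})\geq d>\sup_{I}f(I)$, a contradiction. This yields $\sup_{I}f_{n}(I)\to\sup_{I}\rho(M/I)$, and combined with the first display gives $\rho(M)=\sup_{I\in\operatorname{Prim}(A)}\rho(M/I)$.

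The only delicate point is verifying that the sublevel sets one must intersect are genuinely quasicompact and form a down-directed family inside $\operatorname{Prim}(A)$; this is the role of Lemma \ref{norm-qc} (which is why the proof works for precompact, not merely bounded, $M$) together with the monotonicity $\|M^{n+k}/I\|\leq\|M^{n}/I\|\|M^{k}/I\|$. No new topological property of $\operatorname{Prim}(A)$ is needed beyond $(QC)$, and no continuity assumption on $I\mapsto\|a/I\|$ is required, so the statement holds uniformly over the class $\mathfrak{C}_{\mathfrak{qc}}$ which by Corollary \ref{GCR-QC} already covers all GCR C*-algebras.
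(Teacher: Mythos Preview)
Your proof is correct and follows essentially the same approach as the paper's: both define the nested sequence of quasicompact sets $\{I:\|M^{2^{n}}/I\|\geq c^{2^{n}}\}$, invoke Lemma~\ref{norm-qc} for quasicompactness, and use property $(QC)$ to extract a common $I$. The only cosmetic difference is that the paper takes $c=\rho(M)$ directly (thus showing the supremum is actually \emph{attained} at some primitive ideal), whereas you run the Dini contradiction with an intermediate $d<\rho(M)$; both work, and your version sidesteps the small point that the sets with threshold exactly $\rho(M)$ are non-empty (which the paper gets from the attainment statement embedded in the proof of Lemma~\ref{norm-qc}).
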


\begin{proof}
For each $n$, let $K_{n}=\left\{  I\in\operatorname{Prim}(A):\Vert M^{2^{n}%
}/I\Vert\geq\rho(M)^{2^{n}}\right\}  $. Since
\[
\Vert M^{2^{n}}\Vert\geq\rho(M)^{2^{n}},
\]
all $K_{n}$ are non-zero, and they are quasicompact by Lemma \ref{norm-qc}.
Since the sequence $\Vert M^{2^{n}}/I\Vert^{1/2^{n}}$ decreases, the inclusion%
\[
K_{n}\supset K_{n+1}%
\]
holds for each $n$. It follows that there is a primitive ideal $I\in\cap
_{n=1}^{\infty}K_{n}$. Thus
\[
\Vert M^{2^{n}}/I\Vert^{1/2^{n}}\geq\rho(M)
\]
for all $n$. Taking the limit we get that $\rho(M/I)\geq\rho(M)$.
\end{proof}

\begin{corollary}
\label{QC-quot}Let $A$ be a C*-algebra, and let $M$ be a precompact subset of
$A$. If $A\in\mathfrak{C}_{\mathfrak{qc}}$ and $\rho(M/I)=r(M/I)$ for all
$I\in\operatorname{Prim}(A)$ then
\[
\rho(M)=r(M).
\]

\end{corollary}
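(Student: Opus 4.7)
The plan is to chain together two results that have already been established in the previous subsection. The hypothesis $A\in\mathfrak{C}_{\mathfrak{qc}}$ supplies, via Theorem \ref{QC-rho}, the identity
\[
\rho(M)=\sup_{I\in\operatorname{Prim}(A)}\rho(M/I),
\]
and the standing hypothesis $\rho(M/I)=r(M/I)$ lets me replace $\rho(M/I)$ by $r(M/I)$ inside the supremum. On the other hand, equation (\ref{r-prim}), which follows from Proposition \ref{r-cond} applied with $\mathcal{F}=\operatorname{Prim}(A)$ (the assumption there, namely $\rho(a)=\sup_{I\in\operatorname{Prim}(A)}\rho(a/I)$, holds in every Banach algebra), tells me that
\[
r(M)=\sup_{I\in\operatorname{Prim}(A)}r(M/I).
\]

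Putting the three displayed facts in a single chain,
\[
\rho(M)=\sup_{I\in\operatorname{Prim}(A)}\rho(M/I)=\sup_{I\in\operatorname{Prim}(A)}r(M/I)=r(M),
\]
finishes the proof. I would only remark that the inequality $r(M)\le\rho(M)$ is automatic, so the above chain is what actually has content.

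I do not foresee any genuine obstacle: the work has been done in establishing Theorem \ref{QC-rho}, where the property $(QC)$ of $\operatorname{Prim}(A)$ was used to produce a primitive ideal $I\in\bigcap_n K_n$ at which $\Vert M^{2^n}/I\Vert^{1/2^n}\ge\rho(M)$ for every $n$. Everything else in the argument is formal, and the statement is essentially the transfer of the Berger--Wang equality from the pointwise (primitive-quotient) level to the global level, under the quasicompactness hypothesis.
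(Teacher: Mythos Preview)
Your proof is correct and follows essentially the same route as the paper: both invoke Theorem~\ref{QC-rho} to obtain $\rho(M)=\sup_{I}\rho(M/I)$, replace $\rho(M/I)$ by $r(M/I)$ via the hypothesis, and then compare with $r(M)$. The only cosmetic difference is that the paper closes with the trivial inequality $\sup_{I}r(M/I)\le r(M)$ (relying on the automatic $r(M)\le\rho(M)$ for the reverse), whereas you invoke the equality~(\ref{r-prim}) directly.
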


\begin{proof}
By Theorem \ref{QC-rho}, we obtain that
\[
\rho(M)=\sup_{I\in\operatorname{Prim}(A)}\rho(M/I)=\sup_{I\in
\operatorname{Prim}(A)}r(M/I)\leq r(M).
\]

\end{proof}

The following result establishes a C*-version of the joint spectral radius
formula for algebras satisfying the condition (\ref{rhorepr}) with all their quotients.

\begin{proposition}
\label{if1} Let $A$ be a C*-algebra. If, for any closed ideal $K$ of $A$,%
\[
\rho(N)=\sup_{I\in\operatorname{Prim}(A/K)}\rho(N/I)
\]
for every precompact subset $N$ of $A/K$ then
\[
\rho(M)=\max\{r(M),\rho(M/\mathcal{R}_{\mathfrak{gcr}}(A))\}
\]
for every precompact subset $M$ of $A$.
\end{proposition}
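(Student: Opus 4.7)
Write $J:=\mathcal{R}_{\mathfrak{gcr}}(A)$. Since $r(M)\le\rho(M)$ and the quotient map $A\to A/J$ does not increase the joint spectral radius, only the inequality $\rho(M)\le\max\{r(M),\rho(M/J)\}$ needs proof. The plan is to argue: assuming $\rho(M)>\rho(M/J)$ we show $\rho(M)\le r(M)$; combined with the trivial case $\rho(M)\le\rho(M/J)$ this yields the claim.

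Because $J$ is a GCR C*-algebra, it admits a Kaplansky composition series, i.e., an increasing transfinite chain $(J_\alpha)_{\alpha\le\gamma}$ of its closed ideals with $J_0=0$, $J_\gamma=J$, with every gap-quotient $J_{\alpha+1}/J_\alpha$ CCR, and $J_\alpha=\overline{\bigcup_{\beta<\alpha}J_\beta}$ at limit ordinals (cf.\ the references used in Corollary \ref{GCR-QC}). By property (3) of Section \ref{c-algebras}, each $J_\alpha$ is a closed ideal of $A$. Set $\phi(\alpha):=\rho(M/J_\alpha)$. Then $\phi$ is nonincreasing, and Lemma \ref{410} gives $\phi(\alpha)=\inf_{\beta<\alpha}\phi(\beta)$ at every limit ordinal $\alpha$. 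Since $\phi(0)=\rho(M)>\rho(M/J)=\phi(\gamma)$, the least ordinal $\alpha_0$ with $\phi(\alpha_0)<\phi(0)$ cannot be a limit, so $\alpha_0=\beta+1$ with $\phi(\beta)=\rho(M)$. Put $B:=A/J_\beta$, $N:=M/J_\beta$, $I:=J_{\alpha_0}/J_\beta$; then $I$ is a CCR ideal of $B$ and $\rho(N/I)<\rho(N)=\rho(M)$.

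Apply the hypothesis with $K=J_\beta$ to get $\rho(N)=\sup\{\rho(N/I'):I'\in\operatorname{Prim}(B)\}$, and choose $I'\in\operatorname{Prim}(B)$ with $\rho(N/I')>\rho(N/I)$; then $I\not\subset I'$. Realise $B/I'$ faithfully and irreducibly on a Hilbert space $H$, and set $\pi:=q_{I'}$. The ideal $\pi(I)$ of $B/I'$ is non-zero and acts irreducibly on $H$; being a quotient of the CCR algebra $I$, it is a CCR algebra of compact operators, and any non-zero irreducible C*-algebra of compact operators on $H$ coincides with $\mathcal{K}(H)$, so $\pi(I)=\mathcal{K}(H)$. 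Applying the operator joint spectral radius formula $(\ref{afo})$ to the precompact set $\pi(N)\subset\mathcal{B}(H)$ we obtain
\[
\rho(N/I')=\rho(\pi(N))=\max\bigl\{r(\pi(N)),\,\rho(\pi(N)/\mathcal{K}(H))\bigr\}.
\]
Clearly $r(\pi(N))\le r(N)$. Since $\pi^{-1}(\mathcal{K}(H))=\pi^{-1}(\pi(I))=I+I'$, the image of $\pi(N)$ in the Calkin algebra coincides isometrically with $N/(I+I')$ in the C*-algebra $B/(I+I')$, whence $\rho(\pi(N)/\mathcal{K}(H))=\rho(N/(I+I'))\le\rho(N/I)$. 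Thus $\rho(N/I')\le\max\{r(N),\rho(N/I)\}$ for every $I'\in\operatorname{Prim}(B)$; taking the supremum and using $\rho(N)>\rho(N/I)$ forces $\rho(N)\le r(N)$, and hence $\rho(M)=\rho(N)\le r(N)\le r(M)$.

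The principal difficulties lie in the transfinite-chain reduction — which relies both on Kaplansky's composition series for $J$ being realisable as a chain of closed ideals of the ambient $A$ and on Lemma \ref{410} to guarantee that $\phi$ drops at a successor ordinal — and on the identification $\pi(I)=\mathcal{K}(H)$ inside the primitive quotient $B/I'$, without which the operator version of the joint spectral radius formula could not be invoked on $\pi(N)$.
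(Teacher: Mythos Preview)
Your proof is correct and follows essentially the same approach as the paper: both use the Kaplansky composition series for the GCR ideal together with Lemma~\ref{410} to reduce to a CCR step, and at that step invoke the identification $\pi(I)=\mathcal{K}(H)$ in a primitive quotient so that the operator version~(\ref{afo}) of the joint spectral radius formula applies. The only organisational difference is that the paper first isolates and proves the CCR case as a standalone statement and then runs a direct transfinite induction on $\alpha$ to get $\rho(M)=\max\{r(M),\rho(M/J_\alpha)\}$ for every $\alpha$, whereas you argue by locating the first successor ordinal at which $\phi(\alpha)=\rho(M/J_\alpha)$ drops; the content is the same.
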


\begin{proof}
We show that
\begin{equation}
\rho(M)=\max\{r(M),{\rho}(M/J)\} \label{GBWFstar}%
\end{equation}
for each closed GCR ideal $J$ of $A$.

Let us firstly prove (\ref{GBWFstar}) for the case that $J$ is a CCR-ideal
(assuming only that (\ref{rhorepr}) is true for $A$). By (\ref{rhorepr}), it
suffices to show that
\[
{\rho}(M/I)\leq\max\{{\rho}(M/J),r(M)\}
\]
for each $I\in\operatorname{Prim}(A)$. Let $\pi$ be an irreducible
representation of $A$ on a Hilbert space $H$, with $\ker\pi=I$. Then
\[
\rho(M/I)=\rho(\pi(M)).
\]
If ${\pi}(J)=0$ then ${\pi}$ is a representation of $A/J$ and the inequality
\[
{\rho}({\pi}(M))\leq{\rho}(M/J)
\]
is evident. Otherwise ${\pi}(J)=\mathcal{K}(H)$ and%
\[
{\rho}_{e}({\pi}(M))={\rho}({\pi}(M)/{\pi}(J))\leq{\rho}(M/J).
\]
Applying the operator version of the joint spectral radius formula $\left(
\ref{afo}\right)  $ to ${\pi}(M)$ we obtain the needed inequality.

In general, we have an increasing transfinite chain $\left(  J_{\alpha
}\right)  _{{{\alpha}\leq\delta}}$ such that $J_{0}=0$, $J_{\delta}=J$ and all
gap-quotients $J_{{\alpha}+1}/J_{{\alpha}}$ are CCR-algebras.

Assume, to the contrary, that there is an ordinal $\gamma$ which is the
smallest one among ordinals ${\alpha}$ for which (\ref{GBWFstar}) is not true
with $J=J_{\alpha}$. If ${\gamma}$ is not limit then ${\gamma}={\alpha}+1$ for
some $\alpha$. As $M/J_{\gamma}\cong(M/J_{\alpha})/(J_{\gamma}/J_{\alpha})$
then
\[
\rho(M/J_{\alpha})=\max\{r(M/J_{\alpha}),{\rho}(M/J_{\gamma})\}.
\]
Since also $\rho(M)=\max\{r(M),{\rho}(M/J_{\alpha})\}$ then
\[
\rho(M)=\max\{r(M),{\rho}(M/J_{\gamma})\},
\]
a contradiction.

Let now ${\gamma}$ be a limit ordinal. By Lemma \ref{410},
\begin{align*}
\max\{r(M),{\rho}(M/J_{\gamma})\}  &  =\max\{r(M),\inf_{\alpha^{\prime}%
<\gamma}{\rho}(M/J_{\alpha^{\prime}})\}\\
&  =\inf_{\alpha^{\prime}<\gamma}\max\{r(M),{\rho}(M/J_{\alpha^{\prime}%
})\}={\rho}(M),
\end{align*}
a contradiction, so the equality holds for all ordinals.
\end{proof}

\begin{corollary}
\label{C(QC)-BW}Let $A$ be a C*-algebra. If $A\in\mathfrak{C}_{\mathfrak{qc}}$
then
\[
\rho(M)=\max\{r(M),\rho(M/\mathcal{R}_{\mathfrak{gcr}}(A))\}
\]
for every precompact subset $M$ of $A$.
\end{corollary}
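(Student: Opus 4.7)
The plan is to combine Proposition \ref{if1} with Theorem \ref{QC-rho} via the hereditary stability of the class $\mathfrak{C}_{\mathfrak{qc}}$. The hypothesis of Proposition \ref{if1} requires that the primitive-ideal representation formula $\rho(N) = \sup_{I \in \operatorname{Prim}(A/K)} \rho(N/I)$ hold for every precompact $N$ in every quotient $A/K$ of $A$ by a closed ideal $K$. Theorem \ref{QC-rho} gives exactly this formula, but only for algebras belonging to $\mathfrak{C}_{\mathfrak{qc}}$.

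First I would observe that Corollary \ref{C-QC}(3) passes membership in $\mathfrak{C}_{\mathfrak{qc}}$ to arbitrary quotients by closed ideals: if $A \in \mathfrak{C}_{\mathfrak{qc}}$ then for each closed ideal $K$ of $A$ the space $\operatorname{Prim}(A/K)$ is homeomorphic to a closed subset of $\operatorname{Prim}(A)$ (via $I \longmapsto I/K$), hence it inherits property $(QC)$. Thus $A/K \in \mathfrak{C}_{\mathfrak{qc}}$.

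Next I would invoke Theorem \ref{QC-rho} applied to the C*-algebra $A/K$: for every precompact subset $N \subset A/K$,
\[
\rho(N) = \sup_{J \in \operatorname{Prim}(A/K)} \rho(N/J).
\]
Since $K$ is arbitrary, this is precisely the hypothesis of Proposition \ref{if1}.

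Finally, Proposition \ref{if1} yields
\[
\rho(M) = \max\bigl\{r(M),\, \rho(M/\mathcal{R}_{\mathfrak{gcr}}(A))\bigr\}
\]
for every precompact subset $M$ of $A$, which is the desired conclusion. There is no real obstacle here beyond lining up the two tools: the whole content of the corollary is already packaged in Proposition \ref{if1} and Theorem \ref{QC-rho}, and the only technical point to verify is the hereditary behavior of $\mathfrak{C}_{\mathfrak{qc}}$ under quotients, which is immediate from Corollary \ref{C-QC}(3).
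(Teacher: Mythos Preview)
Your proof is correct and follows exactly the paper's approach: the paper's proof consists of the single observation that quotients of algebras in $\mathfrak{C}_{\mathfrak{qc}}$ remain in $\mathfrak{C}_{\mathfrak{qc}}$ (Corollary \ref{C-QC}(3)), so Theorem \ref{QC-rho} applies to every $A/K$ and the hypothesis of Proposition \ref{if1} is met. You have reproduced this argument verbatim, with slightly more detail.
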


\begin{proof}
Indeed, any quotient of a C*-algebra from $\mathfrak{C}_{\mathfrak{qc}}$
belongs to $\mathfrak{C}_{\mathfrak{qc}}$ (see Corollary \ref{C-QC}$\left(
3\right)  $). So the result follows from Theorems \ref{if1} and \ref{QC-rho}.
\end{proof}

\begin{corollary}
\label{GCR-BW} Each GCR C*-algebra is a Berger-Wang algebra.
\end{corollary}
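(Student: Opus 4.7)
The plan is to derive this immediately by combining the two preceding corollaries. Let $A$ be a GCR C*-algebra. By Corollary \ref{GCR-QC}, $A$ lies in the class $\mathfrak{C}_{\mathfrak{qc}}$, so the generalised joint spectral radius formula of Corollary \ref{C(QC)-BW} applies to $A$: for every precompact subset $M \subset A$,
\[
\rho(M) = \max\{r(M),\,\rho(M/\mathcal{R}_{\mathfrak{gcr}}(A))\}.
\]
Since $A$ is GCR, $\mathcal{R}_{\mathfrak{gcr}}(A) = A$, so $A/\mathcal{R}_{\mathfrak{gcr}}(A) = 0$ and the second term inside the maximum vanishes. Hence $\rho(M) = r(M)$ for every precompact $M \subset A$, i.e.\ $A$ is a Berger-Wang algebra.

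There is essentially no obstacle here: all the substantive content has already been established. The nontrivial work was done in Corollary \ref{GCR-QC} (transfinite induction using that GCR algebras admit an increasing transfinite chain with Hausdorff-spectrum gap-quotients, together with the permanence properties of $\mathfrak{C}_{\mathfrak{qc}}$ recorded in Corollary \ref{C-QC}) and in Corollary \ref{C(QC)-BW} (which in turn rested on Theorem \ref{QC-rho} and Proposition \ref{if1}). The present statement is merely the specialisation of Corollary \ref{C(QC)-BW} to the case $\mathcal{R}_{\mathfrak{gcr}}(A) = A$.
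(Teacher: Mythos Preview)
Your proof is correct and matches the paper's approach exactly: the paper's proof reads simply ``Follows from Corollary \ref{C(QC)-BW},'' which implicitly uses Corollary \ref{GCR-QC} to ensure the hypothesis $A\in\mathfrak{C}_{\mathfrak{qc}}$ is met. You have merely spelled out this implicit step.
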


\begin{proof}
Follows from Corollary \ref{C(QC)-BW}.
\end{proof}

\begin{corollary}
\label{finiteW} Let $A$ be a C*-algebra. Then any precompact subset of
$\mathcal{R}_{\mathfrak{gcr}}\left(  A\right)  $ is a point of continuity of
the joint spectral radius.
\end{corollary}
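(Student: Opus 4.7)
The plan is to combine three results already at hand in this section: the identification $\mathcal{R}_{\mathfrak{gcr}}(A)=\mathcal{R}_{s}^{p\ast}(A)$ from Theorem \ref{CCR-R}; the fact that GCR C*-algebras are Berger--Wang (Corollary \ref{GCR-BW}); and the criterion for continuity of the joint spectral radius given by Lemma \ref{sirano}, which reduces the statement to checking two conditions at $M$, namely $\rho(M)=r(M)$ and that every element of $M$ is a point of continuity of the usual spectral radius $\rho$.

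Fix a precompact set $M\subset J:=\mathcal{R}_{\mathfrak{gcr}}(A)$. Because $J$ is an ideal, all powers $M^{n}$ lie in $J$; and because $J$ is a closed ideal of the C*-algebra $A$, its C*-norm coincides with the ambient norm and $J$ is a spectral subalgebra of $A$. Consequently both $\rho(M)=\inf_{n}\|M^{n}\|^{1/n}$ and $r(M)=\limsup_{n}\sup\{\rho(a)^{1/n}:a\in M^{n}\}$ take the same values whether computed in $J$ or in $A$. Since $J$ is a GCR C*-algebra, Corollary \ref{GCR-BW} tells us that $J$ is Berger--Wang, so $\rho(M)=r(M)$ inside $J$ and hence inside $A$. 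This gives the first hypothesis of Lemma \ref{sirano}. The second hypothesis is exactly the content of Corollary \ref{cont-CCR}: the spectral radius is continuous at every element of $\mathcal{R}_{\mathfrak{gcr}}(A)$, and in particular at every element of $M$. An application of Lemma \ref{sirano} in $A$ then yields that $M$ is a point of continuity of the joint spectral radius $\rho_{j}$.

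The main point worth highlighting is that Theorem \ref{rhoJ}, which superficially looks like the result we want, is not directly applicable: it requires the ambient algebra itself to be Berger--Wang, an assumption we do not make here about $A$. What rescues the argument is that Lemma \ref{sirano} only demands the Berger--Wang identity $\rho(M)=r(M)$ at the particular set $M$ under consideration, and this identity propagates upward from the ideal $J$ to the ambient algebra $A$ precisely because $\rho$ and $r$ of a subset of $J$ are intrinsic invariants. Thus the GCR structure is used only where it is needed---inside the ideal---while the continuity conclusion is drawn in $A$.
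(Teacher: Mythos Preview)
Your proof is correct and follows essentially the same approach as the paper, which simply cites Corollary \ref{GCR-BW}, Theorem \ref{CCR-R}, and Theorem \ref{rhoJ}. You have unpacked the reference to Theorem \ref{rhoJ} (whose proof is precisely Lemma \ref{sirano} plus continuity of $\rho$ on $\mathcal{R}_{s}^{p\ast}(A)$) and made explicit why the Berger--Wang hypothesis on the ambient algebra is unnecessary: the identity $\rho(M)=r(M)$ is only needed at the particular $M$, and it transfers from the GCR ideal $J$ to $A$ since norms and spectra of elements in $J$ agree in both algebras.
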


\begin{proof}
Note that $\mathcal{R}_{\mathfrak{gcr}}\left(  A\right)  $ is the largest
GCR-ideal of $A$ by Theorem \ref{special}. So the result follows from
Corollary \ref{GCR-BW} and Theorems \ref{CCR-R} and \ref{rhoJ}.
\end{proof}


\begin{thebibliography}{999}                                                                                              %


\bibitem[AW]{AkWr}C. Akemann, S. Wright, \textit{Compact actions on
C*-algebras,} Glasgow Math. J., \textbf{21 }(1980) 143-149.

\bibitem[Al]{Al68}J. C. Alexander, \textit{Compact Banach algebras,} Proc.
London Math. Soc. (3) \textbf{18 }(1968) 1-18.

\bibitem[AR]{AR79}V. A. Andrunakievich, Yu. M. Ryabuhin, Radicals of algebras
and structure theory, Moscow, \textquotedblleft Nauka\textquotedblright, 1979.
[In Russian]

\bibitem[A]{A91}B. Aupetit, Primer to Spectral Theory, Springer-Verlag, N-Y,
Berlin, 1991.

\bibitem[AM]{AM2000}B. Aupetit, M. Mathieu, \textit{The continuity of Lie
homomorphisms,} Studia Math. 138 (2000) 193-199.

\bibitem[B]{B68}B. A. Barnes, \textit{On the existence of minimal ideals in a
Banach algebra,} Trans. AMS, \textbf{133} (1968) 511-517.

\bibitem[BMS]{BMSW82}B. A. Barnes, G. J. Murphy, M. R. F. Smyth, T. T. West,
Riesz and Fredholm theory in Banach algebras, Pitman Publ. Inc., Boston, 1982.

\bibitem[BS]{BS01}D. Belti\c{t}a, M. \c{S}abak, Lie algebras of bounded
operators, Basel, Boston, Berlin, Birkh\"{a}user, 2001.

\bibitem[BW]{BW92}A. M. Berger and Y. Wang, \textit{Bounded semigroups of
matrices,} Linear Algebra and Appl., \textbf{166} (1992) 21-27.

\bibitem[BD]{BD73}F. F. Bonsall, J. Duncan, Complete normed algebras,
Springer-Verlag, Berlin, Heidelberg, N-Y, 1973.

\bibitem[Bo]{B67}N. Bourbaki, Theories Spectrales, Hermann, 1967.

\bibitem[BE]{BE03}M. Bre\v{s}ar, D. Eremita, \textit{Lower socle and finite
rank elementary operators,} Communications in Algebra, \textbf{31}, 3 (2003) 1485-1497.

\bibitem[BT]{BT07}M. Bre\v{s}ar, Yu. V. Turovskii, \textit{Compactness
conditions for elementary operators,} Studia Math. \textbf{178} (1) (2007) 1-18.

\bibitem[BDF]{BDF}L.G. Brown , R.G. Douglas and P.A.Fillmore, \textit{Unitary
equivalence modulo the compact operators and extensions of C*-algebras,}
Proceedings of a conference on operator theory, Halifax, Nova Scotia 1973,
Lect. Notes in Math. 345 (Berlin Heidelberg N.Y.: Springer-Verlag), 58-128.

\bibitem[Br]{Brown}L.G. Brown, \textit{Extension of C*-algebras, }Operator
Algebras and Applications, Proc. Symp. Pure Math. \textbf{38} (1981) 175-176,
Amer. Math. Soc., Providence.

\bibitem[Bu]{Bur}L. Burlando, \textit{Spectral continuity in some Banach
algebras,} Rocky Mountain J. Math. \textbf{23 }(1993) 17-39.

\bibitem[C]{C12}P. Cao, \textit{Quasinilpotent operators in operator Lie
algebras III}, J. Math. Anal. Appl.\textbf{386} (2012) 709-717.

\bibitem[Dv]{Dav}K.R. Davidson, C*-algebras by Examples, Fields Institute
Monographs, 6, AMS, Providence, RI, 1996.

\bibitem[Dm]{Dixmier}J. Dixmier, Les C*-alg\'{e}bres et leur
repr\`{e}sentations, Gauthier-Villars, Paris, 1964.

\bibitem[D1]{D73}P. G. Dixon, \textit{Semiprime Banach algebras,} J. London
Math. Soc. (2) \textbf{6} (1973) 676-678.

\bibitem[D2]{D77}P. G. Dixon, \textit{A Jacobson-semisimple Banach algebra
with a dense nil subalgebra,} Colloq. Math. \textbf{37} (1977) 81-82.

\bibitem[D3]{D97}P. G. Dixon, \textit{Topologically irreducible
representations and radicals in Banach algebras,} Proc. London Math. Soc. (3)
\textbf{74} (1997) 174-200.

\bibitem[D4]{D00}P. G. Dixon, \textit{Non-closed sums of closed ideals in
Banach algebras,} Proc. Amer. Math. Soc. \textbf{128}, 2 (2000) 3647-3654.

\bibitem[Ds]{D06}A. A. Dosiev,\textit{ Cartan-Slodkowski spectra, splitting
elements and noncommutative spectral mapping theorems,} J. Funct. Anal.
\textbf{230 }(2006) 446-493.

\bibitem[G]{G69}T. W. Gamelin, Uniform algebras, Prentice-Hall, 1969.

\bibitem[Gr]{Gard}L. T. Gardner, \textit{On isomorphisms of C*-algebras,}
Amer. J. Math. \textbf{87 }(1985) 384-396.

\bibitem[GR]{GR91}S. Giotopoulos, M. Roumeliotis, \textit{Algebraic ideals of
semiprime Banach algebras,} Glasgow Math. J. \textbf{33} (1991) 359-363.

\bibitem[H]{H88}R. E. Harte. Invertibility and singularity, Dekker, 1988.

\bibitem[F]{F93}A. S. Fainshtein, \textit{Taylor joint spectrum for families
of operators generating nilpotent Lie algebras,} J. Oper. Theory, \textbf{29}
(1993) 3-27.

\bibitem[FK]{FK00}I. Feldman, N. Krupnik, \textit{On the continuity of the
spectrum in certain Banach algebras,} Int. Equ. Oper. Theory \textbf{38}
(2000) 284-301.

\bibitem[Fr]{Form}E. Formanek, \textit{The Nagata-Higman Theorem,} Acta
Applic. Math., \textbf{21}(1990) 185-192

\bibitem[H]{Halm}P. Halmos, Hilbert space problem book, Van Nostrand, 1967.

\bibitem[J]{J67}B. E. Johnson, \textit{The uniqueness of the (complete) norm
topology,} Bull. Amer. Math. Soc. \textbf{42} (1967) 537-539.

\bibitem[KS]{KS97}E. Kissin, V. S. Shulman, Representations on Krein spaces
and derivations of C*-algebras, Pitman Monogr. and Surveys in Pure and Applied
Math. 89, Addison Wesley Longman, London, 1997.

\bibitem[KST]{TR4}E. Kissin, V. S. Shulman, Yu. V. Turovskii,
\textit{Topological radicals and the Frattini theory of Banach Lie algebras, }(submitted)

\bibitem[Ks]{Kus}M. Kusuda, \textit{A characterization of scattered
C*-algebras and its applications to C*-crossed products,} J. Operator Theory,
\textbf{63}:2 (2010) 417-424.

\bibitem[Ln]{Lang}S. Lang, Algebra, Addison-Wesley, 1965.

\bibitem[M]{M07}V. M\"{u}ller, Spectral theory of linear operators and
spectral systems in Banach algebras, 2ed., Birkhauser, 2007.

\bibitem[P]{P94}T.W. Palmer, Banach algebras and the general theory of
*-algebras, v. 1, Cambridhe Univ. Press, 1994.

\bibitem[PZ]{PZ77}V. Ptak, J. Zemanek, \textit{On uniform continuity of the
spectral radius in Banach algrebras,} Manuscripta Math. \textbf{20} (1977) 177-189.

\bibitem[RS]{RS60}G.-C. Rota, W. G. Strang, \textit{A note on the joint
spectral radius,} Indag. Math. \textbf{22} (1960) 379-381.

\bibitem[Rd]{Rud}W. Rudin, Harmonic analysis on groups, Int. Sci. Tracts in
Pure and Appl. Math., No 12, Wiley, N.Y., 1962.

\bibitem[Sn]{S76}A. M. Sinclair, Automatic continuity of linear operators,
Cambridge Univ. Press, 1976.

\bibitem[S]{S84}V. S. Shulman, \textit{On invariant subspaces of Volterra
operators,} Funct. Anal. Appl. \textbf{18} (1984) 84-85. [In Russian]

\bibitem[ST1]{ST00}V. S. Shulman, Yu. V. Turovskii, J\textit{oint spectral
radius, operator semigroups, and a problem of W. Wojty\'{n}ski,} J. Funct.
Anal. \textbf{177} (2000) 383-441.

\bibitem[ST2]{TR0}V. S. Shulman,Yu. V. Turovskii, \textit{Radicals in Banach
algebras and some problems in the theory of radical Banach algebras,} Funct.
Anal. and its Appl. \textbf{35} (2001) 312--314.

\bibitem[ST3]{ST02}V. S. Shulman, Yu. V. Turovskii, \textit{Formulae for joint
spectral radii of sets of operators,} Studia Math. \textbf{149} (2002) 23-37.

\bibitem[ST4]{ST05}V. S. Shulman,Yu. V. Turovskii, \textit{Invariant subspaces
of operator Lie algebras and Lie algebras with compact adjoint action,} J.
Funct. Anal. \textbf{223} (2005) 425-508.

\bibitem[ST5]{TR1}V. S. Shulman, Yu. V. Turovskii, \textit{Topological
radicals, I. Basic properties, tensor products and joint quasinilpotence,}
Banach Center Publications,\textbf{ 67} (2005) 293-333.

\bibitem[ST6]{TR2}V. S. Shulman, Yu. V. Turovskii, \textit{Topological
radicals, II. Applications to the spectral theory of multiplication operators,
}Operator Theory: Advances and Applications, \textbf{212} (2010) 45-114.

\bibitem[ST7]{TR3}V. S. Shulman, Yu. V. Turovskii, \textit{Topological
radicals and the joint spectral radius}, Funct. Anal. and its Appl., to
appear. [In Russian] (cf. preprint: arXiv:0805.0209 [math.FA] 2 May 2008.)

\bibitem[Tk1]{Tak1}M. Takesaki, On the cross norms of the direct product of
C*-algebras, T\^{o}hoku Mathematical J. \textbf{16} (1964) 111-112.

\bibitem[Tk2]{Tak}M. Takesaki, Theory of Operator algebras, III, Encycl. Math.
Sci., 127, Sprimger-Verlag, 2002.

\bibitem[T]{T76}A. W. Tullo, \textit{Conditions on Banach algebras which imply
finite dimensionality,} Proc. Edinburgh Math. Soc. (2) \textbf{20} (1976) 1-5.

\bibitem[T1]{T85}Yu. V. Turovskii, \textit{Spectral properties of certain Lie
subalgebras and the spectral radius of subsets of a Banach algebra,} Spectral
theory of operators and its applications, \textbf{6} (1985), "Elm", Baku,
144-181. [in Russian]

\bibitem[T2]{T99}Yu. V. Turovskii, \textit{Volterra semigroups have invariant
subspaces,} J. Funct. Anal. \textbf{182} (1999) 313-323.

\bibitem[Ws]{Wass}S. Wasserman, Exact C*-algebras and related topics, Seoul
National Univ. Lecture Notes, Research institute of Math., 1994, 1-92.

\bibitem[W]{W98}W. Wojty\'{n}ski,\textit{ Quasinilpotent Banach-Lie algebras
are Baker-Campbell-Hausdorff,} J. of Functional Anal \textbf{153} (1998) 405-413.

\bibitem[Z1]{Z77}J. Zemanek, \textit{Spectral radius characterizations of
commutativity in Banach algebras,} Studia Math. \textbf{61} (1977) 257-268.

\bibitem[Z2]{Z80}J. Zemanek, \textit{Spectral characterization of two-sided
ideals in Banach algebras,} Studia Math. \textbf{67} (1980) 1-12.
\end{thebibliography}
\end{document}